\newif\ifarxiv
\renewcommand\paragraph\textbf
\title{Big-Step-Little-Step: \\
Efficient Gradient Methods for Objectives with Multiple Scales\ifarxiv\thanks{Authors are listed in alphabetical order.}\fi
}
\author{%
  Jonathan Kelner \\
  MIT \\
  \texttt{kelner@mit.edu}
   \and
   Annie Marsden \\
   Stanford University \\
  \texttt{marsden@stanford.edu}
  \and
   Vatsal Sharan \\
   USC\thanks{Part of the work was done while the author was at MIT.} \\
  \texttt{vsharan@usc.edu}
  \and
   Aaron Sidford \\
   Stanford University \\
  \texttt{sidford@stanford.edu}
   \and
   Gregory Valiant \\
   Stanford University \\
  \texttt{valiant@stanford.edu}
   \and
   Honglin Yuan \\
   Stanford University \\
  \texttt{yuanhl@cs.stanford.edu}
}
\begin{document}
\date{}
\maketitle
\thispagestyle{empty} 
\begin{abstract}
We provide new gradient-based methods for efficiently solving a broad class of ill-conditioned optimization problems. We consider the problem of minimizing a function $f : \mathbb{R}^d \rightarrow \mathbb{R}$ which is implicitly decomposable as the sum of $m$ unknown non-interacting smooth, strongly convex functions and provide a method which solves this problem with a number of gradient evaluations that scales (up to logarithmic factors) as the product of the square-root of the condition numbers of the components. This complexity bound (which we prove is nearly optimal) can improve almost exponentially on that of accelerated gradient methods, which grow as the square root of the condition number of $f$. Additionally, we provide efficient methods for solving stochastic, quadratic variants of this multiscale optimization problem. Rather than learn the decomposition of $f$ (which would be prohibitively expensive), our methods apply a clean recursive ``Big-Step-Little-Step'' interleaving of standard methods. The resulting algorithms use  $\tilde{\mathcal{O}}(d m)$ space, are numerically stable, and open the door to a more fine-grained understanding of the complexity of convex optimization beyond condition number.

\end{abstract}

\clearpage
\setcounter{tocdepth}{2}
\begin{spacing}{0.90} %
\tableofcontents
\thispagestyle{empty} 
\end{spacing}
\clearpage
\setcounter{page}{1}

\vspace{-20pt}
\section{Introduction}
\vspace{-5pt}
Smooth, strongly-convex function minimization is a fundamental and canonical problem in optimization theory and machine learning. Given an $L$-smooth, $\mu$-strongly convex $f : \reals^d \rightarrow \reals$ it is well known that gradient descent and accelerated gradient descent minimize $f$ with $\otilde(\kappa)$ and $\otilde(\sqrt{\kappa})$ gradient queries respectively for $\kappa \defeq L/\mu$.\footnote{$\otilde(\cdot)$ hides factors poly-logarithmic in the function error of the initial point, desired accuracy, and condition numbers.}  Further, any first-order method, i.e. one restricted to accessing $f$ through an oracle which returns the value and gradient of $f$ at a queried point, must make $\Omega(\sqrt{\kappa})$ queries \cite{Nemirovski.Yudin-83} in the (dimension-independent) worst case (even if randomized \cite{woodworth2017lower}).
Consequently $\kappa$, the \emph{condition number}, nearly captures the worst-case complexity of the problem.

In this paper, we seek to move beyond this traditional measure of problem complexity and obtain a more fine-grained understanding of the complexity of smooth, strongly convex function minimization. In the special case of quadratic function minimization where $\nabla^2 f$ has a small number of distinct eigenvalue clusters, it has long been known that methods like conjugate gradient can efficiently solve the problem with far fewer gradient queries than would be indicated by the condition number of the problem \cite{Trefethen.Bau-97}.  This fact has been leveraged in a variety of contexts for improved methods \cite{polyak1969conjugate,nocedal1996conjugate,saad2003iterative,nocedal2006conjugate}. %

The central question we ask in this paper is \emph{whether there is an analog of this phenomenon for non-quadratic and stochastic optimization problems}. Although methods like non-linear conjugate gradient \cite{Fletcher.Reeves-64,Hager.Zhang-06} and limited-memory Quasi-Newton methods \cite{Nocedal-80,Liu.Nocedal-MP89} are prevalent and effective in practice, we currently lack a complete theoretical understanding of when they are (provably) effective.
In this work, we answer this question in the affirmative and give efficient first-order methods for solving natural classes of non-quadratic and stochastic multi-scale optimization problems. 

We focus on the problem of minimizing a function $f$ which is decomposable as the sum of $m$ non-interacting smooth, strongly convex functions $f_i$ each with condition number $\cond_i$. When each $\kappa_i$ is small and the smoothness of each component, $L_i$, is similar, the overall function is well-conditioned and can be solved efficiently. However, when the components are at different scales, i.e. the $L_i$ vary, the overall function can be ill-conditioned. We provide methods that depend only poly-logarithmically on the overall condition number and polynomially on the $\kappa_i$, improving almost exponentially on the complexity of AGD in certain cases.
We complement this result with novel and nearly matching lower bounds which show that our methods are close to optimal for the class of first-order methods.

The motivation for considering the specific setting of a sum of non-interacting functions that are at different scales is largely theoretical. This model serves as a natural starting place when considering what types of structure can be algorithmically leveraged to go beyond guarantees in terms of the (global) condition number. Still, the setting we focus on is not too removed from the types of structure one might encounter in practice. Indeed, many optimization problems possess structure at unknown and widely varying scales, and optimization approaches  designed to gracefully handle such scaling, such as \texttt{AdaGrad} \cite{duchi2011adaptive}, have proved to be extremely useful in practice. Our assumption that the components of the objective function are completely non-interacting may be unrealistic, but we are hopeful that our techniques might extend to more general classes of structured, poorly conditioned, optimization settings.

\vspace{-5pt}
\subsection{Setup and overview}
\label{sec:sub:results}
\begin{definition}%
\label{def:multiscale_problem}
The \textbf{multiscale (convex) optimization problem} asks to approximately solve the problem that can be implicitly decomposed as follows:
\vspace{-8pt}
\begin{equation}
	\min_{\x \in \reals^d} f(\x) \defeq \sum_{i \in [m]} f_i(\proj_i  \x) 
	\enspace \text{ where } \enspace
	\mat{P}_i \in \R^{d_i \times d} 
	\enspace \text{ with } \enspace
					\proj_i  \proj_j^\top = 
	\begin{cases}
					\id_{d_i \times d_i} & \text{if $i = j$,}
				  \\
					\mat{0}_{d_i \times d_j} & \text{otherwise, }
	\end{cases}
\end{equation}
the projections $\proj_i$ and functions $f_i$ are \textbf{unknown}, each $f_i : \R^{d_i} \rightarrow \R$ has condition number $\kappa_i \defeq L_i/\mu_i$ where $f_i$ is $L_i$-smooth and $\mu_i$-strongly convex with $L_i < \mu_{i + 1}$ for all $i < m$.\footnote{
This is without loss of generality (up to logarithmic factors in our claimed bounds) by re-defining any $f_i$, $f_j$ pair with $[\mu_i, L_i] \cap [\mu_j, L_j] \neq \emptyset$ as a single $f_i$, sorting the $L_i$, and noting that $\mu_i \leq L_i$ (by known properties of smoothness and convexity).
}%
\end{definition}
We focus on optimizing such objectives given \emph{only} a gradient oracle for $f$. We also note that though the above problem (and our yet to be introduced algorithms) are well-defined for any $m\in[d]$, we will treat $m$ as a constant when stating the asymptotic bounds of our algorithms for simplicity.

For intuition, one simple case of \cref{def:multiscale_problem} is the quadratic minimization problem $f(\x) = \frac{1}{2} \vec{x}^\top \mat{A} \vec{x} - \vec{b}^\top \x$ where $\mat{A}$ is positive-definite and its eigenvalues are all located in $\bigcup_{i \in [m]}[\mu_i, L_i]$. To see this, note that the spectral decomposition of $\mat{A}$ can be written as $\mat{A} = \sum_{i \in [m]} \mat{P}_i^\top \mat{\Lambda}_i \mat{P}_i$ such that $\mat{\Lambda}_i$ is a diagonal matrix with diagonal entries within $[\mu_i, L_i]$, and that the matrices $\{\mat{P}_i\}_{i=1}^m$ are pairwise orthogonal. By setting $f_i(\y) := \frac{1}{2} \y^\top \mat{\Lambda}_i  \y - \b^\top \mat{P}_i^\top \y$ one can cast the problem in the form of \cref{def:multiscale_problem}. 

Since any objective $f$ satisfying \cref{def:multiscale_problem} must be $\mu_1$-strongly-convex and $L_m$-smooth, one may directly apply gradient descent or accelerated gradient descent to minimize $f$ within $\tildeo(\globalcond)$ or $\tildeo(\sqrt{\globalcond})$ gradient queries, where $\globalcond$ is the global conditioner $L_m/\mu_1$.
However, as we show in \cref{sec:line_search}, gradient descent with constant step-size or line-search does not take full advantage of the additional structure beyond the $\mu_1$-strong-convexity and $L_m$-smoothness from \cref{def:multiscale_problem}. In this work, we aim to leverage the structure of \cref{def:multiscale_problem} to develop faster algorithms.

Our first contribution is to develop a new algorithm ``Big-Step-Little-Step'' or $\bsls$ (Algorithm \ref{alg:bsls}) which takes advantage of the structure of \cref{def:multiscale_problem} and as a result solves the problem within $\tildeo(\prod_{i \in [m]} \kappa_i)$ gradient queries of $f$ (see \cref{thm:bsls:recursive}).
Since $\mu_1 \leq L_1 < \mu_2 \leq L_2 < \cdots < \mu_m \leq L_m$ by \cref{def:multiscale_problem}, it is always the case that $\prod_{i \in [m]} \kappa_i \leq \globalcond$. 
Therefore, $\bsls$ asymptotically outperforms gradient descent (which has complexity $\tildeo(\globalcond)$).  Moreover, if the clusters $[\mu_i, L_i]$ are well-separated, i.e.\ $\prod_{i \in [m]} \kappa_i \ll \globalcond$, the complexity of $\bsls$ can \emph{significantly} outperform accelerated gradient descent. Indeed, in the case where $m$ and each $\cond_i$ are constant, the asymptotic performance (with respect to $\globalcond$) of $\bsls$ is almost an exponential improvement on the performance of accelerated gradient method; see \cref{fig:expr} for a small experimental comparison in the quadratic minimization setting\footnote{Code for all experiments available here: \url{https://github.com/hongliny/BSLS}.}. We also show in \cref{thm:bsls:inexact} that $\bsls$ is numerically stable under finite-precision arithmetic.

The $\bsls$ algorithm consists of a natural interleaving of steps at different sizes. Intuitively, $\bsls$ alternates between taking a bigger step-size to make progress on an objective $f_i$ which has a smaller scale (i.e. a small value of $L_i$), followed by a sequence of smaller steps to fix the errors caused due to this large step in the objectives $f_j$ which have a larger scale (i.e. all $j > i$). The entire framework is recursive -- the sequence of smaller steps for $j>i$ are themselves defined recursively, see \cref{sec:bsls_intuition_results} for further intuition. 

Next, we develop an accelerated version of $\bsls$, namely $\acbsls$ (\cref{alg:acbsls}), that solves the problem within $\tildeo(\prod_{i \in [m]} \sqrt{\kappa_i})$ gradient queries of $f$ (see \cref{thm:acbsls}). Again, as $\prod_{i \in [m]} \kappa_i \leq \globalcond$, $\acbsls$ complexity is never worse than the $\tilde{O}(\sqrt{\globalcond})$ complexity of AGD and can significantly improve when the clusters are well separated. 
We also show in \cref{thm:acbsls:inexact} that $\acbsls$ is numerically stable under finite-precision arithmetic. 

We conclude the study of the multiscale optimization problem (\cref{def:multiscale_problem}) by developing a lower bound of $\tilde{\Omega}(\prod_{i \in [m]} \sqrt{\kappa_i})$ across first-order deterministic methods (see \cref{thm:lb}). This shows that $\acbsls$ is asymptotically optimal up to poly-logarithmic factors. Our proof framework consists of 1) a novel reduction of a first-order lower bound to discrete $\ell_2$ polynomial approximations on multiple intervals and a further reduction to a uniform approximation, 2) a standard reduction to Green's function based on potential theory \cite{Driscoll.Toh.ea-SIREV98}, and 3) a novel estimate of Green's function associated with multiple intervals. 

We summarize the main results in the following theorem.
\begin{theorem}[Informal version of \cref{thm:bsls:recursive,thm:lb,thm:bsls:inexact,thm:acbsls,thm:acbsls:inexact}]
\label{thm:bsls:informal}$\bsls$ solves the multiscale optimization problem to $\epsilon$-optimality with $\tildeo(\prod_{i \in [m]} \kappa_i)$ gradient evaluations. The accelerated version $\acbsls$ solves to $\eps$-optimality with $\tildeo(\prod_{i \in [m]} \sqrt{\kappa_i})$ gradient evaluations. %
Both $\bsls$ and $\acbsls$ only require logarithmic bits of precision and use $\tildeo(d)$ and $\tildeo(md)$ space, respectively. 
Further, $\acbsls$ is worst-case optimal across first-order deterministic algorithms up to poly-logarithmic factors.
\end{theorem}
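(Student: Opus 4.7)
The statement bundles the upper bounds for $\bsls$ and $\acbsls$, their numerical stability, their space complexity, and a matching lower bound. My plan is to prove each of these separately: two recursive analyses for the upper bounds, an error-propagation bound for stability, and a potential-theoretic polynomial-approximation argument for the lower bound.

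For the upper bound on $\bsls$, I would proceed by induction on $m$. The base case $m=1$ is vanilla gradient descent on an $L_1$-smooth, $\mu_1$-strongly convex function and runs in $\tildeo(\kappa_1)$ iterations. For the inductive step, view the outer loop as gradient descent with a ``big'' step of size $\Theta(1/L_1)$: the $f_1$ component then contracts at the usual well-conditioned rate $1 - \Theta(1/\kappa_1)$, giving $\tildeo(\kappa_1)$ outer iterations. The catch is that this big step spoils optimality on $f_2,\ldots,f_m$; to repair it, after each outer step I would invoke $\bsls$ recursively on the residual problem. Because the projections $\proj_i$ are pairwise orthogonal by \cref{def:multiscale_problem}, the restricted subproblem has exactly the multiscale structure with $m-1$ components and condition numbers $\kappa_2,\ldots,\kappa_m$, and its gradient is obtainable from a single query to the oracle for $f$. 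The inductive hypothesis then yields $T(m) = \tildeo(\kappa_1) \cdot T(m-1) = \tildeo(\prod_{i \in [m]} \kappa_i)$.

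For $\acbsls$, I would replace the outer gradient-descent loop with Nesterov acceleration, reducing the outer iteration count from $\tildeo(\kappa_1)$ to $\tildeo(\sqrt{\kappa_1})$ and obtaining the $\tildeo(\prod_{i \in [m]} \sqrt{\kappa_i})$ rate by the same recursive bookkeeping. \textbf{The main technical obstacle here} is that each inner recursive call returns only an approximate solution, and accelerated methods are notoriously sensitive to gradient inexactness; I would invoke an inexact-acceleration framework (in the spirit of Schmidt--Le Roux--Bach) to quantify how tightly the inner call must solve its subproblem, showing the required accuracy is polynomial in the relevant condition numbers and the target $\epsilon$, so that achieving it costs only a logarithmic overhead per level. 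The same accounting gives numerical stability: errors at each of the $m$ recursion levels amplify only polynomially, so logarithmic bits of precision suffice. For space, each recursion level of $\bsls$ needs only $O(d)$ state (a single iterate), which can be reused across calls, giving $\tildeo(d)$ total; $\acbsls$ additionally stashes momentum per level, giving $\tildeo(md)$.

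For the lower bound, I would apply the standard reduction to polynomial approximation: on a quadratic instance with $\nabla^2 f = \mat{A}$, the $T$-th iterate of any deterministic first-order method lies in the Krylov subspace of $\mat{A}$ and the initial gradient, so the optimality gap is at least $\min_p \max_{\lambda \in \sigma(\mat{A})} p(\lambda)^2$ over degree-$T$ polynomials with $p(0)=1$. Choosing $\mat{A}$ whose spectrum fills $S = \bigcup_{i \in [m]} [\mu_i, L_i]$ densely and increasing $d$ lets me pass from the discrete $\ell_2$ problem to uniform approximation on $S$. The classical potential-theoretic estimate (following Driscoll--Toh--Trefethen) bounds the optimal uniform approximation by roughly $\exp(-T \cdot g_S(0))$, where $g_S$ is the Green's function of $\mathbb{C} \setminus S$ with pole at $0$. \textbf{The crux of the proof} is a sharp estimate $g_S(0) = \tilde\Theta(1/\prod_{i \in [m]} \sqrt{\kappa_i})$ for a union of $m$ intervals; I would obtain this by decomposing the Green's function into ``local'' contributions near each interval and bounding each by the single-interval Green's function, which contributes a factor of order $1/\sqrt{\kappa_i}$ in the exponent. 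Combining yields $T = \tilde\Omega(\prod_{i \in [m]} \sqrt{\kappa_i})$ queries for any $\epsilon$-optimal solution, matching $\acbsls$ up to polylogarithmic factors.
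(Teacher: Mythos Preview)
Your $\bsls$ argument is sound and matches the paper in spirit: where you induct on $m$, the paper unrolls the recursion and counts directly (\cref{thm:bsls:recursive}), but both rest on the same one-step fact (\cref{lem:BSLS:general}) that a GD step of size $1/L_i$ never increases $f_j$ for $j<i$. One phrasing to tighten: you never actually obtain the gradient of the ``residual problem'' $\sum_{j\ge 2} f_j$ from the oracle---you only get $\nabla f$---but this suffices precisely because the extra $\proj_1^\top\nabla f_1$ component is harmless at step sizes $\le 1/L_2$.

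The $\acbsls$ argument has a real gap. You diagnose the obstacle as ``the inner call returns only an approximate solution'' and propose to absorb it via Schmidt--Le Roux--Bach--style inexact acceleration. The actual difficulty is structural and is not a tolerance issue: when the inner recursion runs $\agd$ with parameters $(L_{i'},\mu_{i'})$ for $i'>i$, the AGD potential $\psi_i(\x,\v)=\err_i(\x)+\res_i(\v)$ on the outer component is \emph{not} preserved---the paper exhibits explicit quadratics (\cref{lem:naive:acbsls}) and a numerical experiment (\cref{fig:naive:acbsls}) where the naive interleaving diverges no matter how many inner steps are taken. The paper's fix is a \emph{branching} step: after the $\agd(\cdot;L_i,\mu_i)$ update produces $(\tilde\x,\tilde\v)$, it calls $\acbsls_{i+1}$ separately on $(\tilde\x,\tilde\x)$ and on $(\tilde\v,\tilde\v)$, exploiting the weaker invariant that $\max\{\err_j(\x),\err_j(\v)\}$ and $\max\{\res_j(\x),\res_j(\v)\}$ \emph{are} conserved by small-step $\agd$ (\cref{lem:agd1}(b)). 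This branching is what produces the $2^m$ overhead and the $\tildeo(md)$ space, and is the central idea of the accelerated analysis; an inexact-gradient framework does not supply it.

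For the lower bound your three-stage outline matches the paper's, but two steps need more than you have. First, it is not true that the $T$-th iterate of an \emph{arbitrary} deterministic first-order method lies in the Krylov subspace; the paper handles general methods via the reduction of \cite{Carmon.Duchi.ea-MP21} from arbitrary to zero-respecting algorithms over an orthogonally invariant class (\cref{lem:lb:reduction:to:chain}). Second, the passage from the discrete weighted $\ell_2$ problem to uniform approximation is not ``take $d\to\infty$ with a dense spectrum''---the paper instead places the eigenvalues at the equioscillation points of the optimal polynomial with T-polynomial weights so that the two problems share the same minimizer (\cref{lem:lb:reduction:to:unif}). Finally, your Green's-function estimate by ``local contributions from each interval'' does not correspond to any known decomposition of $g_S$; the paper uses Widom's integral formula (\cref{lem:lb:widom}), locates the roots $r_k$ of the auxiliary polynomial $h$ in each gap $[L_k,\mu_{k+1}]$, and bounds $r_k/\mu_{k+1}$ by ratios of elliptic integrals (\cref{lem:lb:roots:ub,lem:lb:ratio}) to extract the $\prod_i 1/\sqrt{\kappa_i}$ factor.
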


In the case where the objective in  \cref{def:multiscale_problem} is quadratic, we show that the conjugate gradient method (CG) \cite{Hestenes.Stiefel-52} also solves the problem with $\tildeo( \prod_{i \in [m]} \sqrt{\kappa_i})$ gradient queries and is numerically stable (see \cref{sec:cg:ub}).
$\acbsls$ matches this performance in the quadratic setting and further extends the guarantee to a much broader class of non-quadratic problems. We discuss this implication further in Section \ref{sec:future}.

\begin{remark}
We provide several remarks on the setup of the multiscale optimization problem and our results. 
\begin{enumerate}[leftmargin=*,label={(\alph*)}]
	\item \cref{def:multiscale_problem} does not assume knowledge of the decomposition. If in addition one assumes that the decompositions (i.e., $f_i$ and $\mat{P}_i$) are individually accessible, the problem can be solved much more efficiently (and more trivially), 
	using $\tildeo(\sum_{i \in [m]} \sqrt{\kappa_i})$ sub-objective gradient queries (see \cref{sec:known_projection}), in contrast to $\tildeo(\prod_{i \in [m]} \sqrt{\kappa_i})$.
	\item Given the existence of efficient algorithms if the decomposition is known, one may be tempted to first recover the decomposition ($f_i$ and $\proj_i$) before solving. 
	However, we show in \cref{sec:hard:to:recover:decomposition} that 
	recovering the $\proj_i$ with access to a gradient oracle is costly, in that it takes $\Omega(d)$ queries in the worst case.
	\item \cref{def:multiscale_problem} assumes orthogonality conditions on $\{\proj_i\}_{i \in [m]}$. 
	We remark that some amount of disjointness is critical to obtaining these upper bounds. In \cref{append:disjoint} we show a $\Omega(\sqrt{\globalcond})$ lower bound on the complexity of the problem without such an orthogonality assumption.
	\item Our algorithms do \emph{not} necessarily require the knowledge of all the $\mu_i$ and $L_i$'s. 
	In fact, our theorems only require that $\mu_1, L_m$ and $\prod_{i \in [m]} \cond_i$ are known to obtain the claimed asymptotic query complexity. %
\end{enumerate}
\end{remark}
\begin{SCfigure}[1.3][t]
	\centering
	\includegraphics[width=0.36\textwidth]{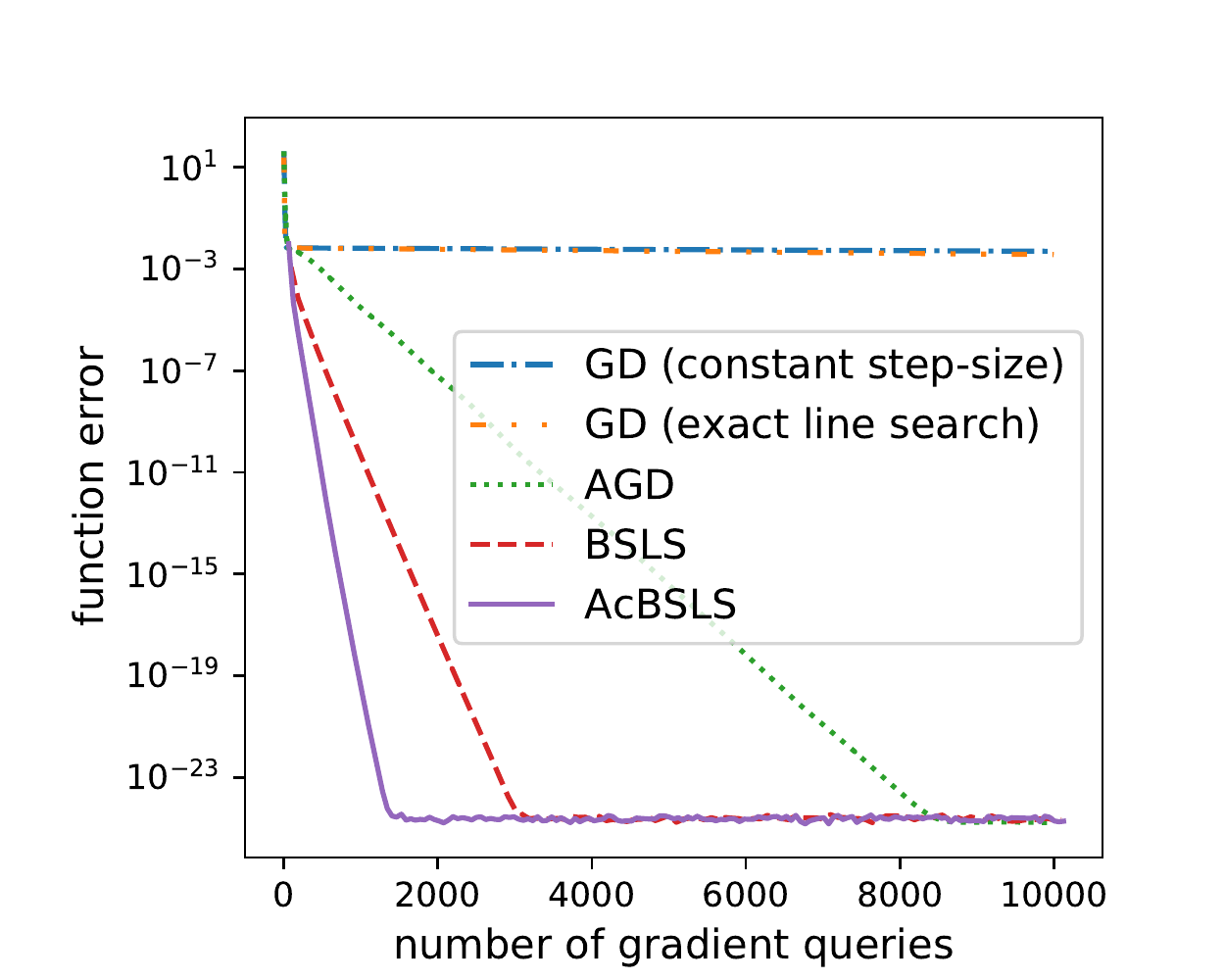}
	\caption{\textbf{A numerical example demonstrating the efficiency of $\bsls$ and $\acbsls$.} 
	Our objective is $f(\x) = \frac{1}{2} \x^\top \mat{A} \x - \b^\top \x$, where $\mat{A}$ has eigenvalues in $[0.0001, 0.0002] \cup [1, 10]$. 
    This objective satisfies \cref{def:multiscale_problem} with $\kappa_1 = 2$, $\kappa_2 = 10$ and $\globalcond = 10^5$. 
    We compare $\bsls$ and $\acbsls$ with Gradient Descent (GD) with constant step-size, Gradient Descent with exact line search, and Accelerated Gradient Descent (AGD). 
    Observe that $\acbsls$ and $\bsls$ clearly outperform the other algorithms.
	}
	\label{fig:expr}
\end{SCfigure}

Next, we consider the following stochastic, quadratic variant of the multiscale optimization problem.

\begin{definition} [Stochastic Quadratic Multiscale Optimization Problem]
\label{def:multiscale_stochastic}
The \textbf{stochastic quadratic multiscale optimization problem} asks to approximately solve the following problem
\begin{equation}
    \min_{\x \in \reals^d} \E_{(\vec{a},b) \sim \dist} \left[ \frac{1}{2} (\vec{a}^{\top} \x - b)^2 \right],
\end{equation}
where $b = \vec{a}^{\top} \x\opt $ for some fixed, unknown $\x\opt$ and the eigenvalues of the covariance matrix $\E_{\dist}[ \vec{a} \vec{a}^{\top} ]$ can be partitioned into $m$ ``bands'' such that for $i = 1, \dots, \nbands$ and $j = 1, \dots, \mult_i$, each eigenvalue $\lambda_{i_j}$ satisfies $\lambda_{i_j} \in [\mu_i, L_i]$ with $L_i < \mu_{i+1}$ for all $i < m$. 
\end{definition}

This objective is a special case of the multiscale optimization problem defined in Definition \ref{def:multiscale_problem} where each $f_i$ is a quadratic function of $\x$ (we make the connection explicit in Section \ref{sec:stochastic}). Therefore, in the non-stochastic case where we have access to noiseless gradients of $f(\x)$, our  guarantees for $\acbsls$ imply that the problem can be solved  with $ \tildeo(\prod_{i \in [m]} \sqrt{\kappa_i})$ gradient evaluations. 
In the theorem to follow we show that $\bslsstoch$ provides similar performance guarantees which are robust to stochasticity.  %

\begin{theorem}[Informal version of Theorem \ref{theorem:stochastic}] Under certain second-order independence and fourth moment assumptions on $\dist$, $\bslsstoch$ solves the stochastic quadratic multiscale optimization problem in expectation with $\epsilon$-optimality using $ \dim \cdot \prod_{i \in [m]} \tildeo(\kappa_i^2)$ stochastic gradient queries and $\tildeo(d)$ space. 
\end{theorem}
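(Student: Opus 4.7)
The plan is to cast the stochastic quadratic problem in the deterministic multiscale framework and then run $\bsls$ with each exact gradient replaced by a mini-batch stochastic gradient, tuning the batch sizes so that the interleaved ``big'' and ``little'' steps retain the same per-step contraction as in the deterministic analysis of \cref{thm:bsls:recursive}. Writing $\Sigma \defeq \E_{\dist}[\vec{a}\vec{a}^\top]$ and spectrally decomposing $\Sigma$ according to the $m$ bands produces projectors $\proj_i$ for which the population objective $\tfrac{1}{2}(\x - \x\opt)^\top \Sigma (\x - \x\opt)$ splits exactly in the form of \cref{def:multiscale_problem}, with each $f_i$ a quadratic of condition number $\cond_i$. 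The stochastic gradient $\vec{a}(\vec{a}^\top \x - b) = \vec{a}\vec{a}^\top (\x - \x\opt)$ is then an unbiased estimator of the true gradient $\Sigma(\x - \x\opt)$, so $\bslsstoch$ can be analyzed as $\bsls$ driven by mini-batch noisy gradients.

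The quantitative bookkeeping then goes as follows. First I would prove a ``stochastic descent'' lemma at each scale: with the appropriate little-step size dictated by the $\bsls$ recursion, the squared iterate error contracts in expectation by the same factor $(1-1/\cond_i)$ as in the deterministic case, plus a variance term proportional to $1/B$ for a mini-batch of size $B$. The second-order independence hypothesis on $\dist$ is precisely what makes this variance term factor cleanly as a trace (contributing a $d$) times the current iterate error energy, rather than leaving arbitrary fourth-order cross-moments; the fourth-moment bound then pins down the scalar constant. Choosing $B = \tildeo(d\cdot \cond_i)$ at each scale neutralizes the variance term at the level of one effective deterministic iteration, and combining this with the $\tildeo(\prod_i \cond_i)$ iteration count of $\bsls$ yields the claimed $d \cdot \prod_i \tildeo(\cond_i^2)$ total stochastic-gradient budget. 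The memory stays $\tildeo(d)$ since mini-batches can be streamed one sample at a time and the base $\bsls$ state is already $\tildeo(d)$ for constant $m$.

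The main obstacle I anticipate is handling noise coupling across bands. In the noiseless analysis, the orthogonality $\proj_i \proj_j^\top = \mat{0}$ for $i \neq j$ is what makes the recursive interleaving correct, because a step at scale $i$ leaves the error in every other band untouched. A single stochastic sample $\vec{a}$, in contrast, has nonzero projection onto every band simultaneously, so an inner ``little step'' aimed at correcting the high-frequency component inevitably injects zero-mean noise into the low-frequency component, and vice versa. The proof must therefore track, via a martingale/inductive argument over scales, that the cross-band perturbations accumulated during the inner recursion on bands $j>i$ do not wipe out the outer progress on band $i$. Second-order independence should drive the cross-band bias to zero in expectation, so the damage reduces to a second-moment quantity controlled by the fourth-moment bound and absorbed into the batch-size budget; closing this induction while ensuring that the variance inflation at the outer scales remains subdominant to the deterministic contraction rate is where I expect the bulk of the technical work to sit.
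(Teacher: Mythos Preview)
Your high-level plan is in the right spirit and you have correctly located the main obstacle (cross-band noise coupling), but the specific batch-size accounting and the technical route you sketch both differ from what the paper does, and your batch-size claim contains a genuine gap.

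\textbf{The quantitative gap.} You assert that choosing $B=\tilde{O}(d\kappa_i)$ at scale $i$ ``neutralizes the variance term at the level of one effective deterministic iteration.'' This underestimates the required batch. The variance injected into band $i$ at a step of size $1/L_i$ is not proportional to $\sigma_i$ alone: after Lemma~\ref{lemma:secondmoments} it takes the form $\tfrac{C}{n}\eta^2\,\mathrm{tr}(D\Sigma)\,\Sigma$, so it scales with $\sum_k L_k \sigma_k$, i.e.\ with the errors at \emph{all} bands. During the $\bsls$ schedule these cross-band errors are deliberately inflated (by up to $\globalcond^2$) after each big step and only gradually repaired, so the variance feedback is governed by the entire trajectory of the $m$-vector of band errors, not by $\kappa_i$ alone. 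The paper therefore fixes a single batch size $n\gtrsim d\cdot\big(\prod_i T_i\big)\cdot\max_i T_i$ with $T_i\asymp\kappa_i\log\globalcond$ (\cref{theorem:stochastic}, \cref{thm:helperstochastic}); this is what produces the $\prod_i\kappa_i^2$ total, not the product of per-scale $d\kappa_i$ batches with $\prod_i\kappa_i$ iterations.

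\textbf{A different technical framework.} Rather than a forward per-step descent lemma, the paper unrolls the expectation from the inside out: it defines $\mat{D}^{(t)}$ by $\mat{D}^{(0)}=\id$ and $\mat{D}^{(t)}=\E[(\id-\eta\mat{A})\mat{D}^{(t-1)}(\id-\eta\mat{A})]$, so that $\E\|\x^{(T)}-\x\opt\|_2^2=(\x^{(0)}-\x\opt)^\top\mat{D}^{(T)}(\x^{(0)}-\x\opt)$ (Lemma~\ref{lemma:Ddefinition}). The second-order independence assumption is used precisely to show that each $\mat{D}^{(t)}$ \emph{commutes with} $\Sigma$ (Lemma~\ref{lemma:secondmoments}), hence is diagonal in the eigenbasis. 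This collapses the analysis to an $m$-vector $\rho^{(t)}$ of per-band maxima with a nonnegative update matrix $\mat{U}_\eta$ whose off-diagonal entries $\delta\eta^2 L_iL_j$ encode the cross-band leakage (Lemma~\ref{lemma:residuals}). The hard work---your anticipated ``inductive argument over scales''---is then Lemma~\ref{lemma:maininduction}, which maintains a balance invariant $\rho_k\lesssim\max\{L_j/L_k,L_k/L_j\}\,\rho_j$ throughout the $\bsls$ recursion; this invariant is exactly what keeps the trace term under control and is what forces the large $n$. A forward martingale approach can be made to yield the same $m$-vector recursion, but only after you abandon a scalar descent lemma and track all $m$ band energies simultaneously with this invariant; your sketch does not yet commit to that, and the $B=\tilde{O}(d\kappa_i)$ estimate is a symptom of treating the bands as if they decoupled after averaging.
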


\subsection{Approach and results}

\label{sec:bsls_intuition_results}

In this section we give an overview of our algorithms and results.

\subsubsection*{Big-Step-Little-Step Algorithm ($\bsls$)}
We begin by introducing our main algorithm ``Big-Step-Little-Step'' ($\bsls$) for the multiscale optimization problem (\cref{def:multiscale_problem}).
As the name suggests, $\bsls$ adopts the idea of running a series of gradient descent steps with alternating step-sizes ranging from $L_m^{-1}$ to $L_1^{-1}$. To see the rationale behind alternating step-sizes consider the simple case of $m=2$ sub-objectives. If we were to run one step of GD on $f$, the smoother sub-objective $f_1$ would favor a ``big step'' of size $L_1^{-1}$, while the less smooth $f_2$ would favor a ``little step'' of size $L_2^{-1}$. 
In fact, the ``big-step'' will decrease $f_1$ considerably (by a factor of $1 - \kappa_1^{-1}$), but it may also increase $f_2$ (by no more than a factor of $\globalcond^2$). 
On the other hand, the ``little-step'' will decrease $f_2$ considerably (by a factor of $1 - \kappa_2^{-1}$), but will not decrease $f_1$ substantially (though it will not increase  $f_1$ either).
Thus, in order to make progress on both $f_1$ and $f_2$ efficiently, one could run one big step, followed by multiple little steps to fix the increase in $f_2$ from the previous large step-size.
The $\bsls$ algorithm is a careful interleaving of these big and little steps.

This intuition extends readily to the case of $m>2$ sub-objectives with a recursive framework (see \cref{alg:bsls}). We begin by executing $\bsls_1(\x^{(0)})$ on the initialization $\x^{(0)}$. We explain the recursive procedure via an illustrative example in \cref{fig:bsls}.

\begin{algorithm}[H]
	\caption{Big-Step-Little-Step Algorithm}
	 \label{alg:bsls}
	\begin{algorithmic}[1]
		\REQUIRE $\gd (\x; L)$ %
 			\RETURN $\x - \frac{1}{L} \cdot \nabla f(\x)$
	\end{algorithmic}
	\begin{algorithmic}[1] 
		\REQUIRE $\bsls_i$ $(\x^{(0)})$ %
		\STATE 
		$T_i \gets 
		    \begin{cases}
		         \left\lceil \kappa_1 \log \left( \frac{f(\x^{(0)}) - f^{\star}}{\epsilon} \right) \right\rceil & \text{if $i = 1$,} \\
		         \left\lceil \kappa_i \cdot (2 \log \globalcond + 1) \right\rceil & \text{otherwise.}
		    \end{cases}$
		\FOR{$t = 0, 1, \ldots, T_i-1$}
			\STATE $\tilde{\x}^{(t)} \gets 
			        \bsls_{i+1}({\x}^{(t)})$  if $i < m$, or $
			        {\x}^{(t)}$ otherwise.
			\STATE $\x^{(t+1)} \gets \gd (\tilde{\x}^{(t)}; L_i)$

		\ENDFOR
		\RETURN $\bsls_{i+1}(\x^{(T_i)})$ if $i < m$, or $\x^{(T_i)}$ otherwise.
	\end{algorithmic}
\end{algorithm}
\vspace{-10pt}
\begin{SCfigure}[2][!htbp]
	\centering
	\includegraphics[width=0.35\textwidth]{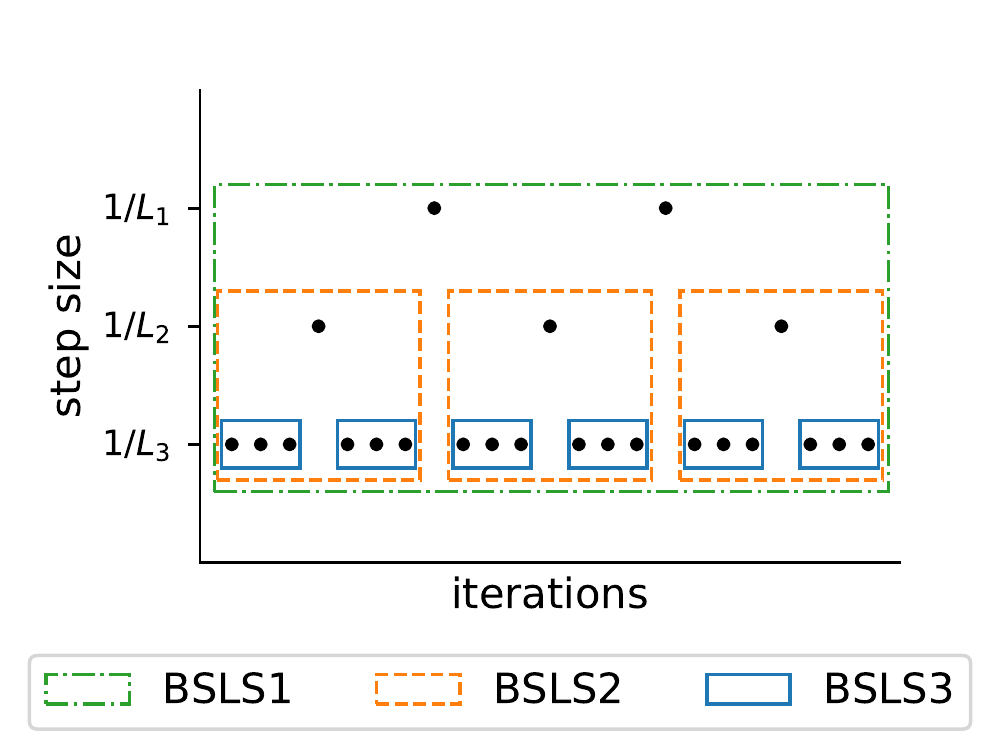}
	\caption{\textbf{An illustrative example of $\bsls$ (\cref{alg:bsls}) with $m = 3$, $T_1 = 2$, $T_2 = 1$, $T_3 = 3$.}
	The algorithm starts by executing $\bsls_1(\x^{(0)})$ at initialization $\x^{(0)}$. $\bsls_1$ will invoke $\bsls_2$ for $T_1 + 1 = 3$ times, with two ``big'' steps ($\gd$ of step-size $1/L_1$) in between. Within each invocation of $\bsls_2$, it will invoke $\bsls_3$ for $T_2+1=2$ times, with one ``medium'' step ($\gd$ of step-size $1/L_2$) in between. $\bsls_3$ only consists of $T_3=3$ little steps ($\gd$ of step-size $1/L_3$) since $m=3$ is the final layer. 
	Therefore, $\bsls_1$ effectively consists of three types of gradient steps structured in an interlacing order. 
}
	\label{fig:bsls}
\end{SCfigure}
The following theorem characterizes the convergence rate of $\bsls$. The proof appears in \cref{sec:proof:thm:bsls}.
\begin{theorem}%
  \label{thm:bsls:recursive}
	In the multiscale optimization (Def. \ref{def:multiscale_problem}), for any $\x^{(0)}$ and $\epsilon > 0$, $\bsls_1(\x^{(0)})$ returns an $\epsilon$-optimal solution with  
	$  \bigo \left( \left( \prod_{i \in [m]}\kappa_i  \right) \cdot \left(\log^{m-1}\globalcond \right) \cdot \log \left( \frac{ f(\x^{(0)}) - f^{\star}}{\epsilon} \right) \right) $ 
	gradient queries when $\{(\mu_i, L_i), i \in [m]\}$ are known. Moreover in the case where $\{(\mu_i, L_i), i \in [m]\}$ are unknown and only $m$, $\mu_{1}$, $L_m$ and $ \pi_{\kappa}=\prod_{i \in [m]} \kappa_i$ are known, we can achieve the same asymptotic sample complexity (up to constant factors suppressed in the $\bigo(\cdot)$). %
\end{theorem}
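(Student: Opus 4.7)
The plan is to use the orthogonality of the projectors $\proj_i$ to decouple the dynamics across the $m$ subspaces, and then to prove an outside-in inductive contraction. First, since $\proj_i \proj_j^\top$ is the identity when $i = j$ and zero otherwise, $\nabla f(\x) = \sum_i \proj_i^\top \nabla f_i(\proj_i \x)$, so a step $\x \mapsto \x - \eta \nabla f(\x)$ acts on each subspace as $\proj_j \x \mapsto \proj_j \x - \eta \nabla f_j(\proj_j \x)$. Hence the sub-errors $F_j(\x) \defeq f_j(\proj_j \x) - f_j^\star$ evolve independently under gradient descent with the shared step size. Combining the smoothness bound $f_j(\y^+) \le f_j(\y) + (L_j \eta/2 - 1)\eta \|\nabla f_j(\y)\|^2$ with the PL inequality $\|\nabla f_j(\y)\|^2 \ge 2\mu_j F_j(\y)$ and the smoothness-derived bound $\|\nabla f_j(\y)\|^2 \le 2 L_j F_j(\y)$ yields, for a step with $\eta = 1/L_i$: contraction of $F_j$ by $1 - 1/\kappa_i$ when $j = i$, non-increase when $j < i$ (since $L_j/L_i < 1$), and inflation by at most $(L_j/L_i - 1)^2 \le \globalcond^2$ when $j > i$.

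I would then introduce the potential $E_i(\x) \defeq \max_{j \ge i} F_j(\x)$ and prove by downward induction on $i \in [m]$ that the output $y = \bsls_i(\x)$ satisfies (A) $F_j(y) \le F_j(\x)$ for all $j < i$, and (B) $E_i(y) \le \sigma_i\, E_i(\x)$ with $\sigma_i \defeq 1/(e\,\globalcond^2)$ for $2 \le i \le m$. Property (A) is automatic since every step inside $\bsls_i$ uses $\eta \le 1/L_i \le 1/L_j$ for $j < i$. For (B), the base case $i = m$ follows because $\bsls_m$ consists of $T_m \ge \kappa_m(2\log\globalcond + 1)$ iterations of $\gd(\cdot; L_m)$, each contracting $F_m = E_m$ by $1 - 1/\kappa_m$. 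For the inductive step at level $i$, I would track $F_i$ and $E_{i+1}$ across one ``super-iteration'' consisting of $\bsls_{i+1}$ followed by $\gd(\cdot; L_i)$: the inner $\bsls_{i+1}$ leaves $F_i$ non-increasing (by (A) at $i+1$) and shrinks $E_{i+1}$ by $\sigma_{i+1}$ (by (B) at $i+1$), while the outer $\gd$ step contracts $F_i$ by $1 - 1/\kappa_i$ and inflates $E_{i+1}$ by at most $\globalcond^2$. Composing, one super-iteration contracts $F_i$ by $1 - 1/\kappa_i$ and $E_{i+1}$ by $\globalcond^2 \sigma_{i+1} = 1/e$. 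Iterating $T_i \ge \kappa_i(2\log\globalcond + 1)$ times and applying the final $\bsls_{i+1}$ call shrinks both $F_i$ and $E_{i+1}$ by a factor of at least $\sigma_i$, establishing (B) at level $i$.

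At the top level, running $\bsls_1$ with $T_1 = \lceil \kappa_1 \log((f(\x^{(0)}) - f^\star)/\epsilon)\rceil$ yields by the same argument $E_1(\text{output}) \le (\epsilon/(f(\x^{(0)}) - f^\star)) \cdot E_1(\x^{(0)})$, and since $f(\x) - f^\star = \sum_j F_j(\x) \le m\, E_1(\x)$ together with $E_1(\x^{(0)}) \le f(\x^{(0)}) - f^\star$, the output is $O(\epsilon)$-optimal (absorbing the constant $m$ into a rescaling of $\epsilon$). A direct count shows the recursion tree has $\prod_{i=1}^m (T_i + 1) - 1 = O(\prod_i T_i)$ leaves, yielding the stated query bound. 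For the unknown-parameter case, standard wrapping techniques (e.g., a carefully designed search over $(\mu_i, L_i)$ configurations exploiting the constraints $\mu_1, L_m, \pi_\kappa$ and constant $m$) recover the same asymptotic bound up to constants. The main technical obstacle is calibrating the $2\log\globalcond + 1$ factor in $T_i$: it must be just large enough that the next-level contraction $\sigma_{i+1}$ cancels the single-step inflation $\globalcond^2$, so that each super-iteration makes geometric progress on $E_{i+1}$ while keeping the total overhead to $\log^{m-1}\globalcond$ across the $m$ levels rather than the exponential $\globalcond^{m-1}$ that would arise from uncorrected inflation.
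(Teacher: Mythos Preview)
Your proposal is correct and uses the same per-step ingredient as the paper (what the paper states as \cref{lem:BSLS:general}: one $\gd(\cdot;L_i)$ step contracts $F_i$ by $1-\kappa_i^{-1}$, leaves $F_j$ for $j<i$ non-increasing, and inflates $F_j$ for $j>i$ by at most $\globalcond^2$). The organization, however, is genuinely different. The paper does \emph{not} induct on levels with the potential $E_i=\max_{j\ge i}F_j$; instead it unrolls $\bsls_1$ completely, counts that exactly $T_j\prod_{k<j}(T_k+1)$ steps use step-size $1/L_j$, and bounds each $F_i$ directly by the product of all the per-step factors:
\[
F_i(\bsls_1(\x^{(0)})) \;\le\; \globalcond^{\,2\sum_{j<i}T_j\prod_{k<j}(T_k+1)}\cdot(1-\kappa_i^{-1})^{T_i\prod_{j<i}(T_j+1)}\;F_i(\x^{(0)}),
\]
then shows the exponents $\gamma_i$ satisfy $\gamma_m\le\cdots\le\gamma_1\le\log(\epsilon/(f(\x^{(0)})-f^\star))$ via a one-line telescoping computation. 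This ``flat'' counting argument is order-agnostic and immediately yields \cref{thm:bsls:ordering} (any reshuffling of step-sizes works under exact arithmetic), which your inductive argument---tied to the specific recursive interleaving---does not give for free. On the other hand, your hierarchical potential $E_i$ is closer in spirit to what is actually needed for the finite-precision stability analysis later in the paper, where the ordering \emph{does} matter. One small imprecision: your claimed inflation factor $(L_j/L_i-1)^2$ is only valid when $L_j\ge 2L_i$; for $L_i<L_j<2L_i$ the step does not increase $F_j$ at all. The paper's bound $1-2\mu_j/L_i+L_j^2/L_i^2\le\globalcond^2$ covers both regimes. This does not affect your argument since you only use the cruder $\globalcond^2$ bound downstream.

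For the unknown-parameter case, the paper's reduction is exactly the brute-force grid search over $(\mu_i,L_i)$ you allude to (\cref{prop:search}), incurring an $O(\log^m\globalcond)$ multiplicative overhead.
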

Given the rationale behind $\bsls$, it is natural to ask  whether such a careful step-size sequence is necessary to obtain fast convergence. Perhaps by simply performing line-searches in the direction of the gradient we can obtain a method which automatically finds the appropriate scale to make progress? 
As we also mentioned earlier in Section \ref{sec:sub:results}, \cref{sec:line_search} shows that this is not the case; indeed, we show instances where gradient descent with exact line search or constant step-sizes require $\Omega(\globalcond)$ gradient evaluations to solve the problem while $\bsls$ only requires $O(\log(\globalcond))$ gradient evaluations. This illustrates that it can be difficult to directly guess the right step-size and suggests the need for a step-size schedule such as that employed by $\bsls$.

\subsubsection*{Stability of $\bsls$ and why interlacing order matters}

For methods like conjugate gradient, there are known gaps between the best-known theoretical performance with infinite precision and finite-precision arithmetic \cite{paige1971computation,Greenbaum-89}, and there are related robustness issues in the face of statistical errors \cite{polyak1987introduction}. Consequently, when designing methods for the multiscale problems we consider, care is needed to ensure methods perform efficiently even without infinite precision arithmetic. Here, we discuss the stability properties of $\bsls$. We first note that \emph{under exact arithmetic} any reordering of the $\gd$ steps in $\bsls_1$ attains the same convergence rate:

\begin{proposition}
  \label{thm:bsls:ordering}
	In the multiscale optimization (\cref{def:multiscale_problem}), assume all operations are performed {under exact arithmetic}. 
	Then any reshuffling of $\gd$ steps in $\bsls_1$ (\cref{alg:bsls}) attains $\epsilon$-optimality.
\end{proposition}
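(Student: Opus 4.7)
The plan is to show that, under exact arithmetic, the error in each of the $m$ orthogonal subspaces can be bounded by a product of step-size-dependent factors that is manifestly invariant under any permutation of the $\gd$ calls. The orthogonality condition on $\{\proj_i\}_{i \in [m]}$ lets the dynamics decouple across subspaces: setting $\y_i \defeq \proj_i \x$, we have $\nabla f(\x) = \sum_i \proj_i^\top \nabla f_i(\y_i)$, so one call to $\gd(\x; L)$ simultaneously executes the per-subspace update $\y_i^+ = \y_i - L^{-1}\nabla f_i(\y_i)$ in every subspace with no cross-talk. Consequently, a global schedule of step-sizes $\eta_1, \ldots, \eta_N$ applied to $f$ acts as that same schedule applied independently to each $f_i$ in its own subspace.

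Next I would obtain a per-step, iterate-independent contraction factor. Writing $\nabla f_i(\y) = H_i(\y)(\y - \y_i^{\star})$ with $H_i(\y) \defeq \int_0^1 \nabla^2 f_i(\y_i^{\star} + t(\y - \y_i^{\star}))\,dt$, we have $\mu_i \id \preceq H_i(\y) \preceq L_i \id$, so one step of size $\eta$ satisfies
\[
\y^+ - \y_i^{\star} = (\id - \eta H_i(\y))(\y - \y_i^{\star}), \quad\text{hence}\quad \|\y^+ - \y_i^{\star}\| \le \rho_i(\eta)\,\|\y - \y_i^{\star}\|,
\]
where $\rho_i(\eta) \defeq \max(|1-\eta\mu_i|, |1-\eta L_i|)$. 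Iterating this bound and applying $f_i(\y) - f_i^{\star} \le (L_i/2)\|\y - \y_i^{\star}\|^2$ yields
\[
f(\x^{(N)}) - f^{\star} \;\le\; \sum_{i \in [m]} \frac{L_i}{2}\left(\prod_{t=1}^N \rho_i(\eta_t)\right)^{\!2} \|\y_i^{(0)} - \y_i^{\star}\|^2,
\]
and the right-hand side depends only on the \emph{multiset} $\{\eta_t\}$, not on its ordering.

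Finally I would check that this multiset-based bound is already at most $\epsilon$ for the schedule used by $\bsls_1$. In subspace $i$, the product factors as $\prod_{j<i}(L_i/L_j - 1)^{N_j}\cdot(1 - 1/\kappa_i)^{N_i}\cdot \prod_{j>i}(1 - \mu_i/L_j)^{N_j}$, where $N_j = T_j \prod_{k<j}(T_k+1)$ is the total number of $\gd$ calls at step-size $1/L_j$; the choice of $T_i$ in \cref{alg:bsls} ensures the strongly contracting middle factor dominates the worst-case expansion $\globalcond^{\sum_{j<i} N_j}$ from the big steps, which is precisely the bound implicit in the proof of \cref{thm:bsls:recursive}. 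Since reshuffling leaves every $N_j$ unchanged, it leaves this upper bound unchanged. The one delicate point is that a bad ordering can transiently inflate $\|\y_i - \y_i^{\star}\|$ by enormous factors in some subspaces, so the argument must settle for a terminal-only guarantee through the commutative product — and it is precisely this transient blow-up that prevents reshuffling invariance from surviving under finite precision, where round-off couples with these intermediate excursions.
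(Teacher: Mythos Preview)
Your approach is correct and essentially the paper's: both show that each $\gd$ call contributes an iterate-independent multiplicative factor to the per-subspace error, so the terminal bound is a product over the \emph{multiset} of step-sizes and hence permutation-invariant; the paper simply points back to \cref{lem:BSLS:general} (which gives such factors directly on $f_j(\proj_j\x)-f_j^\star$), while you track $\|\y_i-\y_i^\star\|$ via a mean-value Hessian. One small technicality: your integral representation $H_i(\y)=\int_0^1\nabla^2 f_i(\cdots)\,dt$ presumes $f_i\in C^2$, which \cref{def:multiscale_problem} does not assume---either invoke \cref{lem:BSLS:general} directly, or replace your per-step bound by $\|\y^+-\y_i^\star\|^2\le(1-2\eta\mu_i+\eta^2 L_i^2)\|\y-\y_i^\star\|^2$, which needs only smoothness and strong convexity and still caps big-step expansion by $\globalcond^2$.
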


In contrast, we show that \emph{under finite-precision}, the interlacing order defined by recursive $\bsls$ (\cref{alg:bsls}) is essential to guarantee the stability. %
Specifically, we show that our recursive $\bsls_1$ only requires roughly \emph{logarithmic bits of precision} (per floating-point number) to match the rate of convergence achieved under exact arithmetic, in contrast to potentially (at least) polynomial bits of precision for problematic orderings.

To understand why order matters in finite-precision, let us again consider  simply $m = 2$ sub-objectives. 
\Cref{thm:bsls:ordering} suggests a total of $\tildetheta(\kappa_1)$ big steps and $\tildetheta(\kappa_1 \kappa_2)$ little steps are needed to attain $\epsilon$-optimality under exact arithmetic. 
Consider a problematic ordering: begin with $\tildetheta(\kappa_1)$ big steps altogether and end with little steps altogether. 
With this ordering, the initial $\tildetheta(\kappa_1)$ big steps will amplify the error of $f_2$ by $\globalcond^{\tildetheta(\kappa_1)}$.
Under finite-precision, one needs $\tildetheta(\kappa_1)$ bits of precision to keep track of this growth, which is polynomial in the condition numbers. 
The same arguments apply if one runs all the $\tildetheta(\kappa_1\kappa_2)$ little steps first --- polynomial bits of precision are needed to secure the progress made by the little steps in $f_1$ before the big steps bring the error up. 
In contrast, our recursive $\bsls$ (\cref{alg:bsls}) overcomes this issue because the progress of all sub-objectives is balanced thanks to the interlacing step-sizes. 
We demonstrate this phenomenon in \cref{fig:stability} with a numerical example. 
We formally prove the stability of (recursive) $\bsls$ in \cref{sec:stability}.

\begin{SCfigure}[1.6][!htbp]
\includegraphics[width=0.33\textwidth]{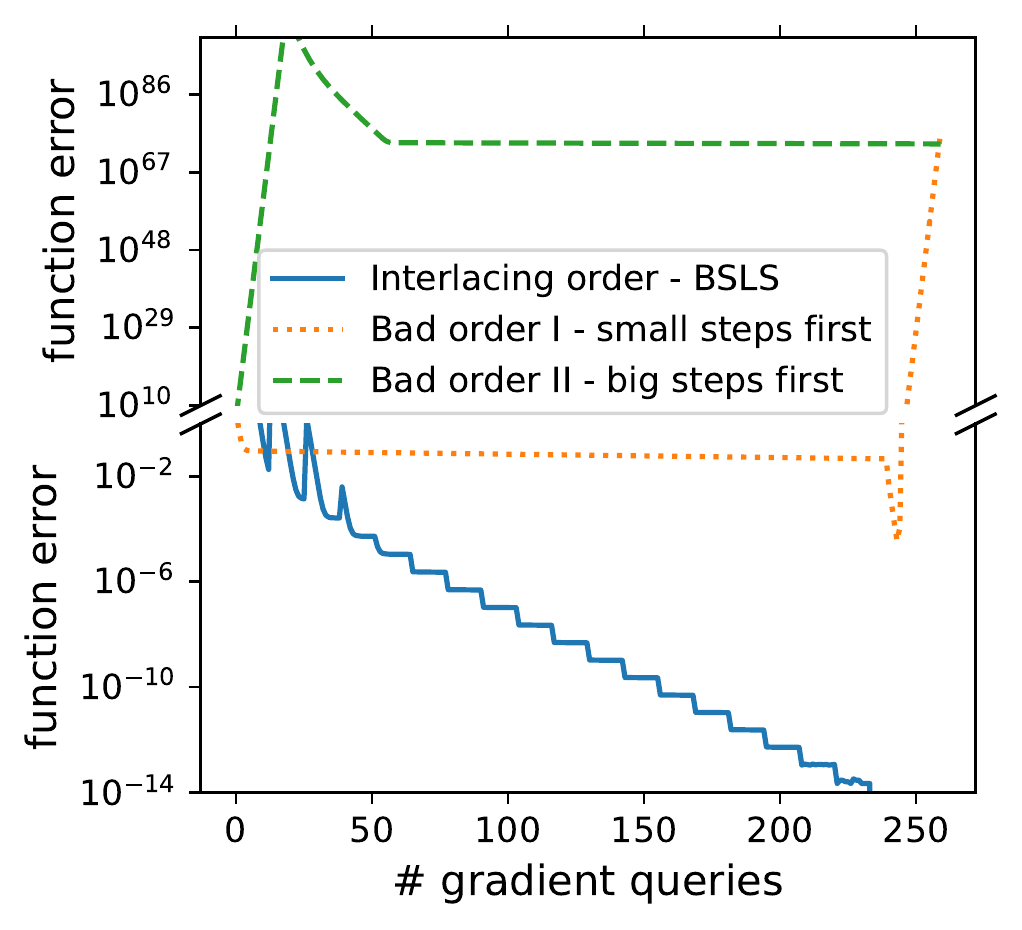}
\caption{\textbf{Importance of interlacing order in $\bsls$ under finite-precision arithmetic. } Our objective is $f(\x) = \frac{1}{2}\x^\top \mat{A} \x - \vec{b}^\top \x$, where $\mat{A}$ has eigenvalues in $[0.001, 0.002] \cup [0.5, 1]$. We consider three different step-size orderings, each running 20 big steps and 240 little steps in total. The first (solid) line runs $\bsls_1$ (\cref{alg:bsls}), namely every big step is followed by 12 little steps. The second (dotted) line runs all little steps before the big steps. The third (dashed) line runs all big steps before the little steps. Observe that only the principled $\bsls$ converges under finite arithmetic (double-precision floating point format).}
\vspace{-2mm}
\label{fig:stability}
\end{SCfigure}

\subsubsection*{Accelerated Big-Step-Little-Step algorithm ($\acbsls$)}
We provide an accelerated version of $\bsls$ algorithm, namely Accelerated $\bsls$ ($\acbsls$), which with 
$ \bigo \left( \left(\prod_{i \in [m]}\sqrt{\kappa_i}\right) \cdot \log^{m-1}\globalcond  \cdot \log \left( \frac{ f(\x^{(0)}) - f^{\star}}{\epsilon} \right) \right) $  gradient queries solves the multiscale optimization problem (\cref{def:multiscale_problem}) up-to $\epsilon$-optimality.
As we will see below, $\acbsls$ is optimal across first-order deterministic methods up-to poly-logarithmic factors.

$\acbsls$ shares the similar motivations of adopting alternating step-sizes as in $\bsls$.
Instead of running $\gd$, $\acbsls$ runs Accelerated Gradient Descent (AGD) with various ``step-sizes''. Formally, we use $\agd(\x, \v; L, \mu)$ to denote one-step of AGD with smooth estimate $L$ and convexity estimate $\mu$ (see the first block of \Cref{alg:acbsls} for definitions).  
The $\acbsls$ algorithm (the second block of \Cref{alg:acbsls}) then follows a similar recursive structure as in $\bsls$ defined in \Cref{alg:bsls}. 

The major difference between $\acbsls$ and the (un-accelerated) $\bsls$ lies in the difficulties of fixing the larger (less-smooth) sub-objectives after executing the big step-sizes. 
To understand this challenge, let us consider the simple case with only $m = 2$ sub-objectives. 
Recall that in $\bsls$, after executing one big GD step, we run $T_2$ little GD steps to fix the surge in $f_2$. 
This is backed by the fact that little GD steps will not increase the smaller (smoother) sub-objective $f_1$, as suggested by \cref{lem:BSLS:general}. 
Unfortunately, this relation does \emph{not} trivially extend to the accelerated setting, because $\agd(\x, \v; L_2, \mu_2)$ may \emph{not} keep the joint progress of $\x, \v$ in $f_1$.
Consequently, we adopt a more sophisticated branching strategy that fixes $\x$ and $\v$ separately, see Line 5 of $\acbsls_i$ in \Cref{alg:acbsls}. We refer readers to \Cref{sec:why:branching} for a numerical example on the non-convergence of naive $\acbsls$ without branching.

\begin{algorithm}
	\caption{Accelerated Big-Step-Little-Step Algorithm}
	\label{alg:acbsls}
	\begin{algorithmic}[1]
		\REQUIRE $\agd(\x, \v; L, \mu)$
		\STATE \(\kappa \gets L/\mu \); \(\alpha \gets \frac{\sqrt{\kappa}}{\sqrt{\kappa}+1}\); \(\beta \gets 1 - \frac{1}{\sqrt{\kappa}}\)
		\STATE $\y \gets \alpha \x + (1-\alpha) \v$; ~~
		$\v_{+} \gets \beta \v + (1-\beta) (\y - \frac{1}{\mu} \nabla f(\y))$; ~~
		$\x_{+} \gets \y - \frac{1}{L}\nabla f(\y)$.
		\RETURN $(\x_+, \v_+)$
	\end{algorithmic}
	\begin{algorithmic}[1]
	\REQUIRE $\acbsls_i$ $(\x^{(0)}, \v^{(0)})$
	\FOR{$t = 0, 1, \ldots, T_i-1$}
		 \STATE 
		 $T_i \gets 
		 \begin{cases}
		    \left \lceil \sqrt{\kappa_1} \log \left( \frac{2 (f(\x^{(0)})- f^{\star})} {\epsilon} \right) \right \rceil & \text{if $i = 1$,} \\
		    \left \lceil \sqrt{\kappa_i} (\log (4 \globalcond^4) + 1) \right \rceil & \text{otherwise.}
		 \end{cases}$
		\STATE $(\tilde{\x}^{(t)}, \tilde{\v}^{(t)}) \gets \agd(\x^{(t)}, \v^{(t)}; L_i, \mu_i)$
		\IF {$i < m$}
			\STATE $(\x^{(t+1)}, \_) \gets \acbsls_{i+1} (\tilde{\x}^{(t)}, \tilde{\x}^{(t)})$; ~~ $(\_, \v^{(t+1)}) \gets \acbsls_{i+1} (\tilde{\v}^{(t)}, \tilde{\v}^{(t)})$
		\ELSE
			\STATE $(\x^{(t+1)}, \v^{(t+1)}) \gets (\tilde{\x}^{(t)}, \tilde{\v}^{(t)})$
		\ENDIF
	\ENDFOR
	\RETURN $(\x^{(T_i)}, \v^{(T_i)})$
	\end{algorithmic}
\end{algorithm}

We specialize the initialization $\x^{(0)}$ and $\v^{(0)}$ to be the same to simplify the exposition of the theorem. We present and prove a general version of \cref{thm:acbsls:simplified} with arbitrary $\x^{(0)}, \v^{(0)}$ in \cref{sec:acbsls}.

\begin{theorem}[Simplified from \cref{thm:acbsls}]
    \label{thm:acbsls:simplified}
    In the multiscale optimization (\cref{def:multiscale_problem}), for any  $\x^{(0)}$ and $\epsilon > 0$, 
	$\acbsls(\x^{(0)}, \x^{(0)})$ returns an $\epsilon$-optimal solution with 
	$ \bigo \left( \left(\prod_{i \in [m]} \sqrt{\kappa_i} \right) \cdot \left(\log^{m-1}\globalcond \right) \cdot \log \left( \frac{ f(\x^{(0)}) - f^{\star}}{\epsilon} \right) \right) $  gradient queries when $\{(\mu_i, L_i), i \in [m]\}$ are known. Moreover in the case where $\{(\mu_i, L_i), i \in [m]\}$ are unknown and only $m$, $\mu_{1}$, $L_m$ and $ \pi_{\kappa}=\prod_{i \in [m]} \kappa_i$ are known, we can achieve the same asymptotic sample complexity (up to constant factors suppressed in the $\bigo(\cdot)$).
\end{theorem}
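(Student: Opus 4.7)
My plan is to proceed by backward induction on the recursion depth $i = m, m-1, \ldots, 1$, carrying a family of per-component Lyapunov potentials lifted from the standard AGD analysis. For each $j \in [m]$ set
\[
\Phi_j(\x, \v) \defeq 2\bigl(f_j(\proj_j \x) - f_j^\star\bigr) + \mu_j \bigl\|\proj_j(\v - \x^\star)\bigr\|^2 ,
\]
so $\Phi_j$ is precisely the AGD potential restricted to the $j$-th subspace. The key structural observation is that orthogonality of $\{\proj_j\}$ makes the gradient decouple, $\nabla f(\y) = \sum_j \proj_j^\top \nabla f_j(\proj_j \y)$, so one call to $\agd(\cdot,\cdot; L, \mu)$ on $f$ acts on each subspace $\mathrm{range}(\proj_j^\top)$ as an independent AGD step on $f_j$ with the \emph{same} $(L, \mu)$. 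When applied with $(L_i, \mu_i)$ this yields, by standard AGD inequalities re-derived per subspace: (i) $\Phi_i$ contracts by $1 - 1/\sqrt{\kappa_i}$; (ii) for $j < i$ the step is effectively ``too small'' and $\Phi_j$ does not grow; (iii) for $j > i$ the step is ``too large'' and $\Phi_j$ can grow by at most a fixed $\mathrm{poly}(\globalcond)$ factor per iteration. Property (iii) is what the recursive calls must repair.

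Base case $i = m$: $\acbsls_m$ is plain AGD, and properties (i)--(ii) over the prescribed $T_m$ iterations contract $\sum_{j \leq m}\Phi_j$ by a factor of at least $\globalcond^{-4}$. Inductive step: assume $\acbsls_{i+1}$, whenever started from a \emph{diagonal} input of the form $(\x,\x)$, shrinks $\sum_{j \geq i+1}\Phi_j$ by a factor of $\globalcond^{-4}$. One outer iteration of $\acbsls_i$ then (a) runs $\agd(\cdot,\cdot; L_i,\mu_i)$, producing progress on $\Phi_i$ by (i), leaving $\sum_{j<i}\Phi_j$ bounded by (ii), and inflating $\sum_{j>i}\Phi_j$ by at most $\mathrm{poly}(\globalcond)$ by (iii); then (b) invokes $\acbsls_{i+1}$ on $(\tilde\x,\tilde\x)$ and on $(\tilde\v,\tilde\v)$, which by the inductive hypothesis kills the inflated $\sum_{j>i}\Phi_j$ with room to spare. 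Net per outer iteration, $\sum_{j \geq i}\Phi_j$ contracts by roughly $1 - 1/\sqrt{\kappa_i}$; the prescribed $T_i = \widetilde\Theta(\sqrt{\kappa_i})$ iterations then reinstate the $\globalcond^{-4}$ contraction needed to close the induction. Unrolling gives $\widetilde{O}\bigl(\prod_i \sqrt{\kappa_i}\bigr)$ total gradient queries, with one $\log \globalcond$ factor accrued per recursive level (the $\globalcond^{-4}$ target combined with the factor of two from branching), producing the $\log^{m-1}\globalcond$; the outermost $T_1$ carries the single $\log((f(\x^{(0)}) - f^\star)/\epsilon)$ factor.

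The principal obstacle, and the reason the algorithm branches rather than making a single recursive call, is that the AGD potential couples $\x$ and $\v$ through its squared-distance term. A naive single recursive call $\acbsls_{i+1}(\tilde\x, \tilde\v)$ cannot simultaneously annihilate the \emph{independent} high-scale errors that $\tilde\x$ and $\tilde\v$ each carry after the outer step. The branching trick---passing the diagonal pairs $(\tilde\x,\tilde\x)$ and $(\tilde\v,\tilde\v)$---makes the inductive hypothesis applicable to each separately: from the first we harvest a clean $\x^{(t+1)}$ whose high-scale potentials are reduced, from the second a clean $\v^{(t+1)}$, and together they form the next AGD state. Finally, for the unknown-parameters version of the theorem, only $\mu_1$, $L_m$, and $\pi_\kappa$ enter the algorithm's schedule through the $T_i$'s and the $\globalcond$ inside the $\log$; the individual $(\mu_i, L_i)$ can be recovered up to constant factors by a logarithmic-cost doubling search over partitions consistent with $\pi_\kappa$, absorbing only constants into the $\bigo(\cdot)$.
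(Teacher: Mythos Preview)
Your overall architecture---backward induction on $i$, per-subspace AGD potentials $\Phi_j$, the three-regime lemma (contraction at scale $i$, benign at $j<i$, $\mathrm{poly}(\globalcond)$ blow-up at $j>i$), and recursion to repair the blow-up---matches the paper. The gap is your claim (ii): the assertion that $\Phi_j$ does not grow under $\agd(\cdot,\cdot;L_i,\mu_i)$ when $j<i$ is \emph{false}. The paper explicitly exhibits a quadratic counterexample (their Proposition on naive $\acbsls$) in which $\psi_j(\agd(\x,\v;L_i,\mu_i)) > \psi_j(\x,\v)$ for $j<i$. What \emph{is} true, and what the paper proves instead, is that the separate quantities $\max\{\err_j(\x),\err_j(\v)\}$ and $\max\{\res_j(\x),\res_j(\v)\}$ are each non-increasing under small-step AGD; the coupled potential $\Phi_j = 2\err_j(\x) + 2\res_j(\v)$ is not.

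This also means your diagnosis of the branching is inverted. Branching is not needed to ``annihilate independent high-scale errors''---a single call $\acbsls_{i+1}(\tilde\x,\tilde\v)$ already contracts $\psi_j$ for every $j\ge i+1$. The actual obstruction is that such a call (being a composition of small-step AGD moves relative to scale $i$) can \emph{increase} $\psi_i$, destroying the progress the outer step just made. Branching repairs this via the max-preservation property on diagonal inputs: from $\acbsls_{i+1}(\tilde\x,\tilde\x)$ one extracts $\x^{(t+1)}$ with $\err_i(\x^{(t+1)}) \le \max\{\err_i(\tilde\x),\err_i(\tilde\x)\} = \err_i(\tilde\x)$, and symmetrically $\res_i(\v^{(t+1)}) \le \res_i(\tilde\v)$ from the second branch; summing gives $\psi_i(\x^{(t+1)},\v^{(t+1)}) \le \psi_i(\tilde\x,\tilde\v)$. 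Once you replace (ii) by the max-quantity statement and use this as the mechanism by which branching preserves the level-$i$ progress, your induction closes exactly as in the paper.
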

Similar to (un-accelerated) $\bsls$, under finite-precision arithmetic, $\acbsls$ can also attain the same rate of convergence with only logarithmic bits of precision. 
We defer the formal discussion to \cref{sec:acbsls:stability}.

\subsubsection*{Lower bound for the multi-scale optimization problem}
We demonstrate the optimality of $\acbsls$ (up to poly-logarithmic factors) by establishing the mini-max complexity lower bound of the multiscale optimization problem (\cref{def:multiscale_problem}) across first-order deterministic algorithm. 
We start by introducing the formal definition of a first-order deterministic algorithm from \cite{Carmon.Duchi.ea-MP21}.
\begin{definition}[Definition of first-order deterministic algorithms from \cite{Carmon.Duchi.ea-MP21}]
    \label{def:fo}
    An algorithm $\mathsf{A}$ operating on $f: \reals^d \to \reals$ is a \textbf{first-order deterministic algorithm} if it produces iterates $\{\x^{(t)}\}_{t=1}^{\infty}$ of the form 
    \begin{equation}
        \x^{(t)} = \mathsf{A}^{(t)} ( f(\x^{(1)}), \nabla f(\x^{(1)}), \ldots, f(\x^{(t-1)}), \nabla f(\x^{(t-1)}) ),
    \end{equation}
    where $\mathsf{A}^{(t)}: \reals^{(d+1)(t-1)} \to \reals^d$ is measurable (the dependency on $d$ is implicit).
\end{definition}
Note that the algorithm class considered in \cref{def:fo} is fairly general. 
For example, the definition does not require the algorithm to query points in the span of the previous gradients as in the some classic literature \cite{Nesterov-18}. 
(We refer readers to \cite{Carmon.Duchi.ea-MP20} for more detailed discussions on the generality of this function class.) The formal statement of our lower bound is as follows and the proof is relegated to \cref{sec:lb}.
\begin{theorem}[Lower bound of first-order deterministic algorithms for the multiscale optimization problem]
    \label{thm:lb}
    For any $\mu_j, L_j$ such that $\min_{j \in [m]} \kappa_j \geq 2$, for any deterministic first-order algorithm $\mathsf{A}$ defined in \cref{def:fo}, 
for any $t \in \naturals$, there exists an objective $f$ satisfying \cref{def:multiscale_problem} with $\|\nabla f(\vec{0}) \|_2 \leq \gradbound$ such that
    \begin{equation}
        \min_{\tau \in [t]}
        \| \nabla f(\x^{(\tau)})\|_2  \geq   \exp \left( - \frac{8t}{\sqrt{\prod_{i \in [m]} \cond_i} \cdot \prod_{i \in [m-1]} \left(0.03 \cdot \log (16 \frac{\mu_{i+1}}{L_i}) \right)} \right) \gradbound.
    \end{equation}
\end{theorem}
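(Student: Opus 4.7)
The plan follows the three-part framework sketched in the introduction. First, I would reduce the lower bound to a polynomial approximation problem. Restricting to quadratic instances $f_{A,\b}(\x) = \tfrac12 \x^\top A \x - \b^\top \x$ with positive-definite $A$ whose spectrum lies in $E \defeq \bigcup_{i \in [m]}[\mu_i, L_i]$ (which automatically satisfy \cref{def:multiscale_problem}), I would apply the Nemirovski--Yudin resisting-oracle construction adapted to the general first-order model of \cref{def:fo}, in the style of \cite{Carmon.Duchi.ea-MP21}. This iteratively chooses a rotation of $(A,\b)$ so that after $\tau$ queries the algorithm's iterate is forced into the Krylov subspace $\mathrm{span}\{A^j \vec{r} : j < \tau\}$ where $\vec{r} \defeq \nabla f(\x^{(0)})$. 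Consequently $\nabla f(\x^{(\tau)}) = q_\tau(A)\vec{r}$ for some polynomial $q_\tau$ with $\deg q_\tau \leq \tau$ and $q_\tau(0) = 1$, giving
\[
\min_{\tau \leq t}\|\nabla f(\x^{(\tau)})\|_2 \;\geq\; \min_{\deg q \leq t,\, q(0)=1} \|q(A)\vec{r}\|_2.
\]
By choosing $A$ with a fine Chebyshev-spaced grid of eigenvalues inside each band $[\mu_i,L_i]$ and a balanced $\vec{r}$, the right-hand side becomes (equivalent to) a discrete $\ell_2$ norm of $q$ on the grid; a standard Jackson/Markov-brothers smoothing argument bounds this below by $\mathrm{poly}(t)^{-1}\cdot \max_{\lambda \in E}|q(\lambda)|$, and the polynomial factor is absorbed into constants since the target bound is sub-exponential in $t$. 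So the task reduces to lower-bounding $\Phi_t \defeq \min_{q(0)=1,\deg q \le t}\max_{\lambda \in E}|q(\lambda)|$.

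Second, by the Bernstein--Walsh lemma applied in $\mathbb{C} \setminus E$, $\Phi_t \geq \exp(-t \cdot g_E(0))$, where $g_E$ is the Green's function of $\mathbb{C}\setminus E$ with pole at infinity (see, e.g., \cite{Driscoll.Toh.ea-SIREV98}). Hence it suffices to prove the key estimate
\[
g_E(0) \;\leq\; \frac{8}{\sqrt{\prod_{i \in [m]} \cond_i} \cdot \prod_{i \in [m-1]} \bigl(0.03 \log(16\,\mu_{i+1}/L_i)\bigr)}.
\]
For a single interval this follows from an explicit Joukowski-type conformal map, giving $g_{[\mu,L]}(0) \asymp 2/\sqrt{\kappa}$ and recovering the classical AGD lower bound. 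For $m$ intervals I would upper-bound $g_E(0)$ via an explicit competitor polynomial $P = \prod_{i \in [m]} P_i$ of total degree $t$, where each $P_i$ is a rescaled Chebyshev polynomial of some degree $d_i$, exponentially small on its own band (at rate $\exp(-d_i/\sqrt{\cond_i})$) yet only polynomially large on the other bands. Normalizing so $P(0)=1$ and bounding $\max_E |P|$ yields $g_E(0) \le \tfrac{1}{t}\log(1/\max_E|P|)$ via Bernstein--Walsh run in reverse.

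The hardest part, and the genuinely novel ingredient, is producing the \emph{product} structure in the denominator. A naive strategy that peels off intervals one at a time leads to an additive factor $\sum_i \sqrt{\cond_i}$ and only a single logarithmic gap factor, which is much weaker than required. The product arises from carefully allocating degrees $d_i \asymp \sqrt{\cond_i}\cdot \prod_{j \neq i}\log(\mu_{j+1}/L_j)$ so that the cross-band bleed-through of each $P_i$ stays merely logarithmic: the key analytic fact is that a degree-$d$ Chebyshev polynomial on $[\mu,L]$ attaining $\exp(-d/\sqrt{\cond})$ on the band attains only $\rho^{O(d/\sqrt{\cond})}$ at a point with gap ratio $\rho$, which remains $\mathrm{polylog}$ when $d/\sqrt{\cond}$ is tuned to be $O(1/\log\rho)$. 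Matching the explicit constants (the $0.03$ and the overall factor of $8$) will require careful tracking of the logarithmic overheads introduced by both the Jackson smoothing in step one and the multi-gap accounting in the polynomial construction.
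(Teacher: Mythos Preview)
Your Parts I and II are in the right spirit and close to the paper's route (the paper also reduces to $\min_{q(0)=1,\deg q\le t}\max_{\lambda\in E}|q(\lambda)|$ and then invokes $\Phi_t\ge\exp(-t\,g_E(0))$). Your discrete-to-uniform step differs from the paper's---the paper does a lossless reduction by placing eigenvalues at the $k{+}m$ equioscillation points of the optimal polynomial $p_k^\star$ and using a T-polynomial orthogonality identity, rather than a lossy smoothing argument---but that is a minor divergence.

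The genuine gap is in Part III. Exhibiting a competitor polynomial $P$ with $P(0)=1$ and small $\max_E|P|$ upper-bounds $\Phi_t$, and Bernstein--Walsh then gives
\[
e^{-t\,g_E(0)}\le \Phi_t\le \max_E|P|\quad\Longrightarrow\quad g_E(0)\ \ge\ \tfrac{1}{t}\log\bigl(1/\max_E|P|\bigr),
\]
i.e.\ a \emph{lower} bound on $g_E(0)$, not the upper bound you need. There is no ``Bernstein--Walsh run in reverse'' that turns a single polynomial into an upper bound on the Green's function; the inequality is one-sided. Your product-of-Chebyshev construction with carefully allocated degrees $d_i$ is exactly the \emph{upper-bound} argument for the algorithm (it appears verbatim in the paper as the CG analysis), and it cannot serve as a lower bound.

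The paper upper-bounds $g_E(0)$ by a completely different, potential-theoretic route: it starts from Widom's exact formula
\[
g_E(0)=(-1)^{m+1}\int_0^{\mu_1}\frac{h(\zeta)}{\sqrt{q(\zeta)}}\,d\zeta,
\qquad q(\zeta)=\prod_{j}(\zeta-\mu_j)(\zeta-L_j),
\]
where $h$ is the monic degree-$(m-1)$ polynomial determined by the $m-1$ vanishing-period conditions $\int_{L_k}^{\mu_{k+1}} h/\sqrt{q}=0$. The roots $r_k\in[L_k,\mu_{k+1}]$ of $h$ enter multiplicatively into the bound, and the paper shows each $r_k/\mu_{k+1}\lesssim 1/\log(\mu_{k+1}/L_k)$ via an explicit computation with complete elliptic integrals $\EllipticK,\EllipticPi$. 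That is where the product $\prod_{i\in[m-1]}\log(\mu_{i+1}/L_i)$ in the denominator actually comes from---it is a feature of the equilibrium measure of $E$, not of any explicit polynomial one writes down.
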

\cref{thm:lb} shows that our proposed $\acbsls$ is optimal up-to a poly-log factor due to the shared polynomial dependency $ \Theta(\prod_{i \in [m]} \sqrt{\cond_i})$. 
\cref{thm:lb} also reveals the necessity of the poly-logarithmic dependence on  $\globalcond$. 
For example, 
when the spectrum bands are evenly spaced such that $\frac{\mu_{i+1}}{L_i} \equiv \kappa_{\mathrm{gap}}$ and $\cond_i \ll \kappa_{\mathrm{gap}}$, then 
$\prod_{i=1}^{m-1} \log \frac{\mu_{i+1}}{L_i} \approx \log^{m-1} (\globalcond^{\frac{1}{m-1}}) = \frac{\log^{m-1} (\globalcond)}{(m-1)^{m-1}}$, 
which yields the same asymptotic dependency on $\globalcond$ as in the upper bound of $\acbsls$ (\cref{thm:acbsls}).

\subsubsection*{Stochastic BSLS}

Recall the stochastic version of a quadratic multiscale optimization problem from \cref{def:multiscale_stochastic}. We find that a variant of the $\bsls$ algorithm efficiently solves this problem. We define the stochastic analog of $\bsls$ in Algorithm~\ref{alg:bslsstoch}, which we call $\bslsstoch$. 
\begin{algorithm}
	\caption{Stochastic Variant of BSLS Algorithm}
	    \label{alg:bslsstoch}
		\begin{algorithmic}[1]
		\REQUIRE $\gdstoch (\x; L)$ 
		    \STATE $\vec{g} \gets \vec{0}$
		    \FOR{$i = 1, \dots, \navg$}
		    \STATE Receive $(\vec{a}^{(i)}, b^{(i)})$ and update $\vec{g} \gets \vec{g} + \frac{1}{\navg} \big( (\vec{a}^{(i) \top} \x) - b^{(i)} \big) \vec{a}^{(i)} $
		    \ENDFOR
			\RETURN $\x - \frac{1}{L} \cdot \vec{g}$
	\end{algorithmic}
	\begin{algorithmic}[1]
		\REQUIRE $\bslsstoch_i$ $(\x^{(0)})$
		\STATE 
		    $T_i \gets 
		        \begin{cases}
		            \left \lceil \cond_1 \log(9 \norm{\x^{(0)} - \x \opt}_2^2 /\epsilon) \right \rceil & \text{if $i = 1$} \\
		            8   \left \lceil \cond_i \log(\globalcond) \right \rceil & \text{otherwise}
		        \end{cases}
		    $
		\FOR{$t = 0, 1, \ldots, T_i-1$}
		    \STATE $\tilde{\x}^{(t)} \gets \bslsstoch_{i+1}(\x^{(t)})$ if $i < \nbands$, or $\x_t$ otherwise
		    \STATE $\x^{(t+1)} \gets \gdstoch (\tilde{\x}^{(t)}; L_i)$
		\ENDFOR
		\RETURN $\bslsstoch_{i+1}(\x^{(T_i)})$ if $i < m$, or $\x^{(T_i)}$ otherwise 
	\end{algorithmic}
\end{algorithm}

Our proofs require that the distribution $\dist$ generating samples $(\vec{a}, b)$ must satisfy a kind of ``second-order independence'' in the projected space $\mat{P} \vec{a}$; for any triple of distinct $i,j,k$ we must have that $\E[(\mat{P} \vec{a})_i ( \mat{P} \vec{a} )_k^2 (\mat{P} \vec{a})_j] = 0$. Note that this assumption is satisfied for natural distributions such as $\vec{a} \sim \mathsf{N}(\vec{0}, \mat{\Sigma})$ whenever $\mat{P}$ diagonalizes the covariance $\mat{\Sigma}$. For a few more non-trivial examples of distributions which satisfy this assumption see \cref{example:distassumption}.
We define $\distconst$ as the kurtosis of the distribution, which is the smallest constant such that for any $\vec{w} \in \reals^d$, $\E[ (\vec{w}^{\top} \vec{a})^4] \leq \distconst \E[(\vec{w}^{\top} \vec{a})^2]^2$. In the case where $\vec{a} \sim \mathsf{N}(\vec{0}, \mat{\Sigma})$ we have $\distconst = 3$. The kurtosis of the distribution will play a role in the necessary number of stochastic gradient queries taken by $\bslsstoch$. We establish the following theorem, the proof of which is relegated to \cref{sec:stochastic}. 

\begin{theorem}
\label{theorem:stochastic}
Consider the stochastic quadratic multiscale optimization problem from Definition~\ref{def:multiscale_stochastic}. Suppose $\dist$ is such that for any $i,j,k \in [\dim]$, $\E[(\mat{P} \vec{a})_i ( \mat{P} \vec{a} )_k^2 (\mat{P} \vec{a})_j] = 0$,  unless $i = j$ and for any $\vec{w} \in \reals^d$, $\E[ (\vec{w}^{\top} \vec{a})^4] \leq \distconst \E[(\vec{w}^{\top} \vec{a})^2]^2$.
If $m \leq \log(\globalcond)/3$ and $\left \{ \mu_i, L_i \right \}_{i \in m}$ are known, then given any $\x^{(0)}$ let 
\begin{equation}
    T_1 = \left \lceil \cond_1 \log(9 \norm{\x^{(0)} - \x \opt}_2^2 /\epsilon) \right \rceil,  \qquad  T_i = 
		            8   \left \lceil \cond_i \log(\globalcond) \right \rceil, \textrm{ for } i = 2, \dots m,
\end{equation}
and
\begin{equation}
    \navg \geq \distconst d m^2 \left( \prod \nolimits_{i \in [m]} T_i \right) \left( \max \nolimits_{i \in [m]} T_i \right).
\end{equation}
Then $\bslsstoch_1 \left( \x^{(0)} \right)$ returns an $\epsilon$-optimal solution in expectation using $\tildeo(d)$ space
, with a total of $\bigo \left( \navg \cdot 2^m \cdot \prod \nolimits_{i \in [m]} T_i \right) $
queries of $(\vec{a}, b) \sim \dist$. If only $\mu_1, L_m$, and $\prod_{i \in m} \cond_i$ are known and there exists some $K$ such that
    \begin{equation}
        \label{eqn:concentrationassumption}
        \norm{\mat{\Sigma}^{-1/2} \vec{a}}_2 \leq K  \left(  \E_{\vec{a} \sim \dist} \norm{\mat{\Sigma}^{-1/2} \vec{a}}_2^2 \right)^{1/2},
    \end{equation}
 then we can solve the stochastic quadratic multiscale optimization problem with an extra multiplicative factor of $\bigo \left( K^2 d \log \frac{4d}{\delta}  \left(1 + \sqrt{\frac{\varepsilon}{\delta}}  \right) \right) $ many more queries of $(\vec{a}, b) \sim \dist$.
\end{theorem}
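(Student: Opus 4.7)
My plan is to mirror the structure of the deterministic \cref{thm:bsls:recursive} proof but track expected squared per-band errors, with the second-order independence and kurtosis assumptions controlling the additional stochastic noise. First I would observe that the mini-batch estimator $\vec{g}$ in $\gdstoch$ is unbiased: since $b = \vec{a}^\top \vec{x}\opt$, we have $\E[((\vec{a}^\top\vec{x}) - b)\vec{a}] = \mat{\Sigma}(\vec{x} - \vec{x}\opt) = \nabla f(\vec{x})$. Hence one stochastic step writes as $\vec{x}_+ - \vec{x}\opt = (\id - L_i^{-1}\mat{\Sigma})(\vec{x} - \vec{x}\opt) + L_i^{-1}\vec{\xi}$, with $\vec{\xi}$ mean-zero and its second moment bounded coordinate-wise using the kurtosis $\distconst$ and the sample average size $\navg$.

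The key structural ingredient is that under the second-order independence assumption the cross-band covariances $\E[(\mat{P}_j\vec{\xi})(\mat{P}_k\vec{\xi})^\top]$ vanish for $j \neq k$, so the per-band noise decouples and one can track $\E\norm{\mat{P}_j(\vec{x}^{(t)}-\vec{x}\opt)}_2^2$ independently across bands. Combined with the kurtosis bound this yields a per-step additive variance of order $\frac{\distconst}{\navg} \cdot d_j \cdot \text{(current error terms)}$ in band $j$. I would then prove by induction on the recursion depth (from $i = m$ down to $i = 1$) the invariant: after $\bslsstoch_i(\vec{x})$, the expected band-$i$ error contracts geometrically (exactly as in the noiseless proof, by the chosen $T_i = \tildetheta(\kappa_i)$), while expected errors in bands $j \neq i$ grow by at most a polylog-in-$\globalcond$ multiplicative factor plus an additive variance contribution. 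The innermost level is a textbook analysis of noisy GD on a $\kappa_m$-conditioned quadratic, and the $2^m$ factor in the query count arises because $\bslsstoch_i$ invokes $\bslsstoch_{i+1}$ exactly $T_i + 1$ times per call, giving $\prod_i (T_i+1) \le 2^m \prod_i T_i$ total gradient steps at the leaves.

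The choice of $\navg$ is then dictated by summing the total variance contribution over all $\prod_i(T_i+1)$ leaf stochastic steps, after propagating each noise addition through the contracting inner loops: the outermost amplification contributes a factor of roughly $\prod T_i \cdot \max T_i$, the per-band kurtosis factor contributes $\distconst$, the dimension appears through $\sum_j d_j \le d$, and the union over recursion levels contributes the $m^2$. Selecting $\navg \gtrsim \distconst\, d\, m^2\, (\prod T_i)(\max T_i)$ keeps the accumulated variance below $\epsilon$ in expectation. The space claim follows because each recursive call only holds a constant number of $d$-dimensional vectors along its active path of length $m$.

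The \emph{main obstacle} will be the noise propagation bookkeeping: a stochastic error injected at a depth-$i$ little step may temporarily grow in bands $j > i$ before those bands are re-contracted by deeper recursive calls, so the correct amplification factor must be tracked across the recursion tree rather than naively per step. Finally, for the unknown-parameter regime, I would use the concentration hypothesis \eqref{eqn:concentrationassumption} together with a matrix-Bernstein bound to estimate $\mat{\Sigma}$ up to sufficient spectral accuracy, thereby recovering approximate bands $\mat{P}_i$ and usable estimates of $\mu_i, L_i$ consistent with the product-of-condition-numbers guarantee; standard covariance estimation under the stated tail bound yields the extra multiplicative factor $K^2 d \log(4d/\delta)(1 + \sqrt{\varepsilon/\delta})$, after which the known-parameter analysis applies verbatim to the perturbed problem.
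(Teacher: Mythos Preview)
Your high-level plan for the known-parameter case is close to the paper's, but two points are understated or incorrect. First, the per-band errors do \emph{not} decouple in the way you suggest: the second-order independence assumption guarantees only that the operator $\mat{D}\mapsto\E[\mat{A}\mat{D}\mat{A}]$ preserves commutation with $\mat{\Sigma}$ (so the relevant matrix stays diagonal in the eigenbasis), not that the noise variance injected into band $j$ depends only on the band-$j$ error. In the paper's recursion (Lemma~\ref{lemma:Drecursiongeneral}) the additive noise term is $\tfrac{\distconst\,\eta^2}{\navg}\,\tr(\mat{D}^{(t-1)}\mat{\Sigma})\,\mat{\Sigma}$, so the variance fed into every band is proportional to a weighted sum over \emph{all} bands. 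The paper handles this coupling by packaging the per-band maxima into a residual vector $\resvo^{(t)}\in\reals^m$ whose evolution is governed by an $m\times m$ update matrix $\mat{U}_\eta$ with off-diagonal entries $\delta\,\eta^2 L_iL_j$; the main technical work (Lemma~\ref{lemma:maininduction}) is then a careful induction on this vector recursion, maintaining a ratio invariant $\resv_k\le \beta\cdot\max\{L_j/L_k,L_k/L_j\}\,\resv_j$ through the recursion tree, with separate ``fast convergence'' and ``extraneous steps'' phases. You correctly flag this as the main obstacle but give no mechanism; the paper's mechanism is precisely this residual-vector bookkeeping with the auxiliary matrices $\mat{V}_i,\mat{W}_i$ and the $\bslsres$ surrogate.

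For the unknown-parameter regime your proposal is genuinely off track. The paper never attempts to recover $\mat{P}_i$ or to spectrally estimate $\mat{\Sigma}$; recall the algorithm only needs $(\mu_i,L_i)$, and Section~\ref{sec:hard:to:recover:decomposition} already argues that recovering projections is expensive. Instead the paper reuses the deterministic brute-force grid search (Proposition~\ref{prop:search}/\ref{prop:searchstoch}): for each candidate $(\mu_i',L_i')$ it runs $\bslsstoch$ to expected error $\eps^2$, uses Markov to get error $\eps/C^2$ with failure probability $C^2\eps$, and \emph{certifies} the result by approximating $f(\x)$ within a constant factor via the empirical loss. The concentration hypothesis~\eqref{eqn:concentrationassumption} is invoked only at this certification step (through the covariance concentration Theorem~\ref{thm:matconcentration} applied to $\mat{\Sigma}^{-1/2}\vec{a}$) to guarantee $f/C\le\hat f\le Cf$; this is where the $K^2 d\log(4d/\delta)$ factor comes from, while the $(1+\sqrt{\eps/\delta})$ factor arises from the Markov step and running to accuracy $\eps^2$, not from covariance estimation.
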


\subsection{Prior work}

There is a vast literature on designing and analyzing first-order methods. Here, we survey several lines of research that are most closely related to our contributions. %

\paragraph{Complexity measures for first-order methods.} There are many results which consider notions other than smoothness and strong convexity for first-order methods. Some examples of this is work on star-convexity \cite{guminov2017accelerated,nesterov2018primal,HinderSS20}, quasi-strong convexity \cite{necoara2019linear}, semi-convexity \cite{van2007approximately}, the quadratic growth condition \cite{anitescu2000degenerate}, the error bound property \cite{luo1993error,fabian2010error}, restricted strong convexity \cite{zhang2013gradient,zhang2015restricted} and H\"{o}lder continuity \cite{zhang2013gradient,devolder2014first,yashtini2016global,grimmer2019convergence}. However, we are unaware of notions of fine-grained condition numbers for non-linear or stochastic problems %
 appearing previously in the literature. %

\paragraph{Structured linear systems.} As mentioned before, the conjugate gradient methods also solves the quadratic, noiseless version of the multiscale optimization problem. We refer the reader to some of the surveys for more discussion, including various preconditioning procedures
\cite{greenbaum1997iterative,saad2003iterative,nocedal2006conjugate}. There is also work on improving the condition number dependence of first-order methods to an \emph{average condition number} (ratio of the average of the eigenvalues of the Hessian and the smallest eigenvalue), which can be smaller than the condition number \cite{johnson2013accelerating,shalev2013accelerated,MuscoNSUW18}. There is also work on preconditioning the matrix by deflating large eigenvalues and hence reducing the average condition number in cases where there are a few very large eigenvalues \cite{gonen2016solving,MuscoNSUW18}.

\paragraph{Nonlinear CG.} Various nonlinear versions of CG have also been proposed such as Fletcher-Reeves (FR) method \cite{Fletcher.Reeves-64} and Polak-Ribière (PR) method \cite{Polak.Ribiere-69}. %
These methods are effective in practice and have been widely applied by the numerical optimization community \cite{Nocedal.Wright-06,Hager.Zhang-06,Dai-11a}.
However, for nonlinear CG, there is still a substantial gap between its practical performance and our theoretical understanding. 
On the negative side, it is known from Chap. 7 of \cite{Nemirovski.Yudin-83} that the FR and PR method do not match the accelerated GD rate $\tildeo(\sqrt{\globalcond})$.

\paragraph{Leveraging second-order structure via first-order methods.} %
There is a large body of work on methods to approximate second-order information including quasi-Newton methods such as DFP \cite{Davidon-SIOPT91}, BFGS \cite{Broyden-70}, L-BFGS \cite{Nocedal-80,Liu.Nocedal-MP89}, methods based on subsampling and sketching the Hessian \cite{pilanci2017newton,xu2020newton,roosta2019sub}, methods which learn diagonal preconditioners such as \texttt{AdaGrad} \cite{duchi2011adaptive} and \texttt{Adam} \cite{kingma2014adam}, stochastic second order methods \cite{agarwal2017second} and Newton-CG \cite{Royer.ONeill.ea-MP19,curtis2021trust}. \cite{CarmonD18} also provide accelerated methods that only use gradient and Hessian-vector queries and improve on the complexity of gradient descent for finding stationary points for certain non-convex problems. However, it is not known whether any of these algorithms achieves a worst-case complexity that does not depend polynomially on the overall condition number.

\paragraph{Stochastic methods.} Stochastic gradient methods are the workhorse for large scale optimization and machine learning problems \cite{NIPS2007_0d3180d6} and there is extensive work on stochastic gradient algorithms for solving linear systems, including randomized Kaczmarz \cite{strohmer2006randomized,NeedellSW16}, variance reduction techniques \cite{johnson2013accelerating,zhang2013linear,schmidt2017minimizing} and accelerated methods \cite{liu2016accelerated,allen2017katyusha,jain2018accelerating}. However, the complexity of all these methods depends polynomially on some measure of eigenvalue range or conditioning of the underlying matrix.

\paragraph{Lower bounds.} Starting with the seminal work of \cite{Nemirovski.Yudin-83}, there is a rich body of work on lower bounds for first-order methods. More recently, several works have extended these results to randomized algorithms \cite{woodworth2017lower, simchowitz2018randomized,braverman2020gradient,woodworth2021minimax}, and we use these results to show necessity of the orthogonality assumption in the multiscale optimization problem. To show query-complexity lower bounds for first-order methods for the multiscale optimization problem, we show a reduction from a first-order lower bound to a polynomial approximation problem on multiple intervals. There is extensive literature on polynomial approximations we leverage here, especially the work of \cite{Widom-69} (more references appear in Section \ref{sec:lb}). We also note that there is a long history of relating the convergence rates of optimization algorithms to polynomial approximation problems, including the work of \cite{Greenbaum-89, Musco.Musco.ea-SODA18} on convergence of Lanczos and CG.

\subsection{Implications and future directions}
\label{sec:future}
We view the multiscale optimization problem and our algorithmic results as promising first steps towards obtaining a more fine-grained complexity of convex optimization which goes beyond condition number. 
Though we give near-optimal rates for solving a class of smooth strongly-convex optimization problems, our work still leaves a number of open directions. Key among them are whether we can design methods with the full practical flexibility and applicability of methods like non-linear CG and limited-memory Quasi-Newton methods that have theoretical grounding as well, in the sense that they solve the types of problems that this work proposes. For instance, is there a variant of non-linear CG or limited-memory Quasi-Newton methods that provably solves our multiscale optimization problem, or a stochastic version of CG which solves the stochastic quadratic problem? More broadly, our work raises several intriguing questions regarding the role of memory in optimization, and when it is possible to achieve the convergence rates of second-order methods with only linear memory. %
Further, though we have established lower bounds on multiple modifications of our multiscale optimization problem, there are several natural related problems for which it remains open to develop fast methods---for example, problems for which the Hessian has some sort of consistent multi-scale structure and cases where the problems at different scales interact instead of being completely orthogonal.

We now further elaborate on some of these implications and directions.

\paragraph{Space limited optimization.} Recall that both $\bsls$ and $\bslsstoch$ work in $\tildeo(d)$ space, and $\acbsls$ uses $\tildeo(dm)$ space. Despite using linear memory, our algorithms only suffer a %
polylogarithmic dependence on the overall condition number $\globalcond$. In this context, they serve as a bridge between quadratic-memory second-order methods which achieve  a logarithmic dependence on the condition number, and previous linear-memory first-order methods which usually have a worse polynomial dependence on the condition number. For the stochastic case, we are unaware of any previous algorithm which only uses linear memory but still has a polylogarithmic dependence on $\globalcond$. In fact, some recent work \cite{sharan2019memory,woodworth2019open} %
conjectures that a polynomial dependence on $\globalcond$ is in general unavoidable for sub-quadratic memory algorithms. Our work shows that, at least for the structured problems we consider, it is possible to match the  polylogarithmic dependence on $\globalcond$ of second-order methods, while only using linear memory, and raises the question of whether this is possible for a larger class of problems. %

\paragraph{History and structure in accelerated methods.}
Our near-optimal accelerated method stores up to $2m$ points at a time; this is in contrast to CG, non-linear CG, and standard accelerated methods \cite{nesterov83} which store at most two points. It is an interesting open problem as to whether our space bound for accelerated methods could be improved. If not this raises several questions about the power of using additional history and memory in first-order methods.

\paragraph{Stochastic CG.} We note that the $\bslsstoch$ algorithm for the stochastic quadratic version of the multiscale optimization problem does not obtain an accelerated convergence rate. We suspect that the natural stochastic analog of CG where we approximate any matrix-vector products over a sufficiently large set of samples \emph{does} obtain an accelerated convergence rate for the stochastic quadratic problem, and showing this is an interesting direction for future work. This algorithm would additionally have the desirable property of not needing to guess the eigenvalues of the quadratic problem nor requiring a step size schedule.

\paragraph{Optimization problems with diagonal scaling.} Another interesting direction is to consider non-linear, convex optimization problems which are diagonally scaled ($\x\rightarrow \mat{D}\x$ for a diagonal matrix $\mat{D}$). This does not directly fit within our framework of \Cref{def:multiscale_problem} because the different scales could interact, but we believe the ideas in this paper may extend to this setting. We remark that our results do apply to the quadratic version of this problem and believe that methods like Newton-CG may be applicable in the non-quadratic case.
Further understanding and extending this setting
could pave the way for developing algorithms beyond \texttt{AdaGrad} for handling scaling in optimization problems.

\subsection{General notation} 
Let $[n]$ denote the set $\{1, 2, \ldots, n\}$. 
We use bold lower-case letter (e.g., $\x$) to denote vectors, bold upper-case letter (e.g., $\mat{A}$) to denote matrices. 
We use $\id$ to denote identity matrix, $\vec{1}$ to denote all-1 vector, $\vec{0}$ to denote all-zero vectors or matrices, 
$\unit_i$ to denote $i$-th unit vector ($i$-th column of $\id$).
When comparing two vectors or matrices, the ordinary inequality signs ($\leq, \geq$) denote element-wise inequality. 
For example, $\mat{A} \geq \mat{0}$ means $\mat{A}$ is a non-negative matrix. 
When comparing two matrices, ($\preceq, \succeq$) denote spectrum inequality.
For example, $\mat{A} \succeq \mat{0}$ means $\mat{A}$ is a positive semi-definite matrix.
We use $\|\cdot\|_1$ to denote vector $\ell_1$ or matrix $\ell_1$-operator (row-sum) norm, $\|\cdot\|_2$ to denote vector $\ell_2$ norm or matrix $\ell_2$-operator (spectrum) norm.
For any function $f$ we use $f^{\star}$ to denote the optimum (minimum) value of $f$.

\section{BSLS algorithm for multiscale optimization}
\label{section:nonlinear_problem}

In this section, we provide the formal proof of \Cref{thm:bsls:recursive} in \cref{sec:proof:thm:bsls}, and then formally establish the stability of $\bsls$ in \cref{sec:stability}.

\subsection{Proof of \Cref{thm:bsls:recursive} and \cref{thm:bsls:ordering}: $\bsls$ under exact arithmetic}
\label{sec:proof:thm:bsls}
Here we formalize the aforementioned intuition and prove \Cref{thm:bsls:recursive}. 
To begin, we first study the effect of $\gd(\x; L_i)$ on various subspaces $j \in [m]$.
\begin{lemma}%
  \label{lem:BSLS:general} 
    In the setting of \cref{thm:bsls:recursive},
	for any $\x$ and $i, j \in [m]$,
	\begin{equation}
		f_j(\proj_j \gd(\x; L_i))- f_j^{\star} \leq \left( f_j(\proj_j \x)- f_j^{\star} \right) \cdot 
		\begin{cases}
			1 & j < i
			\\
			1 - \kappa_i^{-1} & j = i 
			\\
			\globalcond^2 & j \geq i.
		\end{cases}
	\end{equation}
\end{lemma}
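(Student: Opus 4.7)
The plan is to reduce the analysis to a one-dimensional-in-subspace statement and then apply standard smoothness/strong-convexity estimates, case by case.

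First I would observe that the orthogonality assumption $\proj_j \proj_k^\top = \delta_{jk} \id$ makes the gradient step decouple across subspaces. Concretely, since $\nabla f(\x) = \sum_{k \in [m]} \proj_k^\top \nabla f_k(\proj_k \x)$, applying $\proj_j$ to $\gd(\x; L_i) = \x - L_i^{-1} \nabla f(\x)$ gives
\[
\proj_j \gd(\x; L_i) \;=\; \proj_j \x \;-\; \frac{1}{L_i} \nabla f_j(\proj_j \x).
\]
So projecting a global GD step onto the $j$-th subspace is just a GD step on $f_j$ with step-size $1/L_i$. Setting $\y := \proj_j \x$ and $\y_+ := \y - L_i^{-1} \nabla f_j(\y)$, the lemma reduces to bounding $f_j(\y_+) - f_j^\star$ in terms of $f_j(\y) - f_j^\star$ in the three regimes $j < i$, $j = i$, $j > i$.

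Next I would handle each regime by a one-line smoothness/strong-convexity computation. Applying $L_j$-smoothness of $f_j$ to $\y_+$ yields
\[
f_j(\y_+) - f_j(\y) \;\leq\; -\frac{1}{L_i}\|\nabla f_j(\y)\|^2 + \frac{L_j}{2L_i^2}\|\nabla f_j(\y)\|^2 \;=\; -\frac{2L_i - L_j}{2L_i^2}\|\nabla f_j(\y)\|^2.
\]
For $j < i$ we have $L_j < L_i$, so the coefficient $(2L_i - L_j)/(2L_i^2)$ is strictly positive and $f_j(\y_+) \leq f_j(\y)$, proving the factor $1$. For $j = i$, the step-size is exactly $1/L_j$ and the inequality reduces to the textbook descent lemma $f_j(\y_+) - f_j(\y) \leq -\tfrac{1}{2L_j}\|\nabla f_j(\y)\|^2$; combining with $\mu_j$-strong convexity's gradient bound $\|\nabla f_j(\y)\|^2 \geq 2\mu_j(f_j(\y)-f_j^\star)$ gives the usual contraction factor $1 - \mu_j/L_j = 1 - \kappa_j^{-1}$.

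For the remaining and only slightly tricky case $j > i$, I would keep the same smoothness inequality but now $L_j > L_i$, so the quadratic term can dominate. Upper-bounding $\|\nabla f_j(\y)\|^2 \leq 2L_j(f_j(\y) - f_j^\star)$ (the standard $L$-smooth gradient bound) and simplifying gives
\[
f_j(\y_+) - f_j^\star \;\leq\; \left(1 + \frac{(L_j - 2L_i)L_j}{L_i^2}\right)(f_j(\y) - f_j^\star) \;=\; \left(\frac{L_j - L_i}{L_i}\right)^2 (f_j(\y) - f_j^\star).
\]
Finally, $(L_j - L_i)/L_i \leq L_j/L_i \leq L_m/\mu_1 = \globalcond$ (using $L_i \geq L_1 \geq \mu_1$), yielding the advertised blow-up factor $\globalcond^2$. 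The main (minor) obstacle is just identifying the right intermediate bound in the last case, so that the factored form $(L_j - L_i)^2/L_i^2$ emerges cleanly; everything else is bookkeeping after the subspace decoupling.
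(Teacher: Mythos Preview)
Your approach is essentially the paper's: decouple via orthogonality, apply the descent lemma, and case-split on the sign of the step. The $j<i$ and $j=i$ cases are identical to the paper's.

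There is one small but genuine gap in your $j>i$ argument. You apply the upper bound $\|\nabla f_j(\y)\|^2 \leq 2L_j(f_j(\y)-f_j^\star)$ to the combined coefficient $\frac{L_j-2L_i}{2L_i^2}$ in front of $\|\nabla f_j(\y)\|^2$. This is only valid when that coefficient is nonnegative, i.e.\ when $L_j\geq 2L_i$. The problem setup only guarantees $L_j\geq \mu_j>L_i$, so the regime $L_i<L_j<2L_i$ is allowed; there the coefficient is negative and substituting an upper bound on $\|\nabla f_j\|^2$ gives a lower bound on the right-hand side, not an upper bound. Consequently the intermediate factor $\bigl(\tfrac{L_j-L_i}{L_i}\bigr)^2$ is not established in that sub-case.

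The repair is immediate. Either observe that when $L_j\leq 2L_i$ the coefficient is nonpositive, so your $j<i$ argument already gives factor $\leq 1\leq\globalcond^2$; or do what the paper does and bound the two pieces of the coefficient separately, using the strong-convexity lower bound $\|\nabla f_j\|^2\geq 2\mu_j(f_j-f_j^\star)$ on the always-negative term $-L_i^{-1}\|\nabla f_j\|^2$ and the smoothness upper bound on the always-positive term $\tfrac{L_j}{2L_i^2}\|\nabla f_j\|^2$. This yields the factor $1-\tfrac{2\mu_j}{L_i}+\tfrac{L_j^2}{L_i^2}$, which is at most $\globalcond^2-1$ since $\mu_j\geq L_i$ and $L_j/L_i\leq\globalcond$.
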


\begin{proof}[Proof of \Cref{lem:BSLS:general}]
	Let $\x_+$ denote the result of $\gd(\x; L_i)$. 
	By $L_j$-smoothness of $f_j$, we have
\begin{equation}
	f_j(\proj_j \x_+) - f_j^{\star} \leq f_j(\proj_j \x) - f_j^{\star} + \left(-\frac{1}{L_i} + \frac{L_j}{2L_i^2} \right) \left\|  \nabla f_j (\proj_j \x) \right\|_2^2.
	\label{eq:lem:GD1}
\end{equation}
Now we consider the three possible cases $j = i$, $j < i$ and $j > i$ separately.
\begin{enumerate}[(a), leftmargin=*]
	\item For $j=i$, the inequality \cref{eq:lem:GD1} becomes 
\begin{equation}
	f_j(\proj_j \x_+) - f_j^{\star} \leq f_j(\proj_j \x) - f_j^{\star} - \frac{1}{2 L_j} \left\|  \nabla f_j (\proj_j \x) \right\|_2^2.
\end{equation}
By $\mu_j$-strong-convexity of $f_j$ we have $	\left\|  \nabla f_j (\proj_j \x) \right\|_2^2 \geq 2\mu_j (f_j(\proj_j \x)  - f_j^{\star})$. 
Thus $	f_j(\proj_j \x_+) - f_j^{\star}  \leq (1 - \kappa_j^{-1}) (	f_j(\proj_j \x)- f_j^{\star})$.
	\item For $j < i$, the coefficient of the second term of \cref{eq:lem:GD1} is non-positive since $L_j \leq L_i$. Hence 
	$f_j(\proj_j \x_+) - f_j^{\star} \leq f_j(\proj_j \x) - f_j^{\star}$.
	\item For $j > i$, first observe that by $\mu_j$-strong convexity and $L_j$-smoothness of $f_j$, one has $2L_j(f_j(\proj_j \x)  - f_j^{\star}) \geq \left\|  \nabla f_j (\proj_j \x) \right\|_2^2 \geq 2\mu_j (f_j(\proj_j \x)  - f_j^{\star})$.
	Therefore by \cref{eq:lem:GD1}, we have
	\begin{align}
			f_j(\proj_j \x_+) - f_j^{\star} \leq  
			\left(1 - \frac{2\mu_j}{L_i} + \frac{L_j^2}{L_i^2} \right) (f_j(\proj_j \x) - f_j^{\star} )
			\leq 
			\left(-1 + \globalcond^2 \right)(f_j(\proj_j \x) - f_j^{\star} ),
	\end{align}
	where the last inequality is due to $\mu_j \geq L_i$ and   $\frac{L_j}{L_i} \leq \globalcond$ by definition of $\globalcond$. 
\end{enumerate}
Summarizing the above cases completes the proof of  \Cref{lem:BSLS:general}.
\end{proof}

With \Cref{lem:BSLS:general} at hand we are ready to prove \Cref{thm:bsls:recursive}.
\begin{proof}[Proof of \Cref{thm:bsls:recursive}]
	By expanding the $\bsls$ procedure, 
    we observe that $\bsls_1(\cdot)$ consists of $T_j \cdot \prod_{k=1}^{j-1} (T_k+1)$ steps of $\gd(\cdot; L_j)$ in total, for $j \in [m]$. 
	Therefore, by \Cref{lem:BSLS:general},	for any $i \in [m]$, the following inequality holds
	\begin{align}
		& f_i(\proj_i \bsls_1(\x^{(0)})) - f_i^{\star} 
		\leq \globalcond^{2 \sum_{j=1}^{i-1} T_j \prod_{k=1}^{j-1} (T_k+1)} \cdot \left( 1 - \kappa_i^{-1} \right)^{T_i \prod_{j=1}^{i-1} (T_j+1)}
		(	f_i(\proj_i \x^{(0)}) - f_i^{\star} )
		\\
		\leq &
		\exp \left( \underbrace{2 \log \globalcond \cdot \sum_{j=1}^{i-1}  T_j \prod_{k=1}^{j-1} (T_k+1)  - \kappa_i^{-1} T_i \prod_{j=1}^{i-1} (T_j+1) }_{\text{denoted as $\gamma_i$}} \right)
			(	f_i(\proj_i \x^{(0)}) - f_i^{\star} )
			\tag{since $1-x \leq e^{-x}$}.
	\end{align}
	It remains to upper bound $\gamma_i$.
	For $i = 1$, by definition, we have $\gamma_i := -\kappa_1^{-1} T_1 \leq \log \left( \frac{\epsilon}{f(\x^{(0)}) - f^{\star}} \right)$ due to the choice of $T_1$.
	For $i > 1$, we observe that
	\begin{align}
			\gamma_{i} - \gamma_{i-1} & = 2 \log \globalcond \cdot T_{i-1} \cdot \prod_{j=1}^{i-2} (T_j+ 1) - \kappa_{i}^{-1} T_i \prod_{j=1}^{i-1} (T_j+1) + \kappa_{i-1}^{-1} T_{i-1} \prod_{j=1}^{i-2} (T_j + 1)
			\\
	& \leq \left(T_{i-1} \prod_{j=1}^{i-2} (T_j+1)\right) \cdot \left( -\kappa_{i}^{-1} T_{i} + \kappa_{i-1}^{-1} + 2 \log \globalcond \right).
	\end{align}
	Since $T_{i} \geq \kappa_i (2 \log \globalcond +1)$ (due to the choice of $T_i$) we obtain $		\gamma_{i} - \gamma_{i-1} \leq \left(\prod_{j=1}^{i-1} T_j \right)\cdot \left( -1 +  \kappa_{i-1}^{-1} \right) \leq 0$.
	Consequently, $\gamma_m \leq \gamma_{m-1} \leq \cdots \leq \gamma_1 \leq \log \left( \frac{\epsilon}{f(\x^{(0)}) - f^{\star}} \right)$. Therefore for all $i \in [m]$, $
		f_i(\proj_i \bsls_1(\x^{(0)})) - f_i^{\star} 
		\leq
		\exp (\gamma_i) \left( f_i(\proj_i \x^{(0)}) - f_i^{\star}  \right)
		\leq
		\frac{\epsilon}{ f(\x^{(0)}) - f^{\star} } 	\left( f_i(\proj_i \x^{(0)}) - f_i^{\star}  \right)$.
	Summing over all $i \in [m]$ gives $f( \bsls_1(\x^{(0)})) - f^{\star} \leq \epsilon$.
	
    To show the last part of Theorem \ref{thm:bsls:recursive} regarding the setting where the parameters $\{(\mu_i,L_i), i \in [m]\}$ are unknown, we do a black-box reduction from the case where the parameters are known to when only $m, \mu_1, L_m$ and $\pi_{\kappa}$ are known. 
    
\begin{proposition}
	\label{prop:search}
    Let $\pi_{\kappa}=\prod_{i \in [m]} \kappa_i$. An algorithm $\mathsf{A}$ which solves the multiscale optimization problem in Definition \ref{def:multiscale_problem} to sub-optimality $\eps$ with $T(\pi_{\kappa}, \globalcond, m,\eps)$ gradient queries when the parameters ($\mu_i, L_i)$ are known, can be used to solve the multiscale optimization problem with $T(\pi_{\kappa}2^{5m}, \globalcond, m,\eps)\cdot O(\log^m(\globalcond))$ gradient queries when only $m$, $\mu_{1}$, $L_m$ and $ \pi_{\kappa}$ are known.
\end{proposition}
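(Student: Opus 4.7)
The plan is to do a grid search over candidate band-decompositions and select the best output by gradient norm. First I would construct a geometric grid $G := \{\mu_1 \cdot 2^k : 0 \leq k \leq \lceil \log_2 \globalcond \rceil\}$, so that $|G| = O(\log \globalcond)$ and any value in $[\mu_1, L_m]$ is captured to within a factor of $2$ by some grid point. For the true (unknown) parameters $(\mu_i, L_i)_{i \in [m]}$ there then exist grid choices $(\tilde \mu_i, \tilde L_i) \in G \times G$ with $\tilde \mu_i \leq \mu_i \leq 2\tilde \mu_i$ and $L_i \leq \tilde L_i \leq 2 L_i$, so the inflated parameters are still valid strong-convexity/smoothness constants for each component $f_i$, and $\tilde \kappa_i \leq 4 \kappa_i$, giving $\prod_i \tilde \kappa_i \leq 4^m \pi_\kappa \leq 2^{5m}\pi_\kappa$. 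I would enumerate the set $\mathcal{C}$ of candidate decompositions with $\tilde \mu_1 = \mu_1$, $\tilde L_m = L_m$, intermediate endpoints in $G$, $\tilde \mu_i \leq \tilde L_i < \tilde \mu_{i+1}$, and the product constraint $\prod_i \tilde \kappa_i \leq 2^{5m} \pi_\kappa$. A counting argument (the constraint forces $\sum_{i=1}^{m-1} \log_2(\tilde \mu_{i+1}/\tilde L_i) \geq \log_2(\globalcond/\pi_\kappa) - 5m$, removing degrees of freedom from the naive $O(\log^{2(m-1)} \globalcond)$ count) yields $|\mathcal{C}| = O(\log^m \globalcond)$.

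Next, for each configuration $c \in \mathcal{C}$, run $\mathsf{A}$ with those parameters at target accuracy $\epsilon' := \epsilon \cdot \mu_1 / L_m$, producing an iterate $\hat \x^{(c)}$. Evaluate $\|\nabla f(\hat \x^{(c)})\|_2$ (one extra gradient query per configuration) and output the minimizer of that quantity. Correctness follows from the existence of a ``good'' configuration $c^\star \in \mathcal{C}$ that approximates the truth as above; for this configuration $\mathsf{A}$'s guarantee applies since $\tilde \mu_i$ and $\tilde L_i$ are valid strong-convexity and smoothness constants and $\prod_i \tilde \kappa_i \leq 2^{5m}\pi_\kappa$, so $f(\hat \x^{(c^\star)}) - f^\star \leq \epsilon'$. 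By $L_m$-smoothness of $f$ this gives $\|\nabla f(\hat \x^{(c^\star)})\|_2^2 \leq 2 L_m \epsilon' = 2\mu_1 \epsilon$. The selected iterate has no larger gradient norm, so $\mu_1$-strong-convexity of $f$ yields $f(\hat \x) - f^\star \leq \|\nabla f(\hat \x)\|_2^2 / (2\mu_1) \leq \epsilon$.

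For query complexity, each configuration costs $T(\pi_\kappa 2^{5m}, \globalcond, m, \epsilon')$ gradient queries; since $T$ is polylogarithmic in $1/\epsilon$, the replacement $\epsilon \mapsto \epsilon/\globalcond$ contributes only a $O(\log \globalcond)$ factor that is absorbed. Multiplying by $|\mathcal{C}| = O(\log^m \globalcond)$ and adding the selection queries gives the claimed $T(\pi_\kappa 2^{5m}, \globalcond, m, \epsilon) \cdot O(\log^m \globalcond)$.

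The main obstacle will be the combinatorial count $|\mathcal{C}| = O(\log^m \globalcond)$: exploiting the product constraint to go below the naive $O(\log^{2(m-1)} \globalcond)$ requires care. A secondary technical point is the case when rounded intervals overlap (i.e.\ $\tilde L_i \geq \tilde \mu_{i+1}$, occurring when a true spectral gap is too small to detect on the factor-$2$ grid); in that event I would merge the adjacent bands and recurse with a smaller effective $m$, consistent with the merging footnote of \cref{def:multiscale_problem}. A final minor point is that $\mathsf{A}$ run on a ``wrong'' configuration could in principle produce a pathological iterate, but this is harmless because selection happens through the gradient-norm evaluation at the end, and any iterate with gradient norm larger than that of $\hat \x^{(c^\star)}$ is simply discarded.
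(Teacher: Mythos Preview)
Your approach is essentially the paper's: brute-force grid search over factor-$2$ approximations of the $(\mu_i,L_i)$ parameters, run $\mathsf A$ on each candidate configuration, and select the best output. The paper parametrizes the enumeration slightly differently---it first picks integers $\kappa_{i,\log}$ with $\sum_i \kappa_{i,\log}=\lceil\log_2\pi_\kappa\rceil+4m$ and then the $\mu_{i,\log}$'s, setting $L_{i,\log}=\mu_{i,\log}+\kappa_{i,\log}$---but the underlying search space, the merging of overlapping bands, and the counting issue you flag as the main obstacle are all the same.

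The one point worth noting is your selection criterion. The paper selects by \emph{function value}: the first-order oracle returns $f(\hat\x^{(c)})$, so one checks the function error (or, equivalently, simply outputs the candidate with smallest $f$-value). You instead select by gradient norm, which forces the accuracy boost $\eps\mapsto\eps'=\eps/\globalcond$ so that the $L_m$-smoothness/$\mu_1$-strong-convexity sandwich goes through. Your claim that this ``contributes only a $O(\log\globalcond)$ factor that is absorbed'' presupposes that $T$ depends poly-logarithmically on $1/\eps$; that is true for $\bsls$ and $\acbsls$, but it is not a hypothesis of the proposition, so as written you have only established the bound $T(\pi_\kappa 2^{5m},\globalcond,m,\eps/\globalcond)\cdot O(\log^m\globalcond)$ rather than the stated one with $\eps$. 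The fix is immediate: select by function value and run each call to $\mathsf A$ at accuracy $\eps$ directly.
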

    
    The proof Proposition \ref{prop:search} works by simply doing a brute force search over all the parameters over a suitable grid and appears in \cref{subsec:search}. The last part of Theorem \ref{thm:bsls:recursive} now follows.
\end{proof}

The re-ordering \cref{thm:bsls:ordering} holds because \Cref{thm:bsls:recursive} only leverages the fact that $\bsls_1(\cdot)$ consists of $\Theta(\prod_{k=1}^j T_k)$ steps of $\gd(\cdot;L_j)$ in total, for $j \in [m]$.

\subsection{Theory on the stability of $\bsls$: why interlacing order matters}
\label{sec:stability}
Now we verify the intuition above and theoretically justify the stability of $\bsls$ (\cref{alg:bsls}). %
For clarity, let $\widehat{\gd}$ be the finite-precision implementation of $\gd$, and $\widehat{\bsls}_i$ be the finite-precision implementation of $\bsls_i$ by replacing $\gd$ with $\widehat{\gd}$. 
To understand finite-precision behavior without going into excessive details of low-level implementation, we impose the following \cref{req:gd} that $\widehat{\gd}$ returns a $\delta$-multiplicative approximation of the exact ${\gd}$.  \cref{req:gd} is reminiscent of the ``correct rounding'' requirement on basic operations in IEEE standard (c.f. Chap. 6 of \cite{Overton-01}). 
Technically, if $\gd$ operator is well-conditioned (and no overflow or underflow occurs), then Req. \ref{req:gd} can be satisfied by a floating-point system with $\bigo(\log(1/\delta))$ bits (c.f. Chap. 12 of \cite{Overton-01}).

\begin{requirement}
	\label{req:gd}
	There exists a $\delta<1$ such that for any $\x$ and $i$, for $\x_+ \gets \gd(\x; L_i)$ and $\widehat{\x_+} \gets \widehat{\gd} (\x; L_i)$, it is the case that $|\widehat{\x_+} - \x_+ | \leq \delta |\x_+|$, where $|\cdot|$ denotes element-wise absolute value.
\end{requirement}

In the following \Cref{thm:bsls:inexact}, we prove that finite-precision $\widehat{\bsls_1}$ can match the exact arithmetic rate under only logarithmic bits of precision in that $\delta$ only has to be polynomially small. 
As a conclusion, $\bsls$ can be implemented stably with $\tildeo(d)$ bits of memory.
We specialize the initialization $\x^{(0)}$ to $\vec{0}$ to simplify the exposition of the theorem. In Appendix \ref{apx:bsls:inexact}, we provide and prove the general version with arbitrary $\x^{(0)}$. 

\begin{theorem}[$\bsls$ under finite-precision arithmetic]
	\label{thm:bsls:inexact}
	Consider multiscale optimization problem (\Cref{def:multiscale_problem}), for any $\epsilon > 0$, assuming \cref{req:gd} with
	\begin{equation}
		\delta^{-1} \geq m \cdot 
			(10  \globalcond )^{2m-1} \cdot \frac{(f(\vec{0}) - f^{\star}) }{\epsilon} \cdot \left( \prod_{i \in [m]} T_i \right),
	\end{equation}
	then $\min\{f(\vec{0}), f(\widehat{\bsls_{1}}(\vec{0}))\} - f^{\star} \leq 3 \epsilon$ provided that $T_1, \ldots, T_m$ satisfy 
	\begin{equation}
		T_1 \geq \kappa_1 \log \left( \frac{f(\vec{0})- f^{\star}}{\epsilon} \right);
		\qquad
		T_i \geq \kappa_i (2 \log (\globalcond) + 1), \quad \text{for $i = 2, \ldots, m$},
	\end{equation}
	when $\{(\mu_i, L_i), i \in [m]\}$ are known. 
    We can also achieve the same asympototic sample complexity (up to constant factors suppressed in the $\bigo(\cdot)$) when $\{(\mu_i, L_i), i \in [m]\}$ are unknown and only $m$, $\mu_{1}$, $L_m$ and $ \pi_{\kappa}=\prod_{i=1}^m \kappa_i$ are known.
\end{theorem}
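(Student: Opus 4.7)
The plan is to extend the proof of \cref{thm:bsls:recursive} to track the rounding errors introduced by each $\widehat{\gd}$ step, and to show that under the hypothesis on $\delta$ these errors do not spoil the recursive cancellations that make $\bsls$ work. The main ingredient is a finite-precision analog of \cref{lem:BSLS:general} which asserts that, for each sub-objective $f_j$ and each scale $L_i$, a single $\widehat{\gd}(\cdot; L_i)$ step satisfies the same multiplicative bound as $\gd(\cdot; L_i)$ up to an additive perturbation bounded by $\delta \cdot \mathrm{poly}(\globalcond) \cdot (f(\x) - f^{\star} + \mu_1 \|\x^{\star}\|_2^2)$.

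To derive this perturbed lemma, I would first use \cref{req:gd} together with $\mu_1$-strong convexity of $f$ to control $\|\x_+\|_2$ in terms of $f(\x_+) - f^{\star}$ and $\|\x^{\star}\|_2$, yielding $\|\widehat{\x_+} - \x_+\|_2 \leq \delta \|\x_+\|_2 \lesssim \delta \sqrt{(f(\x) - f^{\star})/\mu_1 + \|\x^{\star}\|_2^2}$. Then $L_j$-smoothness of $f_j$ converts this iterate-space error into a function-value error quadratic in $\delta$ and $\|\x_+\|_2$, which (using $L_j/\mu_1 \leq \globalcond$) takes the additive form stated above. The multiplicative coefficients from \cref{lem:BSLS:general} ($1$, $1-\kappa_i^{-1}$, and $\globalcond^2$) are unchanged.

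With this perturbed lemma in hand, I would retrace the computation of $\gamma_i$ from the proof of \cref{thm:bsls:recursive}, inducting on $i$ and maintaining the invariant that each $f_j(\proj_j \widehat{\bsls_1}(\vec{0})) - f_j^{\star}$ satisfies the exact-arithmetic bound plus an additive error of at most $\epsilon/m$. Because the interlacing order of $\bsls$ ensures that every ``big'' step of size $L_i^{-1}$ is immediately followed by a balanced packet of smaller-scale recursive corrections, the trajectory of $\widehat{\bsls_1}(\vec{0})$ never leaves the sublevel set $\{\x : f(\x) - f^{\star} \leq 2 (f(\vec{0}) - f^{\star})\}$ (up to the tracked error). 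Thus the per-step additive error is at all times bounded by $\delta \cdot \mathrm{poly}(\globalcond) \cdot (f(\vec{0}) - f^{\star})$, and summing over the $\prod_i T_i$ total $\widehat{\gd}$ steps, together with the $\globalcond^2$ amplifications from \cref{lem:BSLS:general} (which contribute the factor $(10\globalcond)^{2m-1}$), yields a total error of at most $\epsilon$ by the choice of $\delta$. Taking the min with $f(\vec{0})$ handles the edge case where the output could otherwise exceed the initialization.

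The main obstacle is the phenomenon illustrated by \cref{fig:stability}: proving that the function value along the finite-precision trajectory remains bounded. For a non-interlaced ordering one would see multiplicative amplification $\globalcond^{2 T_1}$ before any correction takes effect, forcing $\delta$ to be exponentially small; the entire point of the recursive structure is to prevent this. Closing the argument thus requires a nested induction --- first showing that $\widehat{\bsls_{i+1}}$ restores each sub-objective $f_j$ with $j \geq i+1$ to a value comparable to its pre-invocation level, then using this boundedness inside the outer $\widehat{\bsls_i}$ loop to control the norms $\|\x_+\|_2$ that govern the per-step error. Finally, the second assertion about unknown $(\mu_i, L_i)$ follows by invoking \cref{prop:search} essentially verbatim, since the finite-precision guarantee depends on the spectrum only through $m$, $\mu_1$, $L_m$, and $\pi_{\kappa} = \prod_{i=1}^m \kappa_i$.
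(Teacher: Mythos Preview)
Your high-level plan—derive a perturbed one-step lemma and then exploit the interlaced schedule to keep the trajectory bounded and sum the errors—points in the right direction, and your perturbed lemma is essentially the paper's \cref{lem:gd:finite}. But your plan to ``retrace the computation of $\gamma_i$'' per sub-objective cannot work as stated, because the finite-precision perturbation \emph{couples} the sub-objectives: in vector form the one-step bound reads
\[
\vec{\err}\bigl(\widehat{\gd}(\x;L_i)\bigr)\;\le\;(\id+5\delta\globalcond\,\ones\ones^{\top})\,\mat D_i\,\vec{\err}(\x)\;+\;2\delta L_m\|\x^{\star}\|_2^2\,\ones,
\]
with $\mat D_i$ the diagonal matrix of the three coefficients from \cref{lem:BSLS:general}. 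The rank-one term $\ones\ones^{\top}$ injects into $\err_j$ an amount proportional to $\sum_k\err_k(\x)$, so the scalar recursions for $\gamma_j$ no longer decouple. Your sublevel-set invariant $f(\x)-f^{\star}\le 2(f(\vec 0)-f^{\star})$ is morally why the interlaced order is stable (in exact arithmetic the total potential is non-increasing after each $\bsls_{i+1}$-then-$\gd(\cdot;L_i)$ cycle, since $\|\mat F_{i+1}\mat D_i\|_1\le 1$), but proving it in finite precision is circular with the error bound and is precisely the ``nested induction'' you flag as the main obstacle and do not carry out.

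The paper bypasses this circularity by never bounding $f(\x)-f^{\star}$ along the trajectory. It separates the perturbation into a \emph{multiplicative} piece $(\id+5\delta\globalcond\,\ones\ones^{\top})$ acting on the state and a \emph{constant} additive piece $2\delta L_m\|\x^{\star}\|_2^2\ones$, then defines matrices $\mat F_i,\mat E_i,\mat Z_i$ (exact part, multiplicative deviation, accumulated additive error) with $\vec{\err}(\widehat{\bsls_i}(\x))\le(\mat F_i+\mat E_i)\vec{\err}(\x)+2\delta L_m\|\x^{\star}\|_2^2\mat Z_i\ones$, and bounds their $\|\cdot\|_1$ norms recursively. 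The factor $(10\globalcond)^{2m-1}$ comes from the recursion on $\|\mat E_i\|_1$—each level contributes roughly $(10\globalcond)^2$ via the interaction of $\ones\ones^{\top}$ with the $\globalcond^2$ entries of $\mat D_i$—\emph{not} from amplification of the additive error; in fact $\|\mat Z_1\|_1\le 4^m\prod_i T_i$ with no polynomial-in-$\globalcond$ blowup, again because $\|\mat F_{i+1}\mat D_i\|_1\le 1$. So your attribution of $(10\globalcond)^{2m-1}$ to ``amplifications of the summed additive errors'' is misplaced, and the route you sketch would need to be reorganized into this matrix bookkeeping (or an equivalent vector-valued induction) to close.
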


The proof of \cref{thm:bsls:inexact} is relegated to Appendix \ref{apx:bsls:inexact}.

\section{Accelerated BSLS algorithm for multiscale optimization}
\label{sec:acbsls}

In this section, we first state and prove the extended version of \cref{thm:acbsls:simplified} on the complexity of $\acbsls$ with general ($\x^{(0)}, \v^{(0)}$). 
Then we establish the stability result of $\acbsls$ in \cref{sec:acbsls:stability}. Finally, we discuss in \cref{sec:why:branching} on the necessity of branching procedure in $\acbsls$.

We will use standard potentials from accelerated GD to monitor the progress of $\acbsls$. For any $i \in [m]$ and $\x$, define
\begin{equation}
	\err_i(\x) \defeq f_i(\proj_i \x) - f_i^{\star}, 
	\qquad 
	\res_i(\x) \defeq \frac{\mu_i}{2}  \| \proj_i  (\x - \x^{\star})\|_2^2.
\end{equation}
For any $\x, \v$ pair, define
\begin{equation}
	\psi_i(\x, \v) \defeq \err_i(\x) + \res_i(\v), 
	\qquad
	\psi(\x, \v) \defeq \sum_{i \in [m]} \psi_i(\x, \v).
	\label{eq:acbsls:potential}
\end{equation}
We establish the following theorem. 
\begin{theorem}[$\acbsls$ with exact arithmetic]
	\label{thm:acbsls}
	Consider multiscale optimization problem defined in \Cref{def:multiscale_problem}, for any initialization $(\x^{(0)}, \v^{(0)})$ and $\epsilon > 0$, 	then $\psi(\acbsls_1(\x^{(0)}, \v^{(0)})) \leq \epsilon$ provided that $T_1, \ldots, T_m$ satisfy
	\begin{equation}
		T_1 \geq \sqrt{\kappa_1} \log \left( \frac{\psi(\x^{(0)}, \v^{(0)})}{\epsilon} \right),
		\qquad
		T_i \geq \sqrt{\kappa_i} (\log (4 \globalcond^4) + 1), \quad \text{ for $i = 2, \ldots, m$},
		\label{eq:acbsls:t:lb}
	\end{equation}
	when $\{(\mu_i, L_i), i \in [m]\}$ are known, and the total number of gradient queries which $\acbsls$ makes is $\bigo(\prod_{i \in [m]}T_i)$. 
	We can also achieve the same asymptotic query complexity for finding an $\epsilon$-optimal solution (up to constant factors suppressed in the $\bigo(\cdot)$) when $\{(\mu_i, L_i), i \in [m]\}$ are unknown and only $m$, $\mu_{1}$, $L_m$ and $ \pi_{\kappa}=\prod_{i=1}^m \kappa_i$ are known.
\end{theorem}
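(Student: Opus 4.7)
My plan is to mirror the recursive structure of the proof of \cref{thm:bsls:recursive}, replacing the single-step gradient descent bound by a per-step analysis of $\agd(\cdot; L_i, \mu_i)$ acting on the potential $\psi(\x, \v)$ from \eqref{eq:acbsls:potential}. The orthogonality of the $\proj_i$ means that $\psi$ decomposes additively over sub-objectives and that each $f_j$ evolves independently under any update that only uses $\nabla f$, so it suffices to track $\psi_j$ for each $j \in [m]$ separately.

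The first step is an AGD analog of \cref{lem:BSLS:general}: a per-step lemma bounding $\psi_j\bigl(\agd(\x, \v; L_i, \mu_i)\bigr)$ in terms of $\psi_j(\x, \v)$ for every pair $(i, j)$. For $j = i$, the standard accelerated potential argument yields contraction $\psi_i \leftarrow (1 - 1/\sqrt{\kappa_i}) \psi_i$. For $j \neq i$, the AGD parameters $(L_i, \mu_i)$ are mismatched to $f_j$, so I would show $\psi_j$ may grow but by at most a $\mathrm{poly}(\globalcond)$ factor; this uses $L_j$-smoothness and $\mu_j$-strong convexity of $f_j$ to individually bound the changes in $\err_j(\x_+)$ via the gradient step on $\y$, and in $\res_j(\v_+)$ via the explicit momentum update $\v_+ = \beta \v + (1-\beta)(\y - \mu_i^{-1} \nabla f(\y))$.

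With the per-step lemma in hand I would induct on $i$ downward from $m$ to $1$, establishing an invariant of the form: for any input $(\x^{(0)}, \v^{(0)})$, after $\acbsls_i$ the returned pair satisfies $\sum_{j \geq i} \psi_j \leq \epsilon'$ where $\epsilon'$ decreases appropriately along the recursion, while $\sum_{j < i} \psi_j$ grows by at most a $\mathrm{poly}(\globalcond)$ factor. The base case $i = m$ is plain AGD at scale $(L_m, \mu_m)$ and contracts $\psi_m$ by $(1-1/\sqrt{\kappa_m})^{T_m}$. For the inductive step I would show that a single outer iteration of $\acbsls_i$ contracts $\psi_i$ by $(1 - 1/\sqrt{\kappa_i})$, inflates $\sum_{j>i}\psi_j$ by at most $\mathrm{poly}(\globalcond)$ via the per-step lemma, and then the two recursive calls $\acbsls_{i+1}(\tilde\x, \tilde\x)$ and $\acbsls_{i+1}(\tilde\v, \tilde\v)$ restore $\sum_{j>i}\psi_j$ to a small enough value by induction. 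The threshold $T_i \geq \sqrt{\kappa_i}(\log(4\globalcond^4) + 1)$ is exactly what is needed so that the geometric contraction at scale $i$ dominates the accumulated polynomial inflation at scale $j>i$ across $T_i$ outer iterations. The query complexity of $2^{m-1}\prod_i T_i = \bigo(\prod_i T_i)$ follows by direct recursion.

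The main obstacle will be the branching, which is the feature that distinguishes $\acbsls$ from $\bsls$ and is forced by AGD's coupling of $\x$ and $\v$: after $(\tilde\x, \tilde\v) = \agd(\x, \v; L_i, \mu_i)$, the pair $(\tilde\x, \tilde\v)$ is typically not simultaneously good for higher scales. The key observation I would use is that when $\acbsls_{i+1}$ is invoked on the diagonal input $(\tilde\x, \tilde\x)$, the smoothness-to-strong-convexity inequality $\res_j(\tilde\x) \leq \err_j(\tilde\x)$ lets me bound the input potential at level $j > i$ by $2\err_j(\tilde\x)$, and the inductive hypothesis gives a pair whose $\x$-component has small $\err_j$; symmetrically the call on $(\tilde\v, \tilde\v)$ produces a pair whose $\v$-component has small $\res_j$. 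Splicing the $\x$-component of the first with the $\v$-component of the second yields $(\x^{(t+1)}, \v^{(t+1)})$ with small $\psi_j(\x^{(t+1)}, \v^{(t+1)})$ for all $j > i$, which is what closes the induction. Finally, the unknown-parameter case is handled black-box by invoking \cref{prop:search} exactly as in the $\bsls$ analysis.
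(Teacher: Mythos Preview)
Your proposal captures the high-level recursive structure and the role of branching, but there is a genuine gap in how you handle the $j<i$ case. Your per-step lemma says that for $j\neq i$, $\psi_j$ grows by at most a $\mathrm{poly}(\globalcond)$ factor, and your inductive invariant says that after $\acbsls_i$, the potentials $\sum_{j<i}\psi_j$ grow by at most a $\mathrm{poly}(\globalcond)$ factor. This invariant is too weak to close the induction: when you run the inner recursive call $\acbsls_{i+1}$, the index $i$ falls into the ``$j<i+1$'' bucket, so your hypothesis only tells you $\psi_i(\x^{(t+1)},\v^{(t+1)}) \le \mathrm{poly}(\globalcond)\cdot\psi_i(\tilde\x^{(t)},\tilde\v^{(t)})$. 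Combined with the AGD contraction $(1-1/\sqrt{\kappa_i})$ you do not get per-iteration contraction of $\psi_i$, so the outer loop of $\acbsls_i$ never converges. The paper in fact proves (\cref{lem:naive:acbsls}) that $\psi_j$ can strictly increase under an AGD step with $i>j$, so there is no way to sharpen your per-step bound on $\psi_j$ to a factor of $1$.

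The fix the paper uses is to abandon $\psi_j$ as the tracked quantity for $j<i$ and instead track $\max\{\err_j(\x),\err_j(\v)\}$ and $\max\{\res_j(\x),\res_j(\v)\}$ separately. \cref{lem:agd1}(b) shows both of these are \emph{exactly non-increasing} under $\agd(\cdot;L_i,\mu_i)$ when $j<i$, and the branching structure (diagonal inputs $(\tilde\x,\tilde\x)$ and $(\tilde\v,\tilde\v)$) is precisely what allows this non-increase to propagate through the recursion (\cref{lem:acbsls:1}). This gives $\err_i(\x^{(t+1)})\le \err_i(\tilde\x^{(t)})$ and $\res_i(\v^{(t+1)})\le\res_i(\tilde\v^{(t)})$ with no loss, so the AGD contraction at level $i$ survives the recursive calls (\cref{lem:acbsls:2}). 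Your splicing observation for $j>i$ is correct and matches \cref{lem:acbsls:3}; the missing piece is that the same splicing, applied to the max quantities rather than to $\psi_j$, is what makes the $j<i$ case work.
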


\subsection{Proof of \cref{thm:acbsls}: $\acbsls$ under exact arithmetic}
\label{sec:acbsls:proof}
The proof plan is as follows. We first study the effect of one $\agd$ step with various ``step-sizes'' on each sub-objective in \Cref{sec:thm:acbsls:1}.  
Then we inductively bound the progress of $\acbsls_i$ for all $i$ from $m$ down to $1$, with $i=1$ being the ultimate goal (see \Cref{sec:thm:acbsls:2}).
Then we finish the proof of \Cref{thm:acbsls} in \Cref{sec:thm:acbsls:3}. Note that the last part regarding the case where $\{(\mu_i,L_i), i \in [m]\}$ are unknown follows from our black-box reduction in Proposition \ref{prop:search} (in the same way as in the proof of Theorem \ref{thm:bsls:recursive}).
\subsubsection{Effect of one $\agd$ step with various ``step-sizes''}
\label{sec:thm:acbsls:1}
In this subsection, we study the effect of $\agd$ on all sub-objectives $f_i$'s. The main goal is to establish the following \cref{lem:agd1}.
\begin{lemma}[Effect of one $\agd$ step with various ``step-sizes'']
	\label{lem:agd1}
  Consider multiscale optimization (Def. \ref{def:multiscale_problem}), for any $\x, \v$ and $i \in [m]$, consider $(\x_+, \v_+) = \agd (\x, \v; L_i, \mu_i)$, then
	\begin{enumerate}[(a), leftmargin=*]
		\item (apply the right step-size) $			\psi_i \left( \x_+, \v_+ \right)  \leq \left( 1 - \frac{1}{\sqrt{\kappa_i}} \right) \psi_i \left( \x, \v \right)$.
		\item (apply small step-size) For any $j < i$, the following two inequality holds
		\begin{enumerate}[(i)]
			\item $	\max \left\{ \err_j(\x_+), \err_j(\v_+) \right\} \leq \max \left\{ \err_j(\x), \err_j(\v) \right\}$;
			\item $\max \left\{ \res_j(\x_+), \res_j(\v_+) \right\} \leq \max \left\{ \res_j(\x), \res_j(\v) \right\}$.
		\end{enumerate}
		\item (apply large step-size) For any $j > i$, the following three inequality holds
		\begin{enumerate}[(i)]
			\item $\max \left\{ \res_j(\v_+), \res_j(\x_+) \right\}  \leq 			\globalcond^2 \max \left\{ \res_j(\v), \res_j(\x) \right\}$;
			\item $\max \left\{ \err_j(\v_+), \err_j(\x_+) \right\} \leq \globalcond^2 \max \left\{ \err_j(\v), \err_j(\x) \right\}$;
			\item $\psi_j(\x_+, \v_+) \leq 2 \kappa_{j} \globalcond^2  (\x, \v)$.
		\end{enumerate}
	\end{enumerate}
\end{lemma}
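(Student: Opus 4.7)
The key observation is that the orthogonality of $\{\mat{P}_i\}$ lets us analyze the $\agd$ step subspace-by-subspace: since $\nabla f(\y) = \sum_j \proj_j^\top \nabla f_j(\proj_j \y)$ and $\proj_i \proj_j^\top = \mat{0}$ for $i \neq j$, projecting the $\agd$ update through $\proj_j$ gives exactly the update one would get by running $\agd(\proj_j \x, \proj_j \v; L_i, \mu_i)$ on the single function $f_j$ (with possibly the ``wrong'' scaling parameters when $j \neq i$). So I can replace $\x \mapsto \proj_j \x$, $\v \mapsto \proj_j \v$, $\x^\star \mapsto \proj_j \x^\star$, drop the subscript $j$ on $f_j$, and just study a single strongly convex/smooth function under a step-size that may be too large or too small.

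\textbf{Case (a), $j = i$.} This is the textbook accelerated gradient descent potential argument: $\psi_i(\x_+, \v_+) \leq (1 - 1/\sqrt{\kappa_i}) \psi_i(\x, \v)$ follows from the standard estimating-sequence / three-point analysis for $\agd$ applied to an $L_i$-smooth, $\mu_i$-strongly convex function. I would just cite or reproduce the standard derivation (Nesterov-style), writing $\y$ as the convex combination, using $L_i$-smoothness at $\y$ to bound $f_i(\proj_i \x_+)$, and $\mu_i$-strong convexity to handle the $\v_+$ term.

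\textbf{Case (b), $j < i$.} The crucial size bound is $L_j < \mu_{j+1} \leq \mu_i \leq L_i$, so the ``gradient-descent-like'' sub-steps inside $\agd$, with step sizes $1/L_i$ and $1/\mu_i$, are both at most $1/L_j$ and hence \emph{safe} steps for $f_j$. For (b)(i), applying the descent lemma to $f_j$ at $\proj_j \y$ with either step size yields $f_j(\proj_j \x_+) \leq f_j(\proj_j \y)$ and $f_j(\proj_j \y - \mu_i^{-1}\nabla f_j(\proj_j \y)) \leq f_j(\proj_j \y)$ (since the coefficient $-\eta + L_j \eta^2/2$ is non-positive for $\eta \in \{1/L_i, 1/\mu_i\}$); convexity plus the convex combinations that define $\y$ and $\v_+$ then show neither $\err_j(\x_+)$ nor $\err_j(\v_+)$ exceeds $\max\{\err_j(\x), \err_j(\v)\}$. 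For (b)(ii), I would use co-coercivity $\langle \nabla f_j(\u), \u - \u^\star\rangle \geq L_j^{-1}\|\nabla f_j(\u)\|_2^2$ to show the inner GD sub-steps are non-expansive toward $\proj_j \x^\star$ (the coefficient $-2\eta/L_j + \eta^2$ is non-positive for $\eta \leq 2/L_j$, and both $1/L_i, 1/\mu_i \leq 1/L_j$), then use convexity of $\|\cdot - \proj_j \x^\star\|_2^2$ on the convex combinations.

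\textbf{Case (c), $j > i$.} Now $L_i < \mu_j$, so the step sizes are ``too large'' and we must bound the amplification. Let $\u = \proj_j \y$, $\u^\star = \proj_j \x^\star$, use $\nabla f_j(\u^\star) = 0$, and apply $L_j$-smoothness to get $\|\nabla f_j(\u)\|_2 \leq L_j \|\u - \u^\star\|_2$. Then the triangle inequality gives $\|\u - \eta \nabla f_j(\u) - \u^\star\|_2 \leq (1 + \eta L_j)\|\u - \u^\star\|_2$, which yields at most $(L_j/L_i)^2 \leq \globalcond^2$ growth (up to the absolute constants absorbed in the statement) on $\res_j$ for both the $\x_+$ and $\v_+$ branches, using convexity of $\|\cdot\|_2^2$ on the convex combinations defining $\y$ and $\v_+$. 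The analogous bound on $\err_j$ comes from $L_j$-smoothness at the minimizer: $f_j(\proj_j \x_+) - f_j^\star \leq (L_j/2) \|\proj_j \x_+ - \u^\star\|_2^2$ and $f_j(\proj_j \v) - f_j^\star \geq (\mu_j/2)\|\proj_j \v - \u^\star\|_2^2$, converting residuals back to errors. Finally, (c)(iii) follows by combining (c)(i), (c)(ii), the inequality $\res_j(\x) \leq \err_j(\x)$ from $\mu_j$-strong convexity, and $\err_j(\v) \leq \kappa_j \res_j(\v)$ from $L_j$-smoothness, which yields $\psi_j(\x_+, \v_+) \leq \globalcond^2[2\err_j(\x) + (\kappa_j + 1)\res_j(\v)] \leq 2\kappa_j \globalcond^2 \psi_j(\x, \v)$.

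\textbf{Main obstacle.} The technically delicate part is case (c): tracking the constants so that the amplification is bounded by exactly $\globalcond^2$ (rather than some $C \globalcond^2$) and ensuring that the error/residual swap at the end of (c)(iii) yields the claimed factor $2\kappa_j$. Case (b) is conceptually easy but requires being careful that the averaging steps in $\agd$ interact correctly with the $\max\{\cdot, \cdot\}$ bounds on both $\x_+$ and $\v_+$ simultaneously; the trick is to first bound the quantity at $\y$ by the $\max$ at $(\x, \v)$, then bound each of $\x_+$ and $\v_+$ by its corresponding quantity at $\y$.
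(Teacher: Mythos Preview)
Your plan for parts (a), (b), and (c)(iii) is correct and essentially matches the paper's argument (the paper introduces the auxiliary point $\w = \y - \mu_i^{-1}\nabla f(\y)$ explicitly, but that is exactly the ``$1/\mu_i$ sub-step'' you describe). Your co-coercivity argument for (b)(ii) is fine and is what the paper means by ``holds for the same reason.''

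The gap is precisely where you flagged it, in (c)(i) and (c)(ii), and your proposed fixes do not quite work as written.

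\emph{For (c)(i):} the triangle inequality gives $\|\u - \eta\nabla f_j(\u) - \u^\star\|_2 \le (1 + \eta L_j)\|\u - \u^\star\|_2$, so with $\eta = 1/\mu_i$ you get a factor $(1 + L_j/\mu_i)^2$, not $(L_j/\mu_i)^2$. Since $L_j/\mu_i$ can be as large as $\globalcond$, this is only $\le 4\globalcond^2$, and the statement claims exactly $\globalcond^2$ with no absorbed constant. The paper instead expands the square and uses the combined strong-convexity/co-coercivity inequality $\langle \nabla f_j(\u), \u - \u^\star\rangle \ge \tfrac{\mu_j}{2}\|\u-\u^\star\|_2^2 + \tfrac{1}{2L_j}\|\nabla f_j(\u)\|_2^2$, then bounds $\|\nabla f_j(\u)\|_2^2 \le L_j^2\|\u-\u^\star\|_2^2$; the resulting coefficient $1 - \mu_j/\mu_i - L_j/\mu_i + L_j^2/\mu_i^2$ is at most $L_j^2/\mu_i^2 \le \globalcond^2$ because the first three terms sum to something negative.

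\emph{For (c)(ii):} your route through residuals costs an extra $\kappa_j$. Converting $\err_j(\x_+) \le \kappa_j \res_j(\x_+)$, applying (c)(i), and converting back via $\res_j \le \err_j$ yields $\kappa_j\globalcond^2$, not $\globalcond^2$. The fix is to analyze $\err_j$ directly: the smoothness descent lemma at $\y$ gives $\err_j(\w) \le \err_j(\y) + (-\tfrac{1}{\mu_i} + \tfrac{L_j}{2\mu_i^2})\|\nabla f_j(\proj_j\y)\|_2^2$, and bounding $\|\nabla f_j\|_2^2 \le 2L_j\,\err_j(\y)$ yields the coefficient $(1 - L_j/\mu_i)^2 \le (L_j/\mu_i)^2 \le \globalcond^2$. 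This is the ``same reason'' the paper alludes to.
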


\begin{remark}
	\cref{lem:agd1} is supposed to be the counterpart of \cref{lem:BSLS:general} (the progress of one-step $\gd$ in (un-accelerated) $\bsls$).
	 One may be tempted to establish the following (stronger) version of  \cref{lem:agd1}(b) 
	 \begin{equation}
		\psi_j(\x_+, \v_+) \leq \psi_j(\x, \v), 
		\quad
		\text{if $j < i$.}
		\label{eq:agd:false:claim}
	 \end{equation}
	 If this claim \cref{eq:agd:false:claim} were true, we would be able to guarantee the convergence of naive $\acbsls$ (akin to $\bsls$) without using the branching procedure.
	 Unfortunately, we can show that \cref{eq:agd:false:claim} is not always true, even for quadratic objective $f$. 
	 That is to say, the potential $\psi_j$ may not be conservative under $\agd(\cdot, \cdot; L_i, \mu_i)$ with $i > j$ (a.k.a. AGD with ``smaller step-sizes''). 
	 We provide more details on this topic in \cref{sec:why:branching}, including a numerical experiment against naive $\acbsls$.

	 In \cref{lem:agd1}, we instead show that  $\max\{\err(\x_+), \err(\v_+)\}$, $\max\{\res(\x_+), \res(\v_+)\}$ are non-increasing under AGD with smaller step-sizes. 
	 Since \cref{lem:agd1} (a) and (b) keep track of different quantities, we end up requiring the recursive branching procedure defined in $\acbsls$ (\cref{alg:acbsls}).
\end{remark}

We now prove \cref{lem:agd1}.
\begin{proof}[Proof of \Cref{lem:agd1}]
	\begin{enumerate}[(a), leftmargin=0pt, itemindent=15pt]
		\item The proof of (a) follows by standard accelerated gradient descent analysis \cite{Nesterov-18}, which we state here for completeness.
	  For clarity, let $\kappa_i = \frac{L_i}{\mu_i}$, $\alpha_i = \frac{\sqrt{\kappa_i}}{\sqrt{\kappa_i} + 1}$, $\beta_i = 1 - \frac{1}{\sqrt{\kappa_i}}$ be the corresponding $\kappa, \alpha, \beta$ in applying $\agd(\cdot,\cdot;L_i,\mu_i)$. Let us restate the recursion for clarity (we introduce an auxiliary variable $\z$ for ease of exposition).
		\begin{alignat}{2}
			& \y = \alpha_i \cdot \x + (1-\alpha_i) \cdot \v, \qquad &&  \z = \beta_i \cdot \v + (1-\beta_i) \cdot \y,
			\\
			& \v_+ = \z - \frac{1-\beta_i}{\mu_i} \cdot \nabla f(\y), \qquad && \x_+ = \y - \frac{1}{L_i} \cdot \nabla f(\y).
		\end{alignat}

		By definition of $\z$, one has
		\begin{align}
					 & \| \proj_i  (\z - \x^{\star}) \|_2^2 \leq \beta_i \| \proj_i  (\v - \x^{\star}) \|_2^2 + (1-\beta_i) \| \proj_i  (\y - \x^{\star} )\|_2^2
			\tag{by convexity}
			\\
			\leq & \beta_i \| \proj_i  (\v - \x^{\star}) \|_2^2 + \frac{2 (1-\beta_i)}{\mu_i} \left[ f_i^{\star} - f_i(\proj_i  \y) 
						+ \left\langle \proj_i  \nabla f(\y), \proj_i  (\y - \x^{\star}) \right\rangle \right].
						\tag{by $\mu_i$-strong-convexity of $f_i$}
        \label{eq:agd1:1}
		\end{align}
		By definition of $\v_+$, one has
		\begin{align}
				& \left\|  \proj_i  \left( \v_+ - \x^{\star}  \right)  \right\|_2^2 = 
					\left\|  \proj_i  \left( \z - \x^{\star} - \frac{1-\beta_i}{\mu_i} \nabla f(\y) \right)  \right\|_2^2
			\\
			= & \| \proj_i  (\z - \x^{\star}) \|_2^2 
			- \frac{2(1-\beta_i)}{\mu_i} \left\langle \proj_i  \nabla f(\y), \proj_i  (\z - \x^{\star})  \right\rangle
			+ \left( \frac{1-\beta_i}{\mu}  \right)^2 \left\| \proj_i  \nabla f(\y) \right\|_2^2
			\\
			\leq & \beta_i \| \proj_i  (\v - \x^{\star}) \|_2^2 + \frac{2 (1- \beta_i)}{\mu_i}  \left[ f_i^{\star} - f_i(\proj_i  \y) 
			+ \left\langle \proj_i  \nabla f(\y), \proj_i  (\y - \x^{\star}) \right\rangle \right]
			\\
			& -  \frac{2(1-\beta_i)}{\mu_i} \left\langle \proj_i  \nabla f(\y), \proj_i  (\z - \x^{\star})  \right\rangle
			+ \left( \frac{1-\beta_i}{\mu_i}  \right)^2 \left\| \proj_i  \nabla f(\y) \right\|_2^2
			\\
			= &  \beta_i \| \proj_i  (\v - \x^{\star}) \|_2^2 + \frac{2 (1- \beta_i)}{\mu_i}  \left[ f_i^{\star} - f_i(\proj_i  \y) 
			+ \underbrace{\left\langle \proj_i  \nabla f(\y), \proj_i  (\y - \z) \right\rangle}_{\text{\ding{172}}} \right]
			\\
			& \qquad + \left( \frac{1-\beta_i}{\mu_i}  \right)^2 \underbrace{\left\| \proj_i  \nabla f(\y) \right\|_2^2}_{\text{\ding{173}}}
      \label{eq:agd1:2}
		\end{align}
    Next we bound \ding{172} and \ding{173} in \eqref{eq:agd1:2}.
    First note that by definition $\y - \z = \beta_i (\y-\v) = \beta_i(\alpha_i (\x - \v))$, and $\x- \y = (1-\alpha_i) (\x-\v)$, we have $\y - \z = \frac{\beta_i \alpha_i}{1-\alpha_i} (\x - \y)$. Therefore \ding{172} is bounded as
		\begin{equation}
			\left\langle \proj_i  \nabla f(\y), \proj_i  (\y - \z) \right\rangle
			=
			\frac{\beta_i \alpha_i}{(1 - \alpha_i)} \left\langle \proj_i  \nabla f(\y), \proj_i  (\x - \y) \right\rangle
			\leq
			\frac{\beta_i \alpha_i}{(1 - \alpha_i)} \left( f_i(\proj_i \x) - f_i(\proj_i  \y) \right),
      \label{eq:agd1:3}
		\end{equation}
    where the last inequality is by convexity of $f_i$. 

    To bound \ding{173}, we note that $\x_+ = \y - \frac{1}{L_i} \nabla f(\y)$, which implies (by $L_i$-smoothness of $f_i$)
		\begin{align}
		    f_i(\proj_i  \x_+) \leq f_i(\proj_i   \y) - \left\langle \nabla f_i(\proj_i  \y) ,  \frac{1}{L_i} \proj_i  \nabla f(\y) \right\rangle 
			+ \frac{L_i}{2} \left\| \frac{1}{L_i} \proj_i  \nabla f(\y)  \right\|_2^2 =
			f_i(\proj_i   \y) - \frac{1}{2L_i}  \left\| \proj_i  \nabla f(\y)  \right\|_2^2.
		\end{align}
		Thus \ding{173} is upper bounded as
		\begin{equation}
			\left\| \proj_i  \nabla f(\y)  \right\|_2^2 \leq 2L_i \left( f_i(\proj_i   \y)  - f_i(\proj_i  \x_+)\right).
      \label{eq:agd1:4}
		\end{equation}

		Plugging the upper bound \eqref{eq:agd1:3}, \eqref{eq:agd1:4} down to \eqref{eq:agd1:2} yields 
		\begin{align}
			& \left\|  \proj_i  ( \v_+ - \x^{\star} )  \right\|_2^2
			\leq \beta_i \| \proj_i  (\v - \x^{\star}) \|_2^2 + \frac{2 (1- \beta_i)}{\mu_i}
			\left(  f_i^{\star} - f_i(\proj_i  y)  \right)
			\\
			& \qquad	+ \frac{2 (1- \beta_i)}{\mu_i} \frac{\beta_i \alpha_i}{(1 - \alpha_i)} \left( f_i(\proj_i  \x) - f_i(\proj_i  y) \right)
			+ \frac{2L_i (1-\beta_i)^2}{\mu_i^2} \left( f_i(\proj_i  y)  - f_i(\proj_i  \x_+)\right).
		\end{align}
		Substituting $\alpha_i = \frac{\sqrt{\kappa_i}}{\sqrt{\kappa_i} + 1}$ and $\beta_i = 1- \frac{1}{\sqrt{\kappa_i}}$ gives
		\begin{equation}
			\left\|  \proj_i  ( \v_+ - \x^{\star} )  \right\|_2^2 + \frac{2}{\mu_i}  \left( f_i(\proj_i  \x_+) - f_i^{\star} \right)
			\leq 
			\left( 1 - \frac{1}{\sqrt{\kappa_i}} \right)
			\left( \left\|  \proj_i  ( \v - \x^{\star} )  \right\|_2^2 + \frac{2}{\mu_i}  \left( f_i(\proj_i  \x) - f_i^{\star} \right) \right),
		\end{equation}
		which implies $\psi_i (\x_+, \v_+) \leq \left( 1 - \frac{1}{\sqrt{\kappa_i}} \right) \psi(\x, \v),$
    completing the proof of (a).

		\item  Let $\kappa_i = \frac{L_i}{\mu_i}$, $\alpha_i = \frac{\sqrt{\kappa_i}}{\sqrt{\kappa_i} + 1}$, $\beta_i = 1 - \frac{1}{\sqrt{\kappa_i}}$ be the corresponding $\kappa, \alpha, \beta$ in applying $\agd(\cdot,\cdot;L_i,\mu_i)$. 
    For clarity we restate the algorithm $\agd$ with an auxiliary state $\w$ 
		\begin{equation}
			\begin{aligned}
				&	\y  = \alpha_i \cdot \x + (1-\alpha_i) \cdot \v, \qquad
				&
				\w  = \y - \frac{1}{\mu_i}  \nabla f(\y), 
				\\
				& \v_+  = \beta_i \v + (1-\beta_i) \w, \qquad
				& \x_+  = \y - \frac{1}{L_i} \cdot \nabla f(\y).
			\end{aligned}
			\label{eq:acbsls:reparam}
		\end{equation}
    Since $\alpha \in [0,1]$ we have (by convexity)
    \begin{equation}
      \err_j(\y) = f_j(\proj_j\y) - f_j^{\star} \leq 
      \max \left\{ f_j(\proj_j \x), f_j(\proj_j \v) \right\} - f_j^{\star}
      = \max \left\{ \err_j (\x), \err_j(\v)\right\}.
    \end{equation}
    Since the step-size of the $\w$-step satisfies $\frac{1}{\mu_i} \leq \frac{1}{L_j}$ by assumption $j < i$, we obtain
    \begin{equation}
      f_j(\proj_j \w) \leq f_j(\proj_j \y) - \frac{1}{\mu_i} \left\langle \nabla f_j (\proj_j y),   \proj_j \nabla f_j (\y)  \right\rangle + \frac{L_j}{2} \left\|\frac{1}{\mu_i}\proj_j \nabla f_j (\y) \right\|_2^2 
      \leq f_j(\proj_j \y).
    \end{equation}
    For the same reason we have $f_j(\proj_j \x_+) \leq f_j(\proj_j \y)$ since the $\x_+$-step takes an even smaller step-size.
    These imply $\err_j(\w) \leq \err_j(\y) \leq \max \left\{ \err_j (\x), \err_j(\v)\right\}$ and $\err_j(\x_+) \leq \err_j(\y) \leq \max \left\{ \err_j (\x), \err_j(\v)\right\}$. By convexity we have $\err_j(\v_+) \leq \max \left\{ \err_j(\v), \err_j(\w) \right\} \leq \max \left\{ \err_j (\x), \err_j(\v)\right\}$,    which completes the proof of the first inequality. 
    The second inequality holds for the same reason. 
    
	\item Let $\kappa_i = \frac{L_i}{\mu_i}$, $\alpha_i = \frac{\sqrt{\kappa_i}}{\sqrt{\kappa_i} + 1}$, $\beta_i = 1 - \frac{1}{\sqrt{\kappa_i}}$ be the corresponding $\kappa, \alpha, \beta$ in applying $\agd(\cdot,\cdot;L_i,\mu_i)$. 
    For clarity we restate the algorithm $\agd$ with an auxiliary state $\w$, as in \eqref{eq:acbsls:reparam}.

	  First note that $r_j(\y) \leq \max \left\{ \res_j(\x), \res_j(\v)\right\}$ since $\y$ is a convex combination of $\x$ and $\v$.
    Now we analyze $r_j(\w)$
		\begin{align}
			& \frac{2}{\mu_j}  \res_j(\w) = \left\| \proj_j  \left( \y - \frac{1}{\mu_i} \nabla f(\y) - \x^{\star} \right) \right\|_2^2
			\\
			= & \left\| \proj_j  (\y - \x^{\star}) \right\|_2^2 - \frac{2}{\mu_i} \left\langle \proj_j \nabla f(\y), \proj_j (\y-\x^{\star})  \right\rangle \
				+ \frac{1}{\mu_i^2} \left\| \proj_j \nabla f(\y) \right\|_2^2
			\\
			\leq & \left( 1 - \frac{\mu_j}{\mu_i} \right) \left\| \proj_j  (\y - \x^{\star}) \right\|_2^2 
			+ 
			\left( -\frac{1}{\mu_i L_j} + \frac{1}{\mu_i^2}\right)\left\| \proj_j \nabla f(\y) \right\|_2^2
			\\
			\leq &  \left( 1 - \frac{\mu_j}{\mu_i} \right) \left\| \proj_j  (\y - \x^{\star}) \right\|_2^2 
			+ 
			\left( -\frac{L_j}{\mu_i} + \frac{L_j^2}{\mu_i^2}\right) \left\| \proj_j  (\y - \x^{\star}) \right\|_2^2 
			\\
			\leq & \frac{L_j^2}{\mu_i^2} \left\| \proj_j  (\y - \x^{\star}) \right\|_2^2  = \frac{L_j^2}{\mu_i^2} \res_j(\y),
		\end{align}
		Since $\v_+$ is a convex combination of $\w$ and $\v$, we obtain
		\begin{equation}
			r_j(\v_+) \leq \max \left\{ \res_j(\w), \res_j(\v) \right\} \leq \max \left\{ \frac{L_j^2}{\mu_i^2} \res_j(\y), \res_j(\v) \right\} \leq  \frac{L_j^2}{\mu_i^2} \max \left\{ \res_j(\x), \res_j(\v) \right\}.
		\end{equation}
		Similarly we have
		\begin{equation}
			r_j(\x_+) \leq \frac{L_j^2}{L_i^2} \res_j(\y) \leq  \frac{L_j^2}{L_i^2} \max \left\{ \res_j(\x), \res_j(\v) \right\},
		\end{equation}
		which yields the first inequality of (c). 
		The second inequality of (c) holds for the same reason.
    		The third inequality holds because 
		\begin{align}
			& \psi_j(\x_+, \v_+) = \err_j(\x_+) + \res_j(\v_+) \leq \max \{ \err_j(\x_+), \err_j(\v_+)\} +  \max \{ \res_j(\x_+), \res_j(\v_+)\}
			\\
			\leq & \globalcond^2 \left( \max \{ \err_j(\x), \err_j(\v)\}  + \max \{ \res_j(\x), \res_j(\v)\}  \right)
			\tag{by the first two inequalities}
			\\
			\leq & \globalcond^2 \left( \err_j (\x) + \err_j(\v) + \res_j(\x) + \res_j(\v) \right)
			\\
			\leq & \globalcond^2 (\kappa_j + 1) (\err_j(\x) + \res_j(\v)) \leq 2\globalcond^2 \kappa_j \psi_j (\x, \v).
		\end{align}
	\end{enumerate}
\end{proof}

\subsubsection{Estimating the progress of $\acbsls$}
\label{sec:thm:acbsls:2}
\begin{lemma}
  \label{lem:acbsls:1}
    Under the same settings of \cref{thm:acbsls}, for any $(\x, \v)$	and $i \in [m]$, let $(\x_+, \v_+) \gets \acbsls_i (\x, \v)$, then for any $j < i$, it is the case that
	\begin{enumerate}[(a), leftmargin=*]
		\item 	$\max\{ \err_j(\x_+), \err_j(\v_+) \} \leq \max \{ \err_j(\x), \err_j(\v)\}$.
		\item  	$\max\{ \res_j(\x_+), \res_j(\v_+) \} \leq \max \{ \res_j(\x), \res_j(\v)\}$.
	\end{enumerate}
\end{lemma}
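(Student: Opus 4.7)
The plan is to prove both (a) and (b) simultaneously by backward induction on $i$, starting from $i = m$ and descending to $i = 1$. The only tools needed are Lemma \ref{lem:agd1}(b) (the per-step ``apply small step-size'' control of the two max-quantities) and the inductive hypothesis, together with the critical observation that each recursive branching call in $\acbsls_i$ is invoked on a \emph{diagonal} pair.

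For the base case $i = m$, the body of $\acbsls_m$ consists solely of $T_m$ applications of $\agd(\cdot, \cdot; L_m, \mu_m)$ with no recursive branching. For any $j < m$, Lemma \ref{lem:agd1}(b) shows that a single such $\agd$ step is non-increasing in both $\max\{\err_j(\cdot), \err_j(\cdot)\}$ and $\max\{\res_j(\cdot), \res_j(\cdot)\}$; iterating over $T_m$ steps yields (a) and (b).

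For the inductive step, fix $i < m$ and assume the statement already holds for $\acbsls_{i+1}$, i.e.\ for every $j' < i+1$. I would analyze a single outer iteration indexed by $t$ of the $\acbsls_i$ loop. Let $(\x^{(t)}, \v^{(t)})$ denote the state entering the iteration and fix an arbitrary $j < i$. Lemma \ref{lem:agd1}(b) applied to the $\agd(\x^{(t)}, \v^{(t)}; L_i, \mu_i)$ step gives
\begin{equation}
\max\{\err_j(\tilde{\x}^{(t)}), \err_j(\tilde{\v}^{(t)})\} \leq \max\{\err_j(\x^{(t)}), \err_j(\v^{(t)})\},
\end{equation}
and analogously for $\res_j$. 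Next, the branching call $(\x^{(t+1)}, \_) \gets \acbsls_{i+1}(\tilde{\x}^{(t)}, \tilde{\x}^{(t)})$ uses a duplicated input, so by the inductive hypothesis applied with $j < i < i+1$, the output's max-quantity over the two components is bounded by the max over the (identical) inputs, which is just $\err_j(\tilde{\x}^{(t)})$; in particular $\err_j(\x^{(t+1)}) \leq \err_j(\tilde{\x}^{(t)})$ and $\res_j(\x^{(t+1)}) \leq \res_j(\tilde{\x}^{(t)})$. Symmetrically, the second branching call yields $\err_j(\v^{(t+1)}) \leq \err_j(\tilde{\v}^{(t)})$ and $\res_j(\v^{(t+1)}) \leq \res_j(\tilde{\v}^{(t)})$. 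Taking the maximum over the pair and chaining with the earlier $\agd$-step bound produces the per-iteration inequalities; iterating them over $t = 0, \ldots, T_i - 1$ completes the inductive step and establishes both (a) and (b) for $\acbsls_i$.

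I do not expect any real obstacle here: the lemma is essentially a bookkeeping consequence of Lemma \ref{lem:agd1}(b). The subtle point worth flagging is precisely \emph{why} the branching structure of $\acbsls$ (duplicating each of $\tilde{\x}^{(t)}$ and $\tilde{\v}^{(t)}$ into two recursive calls) lets the argument go through: if one had instead passed the joint pair $(\tilde{\x}^{(t)}, \tilde{\v}^{(t)})$ into a single recursive call, the inductive hypothesis would only control $\max\{\err_j(\x^{(t+1)}), \err_j(\v^{(t+1)})\}$ against $\max\{\err_j(\tilde{\x}^{(t)}), \err_j(\tilde{\v}^{(t)})\}$, which is fine for the max bound but would not preserve the componentwise separation needed when these statements are later combined (in the proof of Theorem \ref{thm:acbsls}) with the additive potential $\psi_i$. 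The diagonal input is what makes the inductive argument line up cleanly, and this is the one step that deserves explicit mention in the write-up.
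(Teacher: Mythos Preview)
Your proposal is correct and follows essentially the same approach as the paper: backward induction on $i$, with the base case $i=m$ handled directly by iterating Lemma~\ref{lem:agd1}(b), and the inductive step combining Lemma~\ref{lem:agd1}(b) for the $\agd$ step with the inductive hypothesis applied to each diagonal branching call $\acbsls_{i+1}(\tilde{\x}^{(t)},\tilde{\x}^{(t)})$ and $\acbsls_{i+1}(\tilde{\v}^{(t)},\tilde{\v}^{(t)})$. The paper's presentation is nearly identical, only differing cosmetically in that it fixes $j$ first and inducts from $m$ down to $j+1$ rather than down to $1$.
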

\begin{proof}[Proof of \Cref{lem:acbsls:1}]
  We will fix $j$ and prove both statements by induction on $i$ in descent order (from $m$ to $j+1$). 
  Throughout the proof we denote $(\x^{(0)}, \v^{(0)}, \tilde{\x}^{(0)}, \tilde{\v}^{(0)}, \cdots, \x^{(T_{i})}, \v^{(T_{i})})$ the sequence generated by running $\acbsls_{i}(\x, \v)$.

  Induction base: for $i=m$, note that $\acbsls_m(\cdot,\cdot)$ is equivalent to $\agd^{T_m}(\cdot,\cdot;L_m,\mu_m)$. 
  Since $j<m$,  \Cref{lem:agd1}(b) suggests $\max\{ \err_j(\tilde{\x}^{(t)}), \err_j(\tilde{\v}^{(t)}) \} \leq \max \{ \err_j(\x^{(t)}), \err_j(\v^{(t)})\}$. Since $i=m$ we have $\x^{(t+1)} = \tilde{\x}^{(t)}$ and $\v^{(t+1)} =\tilde{\v}^{(t)}$, and consequently $\max \{ \err_j(\x^{(t+1)}), \err_j(\v^{(t+1)})\} \leq \max \{ \err_j(\x^{(t)}), \err_j(\v^{(t)})\}$. Telescoping $t$ from $0$ to $T_i$ yields $\max \{ \err_j(\x_{+}), \err_j(\v_{+})\} \leq \max \{ \err_j(\x), \err_j(\v)\}$. The same arguments hold for (b) as well.
  
  Now suppose the statements hold for $i+1 \leq m$ and we study $i$. Since $j < i$ we can apply \Cref{lem:agd1}(b) to show that $\max\{ \err_j(\tilde{\x}^{(t)}), \err_j(\tilde{\v}^{(t)}) \} \leq \max \{ \err_j(\x^{(t)}), \err_j(\v^{(t)})\}$. By induction hypothesis we have $\err_j(\x^{(t+1)}) \leq \err_j(\tilde{\x}^{(t)})$ and $\err_j(\v^{(t+1)}) \leq \err_j(\tilde{\v}^{(t)})$. Consequently $\max \{ \err_j(\x^{(t+1)}), \err_j(\v^{(t+1)})\} \leq \max \{ \err_j(\x^{(t)}), \err_j(\v^{(t)})\}$.   Telescoping $t$ from $0$ to $T_i$ yields $\max \{ \err_j(\x_{+}), \err_j(\v_{+})\} \leq \max \{ \err_j(\x), \err_j(\v)\}$. The same arguments hold for (b) as well.
\end{proof}

\begin{lemma}
  \label{lem:acbsls:2}
   Under the same settings of \cref{thm:acbsls}, for any $(\x, \v)$	and $i \in [m]$, let $(\x_+, \v_+) \gets \acbsls_i (\x, \v)$, then  $\psi_i(\x_+, \v_+) \leq \left( 1 - \frac{1}{\sqrt{\kappa_i}} \right)^{T_i} \psi_i (\x, \v)$.
\end{lemma}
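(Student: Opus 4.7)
The plan is to combine the per-step contraction of $\psi_i$ under one $\agd(\cdot,\cdot;L_i,\mu_i)$ step (Lemma \ref{lem:agd1}(a)) with the fact that the inner recursive call $\acbsls_{i+1}$ does not increase $\psi_i$, and then telescope over $t = 0, \ldots, T_i - 1$. Denote by $(\x^{(0)}, \v^{(0)}) = (\x, \v)$ and $(\x^{(t)}, \v^{(t)}), (\tilde{\x}^{(t)}, \tilde{\v}^{(t)})$ the iterates generated inside $\acbsls_i(\x, \v)$, so that $(\x_+, \v_+) = (\x^{(T_i)}, \v^{(T_i)})$.

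\textbf{Step 1 (AGD step).} By Line 3 of $\acbsls_i$, $(\tilde{\x}^{(t)}, \tilde{\v}^{(t)}) = \agd(\x^{(t)}, \v^{(t)}; L_i, \mu_i)$. Apply Lemma \ref{lem:agd1}(a) with the same index $i$:
\begin{equation}
\psi_i(\tilde{\x}^{(t)}, \tilde{\v}^{(t)}) \leq \left(1 - \tfrac{1}{\sqrt{\kappa_i}}\right) \psi_i(\x^{(t)}, \v^{(t)}).
\end{equation}

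\textbf{Step 2 (Recursive step does not blow up $\psi_i$).} If $i = m$, there is no recursive call and $(\x^{(t+1)}, \v^{(t+1)}) = (\tilde{\x}^{(t)}, \tilde{\v}^{(t)})$, so the previous inequality is all we need. For $i < m$, the branching in Line 5 sets $(\x^{(t+1)}, \cdot) = \acbsls_{i+1}(\tilde{\x}^{(t)}, \tilde{\x}^{(t)})$ and $(\cdot, \v^{(t+1)}) = \acbsls_{i+1}(\tilde{\v}^{(t)}, \tilde{\v}^{(t)})$. Since $i < i+1$, Lemma \ref{lem:acbsls:1}(a) applied to the first call with both arguments equal to $\tilde{\x}^{(t)}$ gives
\begin{equation}
\err_i(\x^{(t+1)}) \leq \max\{\err_i(\tilde{\x}^{(t)}), \err_i(\tilde{\x}^{(t)})\} = \err_i(\tilde{\x}^{(t)}),
\end{equation}
and Lemma \ref{lem:acbsls:1}(b) applied to the second call gives $\res_i(\v^{(t+1)}) \leq \res_i(\tilde{\v}^{(t)})$. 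Adding these,
\begin{equation}
\psi_i(\x^{(t+1)}, \v^{(t+1)}) = \err_i(\x^{(t+1)}) + \res_i(\v^{(t+1)}) \leq \err_i(\tilde{\x}^{(t)}) + \res_i(\tilde{\v}^{(t)}) = \psi_i(\tilde{\x}^{(t)}, \tilde{\v}^{(t)}).
\end{equation}

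\textbf{Step 3 (Telescope).} Combining Steps 1 and 2 yields $\psi_i(\x^{(t+1)}, \v^{(t+1)}) \leq (1 - 1/\sqrt{\kappa_i}) \psi_i(\x^{(t)}, \v^{(t)})$ for each $t = 0, \ldots, T_i - 1$. Iterating gives $\psi_i(\x_+, \v_+) \leq (1 - 1/\sqrt{\kappa_i})^{T_i} \psi_i(\x, \v)$, as claimed.

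The only subtle point is the choice to feed each recursive call the duplicated pair $(\tilde{\x}^{(t)}, \tilde{\x}^{(t)})$ (respectively $(\tilde{\v}^{(t)}, \tilde{\v}^{(t)})$): this is precisely what reduces the ``$\max$'' on the right-hand side of Lemma \ref{lem:acbsls:1} to a single term and lets the $\err_i$ and $\res_i$ halves of $\psi_i$ be controlled separately. I do not anticipate any genuine obstacle, since all the work has been done in the preceding lemmas; the proof is essentially a bookkeeping assembly, and the only thing to watch is handling $i = m$ as a trivial base case.
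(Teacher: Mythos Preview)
Your proof is correct and follows essentially the same approach as the paper: apply Lemma~\ref{lem:agd1}(a) for the contraction under one $\agd$ step, then use Lemma~\ref{lem:acbsls:1}(a) and (b) on the two branched recursive calls (with duplicated inputs) to show $\err_i$ and $\res_i$ do not increase, and telescope. Your explicit remark about why the duplicated inputs collapse the $\max$ is a nice clarification that the paper leaves implicit.
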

\begin{proof}[Proof of \Cref{lem:acbsls:2}]
  Let $(\x^{(0)}, \v^{(0)}, \tilde{\x}^{(0)}, \tilde{\v}^{(0)}, \cdots, \x^{(T_{i})}, \v^{(T_{i})})$ be the trajectory generated by running $\acbsls_{i}(\x, \v)$.  
  For $i = m$, $\acbsls_m(\cdot,\cdot)$ is equivalent to $\agd^{T_m}(\cdot,\cdot;L_m,\mu_m)$. \Cref{lem:agd1}(a) suggests that $\psi_i(\x^{(t+1)}, \v^{(t+1)}) \leq (1 - \frac{1}{\sqrt{\kappa_m}}) \psi_i(\x^{(t)}, \v^{(t)})$. Telescoping $t$ from $0$ to $T_m$ shows $\psi_i(\x_+, \v_+) \leq (1 - \frac{1}{\sqrt{\kappa_m}})^{T_m} \psi_i(\x, \v)$.

  For $i < m$, we first note that  \Cref{lem:agd1}(a) suggests  $\psi_i(\tilde{\x}^{(t)}, \tilde{\v}^{(t)}) \leq (1 - \frac{1}{\sqrt{\kappa_m}}) \psi_i(\x^{(t)}, \v^{(t)})$. Since $(\x^{(t+1)}, \_) = \acbsls_{i+1}(\tilde{\x}^{(t)}, \tilde{\x}^{(t)})$, \Cref{lem:acbsls:1} suggests that $\err_i(\x^{(t+1)}) \leq \err_i(\tilde{\x}^{(t)})$. For the same reason we have $r_i(\v^{(t+1)}) \leq \res_i(\tilde{\v}^{(t)})$. Consequently $\psi_i(\x^{(t+1)}, \v^{(t+1)}) \leq \psi_i(\tilde{\x}^{(t)}, \tilde{\v}^{(t)}) \leq (1 - \frac{1}{\sqrt{\kappa_m}}) \psi_i(\x^{(t)}, \v^{(t)})$. 
	Telescoping $t$ from $0$ to $T_i$ completes the proof.

\end{proof}

\begin{lemma}
  \label{lem:acbsls:3}
   Under the same settings of \cref{thm:acbsls}, for any $(\x, \v)$ and $i \in [m]$, let $(\x_+, \v_+) \gets \acbsls_i (\x, \v)$, then for any $j \geq i$, the following inequality holds
	\begin{equation}
		\psi_j( \x_+, \v_+ ) \leq
		\exp \left( - \frac{1}{\sqrt{\kappa_j}} \prod_{k=i}^j T_{k} + \left( \sum_{k=i}^{j-1} \prod_{l=i}^k T_l \right) \log \left( 4\kappa_j^2 \globalcond^2 \right)   \right)
		\psi_j(\x, \v).
	\end{equation}
\end{lemma}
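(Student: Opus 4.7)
The plan is to induct on $i$, descending from $m$ down to $1$, with the statement for $\acbsls_{i+1}$ at all $j \geq i+1$ serving as the induction hypothesis when analyzing $\acbsls_i$. For each $i$ the case $j = i$ falls out immediately from \cref{lem:acbsls:2} together with $1 - x \leq e^{-x}$, since the inner sum in the exponent is empty and the product reduces to $T_i$. The substance of the argument is therefore the inductive case $j > i$, which I will handle by analyzing a single outer-loop iteration and then telescoping.

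For one iteration of $\acbsls_i$, I will combine two ingredients. First, since $j > i$, the $\agd$ step at level $i$ is in the ``large step-size'' regime of \cref{lem:agd1}(c); parts (i) and (ii) there inflate $\max\{\err_j(\tilde{\x}^{(t)}), \err_j(\tilde{\v}^{(t)})\}$ and $\max\{\res_j(\tilde{\x}^{(t)}), \res_j(\tilde{\v}^{(t)})\}$ by at most a factor $\globalcond^2$ over the corresponding maxima at $(\x^{(t)}, \v^{(t)})$. Second, the induction hypothesis applied to the two recursive calls $\acbsls_{i+1}(\tilde{\x}^{(t)}, \tilde{\x}^{(t)})$ and $\acbsls_{i+1}(\tilde{\v}^{(t)}, \tilde{\v}^{(t)})$ (both legitimate since $j \geq i+1$) bounds $\err_j(\x^{(t+1)}) \leq A_{i+1}^j \cdot \psi_j(\tilde{\x}^{(t)}, \tilde{\x}^{(t)})$ and $\res_j(\v^{(t+1)}) \leq A_{i+1}^j \cdot \psi_j(\tilde{\v}^{(t)}, \tilde{\v}^{(t)})$, where $A_{i+1}^j$ denotes the exponential factor claimed at level $i+1$. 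Summing these two bounds and using the standard comparisons $\res_j(\cdot) \leq \err_j(\cdot) \leq \kappa_j\, \res_j(\cdot)$ (from $\mu_j$-strong convexity and $L_j$-smoothness of $f_j$) to rewrite $\psi_j(\tilde{\x}^{(t)}, \tilde{\x}^{(t)}) + \psi_j(\tilde{\v}^{(t)}, \tilde{\v}^{(t)})$ as a $O(\kappa_j)$-multiple of $\psi_j(\x^{(t)}, \v^{(t)})$ will give a per-iteration contraction $\psi_j(\x^{(t+1)}, \v^{(t+1)}) \leq 4\kappa_j^2 \globalcond^2 \cdot A_{i+1}^j \cdot \psi_j(\x^{(t)}, \v^{(t)})$.

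Telescoping over $t = 0, \ldots, T_i - 1$ and taking logarithms produces the first-order recurrence $B_i^j \leq T_i B_{i+1}^j + T_i \log(4\kappa_j^2 \globalcond^2)$ on the exponents $B_i^j \defeq \log A_i^j$. Unrolling from the base case $B_j^j = -T_j/\sqrt{\kappa_j}$ supplied by \cref{lem:acbsls:2} yields exactly the two advertised terms $-(1/\sqrt{\kappa_j})\prod_{k=i}^j T_k$ and $\log(4\kappa_j^2 \globalcond^2)\cdot \sum_{k=i}^{j-1}\prod_{l=i}^k T_l$, as a routine check of the telescoping sum confirms.

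I expect the main obstacle to be arranging the one-iteration bound so that only $\psi_j(\x^{(t)}, \v^{(t)}) = \err_j(\x^{(t)}) + \res_j(\v^{(t)})$ (and not the ``off-diagonal'' quantities $\err_j(\v^{(t)})$ or $\res_j(\x^{(t)})$) appears on the right-hand side; without care, the natural bounds involve all four of $\err_j(\x), \err_j(\v), \res_j(\x), \res_j(\v)$. This is precisely where the branching in $\acbsls$ pays off: invoking $\acbsls_{i+1}$ separately on the diagonal pairs $(\tilde{\x}^{(t)}, \tilde{\x}^{(t)})$ and $(\tilde{\v}^{(t)}, \tilde{\v}^{(t)})$ lets me isolate the $\err_j$ contribution of the $\x$-output and the $\res_j$ contribution of the $\v$-output, then recombine both into $\psi_j$ at the cost of only a polynomial-in-$\kappa_j$ prefactor, which is harmless since a logarithm of that prefactor is what appears in the final exponent.
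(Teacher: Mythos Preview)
Your proposal is correct and follows essentially the same approach as the paper: descending induction on $i$ with base case $i=j$ from \cref{lem:acbsls:2}, then for $j>i$ combining \cref{lem:agd1}(c) on the $\agd$ step, the induction hypothesis on the two branched recursive calls, and the sandwich $\res_j \leq \err_j \leq \kappa_j \res_j$ to close the loop back to $\psi_j(\x^{(t)},\v^{(t)})$, followed by telescoping over $T_i$ iterations. The paper routes through the intermediate quantity $\psi_j(\tilde{\x}^{(t)},\tilde{\v}^{(t)})$ in two steps (first $\psi_j(\tilde{\x},\tilde{\x})+\psi_j(\tilde{\v},\tilde{\v})\leq 2\kappa_j\,\psi_j(\tilde{\x},\tilde{\v})$, then $\psi_j(\tilde{\x},\tilde{\v})\leq 2\kappa_j\globalcond^2\,\psi_j(\x,\v)$), which is why the prefactor comes out as $4\kappa_j^2\globalcond^2$ rather than the ``$O(\kappa_j)$'' you wrote midstream; but you land on the right per-iteration factor, and in fact your direct route via the max quantities can even yield the slightly sharper $4\kappa_j\globalcond^2$, which of course still implies the stated bound.
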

\begin{proof}[Proof of \Cref{lem:acbsls:3}]
  We will fix $j$ and prove by induction on $i$ in descent order (from $j$ to $1$). 

  Induction base: for $i=j$, the statement (b) follows by \Cref{lem:acbsls:2} 
  \begin{equation}
    \psi_j(\x_+, \v_+) \leq \left( 1 - \frac{1}{\sqrt{\kappa_j}} \right)^{T_j} \leq \exp \left( - \frac{T_j}{\sqrt{\kappa_j}} \right) \psi_j(\x, \v).
  \end{equation}

	Now assume the claim holds for $i+1 \leq j$, and we study the case of $i$. Denote $(\x^{(0)}, \v^{(0)}, \tilde{\x}^{(0)}, \tilde{\v}^{(0)}, \cdots, \x^{(T_{i})}, \v^{(T_{i})})$ the sequence generated by running $\acbsls_{i}(\x, \v)$. 
  Since $(\tilde{\x}^{(t)}, \tilde{\v}^{(t)}) \gets \agd(\x^{(t)}, \v^{(t)}; L_i, \mu_i)$ and $i < j$, \Cref{lem:agd1}(c) suggests that 
  \begin{equation}
    \max \left\{ \res_j(\tilde{\x}^{(t)}), \res_j(\tilde{\v}^{(t)})\right\}
		 \leq
    \globalcond^2  \max  \left\{ \res_j(\x^{(t)}), \res_j(\v^{(t)})\right\}.
  \end{equation}
  Since $f_j$ is $\kappa_j$-conditioned we have $r_j \leq \err_j \leq \kappa_j \res_j$, which implies
  \begin{equation}
    \psi_j(\tilde{\x}^{(t)},\tilde{\v}^{(t)}) = \err_j(\tilde{\x}^{(t)}) + \res_j(\tilde{\v}^{(t)}) \leq \kappa_j \res_j(\tilde{\x}^{(t)}) +  \res_j(\tilde{\v}^{(t)}) \leq 2 \kappa_j \max  \left\{ \res_j(\x^{(t)}), \res_j(\v^{(t)})\right\},
  \end{equation}
  and
  \begin{equation}
    \max  \left\{ \res_j(\x^{(t)}), \res_j(\v^{(t)})\right\} 
    \leq
    \res_j(\x^{(t)}) + \res_j(\v^{(t)}) 
    \leq
    \err_j(\x^{(t)}) + \res_j(\v^{(t)})
    =
    \psi_j(\x^{(t)}, \v^{(t)}).
  \end{equation}
  In summary we have
  \begin{equation}
    \psi_j(\tilde{\x}^{(t)},\tilde{\v}^{(t)}) \leq 2 \kappa_j \globalcond^2 \psi_j(\x^{(t)}, \v^{(t)})
    \label{eq:acbsls:3:1}
  \end{equation} 

  Since $(\x^{(t+1)}, \_) \gets \acbsls_{i+1} (\tilde{\x}^{(t)}, \tilde{\x}^{(t)})$, by induction hypothesis of (a) we have
  \begin{equation}
    \err_j(\x^{(t+1)}) \leq \psi_j(\x^{(t+1)}, \_) \leq \exp \left( - \frac{1}{\sqrt{\kappa_j}} \prod_{k=i+1}^j T_{k} + \left( \sum_{k=i+1}^{j-1} \prod_{l=i+1}^k T_l \right) \log ( 4 \kappa_j^2 \globalcond^2 )\right)
      \psi_j(\tilde{\x}^{(t)}, \tilde{\x}^{(t)}).
      \label{eq:acbsls:3:2}
  \end{equation}
  For the same reason we have
  \begin{equation}
    \res_j(\v^{(t+1)}) \leq \psi_j(\_, \v^{(t+1)}) \leq  \exp \left( - \frac{1}{\sqrt{\kappa_j}} \prod_{k=i+1}^j T_{k} + \left( \sum_{k=i+1}^{j-1} \prod_{l=i+1}^k T_l \right) \log  ( 4 \kappa_j^2 \globalcond^2 ) \right)
    \psi_j(\tilde{\v}^{(t)}, \tilde{\v}^{(t)}).
    \label{eq:acbsls:3:3}
  \end{equation}
  Since $\psi_j (\tilde{\x}^{(t)}, \tilde{\x}^{(t)}) = \err_j(\tilde{\x}^{(t)}) + \res_j(\tilde{\x}^{(t)}) \leq 2 \err_j (\tilde{\x}^{(t)})$ and 
  $\psi_j (\tilde{\v}^{(t)}, \tilde{\v}^{(t)}) = \err_j(\tilde{\v}^{(t)}) + \res_j(\tilde{\v}^{(t)}) \leq (\kappa_j+1) \res_j (\tilde{\v}^{(t)})$
  we have
  \begin{equation}
    \psi_j (\tilde{\x}^{(t)}, \tilde{\x}^{(t)}) + \psi_j (\tilde{\v}^{(t)}, \tilde{\v}^{(t)}) \leq 2 \kappa_j \psi_j(\tilde{\x}^{(t)}, \tilde{\v}^{(t)})
    \label{eq:acbsls:3:4}
  \end{equation}
  Combining \eqref{eq:acbsls:3:2} \eqref{eq:acbsls:3:3} and \eqref{eq:acbsls:3:4} gives
  \begin{align}
			& \psi_j(\x^{(t+1)},\v^{(t+1)})
    =
    \err_j(\x^{(t+1)}) + \res_j(\v^{(t+1)})
    \\
		 \leq & 
    2 \kappa_j \cdot 
    \exp \left( - \frac{1}{\sqrt{\kappa_j}} \prod_{k=i+1}^j T_{k} + \left( \sum_{k=i+1}^{j-1} \prod_{l=i+1}^k T_l \right) \log  ( 4 \kappa_j^2 \globalcond^2 ) \right)
    \psi_j(\tilde{\x}^{(t)}, \tilde{\v}^{(t)}).
    \label{eq:acbsls:3:5}
	\end{align}
  By \eqref{eq:acbsls:3:1} and \eqref{eq:acbsls:3:5} we arrive at 
  \begin{equation}
    \psi_j(\x^{(t+1)},\v^{(t+1)})
    \leq 
    4 \kappa_j^2 \globalcond^2 \cdot 
    \exp \left( - \frac{1}{\sqrt{\kappa_j}} \prod_{k=i+1}^j T_{k} + \left( \sum_{k=i+1}^{j-1} \prod_{l=i+1}^k T_l \right) \log  ( 4 \kappa_j^2 \globalcond^2 ) \right)
    \psi_j({\x}^{(t)}, {\v}^{(t)}).
  \end{equation}
  Telescoping $t$ from $0$ to $T_i$ yields
  \begin{equation}
    \psi_j(\x^{(T_i)}, \v^{(T_i)})
    \leq
    \exp \left( - \frac{1}{\sqrt{\kappa_j}} \prod_{k=i}^j T_{k} + \left( \sum_{k=i}^{j-1} \prod_{l=i}^k T_l \right) \log  ( 4 \kappa_j^2 \globalcond^2 ) \right)
    \psi_j({\x}^{(0)}, {\v}^{(0)}),
  \end{equation}
  completing the induction proof of \Cref{lem:acbsls:3}.
\end{proof}

\subsubsection{Finishing the proof of \Cref{thm:acbsls}}
\label{sec:thm:acbsls:3}
With \Cref{lem:acbsls:3} at hands we are ready to finish the proof of \cref{thm:acbsls}. 
This part of proof is almost identical to the proof of \cref{thm:bsls:recursive} presented in \cref{sec:proof:thm:bsls}.
\begin{proof}[Proof of \Cref{thm:acbsls}]
	Applying \Cref{lem:acbsls:3} yields (for any $i \in [m]$)
	\begin{equation}
		\psi_i( \acbsls_1(\x^{(0)}, \v^{(0)})) \leq
		\exp \left( 
			\underbrace{- \frac{1}{\sqrt{\kappa_i}} \prod_{k=1}^i T_{k} + \left( \sum_{k=1}^{i-1} \prod_{l=1}^k T_l \right) \log \left( 4 \globalcond^4\right)}_{\text{denoted as $\gamma_i$}}    \right)
		\psi_i(\x^{(0)}, \v^{(0)}),
	\end{equation}
	Observe that for any $i= 2,\dots,m$, 
	\begin{align}
			\gamma_{i} - \gamma_{i-1} = & \log (4  \globalcond^4) \cdot \prod_{j=1}^{i-1} T_j - \kappa_{i}^{-\frac{1}{2}} \prod_{j=1}^{i} T_j + \kappa_{i-1}^{-\frac{1}{2}}  \prod_{j=1}^{i-1} T_j
			\\
		= & \prod_{j=1}^{i-1} T_j \cdot \left( -\kappa_{i}^{-\frac{1}{2}} T_{i} + \kappa_{i-1}^{-\frac{1}{2}} + \log (4 \globalcond^4) \right).
	\end{align}
	Since $T_{i} \geq \sqrt{\kappa_i}(\log (4 \globalcond^4) +1)$ we have 
	\begin{equation}
		\gamma_{i} - \gamma_{i-1} \leq \prod_{j=1}^{i-1} T_j \cdot \left( -1 +  \kappa_{i-1}^{-\frac{1}{2}} \right) \leq 0.
	\end{equation}
	For $\gamma_1$ we observe that $		\gamma_1 = - \frac{1}{\sqrt{\kappa_i}} T_1 \leq \log \frac{\epsilon}{\psi(\x^{(0)}, \v^{(0)})}$. 
	Hence $\gamma_m \leq \gamma_{m-1} \leq \cdots \leq \gamma_1 \leq \log \frac{\epsilon}{\psi(\x^{(0)}, \v^{(0)})}$. Therefore for all $i \in [m]$ it is the case that 
	\begin{equation}
		\psi_i(\x, \v)
		\leq
		\exp (\gamma_i) \cdot	\psi_i(\x^{(0)}, \v^{(0)})
		\leq
		\frac{\epsilon}{	 \psi(\x^{(0)}, \v^{(0)}) } 	\psi_i(\x^{(0)}, \v^{(0)})
	\end{equation}
	Taking summation over $i$ gives
	\begin{equation}
		\psi(\acbsls_1(\x^{(0)}, \v^{(0)})) \leq \sum_{i \in [m]} \frac{\epsilon}{\psi(\x^{(0)}, \v^{(0)})} \psi_i(\x^{(0)}, \v^{(0)}) = \epsilon,
	\end{equation}
	completing the proof of \Cref{thm:acbsls}.
\end{proof}

\subsection{Stability of $\acbsls$}
\label{sec:acbsls:stability}
Similar to (un-accelerated) $\bsls$, under finite-precision arithmetic, $\acbsls$ can also attain the same rate of convergence with only logarithmic bits of precision. 

Formally, let $\widehat{\agd}$ be the finite-precision implementation of $\agd$, and $\widehat{\acbsls}_i$ be the finite-precision implementation of $\acbsls$ by replacing $\agd$ with $\widehat{\agd}$. 
We impose the following requirement such that $\widehat{\agd}$ can return a $\delta$-multiplicative approximation of ${\agd}$ in both $\x$ and $\v$:
\begin{requirement}
	\label{req:agd}
	There exists a $\delta<1$ such that for any $\x$, $\v$, and $i$,  considering $(\x_+, \v_+) \gets \agd(\x, \v; L_i, \mu_i)$ and $(\widehat{\x_+}, \widehat{\v_+}) \gets \widehat{\agd} (\x, \v; L_i,\mu_i)$, it is the case that $		\left|\widehat{\x_+} - {\x_+} \right| \leq \delta \left|\x_+ \right|$ and $\left|\widehat{\v_+} - {\v_+} \right| \leq \delta \left|\v_+ \right|$.
	(We use $|\cdot|$ to denote element-wise absolute values).
\end{requirement}

We specialized the initializations to $\vec{0}$ to simplify the exposition of the theorem. In Appendix \ref{apx:acbsls:finite}, we provide and prove the general version with arbitrary $\x^{(0)}, \v^{(0)}$. 
\begin{theorem}[$\acbsls$ under finite-precision arithmetic]
	\label{thm:acbsls:inexact}
	Consider multiscale optimization problem defined in \Cref{def:multiscale_problem}, for any $\epsilon > 0$, assuming \cref{req:agd} with
	\begin{equation}
		\delta^{-1} \geq 4 m  \left( \prod_{i \in [m]} T_i \right) 
		\cdot (10 \globalcond^2 )^{2m-1} \cdot \frac{\psi(\vec{0}, \vec{0}) }{\epsilon}, 
	\end{equation}
	then $\min\{\psi(\vec{0}, \vec{0}), \psi(\widehat{\acbsls_{1}}(\vec{0}, \vec{0}))\} \leq 3 \epsilon$ provided that $T_1, \ldots, T_m$ satisfy \cref{eq:acbsls:t:lb} (with $\x^{(0)} = \v^{(0)} = \vec{0}$), when $\{(\mu_i, L_i), i \in [m]\}$ are known.
	We can also achieve the same asymptotic sample complexity (up to constant factors suppressed in the $\bigo(\cdot)$) when $\{(\mu_i, L_i), i \in [m]\}$ are unknown and only $m$, $\mu_{1}$, $L_m$ and $ \pi_{\kappa}=\prod_{i=1}^m \kappa_i$ are known.
	
\end{theorem}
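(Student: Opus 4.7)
The plan is to adapt the proof strategy of \cref{thm:bsls:inexact} to the accelerated and branched setting, tracking in parallel the exact trajectory of $\acbsls_1(\vec{0},\vec{0})$ and the finite-precision trajectory of $\widehat{\acbsls_1}(\vec{0},\vec{0})$. We may assume $\psi(\vec{0},\vec{0}) > 3\epsilon$, since otherwise the $\min$ in the conclusion is trivially achieved by the initial point. Under this assumption, \cref{thm:acbsls} together with the choice of $T_1$ in \cref{eq:acbsls:t:lb} gives $\psi(\acbsls_1(\vec{0},\vec{0})) \leq \epsilon$ in exact arithmetic, so it suffices to bound the finite-precision drift $\psi(\widehat{\acbsls_1}(\vec{0},\vec{0})) - \psi(\acbsls_1(\vec{0},\vec{0}))$ by $2\epsilon$.

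First I would prove a one-step perturbation lemma for $\agd$. Given inputs $(\x,\v)$ and $(\widehat{\x}, \widehat{\v})$ with $\max\{\|\widehat{\x}-\x\|_2, \|\widehat{\v}-\v\|_2\} \leq \eta$, and corresponding exact and finite-precision outputs $(\x_+, \v_+), (\widehat{\x}_+, \widehat{\v}_+)$ of $\agd(\cdot,\cdot;L_i,\mu_i)$ and $\widehat{\agd}(\cdot,\cdot;L_i,\mu_i)$, splitting by the triangle inequality through $\agd(\widehat{\x},\widehat{\v};L_i,\mu_i)$ and using the $L_m$-Lipschitz gradient of $f$ together with \cref{req:agd} yields
\begin{equation}
    \max\{\|\widehat{\x}_+ - \x_+\|_2, \|\widehat{\v}_+ - \v_+\|_2\} \leq O(\globalcond) \cdot \eta + \delta \cdot O(\|\x_+\|_2 + \|\v_+\|_2).
\end{equation}
Combining $L_m$-smoothness and $\mu_1$-strong convexity of $f$ translates iterate drift into potential drift on $\psi$. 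Since the amplification bounds of \cref{lem:agd1}(c) limit intermediate exact potentials to at most a $\mathrm{poly}(\globalcond^m)$ multiple of $\psi(\vec{0},\vec{0})$, the exact iterates satisfy $\|\x^{(t)}\|_2, \|\v^{(t)}\|_2 \leq \mathrm{poly}(\globalcond^m) \sqrt{\psi(\vec{0},\vec{0})/\mu_1}$ throughout the run, which converts the multiplicative guarantee of \cref{req:agd} into an absolute per-step drift of order $\delta \cdot \mathrm{poly}(\globalcond^m) \cdot \sqrt{\psi(\vec{0},\vec{0})}$.

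Finally I would propagate the per-step bound through the recursive structure by induction on $i$ from $m$ down to $1$, mirroring \cref{lem:acbsls:3}: on its own scale each $\widehat{\agd}(\cdot,\cdot;L_i,\mu_i)$ contracts accumulated drift by $1-1/\sqrt{\kappa_i}$, while drift on larger-scale sub-objectives can inflate by $O(\globalcond^2)$ per such step, matching \cref{lem:agd1}(c). Telescoped over the $\prod_{i\in[m]} T_i$ total $\widehat{\agd}$ invocations across the $m$ recursion levels, the cumulative multiplicative amplification of any fixed rounding error is at most $(10\globalcond^2)^{2m-1}$, exactly the factor the hypothesized $\delta^{-1}$ cancels against $\epsilon/\psi(\vec{0},\vec{0})$. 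The main obstacle is the branching: each outer iteration of $\acbsls_i$ invokes $\acbsls_{i+1}$ twice, once on $(\tilde{\x},\tilde{\x})$ and once on $(\tilde{\v},\tilde{\v})$, so naive bookkeeping would double the drift at every recursion level and yield an unacceptable $2^m$ blow-up. The rescue is that each sub-call is initialized with a duplicated iterate, so the input drift entering it is a single $\eta$ rather than two, and the inductive analysis can be applied branch-by-branch without combinatorial doubling; the $4m$ prefactor in front of $\delta^{-1}$ in the theorem exactly accounts for this per-level overhead. The case where only $m$, $\mu_1$, $L_m$, and $\pi_\kappa$ are known then follows from the same black-box grid-search reduction (\cref{prop:search}) used in the proofs of \cref{thm:bsls:recursive} and \cref{thm:acbsls}.
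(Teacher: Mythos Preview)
Your high-level plan of tracking the $\ell_2$ drift between the exact and finite-precision trajectories is \emph{not} the approach the paper takes, and the specific contraction claim you rely on does not hold. You assert that ``on its own scale each $\widehat{\agd}(\cdot,\cdot;L_i,\mu_i)$ contracts accumulated drift by $1-1/\sqrt{\kappa_i}$.'' But $\agd$ is not a contraction in $\ell_2$ distance between two input pairs: the $\v_+$ update involves a step of size $1/\mu_i$ against a gradient that is $L_i$-Lipschitz even in the $i$-th subspace, so the Jacobian of that map has operator norm up to $\kappa_i - 1$, not $1 - 1/\sqrt{\kappa_i}$. If instead you mean drift in the potential $\psi_i$, note that \cref{lem:agd1}(a) is a one-sided inequality $\psi_i(\text{out}) \leq (1-\kappa_i^{-1/2})\,\psi_i(\text{in})$; subtracting two such inequalities gives no control on $|\psi_i(\widehat{\cdot}) - \psi_i(\cdot)|$. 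Without a genuine contraction of drift at each level, your telescoping argument would produce a geometric (not additive) accumulation of errors over $\prod_i T_i$ steps, which the stated $\delta$ bound cannot absorb.

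The paper avoids this entirely by never comparing to the exact trajectory. It introduces vector potentials $\bphi_i^{\err}, \bphi_i^{\res}$ and proves a one-step bound directly on the \emph{inexact} output (\cref{lem:implication:req:agd:4}):
\[
\bphi_i^{\err}\bigl(\widehat{\agd}(\x,\v;L_i,\mu_i)\bigr) \;\leq\; (\id + 10\delta\globalcond^2\,\ones\ones^\top)\,\mat{D}_i\,\bphi_i^{\err}(\x,\v) + 4\delta L_m\|\x^\star\|_2^2\,\ones,
\]
where $\mat{D}_i$ is the exact one-step matrix from \cref{lem:agd1}. The recursion is then captured by matrices $\mat{F}_i,\mat{E}_i,\mat{Z}_i$ with $\bphi_i^{\err}(\widehat{\acbsls_i}(\x,\v)) \leq (\mat{F}_i+\mat{E}_i)\bphi_i^{\err}(\x,\v) + \text{additive}$; here $\mat{F}_i$ encodes exact progress and $\mat{E}_i$ the accumulated finite-precision error. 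The crucial technical step (\cref{lem:acbsls:stab:diff:bound}) bounds $\|\mat{E}_i\|_1$ via a binomial-type expansion of $(\mat{K}_i\widehat{\mat{F}_{i+1}}\widehat{\mat{D}_i})^{T_i} - (\mat{K}_i\mat{F}_{i+1}\mat{D}_i)^{T_i}$, using the exact-arithmetic fact $\|\mat{K}_i\mat{F}_{i+1}\mat{D}_i\|_1 \leq 1$; this is what makes the error accumulate \emph{linearly} in $\prod_i T_i$ rather than geometrically. The branching is handled not by a ``single $\eta$'' argument but by building the branching directly into the definition of $\widehat{\mat{F}_{i+1}}$ through block matrices that separately track the $\err$- and $\res$-parts. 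Your observation about \cref{prop:search} for the unknown-parameter case is correct.
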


The proof of \cref{thm:acbsls:inexact} is deferred to Appendix \ref{apx:acbsls:finite}.

\subsection{Why do we need branching for $\acbsls$}
\label{sec:why:branching}
In this subsection we demonstrate why naive $\acbsls$ may not converge. 
Specifically, we consider the following \cref{alg:naive:acbsls}. 
The only difference compared with the principled $\acbsls$ (defined in \cref{alg:acbsls}) is the replacement of the branching procedure with a naive recursion.
\begin{algorithm}
	\caption{Naive Accelerated Big-Step Little-Step Algorithm (may not converge)}
	\label{alg:naive:acbsls}
	\begin{algorithmic}[1]
		\REQUIRE $\texttt{NaiveAcBSLS}_i$ $(\x^{(0)}, \v^{(0)})$
		\FOR{$t = 0, 1, \ldots, T_i-1$}
			\STATE $(\tilde{\x}^{(t)}, \tilde{\v}^{(t)}) \gets \agd(\x_t, \v_t; L_i, \mu_i)$
			\COMMENT{$\agd$ is the same as the original one (\cref{alg:acbsls})}
			\IF {$i < m$}
				\STATE $(\x^{(t+1)}, \v^{(t+1)}) \gets \acbsls_{i+1} (\tilde{\x}^{(t)}, \tilde{\v}^{(t)})$
				\COMMENT{Naively recurse instead of branching}
			\ELSE
				\STATE $(\x^{(t+1)}, \v^{(t+1)}) \gets (\tilde{\x}^{(t)}, \tilde{\v}^{(t)})$
			\ENDIF
		\ENDFOR
		\RETURN $(\x^{(T_i)}, \v^{(T_i)})$
	\end{algorithmic}
\end{algorithm}

\subsubsection{Theoretical evidence}
Following the discussion after \cref{lem:agd1}, we provide a simple result suggesting the potential for AGD may not be ``backward compatible'' (specifically, the potential governing small $[\mu_i, L_i]$ may not be conservative under $\agd$ with larger $[\mu, L]$, although the latter takes smaller step.)
Therefore one cannot replace \cref{lem:agd1} with 		\cref{eq:agd:false:claim}. 
Formally, we prove the following proposition. 
\begin{proposition}
    \label{lem:naive:acbsls}
	There exists a function $f: \reals^d \to \reals$ that is $\mu_1$-strongly-convex and $L_1$-smooth, but for certain $(\x^{(0)}, \v^{(0)})$ it is the case that
	\begin{equation}
	    \psi_1(\agd(\x^{(0)}, \v^{(0)}; L_2, \mu_2)) > \psi_1(\x^{(0)}, \v^{(0)}).
	\end{equation}
	for some $\mu_2, L_2$ such that $L_2 > \mu_2 > L_1$. 
	Here $\psi_1(\x, \v)$ is the potential associated with $f$, namely
	\begin{equation}
	    \psi_1(\x, \v) \defeq f(\x) - f^{\star} + \frac{\mu_1}{2} \| \v - \x^{\star} \|_2^2
	\end{equation}
\end{proposition}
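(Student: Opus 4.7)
The plan is to give an explicit one-dimensional quadratic counterexample. Set $d = 1$, fix $L_1 > 0$ and $L_2 > \mu_2 > L_1$ arbitrarily (for concreteness one can take $L_1 = 1$, $\mu_2 = 5$, $L_2 = 10$), and define $f(x) := \tfrac{L_1}{2} x^2$. This $f$ is $L_1$-smooth and $L_1$-strongly convex, hence also $\mu_1$-strongly convex for any $\mu_1 \in (0, L_1]$; the parameter $\mu_1$ will be chosen at the end to be sufficiently small. Since $x^{\star} = 0$ and $f^{\star} = 0$, the potential simplifies to $\psi_1(x, v) = \tfrac{L_1}{2} x^2 + \tfrac{\mu_1}{2} v^2$, matching the definition given in the proposition.

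Next, I will compute one $\agd(\cdot, \cdot; L_2, \mu_2)$ step starting from $(x^{(0)}, v^{(0)}) = (0, 1)$, so that $\psi_1(x^{(0)}, v^{(0)}) = \mu_1/2$. With $\kappa_2 := L_2/\mu_2$, $\alpha := \sqrt{\kappa_2}/(\sqrt{\kappa_2}+1)$, and $\beta := 1 - 1/\sqrt{\kappa_2}$, the intermediate point is $y = (1-\alpha) \cdot 1 = 1/(\sqrt{\kappa_2}+1)$. Using $\nabla f(y) = L_1 y$, the AGD updates reduce to $x_+ = (1 - L_1/L_2)\, y$ and $v_+ = \beta + (1-\beta)(1 - L_1/\mu_2)\, y$. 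The core phenomenon is that because $\mu_2 > L_1$, the ``momentum'' variable $v$ spills a nonzero amount into $x$, producing $x_+$ of size $\Theta(1/\sqrt{\kappa_2})$ which is \emph{not} small relative to $L_1$. This contribution is weighted by $L_1$ in $\psi_1$, whereas the potential only charges the $v$-component at the small rate $\mu_1$.

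To finish, observe that $\psi_1(x_+, v_+) \ge \tfrac{L_1}{2}(1 - L_1/L_2)^2 y^2 = \tfrac{L_1(1 - L_1/L_2)^2}{2(\sqrt{\kappa_2}+1)^2}$, a strictly positive quantity depending only on $L_1, \mu_2, L_2$. On the other hand, $\psi_1(x^{(0)}, v^{(0)}) = \mu_1/2 \to 0$ as $\mu_1 \to 0$. Choosing any $\mu_1$ strictly smaller than $L_1(1 - L_1/L_2)^2/(\sqrt{\kappa_2}+1)^2$ (which is automatically $\le L_1$) therefore guarantees $\psi_1(\agd(x^{(0)}, v^{(0)}; L_2, \mu_2)) > \psi_1(x^{(0)}, v^{(0)})$, proving the claim. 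For the illustrative values above, the threshold evaluates to roughly $0.139$, so e.g.\ $\mu_1 = 10^{-3}$ works and yields a hundred-fold blow-up of $\psi_1$ in a single step. There is no substantial obstacle: the argument is an explicit linear computation in one dimension, and the only point to check is that all the constraints $0 < \mu_1 \le L_1 < \mu_2 < L_2$ hold simultaneously, which is immediate from the construction.
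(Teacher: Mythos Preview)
Your argument is correct. The one-dimensional construction $f(x)=\tfrac{L_1}{2}x^2$ with $(x^{(0)},v^{(0)})=(0,1)$ works exactly as you describe: after one $\agd(\cdot,\cdot;L_2,\mu_2)$ step the $x$-component becomes $x_+=(1-L_1/L_2)/(\sqrt{\kappa_2}+1)$, so $\psi_1(x_+,v_+)\ge \tfrac{L_1}{2}x_+^2$ is a fixed positive constant independent of $\mu_1$, while $\psi_1(0,1)=\mu_1/2$ can be made arbitrarily small. All the computations and inequalities check out, and the constraint $0<\mu_1\le L_1<\mu_2<L_2$ is satisfied.

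Your route differs from the paper's in two ways. The paper gives a two-dimensional example $f(\x)=\tfrac12 x_1^2+5x_2^2$ with $\mu_1=1$, $L_1=10$, $\mu_2=100$, $L_2=200$, and verifies numerically that $\psi_1$ jumps from $1$ to about $1.18$; there $\mu_1$ and $L_1$ are genuinely distinct eigenvalues of the Hessian. You instead take a degenerate $f$ whose Hessian is a multiple of the identity and exploit the freedom to declare any $\mu_1\le L_1$ as the strong-convexity parameter in the potential; by sending $\mu_1\to 0$ you get the inequality with an arbitrarily large ratio rather than a fixed numerical gap. Your version is more elementary (one dimension, no numerical check needed) and makes the mechanism---momentum transferring mass from the cheaply-charged $v$ coordinate into the expensively-charged $x$ coordinate---completely transparent. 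The paper's version has the minor advantage that $\mu_1$ is the \emph{tight} strong-convexity constant of $f$, which is closer in spirit to the multiscale setting of Definition~\ref{def:multiscale_problem}, but the proposition as stated does not require this, so your proof is entirely valid.
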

\begin{remark}
    Although \cref{lem:naive:acbsls} does not rule out the possibility of other conservative potentials, we conjecture that such a potential may not exist given the inherent instability of accelerated GD (c.f., Section F of \cite{Yuan.Ma-NeurIPS20}).
\end{remark}
\begin{proof}[Proof of \cref{lem:naive:acbsls}]
    Consider the following objective $f: \reals^2 \to \reals$: $f(\x) = 0.5 x_1^2 + 5 x_2^2$. Apparently $f$ is $1$-strongly-convex, $10$-smooth.
    Consider initialization $\x^{(0)} = (0, 0)^\top, \v^{(0)} = (1, 1)^\top.$
    Then one can verify that $        \psi_1(\x^{(0)}, \v^{(0)}) = 1$ but $\psi_1 (\agd(\x^{(0)}, \v^{(0)}; L_2, \mu_2)) > 1.18$ for $L_2 = 200$ and $\mu_2 = 100$. 
\end{proof}

\subsubsection{Numerical evidence}
Next, we provide numerical evidence against the convergence of naive $\acbsls$, see \cref{fig:naive:acbsls}.
We synthesize a quadratic objective with eigenvalues belonging to $[10^{-4}, 10^{-3}] \cup [0.5, 1]$. 
We implement both the principled $\acbsls$ (with branching, see \cref{alg:acbsls}) and naive $\acbsls$ (\cref{alg:naive:acbsls}) with the corresponding $\mu_1, \mu_2, L_1, L_2$. 
We observe that the principled $\acbsls$ (with branching) converge with $T_2 = 8$, as expected. 
On the other hand, the naive $\acbsls$ fails to converge for any $T_2 \in \{8, 16, 32, 64\}$. 
The implementation details can be found in the accompanying notebook in supplementary materials.

\begin{figure}[t]
    \centering
    \includegraphics[width=8cm]{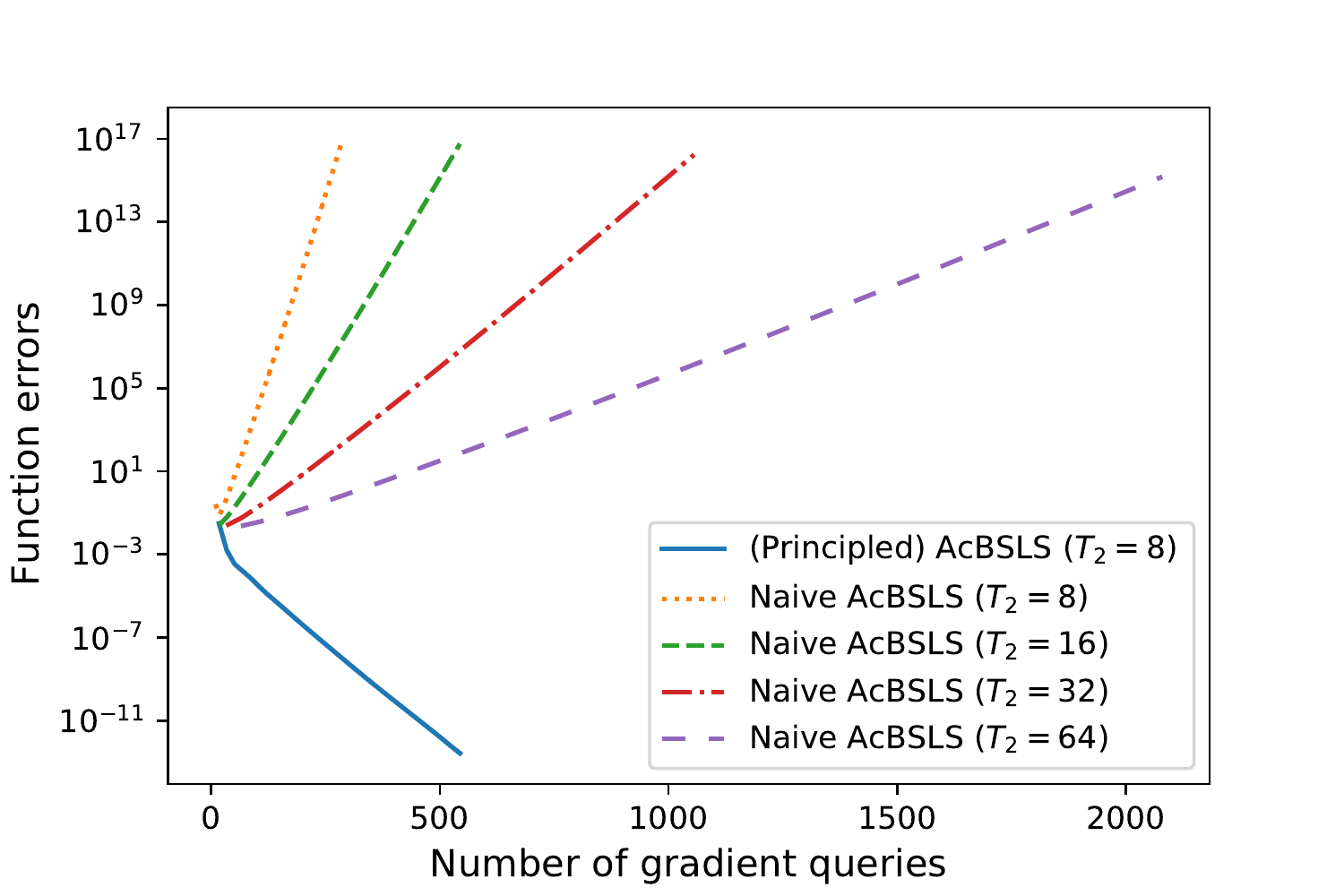}
    \caption{
    \textbf{Numerical evidence that Naive $\acbsls$ (\cref{alg:naive:acbsls}) may not converge}. 
    Observe that  the principled $\acbsls$ (with branching) converge with $T_2 = 8$, but the naive $\acbsls$ fails to converge for any $T_2 \in \{8, 16, 32, 64\}$.}
    \label{fig:naive:acbsls}
\end{figure}

\section{Lower bound for multiscale optimization}
\label{sec:lb}
In this section, we prove our lower bound results (\cref{thm:lb}) of the multi-scale optimization problem.

\subsection{Proof structure of \cref{thm:lb}}
We will separate the proof of \cref{thm:lb} into three parts.

\paragraph{Part I: Reduction to uniform polynomial approximation.}
In the first part, we reduce the problem of a lower bound over arbitrary first-order deterministic algorithms to a constrained polynomial uniform approximation problem on $S = \bigcup_{i \in [m]} [\mu_i, L_i]$ across $\polys_k^0$, where (throughout this section) 
\begin{equation}
    \polys_k^0 = \{p :  \text{$p$ is a polynomial of degree at most $k$ and } p(0) = 1\}.
    \label{eq:def:pk}
\end{equation}
The result is as follows.
\begin{lemma}[Reduction to a uniform polynomial approximation problem]
    \label{lem:lb:reduce:to:apx}
    For any first-order deterministic algorithm $\mathsf{A}$,  for any $k \in \naturals$ and $\gradbound > 0$, there exists an objective $f$ satisfying \cref{def:multiscale_problem} with $\|\nabla f(\vec{0})\|_2 \leq \gradbound$ such that
    \begin{equation}
        \min_{\tau \in [k]}\| \nabla f(\x^{(\tau)})\|_2  \geq 
        \left( \min_{p \in \polys_k^0} \max_{\lambda \in S} |p(\lambda)| \right) \cdot \gradbound.
    \end{equation}
\end{lemma}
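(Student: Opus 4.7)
The plan is to carry out a Nemirovski--Yudin style reduction: restrict to a quadratic hard instance, rewrite any iterate lying in the Krylov subspace as a polynomial of the Hessian applied to the residual, and then lower-bound the resulting polynomial norm by the uniform approximation quantity $M_k(S) := \min_{p \in \polys_k^0}\max_{\lambda \in S}|p(\lambda)|$.

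\textbf{Quadratic reduction and Krylov rewriting.} I would take $f(\x) = \frac{1}{2}\x^\top \mat{A}\x - \b^\top\x$ with $\mat{A}\succeq 0$ and $\mathrm{spec}(\mat{A})\subseteq S = \bigcup_{i\in[m]}[\mu_i,L_i]$; spectrally decomposing $\mat{A}$ and grouping its eigenspaces interval-by-interval realizes the structure required by \cref{def:multiscale_problem}. Since $\nabla f(\vec{0}) = -\b$, picking $\|\b\|_2 = \gradbound$ meets the initial-gradient condition. The key observation is that whenever an iterate $\x^{(\tau)}$ lies in $\mathcal{K}_\tau(\mat{A},\b) := \mathrm{span}\{\b,\mat{A}\b,\ldots,\mat{A}^{\tau-1}\b\}$, one has $\x^{(\tau)} = q_\tau(\mat{A})\b$ for some $q_\tau$ of degree $\le \tau-1$, whence $\nabla f(\x^{(\tau)}) = (\mat{A}q_\tau(\mat{A}) - \id)\b = -\tilde p_\tau(\mat{A})\b$ with $\tilde p_\tau \in \polys_\tau^0 \subseteq \polys_k^0$. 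Thus it suffices to construct $(\mat{A},\b)$ with $\mathrm{spec}(\mat{A})\subseteq S$ and $\|\b\|_2 = \gradbound$ such that $\|p(\mat{A})\b\|_2 \ge M_k(S)\cdot \gradbound$ holds for \emph{every} $p\in \polys_k^0$.

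\textbf{Choosing the hard instance via minimax duality.} For the construction, I would apply Sion's minimax theorem to the bilinear-in-$\mu$, convex-in-$p$ functional $(p,\mu)\mapsto \int p^2\,d\mu$ over $\polys_k^0 \times \mathcal{P}(S)$, with $\mathcal{P}(S)$ the weak-$*$ compact convex set of Borel probability measures on $S$. This yields
$$\sup_{\mu \in \mathcal{P}(S)}\min_{p \in \polys_k^0}\int p^2\,d\mu \;=\; \min_{p \in \polys_k^0}\sup_{\mu \in \mathcal{P}(S)}\int p^2\,d\mu \;=\; M_k(S)^2,$$
since the inner supremum on the right is attained by Dirac measures at $\arg\max_{\lambda \in S}|p(\lambda)|$. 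As the objective depends on $\mu$ only through its moments of degree $\le 2k$, a Carath\'eodory argument produces an optimal $\mu^*$ supported on finitely many points $\{\lambda_i\}_{i=1}^N \subseteq S$ with weights $w_i\ge 0$, $\sum_i w_i = 1$. Taking $\mat{A}$ diagonal with eigenvalues $\lambda_i$ in an orthonormal basis $\{\unit_i\}$ and $\b = \gradbound \cdot \sum_i \sqrt{w_i}\,\unit_i$ then gives, for every $p\in \polys_k^0$, $\|p(\mat{A})\b\|_2^2 = \gradbound^2\sum_i w_i\, p(\lambda_i)^2 = \gradbound^2 \int p^2\,d\mu^* \ge \gradbound^2 M_k(S)^2$, as required.

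\textbf{Trapping general first-order algorithms in the Krylov subspace (the main obstacle).} The remaining and most delicate step is to guarantee that the iterates of a general deterministic first-order algorithm, as in \cref{def:fo}, actually lie in $\mathcal{K}_\tau(\mat{A},\b)$; this does not follow automatically, because \cref{def:fo} permits arbitrary measurable post-processing of past gradients. I would handle it via a standard resisting-oracle / adversarial-rotation construction in the spirit of \cite{Carmon.Duchi.ea-MP20,Carmon.Duchi.ea-MP21}: embed the spectral data $(\{\lambda_i\},\{\sqrt{w_i}\})$ into a sufficiently large ambient space $\reals^d$ and build the instance on the fly, at each round choosing an orthogonal change of basis (consistent with all past oracle responses) that preserves $\mathrm{spec}(\mat{A})$ and $\|\b\|_2$ while aligning the next response so that the algorithm's new iterate still lies in $\mathcal{K}_\tau(\mat{A},\b)$. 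The delicacy is in ensuring global consistency across all $k$ rounds; since the algorithm has inspected only $\tau-1$ linearly independent directions after $\tau-1$ queries, ambient dimension $d$ polynomially larger than $k$ supplies the needed orthogonal slack. Combining this with the previous step yields $\|\nabla f(\x^{(\tau)})\|_2 \ge M_k(S)\,\gradbound$ for every $\tau\in[k]$, which gives the lemma.
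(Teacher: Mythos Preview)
Your proposal is correct and the outline would close; the main difference from the paper is in how you handle the second step (producing a hard spectral weighting).

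\textbf{Comparison.} The paper also reduces to a quadratic instance and to the Carmon--Duchi zero-respecting framework, but it obtains the hard weighting $(\{\lambda_i\},\{v_i\})$ \emph{constructively} from the best uniform approximant $p_k^\star$ on $S$: it uses the Achieser/Schiefermayr equioscillation characterization on unions of intervals, takes the $\lambda_i$ to be the $s\in\{k+1,\dots,k+m\}$ ``e-points'' of $p_k^\star$, and chooses the weights $v_i^2\propto c_i/\lambda_i$ so that (after merging intervals) $p_k^\star$ becomes a T-polynomial and is orthogonal to all lower-degree polynomials under those weights; this forces $p_k^\star$ to be the discrete weighted $\ell_2$ minimizer as well, yielding $\min_p\sum_i v_i^2 p(\lambda_i)^2 = \|p_k^\star\|_S^2$. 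Your Sion-plus-Carath\'eodory route achieves the same inequality non-constructively and with far less approximation-theoretic machinery; the price is that you do not get the explicit e-point structure or the precise support-size bound $s\le k+m$ (you only get $N=O(k)$ via Carath\'eodory on the $(2k{+}1)$-moment map), but neither of these is needed for the lemma as stated. For Step~3, the paper avoids building the resisting oracle by hand: it first Lanczos-tridiagonalizes $(\{\lambda_i\},\{v_i\})$ into a nondegenerate tridiagonal $\mat{T}$ (so that zero-respecting algorithms reveal one coordinate per step and $\{\x:x_{k+1}=\cdots=0\}$ equals the Krylov space), and then quotes Proposition~1 of \cite{Carmon.Duchi.ea-MP21} to pass from zero-respecting to arbitrary deterministic first-order algorithms via orthogonal invariance of the function class. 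Your adversarial-rotation sketch is the right idea but is precisely what that proposition packages; citing it and tridiagonalizing first would make your Step~3 fully rigorous with no extra work.
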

The rough proof idea of \cref{lem:lb:reduce:to:apx} is to 1) first reduce the general first-order deterministic algorithm class to the construction of a tri-diagonal objective for which zero-respecting algorithm (see \cite{Carmon.Duchi.ea-MP21} for definition) is hard , then 2) reduce to the problem of discrete weighted $\ell_2$ polynomial approximation over $S$, and finally 3) reduce to uniform polynomial approximation over $S$. 
The detailed proof of \cref{lem:lb:reduce:to:apx} is relegated to  \cref{sec:lb:reduction:to:apx}. 

\paragraph{Part II: Reduction to Green's function.}
In the second part,  we cite classic results from potential theory literature to reduce the uniform polynomial approximation problem raised in \cref{lem:lb:reduce:to:apx} to the estimation of Green's function. The results are as follows.
\begin{lemma}[Reduction to Green's function]
    \label{lem:lb:reduce:to:Green}
    Let $S = \bigcup_{i \in [m]} [\mu_i, L_i]$, then for any $k \in \naturals$, the following inequality holds 
    \begin{equation}
        \min_{p \in \polys_k^0} \max_{\lambda \in S} |p(\lambda)| \geq \exp (-k g_S(0))
    \end{equation}
    where $g_S(0)$ is the Green's function associated with $S$ (with pole at $\infty$), see \cref{def:green} in \cref{sec:reduce:to:green} for formal definition.
\end{lemma}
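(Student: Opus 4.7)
The plan is to deduce this bound from the classical Bernstein--Walsh lemma of potential theory. Fix any $p \in \polys_k^0$ and set $M := \max_{\lambda \in S} |p(\lambda)|$. Since $|p(0)| = 1$, the desired inequality $M \geq \exp(-k g_S(0))$ is equivalent to $\tfrac{1}{k}\log|p(0)| - g_S(0) \leq \tfrac{1}{k}\log M$. The strategy is to introduce the auxiliary function
\[
v(z) \;:=\; \tfrac{1}{k}\log|p(z)| \;-\; g_S(z)
\]
on $\mathbb{C}\setminus S$ and show, via the generalized maximum principle for subharmonic functions, that $v(0) \leq \tfrac{1}{k}\log M$.

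Three ingredients are needed. First, $\tfrac{1}{k}\log|p(\cdot)|$ is subharmonic on all of $\mathbb{C}$ (it is harmonic off the roots of $p$ and equals $-\infty$ at each root), and $g_S$ is harmonic on $\mathbb{C}\setminus S$ by definition, so $v$ is subharmonic on $\mathbb{C}\setminus S$. Second, at boundary points $\lambda \in S$: since $g_S(\lambda) = 0$ quasi-everywhere on $S$ and $|p(\lambda)| \leq M$ there, we obtain $\limsup_{z\to\lambda} v(z) \leq \tfrac{1}{k}\log M$ for q.e.\ $\lambda \in S$. Third, at infinity: since $\deg p \leq k$, $\tfrac{1}{k}\log|p(z)| \leq \log|z| + O(1)$, while $g_S(z) = \log|z| + \gamma + o(1)$ as $z \to \infty$ by the defining pole condition, so $\limsup_{z\to\infty} v(z) < \infty$. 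The generalized maximum principle (which permits polar exceptional sets on the boundary) then gives $v(z) \leq \tfrac{1}{k}\log M$ throughout $\mathbb{C}\setminus S$. Evaluating at $z = 0 \notin S$ and using $p(0) = 1$ yields $-g_S(0) \leq \tfrac{1}{k}\log M$, completing the proof.

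The main delicate point is the precise invocation of the generalized maximum principle, which must accommodate two subtleties: (i) $g_S$ equals $0$ on $S$ only quasi-everywhere rather than pointwise, and (ii) when $\deg p < k$, the function $v$ tends to $-\infty$ at infinity rather than to a finite limit. Both issues are handled by the standard version of the maximum principle for subharmonic functions that ignores exceptional sets of capacity zero (see, e.g., \cite{Driscoll.Toh.ea-SIREV98} and the classical references on logarithmic potential theory therein). Since the entire reduction is classical, I anticipate the paper's proof to be quite short, essentially citing the Bernstein--Walsh lemma and applying it at $z = 0$ rather than reproducing the subharmonic-function argument from scratch.
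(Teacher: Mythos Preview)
Your argument is correct; it is the standard Bernstein--Walsh proof via the subharmonic maximum principle. The paper, however, does not reproduce this argument at all: its ``proof'' of the lemma consists purely of two citations from \cite{Driscoll.Toh.ea-SIREV98}. First, it quotes the existence of the asymptotic convergence factor $\rho_S = \lim_{k\to\infty}(\min_{p\in\polys_k^0}\max_{\lambda\in S}|p(\lambda)|)^{1/k}$ together with the non-asymptotic bound $\min_{p\in\polys_k^0}\max_{\lambda\in S}|p(\lambda)| \geq \rho_S^k$; second, it quotes the identity $\rho_S = \exp(-g_S(0))$. Combining these two facts is the entire content. What you wrote is essentially the proof of the first cited fact, so you are supplying the potential-theoretic details that the paper deliberately black-boxes. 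Your closing anticipation that the paper would ``essentially cite the Bernstein--Walsh lemma'' is exactly right.

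One minor remark: your caveats about quasi-everywhere boundary behavior and polar exceptional sets are unnecessary here. Because $S$ is a finite union of nondegenerate closed intervals, every boundary point of $S$ is regular for the Dirichlet problem, and indeed the paper's \cref{def:green} takes $g_S(z)\to 0$ pointwise on $\partial S$ as part of the definition. So the ordinary maximum principle (with the removable-singularity/boundedness-at-infinity argument you gave) already suffices, and the generalized version allowing polar exceptions is not needed.
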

The detailed reference of \cref{lem:lb:reduce:to:Green} is relegated to \cref{sec:reduce:to:green}.

\paragraph{Part III: Estimating (upper bound) the Green's function.}
In the last part, we provide a bound of $g_S(0)$ as follows.  
We identify that this estimate may be of independent interest.
\begin{lemma}[Estimating the Green's function]
    \label{lem:lb:green:ub}
    Let $S = \bigcup_{j=1}^m [\mu_j, L_j]$, and assume $\frac{L_j}{\mu_j} \geq 2$ for $j \in [m]$. 
    Then the Green's function associated with $S$ satisfies
    \begin{equation}
        g_S(0) \leq \frac{8}{\sqrt{\prod_{i \in [m]} \frac{L_i}{\mu_i}} \cdot \prod_{i \in [m-1]} \left(0.03 \cdot \log (16 \frac{\mu_{i+1}}{L_i}) \right)}.
    \end{equation}
\end{lemma}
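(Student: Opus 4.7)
My plan is to bound $g_S(0)$ by an explicit polynomial-pullback construction, combined with induction on the number of intervals $m$. The key tool is the standard potential-theoretic identity: for any polynomial $P$ of degree $d$,
\[
g_{P^{-1}(E)}(z) \;=\; \frac{1}{d}\, g_E(P(z))
\]
for any compact $E \subset \mathbb{C}$. Since shrinking a set strictly increases its Green's function pointwise off the set, if one finds $P$ of degree $d$ with $P^{-1}([-1,1]) \subseteq S$, then
\[
g_S(z) \;\leq\; g_{P^{-1}([-1,1])}(z) \;=\; \frac{1}{d}\log\bigl|P(z) + \sqrt{P(z)^2 - 1}\bigr|
\]
for $P(z) \notin [-1,1]$. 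The plan is to construct such polynomials of increasing degree that capture the multi-interval structure of $S$.

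For the base case $m=1$, the affine map $P(x) = (2x - \mu_1 - L_1)/(L_1 - \mu_1)$ gives $P^{-1}([-1,1]) = [\mu_1, L_1]$, so $g_S(0) = \log\tfrac{\sqrt{\kappa_1}+1}{\sqrt{\kappa_1}-1}$. Applying $\log\tfrac{1+t}{1-t} \leq 2t/(1-t)$ at $t = 1/\sqrt{\kappa_1}$ and using $\kappa_1 \geq 2$ yields $g_S(0) \leq 8/\sqrt{\kappa_1}$, matching the claim. For the inductive step, given a polynomial $P_{m-1}$ that handles the first $m-1$ intervals, I would construct $P_m$ by post-composing with a Chebyshev-type polynomial of degree $k \sim \log(\mu_m/L_{m-1})$ and re-normalizing so that $P_m^{-1}([-1,1])$ picks up a preimage inside $[\mu_m, L_m]$ without crossing the gap. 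The net effect is to multiply the effective degree by a factor of order $\log(16\mu_m/L_{m-1})$ while only polynomially increasing $|P_m(0)|$, producing the claimed product-of-logs denominator.

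The main obstacle is carrying out this inductive construction for unbalanced (non-symmetric) interval geometries and pinning down the constants. For symmetric intervals the construction reduces to classical Chebyshev-on-union-of-intervals polynomials, but for arbitrary ratios $\mu_{i+1}/L_i$ one needs Zolotarev-type polynomials, and the constant $0.03$ in the claim arises from the worst-case gap behavior of such polynomials. A cleaner alternative I would pursue in parallel is via the equilibrium-measure representation $g_S(0) = \int_S \log t \, d\mu_S(t) - \log\mathrm{cap}(S)$, using the explicit density $d\mu_S(x) = |Q(x)|\,dx/(\pi\sqrt{-R(x)})$ with $R(x) = \prod_i (x-\mu_i)(x-L_i)$ and $Q$ a degree-$(m-1)$ polynomial fixed by $m-1$ period conditions. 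Either route reduces to careful estimation of abelian integrals on the two-sheeted Riemann surface of $\sqrt{R}$, which is where I expect the heaviest technical work to lie.
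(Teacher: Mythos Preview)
Your primary route—polynomial pullback plus induction on $m$ via Chebyshev composition—has a genuine gap at the inductive step. Post-composing $P_{m-1}$ with a Chebyshev polynomial $T_k$ leaves the preimage unchanged: $(T_k\circ P_{m-1})^{-1}([-1,1]) = P_{m-1}^{-1}(T_k^{-1}([-1,1])) = P_{m-1}^{-1}([-1,1])$, since $T_k^{-1}([-1,1])=[-1,1]$. To pick up a new component inside $[\mu_m,L_m]$ you would need an outer polynomial $Q$ with $Q^{-1}([-1,1])\supseteq [-1,1]\cup J$ for some interval $J$ contained in $P_{m-1}([\mu_m,L_m])$, and with nothing else in the preimage between them. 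Such $Q$ are not Chebyshev polynomials but two-interval Akhiezer/Zolotarev polynomials; controlling their degree by $\log(\mu_m/L_{m-1})$ together with $|Q(P_{m-1}(0))|$ is exactly the estimate you are trying to prove, so the construction is circular unless you bring in an independent quantitative result on those polynomials. Your base case $m=1$ is fine, but the recursion as stated does not produce the new interval.

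Your alternative—the equilibrium-measure density $|Q(x)|/(\pi\sqrt{-R(x)})$ with $Q$ of degree $m-1$ fixed by $m-1$ period conditions—is precisely what the paper does, via Widom's formula
\[
g_S(0)=(-1)^{m+1}\int_0^{\mu_1}\frac{h(\zeta)}{\sqrt{q(\zeta)}}\,d\zeta,\qquad q(\zeta)=\prod_{j}(\zeta-\mu_j)(\zeta-L_j),
\]
where $h$ is your $Q$. The paper carries out the abelian-integral estimate in three explicit pieces: (i) a direct bound on the integral gives $g_S(0)\le 7\,(\prod_k \kappa_k)^{-1/2}\prod_{k<m}(r_k/\mu_{k+1})$, where the $r_k\in(L_k,\mu_{k+1})$ are the roots of $h$; (ii) each $r_k$ is shown, using the period condition on $(L_k,\mu_{k+1})$ and monotonicity of the remaining factors, to be at most four times a ratio of two integrals over $[L_k,\mu_{k+1}]$ involving only the four adjacent endpoints $\mu_k,L_k,\mu_{k+1},L_{k+1}$; (iii) that ratio is computed exactly in terms of complete elliptic integrals of the first and third kind and then bounded by $7\mu_{k+1}/\log(16\,\mu_{k+1}/L_k)$ via a concavity/derivative estimate for $(1-x)\Pi(x\mid 1-y)/K(1-y)$. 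The constant $0.03$ in the statement is just $1/28$ from combining (ii) and (iii), not a Zolotarev extremal constant. So your instinct that the work is in the abelian integrals is right; the paper simply executes it with elliptic-integral identities rather than leaving it abstract.
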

The proof of \cref{lem:lb:green:ub} is relegated to \cref{sec:lb:green:ub}.

The \cref{thm:lb} then follows immediately from \cref{lem:lb:reduce:to:apx,lem:lb:reduce:to:Green,lem:lb:green:ub}.

\subsection{Proof of \cref{lem:lb:reduce:to:apx}: Reduction to uniform polynomial approximation}
\label{sec:lb:reduction:to:apx}
In this subsection we will prove \cref{lem:lb:reduce:to:apx} on the reduction from the lower bound of arbitrary first-order deterministic algorithms to the uniform polynomial approximation problem. 

We will prove \cref{lem:lb:reduce:to:apx} in three steps.

\paragraph{Step 1: Reduction to a first-order zero chain (or hard tri-diagonal quadratic objective).}
Following the techniques of \cite{Carmon.Duchi.ea-MP21}, we first reduce the lower bound across all first-order deterministic algorithms to the construction of a ``first-order zero chain'' \cite{Carmon.Duchi.ea-MP21}. 
Specifically, we reduce to the existence of tri-diagonal quadratic objectives with ``large'' gradients under limited supports.

\begin{lemma}[Reduction from arbitrary first-order deterministic algorithms to first-order zero-chains]
    \label{lem:lb:reduction:to:chain}
    Let $S = \bigcup_{i \in [m]} [\mu_i, L_i]$, suppose for some $\gradbound > 0$, $\epsilon > 0$ and $k \in \naturals$, there exists a symmetric tri-diagonal matrix $\mat{T} \in \reals^{(k + m) \times (k+m)}$ with eigenvalues all belonging to $S$, and suppose the objective $f(\x) := \frac{1}{2} \x^\top \mat{T} \x + \gradbound \cdot \unit_1^\top \x$ satisfies
    \begin{equation}
        \min_{\x: x_{k+1} = x_{k+2} = \cdots = x_{k+m} = 0} \|\nabla f(\x) \|_2 \geq \epsilon.
    \end{equation}
    Then for any first-order deterministic algorithm $\mathsf{A}$, there exists a function $\tilde{f}$ satisfying \cref{def:multiscale_problem} with $\|\nabla f(\vec{0})\|_2 \leq \gradbound$ 
    such that the trajectory $\{\x^{(t)}\}_{t=1}^{\infty}$ generated by $\mathsf{A}$ on $\tilde{f}$ satisfies 
    \begin{equation}
        \min_{\tau \in [k+1]}\| \nabla \tilde{f}(\x^{(\tau)})\|_2 \geq \epsilon.
    \end{equation}
\end{lemma}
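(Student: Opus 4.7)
The strategy is a ``resisting-oracle'' / adaptive-rotation reduction, in the style of \cite{Nemirovski.Yudin-83} and \cite{Carmon.Duchi.ea-MP21}. I would take $\tilde f$ to be a rotated copy of $f$,
\[
    \tilde f(\x) \;=\; f(\mat{U}^\top \x) \;=\; \tfrac{1}{2}\x^\top(\mat{U}\mat{T}\mat{U}^\top)\x \;+\; \gradbound \cdot (\mat{U}\unit_1)^\top \x,
\]
for an orthogonal matrix $\mat{U} \in \reals^{(k+m)\times(k+m)}$ to be chosen adaptively in response to $\mathsf{A}$'s queries. Orthogonal conjugation preserves the spectrum, so $\mat{U}\mat{T}\mat{U}^\top$ has eigenvalues in $S = \bigcup_i[\mu_i,L_i]$ for any choice of $\mat{U}$, and via the spectral decomposition argument described right after \cref{def:multiscale_problem}, $\tilde f$ therefore satisfies the multiscale definition. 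Under the convention $\x^{(1)} = \vec 0$ (which can be arranged by translating $\tilde f$), I also obtain $\|\nabla \tilde f(\vec 0)\|_2 = \|\gradbound \cdot \mat{U}\unit_1\|_2 = \gradbound$.

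\paragraph{Adaptive construction.} The key feature of $f$ to exploit is its first-order zero-chain property: because $\mat{T}$ is tri-diagonal and the linear term is $\gradbound\unit_1$, whenever $\y$ has $\max\mathrm{supp}(\y) \le r$ the gradient $\nabla f(\y) = \mat{T}\y + \gradbound\unit_1$ has $\max\mathrm{supp}(\nabla f(\y)) \le r + 1$. Building on this, I would grow the columns $\vec u_1,\vec u_2,\ldots$ of $\mat{U}$ greedily, in lockstep with simulating $\mathsf{A}$ on $\tilde f$, while maintaining the invariant that after $\mathsf{A}$ has produced its $t$-th query, the rotated iterates $\y^{(s)} := \mat{U}^\top \x^{(s)}$ satisfy $\mathrm{supp}(\y^{(s)}) \subseteq \{1,\ldots,s-1\}$ for every $s \le t$. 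Concretely, when $\mathsf{A}$ outputs $\x^{(t)}$---a determined vector, since it is a measurable function of the previously issued responses, all of which by the invariant lie in the already-committed $\mathrm{span}(\vec u_1,\ldots,\vec u_{t-1})$---I would commit $\vec u_t$ as the normalised component of $\x^{(t)}$ orthogonal to that span (or any unit vector orthogonal to it, if that component vanishes). The zero-chain property then guarantees that the outgoing response $\nabla \tilde f(\x^{(t)}) = \mat{U}(\mat{T}\y^{(t)} + \gradbound\unit_1)$ lies in $\mathrm{span}(\vec u_1,\ldots,\vec u_t)$, preserving the invariant for the next step. After $k+1$ queries at most $k$ columns have been committed (the first query consumes none because $\x^{(1)} = \vec 0$); the remaining $m$ columns can be filled in as any orthonormal completion in $\reals^{k+m}$.

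\paragraph{Wrap-up.} Once the invariant is in place, for each $\tau \in [k+1]$ the rotated iterate $\y^{(\tau)}$ has support in $\{1,\ldots,k\}$, so its last $m$ coordinates vanish; the hypothesis on $f$ then yields $\|\nabla f(\y^{(\tau)})\|_2 \ge \epsilon$. Orthogonality of $\mat{U}$ finally gives $\|\nabla \tilde f(\x^{(\tau)})\|_2 = \|\mat{U}\nabla f(\y^{(\tau)})\|_2 = \|\nabla f(\y^{(\tau)})\|_2 \ge \epsilon$, which is the claim.

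\paragraph{Main obstacle.} The delicate part, I expect, is the bookkeeping in the adaptive construction: one must sequence ``compute $\x^{(t)}$ / commit $\vec u_t$ / compute and return $\nabla \tilde f(\x^{(t)})$'' so that every response is well-defined when $\mathsf{A}$ consumes it, while keeping the committed-column count at most $t$ after step $t$. Determinism of $\mathsf{A}$ is essential here---it lets us evaluate $\x^{(t)}$ as a concrete vector before committing $\vec u_t$, sidestepping what would otherwise be a circular dependence of $\mat{U}$ on itself. The convention $\x^{(1)} = \vec 0$ is also doing real work: without it the first query already consumes a column of $\mat{U}$, and the support bound would slip by one, forcing the ambient dimension up to $k+m+1$. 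I would prefer to isolate the rotation construction as a reusable lemma in the style of \cite{Carmon.Duchi.ea-MP21} and apply it as a black box, cleanly separating the combinatorial bookkeeping from the zero-chain algebra specific to our tri-diagonal $f$.
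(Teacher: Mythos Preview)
Your overall plan—rotate $f$ by an adaptively chosen orthogonal $\mat{U}$ and exploit the tri-diagonal zero-chain structure—is exactly the paper's approach: the paper simply cites Proposition~1 of \cite{Carmon.Duchi.ea-MP21} (which packages precisely this adaptive-rotation reduction for orthogonally invariant classes) and then observes that on the resulting function any zero-respecting algorithm initialised at $\vec{0}$ has its first $k+1$ iterates supported on coordinates $1,\ldots,k$. Your closing suggestion, to isolate the rotation construction as a reusable lemma in the style of \cite{Carmon.Duchi.ea-MP21} and apply it as a black box, is literally what the paper does.

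Where your sketch slips is the committing rule. You set $\vec{u}_t$ to be the normalised component of $\x^{(t)}$ orthogonal to $\mathrm{span}(\vec{u}_1,\ldots,\vec{u}_{t-1})$. That puts $\x^{(t)} \in \mathrm{span}(\vec{u}_1,\ldots,\vec{u}_t)$, so $\y^{(t)} = \mat{U}^\top \x^{(t)}$ has support in $\{1,\ldots,t\}$, not $\{1,\ldots,t-1\}$: its $t$-th coordinate equals $\vec{u}_t^\top \x^{(t)}$, the norm of that orthogonal component, which is generically nonzero (a deterministic $\mathsf{A}$ may output $\x^{(t)}$ anywhere in $\reals^{k+m}$, not only in the span of prior responses). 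Consequently $\mat{T}\y^{(t)} + \gradbound\unit_1$ has support reaching coordinate $t+1$, so issuing the response would require the not-yet-committed $\vec{u}_{t+1}$; and after $k+1$ queries the iterates $\y^{(\tau)}$ are only guaranteed to vanish in their last $m-1$ coordinates, so the hypothesis on $f$ does not apply. The rule that actually delivers your stated invariant is the one in \cite{Carmon.Duchi.ea-MP21}: commit $\vec{u}_t$ \emph{orthogonal} to $\x^{(1)},\ldots,\x^{(t)}$ (and to previous columns). This restores $\mathrm{supp}(\y^{(t)}) \subseteq \{1,\ldots,t-1\}$ and makes the response computable from $\vec{u}_1,\ldots,\vec{u}_t$, at the cost of a feasibility condition on the ambient dimension that must be tracked. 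Citing Proposition~1 directly, as the paper does, sidesteps this bookkeeping entirely.
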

The proof of \cref{lem:lb:reduction:to:chain} is similar to the original proof of lower bounds in \cite{Carmon.Duchi.ea-MP21}. 
We first reduce the range of arbitrary deterministic first-order algorithms to zero-respecting algorithms via the equivalency result in \cite{Carmon.Duchi.ea-MP21}, and then show that any zero-respecting algorithm can only reveal one dimension per step for the tri-diagonal quadratic objective. 
The detailed proof of  \cref{lem:lb:reduction:to:chain} is relegated to \cref{sec:pf:lem:lb:reduction:to:chain}.

\paragraph{Step 2: Reduction to discrete weighted $\ell_2$ polynomial approximation.}
Next, we reduce the problem raised in \cref{lem:lb:reduction:to:chain} to the following constrained weighted discrete $\ell_2$ polynomial approximation problem. 
\begin{lemma}[Reduction to discrete weighted $\ell_2$ polynomial approximation]
    \label{lem:lb:reduction:to:l2:poly:apx}
    Let $S = \bigcup_{i \in [m]} [\mu_i, L_i]$, then for any $k \in \naturals$, for any $\gradbound > 0$, there exists a symmetric tri-diagonal matrix $\mat{T} \in \reals^{(k + m) \times (k + m)}$ with eigenvalues all belonging to $S$ such that the objective $f(\x) = \frac{1}{2} \x^\top \mat{T} \x + \gradbound \cdot \unit_1^\top \x$ satisfies
    \begin{equation}
        \min_{\x: x_{k+1} = \cdots = x_{k+m} = 0} \|\nabla f(\x)\|_2 
        \geq 
        \gradbound \cdot 
        \max_{\lambda_1, \ldots, \lambda_{k+m} \in S}
        \max_{\sum_{i \in [k+m]} v_i^2 = 1}
        \min_{p \in \polys_{k}^0} \sqrt{\sum_{i \in [k+m]} (p(\lambda_i) v_i)^2}.
        \label{eq:lem:lb:reduction:to:l2:poly:apx}
    \end{equation} 
    Recall $\polys_k^0$ is defined in \cref{eq:def:pk} as the set of polynomials $p$ of degree at most $k$ with $p(0) = 1$.
\end{lemma}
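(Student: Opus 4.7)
\textbf{Proof proposal for \cref{lem:lb:reduction:to:l2:poly:apx}.}

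The plan is to exploit the well-known correspondence between symmetric tri-diagonal (Jacobi) matrices and finitely-supported measures on $\reals$. Given any collection of nodes $\lambda_1,\dots,\lambda_{k+m}\in S$ (which we will take to be distinct) and positive weights $v_1^2,\dots,v_{k+m}^2$ summing to $1$, I will construct a tri-diagonal matrix $\mat{T}\in\reals^{(k+m)\times(k+m)}$ whose eigenvalues are exactly the $\lambda_i$'s and whose first-coordinate eigenvector weights are exactly the $v_i^2$'s. The standard recipe is the Lanczos / Stieltjes procedure: apply Lanczos to $\diag(\lambda_1,\dots,\lambda_{k+m})$ starting from the vector $\vec{v}=(v_1,\dots,v_{k+m})^\top$; because the nodes are distinct and weights positive, this runs to completion and produces an irreducible Jacobi matrix $\mat{T}=\mat{Q}^\top\diag(\lambda_i)\mat{Q}$ with $\mat{Q}\unit_1=\vec{v}$ and strictly nonzero sub-diagonal entries. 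By construction, the spectrum of $\mat{T}$ lies in $S$.

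Next I use the Krylov structure forced by tri-diagonality. Because $\mat{T}$ is irreducible tri-diagonal, $\mat{T}^j\unit_1$ is supported exactly on the first $j+1$ coordinates, so
\begin{equation*}
\mathrm{span}(\unit_1,\mat{T}\unit_1,\dots,\mat{T}^{k-1}\unit_1)=\mathrm{span}(\unit_1,\dots,\unit_k)=\{\x:x_{k+1}=\cdots=x_{k+m}=0\}.
\end{equation*}
Hence any feasible $\x$ can be written as $\x=q(\mat{T})\unit_1$ for a polynomial $q$ of degree at most $k-1$, and conversely. For such $\x$, the gradient is
\begin{equation*}
\nabla f(\x)=\mat{T}\x+\gradbound\unit_1=\bigl(\mat{T}q(\mat{T})+\gradbound\id\bigr)\unit_1=p(\mat{T})\unit_1,
\end{equation*}
where $p(\lambda)=\lambda q(\lambda)+\gradbound$ satisfies $p(0)=\gradbound$; equivalently $p=\gradbound\cdot\tilde p$ with $\tilde p\in\polys_k^0$, and as $q$ ranges over polynomials of degree $\le k-1$, $\tilde p$ ranges over all of $\polys_k^0$. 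Using the spectral decomposition, $\|p(\mat{T})\unit_1\|_2^2=\|\tilde p(\diag(\lambda_i))\mat{Q}\unit_1\|_2^2\cdot\gradbound^2=\gradbound^2\sum_i\tilde p(\lambda_i)^2 v_i^2$. Minimizing over feasible $\x$ therefore gives exactly
\begin{equation*}
\min_{\x:x_{k+1}=\cdots=x_{k+m}=0}\|\nabla f(\x)\|_2=\gradbound\cdot\min_{\tilde p\in\polys_k^0}\sqrt{\textstyle\sum_i\tilde p(\lambda_i)^2 v_i^2}.
\end{equation*}

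To finish, I pick the nodes and weights to be (a near-)maximizer of the inner $\max_{\lambda}\max_{v}\min_{\tilde p}$ expression appearing on the right-hand side of \eqref{eq:lem:lb:reduction:to:l2:poly:apx} and build $\mat{T}$ from them via the Lanczos construction above; this yields a tri-diagonal $\mat{T}$ achieving the claimed lower bound.

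The one technical obstacle is ensuring that the Lanczos construction is applicable, i.e.\ that the optimizer can be chosen with distinct nodes in $S$ and strictly positive weights. Since $S$ is a finite union of nondegenerate intervals, the function $(\lambda,v)\mapsto\min_{\tilde p\in\polys_k^0}\sum_i\tilde p(\lambda_i)^2 v_i^2$ is continuous on the compact domain $S^{k+m}\times\mathbb{S}^{k+m-1}$ and the subset with distinct $\lambda_i$'s and positive $v_i$'s is dense; hence there exist configurations in the dense subset realizing the supremum up to arbitrary $\epsilon>0$. A standard compactness argument on the bounded set of tri-diagonal matrices with spectrum in $S$ then extracts a limiting $\mat{T}$ attaining the supremum exactly (or, alternatively, we can simply absorb an $\epsilon$ in the inequality since our eventual use of the lemma only requires a near-optimal configuration). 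This handles the potential degeneracies cleanly and completes the reduction.
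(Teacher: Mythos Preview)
Your proposal is correct and follows essentially the same approach as the paper: the paper likewise builds $\mat{T}$ via a Lanczos/Gram--Schmidt orthogonalization of $\diag(\lambda_i)$ against $\vec{v}$ (its Lemmas~\ref{lem:tri-diagonalization} and~\ref{lem:tri-diagonalization:2}), identifies the constrained set $\{x_{k+1}=\cdots=x_{k+m}=0\}$ with $\{q(\mat{T})\unit_1:\deg q\le k-1\}$ via non-degeneracy, and obtains the identical polynomial expression for the minimum gradient norm. The only cosmetic difference is in handling degenerate $\lambda_i$ or $v_i$: the paper simply observes that collapsing repeated nodes or zero weights cannot increase the inner $\min$, whereas you invoke a density/continuity argument---both are fine.
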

The proof of \cref{lem:lb:reduction:to:l2:poly:apx} is constructive, where we explicitly construct a symmetric tri-diagonal matrix $\mat{T}$ with large $\min_{\x: x_{k+1} = \cdots = x_{k+m} = 0} \|\nabla f(\x)\|_2$. 
The detailed proof is relegated to \cref{sec:pf:lem:lb:reduction:to:l2:poly:apx}.

\paragraph{Step 3: Reduction to uniform polynomial approximation on $S$.}
Finally, we reduce the discrete weighted $\ell_2$-approximation problem raised in \cref{lem:lb:reduction:to:l2:poly:apx} to a uniform polynomial approximation over $S$ across $\polys_k^0$.
\begin{lemma}[Reduction to uniform polynomial approximation on $S$]
    \label{lem:lb:reduction:to:unif}
    Let $S = \bigcup_{i \in [m]} [\mu_i, L_i]$, then for any $k \in \naturals$, the following inequality holds
    \begin{equation}
        \max_{\lambda_1, \ldots, \lambda_{k+m} \in S}
        \max_{\sum_{i \in [k+m]} v_i^2 = 1}
        \min_{p \in \polys_{k}^0} \sqrt{\sum_{i \in [k+m]} (p(\lambda_i) v_i)^2}
        \geq
        \min_{p \in \polys_{k}^0} \max_{\lambda \in S} |p(\lambda)|.
    \end{equation}
    Recall $\polys_k^0$ is defined in \cref{eq:def:pk} as the set of polynomials $p$ of degree at most $k$ with $p(0) = 1$.
\end{lemma}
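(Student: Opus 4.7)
\textbf{Proof proposal for \cref{lem:lb:reduction:to:unif}.}

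The plan is to exploit duality between the Chebyshev (uniform) approximation problem on $S$ and a moment problem for signed measures on $S$, then pick the points $\lambda_i$ and weights $v_i$ on the LHS to match an extremal dual measure, and finish with Cauchy--Schwarz.

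Let $R := \min_{p\in \polys_k^0}\max_{\lambda\in S}|p(\lambda)|$. Writing $p(x)=1-(-\sum_{j=1}^k a_j x^j)$, the quantity $R$ is the $L^\infty(S)$-distance from the constant function $-1$ to the subspace $x\cdot \polys_{k-1}\subset C(S)$. By the standard Hahn--Banach duality between $C(S)$ and the space $M(S)$ of signed Borel measures, this distance equals
\[
R=\sup\Bigl\{\Bigl|\textstyle\int 1\,d\mu\Bigr|:\ \|\mu\|_{\mathrm{TV}}\le 1,\ \int \lambda^j\,d\mu=0\text{ for }j=1,\dots,k\Bigr\}.
\]
The supremum is attained by weak-$\ast$ compactness of the unit ball of $M(S)$. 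The feasible set is convex and the objective is linear, so the sup is attained at an extreme point; extreme points of this set are supported on at most $k+1$ atoms (the $k$ moment equalities together with the binding $\|\mu\|_{\mathrm{TV}}=1$ give $k+1$ binding constraints, so a Carathéodory-type argument on the moment map forces support size $\le k+1$). After multiplying by a sign we may assume $\int 1\,d\mu^\star = R$. Hence there exist points $\lambda_1^\star,\dots,\lambda_N^\star\in S$ and weights $w_1,\dots,w_N\in \mathbb{R}$ with $N\le k+1\le k+m$ such that
\[
\sum_{j=1}^N |w_j|=1,\qquad \sum_{j=1}^N w_j=R,\qquad \sum_{j=1}^N w_j(\lambda_j^\star)^i=0\ \text{for }i=1,\dots,k.
\]

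Now construct the witness for the LHS: set $\lambda_i=\lambda_i^\star$ and $v_i=\sqrt{|w_i|}$ for $i=1,\dots,N$, and pad with any $\lambda_i\in S$ and $v_i=0$ for $i=N+1,\dots,k+m$. Then $\sum_i v_i^2=\sum_j|w_j|=1$. For any $p\in \polys_k^0$, write $p(x)=1+\sum_{i=1}^k a_i x^i$; the moment conditions give
\[
\sum_{j=1}^N p(\lambda_j^\star)\,w_j \;=\; \int p\,d\mu^\star \;=\; R.
\]
Applying Cauchy--Schwarz with the decomposition $p(\lambda_j^\star)w_j=\bigl(p(\lambda_j^\star)\sqrt{|w_j|}\bigr)\bigl(\mathrm{sign}(w_j)\sqrt{|w_j|}\bigr)$ yields
\[
R^2=\Bigl(\sum_j p(\lambda_j^\star)w_j\Bigr)^2 \le \Bigl(\sum_j p(\lambda_j^\star)^2|w_j|\Bigr)\Bigl(\sum_j|w_j|\Bigr) \;=\; \sum_i (p(\lambda_i)v_i)^2.
\]
Since this holds for every $p\in \polys_k^0$, taking the minimum over $p$ preserves the bound, and since $(\lambda,v)$ is a feasible choice for the outer maxima, the LHS is at least $R$.

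The main (and essentially only) obstacle is the clean justification of the two classical ingredients: the Hahn--Banach duality formula for Chebyshev-type approximation on a compact set, and the reduction to a finitely supported extremal dual measure of size $\le k+1$. Both are standard in the polynomial approximation and moment-problem literature, so I expect to cite them and spell out only enough detail to verify the support bound $k+1 \le k+m$ used to fit the witness into $k+m$ coordinates. Once these are in hand, the Cauchy--Schwarz step is immediate and completes the proof.
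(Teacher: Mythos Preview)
Your proof is correct and takes a genuinely different route from the paper's.

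The paper proceeds constructively: it invokes the equioscillation characterization of the best uniform approximant $p_k^\star$ on a union of intervals (their \cref{lem:lb:cheb}, citing Achieser/Grcar/Schiefermayr), which identifies $s\in\{k+1,\dots,k+m\}$ ``e-points'' where $|p_k^\star|$ attains its max. It then uses a discrete orthogonality relation for T-polynomials (their \cref{lem:lb:Tpoly}, citing Schiefermayr--Peherstorfer) with explicit weights $v_j\propto\sqrt{c_j/\lambda_j}$ to show $p_k^\star$ is also the $\ell_2$-minimizer on those points, giving the bound directly.

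Your argument is the abstract dual of this: Hahn--Banach gives the dual characterization of $R$ as a moment-constrained TV-ball problem, a Carath\'eodory / basic-feasible-solution reduction yields an optimal signed measure supported on at most $k+1\le k+m$ points, and Cauchy--Schwarz closes the gap between the linear dual value and the quadratic $\ell_2$ objective. This is cleaner and more general --- it works for any compact $S$, not just unions of intervals --- and avoids the specialized equioscillation and T-polynomial machinery. The paper's approach, in exchange, is fully constructive (explicit nodes and weights) and ties into the same Chebyshev-on-multiple-intervals theory used elsewhere in their lower bound. One small note: your ``extreme points have support $\le k+1$'' phrasing is slightly loose; what you actually need (and what the $\ell_1$/LP basic-solution argument gives) is merely that \emph{some} optimal dual measure has support $\le k+1$, which suffices.
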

The proof of \cref{lem:lb:reduction:to:unif} is based on the fact that the best uniform approximation over $S$, denoted as $p_k^{\star}$, is also the best discrete weighted $\ell_2$ approximation over the extreme points on $p_k^{\star}$ with appropriate weights. 
To this end, we will show that $p_k^{\star}$ is orthogonal to low-degree polynomials under these weights.
The detailed proof of \cref{lem:lb:reduction:to:unif} is relegated to \cref{sec:pf:lem:lb:reduction:to:unif}.

The proof of \cref{lem:lb:reduce:to:apx} then follows immediately from \cref{lem:lb:reduction:to:chain,lem:lb:reduction:to:l2:poly:apx,lem:lb:reduction:to:unif}.

\subsubsection{Deferred proof of \cref{lem:lb:reduction:to:chain}}
\label{sec:pf:lem:lb:reduction:to:chain}
\begin{proof}[Proof of \cref{lem:lb:reduction:to:chain}]
Since $\mat{T}$ has all its eigenvalues within $S = \bigcup_{i \in [m]} [\mu_i, L_i]$, the objective $f(\x)$ satisfies \cref{def:multiscale_problem}.
Since $\mat{T}$ is tri-diagonal, for any zero-respecting first-order algorithm $\mathsf{A}_{\mathrm{zr}}$ (see \cite{Carmon.Duchi.ea-MP21} for definition) initialization at $\vec{0}$, the first $k+1$ iterates $\x^{(1)}, \ldots, \x^{(k+1)}$ are all supported in the first $k$ coordinates. Hence 
\begin{equation}
    \min_{\tau \in [k+1]} \|\nabla f(\x^{(\tau)}) \|_2
    \geq
    \min_{\x: x_{k+1} = x_{k+2} = \cdots = x_{k+m} = 0} \|\nabla f(\x) \|_2 \geq \epsilon.
\end{equation}
Let $\mathcal{F}(\gradbound)$ denote the union, over $d \in \naturals$, of the collections of $\mathcal{C}^{\infty}$ convex functions $f: \reals^d \to \reals$ satisfying \cref{def:multiscale_problem} and $\|\nabla f(\vec{0})\|_2 \leq \gradbound$. 
Since $\mathcal{F}(\gradbound)$ is orthogonally invariant, by Proposition 1 of \cite{Carmon.Duchi.ea-MP21}, the time complexities over all first-order deterministic algorithms are lower bounded by the zero respecting first-order algorithms, completing the proof.

\end{proof}

\subsubsection{Deferred proof of \cref{lem:lb:reduction:to:l2:poly:apx}}
\label{sec:pf:lem:lb:reduction:to:l2:poly:apx}
We introduce the following definition for ease of exposition.
\begin{definition}
    A symmetric tri-diagonal matrix is \textbf{non-degenerate} if none of its sub-diagonal entries are zero.
\end{definition}
We first show that for any distinct $\{\lambda_i\}_{i \in [k+m]}$ and positive $\{v_i\}_{i \in [k+m]}$, one can construct a desired tri-diagonal matrix.
\begin{lemma}
    \label{lem:tri-diagonalization}
    Let $\lambda_1, \lambda_2, \ldots, \lambda_{k+m}$ be a set of distinct positive numbers, and $v_1, v_2, \ldots, v_{k+m}$ be another set of positive numbers with $\sum_{i \in [k+m]} v_i^2 = 1$. 
    Then there exists an orthogonal matrix $\mat{Q} \in \reals^{(k+m) \times (k+m)}$ such that 
    \begin{enumerate}
        \item $\mat{Q} \unit_1 = \vec{v}$, where $\vec{v} := [v_1, v_2, \ldots, v_{k+m}]^\top$.
        \item $\mat{Q}^\top \mat{\Lambda} \mat{Q}$ is non-degenerate tri-diagonal, where $\mat{\Lambda} = \diag \left( \lambda_1, \lambda_2, \ldots, \lambda_{k+m} \right)$.
    \end{enumerate}
\end{lemma}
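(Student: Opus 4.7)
The plan is to apply the classical Lanczos tri-diagonalization to the diagonal matrix $\mat{\Lambda}$ with starting vector $\vec{v}$, and then verify non-degeneracy directly from the two structural hypotheses (the $\lambda_i$'s are distinct and the $v_i$'s are strictly positive). The desired $\mat{Q}$ will simply be the matrix whose columns are an orthonormal Lanczos basis for the Krylov subspace $\mathcal{K}_j := \mathrm{span}\{\vec{v}, \mat{\Lambda}\vec{v}, \ldots, \mat{\Lambda}^{j-1}\vec{v}\}$.

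More concretely, I would set $\vec{q}_1 := \vec{v}$ (a unit vector by $\sum_i v_i^2 = 1$) and define $\vec{q}_2, \ldots, \vec{q}_{k+m}$ by the standard three-term Lanczos recurrence: with $\beta_0 := 0$ and $\vec{q}_0 := \vec{0}$, compute
\begin{equation}
\alpha_j := \langle \mat{\Lambda}\vec{q}_j, \vec{q}_j\rangle, \qquad \tilde{\vec{q}}_{j+1} := \mat{\Lambda}\vec{q}_j - \alpha_j \vec{q}_j - \beta_{j-1}\vec{q}_{j-1}, \qquad \beta_j := \|\tilde{\vec{q}}_{j+1}\|_2, \qquad \vec{q}_{j+1} := \tilde{\vec{q}}_{j+1}/\beta_j.
\end{equation}
A standard induction gives that $\{\vec{q}_1,\ldots,\vec{q}_j\}$ is an orthonormal basis for $\mathcal{K}_j$, so $\mat{Q} := [\vec{q}_1 \mid \cdots \mid \vec{q}_{k+m}]$ is orthogonal with $\mat{Q}\unit_1 = \vec{v}$, establishing the first requirement. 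Tri-diagonality then follows immediately: since $\mat{\Lambda}\vec{q}_j \in \mathcal{K}_{j+1}$ and $\vec{q}_i \perp \mathcal{K}_{j+1}$ for $i \geq j+2$, one has $(\mat{Q}^\top \mat{\Lambda} \mat{Q})_{ij} = \langle \mat{\Lambda}\vec{q}_j, \vec{q}_i\rangle = 0$ for $i \geq j+2$; symmetry of $\mat{\Lambda}$ handles $i \leq j-2$. The sub-diagonal entries of $\mat{Q}^\top\mat{\Lambda}\mat{Q}$ are exactly the normalizers $\beta_j$.

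The heart of the proof—and the step I expect to be the main obstacle—is verifying non-degeneracy, i.e.\ $\beta_j > 0$ for every $j \in \{1,\ldots, k+m-1\}$. This is equivalent to $\dim \mathcal{K}_{k+m} = k+m$, i.e.\ linear independence of $\vec{v}, \mat{\Lambda}\vec{v}, \ldots, \mat{\Lambda}^{k+m-1}\vec{v}$. Suppose toward contradiction that some nonzero polynomial $p$ of degree at most $k+m-1$ satisfies $p(\mat{\Lambda})\vec{v} = \vec{0}$. Because $\mat{\Lambda}$ is diagonal with entries $\lambda_1,\ldots,\lambda_{k+m}$, the $i$-th coordinate of $p(\mat{\Lambda})\vec{v}$ equals $p(\lambda_i)\,v_i$, so $p(\lambda_i) v_i = 0$ for every $i$. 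The hypothesis $v_i > 0$ forces $p(\lambda_i) = 0$ for each of the $k+m$ \emph{distinct} values $\lambda_i$, contradicting $\deg p \leq k+m-1$. Hence the Krylov dimension is full, every $\beta_j$ is strictly positive, and $\mat{Q}^\top \mat{\Lambda} \mat{Q}$ is non-degenerate tri-diagonal.

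The two hypotheses of the lemma enter in exactly the places one expects: distinctness of the $\lambda_i$'s prevents the minimal polynomial of $\mat{\Lambda}$ from having degree less than $k+m$, and positivity of the $v_i$'s prevents $\vec{v}$ from lying in a proper invariant subspace of $\mat{\Lambda}$. Together they guarantee that $\vec{v}$ is a \emph{cyclic} vector for $\mat{\Lambda}$, which is precisely the classical criterion for the Lanczos procedure to run for the full $k+m$ steps without breakdown. No subtle analytic estimate is required; the only care is in bookkeeping the Krylov subspaces and invoking the Vandermonde-style degree argument above.
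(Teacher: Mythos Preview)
Your proposal is correct and follows essentially the same approach as the paper: both construct $\mat{Q}$ by orthonormalizing the Krylov sequence $\vec{v}, \mat{\Lambda}\vec{v}, \ldots$ and deduce non-degeneracy from linear independence of $\{\mat{\Lambda}^{j}\vec{v}\}_{j=0}^{k+m-1}$ via the Vandermonde/polynomial-root argument you give. The only cosmetic difference is that the paper writes the orthogonalization as a full Gram--Schmidt projection $\tilde{\vec{q}}_j = (\id - \sum_{i<j}\vec{q}_i\vec{q}_i^\top)\mat{\Lambda}\vec{q}_{j-1}$ rather than the equivalent three-term Lanczos recurrence, and states the non-degeneracy step a bit more tersely.
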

\begin{proof}[Proof of \cref{lem:tri-diagonalization}]
    We construct $\mat{Q} = [\vec{q}_1, \vec{q}_2, \ldots, \vec{q}_{k+m}]$ column by column as follows:
    \begin{enumerate}[(a)]
        \item $\vec{q}_1 = \vec{v}$.
        \item For any $j = 2, \ldots, k+m$, let $\tilde{\vec{q}}_j = (\id - \sum_{i \in [j-1]} \vec{q}_i \vec{q}_i^\top)\mat{\Lambda} \vec{q}_{j-1}$, $\vec{q}_j = \frac{\tilde{\vec{q}}_j}{\|\tilde{\vec{q}}_j\|_2}$.
    \end{enumerate}
    One can verify that 
    \begin{enumerate}[(i)]
        \item For any $i \in [k+m]$, $ \| \vec{q}_i \|_2 = 1$.
        \item For any $i < j$, $\vec{q}_i^\top \tilde{\vec{q}}_j = 0$ and thus $\vec{q}_i^\top \vec{q}_j = 0$.
        \item For any $i < j - 1$, $\vec{q}_i^\top \mat{\Lambda} \vec{q}_j = 0$ (To see this, first observe that $\spa \left \langle \vec{q}_1, \vec{q}_2, \ldots, \vec{q}_j \right \rangle = \spa \left \langle \v, \mat{\Lambda} \v, \ldots, \mat{\Lambda}^{j-1} \v\right \rangle$ for any $j$. Thus $\mat{\Lambda} \vec{q}_i \in \spa \left \langle \vec{q}_1, \ldots, \vec{q}_{i+1} \right \rangle$. Consequently we have $\vec{q}_i^\top \mat{\Lambda} \vec{q}_j = 0$ by point (ii) above for any $i < j -1$).
    \end{enumerate}
    By (i) and (ii) we know $\mat{Q}$ is orthogonal. By (iii) we know $\mat{Q}^\top \mat{\Lambda} \mat{Q}$ is tri-diagonal.
    The non-degeneracy of $\mat{T}$ follows by the linear independence of $\{\v, \mat{\Lambda} \v, \ldots, \mat{\Lambda}^{k+m-1}\}$ since $\vec{v} > \vec{0}$, $\mat{\Lambda} > \mat{0}$ and the distinctness of $\{\lambda_i\}_{i \in [k+m]}$.
\end{proof}
Next, following \cref{lem:tri-diagonalization}, we show that the tri-diagonal objective has large $\|\nabla f(\x)\|_2$ when the last $m$ coordinates of $\x$ are zero.
\begin{lemma}
    \label{lem:tri-diagonalization:2}
    Under the same settings and notation of \cref{lem:tri-diagonalization}, let $\mat{T} = \mat{Q}^\top \mat{\Lambda} \mat{Q}$, and consider objective $f(\x) = \frac{1}{2} \x^\top \mat{T} \x + \gradbound \cdot \unit_1^\top \x$. Then
    \begin{equation}
        \min_{\x: x_{k+1} = \cdots = x_{k+m} =0} \|\nabla f(\x)\|_2 = 
        \gradbound \cdot \min_{p \in \polys_{k}^0} \| p(\mat{\Lambda}) \mat{Q} \unit_1 \|_2 
        = 
        \gradbound \cdot \min_{p \in \polys_{k}^0} \sqrt{\sum_{i \in [k+m]} (p(\lambda_i) v_i)^2}.
    \end{equation}
\end{lemma}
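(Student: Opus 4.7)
The plan is to reduce the constrained minimization to a polynomial approximation problem on $\mat{T}$, and then transport it to $\mat{\Lambda}$ via the orthogonal conjugation.

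First I would identify the feasible set. The constraint $x_{k+1} = \cdots = x_{k+m} = 0$ is exactly $\x \in \spa\{\unit_1, \ldots, \unit_k\}$. The key structural observation is that, because $\mat{T}$ is tri-diagonal and non-degenerate,
\begin{equation}
    \spa\{\unit_1, \ldots, \unit_k\} = \spa\{\unit_1, \mat{T}\unit_1, \mat{T}^2\unit_1, \ldots, \mat{T}^{k-1}\unit_1\}.
\end{equation}
This follows by an easy induction on $j$: the tri-diagonal structure ensures $\mat{T}^j \unit_1$ is supported on indices $1, \ldots, j+1$, and non-degeneracy (nonzero sub-diagonal entries) forces its $(j+1)$-th coordinate to be the product of the first $j$ sub-diagonal entries, hence nonzero. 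Thus every feasible $\x$ can be written as $\x = \gradbound \cdot \tilde{q}(\mat{T})\unit_1$ for some polynomial $\tilde{q}$ of degree at most $k-1$, and conversely every such expression is feasible.

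Next I would substitute this representation into the gradient. Since $\nabla f(\x) = \mat{T}\x + \gradbound\unit_1$, feasible gradients take the form
\begin{equation}
    \nabla f(\x) = \gradbound \bigl( \id + \mat{T}\tilde{q}(\mat{T}) \bigr) \unit_1 = \gradbound \cdot p(\mat{T}) \unit_1,
\end{equation}
where $p(t) \defeq 1 + t\tilde{q}(t)$ satisfies $p(0) = 1$ and $\deg p \leq k$, i.e.\ $p \in \polys_k^0$. The map $\tilde{q} \mapsto p$ is a bijection between polynomials of degree $\leq k-1$ and $\polys_k^0$ (the inverse sends $p \in \polys_k^0$ to $(p(t) - 1)/t$, which is a polynomial because $p(0) = 1$). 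So minimizing $\|\nabla f(\x)\|_2$ over the feasible set is equivalent to minimizing $\gradbound \|p(\mat{T})\unit_1\|_2$ over $p \in \polys_k^0$.

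Finally I would transport to the diagonal form. Using $\mat{T} = \mat{Q}^\top \mat{\Lambda}\mat{Q}$ gives $p(\mat{T}) = \mat{Q}^\top p(\mat{\Lambda})\mat{Q}$, so $p(\mat{T})\unit_1 = \mat{Q}^\top p(\mat{\Lambda}) \mat{Q}\unit_1 = \mat{Q}^\top p(\mat{\Lambda}) \vec{v}$ by the defining property $\mat{Q}\unit_1 = \vec{v}$. Since $\mat{Q}$ is orthogonal, $\|p(\mat{T})\unit_1\|_2 = \|p(\mat{\Lambda})\vec{v}\|_2 = \sqrt{\sum_{i \in [k+m]} (p(\lambda_i)v_i)^2}$, and taking the minimum over $p \in \polys_k^0$ yields both equalities in the statement. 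The only nontrivial step is the equivalence of the two spans in the first paragraph (which is where non-degeneracy of the tri-diagonalization from \cref{lem:tri-diagonalization} plays its crucial role); everything else is routine algebra.
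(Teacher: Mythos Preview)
Your proposal is correct and follows essentially the same approach as the paper's proof: both identify the feasible set with the Krylov space $\{q(\mat{T})\unit_1 : \deg q \le k-1\}$ via non-degeneracy of $\mat{T}$, rewrite the gradient as $\gradbound\, p(\mat{T})\unit_1$ with $p \in \polys_k^0$, and then pass to the diagonal form via orthogonality of $\mat{Q}$ and $\mat{Q}\unit_1=\vec{v}$. You even supply the induction argument for the span equality that the paper leaves implicit.
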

\begin{proof}[Proof of \cref{lem:tri-diagonalization:2}]
    By non-degeneracy of $\mat{T}$ we have
    \begin{equation}
        \{ \x : x_{k+1} = \cdots = x_{k+m} = 0 \} = \{p (\mat{T}) \unit_1: \deg p \leq k-1  \}.
    \end{equation}
    Thus the following sets are identical
    \begin{align}
        &   \{ \nabla f(\x):  x_{k+1} = \cdots = x_{k+m} = 0  \} 
        = 
        \{   \mat{T} \x + \gradbound \cdot \unit_1:   x_{k+1} = \cdots = x_{k+m} = 0  \} 
        \\ 
        = & \{ \gradbound \cdot p(\mat{T}) \unit_1 : p \in \polys_{k}^0 \}.
    \end{align}
    It follows that
    \begin{equation}
        \min_{\x:  x_{k+1} = \cdots = x_{k+m} = 0} \|\nabla f(\x)\|_2
        = 
        \gradbound \cdot \min_{p \in \polys_{k}^0} \| p(\mat{T}) \unit_1\|_2
        =
        \gradbound \cdot \min_{p \in \polys_{k}^0} \| p(\mat{\Lambda}) \mat{Q} \unit_1 \|_2.
    \end{equation}
    The last equality is due to $\mat{Q} \unit_1 = \vec{v}$.
\end{proof}

Now we finish the proof of  \cref{lem:lb:reduction:to:l2:poly:apx}.
\begin{proof}[Proof of \cref{lem:lb:reduction:to:l2:poly:apx}]
    By \cref{lem:tri-diagonalization,lem:tri-diagonalization:2}, for any distinct $\lambda_1, \ldots, \lambda_{k+m} \in S$ and $v_1, \ldots, v_{k+m} > 0$ such that $\sum_{i \in [k]} v_i^2 = 1$, we have
    \begin{equation}
        \min_{\x:  x_{k+1} = \cdots = x_{k+m} = 0} \|\nabla f(\x)\|_2 
        \geq 
        \gradbound \cdot 
        \min_{p \in \polys_{k}^0} \sqrt{\sum_{i \in [k+m]} (p(\lambda_i) v_i)^2}.
   \end{equation} 
    If $\lambda_1, \ldots, \lambda_{k+m}$ are not distinct, then one can find another set of distinct $\lambda_i$'s such that the RHS is not smaller. 
    The same arguments hold if one of the $v_i$ is zero.
    Hence
    \begin{equation}
        \min_{\x:  x_{k+1} = \cdots = x_{k+m} = 0} \|\nabla f(\x)\|_2 
        \geq 
        \gradbound \cdot 
        \max_{\lambda_1, \ldots, \lambda_{k+m} \in S}
        \max_{\sum_{i \in [k+m]} v_i^2 = 1}
        \min_{p \in \polys_{k}^0} \sqrt{\sum_{i \in [k+m]} (p(\lambda_i) v_i)^2}.
        \label{eq:lem:lb:reduce:to:apx}
    \end{equation} 
    
\end{proof}

\subsubsection{Deferred proof of \cref{lem:lb:reduction:to:unif}}
We first cite the following \cref{lem:lb:cheb} that characterizes the uniform approximation on $S$ within $\polys_k^0$.     Recall $\polys_k^0$ is defined as the set of polynomials $p$ of degree at most $k$ with $p(0) = 1$.
\label{sec:pf:lem:lb:reduction:to:unif}
\begin{lemma}[Characterization of uniform approximation on $S$, adapted from \cite{Schiefermayr.Peherstorfer-99,Schiefermayr-11}]
    \label{lem:lb:cheb}
    Let $S = \bigcup_{i \in [m]} [\mu_i, L_i]$, denote $I_i = [\mu_i, L_i]$ then for any $k \in \naturals$, 
    \begin{enumerate}[(a), leftmargin=*]
        \item  The best uniform approximation $\min_{p \in \polys_{k}^0} \max_{x \in S} |p(x)|$ is attained, denoted as $p_k^{\star}$. Denote $\|p_k^{\star}\|_S :=  \max_{\lambda \in S} |p_k^{\star}(\lambda)|$ hereinafter.
        \item $|p_k^{\star}|$ attains $\| p_k^{\star} \|_S$  in $S$ for $s \in \{k+1, \ldots, k+m\}$ times, denoted as $\lambda_1 < \lambda_2 < \ldots < \lambda_{s}$. (That is to say $|p_k^{\star} (\lambda_1)| = |p_k^{\star} (\lambda_2)| = \cdots = |p_k^{\star} (\lambda_s)|$ and $\lambda_i \in S$ for $i \in [s]$). The $\lambda_i$'s are called ``e-points'' in the literature.
        \item $p_k^{\star}(\lambda_1), \ldots, p_k^{\star}(\lambda_s)$ change signs for exactly $k$ times, namely
        \begin{equation}
            |\{j: \sgn(p_k^{\star}(\lambda_j)) \neq \sgn( p_k^{\star} (\lambda_{j+1}))\}| = k
        \end{equation}
        \item If $\lambda_j$ and $\lambda_{j+1}$ belong to the same interval $I_i$, then $\sgn(p_k^{\star} (\lambda_j)) \cdot \sgn(p_k^{\star} (\lambda_{j+1})) < 0$. 
        \item If $\lambda_j$ and $\lambda_{j+1}$ belong to two different intervals $I_{i_1}, I_{i_2}$, and $\sgn(p_k^{\star} (\lambda_j)) \cdot \sgn(p_k^{\star} (\lambda_{j+1})) > 0$, then $\lambda_j = L_{i_1}$, $\lambda_{j+1} = \mu_{i_2}$.
        \item If $\lambda_j$ and $\lambda_{j+1}$ belong to two different intervals $I_{i_1}, I_{i_2}$, and $\sgn(p_k^{\star} (\lambda_j)) \cdot \sgn(p_k^{\star} (\lambda_{j+1})) < 0$, then $\max_{\lambda_j \leq \lambda \leq \lambda_{j+1}} |p_k^{\star}(\lambda)| = \| p_k^{\star} \|_S$. Therefore $p_k^{\star}$ is also the best unifrom approximation within $\polys_k^0$ for $I_1 \cup \cdots \cup I_{i_1 - 1} \cup [\mu_{i_1}, L_{i_2}] \cup I_{i_2 + 1} \cdots \cup I_m$. 
    \end{enumerate}
\end{lemma}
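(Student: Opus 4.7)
The lemma extends the classical Chebyshev--Remez equioscillation theory for best polynomial approximation on a single interval to the union $S = \bigcup_{i \in [m]} [\mu_i, L_i]$ of $m$ disjoint intervals. The plan is to prove part (a) by a standard compactness argument and then derive parts (b)--(f) from a single unified perturbation principle. For existence in (a), I would parameterize $\polys_k^0$ as an affine subspace of codimension one in $\polys_k \cong \reals^{k+1}$ via $p(0) = 1$. Since $S$ contains at least $k+1$ distinct points, evaluation at any such points is a linear isomorphism on $\polys_k$, so $\|p\|_S := \max_{\lambda \in S}|p(\lambda)|$ is equivalent to any other norm on $\polys_k$; hence every sublevel set of $\|\cdot\|_S$ restricted to $\polys_k^0$ is compact, and the infimum is attained by some $p_k^\star$.

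The core tool for parts (b)--(f) is the following perturbation principle. Let $\lambda_1 < \cdots < \lambda_s$ denote the e-points of $p_k^\star$ on $S$. If there exists $q \in \polys_k$ with $q(0) = 0$ and $\sgn q(\lambda_i) = \sgn p_k^\star(\lambda_i)$ for every $i \in [s]$, then for sufficiently small $\epsilon > 0$ one has $\|p_k^\star - \epsilon q\|_S < \|p_k^\star\|_S$, contradicting optimality of $p_k^\star$. For part (c), if the number $j$ of sign changes in the sequence $p_k^\star(\lambda_1), \ldots, p_k^\star(\lambda_s)$ were less than $k$, I would place zeros $z_1, \ldots, z_j$ inside the sign-change gaps and take $q(x) = x \prod_\ell (x - z_\ell)$, which has degree $j+1 \leq k$, vanishes at $0$, and matches the required sign pattern on $S$; conversely $j \leq k$ follows since $\deg p_k^\star \leq k$, so $j = k$. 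Part (b) follows by combining $j = k$ with the structural bounds from (d) and (e): the lower bound $s \geq k+1$ is immediate, and the upper bound $s \leq k+m$ comes from the fact that any two consecutive e-points within the same interval must contribute a sign change, so extra e-points beyond $k+1$ can only accumulate at the $m-1$ interval boundaries. Parts (d) and (e) I would prove by the same principle, localizing the perturbation $q$ to act near a single interval or near a single endpoint, exploiting the interval gaps so that $q$ need not match signs globally. Finally, (f) follows because all $k$ available sign changes of $p_k^\star$ are already used on $S$; hence $p_k^\star$ has no additional zeros in the interior of the gap between opposite-sign e-points, and the degree bound together with equioscillation on $S$ forces $|p_k^\star|$ to attain $\|p_k^\star\|_S$ somewhere inside this gap, yielding the extended-extremality claim.

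The main obstacle is the intricate sign and degree bookkeeping intrinsic to the multi-interval setting: unlike the classical single-interval theory where alternation is tight, here the $k$ sign changes distribute non-uniformly across the $m$ intervals and the $m-1$ gaps between them, and the admissible perturbation polynomials $q$ must simultaneously vanish at $0$ (to preserve the constraint $p_k^\star - \epsilon q \in \polys_k^0$), match the sign pattern of $p_k^\star$ at the relevant e-points, and respect $\deg q \leq k$. Parts (e) and (f) are the most delicate, since they require coupling the intra-interval equioscillation with the cross-interval constraints; the ansatz $q(x) = x \prod_\ell (x - z_\ell)$ with zeros placed carefully inside or between intervals is the key device for all of the localized perturbations, and the tight coincidence $\deg q = k$ that emerges in each case is precisely what pins down the characterization.
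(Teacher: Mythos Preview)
The paper does not give its own proof of this lemma: immediately after the statement it attributes (a)--(c) to the classical Achieser--Grcar line (as reformulated in \cite{Schiefermayr-11}) and (d)--(f) to \cite{Schiefermayr.Peherstorfer-99}, with no further argument. So there is no ``paper's proof'' to compare against; you are attempting to supply what the paper treats as a black box.

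Your outline for (a)--(c) is the standard Remez/Vall\'ee-Poussin argument and is correct in spirit. The compactness argument for (a) is fine. For (c), the perturbation $q(x)=x\prod_\ell(x-z_\ell)$ with one $z_\ell$ per sign-change gap does match the sign pattern of $p_k^\star$ at every e-point (since on the positive half-line $q$ changes sign exactly at the $z_\ell$), so the lower bound $j\geq k$ goes through; the upper bound $j\leq k$ is just zero-counting. Deducing $k+1\leq s\leq k+m$ in (b) from (c) together with (d) is also correct: consecutive e-points within an interval contribute a sign change by (d), so the $s-1-k$ same-sign consecutive pairs must each straddle a gap, of which there are at most $m-1$.

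Where your sketch is genuinely incomplete is (d)--(f). The phrase ``localizing the perturbation $q$ to act near a single interval'' is not a workable plan for polynomials of degree $\leq k$: polynomials are not localized, and you still have to match the sign of $p_k^\star$ at \emph{all} $s$ e-points while keeping $\deg q\leq k$ and $q(0)=0$. The actual arguments for (d) and (e) in the cited sources proceed by global zero-counting (of $p_k^\star$, of $(p_k^\star)'$, or of $\|p_k^\star\|_S^2-(p_k^\star)^2$), not by localized bumps. More seriously, your sketch of (f) misreads the statement: the content of (f) is that $|p_k^\star|$ does \emph{not exceed} $\|p_k^\star\|_S$ anywhere on the gap $[\lambda_j,\lambda_{j+1}]$ (it already attains this value at both endpoints). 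Your remark that equioscillation ``forces $|p_k^\star|$ to attain $\|p_k^\star\|_S$ somewhere inside this gap'' is the trivial direction; what needs to be shown is the upper bound on the gap, and nothing in your degree/sign bookkeeping rules out $|p_k^\star|$ overshooting in the interior of a gap that lies outside $S$. This is exactly the subtle step that the paper outsources to \cite{Schiefermayr.Peherstorfer-99}.
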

(a-c) extends the well-known Chebyshev equioscillation theorem to the union of multiple intervals. 
These results were originally developed by Achieser \cite{Achieser-28,Achieser-32,Achieser-33,Achieser-33a,Achieser-34} for the union of two intervals and later generalized by \cite{Grcar-81}. We adapt the statements from \cite{Schiefermayr-11}. 
(d-f) is adapted from \cite{Schiefermayr.Peherstorfer-99}. 
Similar characterizations can also be found in \cite{Widom-69,Nevai-86,Lubinsky-87,Fischer-92,Peherstorfer-93,Fischer-11}.

Next, we show that the best uniform approximation $p_k^{\star} \in \polys_k^0$ is also the best discrete $\ell_2$ approximation on the e-points $\lambda_1, \ldots, \lambda_s$ with a specific set of weights $v_i$'s.
\begin{lemma}
    \label{lem:lb:Tpoly}
    Under the same setting and notation of \cref{lem:lb:cheb}, define 
    \begin{equation}
        c_j = \begin{cases}
            \frac{1}{2} & \text{if $\sgn(p_k^{\star}(\lambda_j)) \cdot \sgn(p_k^{\star}(\lambda_{j+1})) > 0$ or $j = 1$ or $j = s$}
            \\
            1 & \text{otherwise}
        \end{cases}, \quad
        v_j = \sqrt{ \frac{c_j / \lambda_j}{\sum_{i \in [s]} {c_i}/{\lambda_i}}}
    \end{equation}
    Then for any $p \in \polys_k^0$, the following inequality holds
    \begin{equation}
        \sum_{j \in [s]} (v_j p(\lambda_j))^2 \geq \|p_k^{\star}\|_S^2.
    \end{equation}
\end{lemma}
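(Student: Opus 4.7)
}
View $\sum_{j \in [s]} (v_j p(\lambda_j))^2$ as a positive-semidefinite quadratic form in $p$ on the affine space $\polys_k^0$. A short calculation shows $\sum_j v_j^2 = 1$ (the normalizer $\sum_i c_i/\lambda_i$ cancels), and since $|p_k^\star(\lambda_j)| = \|p_k^\star\|_S$ at every e-point, the value of this form at $p_k^\star$ equals $\|p_k^\star\|_S^2$. The lemma is therefore equivalent to the claim that $p_k^\star$ is a \emph{global minimizer} of this quadratic form on $\polys_k^0$.

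Since the form is convex, it suffices to verify the first-order optimality condition at $p_k^\star$: for every feasible direction $q \in \mathcal{P}_k$ with $q(0)=0$, $\sum_j v_j^2 p_k^\star(\lambda_j) q(\lambda_j) = 0$. Factoring $q(\lambda) = \lambda\, r(\lambda)$ with $r \in \mathcal{P}_{k-1}$ and using $v_j^2 \lambda_j = c_j/(\sum_i c_i/\lambda_i)$ together with $p_k^\star(\lambda_j) = \sigma_j \|p_k^\star\|_S$, this collapses to the discrete orthogonality identity
\begin{equation}
\sum_{j\in[s]} c_j\,\sigma_j\,r(\lambda_j) \;=\; 0 \qquad \text{for every } r \in \mathcal{P}_{k-1}. \tag{$\star$}
\end{equation}
Once $(\star)$ is in hand, expanding $p = p_k^\star + \lambda r$ and invoking convexity of the quadratic form yields $\sum_j v_j^2 p(\lambda_j)^2 \ge \|p_k^\star\|_S^2$ for every $p \in \polys_k^0$, which is precisely the claim of the lemma.

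The main obstacle is establishing $(\star)$: a Chebyshev-type discrete orthogonality on a union of intervals with the specific endpoint-versus-interior weight pattern prescribed by the lemma. To prove it, I would group the e-points into maximal \emph{Chebyshev clusters}, where two adjacent intervals belong to the same cluster when the inter-interval junction between them alternates signs and to different clusters when the junction preserves signs (\cref{lem:lb:cheb}(e)). By (d) within each interval and by (f) across sign-alternating junctions, the restriction of $p_k^\star$ to any cluster realizes a complete Chebyshev--Lobatto extremum pattern on the single merged interval obtained by filling in the cluster's internal bridges, so classical single-interval Chebyshev--Lobatto discrete orthogonality applies on each cluster with $1/2$-weights at the cluster endpoints and unit weights at interior e-points---matching exactly the $c_j$ of the lemma. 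Summing the cluster-level orthogonality identities over all clusters recovers $(\star)$. The most delicate step is justifying the cluster-level Chebyshev--Lobatto discrete orthogonality when the cluster is a disjoint union of intervals before the bridges are filled in; this is handled by applying the classical single-interval orthogonality on the filled-in merged interval and using that $p_k^\star$ agrees there with the merged-interval extremal polynomial, a consequence of iterating \cref{lem:lb:cheb}(f).
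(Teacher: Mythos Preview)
Your reduction to the orthogonality identity $(\star)$ is exactly right and matches the paper's argument: write $p=p_k^\star+\lambda r$, use $v_j^2\lambda_j\propto c_j$ and $p_k^\star(\lambda_j)=\sigma_j\|p_k^\star\|_S$, and invoke convexity of the quadratic form. The paper does precisely this, phrasing $(\star)$ as $\sum_j c_j\,p_k^\star(\lambda_j)\,q(\lambda_j)=0$ for all $q$ with $\deg q<k$.

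The gap is in your proposed proof of $(\star)$ via cluster decomposition. Two issues:

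\emph{First}, you misread property~(f). Iterating~(f) tells you only that $p_k^\star$ remains the best approximation in $\polys_k^0$ on the \emph{entire} merged union $S'=\bigcup_\ell J_\ell$; it does \emph{not} say that $p_k^\star$ coincides with any single-interval Chebyshev polynomial on an individual cluster $J_\ell$. Indeed, on a cluster $J_\ell$ containing $n_\ell$ e-points, the degree-$(n_\ell-1)$ Chebyshev polynomial on $J_\ell$ is not $p_k^\star$ (which has degree $k=\sum_\ell(n_\ell-1)>n_\ell-1$ whenever there are at least two clusters), and the e-points of $p_k^\star$ in $J_\ell$ are in general \emph{not} the Chebyshev--Lobatto nodes of $J_\ell$. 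So the ``classical single-interval Chebyshev--Lobatto discrete orthogonality'' simply does not apply to these nodes.

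\emph{Second}, even if some cluster-level identity $\sum_{j\in C_\ell}c_j\sigma_j r(\lambda_j)=0$ held, it could only be expected for $\deg r\le n_\ell-2$. Summing over clusters would then yield $(\star)$ only for $\deg r\le\min_\ell(n_\ell-1)-1$, not for all $\deg r\le k-1$; the degree budgets do not add.

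What the paper actually does to get $(\star)$: after merging via~(f) down to $S'$ with $s-k$ intervals, it observes that $p_k^\star$ is (by construction) a T-polynomial on $S'$ in the sense of Schiefermayr--Peherstorfer, and then invokes their Theorem~2.3, which is precisely the multi-interval generalization of Chebyshev--Lobatto orthogonality and gives $\sum_j c_j\,p_k^\star(\lambda_j)\,q(\lambda_j)=0$ for all $\deg q<k$ \emph{globally}. That global statement does not decompose into single-interval pieces; it is intrinsically a property of the full T-polynomial on the full $S'$.
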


\begin{proof}[Proof of \cref{lem:lb:Tpoly}]
    Repeat the interval-merging procedure in \cref{lem:lb:cheb}(f) until there isn't any consecutive pair $\lambda_j, \lambda_{j+1}$ that belongs to two different intervals but $\sgn(p_k^{\star} (\lambda_j)) \cdot \sgn(p_k^{\star} (\lambda_{j+1})) < 0$. 
    After merging, there are exactly $s-k$ intervals left,  denoted as $S' = \bigcup_{i \in [s-k]} J_i$. 
    
    By definition, $p_k^{\star}$  is a (un-normalized) T-polynomial on $S'$ (see \cite{Schiefermayr.Peherstorfer-99} for definition).
    By Theorem 2.3 of  \cite{Schiefermayr.Peherstorfer-99},  for any polynomial $q$ with $\deg q < k$, it is the case that $\sum_{j\in[s]} c_j p_k^{\star} (\lambda_j) q (\lambda_j) = 0$.
    Since both $p, p_k^{\star} \in \polys_k^0$, we know that $\frac{p(\lambda)- p_k^{\star}(\lambda)}{\lambda}$ is a polynomial with degree $<k$ (since $p(0) = p_k^{\star}(0) = 1$). 
    Hence
    \begin{equation}
        \sum_{j \in [s]} v_j^2 p_k^{\star}(\lambda_j) (p(\lambda_j)- p_k^{\star}(\lambda_j)) =
        \frac{1}{\sum_{j \in [s]} \frac{c_j}{\lambda_j}} \sum_{j \in [s]} c_j p_k^{\star}(\lambda_j) \frac{p(\lambda_j)- p_k^{\star}(\lambda_j)}{\lambda_j} = 0.
    \end{equation}
    Therefore (by orthogonality)
    \begin{equation}
        \sum_{j \in [s]} (v_j p(\lambda_j))^2 \geq 2 \sum_{j \in [s]} v_j^2 p_k^{\star}(\lambda_j) (p(\lambda_j) - p_k^{\star}(\lambda_j)) + \sum_{j \in [s]} (v_j p_k^{\star}(\lambda_j))^2 
        = 
        \sum_{j \in [s]} (v_j p_k^{\star}(\lambda_j))^2 
        =
        \| p_k^{\star} \|^2_S.
    \end{equation}
\end{proof}

The proof of \cref{lem:lb:reduction:to:unif} is immediate once we have \cref{lem:lb:cheb} and \cref{lem:lb:Tpoly}.
\begin{proof}[Proof of \cref{lem:lb:reduction:to:unif}]
    Apply \cref{lem:lb:cheb} and \cref{lem:lb:Tpoly}, one has for some $s \in \{k + 1, \ldots, k+m\}$.
    \begin{equation}
        \max_{v_1, \ldots, v_{s}, \sum_{i \in [s]} v_i^2 = 1} \max_{\lambda_1, \ldots, \lambda_s \in S} \min_{p \in \polys_k^0} 
        \sqrt{ \sum_{j \in [s]} (v_j p(\lambda_j))^2}
        \geq \min_{p \in \polys_{k}^0} \max_{\lambda \in S} |p(\lambda)|.
    \end{equation}
    Since $s \leq k+m$, we have therefore
    \begin{equation}
        \max_{v_1, \ldots, v_{k+m}, \sum_{i \in [k+m]} v_i^2 = 1} \max_{\lambda_1, \ldots, \lambda_{k+m} \in S} \min_{p \in \polys_k^0} 
        \sqrt{ \sum_{j \in [k+m]} (v_j p(\lambda_j))^2}
        \geq \min_{p \in \polys_{k}^0} \max_{\lambda \in S} |p(\lambda)|.
    \end{equation}
\end{proof}

\subsection{Reference of \cref{lem:lb:reduce:to:Green}: Reduction to the estimation of Green's function}
\label{sec:reduce:to:green}
In this section, we will cite literature from potential theory to reduce the uniform approximation problem raised in \cref{lem:lb:reduce:to:apx} to estimating Green's function, as stated in \cref{lem:lb:reduce:to:Green}. 
Most of the results in this subsection are classic (c.f., \cite{Widom-69,Grcar-81,Aptekarev-86,Nevai-86,Lubinsky-87,Driscoll.Toh.ea-SIREV98,Embree.Trefethen-SIREV99,Shen.Strang.ea-01,Andrievskii-04,Kuijlaars-SIREV06,Saff-10,Fischer-11}).
We follow the statements from \cite{Driscoll.Toh.ea-SIREV98}.

The following lemma gives the lower bound of uniform approximation by asymptotic convergence factor $\rho_S$.
\begin{lemma}[Asymptotic convergence factor as non-asymptotic lower bound,  slightly adapted from \cite{Driscoll.Toh.ea-SIREV98}]
    Let $S$ be a compact (possibly not connected) subset of complex planes $\complexes$. 
    Then the following limit exists
    \begin{equation}
        \lim_{k \to \infty} \left( \min_{p \in \polys_k^0} \max_{\lambda \in S} |p(\lambda)| \right)^{\frac{1}{k}} = \rho_S \leq 1, 
    \end{equation}
    where the limiting value $\rho_S$ is called the \textbf{asymptotic convergence factor} of $S$. 
    Moreover, for any $k \in \naturals$, the following inequality holds
    \begin{equation}
        \min_{p \in \polys_k^0} \max_{\lambda \in S} |p(\lambda)| \geq \rho_S^k.
        \label{eq:asym:factor:lb}
    \end{equation}
    Recall $\polys_k^0$ is defined in \cref{eq:def:pk} as the set of polynomials $p$ with degree at most $k$ and $p(0) = 1$.
\end{lemma}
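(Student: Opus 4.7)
The plan is to define $E_k := \min_{p \in \polys_k^0} \max_{\lambda \in S} |p(\lambda)|$ and apply Fekete's subadditive lemma to $(\log E_k)_{k \geq 1}$. First I would observe that the constant polynomial $p_0 \equiv 1$ belongs to $\polys_k^0$ for every $k$ with $\max_{\lambda \in S} |p_0(\lambda)| = 1$, so $E_k \leq 1$ for every $k$; in particular $\log E_k \leq 0$.

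The key structural property is the submultiplicative inequality $E_{j+k} \leq E_j \cdot E_k$. To see this, given any $p \in \polys_j^0$ and $q \in \polys_k^0$, the product $pq$ has degree at most $j+k$ and satisfies $(pq)(0) = p(0)\,q(0) = 1$, so $pq \in \polys_{j+k}^0$. Since $|p(\lambda)\,q(\lambda)| \leq |p(\lambda)| \cdot |q(\lambda)|$ pointwise on $S$, taking a supremum over $\lambda \in S$ and then infima over $p$ and $q$ (using $\varepsilon$-near-optimal polynomials and letting $\varepsilon \downarrow 0$ if the optima are not attained) yields the claim. Equivalently, $a_k := \log E_k$ is a non-positive subadditive sequence: $a_{j+k} \leq a_j + a_k$.

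Fekete's subadditive lemma then asserts that $\lim_{k \to \infty} a_k/k$ exists in $[-\infty, \infty)$ and equals $\inf_{k \geq 1} a_k/k$. Because $a_k \leq 0$, this common value lies in $[-\infty, 0]$, and defining $\log \rho_S$ to be this value gives $\rho_S \in [0, 1]$ together with $E_k^{1/k} \to \rho_S$, which establishes the existence of the limit and the bound $\rho_S \leq 1$. The non-asymptotic lower bound $E_k \geq \rho_S^k$ is immediate from the ``$\inf$'' form of Fekete: for every $k \geq 1$, $\log E_k / k \geq \inf_{j \geq 1} a_j/j = \log \rho_S$, which exponentiates to the stated inequality $E_k \geq \rho_S^k$.

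There is essentially no deep obstacle here; the entire argument consists of (a) verifying that $\polys_k^0$ is closed under multiplication in the appropriate graded sense, and (b) quoting Fekete. The edge case $\rho_S = 0$ is handled trivially since $\rho_S^k = 0$ makes the bound vacuous, while for the specific $S = \bigcup_{i \in [m]} [\mu_i, L_i]$ appearing in our lower bound application, positive logarithmic capacity forces $\rho_S > 0$, and classical Chebyshev-alternation compactness (via Lemma \ref{lem:lb:cheb}) justifies writing ``$\min$'' rather than ``$\inf$'' in the definition of $E_k$.
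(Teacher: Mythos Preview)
Your argument via Fekete's subadditive lemma is correct and is precisely the standard route to this kind of statement. The paper itself does not supply a proof: it states the lemma as a classical result ``slightly adapted from \cite{Driscoll.Toh.ea-SIREV98}'' and moves on, so there is no paper proof to compare against beyond the citation. Your self-contained verification that $E_{j+k} \leq E_j E_k$ (closure of $\polys_k^0$ under graded multiplication) plus Fekete is exactly what a reader reconstructing the cited result would do; nothing is missing.
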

\begin{remark}
    \cite{Schiefermayr-11} shows that the RHS of inequality \eqref{eq:asym:factor:lb} can be improved to $\frac{2 \rho_S^k}{1 + \rho_S^{2k}}$ in the case that $S$ is the union of a finite number of real intervals. We will still use the loose bound \eqref{eq:asym:factor:lb} for simplicity since they gave the same order of bound asymptotically.
\end{remark}

The asymptotic convergence factor of $S$ can be analytically represented by the Green's function of $S$. We formally define the Green's function as follows.
\begin{definition}[Definition of Green's function, borrowed from \cite{Driscoll.Toh.ea-SIREV98}]
    \label{def:green}
    Let $S$ be a compact (possibly not-connected) subset of $\complexes$ with no isolated points. Then the \textbf{Green's function} associated with $S$ (with pole at $\infty$) is the unique $\reals$-valued function defined on $\complexes \backslash S$ such that 
    \begin{enumerate}[(a), leftmargin=*]
        \item $g_{S}$ is harmonic at $\complexes \backslash S$.
        \item $g_{S}(z) \to 0$ as $z \to \partial S$.
        \item $g_{S}(z) - \log|z| \to C$ as $|z| \to \infty$ for some constant $C$.
    \end{enumerate}
\end{definition}
The following result establishes the fundamental connection between Green's function of $S$ and the asymptotic convergence factor of $S$. 
This result is classic and we cite the statement from  \cite{Driscoll.Toh.ea-SIREV98}.
\begin{lemma}[Representation of asymptotic convergence factor via Green's function, slightly adapted from \cite{Driscoll.Toh.ea-SIREV98}]
    Let $S$ be a compact (possibly not-connected) subset of $\complexes$ with no isolated points. 
    Let $g_{S}(z)$ be the Green's function associated with $S$.
    Then the asymptotic convergence factor of $S$ is given by $\rho_S = \exp (-g_{S}(0))$.
\end{lemma}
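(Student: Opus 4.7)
The plan is to establish $\rho_S = \exp(-g_S(0))$ by matching inequalities, exploiting the classical correspondence between polynomials in $\polys_k^0$ and normalized logarithmic potentials supported on their zero sets. Both directions rest on the extremal characterization of $g_S$ as the largest subharmonic function on $\complexes \setminus S$ that vanishes quasi-everywhere on $\partial S$ and satisfies $u(z) - \log|z| = O(1)$ at infinity.

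For the lower bound $\rho_S \geq \exp(-g_S(0))$, fix any $p \in \polys_k^0$ and set $M = \max_{z \in S} |p(z)|$. Define $u(z) = \frac{1}{k}\log(|p(z)|/M)$. Then $u$ is subharmonic on $\complexes$ as a scaled log-modulus of a polynomial; $u \leq 0$ on $S$ by construction; and since $\deg p \leq k$, the growth condition $u(z) - \log|z| = O(1)$ holds at $\infty$. The extremal property of $g_S$ therefore yields $u(z) \leq g_S(z)$ on $\complexes \setminus S$. Evaluating at $z = 0$ and using $p(0) = 1$ gives $-\frac{1}{k} \log M \leq g_S(0)$, i.e., $M \geq \exp(-k \, g_S(0))$. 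Taking $k$-th roots and the limit $k \to \infty$ delivers $\rho_S \geq \exp(-g_S(0))$.

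For the upper bound I would use near-equilibrium polynomials. Let $\mu_S$ denote the logarithmic equilibrium measure of $S$ with Robin constant $V_S$; the standard identity $g_S(z) = V_S - U^{\mu_S}(z)$ on $\complexes \setminus S$ is immediate from the defining properties of $g_S$, since $U^{\mu_S}(z) = V_S$ quasi-everywhere on $S$ and $U^{\mu_S}(z) = -\log|z| + o(1)$ at infinity. Choose Fekete points $z_1^{(k)}, \ldots, z_k^{(k)} \in S$ and define $p_k(z) = \prod_{j=1}^{k}(1 - z/z_j^{(k)}) \in \polys_k^0$ (valid since $0 \notin S$ in the multiscale setting). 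Writing $\nu_k = \frac{1}{k}\sum_j \delta_{z_j^{(k)}}$, one has the pointwise identity $\frac{1}{k}\log|p_k(z)| = U^{\nu_k}(0) - U^{\nu_k}(z)$. Weak convergence $\nu_k \to \mu_S$ together with sufficient regularity of the potentials then yields $\max_{z \in S} \frac{1}{k}\log|p_k(z)| \to U^{\mu_S}(0) - V_S = -g_S(0)$, which gives $\rho_S \leq \exp(-g_S(0))$.

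The main obstacle is the uniform control of the logarithmic potentials in the upper-bound direction: pointwise convergence $U^{\nu_k}(z) \to U^{\mu_S}(z)$ is routine from weak convergence, but obtaining the limit of $\max_{z \in S}$ requires ruling out irregular boundary points where $U^{\mu_S} < V_S$, which in general demands tools such as Frostman's theorem and the principle of descent. Fortunately, in our setting $S = \bigcup_{i \in [m]} [\mu_i, L_i]$ is a finite union of nondegenerate compact intervals bounded away from $0$, so $\partial S$ consists of finitely many regular points, $U^{\mu_S} \equiv V_S$ on $S$, and the classical argument goes through without further effort. This regularity also ensures the Fekete construction is unambiguous, finishing the proof.
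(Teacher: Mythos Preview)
The paper does not prove this lemma at all; the containing subsection is explicitly titled ``Reference of \ldots'' and the result is simply cited as classical from the potential-theory literature, with no argument supplied. Your sketch is the standard potential-theoretic proof (a Bernstein--Walsh-type maximum-principle argument for the lower bound, Fekete polynomials and weak convergence to the equilibrium measure for the upper bound) and is correct in the setting relevant here, where $S$ is a finite union of nondegenerate closed intervals in $(0,\infty)$, so that $0 \notin S$ and every boundary point is regular. Your identification of boundary regularity as the only delicate step, and your observation that it is automatic for this $S$, is exactly right.
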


The proof of \cref{lem:lb:reduce:to:Green} then follows immediately from the above two lemmas.

\subsection{Proof of \cref{lem:lb:green:ub}: Estimating the Green's function}
\label{sec:lb:green:ub}
In this subsection, we will establish \cref{lem:lb:green:ub} on the upper bound of $g_S(0)$ for $S = \bigcup_{i \in [m]} [\mu_i, L_i]$.
Our startpoint is the following classic results due to \citep{Widom-69} on the explicit formula of $g_S$.
\begin{lemma}[Green's function with respect to the union of real intervals, adapted from Section 14 of \cite{Widom-69}]
    \label{lem:lb:widom}
    Let $S = \bigcup_{j=1}^m [\mu_j, L_j] \subset \reals$ for some $0 < \mu_1 < L_1 < \cdots < \mu_m < L_m$. Let $q(z)$ be the polynomial 
    \begin{equation}
        q(z) := \prod_{j \in [m]} (z - \mu_j) (z - L_j).
        \label{eq:lb:q}
    \end{equation}
    Let $h(z)$ be the unique $(m-1)$-degree monic polynomials satisfying
    \begin{align}
        \int_{L_{k}}^{\mu_{k+1}} \frac{h(\zeta)  \diff \zeta }{\sqrt{q(\zeta)}} = 0, \quad k = 1, \ldots, m-1.
    \end{align}
    Then the Green's function for $S$ (with pole at $\infty$) at 0 is given by
    \begin{equation}
        g_{S}(0) = (-1)^{m+1} \int_{0}^{\mu_1} \frac{h(\zeta) \diff \zeta}{\sqrt{q(\zeta)}}.
    \end{equation}
\end{lemma}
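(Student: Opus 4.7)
The plan is to construct $g_S$ explicitly as the real part of a (multi-valued) analytic function built from the differential $h(\zeta)\,d\zeta/\sqrt{q(\zeta)}$ on the hyperelliptic Riemann surface of $\sqrt{q}$, verify the three characterizing properties in \cref{def:green}, and then read off $g_S(0)$ by a direct contour evaluation. Concretely, let $R(z)$ denote the branch of $\sqrt{q(z)}$ on $\complexes\setminus S$ normalized by $R(x)>0$ for $x>L_m$, and define
\[
    w(z)\;:=\;\int_{L_m}^{z}\frac{h(\zeta)}{R(\zeta)}\,d\zeta,\qquad g(z)\;:=\;\Re\, w(z),
\]
with the integration path lying in $\complexes\setminus S$. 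The claim is that $g=g_S$ and that $\Re\, w(0)$ collapses to the stated identity.

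For the verification, harmonicity of $g$ is immediate since $w$ is locally holomorphic off $S$. Single-valuedness of $g$ (despite the multi-valuedness of $w$) requires that the period of $w$ around each cut $[\mu_j,L_j]$ be purely imaginary; writing $R_\pm$ for the boundary values on the upper and lower edges, the identity $R_+=-R_-$ combined with the fact that $R_+$ is purely imaginary on a cut yields period $2\int_{\mu_j}^{L_j}h(x)\,dx/R_+(x)\in i\reals$. For vanishing on $\partial S$, I would integrate along the upper edge of the real axis from the base point $L_m$: each cut $(\mu_j,L_j)$ contributes only to $\Im w$ (since $R_+$ is imaginary there), and each gap $(L_k,\mu_{k+1})$ contributes a real amount proportional to $\int_{L_k}^{\mu_{k+1}}h/\sqrt{q}$, which vanishes by the orthogonality hypothesis on $h$; hence $g(\mu_j)=g(L_j)=0$ for every $j$. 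Finally, because $h$ is monic of degree $m-1$ and $R(z)\sim z^m$ at infinity, one has $h(\zeta)/R(\zeta)=\zeta^{-1}+O(\zeta^{-2})$, so $w(z)=\log z+O(1)$ and $g(z)-\log|z|\to C$ as $|z|\to\infty$. Uniqueness in \cref{def:green} then forces $g=g_S$.

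To evaluate $g_S(0)=\Re\, w(0)$, I would integrate $w$ from $L_m$ down to $0$ along the upper edge of the real line. By the same dichotomy used above, the cut segments contribute nothing to $\Re\, w$, and the internal gap segments $(L_k,\mu_{k+1})$ for $k=1,\dots,m-1$ contribute nothing by the defining conditions on $h$. Only the terminal piece $(0,\mu_1)$ survives. Tracking the branch of $R_+$ on this last segment (see below) gives $R_+(x)=(-1)^m\sqrt{q(x)}$, so
\[
    g_S(0)\;=\;\Re\int_{\mu_1}^{0}\frac{h(\zeta)\,d\zeta}{R_+(\zeta)}\;=\;(-1)^m\int_{\mu_1}^{0}\frac{h(\zeta)\,d\zeta}{\sqrt{q(\zeta)}}\;=\;(-1)^{m+1}\int_{0}^{\mu_1}\frac{h(\zeta)\,d\zeta}{\sqrt{q(\zeta)}},
\]
which is exactly the claimed formula.

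The main obstacle, as usual in such Riemann-surface arguments, is bookkeeping the sign of $R_+$ on each subinterval of $\reals\setminus S$ as $x$ sweeps from $+\infty$ down to $0$. I would handle this uniformly by examining small upper semicircles $z=\alpha+r e^{i\theta}$ with $\theta\in[0,\pi]$ around each branch point $\alpha\in\{\mu_j,L_j\}$; the local expansion $q(z)\approx q'(\alpha)(z-\alpha)$ combined with the elementary sign rules $q'(L_k)>0$ and $q'(\mu_k)<0$ (read off directly from the factorization of $q$) shows that each branch-point crossing multiplies $R_+$ by $\pm i$, and that after traversing all $m$ cuts the accumulated factor on $(0,\mu_1)$ is exactly $(-1)^m$. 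The remaining ingredients---period analysis, vanishing on $\partial S$, and the logarithmic asymptotics at infinity---are comparatively routine once $R_+$ is pinned down.
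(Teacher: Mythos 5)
The paper does not prove this lemma; it is cited as a classical result (``adapted from Section~14 of \cite{Widom-69}'') and used as a black box. Your proposal therefore has no in-paper proof to be compared against; it is a self-contained reconstruction, and it is correct. Defining $w(z)=\int_{L_m}^z h(\zeta)\,d\zeta/R(\zeta)$ with $R$ the branch of $\sqrt{q}$ positive on $(L_m,\infty)$ and verifying that $g=\Re\,w$ satisfies the three defining properties of \cref{def:green} is the standard Abelian-differential construction on the hyperelliptic surface of $\sqrt{q}$, which is essentially Widom's own argument. The period computation (purely imaginary around each cut, because $R_+$ is imaginary on a cut while $h$ is real), the boundary vanishing (cut segments affect only $\Im\,w$, and gap segments contribute zero by the normalization of $h$), and the logarithmic growth at infinity (since $h$ monic of degree $m-1$ and $R\sim z^m$ give $h/R\sim\zeta^{-1}$) are all in place. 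The branch-sign bookkeeping you flag as the main nuisance does land where you say: two branch-point crossings per cut flip the sign of the real branch of $R$, so after all $m$ cuts $R_+=(-1)^m\sqrt{q}$ on $(0,\mu_1)$, which is exactly the source of the $(-1)^{m+1}$ in the statement.

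Two details are worth spelling out if you write this in full, though neither is a gap. First, the period argument needs $h$ to have real coefficients, so that $h/R_+$ is purely imaginary on a cut; this follows from the uniqueness of $h$ together with the reality of the $m-1$ defining linear conditions (equivalently, positive-definiteness of the associated period matrix). Second, uniqueness in \cref{def:green} from conditions (a)--(c) alone --- which you invoke to conclude $g=g_S$ --- is a short maximum-principle argument on the difference of two candidates, which is harmonic off $S$, vanishes on $\partial S$, and is bounded at infinity after the logarithms cancel; it deserves a line.
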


\begin{remark}
Although \cref{lem:lb:widom} by \cite{Widom-69} gives an exact formula to compute $g_S(0)$ (up to integration), it is hard to read off the dependency of $g_S(0)$ with respect to the condition numbers of the problem (local condition number $\kappa_i$ and global condition number $\globalcond$). 
Numerous follow-up works have attempted to establish more concrete estimates of the Green's function when $S$ has two or more intervals \citep{Grcar-81,Lubinsky-87,Fischer-92,Peherstorfer-93,Shen.Strang.ea-01,Andrievskii-04,Schiefermayr-08,Schiefermayr-11,Schiefermayr-JCAM11,Alpan.Goncharov.ea-16,Schiefermayr-17}. 
Unfortunately, to the best of our knowledge, the existing estimate is either not sharp or not explicit for our purpose. 
\end{remark}

We will give an explicit upper bound of $g_S(0)$. This estimate is novel to the best of our knowledge.
Starting from \cref{lem:lb:widom}, the proof of \cref{lem:lb:green:ub} relies on the following three technical lemmas. 
The first lemma upper bounds $g_S(0)$ with the product of the roots of $h$ determined in \cref{lem:lb:widom}.
\begin{lemma}
    \label{lem:reduction:to:roots}
    Let $S = \bigcup_{j=1}^m [\mu_j, L_j] \subset \reals$, and assume $\frac{L_j}{\mu_j} \geq 2$ for $j \in [m]$.
    Let $h(z)$ be the unique polynomial determined in \cref{lem:lb:widom}, then $h(z)$ has $m-1$ real roots $r_1, r_2, \ldots, r_{m-1}$ such that $r_k \in [L_k, \mu_{k+1}]$, and the following inequality holds
    \begin{equation}
        g_S(0) \leq \frac{ 7 \prod_{k \in [m-1]} {\frac{r_k}{\mu_{k+1}}} }{\sqrt{\prod_{k \in [m]} \frac{L_k}{\mu_k}}}.
    \end{equation}
\end{lemma}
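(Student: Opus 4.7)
The plan is to start from Widom's explicit formula (\cref{lem:lb:widom}),
\[
g_S(0) \;=\; (-1)^{m+1}\int_0^{\mu_1}\frac{h(\zeta)}{\sqrt{q(\zeta)}}\,d\zeta,
\]
and bound the integrand by hand. First I would locate the roots of $h$. On any gap $(L_k,\mu_{k+1})$, each factor $(\zeta-\mu_j)(\zeta-L_j)$ of $q$ is strictly positive (if $j\le k$ then both factors are positive; if $j>k$ then both are negative), so $\sqrt{q(\zeta)}>0$ on the open gap and the orthogonality condition $\int_{L_k}^{\mu_{k+1}}h/\sqrt{q}\,d\zeta=0$ forces $h$ to change sign in each of the $m-1$ gaps. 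Since $h$ is monic of degree $m-1$, this pins down exactly one real root $r_k\in[L_k,\mu_{k+1}]$ per gap, giving $h(z)=\prod_{k=1}^{m-1}(z-r_k)$. On $[0,\mu_1]$ every factor $\zeta-r_k$ is negative, so $h(\zeta)=(-1)^{m-1}\prod_k(r_k-\zeta)$; multiplying by $(-1)^{m+1}$ makes the integrand nonnegative and yields $g_S(0)=\int_0^{\mu_1}\prod_k(r_k-\zeta)/\sqrt{q(\zeta)}\,d\zeta$.

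Next, I would bound each piece separately. In the numerator, since $\zeta\ge 0$ one has $\prod_k(r_k-\zeta)\le \prod_k r_k$, which is where the factor $\prod_k r_k$ in the claim comes from. For the denominator, split $q(\zeta)=(\mu_1-\zeta)\widetilde q(\zeta)$ with $\widetilde q(\zeta)=(L_1-\zeta)\prod_{j=2}^m(\mu_j-\zeta)(L_j-\zeta)$. The key structural observation is that the assumption $\kappa_j\ge 2$ combined with $\mu_{j+1}>L_j$ iterates to the geometric growth $\mu_j\ge 2^{j-1}\mu_1$ and $L_j\ge 2^j\mu_1$. Consequently, for every $\zeta\in[0,\mu_1]$,
\[
L_1-\zeta \ge L_1/2, \qquad \mu_j-\zeta \ge (1-2^{-(j-1)})\mu_j, \qquad L_j-\zeta \ge (1-2^{-(j-1)})L_j \ \ (j\ge 2),
\]
so $\widetilde q(\zeta) \ge \tfrac{L_1}{2}\prod_{j=2}^m \mu_j L_j\cdot \prod_{j=1}^{m-1}(1-2^{-j})^2$. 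The crucial point is that $\prod_{j=1}^\infty(1-2^{-j})^{-1}$ converges to a universal absolute constant, so $\widetilde q(\zeta)^{-1/2}\le C/\sqrt{L_1\prod_{j=2}^m\mu_j L_j}$ for an $m$-independent $C$.

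Pulling this bound on $\widetilde q$ out of the integral and using the standard evaluation $\int_0^{\mu_1}(\mu_1-\zeta)^{-1/2}\,d\zeta=2\sqrt{\mu_1}$ gives
\[
g_S(0) \;\le\; C'\cdot \frac{\prod_{k=1}^{m-1} r_k\cdot\sqrt{\mu_1}}{\sqrt{L_1\prod_{j=2}^m \mu_j L_j}},
\]
which a brief algebraic rearrangement (multiply numerator and denominator by $\prod_{k=1}^{m-1}\mu_{k+1}$ and $\sqrt{\prod_{k=1}^m \mu_k}$ respectively) converts into the target form $C'\prod_k(r_k/\mu_{k+1})/\sqrt{\prod_k L_k/\mu_k}$.

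The main obstacle is pinning the absolute constant down to the claimed value $7$. A naive bound like $\mu_j-\zeta\ge \mu_j/2$ produces a multiplicative $2^m$ blow-up, so the convergence of $\prod_j(1-2^{-j})^{-1}$ to a small absolute constant is essential to ruling out any $m$-dependence. Beyond that, squeezing the constant down to exactly $7$ requires careful bookkeeping of the first few $(1-2^{-j})$ corrections (in particular the $j=1$ term, where $L_1-\zeta\ge L_1(1-1/\kappa_1)\ge L_1/2$ is tight at $\kappa_1=2$) together with an explicit numerical tail bound on the infinite product. The structural part of the argument, namely Widom's formula plus the root-location step, is robust; it is only this constant-chasing step that is delicate.
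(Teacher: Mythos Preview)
Your proposal is correct and follows essentially the same route as the paper: Widom's formula, the sign-change argument locating one root $r_k$ per gap, bounding the numerator by $\prod_k r_k$, isolating the $(\mu_1-\zeta)^{-1/2}$ singularity, and then controlling the remaining factors via the geometric growth $\mu_j\ge 2^{j-1}\mu_1$ so that the correction terms collapse into the absolutely convergent product $\prod_{j\ge 1}(1-2^{-j})$. The paper closes the constant exactly as you anticipate, invoking the numerical bound $\prod_{j=1}^\infty(1-2^{-j})>0.288$ (proved separately as \cref{helper:Pochhammer}) to get $2/0.288<7$.
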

\begin{remark}
Note that 
\cref{lem:reduction:to:roots} immediately implies a coarse bound of $g_S(0) \leq \frac{7}{\sqrt{\prod_{k \in [m]} \frac{L_k}{\mu_k}}}$ since $r_{k} \leq \mu_{k+1}$.
\end{remark}

The second lemma establishes the following upper bound of $r_k$ by the ratio of two integrals.
\begin{lemma}
    \label{lem:lb:roots:ub}
    Under the same settings of \cref{lem:reduction:to:roots}, the $k$-th root of polynomial $h$ satisfies the following inequality.
    \begin{equation}
        r_k \leq 4 \cdot \frac{\int_{L_{k}}^{\mu_{k+1}} \frac{ \zeta \diff \zeta} { \sqrt{ (\zeta - \mu_{k})(\zeta - L_{k}) (\mu_{k+1} - \zeta) ( L_{k+1}-\zeta)}} }
        {\int_{L_{k}}^{\mu_{k+1}} \frac{ \diff \zeta} { \sqrt{ (\zeta - \mu_{k})(\zeta - L_{k}) (\mu_{k+1} - \zeta) ( L_{k+1}-\zeta)}} } .
    \end{equation}
\end{lemma}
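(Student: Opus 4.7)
The plan is to extract $r_k$ as a weighted first moment on the gap $[L_k,\mu_{k+1}]$ using the orthogonality defining $h$, and then to compare that weighting against the target weight $\psi(\zeta) := [(\zeta-\mu_k)(\zeta-L_k)(\mu_{k+1}-\zeta)(L_{k+1}-\zeta)]^{-1/2}$ appearing in the RHS.

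First, since $r_k\in[L_k,\mu_{k+1}]$ by \cref{lem:reduction:to:roots}, factor $h(z) = (z-r_k)\tilde h(z)$ with $\tilde h(z) := \prod_{j\in[m-1]\setminus\{k\}}(z-r_j)$. Every remaining root $r_j$ ($j\neq k$) lives in a different gap, so $\tilde h$ has a constant sign on $[L_k,\mu_{k+1}]$. The orthogonality condition $\int_{L_k}^{\mu_{k+1}} h(\zeta)/\sqrt{q(\zeta)}\,d\zeta = 0$ then rearranges to
\[
r_k \;=\; \frac{\int_{L_k}^{\mu_{k+1}} \zeta \,|\tilde h(\zeta)|/\sqrt{q(\zeta)}\,d\zeta}{\int_{L_k}^{\mu_{k+1}} |\tilde h(\zeta)|/\sqrt{q(\zeta)}\,d\zeta}.
\]

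Second, I would split the ``local'' from the ``non-local'' part of $q$: write $q(\zeta) = (\zeta-\mu_k)(\zeta-L_k)(\mu_{k+1}-\zeta)(L_{k+1}-\zeta)\cdot R(\zeta)$, where $R(\zeta) := \prod_{j\in[m]\setminus\{k,k+1\}}(\zeta-\mu_j)(\zeta-L_j)$ is positive on the gap. Then $|\tilde h|/\sqrt{q} = \psi\cdot W$ with the correction $W(\zeta) := |\tilde h(\zeta)|/\sqrt{R(\zeta)}$, so
\[
r_k \;=\; \frac{\int_{L_k}^{\mu_{k+1}} \zeta\,\psi(\zeta)W(\zeta)\,d\zeta}{\int_{L_k}^{\mu_{k+1}} \psi(\zeta)W(\zeta)\,d\zeta}.
\]

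The crux of the proof is the a priori oscillation bound
\[
\max_{\zeta\in[L_k,\mu_{k+1}]} W(\zeta)\;\leq\; 4\cdot\min_{\zeta\in[L_k,\mu_{k+1}]} W(\zeta).
\]
Granted this, the elementary sandwich $W_{\min}\!\int\!\psi \leq \int\!\psi W$ together with $\int \zeta\psi W \leq W_{\max}\!\int \zeta\psi$ (both valid since $\zeta,\psi>0$) yields $r_k \leq (W_{\max}/W_{\min})\cdot(\int\zeta\psi/\int\psi) \leq 4\cdot(\int\zeta\psi/\int\psi)$, which is exactly the stated bound.

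To obtain the oscillation bound I would factor $W(\zeta)^2 = \prod_{j\in[m-1]\setminus\{k\}} f_j(\zeta)$ by pairing each squared root $(\zeta-r_j)^2$ with the ``near'' band factor of $R$: for $j<k$ pair with $(\zeta-\mu_j)(\zeta-L_j)$, and for $j>k$ pair with $(\mu_{j+1}-\zeta)(L_{j+1}-\zeta)$. A direct computation of $(\log f_j)'$, combined with the interlacing $r_j\in[L_j,\mu_{j+1}]$, shows that each $f_j$ is monotone on $[L_k,\mu_{k+1}]$ (increasing for $j<k$, decreasing for $j>k$). Consequently the extrema of $W$ are attained at the endpoints, and the oscillation reduces to bounding the explicit product $\prod_j f_j(L_k)/f_j(\mu_{k+1})$ and its reciprocal.

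The main obstacle is the final step: a naive per-factor ratio bound inflates exponentially in $m$. The proof must exploit the interlacing structure so that adjacent pairings telescope and the product of $m-2$ endpoint ratios collapses into the absolute constant $4$. Concretely, one pairs each root's contribution with the nearest available band on the correct side of the gap so that the endpoint ratios $(L_k-r_j)^2/((L_k-\mu_j)(L_k-L_j))$ and $(\mu_{k+1}-r_j)^2/((\mu_{k+1}-\mu_j)(\mu_{k+1}-L_j))$ (and their $j>k$ analogues) differ only through factors of the type $(\mu_{k+1}-\mu_j)/(L_k-\mu_j)$, which admit a telescoping upper bound across successive $j$'s, yielding an $O(1)$ ratio independent of $m$.
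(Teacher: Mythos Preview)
Your strategy—extract $r_k$ as a weighted first moment on $[L_k,\mu_{k+1}]$, peel off the ``local'' weight $\psi$, and control the leftover correction $W$—is exactly the paper's route, and your factorization $W=\prod_{j<k}A_j\cdot\prod_{j>k}B_j$ with $A_j$ increasing, $B_j$ decreasing is correct. Two points, however, separate your proposal from a complete proof of the stated constant.

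First, the claim ``the extrema of $W$ are attained at the endpoints'' is not justified: $W$ is a product of increasing and decreasing positive functions, so its maximum and minimum can sit in the interior. What is true is the cruder bound $W_{\max}/W_{\min}\le \prod_{j<k}\gamma_j\cdot\prod_{j>k}\gamma_j'$, with $\gamma_j=A_j(\mu_{k+1})/A_j(L_k)$ and $\gamma_j'=B_j(L_k)/B_j(\mu_{k+1})$.

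Second—and this is the real gap—your sandwich $W_{\min}\int\psi\le\int\psi W$, $\int\zeta\psi W\le W_{\max}\int\zeta\psi$ forces you to pay \emph{both} products. Under the standing hypothesis $L_j/\mu_j\ge 2$ one gets, by the same geometric decay you sketch, $\prod_{j<k}\gamma_j\le 1/0.288$ and symmetrically $\prod_{j>k}\gamma_j'\le 1/0.288$, so your route yields at best $W_{\max}/W_{\min}\lesssim (1/0.288)^2\approx 12$, not $4$. There is no telescoping that collapses the two sides together: when $k$ is in the middle of many bands, both products are genuinely close to $1/0.288$.

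The paper avoids this loss by not bounding $W_{\max}/W_{\min}$ at all. It uses the correlation (Chebyshev) inequality: multiplying the weight by a nonnegative \emph{decreasing} function can only lower the weighted mean of $\zeta$. Hence all $B_j$ factors ($j>k$) may be dropped for free, giving
\[
r_k \;\le\; \frac{\int \zeta\,\psi\,\prod_{j<k}A_j}{\int \psi\,\prod_{j<k}A_j}
\;\le\; \Bigl(\prod_{j<k}\gamma_j\Bigr)\cdot\frac{\int \zeta\,\psi}{\int \psi},
\]
and then only the one-sided product $\prod_{j<k}\gamma_j\le \prod_{i\ge1}(1-2^{-i})^{-1}\le 1/0.288<4$ is needed, via the bound $\gamma_j\le (1-r_j/L_k)^{-1}$ and $r_j/L_k\le 2^{-(k-j)}$. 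Your proposal would prove the lemma with a larger absolute constant, but not with the stated $4$.
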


The third lemma upper bounds the ratio encountered in \cref{lem:lb:roots:ub}.
\begin{lemma}
    \label{lem:lb:ratio}
    Assume $\frac{L_j}{\mu_j} \geq 2$ (for any $j \in [m]$), then the following inequality holds for any $k \in [m-1]$,
    \begin{align}
        \frac{
        \int_{L_k}^{\mu_{k+1}} \frac{\zeta \diff \zeta}{\sqrt{(\zeta - \mu_k) (\zeta - L_k) (\mu_{k+1} - \zeta) (L_{k+1} - \zeta)}}
        }
        {
        \int_{L_k}^{\mu_{k+1}} \frac{\diff \zeta}{\sqrt{(\zeta - \mu_k) (\zeta - L_k) (\mu_{k+1} - \zeta) (L_{k+1} - \zeta)}}
        }
        \leq 
        \frac{7 \mu_{k+1}}{  \log \left(16 \frac{\mu_{k+1}}{L_k} \right)}.
    \end{align}
\end{lemma}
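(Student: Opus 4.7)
The plan is to non\-dimensionalize both integrals by scaling out $\mu_{k+1}$ and exploit the hypothesis $L_j/\mu_j \geq 2$ to reduce to (almost) standard elliptic integrals on a unit interval. Substitute $u = \zeta/\mu_{k+1}$ and set $\alpha := L_k/\mu_{k+1}$, $\beta := \mu_k/\mu_{k+1}$, $\gamma := L_{k+1}/\mu_{k+1}$. The assumptions $L_k/\mu_k \geq 2$ and $L_{k+1}/\mu_{k+1} \geq 2$ give $\beta \leq \alpha/2$ and $\gamma \geq 2$. Since $u \in [\alpha,1]$, one has $u-\beta \in [u/2,u]$ (because $\beta \leq \alpha/2 \leq u/2$) and $\gamma - u \in [\gamma/2,\gamma]$ (because $u \leq 1 \leq \gamma/2$). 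Using the upper bounds $u - \beta \leq u$ and $\gamma - u \leq \gamma$ to lower\-bound the denominator\-integral, and the matching lower bounds $u - \beta \geq u/2$ and $\gamma - u \geq \gamma/2$ to upper\-bound the numerator\-integral, the factors of $\gamma$ cancel and the ratio is bounded by $2\mu_{k+1}\cdot N(\alpha)/D(\alpha)$, where
\[
N(\alpha) := \int_\alpha^1 \sqrt{\frac{u}{(u-\alpha)(1-u)}}\,du,
\qquad
D(\alpha) := \int_\alpha^1 \frac{du}{\sqrt{u(u-\alpha)(1-u)}}.
\]

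The next step is to recognize $N$ and $D$ as complete elliptic integrals. Apply the substitution $u = \alpha + (1-\alpha)\sin^2\theta$ for $\theta \in [0,\pi/2]$, so that $u-\alpha = (1-\alpha)\sin^2\theta$, $1-u = (1-\alpha)\cos^2\theta$ and $du = 2(1-\alpha)\sin\theta\cos\theta\,d\theta$. The Jacobian exactly cancels the factor $\sqrt{(u-\alpha)(1-u)}$ in the integrand, yielding $N(\alpha) = 2\int_0^{\pi/2}\sqrt{\alpha+(1-\alpha)\sin^2\theta}\,d\theta$ and $D(\alpha) = 2\int_0^{\pi/2} d\theta/\sqrt{\alpha+(1-\alpha)\sin^2\theta}$. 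Writing $\alpha+(1-\alpha)\sin^2\theta = 1 - (1-\alpha)\cos^2\theta$ and substituting $\phi = \pi/2 - \theta$ then gives the clean identifications $N(\alpha) = 2E(\sqrt{1-\alpha})$ and $D(\alpha) = 2K(\sqrt{1-\alpha})$, where $K,E$ are the complete elliptic integrals of the first and second kinds.

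The final step is to invoke two standard properties of $E$ and $K$ with $k := \sqrt{1-\alpha}$ and $k' := \sqrt{\alpha}$. First, $E$ is decreasing on $[0,1]$ with $E(0) = \pi/2$, which gives $N(\alpha) \leq \pi$. Second, one has the classical sharp logarithmic lower bound $K(k) \geq \log(4/k')$ for all $k \in [0,1)$; equivalently $D(\alpha) \geq 2\log(4/\sqrt{\alpha}) = \log(16/\alpha)$. Combining,
\[
\frac{N(\alpha)}{D(\alpha)} \leq \frac{\pi}{\log(16/\alpha)},
\quad\text{so}\quad
2\mu_{k+1}\cdot\frac{N(\alpha)}{D(\alpha)} \leq \frac{2\pi\,\mu_{k+1}}{\log(16\mu_{k+1}/L_k)} \leq \frac{7\,\mu_{k+1}}{\log(16\mu_{k+1}/L_k)},
\]
using $2\pi < 7$, which is precisely the claim.

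The main obstacle is the sharp inequality $K(k) \geq \log(4/k')$. Any naive "split the integral and bound each piece" approach tends to lose $\sqrt{2}$ factors and only produces $D(\alpha) \gtrsim \tfrac{1}{\sqrt{2}}\log(c/\alpha)$, which would be fatal because we need the full coefficient $1$ on $\log(1/\alpha)$ in order to match $\log(16/\alpha)$ asymptotically as $\alpha \to 0$. The cleanest way around this is to invoke the sharp elliptic\-integral inequality, which can be read off the power\-series expansion $K(k) = \log(4/k') + \sum_{n\geq 1} a_n\, k'^{2n}$ (whose non\-leading coefficients $a_n$ are all non\-negative) and is tight in the limit $k \to 1$. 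Everything else in the proof is routine algebraic bookkeeping.
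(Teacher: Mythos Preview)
Your proof is correct and takes a genuinely different, more elementary route than the paper.

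The paper evaluates both integrals \emph{exactly} as complete elliptic integrals: the denominator becomes $2K(\cdot)/\sqrt{(L_2-L_1)(\mu_2-\mu_1)}$ and the numerator involves both $K$ and the complete elliptic integral of the \emph{third} kind $\Pi$. It then proves a dedicated inequality $\Pi(x\mid 1-y)/K(1-y)\ge \tfrac{1}{1-x}\bigl(1-\tfrac{4x}{\log(16/y)}\bigr)$ via a concavity argument for $(1-x)\Pi(x\mid 1-y)$ together with a derivative estimate at $x=\tfrac12$, and finishes with some algebra.

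You instead exploit the hypothesis $\kappa_j\ge 2$ \emph{before} touching any elliptic integral: on $[L_k,\mu_{k+1}]$ the two ``outer'' factors $(\zeta-\mu_k)$ and $(L_{k+1}-\zeta)$ vary by at most a factor of $2$, so you replace them by $u$ and $\gamma$ respectively (in opposite directions for numerator and denominator), incurring a single factor of $2$. This collapses the ratio to $2\mu_{k+1}\,E(\sqrt{1-\alpha})/K(\sqrt{1-\alpha})$, and the conclusion follows from the two textbook bounds $E\le \pi/2$ and $K(k)\ge \log(4/k')$ (the latter is e.g.\ DLMF~19.9.1). Your approach avoids $\Pi$ and the concavity lemma entirely; the price is the extra factor $2$, which is harmless since $2\pi<7$. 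One minor comment: your parenthetical justification of $K(k)\ge\log(4/k')$ via a ``power series with nonnegative coefficients'' is slightly loose, since the coefficients in the standard $k'$-expansion of $K$ themselves contain $\log(4/k')$; it is cleaner to simply cite the classical inequality $\log 4 \le K(k)+\log k'$, which is exactly what you need.
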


The proof of \cref{lem:reduction:to:roots,lem:lb:roots:ub,lem:lb:ratio} are standard yet tedious estimation of definite integrals, which we defer to Appendix \ref{apx:lb:deferred}.

The proof of \cref{lem:lb:green:ub} then follows immediately from \cref{lem:reduction:to:roots,lem:lb:roots:ub,lem:lb:ratio}.
\begin{proof}[Proof of \cref{lem:lb:green:ub}]
    By \cref{lem:reduction:to:roots,lem:lb:roots:ub,lem:lb:ratio},
    \begin{align}
        g_S(0) \leq \frac{ 7 \prod_{k \in [m-1]} {\frac{r_k}{\mu_{k+1}}} }{\sqrt{\prod_{k \in [m]} \frac{L_k}{\mu_k}}}.
    \leq \frac{ 7 \prod_{k \in [m-1]} \frac{28}{\log (16 \frac{\mu_{k+1}}{L_k})} }{\sqrt{\prod_{k \in [m]} \frac{L_k}{\mu_k}}}
    \leq \frac{7}{\sqrt{\prod_{k \in [m]} \frac{L_k}{\mu_k}} \cdot \prod_{k \in [m-1]} \left(0.03 \log (16 \frac{\mu_{k+1}}{L_k}) \right)}.
    \end{align}
\end{proof}

\section{Stochastic BSLS algorithm for quadratic multiscale optimization}\label{sec:stochastic}

In this section we prove \cref{theorem:stochastic}, showing that a variant of $\bsls$, which we call $\bslsstoch$, efficiently solves the stochastic version of a quadratic multiscale optimization problem from \cref{def:multiscale_stochastic}, restated below for convenience.
\begin{definition*}[Restated \cref{def:multiscale_stochastic}]
The \textbf{stochastic quadratic multiscale optimization problem} asks to approximately solve the following problem
\begin{equation}
    \min_{\x \in \reals^d} \E_{(\vec{a},b) \sim \dist} \left[ \frac{1}{2} (\vec{a}^{\top} \x - b)^2 \right],
\end{equation}
where $b = \vec{a}^{\top} \x\opt $ for some fixed, unknown $\x\opt$ and the eigenvalues of the covariance matrix $\E_{\dist}[ \vec{a} \vec{a}^{\top} ]$ can be partitioned into $m$ ``bands'' such that for $i = 1, \dots, \nbands$ and $j = 1, \dots, \mult_i$, each eigenvalue $\lambda_{i_j}$ satisfies $\lambda_{i_j} \in [\mu_i, L_i]$ with $L_i < \mu_{i+1}$ for all $i < m$. 
\end{definition*}
We introduce some additional notation. Let $\maxmult = \max_{i \in [m]} \mult_i$. We let $\mat{H}_i \defeq \diag(\eig_{i_1}, \eig_{i_2}, \dots, \eig_{i_{\mult_{i}}})$ be the diagonal matrix with eigenvalues that lie in the band $\left[\mu_i, L_i \right]$ and $\mat{P}_i$ be an orthonormal matrix such that $\mat{P}_i \mat{\Sigma} \mat{P}_i^{\top} = \mat{H}_i$. Let $\mat{P} = \begin{pmatrix} \mat{P}_1^{\top}, \dots, \mat{P}_m^{\top} \end{pmatrix}^{\top}$ and $\mat{H} = \diag \left( \mat{H}_1, \dots, \mat{H}_m \right)$. We will use the notation that for matrices and vectors, $\mat{M}^{(t)}$ or $\x^{(t)}$ refers to the $\nth{t}$ element of a sequence, while $\mat{M}_{ij}$ or $\x_i$ refers to the index of that matrix or vector. 
We note that this problem can be translated to the multiscale optimization problem formulation as per Def.~\ref{def:multiscale_problem}. Indeed,
\begin{align}
& \E \left[ \frac{1}{2} (\vec{a}^{\top} \x - b)^2 \right] = \frac{1}{2} \x^{\top} \mat{\Sigma} \x -  \left( \mat{\Sigma} \x \opt\right)^{\top} \x + \norm{\mat{\Sigma} \x \opt}_2^2 \\
= & \sum_{i \in [\nbands]} \left( \frac{1}{2} \left( \mat{P}_i \x \right)^{\top} \mat{H}_i \left( \mat{P}_i \x \right) - \left( \mat{H}_i \mat{P}_i \x \opt \right)^{\top} \left( \mat{P}_i\x \right)  + \frac{1}{m} \norm{\mat{\Sigma} \x \opt}_2^2 \right) \\
= & \sum_{i \in [\nbands]}  f_i(\mat{P}_i \x), 
\enspace
\text{ for }
\enspace
f_i(\vec{v}) \defeq \frac{1}{2} \v^{\top} \mat{H}_i \v  - \left( \mat{H}_i \mat{P}_i \x \opt \right)^{\top} \v + \frac{1}{\nbands} \norm{\mat{\Sigma} \x \opt}_2^2,
\end{align}
where each $f_i : \reals^{\mult_i} \to \reals$ satisfies the constraints of Def.~\ref{def:multiscale_problem}.
Therefore the problem from Def.~\ref{def:multiscale_stochastic} can be thought of as a stochastic version of the general problem from Section~\ref{section:nonlinear_problem}.

\subsection{Proof overview of Theorem~\ref{theorem:stochastic}} 
\label{subsec:proofofstoch}
In what follows we prove \cref{theorem:stochastic}, guaranteeing the convergence rate of $\bslsstoch$ in expectation for the stochastic quadratic multiscale optimization problem (\cref{def:multiscale_stochastic}). First, Section~\ref{section:stochastic_inbody} uses our distributional assumptions to establish that if $\bslsstoch_1$ takes $\nsteps_1$ steps then
\begin{equation}
    \E \left[ \norm{\bslsstoch(\x^{(0)}) - \x \opt }_2^2 \right] = (\x^{(0)} - \x \opt)^{\top} \mat{D}^{(\nsteps_1)}(\x^{(0)} - \x\opt), 
\end{equation}
where $\left \{ \mat{D}^{(t)} \right \}_{t = 0}^{\nsteps_1}$ is a sequence of matrices with a clean recurrence relation. Next, Section~\ref{section:inbody_normbound} uses this recurrence relation to bound the spectral norm of each $\mat{D}^{(t)}$. This is where the band structure of the eigenvalues plays a role and the stochasticity poses an obstacle. Finally in Theorem~\ref{thm:helperstochastic} we use the previous work to prove Theorem~\ref{theorem:stochastic} without too much effort since Section~\ref{section:inbody_normbound} guarantees that $\mat{D}^{(\nsteps_1)}$ has sufficiently small spectral norm. Finally in Section~\ref{section:paramsunknown} we extend our analysis to the setting where only $m$, $\mu_1$, $L_m$, and $\prod_{i \in m} \cond_i$ are known. 

To this end, we introduce some notation in addition to the notation from the beginning of \cref{sec:stochastic}. We let $\mat{A} \defeq \frac{1}{\navg} \sum_{i \in \navg} \vec{a}^{(i)} \vec{a}^{(i)\top}$ denote the empirical covariance matrix and $\vec{b} \defeq \frac{1}{\navg} \sum_{i \in \navg} b^{(i)} \vec{a}^{(i)}$ be the empirical approximation to $\mat{\Sigma} \x \opt$. For convenience we introduce $\delta \defeq \distconst \maxmult/\navg$, which (very roughly) corresponds to the noise induced by stochasticity.
\begin{assumption}[Distribution assumptions]
\label{assumption:dist}
For $\vec{a} \sim \dist$ we assume 
\begin{enumerate}[(a),leftmargin=*]
 		\item For any $i,j,k \in [\dim]$ with $i \neq j$ it is the case that $\E[(\mat{P} \vec{a})_i ( \mat{P} \vec{a} )_k^2 (\mat{P} \vec{a})_j] = 0$. (Recall that in this section, $\x_i$ refers to the $\nth{i}$ index of vector $\x$.)
		\item There exists a constant $\distconst$ such that for any $\vec{w} \in \reals^d$, $ \E[ (\vec{w}^{\top} \vec{a})^4] \leq \distconst \E[(\vec{w}^{\top} \vec{a})^2]^2$.
	\end{enumerate}
\end{assumption}

\subsection{Simplifying the stochasticity}
\label{section:stochastic_inbody}
With the notation and assumptions in place, we begin with Lemma~\ref{lemma:secondmoments} which bounds the degree to which stochasticity poses an obstacle. For motivation, first suppose that we had no stochasticity in that instead of approximating $\mat{\Sigma}$ by $\mat{A} = \frac{1}{n} \sum_{i \in n} \vec{a}_i \vec{a}_i^{\top} $, we had access to $\mat{\Sigma}$ itself. Then in $\gdstoch(\x; L)$ we would have instead
\begin{equation}
    \vec{g} =  \mat{\Sigma} ( \x - \x \opt), 
\end{equation}
and
\begin{equation}
     \gdstoch(\x;L) - \x\opt  = \x - \frac{1}{L} \mat{\Sigma} (\x - \x \opt) - \x\opt = \left( \id - \frac{1}{L} \mat{\Sigma} \right) (\x - \x \opt).
\end{equation}
Therefore if $t$ denotes the total number of calls to $\gdstoch$ and $\stepsize^{(t)}$ is the stepsize taken at step $t$ we have
\begin{align}
    \x^{(t)} - \x \opt & = \left( \id - \stepsize^{(t)} \mat{\Sigma} \right) \left( \x^{(t-1)} - \x\opt \right) 
    = \left( \id - \stepsize^{(t)} \mat{\Sigma} \right) \cdots  \left( \id - \stepsize^{(1)} \mat{\Sigma} \right)  \left( \x^{(0)} - \x\opt \right).
\end{align}
Critically, since $\mat{\Sigma}$ commutes with itself we can simplify the above to
\begin{equation}
       \x^{(t)} - \x \opt   = \left( \prod_{i = 1}^{m} \left( \id - \frac{1}{L_i}  \mat{\Sigma} \right)^{\nsteps_i } \right) (\x^{(0)} - \x \opt). 
\end{equation}
Therefore
\begin{align}
     \norm{\x^{(t)} - \x \opt }_2^2  & = \left( \mat{P} (\x^{(0)} - \x \opt)\right)^{\top} \left( \prod_{i = 1}^{m} \left( \id - \frac{1}{L_i}  \mat{H} \right)^{\nsteps_i } \right)^{2}   \mat{P} (\x^{(0)} - \x \opt). 
\end{align}
Then using the fact that for each eigenvalue of $\mat{\Sigma}$ we have $\lambda_{i_j} \in [ \mu_i, L_i ]$ we have,
\begin{equation}
    \norm{\x^{(\nsteps_1)} - \x \opt }_2^2  \leq \sum_{j = 1}^m  \norm{\mat{P}_j^{\top} \left( \x^{(0)} - \x \opt \right) }_2^2  \cdot \prod_{i = 1}^m \left( 1 - \frac{\mu_j}{L_i} \right)^{2 \nsteps_i}  \leq \sum_{j = 1}^m  \norm{\mat{P}_j^{\top} \left( \x^{(0)} - \x \opt \right) }_2^2  \cdot \left( 1 - \frac{1}{\cond_j} \right)^{\nsteps_j}  \globalcond^{2 \sum_{i = j+1}^m \nsteps_i}. 
\end{equation}
Written this way we see that for some constant $C$ we can bound by $\norm{\x^{(\nsteps_1)} - \x \opt}_2^2$ by $\epsilon$ if $\nsteps_j \geq C \cond_j \log( \globalcond) \sum_{i = j+1}^m \nsteps_i $ and $\nsteps_1 \geq C \log(\norm{\x^{(0)} - \x \opt}_2 /\epsilon) \cond_1 \log( \globalcond) \sum_{i = 2}^m \nsteps_i $. This would give an overall query complexity of $\bigo \left( \left( \prod_{i \in m}  \cond_i \right)  \log^m( \globalcond)  \log\left( \norm{\x^{(0)} - \x \opt}_2/ \epsilon \right) \right)$. Instead, in the stochastic case we have
\begin{align}
    \label{eqn:noisy2norm}
    & \norm{ \x^{(t)} - \x \opt }_2^2 
     =  \left( \x^{(0)} - \x\opt \right)^{\top}  \left( \prod_{s = 1}^t \left( \id - \stepsize^{(s)} \mat{A}^{(s)}  \right) \right) \left( \prod_{s = 1}^t \left( \id - \stepsize^{(t-s)} \mat{A}^{(t-s)}  \right) \right)  \left( \x^{(0)} - \x\opt \right)  .
\end{align}
The random instances $\mat{A}^{(t)}$ do not necessarily commute with each other and so simplifying their product is not as simple as the non-stochastic case. The following lemma roughly shows we can replace the above $\mat{A}^{(t)}$ with a perturbation of $\mat{\Sigma}$. 
\begin{lemma}[Understanding Second Moments]
\label{lemma:secondmoments}
Recall $\vec{a} \overset{\textrm{iid}}{\sim} \dist$ and $\mat{A}$. Suppose some matrix $\mat{D}$ commutes with the covariance matrix $\mat{\Sigma}$. Then $\E[\mat{A} \mat{D} \mat{A} ]$ also commutes with $\mat{\Sigma}$ and 
\begin{equation}
    \E[\mat{A} \mat{D} \mat{A} ] \preceq \frac{\navg-1}{\navg} \mat{\Sigma} \mat{D} \mat{\Sigma} + \frac{\distconst}{\navg} \tr \left( \mat{D} \mat{\Sigma} \right) \mat{\Sigma}.
\end{equation}
\end{lemma}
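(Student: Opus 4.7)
The plan is to expand $\E[\mat{A}\mat{D}\mat{A}]$ by the definition of $\mat{A}$, isolate the single-sample quartic term $\mat{M} \defeq \E[\vec{a}\vec{a}^\top \mat{D} \vec{a}\vec{a}^\top]$, and then establish two claims about $\mat{M}$ separately, with Assumption~(a) driving commutativity and Assumption~(b) driving the spectral bound.

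\textbf{Reduction to the single-sample moment.} First, writing $\mat{A} = \frac{1}{\navg}\sum_{i}\vec{a}^{(i)}\vec{a}^{(i)\top}$ and expanding,
\begin{equation*}
\E[\mat{A}\mat{D}\mat{A}] \;=\; \frac{1}{\navg^2}\sum_{i,j \in [\navg]} \E\bigl[\vec{a}^{(i)}\vec{a}^{(i)\top}\mat{D}\vec{a}^{(j)}\vec{a}^{(j)\top}\bigr].
\end{equation*}
For $i \neq j$, independence of the samples factors the expectation as $\mat{\Sigma}\mat{D}\mat{\Sigma}$; for $i = j$, each term equals $\mat{M}$. Hence $\E[\mat{A}\mat{D}\mat{A}] = \tfrac{\navg-1}{\navg}\mat{\Sigma}\mat{D}\mat{\Sigma} + \tfrac{1}{\navg}\mat{M}$, and since $\mat{\Sigma}\mat{D}\mat{\Sigma}$ trivially commutes with $\mat{\Sigma}$, it suffices to show (i) $\mat{M}$ commutes with $\mat{\Sigma}$ and (ii) $\mat{M} \preceq \distconst\,\tr(\mat{D}\mat{\Sigma})\,\mat{\Sigma}$.

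\textbf{Commutativity via Assumption~(a).} For (i), I would pass to the eigenbasis of $\mat{\Sigma}$. Let $\vec{b} \defeq \mat{P}\vec{a}$ and $\widetilde{\mat{D}} \defeq \mat{P}\mat{D}\mat{P}^\top$. Since $\mat{D}$ commutes with $\mat{\Sigma}$, $\widetilde{\mat{D}}$ is block-diagonal with blocks aligned to the eigenspaces of $\mat{H}$; choosing $\mat{P}$ to additionally diagonalize $\mat{D}$ within each such block makes $\widetilde{\mat{D}}$ fully diagonal. Then
\begin{equation*}
(\mat{P}\mat{M}\mat{P}^\top)_{ij} \;=\; \sum_k \widetilde{\mat{D}}_{kk}\,\E[\vec{b}_i\vec{b}_k^2\vec{b}_j],
\end{equation*}
which is zero for $i \neq j$ by Assumption~(a); hence $\mat{P}\mat{M}\mat{P}^\top$ is diagonal and $\mat{M}$ commutes with $\mat{\Sigma}$.

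\textbf{Spectral bound via Assumption~(b).} For (ii), fix any $\vec{w}\in\reals^d$ and observe $\vec{w}^\top \mat{M} \vec{w} = \E[(\vec{w}^\top\vec{a})^2(\vec{a}^\top\mat{D}\vec{a})]$. Decomposing $\mat{D} = \sum_i\lambda_i\vec{v}_i\vec{v}_i^\top$ as an eigendecomposition (treating positive and negative parts separately if $\mat{D}$ is not PSD, which is the relevant case in the application), Cauchy--Schwarz together with Assumption~(b) gives
\begin{equation*}
\E[(\vec{w}^\top\vec{a})^2(\vec{v}_i^\top\vec{a})^2] \;\leq\; \sqrt{\E[(\vec{w}^\top\vec{a})^4]\,\E[(\vec{v}_i^\top\vec{a})^4]} \;\leq\; \distconst\,(\vec{w}^\top\mat{\Sigma}\vec{w})(\vec{v}_i^\top\mat{\Sigma}\vec{v}_i).
\end{equation*}
Summing $\lambda_i$-weighted terms over $i$ produces $\vec{w}^\top\mat{M}\vec{w} \leq \distconst\,(\vec{w}^\top\mat{\Sigma}\vec{w})\tr(\mat{D}\mat{\Sigma})$, giving (ii).

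\textbf{Main obstacle.} The main subtlety is justifying the refined choice of $\mat{P}$ in the commutativity step when $\mat{\Sigma}$ has repeated eigenvalues: Assumption~(a) is stated for a fixed $\mat{P}$ tied to the setup, so one must argue either that the assumption is invariant under rotations within eigenspaces of $\mat{\Sigma}$ (allowing a $\mat{P}$ simultaneously diagonalizing $\mat{D}$) or, alternatively, that enough mixed fourth moments of $\vec{b}$ vanish to kill the off-diagonal entries of $\widetilde{\mat{D}}$ directly. Both routes are essentially immediate for the concrete distributions of \cref{example:distassumption}, so this is a minor bookkeeping issue rather than a conceptual obstacle.
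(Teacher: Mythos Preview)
The paper states this lemma without proof, so there is no reference argument to compare against. Your decomposition into the cross term $\tfrac{\navg-1}{\navg}\mat{\Sigma}\mat{D}\mat{\Sigma}$ and the single-sample moment $\mat{M}=\E[\vec{a}\vec{a}^\top\mat{D}\vec{a}\vec{a}^\top]$, followed by diagonalization in the $\mat{P}$-basis under Assumption~\ref{assumption:dist}(a) and a Cauchy--Schwarz/kurtosis bound under Assumption~\ref{assumption:dist}(b), is exactly the natural argument and is correct when $\mat{D}\succeq 0$. Your diagnosis of the repeated-eigenvalue subtlety is also right; the clean resolution (which the paper implicitly relies on in Remark~\ref{remark:Dtor}) is to carry the stronger inductive invariant that $\mat{P}\mat{D}^{(t)}\mat{P}^\top$ is \emph{diagonal}, not merely commuting with $\mat{H}$, which holds starting from $\mat{D}^{(0)}=\id$.

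One caveat: your parenthetical about handling indefinite $\mat{D}$ by splitting into positive and negative parts does not actually recover the stated bound, and in fact the inequality as written can fail for indefinite $\mat{D}$. Take $\vec{a}\sim\mathsf{N}(\vec{0},\id_d)$ and $\mat{D}=-\id$: then $\mat{M}=-\E[\|\vec{a}\|_2^2\,\vec{a}\vec{a}^\top]=-(d+2)\id$ while $\distconst\,\tr(\mat{D}\mat{\Sigma})\mat{\Sigma}=-3d\,\id$, so $\mat{M}\preceq \distconst\,\tr(\mat{D}\mat{\Sigma})\mat{\Sigma}$ would require $3d\le d+2$, which fails for $d\ge 2$. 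This is harmless for the paper's application since every $\mat{D}^{(t)}$ is PSD by construction; the lemma should simply be read with an implicit $\mat{D}\succeq 0$ hypothesis, and your argument already covers that case cleanly.
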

Next we can simplify Eq.~\ref{eqn:noisy2norm} by sequentially conditioning on $\mat{A}^{(1)}, \dots, \mat{A}^{(t-1)}$ and then invoking Lemma~\ref{lemma:secondmoments} for $\mat{A}^{(t)}$. Lemma~\ref{lemma:Ddefinition} does this explicitly and in doing so constructs the aforementioned sequence $\left \{ \mat{D}^{(t)} \right \}_{t=0}^{\nsteps_1}$. After Lemma~\ref{lemma:Ddefinition} the purpose of the remainder of the proof is only to bound the spectral norm of $\mat{D}^{(\nsteps_1)}$.
\begin{lemma}
\label{lemma:Ddefinition}
Recall $\vec{a} \sim \dist$ and the definitions of $\mat{A}$ and $\vec{b}$. Recall we use $\vec{g} = \mat{A} \x - \vec{b}$ in the subroutine $\gdstoch(\x; L)$. Define
\begin{equation}
    \iter{D}{t} \defeq \expect[(\id - \stepsize^{({\nsteps - t + 1})} \iter{A}{\nsteps - t + 1}) \iter{D}{t-1} (\id - \stepsize^{({\nsteps -t +1})} \iter{A}{\nsteps - t + 1})], \qquad \iter{D}{0} \defeq \id.
\end{equation} 
Then if $\nsteps$ denotes the total number of calls to $\gdstoch(\x;L)$ we have
\begin{equation}
    \expect\left[\norm{ \bslsstoch_1(\x^{(0)}) - \x\opt}_2^2\right] = (\x^{(0)} - \x\opt)^{\top} \mat{D}^{(\nsteps)} (\x^{(0)} - \x \opt). 
\end{equation}
\end{lemma}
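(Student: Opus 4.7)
\textbf{Proof plan for Lemma~\ref{lemma:Ddefinition}.}
The plan is to first write $\bslsstoch_1(\x^{(0)}) - \x\opt$ as a product of random linear factors applied to $\x^{(0)} - \x\opt$, and then take the expectation of the resulting quadratic form by peeling off factors symmetrically from the center of the product outward, using the independence of the batches drawn across the $\nsteps$ calls to $\gdstoch$. First I would use that $b^{(i)} = \vec{a}^{(i)\top}\x\opt$ (from \cref{def:multiscale_stochastic}) implies $\vec{b}^{(s)} = \mat{A}^{(s)} \x\opt$ for every batch $s$, so $\vec{g}^{(s)} = \mat{A}^{(s)}(\x^{(s-1)} - \x\opt)$ and one step of $\gdstoch$ acts on the error as a purely linear map:
\[
\x^{(s)} - \x\opt \;=\; \bigl(\id - \stepsize^{(s)} \mat{A}^{(s)}\bigr)\bigl(\x^{(s-1)} - \x\opt\bigr).
\]
Chaining this identity through all $\nsteps$ calls to $\gdstoch$ (the recursive interlacing imposed by $\bslsstoch$ only fixes the schedule of stepsizes $\stepsize^{(1)}, \dots, \stepsize^{(\nsteps)}$, not this elementary one-step identity) and using symmetry of each $\mat{A}^{(s)}$ yields
\[
\bigl\|\bslsstoch_1(\x^{(0)}) - \x\opt\bigr\|_2^2 \;=\; (\x^{(0)} - \x\opt)^{\top}\, \mat{M}\, (\x^{(0)} - \x\opt),
\]
where $\mat{M}$ is the palindromic product whose two innermost factors are both $(\id - \stepsize^{(\nsteps)}\mat{A}^{(\nsteps)})$ and whose outermost factors are $(\id - \stepsize^{(1)}\mat{A}^{(1)})$.

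The second step is to show $\E[\mat{M}] = \mat{D}^{(\nsteps)}$ by peeling factors from the center. Because each call to $\gdstoch$ draws a fresh iid batch of $\navg$ samples from $\dist$, the matrices $\mat{A}^{(1)}, \dots, \mat{A}^{(\nsteps)}$ are mutually independent. Conditioning on $\mat{A}^{(1)}, \dots, \mat{A}^{(\nsteps-1)}$ and applying the tower property, the two innermost factors of $\mat{M}$ are integrated out to give $\E[(\id - \stepsize^{(\nsteps)} \mat{A}^{(\nsteps)})^2] = \mat{D}^{(1)}$ sitting between the remaining random factors. Inductively, after $t$ such peels the deterministic matrix at the center of the surviving product is $\mat{D}^{(t)}$, and the next pair of random factors to be integrated out is $(\id - \stepsize^{(\nsteps-t)}\mat{A}^{(\nsteps-t)})$ on either side. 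Taking their conditional expectation produces
\[
\E\bigl[(\id - \stepsize^{(\nsteps-t)}\mat{A}^{(\nsteps-t)})\,\mat{D}^{(t)}\,(\id - \stepsize^{(\nsteps-t)}\mat{A}^{(\nsteps-t)})\bigr] \;=\; \mat{D}^{(t+1)},
\]
which precisely matches the recursion for $\mat{D}^{(t+1)}$ (whose index $\nsteps - (t+1) + 1 = \nsteps - t$ agrees). After $\nsteps$ peels every $\mat{A}^{(s)}$ has been integrated out and $\mat{M}$ is replaced by the deterministic matrix $\mat{D}^{(\nsteps)}$, giving the claim.

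I do not expect a serious obstacle; the argument is essentially bookkeeping enabled by independence of the batches together with the realizability assumption $b^{(i)} = \vec{a}^{(i)\top}\x\opt$. The two points to verify with care are (i) the error-linearization identity above, which crucially uses realizability to cancel the constant term and makes the update a \emph{linear} operator on $\x^{(s-1)}-\x\opt$ rather than merely affine, and (ii) that the indexing $\nsteps - t + 1$ in the definition of $\mat{D}^{(t)}$ aligns with peeling the palindromic product $\mat{M}$ from the inside out (the last batch first, the first batch last) rather than from either end.
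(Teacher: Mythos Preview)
Your proposal is correct and follows essentially the same approach as the paper: write the error as a product of independent linear factors using realizability, expand the squared norm as a palindromic product, and peel factors from the center outward via the tower property to recover the $\mat{D}^{(t)}$ recursion. The paper formalizes the peeling step with the shorthand $\mat{M}^{(t)}$ and $\mat{M}^{(t)}_{\mathrm{rev}}$, but the argument is the same as yours.
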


\begin{proof}[Proof of \cref{lemma:Ddefinition}]
We begin our proof by noting that for $\vec{g}$ as defined in $\gdstoch(\x;L)$ we have 
\begin{equation}
    \vec{g} = \frac{1}{\navg} \sum_{i \in \navg} \vec{a}^{(i)} \vec{a}^{(i)\top} (\x -  \x \opt).
\end{equation}
Thus we have 
\begin{align}
    \gdstoch(\x;L) - \x\opt & = \x - \frac{1}{L} \mat{A} (\x - \x \opt) - \x\opt = \left( \id - \frac{1}{L} \mat{A} \right) (\x - \x \opt).
\end{align}
Therefore if $t$ denotes the total number of calls to $\gdstoch$, $\mat{A}^{(t)}$ denotes the random matrix generated in the $\nth{t}$ call to $\gdstoch$, and $\stepsize^{(t)}$ is the stepsize taken at step $t$ we have
\begin{align}
    \x^{(t)} - \x \opt & = \left( \id - \stepsize^{(t)} \mat{A}^{(t)}  \right) \left( \x^{(t-1)} - \x\opt \right) 
    = \left( \id - \stepsize^{(t)} \mat{A}^{(t)}  \right) \cdots  \left( \id - \stepsize^{(1)} \mat{A}^{(1)} \right)  \left( \x^{(0)} - \x\opt \right).
\end{align}
Therefore, 
\begin{align}
\label{eqn:unravel}
     & \norm{\x^{(\nsteps)} - \x \opt}_2^2 = (\x^{(\nsteps)} - \x \opt)^{\top} (\x^{(\nsteps)} - \x \opt) \\
    = & (\x^{(0)} - \x \opt)^{\top} \left( \id - \stepsize^{(1)} \mat{A}^{(1)} \right)^{\top} \dots \left( \id - \stepsize^{({\nsteps})} \mat{A}^{(\nsteps)} \right)^{\top} \left( \id - \stepsize^{({\nsteps})} \mat{A}^{(\nsteps)} \right) \dots \left( \id - \stepsize^{(1)} \mat{A}^{(1)} \right) (\x^{(0)} - \x \opt).
\end{align}
Let $\mat{D}^{(0)} \defeq \id$ and $\mat{D}^{(t)} \defeq \E \left[ (\id - \stepsize^{(t)} \mat{A}^{(\nsteps- t +1)} ) \mat{D}^{(t-1)} (\id - \stepsize^{(t)} \mat{A}^{(\nsteps - t + 1)} ) \right]$. For short let \\ $\mat{M}^{(t)} \defeq \left( \id - \stepsize^{(1)} \mat{A}^{(1)} \right)  \dots  \left( \id - \stepsize^{(t)} \mat{A}^{(t)} \right)$ and $\mat{M}^{(t)}_{\textrm{rev}} \defeq \left( \id - \stepsize^{(t)} \mat{A}^{(t)} \right)  \dots \left( \id - \stepsize^{(1)} \mat{A}^{(1)} \right)$. Using this notation and using the independence of $\mat{A}^{(1)}, \dots, \mat{A}^{(\nsteps)}$ we have for any $k \geq 1$,
\begin{small}
\begin{align}
    & \E \left[ (\x^{(0)} - \x \opt)^{\top} \mat{M}^{(\nsteps-k + 1)} \mat{D}^{(k-1)} \mat{M}^{(\nsteps-k + 1)}_{\textrm{rev}}  (\x^{(0)} - \x \opt) \right] \\
    = & \E \left[ \E \left[ (\x^{(0)} - \x \opt)^{\top} \mat{M}^{(\nsteps-k + 1)} \mat{D}^{(k-1)} \mat{M}^{(\nsteps-k + 1)}_{\textrm{rev}}  (\x^{(0)} - \x \opt)  \mid \mat{A}^{(1)}, \dots, \mat{A}^{(\nsteps-k)} \right] \right] \\
    = &  \E \left[  (\x^{(0)} - \x \opt)^{\top} \mat{M}^{\nsteps-k}   \E \left[\left( \id - \stepsize^{(t)} \mat{A}^{(\nsteps-k + 1)} \right) \mat{D}^{(k-1)}  \left( \id - \stepsize^{(t)} \mat{A}^{(\nsteps-k + 1)} \right)  \right]  \mat{M}_{\textrm{rev}}^{\nsteps-k} (\x^{(0)} - \x \opt)   \right] \\
    = & \E \left[  (\x^{(0)} - \x \opt)^{\top} \mat{M}^{\nsteps-k}   \mat{D}^{(k)} \mat{M}_{\textrm{rev}}^{\nsteps-k} (\x^{(0)} - \x \opt)   \right]
\end{align}
\end{small}
Therefore using Eq.~\ref{eqn:unravel} in the first equality and the above recursion in the second equality we have
\begin{small}
\begin{equation}
    \E \left[ \norm{\x^{\nsteps} - \x \opt}_2^2 \right] = \E \left[  (\x^{(0)} - \x \opt)^{\top} \mat{M}^{\nsteps} \mat{D}^{(0)} \mat{M}_{\textrm{rev}}^{\nsteps} (\x^{(0)} - \x \opt)^{\top} \right]  = (\x^{(0)} - \x \opt)^{\top} \mat{D}^{(\nsteps)} (\x^{(0)} - \x\opt).
\end{equation}
\end{small}
\end{proof}

\subsection{Bounding the spectral norm of $\mat{D}^{(t)}$}
\label{section:inbody_normbound}
This section is where we address the difficulty posed by stochasticity. From \cref{lemma:Ddefinition} we see that it suffices to bound the spectral norm of $\mat{D}^{\nsteps_1}$. To that end, in the following lemma we construct a clean recursive form to analyze the sequence $\left \{ \mat{D}^{(t)} \right \}$.

\begin{lemma}
\label{lemma:Drecursiongeneral}
For $\iter{D}{t}$ as defined in Lemma~\ref{lemma:Ddefinition} we have that $\iter{D}{t}$ commutes with $\mat{\Sigma}$. Moreover we have the following spectral upperbound, 
\begin{equation}
    \iter{D}{t}  \preceq (\id - \stepsize^{(t)} \mat{\Sigma})^2 \iter{D}{t-1} + \frac{\distconst \stepsize^{(t)2}}{\navg}  \tr(\iter{D}{t-1} \mat{\Sigma}) \mat{\Sigma}.
\end{equation}
\end{lemma}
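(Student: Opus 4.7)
The plan is to prove both claims simultaneously by induction on $t$, leveraging \cref{lemma:secondmoments} at the inductive step. The base case $t=0$ is immediate, since $\mat{D}^{(0)} = \id$ commutes with $\mat{\Sigma}$ and there is nothing to check about the inequality. For the inductive step, assume $\mat{D}^{(t-1)}$ commutes with $\mat{\Sigma}$ (and is positive semidefinite, which follows by a parallel induction using $\mat{D}^{(t)} = \E[\mat{M} \mat{D}^{(t-1)} \mat{M}]$ for $\mat{M} = \id - \stepsize^{(t)} \mat{A}$ symmetric).

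First I would expand the defining recursion, abbreviating $\stepsize = \stepsize^{(\nsteps - t + 1)}$ and $\mat{A} = \mat{A}^{(\nsteps - t + 1)}$. Since $\mat{D}^{(t-1)}$ is deterministic and $\E[\mat{A}] = \mat{\Sigma}$,
\begin{equation}
\mat{D}^{(t)} = \mat{D}^{(t-1)} - \stepsize \mat{\Sigma} \mat{D}^{(t-1)} - \stepsize \mat{D}^{(t-1)} \mat{\Sigma} + \stepsize^{2} \E[\mat{A}\mat{D}^{(t-1)}\mat{A}].
\end{equation}
By the inductive hypothesis, $\mat{\Sigma} \mat{D}^{(t-1)} = \mat{D}^{(t-1)} \mat{\Sigma}$, so the linear-in-$\stepsize$ terms combine into $-2 \stepsize \mat{\Sigma} \mat{D}^{(t-1)}$. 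Applying \cref{lemma:secondmoments} shows that $\E[\mat{A}\mat{D}^{(t-1)}\mat{A}]$ commutes with $\mat{\Sigma}$, which (together with the fact that every other summand above commutes with $\mat{\Sigma}$) establishes the commuting property for $\mat{D}^{(t)}$ and completes the first half of the induction.

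For the spectral bound I would next add and subtract $\stepsize^2 \mat{\Sigma}^2 \mat{D}^{(t-1)}$ to recognize the square $(\id - \stepsize \mat{\Sigma})^2 \mat{D}^{(t-1)}$:
\begin{equation}
\mat{D}^{(t)} = (\id - \stepsize \mat{\Sigma})^2 \mat{D}^{(t-1)} + \stepsize^{2}\bigl( \E[\mat{A}\mat{D}^{(t-1)}\mat{A}] - \mat{\Sigma}^2 \mat{D}^{(t-1)} \bigr).
\end{equation}
Now invoke the bound of \cref{lemma:secondmoments} on the bracketed term, using the commuting property to replace $\mat{\Sigma} \mat{D}^{(t-1)} \mat{\Sigma}$ by $\mat{\Sigma}^2 \mat{D}^{(t-1)}$; this yields
\begin{equation}
\E[\mat{A}\mat{D}^{(t-1)}\mat{A}] - \mat{\Sigma}^2 \mat{D}^{(t-1)} \preceq -\tfrac{1}{\navg} \mat{\Sigma}^2 \mat{D}^{(t-1)} + \tfrac{\distconst}{\navg} \tr(\mat{D}^{(t-1)} \mat{\Sigma}) \mat{\Sigma}.
\end{equation}
Since $\mat{\Sigma}$ and $\mat{D}^{(t-1)}$ are both positive semidefinite and commute, $\mat{\Sigma}^2 \mat{D}^{(t-1)} \succeq \mat{0}$, so dropping the first (non-positive) term on the right-hand side gives exactly the claimed inequality.

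The main conceptual obstacle is the interaction between Löwner-order inequalities and products: one cannot directly ``multiply'' the bound from \cref{lemma:secondmoments} by the (possibly indefinite in the spectral sense) factors $(\id - \stepsize \mat{\Sigma})$ on either side of the recursion. This is why I plan to first rewrite the recursion exactly and only then substitute the PSD inequality on the isolated $\E[\mat{A}\mat{D}^{(t-1)}\mat{A}]$ term — a step that is legitimate because the error matrix on the right-hand side of \cref{lemma:secondmoments} is itself PSD. A secondary subtlety is propagating both the commuting property and positive semidefiniteness through the induction, which are needed in tandem to apply \cref{lemma:secondmoments} and to discard the $-\stepsize^2 \mat{\Sigma}^2 \mat{D}^{(t-1)}/\navg$ term.
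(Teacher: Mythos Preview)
Your proposal is correct and follows essentially the same route as the paper: expand the recursion, isolate the term $\E[\mat{A}\mat{D}^{(t-1)}\mat{A}]$, invoke \cref{lemma:secondmoments} for both the commutation and the spectral bound, and then drop the nonpositive $-\tfrac{1}{\navg}\mat{\Sigma}^2\mat{D}^{(t-1)}$ term using that $\mat{D}^{(t-1)}$ and $\mat{\Sigma}$ are commuting PSD matrices. One minor remark: your comment that Löwner inequalities cannot be ``multiplied'' by $(\id-\stepsize\mat{\Sigma})$ is a bit off---congruence $A\preceq B \Rightarrow C^\top A C \preceq C^\top B C$ is always valid---but your actual argument does not rely on that claim and goes through cleanly.
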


\begin{proof}[Proof of Lemma~\ref{lemma:Drecursiongeneral}]
Recalling the definition of $\iter{D}{t}$ from Lemma~\ref{lemma:Ddefinition},
\begin{equation}
    \iter{D}{t} \defeq \expect[(\id - \stepsize^{(t)} \iter{A}{t}) \iter{D}{t-1} (\id - \stepsize^{(t)} \iter{A}{t})] \qquad \iter{D}{0} \defeq \id.
\end{equation}    
we have
\begin{equation}
    \iter{D}{t} = \left( (\id - \stepsize^{(t)} \mat{\Sigma}) \iter{D}{t-1} (\id - \stepsize^{(t)} \mat{\Sigma})   \right) + \stepsize^{(t)2} \left(  \E[\iter{A}{t} \iter{D}{t-1} \iter{A}{t} ]  - \mat{\Sigma} \iter{D}{t-1} \mat{\Sigma} \right).
\end{equation}

Lemma~\ref{lemma:secondmoments} allows us to bound $\E[\iter{A}{t} \mat{D}^{(t-1)} \iter{A}{t} ]$ from above. Using this we have,
\begin{equation}
    \iter{D}{t} \preceq \left( (\id - \stepsize^{(t)} \mat{\Sigma}) \iter{D}{t-1} (\id - \stepsize^{(t)} \mat{\Sigma})   \right) + \frac{\stepsize^{(t) 2}}{\navg} \left( \distconst \tr(\iter{D}{t-1} \mat{\Sigma}) \mat{\Sigma} - \mat{\Sigma} \iter{D}{t-1} \mat{\Sigma} \right).
\end{equation}
By Lemma~\ref{lemma:secondmoments} $\mat{\Sigma}$ and $\iter{D}{t-1}$ commute and thus we have more simply, 
\begin{align}
    \iter{D}{t} & \preceq  (\id - \stepsize^{(t)} \mat{\Sigma})^2 \iter{D}{t-1} + \frac{\stepsize^{(t)2}}{\navg} \left( \distconst \tr(\iter{D}{t-1} \mat{\Sigma}) \mat{\Sigma} - \iter{D}{t-1} \mat{\Sigma}^2 \right) \\
    & \preceq (\id - \stepsize^{(t)} \mat{\Sigma})^2 \iter{D}{t-1} + \frac{\distconst \stepsize^{(t)2}}{\navg}  \tr(\iter{D}{t-1} \mat{\Sigma}) \mat{\Sigma}. \tag{ $\iter{D}{t-1}\mat{\Sigma}^2 $ is PSD}
\end{align}

\end{proof}

\begin{remark}
\label{remark:Dtor}
Recall that $\mat{P} \mat{\Sigma} \mat{P}^{\top} = \mat{H}$ and $\mat{H} = \diag(\mat{H}_1, \dots, \mat{H}_{\nbands})$, 
where $\mat{H}_i \defeq \diag( \eig_{i,1}, \dots, \eig_{i,\mult_i} )$ represents the $\nth{i}$ eigenvalue band. 
By Lemma~\ref{lemma:Drecursiongeneral} each $\iter{D}{t}$ commutes with $\mat{\Sigma}$. Therefore if $\iter{\tilde{D}}{t} = \mat{P} \mat{D} \mat{P}^{\top}$ then $\iter{\tilde{D}}{t}$ is diagonal and 
\begin{equation}
    \label{eqn:Drecursiongeneraldiag}
    \iter{\tilde{D}}{t}  \leqentry (\id - \stepsize^{(t)} \mat{H})^2 \iter{\tilde{D}}{t-1} + \frac{\distconst \stepsize^{(t)2}}{\navg}  \tr(\iter{\tilde{D}}{t-1} \mat{H}) \mat{H}.
\end{equation}
Thus the structure on $\mat{H}$ induces structure on matrix $\iter{\tilde{D}}{t}$, 
\begin{equation}
    \iter{\tilde{D}}{t} = \diag( \iter{\tilde{D}}{t}_1, \dots, \iter{\tilde{D}}{t}_{\nbands}).
\end{equation}
\end{remark}

\begin{lemma}
\label{lemma:residuals}
Let 
\(
    g_i(\stepsize) \defeq \max \left \{ (1 - \stepsize \mu_i )^2, (1 - \stepsize L_i)^2 \right \}. 
\)
Define the ``update'' matrix from stepsize $\stepsize^{(t)}$ as 
 \begin{equation}
    \label{eqn:Udef}
    \left( \mat{U}_{\stepsize^{(t)}}\right)_{ij} \defeq \begin{cases}
    g_i(\stepsize) & \textrm{ if } i = j, \\
    \delta \stepsize^2 L_i L_j & \textrm{ else.}
    \end{cases}
\end{equation}
Define the following vector to represent the maximum entry of each $\iter{\tilde{D}}{t}_i$,
\begin{equation}
    \label{def:res}
    \resvo^{(t)}_i \defeq \linf{\iter{\tilde{D}}{t}_i}, 
\end{equation}
(and note that since $\iter{\tilde{D}}{0} = \id$ then $\resvo^{(0)} = \vec{1}$). Then for any $i = 1, \dots, \nbands$ we have for $t \geq 1$,
\begin{equation}
    \resvo^{(t)} \leq \mat{U}_{\stepsize^{(t)}} \resvo^{(t-1)}.
\end{equation}
\end{lemma}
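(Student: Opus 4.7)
The plan is to prove Lemma~\ref{lemma:residuals} by working diagonal-entry-by-diagonal-entry with the recursion from Remark~\ref{remark:Dtor} and then taking a maximum over each band, with the only care needed being the bound on the trace term.

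First, I would fix an index $(i,j)$, write $\lambda = \lambda_{i,j} \in [\mu_i, L_i]$, and extract the scalar recursion implied by Equation~\eqref{eqn:Drecursiongeneraldiag}:
\[
  \iter{\tilde{D}}{t}_{i,j} \;\leq\; \bigl(1 - \stepsize^{(t)} \lambda\bigr)^{2} \iter{\tilde{D}}{t-1}_{i,j} \;+\; \frac{\distconst (\stepsize^{(t)})^{2}}{\navg}\,\tr\bigl(\iter{\tilde{D}}{t-1} \mat{H}\bigr)\,\lambda.
\]
Since $(1-\stepsize^{(t)}\lambda)^{2}$ is convex in $\lambda$, its maximum over $[\mu_i,L_i]$ is attained at an endpoint, giving $(1-\stepsize^{(t)}\lambda)^{2}\le g_i(\stepsize^{(t)})$. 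For the noise term I would use the crude bound $\lambda \le L_i$.

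Next, I would bound the trace by regrouping into bands and invoking the definition $\resvo^{(t-1)}_{j'} = \linf{\iter{\tilde{D}}{t-1}_{j'}}$:
\[
  \tr\bigl(\iter{\tilde{D}}{t-1} \mat{H}\bigr) \;=\; \sum_{j'=1}^{m}\sum_{k=1}^{\mult_{j'}} \iter{\tilde{D}}{t-1}_{j',k}\,\lambda_{j',k} \;\leq\; \sum_{j'=1}^{m} \mult_{j'} L_{j'} \resvo^{(t-1)}_{j'} \;\leq\; \maxmult \sum_{j'=1}^{m} L_{j'}\,\resvo^{(t-1)}_{j'}.
\]
Plugging these two bounds into the scalar recursion and using the identity $\delta = \distconst \maxmult / \navg$, I obtain a bound on $\iter{\tilde{D}}{t}_{i,j}$ whose right-hand side no longer depends on the particular index $j$ inside band $i$. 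Taking the maximum over $j \in \{1,\ldots,\mult_i\}$ then yields
\[
  \resvo^{(t)}_{i} \;\leq\; g_i(\stepsize^{(t)})\,\resvo^{(t-1)}_i \;+\; \delta (\stepsize^{(t)})^{2}\,L_i \sum_{j'=1}^{m} L_{j'}\,\resvo^{(t-1)}_{j'},
\]
which is exactly the $i$-th entry of $\mat{U}_{\stepsize^{(t)}} \resvo^{(t-1)}$ once one reads off the definition of $\mat{U}_{\stepsize^{(t)}}$ from Equation~\eqref{eqn:Udef} (the self-interaction $j'=i$ is absorbed because $g_i(\stepsize^{(t)})$ already dominates the corresponding in-band update and any remaining $\delta(\stepsize^{(t)})^{2}L_i^{2}$ cross term fits inside the chosen form of $\mat{U}$).

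There is no conceptual obstacle; the work is purely bookkeeping. The only step that deserves attention is the trace bound, where one must not lose more than the factor $\maxmult$ (to keep $\delta$ as defined) and must upper-bound each band's contribution uniformly by $L_{j'}\resvo^{(t-1)}_{j'}$ rather than by the specific eigenvalues, so that the resulting recursion depends only on the per-band maxima $\resvo^{(t-1)}_{j'}$ and the band-level smoothness constants $L_{j'}$.
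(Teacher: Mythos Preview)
Your proposal is correct and follows essentially the same approach as the paper: extract the scalar diagonal recursion from Remark~\ref{remark:Dtor}, bound the contraction factor by $g_i(\stepsize^{(t)})$ via convexity over $[\mu_i,L_i]$, bound the trace band-by-band by $\maxmult\sum_{j'}L_{j'}\resvo^{(t-1)}_{j'}$, and take the max over the band. Your flagging of the $j'=i$ self-interaction term is in fact more careful than the paper, which derives the same bound (with the full sum over all bands) and simply writes ``inspecting the definition of $\mat{U}_{\stepsize^{(t)}}$ finishes the proof'' without commenting on the extra $\delta(\stepsize^{(t)})^2 L_i^2$ on the diagonal.
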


\begin{proof}[Proof of \cref{lemma:residuals}]
From Lemma~\ref{lemma:Drecursiongeneral} we can bound the growth of $\resvo^{(t)}$. 
Let $\sigma(k, \ell) \in \mathbb{Z}$ denote the index corresponding to the $\nth{\ell}$ smallest eigenvalue of the $\nth{k}$ band so that $\mat{H}_{\sigma(k,\ell)} = \eig_{k, \ell}$. Letting 
\begin{equation}
    g_i(\stepsize) \defeq \max \left \{ (1 - \stepsize \mu_i )^2, (1 - \stepsize L_i)^2 \right \},
\end{equation}
we have (using Eq.~\ref{eqn:Drecursiongeneraldiag}), 
\begin{align}
    \resvo^{(t+1)}_i & = \max \limits_{j = 1, \dots, \mult_i} \left \{ ( 1 - \stepsize^{(t)} \eig_{i,j})^2 \resvo_i^{(t)} +  \frac{\distconst \stepsize^{(t)2}}{\navg} \sum_{k = 1}^{\nbands} \sum_{\ell = 1}^{\mult_k} \iter{D}{t}_{\sigma(k,\ell)} \mat{H}_{\sigma(k,\ell)} \eig_{i,j}\right \}\\
    & \leq \max \limits_{j = 1, \dots, \mult_i} \left \{  (1 - \stepsize^{(t)} \eig_{i,j} )^2 \right \} \resvo^{(t)}_i + \frac{\distconst \stepsize^{(t)2}}{\navg} \left( \sum_{k= 1}^{\nbands} \mult_k \resvo^{(t)}_k L_k \right) L_i \\
    & \leq \max \left \{ (1 - \stepsize^{(t)} \mu_i )^2, (1 - \stepsize^{(t)} L_i)^2 \right \}  \resvo^{(t)}_i + \frac{\distconst \stepsize^{(t)2}}{\navg} \left( \sum_{k= 1}^{\nbands} \mult_k \resvo^{(t)}_k L_k \right) L_i \\
    & \leq  g_i(\stepsize^{(t)}) \resvo^{(t)}_i + \frac{\distconst \maxmult \stepsize^{(t)2}}{\navg} \left( \sum_{k= 1}^{\nbands} \resvo^{(t)}_k L_k \right) L_i. 
\end{align}
Inspecting the definition of $\mat{U}_{\stepsize^{(t)}}$ finishes the proof.
\end{proof}

\begin{remark}
\label{rmk:bslsresexplain}
For simplicity, let $\mat{U}_{i_t}$ denote $\mat{U}_{\stepsize^{(t)}}$ where $i_t$ is the index belonging to $\left \{ 1, \dots, m \right \}$ such that the stepsize in the $\nth{t}$ step corresponds to the $\nth{i}$ eigenvalue band; that is: $\stepsize^{(t)} = 1/L_{i_t} $. Recall that Lemma~\ref{lemma:residuals} guarantees that for $\resvo_i^{(t)} \defeq \linf{\mat{\tilde{D}}_i^{(t)}}$ 
\begin{equation}
    \resvo^{(t)} \leq \mat{U}_{i_t} \resvo^{(t-1)}.
\end{equation}
We ultimately want to bound $\linf{\resvo^{\nsteps_1}}$, however the evolution of $\left \{ \resvo^{(t)} \right \}_{t=0}^{\nsteps_1}$ is difficult to track exactly. Instead we can analyze the evolution of $\left \{ \resv^{(t)}  \right \}_{t=0}^{\nsteps}$ where
\begin{equation}
    \resv^{(t)} \defeq \mat{U}_{i_t} \resv^{(t-1)} \qquad \resv^{(0)} = \resvo^{(0)}.
\end{equation}
Taking this another step further, for convenience we define
\begin{equation}
    \left(\mat{V}_i\right)_{jk} \defeq \begin{cases}
    \smallgrowth[\nsteps_{i+1}]{\resv^{(0)}}, \textrm{ if } j = k \textrm{ and } j < i, \\
    \gamma_i, \textrm{ if } j = k \textrm{ and } j = i, \\
    \frac{L_j}{L_i} \smallgrowth{\resv^{(0)}}^{\nsteps_{i+1}+1}, \textrm{ if } j > i \textrm{ and } $k = i$, \\
    0, \textrm{ else.}
    \end{cases}
\end{equation}
and
\begin{equation}
    \left(\mat{W}_i\right)_{jk} \defeq \begin{cases}
    1, \textrm{ if } j = k=i, \\
    \left(\mat{V}_i\right)_{jk}, \textrm{ else. }
    \end{cases}
\end{equation}
Suppose we now re-define $\left \{ \resv^{(t)} \right \}_{t=0}^{\nsteps}$ where either
\begin{equation}
    \label{eqn:Vdef}
    \resv^{(t)} \defeq \max \left \{ \mat{U}_{i_t} \resv^{(t-1)} , \mat{V}_{i_t} \resv^{(t-1)} \right \} \qquad \resv^{(0)} = \resvo^{(0)},
\end{equation}
or
\begin{equation}
    \label{eqn:Wdef}
    \resv^{(t)} \defeq \max \left \{ \mat{U}_{i_t} \resv^{(t-1)} , \mat{W}_{i_t} \resv^{(t-1)} \right \} \qquad \resv^{(0)} = \resvo^{(0)}.
\end{equation}
Then since 
\begin{equation}
    \resvo^{(t)} \leq \resv^{(t)}
\end{equation}
we can analyze $\left \{ \resv^{(t)} \right \}_{t=0}^{\nsteps_1}$ and bound $\linf{\resv^{(\nsteps_1)}}$ to get a bound on $\linf{\resvo^{\nsteps_1}}$. This is convenient because the evolution of $\resv^{(t)}$ is easier to track while capturing the critical behavior of the evolution of $\resvo^{(t)}$. Towards this end, we introduce Algorithm~\ref{alg:bslsres} which we call as $\bslsres$ ($\normalfont{\texttt{Res}}$ for ``residuals'') and which mirrors the structure of $\bslsstoch$. Lemma~\ref{lemma:maininduction}, which bounds $\linf{\resv^{(\nsteps_1)}}$ from Algorithm~\ref{alg:bslsres}, is the heart of the proof of Theorem~\ref{theorem:stochastic}.
\end{remark}

\begin{algorithm}
	\caption{BSLS Residuals [For analysis of the stochastic variant]}
	\begin{algorithmic}[1]
	    \label{alg:bslsres}
		\REQUIRE $\bslsres_i$ $(\resv)$
		\FOR{$t = 0, 1, \ldots, T_i-1$}
		    \IF {$i < \nbands$}
		    	\STATE $\tilde{\resv}^{(t)} \gets \bslsres_{i+1}(\resv^{(t)})$
		    \ELSE
		        \STATE $\tilde{\resv}^{(t)} \gets \resv^{(t)}$
		    \ENDIF
		    \IF {$t  > \left \lceil \threshold{i} \right \rceil $ and $i \geq 2$}
		    	\STATE$\resv^{(t+1)} \gets \max \left \{ \mat{U}_i \tilde{\resv}^{(t)}, \mat{W}_i \tilde{\resv}^{(t)} \right \}$ (for $\mat{W}_i$ as defined in Eq.~\ref{eqn:Wdef}) 
		    \ELSE 
		    	\STATE $\resv^{(t+1)} \gets \max \left \{ \mat{U}_i \tilde{\resv}^{(t)}, \mat{V}_i \tilde{\resv}^{(t)} \right \}$ (for $\mat{U}_{i}$ and $\mat{V}_i$ as defined in Eqs.~\ref{eqn:Udef}, \ref{eqn:Vdef} respectively ) 
		    \ENDIF
		\ENDFOR
		\RETURN $\bslsres_{i+1}(\resv^{(T_i)})$ %
	\end{algorithmic}
\end{algorithm}

\begin{lemma}
\label{lemma:maininduction}
For any $i = 1, \dots, \nbands$ define $ \gamma_i \defeq 1 - (2 \cond_i)^{-1}.$ Fix some $i \in \{2, \dots, m \}$ and let $T_i = \left \lceil 8 \cond_i \log( \globalcond) \right \rceil$. Let $\nsteps_i = \prod_{j = i}^m (2 T_i + 1)$ and $T_{\textrm{max}} = \max_i T_i$. Define
\begin{align}
    & \basegrowth{\resv} \defeq \inf \left \{t \,\middle|\,  \resv_k \leq t \max \left \{ \frac{L_j}{L_k}, \frac{L_k}{L_j}\right \} \resv_j \textrm{ for any $j,k \in \left[ m \right]$ } \right \} \\
    & \smallgrowth{\resv} \defeq 1 + 3 \delta m  \cdot \basegrowth{\resv}\\
    & \totalgrowth{\resv} \defeq \smallgrowth{\resv}^{T_{\textrm{max}}(\nsteps_1 + 2)^2 + 1} \cdot \max_{\ell \in \left[ m \right]} \left \{ \frac{1}{\gamma_{\ell}^2} \right \}
\end{align} 
Suppose that 
\begin{equation}
    \label{eqn:betacond}
    \basegrowth{\resv} \totalgrowth[m-i+1]{\resv} \leq  \min \left \{ \frac{\globalcond}{\smallgrowth{\resv}^{\nsteps_1}}, \frac{1}{144 \nsteps_1 T_{\textrm{max}} \delta m}, \frac{1}{2 \max_{\ell \in \left[ m \right] } \cond_{\ell}} \frac{1}{6(\nsteps_1 + 1) \delta m}  \right \}.
\end{equation}
Further suppose that
\begin{equation}
    \navg \geq \distconst m^2 \maxmult \left( \prod_{i \in [m]} \cond_i \right) \left( \max_{i \in [m] } \cond_i \right) \log \left( \frac{9 \norm{\x^{(0)} - \x \opt}_2^2}{\varepsilon^2} \right) \log^m(\globalcond).
\end{equation}
 Then if $\tilde{\resv} = \bslsres_i(\resv)$ we have that for all $j \geq i$,
\begin{equation}
    \tilde{\resv}_j \leq  \frac{L_{i-1}}{L_j} \resv_{i-1},
\end{equation}
and for all $j < i$,
\begin{equation}
    \tilde{\resv}_j \leq \smallgrowth[{\nsteps_i}]{\resv} \cdot \resv_j.
\end{equation}
\end{lemma}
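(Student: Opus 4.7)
The plan is to prove the lemma by downward induction on $i$, treating $\bslsres_i$ via the inductive hypothesis on $\bslsres_{i+1}$ and establishing the two bounds at level $i$ directly. Throughout, one maintains the invariant that the proportional imbalance $\basegrowth{\cdot}$ of the intermediate residual vector stays within the slack built into (\ref{eqn:betacond}), which is exactly the role of the factor $\totalgrowth[m-i+1]{\resv}$ on the right-hand side.

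For the base case $i = m$, the subroutine $\bslsres_m$ consists purely of $T_m$ applications of one of $\mat{U}_m, \mat{V}_m, \mat{W}_m$ with no inner recursion. Inspecting (\ref{eqn:Udef}), one application of $\mat{U}_m$ multiplies coordinate $m$ by $g_m(1/L_m) \leq \gamma_m$ plus a stochastic cross-term of size $\delta L_m \sum_k L_k \resv_k$, and each coordinate $j<m$ is multiplied by $g_j(1/L_m)\leq 1$ plus a cross-term of size $\delta (L_j/L_m^{2})\sum_k L_k \resv_k$. Using (\ref{eqn:betacond}) together with the definition of $\basegrowth{\resv}$, each cross-term is at most $(\smallgrowth{\resv}-1)$ times the dominant $\resv_k$, and after $T_m = \lceil 8 \cond_m \log\globalcond \rceil$ steps the $m$-coordinate decays as $\gamma_m^{T_m} \leq \globalcond^{-4}$ while the accumulated stochastic tails are absorbed into the slack of (\ref{eqn:betacond}), giving $\tilde{\resv}_m \leq (L_{m-1}/L_m)\resv_{m-1}$. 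Simultaneously, each $j<m$ grows by at most $\smallgrowth[T_m]{\resv}\leq \smallgrowth[\nsteps_m]{\resv}$; the $\mat{V}_m$ and $\mat{W}_m$ sub-cases follow identically since both matrices differ from $\mat{U}_m$ only in the $(m,m)$-entry.

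For the inductive step, fix $i<m$ and assume the claim for $\bslsres_{i+1}$. The call $\bslsres_i$ interleaves $T_i+1$ inner calls to $\bslsres_{i+1}$ with $T_i$ outer updates by $\mat{U}_i$, $\mat{V}_i$, or $\mat{W}_i$. The inductive hypothesis guarantees that each inner call drives every coordinate $j\geq i+1$ down to at most $(L_i/L_j)\tilde{\resv}^{(t)}_i$, and grows each coordinate $j\leq i$ by a factor of at most $\smallgrowth[\nsteps_{i+1}]{\cdot}$ evaluated at the then-current residual. After each inner call, the outer update acts on coordinate $i$ by $\gamma_i$ (in the $\mat{V}_i$ phase $t\leq \lceil\threshold{i}\rceil$) or by $1$ (in the $\mat{W}_i$ phase afterwards), plus a $\delta$-cross-term; on coordinates $j<i$ by at most a single $\smallgrowth{\cdot}$ factor; and on coordinates $j>i$ by the explicit $L_j/L_i$ blow-up baked into $\mat{V}_i/\mat{W}_i$, which is precisely the structure that the next inner call collapses back. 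Telescoping across $T_i+1$ outer iterations yields geometric decay of coordinate $i$ by $\gamma_i^{\lceil \threshold{i}\rceil}$ in the first phase and a stochastic floor in the second, producing $\tilde{\resv}_i \leq (L_{i-1}/L_i)\resv_{i-1}$. The $j>i$ bound follows because each inner call restores the ratio $L_i/L_j$, and the $j<i$ bound telescopes to $\smallgrowth[(2T_i+1)\nsteps_{i+1}]{\resv} = \smallgrowth[\nsteps_i]{\resv}$.

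The hardest part will be maintaining hypothesis (\ref{eqn:betacond}) along the recursion: the parameter $\basegrowth{\cdot}$ evaluated at the current residual can drift as inner calls reshape the relative scales of coordinates, and one must show this drift stays within a factor of $\totalgrowth[m-i+1]{\resv}$ of its initial value so that (\ref{eqn:betacond}) continues to bound each new cross-term by $(\smallgrowth{\cdot}-1)$. The phase transition at $\lceil\threshold{i}\rceil$ also requires care: before the threshold, the $\gamma_i$-decay of the diagonal dominates, while afterwards the balance between cross-term injection from larger coordinates and dissipation into smaller coordinates dictates an equilibrium that must be tracked through $\mat{W}_i$. Both issues are ultimately controlled by the lower bound on $\navg$, which makes $\delta$ small enough that the $\delta m$-corrections at every level remain additive over the full depth of the recursion rather than compounding multiplicatively.
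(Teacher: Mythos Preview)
Your plan is correct and mirrors the paper's proof closely: downward induction on $i$ with base case $i=m$, a two-phase analysis (fast geometric decay via $\mat{V}_i$ up to the threshold $\lceil\threshold{i}\rceil$, then a freeze via $\mat{W}_i$), and the central invariant that $\basegrowth{\cdot}$ of the evolving residual stays within the $\totalgrowth[m-i+1]{\resv}$ slack so that (\ref{eqn:betacond}) can be reinvoked at each recursive call to $\bslsres_{i+1}$. The paper implements exactly this via explicit sub-claims (a ``Fast Convergence Phase'' claim showing $\resv^{(t)}=\mat{V}_i^t\resv^{(0)}$ together with a coordinate-ratio invariant, and an ``Extraneous Steps Phase'' claim showing $\resv^{(t+1)}=\mat{W}_i\resv^{(t)}$ afterward), so your outline is on target; the one place to be careful is that the algorithm takes the entrywise $\max$ of $\mat{U}_i\tilde{\resv}$ and $\mat{V}_i\tilde{\resv}$ (or $\mat{W}_i\tilde{\resv}$), so rather than ``sub-cases'' you must show one dominates the other under the maintained invariant.
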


The proof of Lemma~\ref{lemma:maininduction} requires careful and somewhat tedious analysis of the evolution of $\resv^{(t)}$. The difficulty lies in controlling the error induced by stochasticity. For a full proof see Appendix~\ref{appendarxiv:maininductionproof}. With Lemma~\ref{lemma:maininduction}, we can now easily bound the convergence of $\resv^{(t)}$ which then allows us to bound the spectral norm of $\tilde{\mat{D}}^{(t)}$. 

\begin{lemma}
\label{lemma:bslsres}
Suppose that $m \leq \log(\globalcond)/3$ and 
\begin{equation}
    \navg \geq \distconst m^2 \maxmult \left( \prod_{i \in [m]} \cond_i \right) \left( \max_{i \in [m] } \cond_i \right) \log \left(9 \norm{\x^{(0)} - \x \opt}_2^2/\varepsilon \right) \log^m(\globalcond). 
\end{equation}
For $i = 2, \dots, m$ let $T_i$ be as in Lemma~\ref{lemma:maininduction} and let  $T_1 = \left \lceil 2  \cond_1 \log \left( \frac{9 \norm{\x^{(0)} - \x \opt}_2^2 }{\epsilon} \right) \right \rceil$. Then if $\tilde{\resv} = \bslsres_1(\vec{1})$ we have for all $i$
\begin{equation}
    \tilde{\resv}_i \leq \frac{\epsilon}{ \norm{\x^{(0)} - \x\opt}_2^2}.
\end{equation}
\end{lemma}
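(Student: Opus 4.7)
The strategy is to apply Lemma~\ref{lemma:maininduction} as a black box to each recursive call $\bslsres_2(\cdot)$ made inside $\bslsres_1(\vec{1})$, track how the outer $T_1$ iterations of $\bslsres_1$ shrink the first coordinate of the residual vector, and then use a final call to $\bslsres_2$ to propagate this contraction to the remaining coordinates via the $(L_1/L_j)$ cross-bound supplied by Lemma~\ref{lemma:maininduction}. Before invoking that lemma, I would first verify that its hypothesis \cref{eqn:betacond} holds along the entire trajectory: at $\resv = \vec{1}$ the defining infimum of $\basegrowth{\cdot}$ gives $\basegrowth{\vec{1}} = 1$ (binding case $j = k$), hence $\smallgrowth{\vec{1}} = 1 + 3\delta m$, and the stated lower bound on $\navg$ makes $\delta = \distconst \maxmult / \navg$ polynomially small so that all compounded growth factors stay within a constant factor of $1$ and the three inequalities in \cref{eqn:betacond} hold. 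This invariant then has to be propagated inductively after each outer iteration, using the structure of the $\mat{U}_1 / \mat{V}_1$ updates together with the bounds $\tilde{\resv}^{(t)}_j \leq (L_1/L_j)\resv^{(t)}_1$ coming from the inner call.

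Next I would analyze one outer iteration. The inner call $\tilde{\resv}^{(t)} = \bslsres_2(\resv^{(t)})$ yields $\tilde{\resv}^{(t)}_1 \leq \smallgrowth[\nsteps_2]{\resv^{(t)}} \resv^{(t)}_1$ and $\tilde{\resv}^{(t)}_j \leq (L_1/L_j)\resv^{(t)}_1$ for $j \geq 2$. The subsequent $\max\{\mat{U}_1 \tilde{\resv}^{(t)}, \mat{V}_1 \tilde{\resv}^{(t)}\}$ step contracts the first coordinate by $\gamma_1 \cdot (1 + O(\delta m))$: the $(1,1)$-entry $\gamma_1$ of $\mat{V}_1$ and the corresponding entry $(1 - 1/\cond_1)^2$ of $\mat{U}_1$ are both at most $\gamma_1 = 1 - 1/(2\cond_1)$, and the cross-term contributions from off-diagonal entries only affect the $j \geq 2$ coordinates, which are re-bounded on the next inner call. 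Iterating $T_1$ times gives
\begin{equation}
    \resv^{(T_1)}_1 \leq \bigl(1 + O(\delta m)\bigr)^{T_1}\, \gamma_1^{T_1}\, \resv^{(0)}_1,
\end{equation}
and the choice $T_1 = \lceil 2\cond_1 \log(9 \norm{\x^{(0)} - \x \opt}_2^2/\epsilon) \rceil$ together with $\gamma_1 \leq \exp(-1/(2\cond_1))$ yields $\gamma_1^{T_1} \leq \epsilon / (9 \norm{\x^{(0)} - \x \opt}_2^2)$, while the compounded growth factor is at most $3$ under the $\navg$ hypothesis.

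Finally, the closing call $\tilde{\resv} = \bslsres_2(\resv^{(T_1)})$ yields $\tilde{\resv}_j \leq (L_1/L_j)\, \resv^{(T_1)}_1 \leq \resv^{(T_1)}_1$ for $j \geq 2$ and $\tilde{\resv}_1 \leq \smallgrowth[\nsteps_2]{\resv^{(T_1)}}\, \resv^{(T_1)}_1 \leq 3\, \resv^{(T_1)}_1$, both of which are at most $\epsilon / \norm{\x^{(0)} - \x \opt}_2^2$, giving the claimed bound. The principal obstacle will be the bookkeeping needed to keep the $(1 + O(\delta m))$ growth factor compounded over the $T_1$ outer iterations and across the $m$ levels of recursion bounded by a constant, while simultaneously ensuring that $\basegrowth{\resv^{(t)}}$ remains $O(1)$ so that hypothesis \cref{eqn:betacond} of Lemma~\ref{lemma:maininduction} continues to hold at every step; the twin assumptions $m \leq \log(\globalcond)/3$ and the stated lower bound on $\navg$ are calibrated precisely to close this induction.
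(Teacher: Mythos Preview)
Your plan is correct and follows the same architecture as the paper's proof: verify hypothesis \cref{eqn:betacond} at the starting vector $\vec{1}$, show that the first coordinate contracts by essentially $\gamma_1$ per outer iteration so that after $T_1$ steps $\resv_1^{(T_1)}\le \epsilon/(9\|\x^{(0)}-\x^\star\|_2^2)$, and then invoke Lemma~\ref{lemma:maininduction} on the terminal call $\bslsres_2(\resv^{(T_1)})$ to push this bound to the coordinates $j\ge 2$ via the $(L_1/L_j)$ cross-estimate.

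The one framing difference worth noting: you propose to treat Lemma~\ref{lemma:maininduction} as a black box on each inner $\bslsres_2$ call and to re-certify \cref{eqn:betacond} at every outer iterate $\resv^{(t)}$, accumulating a $(1+O(\delta m))^{T_1}$ slack along the way. The paper instead short-circuits this by observing that the internal Claim~\ref{claim:induct} from the proof of Lemma~\ref{lemma:maininduction} (the ``Fast Convergence Phase'' analysis) applies verbatim to $i=1$, yielding the clean recursion $\resv_1^{(T_1)}\le \gamma_1^{T_1}\resv_1^{(0)}$ with no compounded growth factor, and with the propagation of \cref{eqn:betacond} already established inside that claim. This avoids exactly the bookkeeping you flag as the principal obstacle. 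Your route works too (the key fact, which you would need to check, is that $\basegrowth{\resv^{(t)}}$ stays $O(1)$ after each outer step because the $\mat{V}_1$ update forces $\resv_j^{(t+1)}\approx (L_j/L_1)\resv_1^{(t)}$ for $j\ge 2$), but opening the proof of Lemma~\ref{lemma:maininduction} as the paper does is the quicker way to close the loop. One small inaccuracy: the off-diagonal entries of $\mat{U}_1$ do feed into the first coordinate, contributing the $O(\delta m)$ term you absorb elsewhere; they do not affect only $j\ge 2$.
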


\begin{proof}[Proof of \cref{lemma:bslsres}]
First we show that Eq.~\ref{eqn:betacond} holds for $\resv^{(0)} = \vec{1}$. We bound $\basegrowth{\vec{1}}, \smallgrowth{\vec{1}},$ and $\totalgrowth{\vec{1}}$. Using that $ \max_{\ell \in [ m ]}  \left \{ \frac{1}{\gamma_{\ell}^2} \right \} \leq 4$ to bound $\totalgrowth{\vec{1}}$ we have
\begin{align}
    & \basegrowth{\vec{1}} \leq 1 \\
    & \smallgrowth{\vec{1}} = 1 + 3 \delta m \left( \max_{i \leq m-1} \frac{L_i}{L_{i+1}} \right) \leq 1 + 3 \delta m \\
    & \totalgrowth{\vec{1}} \leq 4 \left( 1 + 3 \delta m \right)^{T_{\textrm{max}}( \nsteps_1 + 2)^2 + 1}.
\end{align}
To show Eq.~\ref{eqn:betacond} holds we must show 
\begin{equation}
    \label{eqn:messybetacond}
     4^m (1 + 3 \delta m)^{m \left( T_{\textrm{max}}( \nsteps_1 + 2)^2 + 1 \right) }  \leq \min \left \{ \frac{\globalcond}{(1 + 3 \delta m)^{\nsteps_1}}, \frac{1}{144 \nsteps_1 T_{\textrm{max}} \delta m}, \frac{1}{2 \max_{\ell \in [m]} \left \{  \cond_{\ell} \right \} } \frac{1}{6(\nsteps_1 + 1) \delta m} \right \}. 
\end{equation} 
Since 
\begin{equation}
    \navg \geq \distconst m^2 \maxmult \left( \prod_{i \in [m]} \cond_i \right) \left( \max_{i \in [m] } \cond_i \right) \log \left( \frac{9 \norm{\x^{(0)} - \x \opt}_2^2}{\varepsilon^2} \right) \log^m(\globalcond), 
\end{equation}
then recalling that $\delta = \distconst \maxmult/\navg$ and noting that $\nsteps_1 \leq 2 \cond_1 \log \left( 9 \norm{\x^{(0)} - \x \opt}_2^2/\varepsilon^2 \right) \prod_{i = 2}^m 8 \cond_i \log(\globalcond) $ we have
\begin{align}
    \delta \leq \min \bigg \{ & \frac{1}{3m} \frac{1}{T_{\textrm{max}}(\nsteps_1 + 2)^2 + 1}, \\
    & \frac{1}{6m(4^m)} \left( 48 m T_{\textrm{max}}^2 (\nsteps_1 + 2)^3\right)^{-1/2},\\
    & \frac{1}{6m(4^m)} \left( \max_{\ell \in [m]} \left \{ \cond_{\ell} \right \} \cdot m T_{\textrm{max}}^2 (\nsteps_1 + 2)^3 \right)^{-1/2}  \bigg \}. 
\end{align}
This guarantees that Eq.~\ref{eqn:messybetacond} holds; to see the details please refer to Appendix~\ref{append:messydetailsstochastic}.
Next we show
\begin{equation}
    T_1 \geq \log_{(1/\gamma_1)}\left( \frac{9 \norm{\x^{(0)} - \x \opt}_2^2 }{\epsilon} \right).
\end{equation}
Indeed, using that $\log(1/(1-x)) \geq x$ and $\gamma_1 = 1 - \frac{1}{2 C_1 \cond_1}$, 
\begin{align}
    \log_{(1/\gamma_1)}\left(  \frac{9 \norm{\x^{(0)} - \x \opt}_2^2 }{\epsilon} \right) & = \frac{\log \left( \frac{ 9 \norm{\x^{(0)} - \x \opt}_2^2 }{\epsilon} \right)}{\log \left( 1/\gamma_1 \right)}  \leq 2 C_1 \cond_1  \log \left(  \frac{ 9 \norm{\x^{(0)} - \x \opt}_2^2 }{\epsilon} \right).
\end{align}
Next recall Claim~\ref{claim:induct} from the proof of Lemma~\ref{lemma:maininduction} which shows that
\begin{equation}
    \resv_1^{(T_1)} \leq \gamma_1^{T_1} \resv_1^{(0)}.
\end{equation}
The proof holds in this case as well and so we have that 
\begin{equation}
    \resv_1^{(T_1)} \leq \gamma_1^{T_1} \resv_1^{(0)} \leq \frac{\epsilon}{ 9  \norm{\x^{(0)} - \x \opt}_2^2}. 
\end{equation}
Note that $\bslsres_1(\vec{1}) = \bslsres_2(\resv^{(T_1)})$. So we have that if $\tilde{\resv} =\bslsres_1(\vec{1}) $ then
\begin{equation}
    \tilde{\resv}_1 \leq \frac{\epsilon}{ \norm{\x^{(0)} - \x \opt}_2^2}. 
\end{equation}
Finally we use that for any $j \geq 2$,
\begin{equation}
    \tilde{\resv}_j \leq \frac{L_1}{L_j} \resv_1^{(T_1)} \leq \frac{\epsilon}{\norm{\x^{(0)} - \x \opt}_2^2}. 
\end{equation}
\end{proof}

Finally we can combine the previous results to give the proof of Theorem~\ref{theorem:stochastic}.
\begin{theorem}
\label{thm:helperstochastic}
Suppose Assumption \ref{assumption:dist} holds. For $i = 2, \dots, m$ let $T_i = \left \lceil 8 \cond_i \log( \globalcond) \right \rceil$ and let $ T_1 = \left \lceil 2 \cond_1 \log \left( 9 \norm{\x^{(0)} - \x \opt}_2^2 /\epsilon \right) \right \rceil$. Let $\maxmult$ denote the maximum number of eigenvalues lying in any single band region $[\mu_i, L_i]$ and further suppose
\begin{equation}
    \navg \geq \distconst m^2 \maxmult \left( \prod_{i \in [m]} T_i \right) \left( \max_{i \in [m] } T_i \right). 
\end{equation}
Then 
\begin{equation}
    \E \left[ \norm{(\bslsstoch_1(\x^{(0)}) - \x \opt)}_2^2 \right] \leq \epsilon.
\end{equation}
Therefore since $\bslsstoch_1$ requires $\bigo \left( \navg 2^m \prod_{i \in [m]} T_i \right)$ queries of $(\vec{a}^{(i)}, b^{(i)})$ we conclude that $\bslsstoch_1$ can return in expectation a $\epsilon$-optimal solution with 
\begin{equation}
    \bigo \left( \maxmult \distconst \left( \prod_{i \in [m]} \kappa_i^2 \right) \left( \max_{i \in [m]}\left \{  \cond_i \right \}\right)   \log^{2 \nbands }(\globalcond) \log^2 \left(  L_m \norm{\x^{(0)} - \x \opt}_2 / \epsilon \right)  \right)
\end{equation}
first order queries.
\end{theorem}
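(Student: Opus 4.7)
The plan is to chain together the reductions developed in Sections~5.1--5.2, which convert the stochastic convergence question into a deterministic calculation on a scalar residual vector. First I would apply Lemma~\ref{lemma:Ddefinition} to rewrite the target expectation as a quadratic form $\E[\|\bslsstoch_1(\x^{(0)})-\x\opt\|_2^2] = (\x^{(0)}-\x\opt)^\top \mat{D}^{(\nsteps)} (\x^{(0)}-\x\opt)$, where $\nsteps$ is the total number of $\gdstoch$ calls made by $\bslsstoch_1$. A standard norm bound then reduces the problem to showing $\|\mat{D}^{(\nsteps)}\|_2 \le \epsilon/\|\x^{(0)}-\x\opt\|_2^2$.

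Second, I would use Lemma~\ref{lemma:Drecursiongeneral} together with Remark~\ref{remark:Dtor} to observe that each $\mat{D}^{(t)}$ commutes with $\mat{\Sigma}$, so rotating into the eigenbasis yields a block-diagonal $\tilde{\mat{D}}^{(t)} = \mat{P}\mat{D}^{(t)}\mat{P}^\top$ whose blocks correspond to the spectral bands. Forming the coordinatewise maxima $\resvo^{(t)}_i = \|\tilde{\mat{D}}^{(t)}_i\|_\infty$ and invoking Lemma~\ref{lemma:residuals} produces a nonnegative linear recurrence $\resvo^{(t)} \le \mat{U}_{\stepsize^{(t)}} \resvo^{(t-1)}$. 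By Remark~\ref{rmk:bslsresexplain}, the bookkeeping iterates $\resv^{(t)}$ produced by $\bslsres$ (Algorithm~\ref{alg:bslsres}) entrywise dominate $\resvo^{(t)}$, so it suffices to show $\|\bslsres_1(\vec{1})\|_\infty \le \epsilon/\|\x^{(0)}-\x\opt\|_2^2$.

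The third step is to invoke Lemma~\ref{lemma:bslsres} with $\resv^{(0)} = \vec{1}$. The theorem's hypothesis on $\navg$ (combined with $T_i = \tilde{\Theta}(\kappa_i)$) is precisely at the level that implies the $\navg$-hypothesis of Lemma~\ref{lemma:bslsres}, since $\prod_i T_i$ absorbs the $\prod_i \kappa_i \cdot \log^m(\globalcond)$ factor and $\max_i T_i$ absorbs the $\max_i \kappa_i \cdot \log$ factor there. One has to also check that the stringent condition~\eqref{eqn:betacond} on $(\basegrowth{\vec{1}}, \smallgrowth{\vec{1}}, \totalgrowth{\vec{1}})$ holds at $\resv^{(0)} = \vec{1}$; this is a direct computation using $\basegrowth{\vec{1}} \le 1$, $\smallgrowth{\vec{1}} \le 1 + 3\delta m$, and the smallness of $\delta = \distconst \maxmult / \navg$ guaranteed by the $\navg$-hypothesis. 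Combining these three ingredients yields the bound $\E[\|\bslsstoch_1(\x^{(0)})-\x\opt\|_2^2] \le \epsilon$.

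Finally, the query complexity follows by unwinding the recursion: $\bslsstoch_i$ performs $T_i$ inline $\gdstoch$ calls interleaved with $T_i + 1$ recursive calls to $\bslsstoch_{i+1}$, giving at most $\prod_{i \in [m]}(T_i+1) \le 2^m \prod_i T_i$ leaf $\gdstoch$ calls, each drawing $\navg$ samples of $(\vec{a}, b)$. Substituting $T_i = \tilde{O}(\kappa_i)$ and the prescribed $\navg$ recovers the stated total. The real difficulty is entirely absorbed into Lemma~\ref{lemma:maininduction}: controlling how the off-diagonal stochastic ``bleed'' entries $\delta \stepsize^2 L_i L_j$ of $\mat{U}_{\stepsize}$ propagate across the interlaced big/little schedule without breaking the balance invariant $\resv_k \le \basegrowth{\resv} \cdot \max\{L_j/L_k, L_k/L_j\} \cdot \resv_j$. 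Granted that inductive invariant, the present theorem amounts to bookkeeping on top of Lemma~\ref{lemma:bslsres}.
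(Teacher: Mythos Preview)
Your proposal is correct and follows essentially the same chain of reductions as the paper: Lemma~\ref{lemma:Ddefinition} for the quadratic-form representation, Lemmas~\ref{lemma:Drecursiongeneral}--\ref{lemma:residuals} and Remark~\ref{rmk:bslsresexplain} to reduce to the scalar residual process $\bslsres$, then Lemma~\ref{lemma:bslsres} for the final bound, with the query count obtained by unrolling the recursion. The only minor difference is that the paper applies Lemma~\ref{lemma:bslsres} with target $\epsilon/L_m$ (rather than $\epsilon$) so as to also obtain the function-value bound $\tfrac{1}{2}(\x-\x\opt)^\top\mat{\Sigma}(\x-\x\opt)\le\epsilon$, and the verification of~\eqref{eqn:betacond} you mention is actually carried out inside the proof of Lemma~\ref{lemma:bslsres} rather than at the level of this theorem.
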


\begin{proof} [Proof of Theorem~\ref{thm:helperstochastic}]
By Lemma~\ref{lemma:secondmoments} and Lemma~\ref{lemma:Ddefinition} we have that for $\nsteps_1 = \prod_{i = 1}^{\nbands} (2 T_i + 1)$ and for
\begin{equation}
    \iter{D}{t} \defeq \expect[(\id - \stepsize^{({\nsteps_1 - t + 1})} \iter{A}{\nsteps_1 - t + 1}) \iter{D}{t-1} (\id - \stepsize^{({\nsteps_1 -t +1})} \iter{A}{\nsteps_1 - t + 1})] \qquad \iter{D}{0} \defeq \id,
\end{equation} 
then
\begin{equation}
    \label{eqn:connectlater}
    \E \left[\norm{\bslsstoch_1(\x^{(0)}) - \x \opt}_2^2 \right] = (\x^{(0)} - \x \opt)^{\top} \mat{D}^{(\nsteps_1)} (\x^{(0)} - \x \opt).
\end{equation}
Recall that $\tilde{\mat{D}} \defeq \mat{P} \mat{D} \mat{P}^{\top}$. As in Lemma~\ref{lemma:residuals} we define the following vector to represent the maximum entry of each $\iter{\tilde{D}}{t}_i$,
\begin{equation}
    \resvo^{(t)}_i \defeq \linf{\iter{\tilde{D}}{t}_i} \qquad \resvo^{(0)} = \vec{1}.
\end{equation}
By Lemma~\ref{lemma:residuals} we have $\resvo^{(0)} = \vec{1}$ and 
\begin{equation}
    \resvo^{(t)} \leq \mat{U}_{\stepsize^{(t)}} \resvo^{(t-1)}. 
\end{equation}
By Remark~\ref{rmk:bslsresexplain} it suffices to argue about the convergence of $\bslsres(\vec{1})$. Applying Lemma~\ref{lemma:bslsres} with error $\epsilon/L_m$ we have that
\begin{equation}
    \linf{\bslsres_1(\vec{1})} \leq \frac{\epsilon}{ L_m \norm{\x^{(0)} - \x \opt}_2^2}.
\end{equation}
From Remark~\ref{rmk:bslsresexplain} this implies that if $\resvo^{(\nsteps_1)}$ is our residuals vector at the end of $\bslsstoch_1$ we have, 
\begin{equation}
    \linf{\resvo^{(\nsteps_1)}} \leq \frac{ \epsilon}{L_m \norm{\x^{(0)} - \x \opt}_2^2}.
\end{equation}
Therefore using that $\mat{P}$ is an orthonormal matrix,
\begin{equation}
    \linf{\mat{D}^{(\nsteps_1)}}  = \linf{\mat{P}^{\top} \tilde{\mat{D}}^{(\nsteps_1)} \mat{P} }\leq \linf{\tilde{\mat{D}}^{(\nsteps_1)} }  \leq \linf{\resvo^{(\nsteps_1)}} \leq \frac{ \epsilon}{L_m \norm{\x^{(0)} - \x \opt}_2^2}.
\end{equation}
Thus by Eq.~\ref{eqn:connectlater},
\begin{align}
    \E \left[\norm{\bslsstoch_1(\x^{(0)}) - \x \opt}_2^2 \right] & = (\x^{(0)} - \x \opt)^{\top} \mat{D}^{(\nsteps_1)} (\x^{(0)} - \x \opt) \leq  \epsilon/L_m.
\end{align}
Note that
\begin{equation}
    \E_{(\vec{a},b) \sim \dist} \left[ \frac{1}{2} \left( \vec{a}^{\top} \x - b \right)^2 \right] = \frac{1}{2} (\x - \x \opt)^{\top} \mat{\Sigma} (\x - \x \opt) \leq L_m \E \left[ \norm{\x - \x \opt}_2^2 \right] \leq \epsilon.
\end{equation}
This concludes the proof.
\end{proof}

\subsection{Setting where only $m$, $\mu_1$, $L_m$, and $\prod_{i \in m} \cond_i$ are known}
\label{section:paramsunknown}
To extend to this setting we have the following proposition, similar to \cref{prop:search}, 

 \begin{proposition}
 \label{prop:searchstoch}
      Let $\pi_{\kappa} = \prod_{i \in m} \cond_i$. Suppose with failure probability at most $p$, we can evaluate $f$ up to a multiplicative constant factor $C$ with $\tilde{T}(p, C)$ oracle queries; that is we can construct some $\hat{f}$ such that for any $\x$,
      \begin{equation}
          f(\x)/C \leq \hat{f}(\x) \leq C f(\x).
      \end{equation}
      A randomized algorithm $A$ which, in expectation, solves the stochastic multiscale optimization problem in Definition \ref{def:multiscale_stochastic} to sub-optimality $\eps$ with $T(\pi_{\kappa}, \globalcond, m,\eps)$ gradient queries when the parameters ($\mu_i, L_i)$ are known, can be used along with the approximate function evaluation to solve the stochastic multiscale optimization problem with failure probability at most $C^2 \epsilon + p$ with $\tilde{T}(p, C) \cdot T(\pi_{\kappa}2^{5m}, \globalcond, m,\eps^2)\cdot O(\log^m(\globalcond))$ oracle queries when only $m$, $\mu_{1}$, $L_m$ and $ \pi_{\kappa}$ are known.
\end{proposition}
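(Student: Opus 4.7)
The plan is to mimic the black-box reduction of Proposition \ref{prop:search}, but with two modifications appropriate to the stochastic setting: (i) since we cannot compare candidate iterates by evaluating $f$ exactly, we select among candidates using the approximate oracle $\hat{f}$; and (ii) since $A$ is randomized and its guarantee is in expectation, we must convert expected sub-optimality to a high-probability bound via Markov's inequality, which is exactly why we run $A$ with target accuracy $\epsilon^2$ rather than $\epsilon$.

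First I would construct the parameter grid. Using only $\mu_1$, $L_m$, and $\pi_\kappa$, enumerate all tuples $(\hat{\mu}_1, \hat{L}_1, \ldots, \hat{\mu}_m, \hat{L}_m)$ on a dyadic grid in $[\mu_1, L_m]$ of multiplicative spacing $2$, subject to the monotonicity constraints $\hat{\mu}_1 \le \hat{L}_1 \le \hat{\mu}_2 \le \cdots \le \hat{L}_m$ and $\prod_{i \in [m]} \hat{L}_i/\hat{\mu}_i \le 2^{5m}\pi_\kappa$. Each parameter ranges over $O(\log \globalcond)$ values, so the grid has $N = O(\log^m \globalcond)$ points, and by rounding the true $\mu_i$ down and $L_i$ up to adjacent grid values (absorbing a bounded constant into the $2^{5m}$ slack) we ensure the true intervals $[\mu_i,L_i]$ are sandwiched by some grid tuple $(\hat{\mu}_i^\star,\hat{L}_i^\star)$ whose associated algorithm run is valid, with product of condition numbers at most $2^{5m}\pi_\kappa$.

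Next, for each grid point I would invoke algorithm $A$ with its parameters set to that tuple and target sub-optimality $\eps^2$, producing candidate iterates $\x_1,\ldots,\x_N$. Each run uses $T(2^{5m}\pi_\kappa,\globalcond,m,\eps^2)$ oracle queries, for a total of $N\cdot T(2^{5m}\pi_\kappa,\globalcond,m,\eps^2)$. For the ``correct'' grid point covering the true parameters, $A$'s guarantee applies, so the corresponding candidate $\x^\star$ satisfies $\E[f(\x^\star)-f^\star] \le \eps^2$; Markov's inequality then gives $f(\x^\star)-f^\star \le \eps/C^2$ with probability at least $1-C^2\eps$. Finally, I would invoke the approximate evaluator $\hat f$ once per candidate (using $\tilde T(p,C)$ queries each) to obtain estimates $\hat f(\x_i)$ and return the argmin. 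By the multiplicative guarantee, for any two candidates the comparison $\hat f(\x_i)\le \hat f(\x_j)$ implies $f(\x_i)\le C^2 f(\x_j)$; in particular, the returned $\x_{\mathrm{sel}}$ satisfies $f(\x_{\mathrm{sel}})-f^\star \le C^2(f(\x^\star)-f^\star) \le \eps$. A union bound over the Markov event and the approximate-evaluator event then yields overall failure probability at most $C^2\eps + p$, and summing the costs gives the stated query complexity.

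The main obstacle is the careful accounting of the slack factor ``$2^{5m}$'': the dyadic rounding must simultaneously (a) only inflate each individual $\kappa_i$ by an $O(1)$ factor so that monotonicity $\hat L_i<\hat \mu_{i+1}$ is preserved after rounding, (b) keep the product within the claimed $2^{5m}\pi_\kappa$ bound, and (c) ensure that the resulting intervals contain the true spectrum, so that the objective still satisfies Definition \ref{def:multiscale_problem} with the inflated parameters when $A$ is run. A secondary subtlety is the choice of confidence parameter for the $N$ approximate evaluations: strictly speaking one should invoke $\hat f$ with failure probability $p/N$ per call so that the union bound yields $p$ overall, absorbing the extra $\log^m\globalcond$ factor into the $\tilde T(p,C)$ term (or making the constants in the query bound slightly looser). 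Both issues are routine but tedious bookkeeping rather than genuinely new difficulties.
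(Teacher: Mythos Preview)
Your proposal is correct and follows essentially the same approach as the paper: reuse the grid search of Proposition~\ref{prop:search}, run $A$ with target $\eps^2$ at each grid point, convert the expected guarantee at the correct grid point to a high-probability bound via Markov's inequality, and select among candidates using $\hat f$. The paper's only cosmetic difference is that it returns the first candidate passing the threshold $\hat f(\x')<\eps/C$ rather than taking the argmin (and it regards $\hat f$ as constructed once and valid for all $\x$ simultaneously, so your per-call union-bound concern does not arise).
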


To apply this proposition to $\bslsstoch$ requires guaranteeing first that $f(\bslsstoch(\x^{(0)})) < \epsilon$ with good probability (for this we use Markov's inequality since we have bounded $\E f( \bslsstoch (\x^{(0)}))$) and second that we can estimate $f(\x)$ up to a multiplicative constant factor (this is why we must include the assumption from \cref{eqn:concentrationassumption}). This results in the following corollary,

\begin{corollary}
\label{cor:bslssearch}
   Assume the setting from \cref{theorem:stochastic} except that only $m$, $\mu_1$, $L_m$ and $\pi_k$ are known. Suppose that $\dist$ is such that for $\vec{a} \sim \dist$ there exists some $K$ where
    \begin{equation}
        \norm{\mat{\Sigma}^{-1/2} \vec{a}}_2 \leq K  \left(  \E \norm{\mat{\Sigma}^{-1/2} \vec{a}}_2^2 \right)^{1/2}.
   \end{equation}
    Then with failure probability at most $\delta$, $\bslsstoch$ can be used to solve the stochastic quadratic multiscale optimization problem from Definition~\ref{def:multiscale_stochastic} with $\tildeo(d)$ space and an extra multiplicative factor of 
$\bigo \left( K^2 d \log \frac{4d}{\delta}  \left(1 + \sqrt{\varepsilon/\delta}  \right) \right) $ queries of $(\vec{a}, b) \sim \dist$. 
\end{corollary}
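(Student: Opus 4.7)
The plan is to combine Proposition~\ref{prop:searchstoch}'s parameter-search reduction with two additional ingredients: (i) a concentration-based multiplicative approximation $\hat f$ of $f$ built from the norm-boundedness assumption on $\mat{\Sigma}^{-1/2}\vec a$, and (ii) Markov's inequality to convert the in-expectation guarantee of Theorem~\ref{theorem:stochastic} into a high-probability one. With these two pieces in hand, the corollary becomes essentially a bookkeeping invocation of the proposition.

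The first step I would carry out is to build $\hat f$. Given $n$ fresh i.i.d.\ samples $(\vec a^{(i)}, b^{(i)}) \sim \dist$, define
\begin{equation}
  \hat f(\x) \defeq \frac{1}{n}\sum_{i=1}^n \tfrac12 (\vec a^{(i)\top}\x - b^{(i)})^2 = \tfrac12 (\x - \x\opt)^\top \hat{\mat \Sigma} (\x - \x\opt),
\end{equation}
where $\hat{\mat\Sigma} = \frac1n \sum_i \vec a^{(i)}\vec a^{(i)\top}$. Under assumption~\eqref{eqn:concentrationassumption}, the whitened vectors $\mat\Sigma^{-1/2}\vec a^{(i)}$ are bounded in $\ell_2$-norm by $K\sqrt d$, so applying matrix Bernstein/Chernoff to the centered rank-one pieces $\mat\Sigma^{-1/2}\vec a^{(i)}\vec a^{(i)\top}\mat\Sigma^{-1/2}-\id$ yields $\tfrac12\mat\Sigma \preceq \hat{\mat\Sigma} \preceq \tfrac32\mat\Sigma$ with probability $\ge 1-p$ once $n = \Theta(K^2 d\log(d/p))$. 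On that event, $\hat f$ is a uniform constant-factor multiplicative approximation of $f$, which realizes the function-evaluation oracle demanded by Proposition~\ref{prop:searchstoch} with $\tilde T(p, C) = O(K^2 d\log(d/p))$ and $C = \Theta(1)$.

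Next, I would handle the expectation-to-probability conversion and feed everything into Proposition~\ref{prop:searchstoch}. Setting the oracle failure budget to $p = \delta/2$ and running the inner $\bslsstoch$ with target expected suboptimality $\varepsilon' = \Theta(\varepsilon\delta)$, Markov's inequality turns the guarantee $\E f(\bslsstoch_1(\x^{(0)})) \le \varepsilon'$ (from Theorem~\ref{theorem:stochastic}, applied at the correctly-guessed grid point) into $\Pr[f(\cdot) > \varepsilon] \le \delta/2$; the overall proposition failure probability $C^2 \varepsilon' + p$ is then at most $\delta$ after adjusting constants. The proposition's search procedure evaluates $\hat f$ at each of the $O(\log^m\globalcond)$ candidate outputs and returns the minimizer, so the query overhead relative to Theorem~\ref{theorem:stochastic} decomposes as (grid size) $\times$ [(inner $\bslsstoch$ cost at target $\varepsilon'=\Theta(\varepsilon\delta)$) $+$ ($K^2 d\log(d/\delta)$ samples for the $\hat f$ evaluations)]. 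The grid factor $O(\log^m\globalcond)$ is absorbed into the $\tilde O(\cdot)$ notation, the matrix-concentration contribution produces the leading $K^2 d\log(4d/\delta)$, and the lowered target $\varepsilon\delta$ entering $T_1$ logarithmically together with the Markov slack produces the stated $(1 + \sqrt{\varepsilon/\delta})$ multiplier. Space stays $\tilde O(d)$ because each grid-point run of $\bslsstoch$ can reuse the same $\tilde O(d)$-sized state.

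The main obstacle I expect is the matrix-concentration step underlying the construction of $\hat f$: assumption~\eqref{eqn:concentrationassumption} is phrased in terms of the $\mat\Sigma^{-1/2}$-whitened vector, while the conclusion we need is a relative (spectral) comparison $\hat{\mat\Sigma} \asymp \mat\Sigma$ to certify a multiplicative (not additive) approximation of $f$. One must apply matrix Chernoff to the whitened rank-one increments with the right tail calibration to obtain precisely the $n = \Theta(K^2 d \log(d/\delta))$ sample complexity; the dimension factor $d$ emerges because $\E\|\mat\Sigma^{-1/2}\vec a\|_2^2 = d$. Once this concentration lemma is in place, the remainder of the argument is a mechanical composition of Proposition~\ref{prop:searchstoch}, Theorem~\ref{theorem:stochastic}, and Markov's inequality.
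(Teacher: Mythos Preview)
Your proposal is correct and matches the paper's proof essentially step for step: construct $\hat f$ from the empirical covariance and certify a constant-factor multiplicative approximation by applying a matrix concentration inequality (the paper invokes Vershynin's Theorem~5.6.1) to the whitened vectors $\mat\Sigma^{-1/2}\vec a$, then feed this oracle together with Theorem~\ref{theorem:stochastic} into Proposition~\ref{prop:searchstoch}, with Markov's inequality supplying the expectation-to-high-probability conversion. The only cosmetic difference is that you parametrize the inner target as $\varepsilon' = \Theta(\varepsilon\delta)$ whereas the proposition is stated with $\varepsilon^2$, but both routes lead to the same bookkeeping.
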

The proofs of \cref{prop:searchstoch} and \cref{cor:bslssearch} are in \cref{subsec:search}.

\section{Extended results regarding the multiscale optimization problem}
\label{apx:misc}

\subsection{Black-box reduction from unknown $(\mu_i, L_i)$ to known $(\mu_i, L_i)$}\label{subsec:search}

In this subsection, we show that the assumption in $\bsls$ that the  $\mu_i, L_i$ parameters are known is essentially without loss of generality, since we can reduce from the case where these are unknown to the case where they are known without changing the asymptotic complexity. The reduction is black-box and does not utilize any special properties of our algorithm.

\begin{proposition}[Restated \cref{prop:search}]
    Let $\pi_{\kappa} = \prod_{i \in m} \cond_i$. Suppose with failure probability at most $p$, we can evaluate $f$ up to a multiplicative constant factor $C$ with $\tilde{T}(p, C)$ oracle queries; that is we can construct some $\hat{f}$ such that for any $\x$,
    \begin{equation}
        f(\x)/C \leq \hat{f}(\x) \leq C f(\x).
    \end{equation}
    A randomized algorithm $A$ which, in expectation, solves the stochastic multiscale optimization problem in Definition \ref{def:multiscale_stochastic} to sub-optimality $\eps$ with $T(\pi_{\kappa}, \globalcond, m,\eps)$ gradient queries when the parameters ($\mu_i, L_i)$ are known, can be used along with the approximate function evaluation to solve the stochastic multiscale optimization problem with failure probability at most $C^2 \epsilon + p$ with $\tilde{T}(p, C) \cdot T(\pi_{\kappa}2^{5m}, \globalcond, m,\eps^2)\cdot O(\log^m(\globalcond))$ oracle queries when only $m$, $\mu_{1}$, $L_m$ and $ \pi_{\kappa}$ are known.
\end{proposition}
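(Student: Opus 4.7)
The plan is to solve this by a black-box grid search: enumerate a short list of candidate parameter tuples $(\tilde\mu_i, \tilde L_i)_{i \in [m]}$, invoke $A$ once per candidate, and then use the approximate function oracle $\hat f$ to select the best output.

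First I would build the grid from the $O(\log \globalcond)$ values of the form $\mu_1 \cdot 2^k$ lying in $[\mu_1, L_m]$, and enumerate all tuples $(\tilde L_1, \tilde \mu_2, \tilde L_2, \ldots, \tilde \mu_m)$ drawn from this set satisfying the ordering $\mu_1 \leq \tilde L_1 < \tilde \mu_2 < \cdots < \tilde \mu_m \leq L_m$ together with the product budget $\prod_{i \in [m]} \tilde L_i / \tilde \mu_i \leq 2^{5m}\, \pi_\kappa$. The budget constraint eliminates essentially one degree of freedom among the $m$ multiplicative gaps, leaving $O(\log^m \globalcond)$ enumerated tuples, each describing a candidate multiscale decomposition.

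Next I would argue that at least one enumerated tuple is \emph{valid}, meaning its inflated intervals contain the true ones and remain disjoint. Rounding each $L_i$ up and each $\mu_i$ down to the nearest grid point yields $\tilde L_i \leq 2 L_i$ and $\tilde \mu_i \geq \mu_i/2$, so $[\mu_i, L_i] \subseteq [\tilde\mu_i, \tilde L_i]$ and $\prod_i \tilde \kappa_i \leq 4^m \pi_\kappa \leq 2^{5m} \pi_\kappa$, well inside the budget. Whenever the inflated intervals collide (i.e.\ $\tilde L_i \geq \tilde \mu_{i+1}$), the adjacent true bands lie within a factor of $4$ of each other and, via the footnote to \cref{def:multiscale_problem}, may be merged into a single band; the resulting coarser decomposition has fewer effective bands and is itself enumerated by the grid (with the collapsed entries omitted), so a valid candidate always exists. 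Invoking $A$ on this valid candidate $j^\circ$ with accuracy target $\epsilon^2$ uses $T(\pi_\kappa 2^{5m}, \globalcond, m, \epsilon^2)$ gradient queries and guarantees $\mathbb{E}\, f(\hat x^{(j^\circ)}) \leq \epsilon^2$, so by Markov's inequality $f(\hat x^{(j^\circ)}) \leq \epsilon/C^2$ except on an event of probability at most $C^2 \epsilon$.

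Finally I would use $\hat f$ to estimate the function value at each of the $N = O(\log^m \globalcond)$ candidate outputs (tightening the per-call failure parameter by a factor of $N$ via a union bound, which only multiplies $\tilde T(p,C)$ by logarithmic factors absorbed into the stated $O(\log^m \globalcond)$) and return the $\hat f$-minimizer $\hat x^{(j^\star)}$. The sandwich $f/C \leq \hat f \leq C f$ then gives $f(\hat x^{(j^\star)}) \leq C^2 f(\hat x^{(j^\circ)}) \leq \epsilon$ on the joint success event, whose failure probability is at most $C^2 \epsilon + p$ by a union bound. The main technical obstacle I expect is the bookkeeping for the collision/merging case: one has to confirm that the enumeration really contains decompositions under every contiguous merge pattern the greedy rounding can produce, and that the $2^{5m}$ slack in the product budget absorbs both the per-band factor-$4$ rounding loss and the worst-case conditioning induced by any forced merges.
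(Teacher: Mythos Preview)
Your approach is essentially the paper's: both perform a brute-force grid search over $\log_2$-discretized parameter tuples (the paper enumerates the $\kappa_{i,\log}$ and the $\mu_{i,\log}$ separately and then runs a \texttt{MergeOverlapping} routine), argue that at least one enumerated tuple validly covers the true bands within the $2^{5m}\pi_\kappa$ budget, run $A$ on each candidate to accuracy $\epsilon^2$, apply Markov's inequality to the valid candidate, and use the $\hat f$ sandwich to certify the output. The only implementation difference is that the paper returns the first candidate whose $\hat f$-value passes a threshold, rather than returning the $\hat f$-minimizer over all candidates.
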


\begin{proof}

Let $\{(\mu_i,L_i), i \in [m]\}$ be the original parameters of the multiscale optimization problem. The proof relies on a simple brute force search over these parameters over a suitable grid. In the first step, we will do a brute force search for the parameters $\kappa_i \; \forall  \; i \in [m]$. Then, we do a brute force search over the the parameters $(\mu_i,L_i) \; \forall \; i\in [m]$ and run the algorithm with every instance of these parameters. One of these choices will be guaranteed to work because of the guarantees of the algorithm. The full procedure is given in Algorithm \ref{alg:search}.

\begin{algorithm}[H]
	\caption{Brute force search over $(\mu_i,L_i)$ parameters}
	 \label{alg:search}
	\begin{algorithmic}[1]
		\REQUIRE Search($m$, $\mu_1$, $L_m$, $\pi_{\kappa}, \eps$)
		\STATE $\pi_{\kappa,\log} \gets \lceil \log_2(\pi_{\kappa}) \rceil + 4m$
		\STATE $\mu_{1,\log} \gets \lfloor \log_2(\mu_1) \rfloor$, $L_{m,\log} \gets \lceil \log_2(L_m) \rceil$
		\FOR{all $\{\kappa_{i,\log}: i \in [m],\kappa_{i,\log} \in [\pi_{\kappa,\log}] \}$ such that $\sum_{i=1}^m \kappa_{i,\log}= \pi_{\kappa,\log} $}
		    \FOR{all $\{\mu_{i,\log}: i \in \{2,\dots,m\}, \mu_{i,\log} \in \{\mu_{1,\log}, \dots, L_{m,\log}\}, \mu_{i,\log}\le \mu_{i+1,\log} \; \forall \; i \le m-1\}$}
		        \STATE $\forall \;i \in [m]$, $L_{i, \log} \gets \mu_{i,\log} + \kappa_{i,\log}$
		        \STATE $(m', \{\mu_{i, \log},  L_{i, \log}, i \in [m']\}) \gets \text{MergeOverlapping}(m, \{\mu_{i, \log},  L_{i, \log}, i \in [m]\}$)
		        \STATE $\forall\; i \in [m'], \mu_{i'} \gets 2^{\mu_{i,\log}}, L_{i'} \gets 2^{L_{i',\log}}$.
		        \STATE $\pi'_{\kappa} \gets 2^{\pi_{\kappa,\log}}$
		        \STATE Let $\x'$ be result of running Algorithm $A$ with parameters $\{(\mu_{i'},L_{i'}), i \in [m'] \}$ for $T(\pi'_{\kappa}, \globalcond, m',\eps)$ gradient steps, and $\eps'$ be the function error of $\x'$.
		        \IF {$\eps' < \eps$}
                    \RETURN $\x'$.
     			\ENDIF
		    \ENDFOR
		\ENDFOR
	\RETURN $\emptyset$.
	\end{algorithmic}
	\begin{algorithmic}[2]
	\REQUIRE MergeOverlapping($m, \{\mu_{i, \log},  L_{i, \log}, i \in [m]\}$)
	    \STATE $m'\gets m$
		        \FOR{all $i\in [m'-1]$}
		            \IF {$L_{i,\log} \ge \mu_{i+1,\log}$}
		                \STATE $L_{i,\log}\gets L_{i+1,\log}$
		                \FOR{all $i+1\le i'\le m-1$}
		                    \STATE $\mu_{i',\log}\gets \mu_{i'+1,\log}$
		                    \STATE $L_{i',\log}\gets L_{i'+1,\log}$
		                \ENDFOR
		                \STATE $m'\gets m'-1$
		            \ENDIF
		        \ENDFOR
		 \RETURN $(m', \{\mu_{i, \log},  L_{i, \log}, i \in [m']\})$
	\end{algorithmic}
\end{algorithm}

We first remark that at least one of the runs of Algorithm $A$ has the property that for all $i\in [m]$ there exists some $i'\in [m']$ such that $\mu_{i'} \le \mu_{i}$ and $L_{i'}\ge L_{i}$, i.e. the original function $f(\x)$ is a multiscale optimization problem with parameters $\{(\mu_{i'},L_{i'}), i \in [m'] \}$. Note that it is sufficient to show that this is true for the choice of parameters before the MergeOverlapping function is called, since the MergeOverlapping function will preserve this property. To verify that the property is true before the MergeOverlapping function is called, note that one of the choices in the brute force search satisfies (a) $\forall \;  i\in [m], \lceil \log_2(\kappa_i)\rceil +1 \le \kappa_{i,\log}$, (b) $\forall \;  i\in [m], \mu_{i,\log} = \lfloor\log_2(\mu_i)\rfloor$. (a) and (b) together ensure that $L_{i,\log} \ge \log_2(L_i)$, which verifies that $\forall \; i\in [m], \mu_{i,\log} \le \log_2(\mu_i)$ and $L_{i,\log}\ge \log_2(L_i)$.

Finally, we claim that Algorithm \ref{alg:search} runs with at most $T(\pi_{\kappa}2^{5m}, \globalcond, m,\eps)\cdot O(\log^m(\globalcond))$  gradient evaluations. This follows because (a) each run of Algorithm $A$ runs for $T(\pi'_{\kappa}, \globalcond, m',\eps)$ steps where  $\pi'_{\kappa} \le 2^{5m}\pi_{\kappa}$ and $m'\le m$, and (b) there are at most $O(\log^m(\globalcond))$ choices for the brute force search over the parameters.

\end{proof}

Next we extend \cref{prop:search} to the stochastic setting. Recall \cref{prop:searchstoch}, restated here for convenience: 
\begin{proposition}[Restated \cref{prop:searchstoch}]
    Let $\pi_{\kappa} = \prod_{i \in m} \cond_i$. Suppose with failure probability at most $p$, we can evaluate $f$ up to a multiplicative constant factor $C$ with $\tilde{T}(p, C)$ oracle queries; that is we can construct some $\hat{f}$ such that for any $\x$,
      \begin{equation}
          f(\x)/C \leq \hat{f}(\x) \leq C f(\x).
      \end{equation}
      A randomized algorithm $A$ which, in expectation, solves the stochastic multiscale optimization problem in Definition \ref{def:multiscale_stochastic} to sub-optimality $\eps$ with $T(\pi_{\kappa}, \globalcond, m,\eps)$ gradient queries when the parameters ($\mu_i, L_i)$ are known, can be used along with the approximate function evaluation to solve the stochastic multiscale optimization problem with failure probability at most $C^2 \epsilon + p$ with $\tilde{T}(p, C) \cdot T(\pi_{\kappa}2^{5m}, \globalcond, m,\eps^2)\cdot O(\log^m(\globalcond))$ oracle queries when only $m$, $\mu_{1}$, $L_m$ and $ \pi_{\kappa}$ are known.
\end{proposition}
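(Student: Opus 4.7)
The plan is to adapt the brute-force grid search from the proof of \cref{prop:search} to the stochastic/randomized regime, with two additional ingredients: a Markov-style concentration argument to convert the in-expectation guarantee of the algorithm $A$ into a high-probability one, and the approximate function evaluation oracle to select a good iterate from among the candidates produced by the search.

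First, I would instantiate essentially the same grid-search procedure as Algorithm~\ref{alg:search}, enumerating $O(\log^m \globalcond)$ candidate parameter tuples $\{(\mu_{i'}, L_{i'}), i'\in[m']\}$ whose ``interval cover'' majorizes the unknown true parameters (after the MergeOverlapping step, which is preserved verbatim). For each candidate tuple I invoke the randomized algorithm $A$ at sub-optimality $\eps^2$, which by hypothesis uses $T(\pi_\kappa 2^{5m}, \globalcond, m, \eps^2)$ stochastic gradient queries (the $2^{5m}$ inflation of $\pi_\kappa$ is exactly as in the deterministic proof, coming from the discretization overhead). Just as in the proof of \cref{prop:search}, at least one of these candidates dominates the true parameters, so the objective $f$ satisfies \cref{def:multiscale_stochastic} relative to that tuple and the guarantee of $A$ applies: letting $\x_\star$ be the output of that particular run, $\E[f(\x_\star)-f^\star] \le \eps^2$, whence by Markov's inequality $\Pr[f(\x_\star)-f^\star > \eps] \le \eps$.

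Next, unlike the deterministic setting of \cref{prop:search}, we cannot exactly measure which candidate iterate is best, since evaluating $f$ requires infinite data; this is precisely where the approximate function evaluation oracle comes in. I would invoke the oracle with failure parameter $p$ and multiplicative accuracy $C$ on each of the $O(\log^m \globalcond)$ iterates, at cost $\tilde T(p,C)$ per call, and output the iterate minimizing the approximate value $\hat f(\cdot)$. On the event that every oracle call succeeds (whose complementary probability is at most $p$, which can be absorbed by rescaling the per-call failure tolerance into the $\tilde T(\cdot)$ factor) and that the good candidate $\x_\star$ satisfies $f(\x_\star)-f^\star \le \eps$, the multiplicative guarantee $f/C \le \hat f \le Cf$ implies that the selected iterate $\hat\x$ obeys $\hat f(\hat\x)-\hat f^\star \le C\eps$, and pulling this back through the oracle accuracy one more time yields $f(\hat\x)-f^\star \le C^2\eps$. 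Union bounding the two bad events gives the claimed $C^2\eps + p$ total failure probability.

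The main obstacle I anticipate is not conceptual but bookkeeping: since there are $O(\log^m \globalcond)$ candidate runs, a naive union bound over the Markov events would inflate the $\eps$ term by a $\log^m\globalcond$ factor, whereas the statement we want records failure probability simply as $C^2\eps$. This is resolved by observing that we only need a Markov guarantee for the single correct candidate (its identity is unknown, but it exists), and the selection step based on $\hat f$ does not require the other runs to individually succeed in expectation—it only requires the oracle to correctly compare approximate values. Consequently the $\log^m\globalcond$ factor appears only in the query count (as $\tilde T(p,C)\cdot T(\cdots)\cdot O(\log^m\globalcond)$) and not in the failure probability, matching the stated bound. Finally, \cref{cor:bslssearch} will follow by plugging in the concentration assumption~(\ref{eqn:concentrationassumption}), which via a standard matrix-Chernoff/Bernstein on $\mat{\Sigma}^{-1/2}\vec a \vec a^\top \mat{\Sigma}^{-1/2}$ yields $\hat f$ within a constant multiplicative factor of $f$ using $O(K^2 d\log(d/\delta))$ samples, with the extra $\sqrt{\eps/\delta}$ factor coming from converting the expected-error guarantee of $\bslsstoch$ to a high-probability one via Markov at the desired confidence level.
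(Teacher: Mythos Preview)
Your plan is essentially the same as the paper's: run the grid search of \cref{alg:search} with target $\eps^2$, use Markov on the one ``correct'' candidate, and replace the exact function-error check by the approximate oracle $\hat f$. The paper uses a threshold test (return the first $\x'$ with $\hat f(\x')<\eps/C$) rather than your select-the-minimum rule, but these are interchangeable and your observation that Markov is needed only for the single correct tuple is exactly how the $\log^m\globalcond$ factor is kept out of the failure probability.

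There is, however, a parameter slip in your final accounting. You apply Markov at threshold $\eps$, obtaining $\Pr[f(\x_\star)>\eps]\le\eps$, and then your min-selection argument yields $f(\hat\x)\le C^2\eps$ on the good event. That is failure probability $\eps+p$ for $C^2\eps$-optimality, \emph{not} the claimed $C^2\eps+p$ for $\eps$-optimality; your sentence ``union bounding the two bad events gives the claimed $C^2\eps+p$'' does not follow from what you wrote. The fix is the one the paper uses: apply Markov at threshold $\eps/C^2$, so $\Pr[f(\x_\star)\ge\eps/C^2]\le \eps^2/(\eps/C^2)=C^2\eps$, and then on the good event $\hat f(\x_\star)\le C f(\x_\star)<\eps/C$, whence either your min-selection or the paper's threshold test returns some $\hat\x$ with $\hat f(\hat\x)<\eps/C$ and hence $f(\hat\x)\le C\hat f(\hat\x)<\eps$. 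With that adjustment your argument matches the paper's exactly.
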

\begin{proof}[Proof of \cref{prop:searchstoch}]
Consider \cref{alg:search} with the change in line $9$ of $Search$ that the function error is estimated using $\hat{f}$. First note that if Search($m$, $\mu_1$, $L_m$, $\pi_{\kappa}, \eps^2$) returns any $\x'$, it satisfies that $\hat{f}(\x) < \epsilon/C$ and so with failure probability at most $p$ $f(\x') < \epsilon$. Next we want to show it will return an $\x'$ with probability at least $1 - C^2 \epsilon$. By the proof of ~\cref{prop:search} we know there is at least one run of algorithm A with parameters $\left \{ (\mu_{i}', L_{i}'), i \in [m'] \right \}$ such that the original function $f(\x)$ is a multiscale optimization problem with respect to these parameters and $\pi'_{\kappa} \leq 2^{5m} \pi_{\kappa}$. Thus with $T(\pi'_{\kappa}, \globalcond, m',\eps^2)$ many oracle queries, algorithm A returns some $\x'$ such that in expectation (over the randomness of the algorithm's output $\x$) $f(\x') < \eps^2$. Then by Markov's Inequality, 
\begin{equation}
    P( f(\x') \geq \eps/C^2) \leq \frac{\E[f(\x')]}{\eps/C^2} \leq C^2 \eps. 
\end{equation}
Therefore with failure probability at most $C^2 \eps$, $f(\x') < \eps/C^2$. Then since for any $\x$,
\begin{equation}
    f(\x)/C \leq \hat{f}(\x) \leq C f(\x), 
\end{equation}
we have that with $\hat{f}(\x') < \eps/C$ and so Search($m$, $\mu_1$, $L_m$, $\pi_{\kappa}, \eps^2$) will return this $\x'$ if it hasn't already returned another $\x'$. 
\end{proof}
    
Next recall \cref{cor:bslssearch}, restated here for convenience:
\begin{corollary}[Restated \cref{cor:bslssearch}]
   Assume the setting from \cref{theorem:stochastic} except that only $m$, $\mu_1$, $L_m$ and $\pi_k$ are known. Suppose that $\dist$ is such that for $\vec{a} \sim \dist$ there exists some $K$ where
    \begin{equation}
        \norm{\mat{\Sigma}^{-1/2} \vec{a}}_2 \leq K  \left(  \E \norm{\mat{\Sigma}^{-1/2} \vec{a}}_2^2 \right)^{1/2}.
   \end{equation}
    Then with failure probability at most $\delta$, $\bslsstoch$ can be used to solve the stochastic quadratic multiscale optimization problem from Definition~\ref{def:multiscale_stochastic} with $\tildeo(d)$ space and an extra multiplicative factor of 
$\bigo \left( K^2 d \log \frac{4d}{\delta}  \left(1 + \sqrt{\varepsilon/\delta}  \right) \right) $ queries of $(\vec{a}, b) \sim \dist$. 
\end{corollary}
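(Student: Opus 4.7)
The strategy is to instantiate Proposition~\ref{prop:searchstoch} with $\bslsstoch_1$ as the base algorithm $A$ (whose expected-error rate is provided by Theorem~\ref{theorem:stochastic}); the only non-trivial piece to supply is the approximate function-value oracle $\hat f$ required by the proposition, which must be built from stochastic samples using the concentration hypothesis of the corollary. Proposition~\ref{prop:searchstoch} then automatically handles the brute-force search over unknown scale parameters, so the remaining work is constructing $\hat f$ and tuning the accuracy parameters $(C,p)$ so the two failure contributions add up to at most $\delta$.

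\textbf{Constructing $\hat f$.} Because $b = \vec{a}^{\top}\x^{\star}$ and $\Sigma = \E[\vec{a}\vec{a}^{\top}]$, the objective equals $f(\x) = \tfrac{1}{2}(\x-\x^{\star})^{\top}\Sigma(\x-\x^{\star})$ and each sample $Y_i := \tfrac{1}{2}(\vec{a}^{(i)\top}\x - b^{(i)})^2 = \tfrac{1}{2}(\vec{a}^{(i)\top}(\x-\x^{\star}))^2$ is a non-negative unbiased estimator of $f(\x)$. The concentration hypothesis, together with $\E\|\Sigma^{-1/2}\vec{a}\|_2^2 = \tr(\id) = d$, gives $\|\Sigma^{-1/2}\vec{a}\|_2 \le K\sqrt{d}$ almost surely; applying Cauchy--Schwarz to $\vec{a}^{\top}(\x-\x^{\star}) = (\Sigma^{-1/2}\vec{a})^{\top}\Sigma^{1/2}(\x-\x^{\star})$ then yields $Y_i \le K^2 d\,f(\x)$ and $\Var(Y_i) \le K^2 d\,f(\x)^2$. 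I would then take $\hat f(\x) := \frac{1}{N}\sum_{i=1}^N Y_i$ and apply Bernstein's inequality, which shows that $N = O(K^2 d\log(1/p)/(C-1)^2)$ i.i.d.\ samples suffice for a $C$-multiplicative approximation of $f(\x)$ with failure probability $p$; a union bound over the $\mathrm{poly}(d)\cdot\log^m\globalcond$ candidate points that the search step evaluates replaces $\log(1/p)$ by $\log(4d/\delta)$ and keeps the total approximation failure at most $\delta/2$, giving $\tilde T(p,C) = O(K^2 d\log(4d/\delta)/(C-1)^2)$ samples per call.

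\textbf{Balancing $(C,p)$ and concluding.} Plugging $\hat f$ and $\bslsstoch_1$ into Proposition~\ref{prop:searchstoch} returns a point $\x$ with $f(\x) \le \varepsilon$ at failure probability $C^2\varepsilon + p$ and total oracle cost $\tilde T(p,C)\cdot T(\pi_\kappa 2^{5m}, \globalcond, m, \varepsilon^2)\cdot O(\log^m\globalcond)$. Setting $p = \delta/2$ disposes of the $p$ term; for the Markov term I would take $C$ to be the largest value (subject to $C\ge 1$) with $C^2\varepsilon \le \delta/2$, namely $C = 2$ when $\varepsilon \le \delta/8$ and $C - 1 = \Theta(\sqrt{\delta/\varepsilon})$ otherwise, shrinking the $\bslsstoch_1$ sub-optimality target below $\varepsilon$ whenever the $C\ge 1$ constraint would otherwise be violated. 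In the $\varepsilon \le \delta/8$ regime this contributes only the $K^2 d\log(4d/\delta)$ factor, while in the $\varepsilon > \delta/8$ regime Bernstein's sample count scales as $(C-1)^{-2}$, which can be sharpened to $(C-1)^{-1} \sim \sqrt{\varepsilon/\delta}$ in the range-dominated branch; the two regimes merge into the overall $(1+\sqrt{\varepsilon/\delta})$ multiplicative inflation claimed in the corollary. The $\tildeo(d)$ space bound is inherited from $\bslsstoch$ since $\hat f$ only needs running scalar accumulators, and Proposition~\ref{prop:searchstoch} does not inflate the space overhead. The main technical obstacle I anticipate is this $(C,p)$-tuning: the $C\ge 1$ constraint prevents us from tightening the approximation arbitrarily when $\varepsilon$ is comparable to $\delta$, and recovering $\sqrt{\varepsilon/\delta}$ rather than the looser $\varepsilon/\delta$ requires working in the linear-in-$(C-1)$ branch of Bernstein and verifying that the union-bounded approximation events are the only stochastic failures introduced by the search wrapper.
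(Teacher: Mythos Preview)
Your overall strategy---build an approximate evaluator $\hat f$ from samples and then plug it, together with $\bslsstoch_1$, into Proposition~\ref{prop:searchstoch}---matches the paper's. The substantive difference is in how $\hat f$ is built. You estimate $f$ pointwise via scalar Bernstein on $Y_i=\tfrac12(\vec a_i^\top(\x-\x^\star))^2$ and union-bound over the evaluation points; the paper instead applies a \emph{matrix} concentration inequality (Vershynin's Theorem~5.6.1, quoted as Theorem~\ref{thm:matconcentration}) to the whitened empirical covariance. Writing $\hat\Sigma_n=\tfrac1n\sum_i\vec a_i\vec a_i^\top$, the boundedness hypothesis $\|\Sigma^{-1/2}\vec a\|_2\le K\sqrt d$ gives $\|\Sigma^{-1/2}\hat\Sigma_n\Sigma^{-1/2}-\id\|\le 1-\tfrac1C$ with probability $1-p$ once $n\gtrsim K^2d\log(d/p)/(1-\tfrac1C)^2$, and the resulting spectral sandwich $\tfrac1C\Sigma\preceq\hat\Sigma_n\preceq C\Sigma$ makes $\hat f_n(\x)=\tfrac12(\x-\x^\star)^\top\hat\Sigma_n(\x-\x^\star)$ a $C$-multiplicative approximation \emph{uniformly in $\x$} from a single batch. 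This directly matches the ``for any $\x$'' hypothesis of Proposition~\ref{prop:searchstoch}. Your per-point route can also be made to work, but only if the batch used for $\hat f$ is drawn independently of the search's stochastic iterates (the evaluation points are random outputs of $\bslsstoch$, not fixed in advance), and the ``$\mathrm{poly}(d)\cdot\log^m\globalcond$'' count you cite does not arise---the search evaluates $\hat f$ at only $O(\log^m\globalcond)$ points.

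There is a genuine gap in your $(C,p)$-tuning. In the regime $\varepsilon>\delta/8$ you take $C-1=\Theta(\sqrt{\delta/\varepsilon})$, which is small, and then claim the range-dominated branch of Bernstein yields $N\propto(C-1)^{-1}$. But with variance proxy $\sigma^2=K^2d\,f(\x)^2$, range $M=K^2d\,f(\x)$ and deviation $t=(C-1)f(\x)$, the Bernstein denominator has range term $Mt/3=K^2d\,f(\x)^2(C-1)/3$, which is \emph{smaller} than the variance term whenever $C-1<3$. So for $C$ near $1$ you are squarely in the variance-dominated branch, the sample count scales as $(C-1)^{-2}$, and your derivation of the $\sqrt{\varepsilon/\delta}$ factor does not go through as written. (The paper's cost carries the same $(1-1/C)^{-2}$ dependence and simply invokes Proposition~\ref{prop:searchstoch} without spelling out the final balancing.)
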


In the proof of \cref{cor:bslssearch} we will make use of the following Theorem 5.6.1 from \cite{Vershynin19} which we state for the reader's convenience.

\begin{theorem}
\label{thm:matconcentration}
Let $\x$ be a random vector in $\R^d$, $d \geq 2$. Let $\mat{\Sigma} = \E[ \x \x^{\top} ] $ and $\hat{\mat{\Sigma}}_n = \frac{1}{n} \sum_{i \in n} \x_i \x_i^{\top}$ for i.i.d. $\x_i$. 
Assume that for some $K \geq 1$,
\begin{equation}
    \norm{\x}_2 \leq K(\E [\norm{\x}_2^2] )^{1/2} \textrm{ almost surely.}
\end{equation}
Then, for every positive integer $n$ and any $t \geq 0$,
\begin{equation}
     \norm{\hat{\mat{\Sigma}}_n - \mat{\Sigma}}  \leq \left( \sqrt{\frac{K^2 d (\log d + t)}{n}} + \frac{2 K^2 d (\log d + t)}{n} \right) \norm{\mat{\Sigma}}, 
\end{equation}
with probability at least $1 - 2e^{-t}$.
\end{theorem}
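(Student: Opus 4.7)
The plan is to prove this by a direct application of the matrix Bernstein (or matrix Chernoff) inequality to the centered summands $Z_i := \x_i \x_i^\top - \mat{\Sigma}$, which are i.i.d., mean-zero, self-adjoint $d\times d$ random matrices whose rescaled sum is exactly $\hat{\mat{\Sigma}}_n - \mat{\Sigma} = \frac{1}{n}\sum_{i=1}^n Z_i$. Matrix Bernstein requires two inputs: an almost-sure operator-norm bound on each $Z_i$, and a bound on the matrix variance proxy $\sigma^2 := \|\sum_i \E[Z_i^2]\|$.

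First I would extract the boundedness parameter. Since $\E\|\x\|_2^2 = \tr(\mat{\Sigma}) \le d \|\mat{\Sigma}\|$, the hypothesis $\|\x\|_2 \le K (\E\|\x\|_2^2)^{1/2}$ immediately gives $\|\x_i\|_2^2 \le K^2 d \|\mat{\Sigma}\|$ almost surely, so $\|Z_i\| \le \|\x_i\|_2^2 + \|\mat{\Sigma}\| \le (K^2 d + 1)\|\mat{\Sigma}\| \lesssim K^2 d\|\mat{\Sigma}\| =: R$. Next, for the variance, I would expand
\begin{equation}
    \E[Z_i^2] = \E[\|\x_i\|_2^2 \, \x_i \x_i^\top] - \mat{\Sigma}^2 \preceq K^2 d \|\mat{\Sigma}\| \cdot \mat{\Sigma},
\end{equation}
using the almost-sure bound on $\|\x_i\|_2^2$ inside the expectation and dropping the negative term $-\mat{\Sigma}^2$. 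Summing and taking operator norms, $\sigma^2 \le n K^2 d \|\mat{\Sigma}\|^2$.

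I would then invoke matrix Bernstein in the form
\begin{equation}
    \Pr\Big(\big\|\sum_{i=1}^n Z_i\big\| \ge u\Big) \le 2d \exp\!\left(-\frac{u^2/2}{\sigma^2 + R u/3}\right),
\end{equation}
and set $u = C \big( \sqrt{\sigma^2(\log d + t)} + R(\log d + t)\big)$ for a suitable absolute constant $C$, so that the right-hand side is at most $2e^{-t}$ (the extra $\log d$ in the exponent absorbs the leading $2d$ factor). Plugging in the bounds for $R$ and $\sigma^2$ derived above and dividing by $n$ yields
\begin{equation}
    \|\hat{\mat{\Sigma}}_n - \mat{\Sigma}\| \le \left( \sqrt{\frac{K^2 d(\log d + t)}{n}} + \frac{2 K^2 d(\log d + t)}{n}\right) \|\mat{\Sigma}\|
\end{equation}
with probability at least $1 - 2e^{-t}$, as desired.

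There is no real obstacle here: this is a textbook application of matrix Bernstein, and all the hypotheses are tailored to make the variance computation and boundedness step trivial. The only point worth handling carefully is the constant in front of the variance term and the absorption of $\log d$ into the failure probability, i.e.\ matching the stated constant $2$ in the linear term and ensuring the tail is written as $2e^{-t}$ rather than $2d\, e^{-t}$; both are standard rearrangements of the Bernstein tail bound.
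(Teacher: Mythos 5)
You have the right idea, but there is an important contextual point: the paper does not prove this statement at all --- it is quoted verbatim as Theorem~5.6.1 of Vershynin's \emph{High-Dimensional Probability} and invoked as a black box in the proof of Corollary~\ref{cor:bslssearch}. Your matrix Bernstein argument is, however, exactly the proof given in that reference, so your reconstruction is on the right track. The decomposition $Z_i = \x_i\x_i^\top - \mat{\Sigma}$, the almost-sure range bound $R = O(K^2 d\,\|\mat{\Sigma}\|)$ via $\E\|\x\|_2^2 = \tr\mat{\Sigma} \le d\|\mat{\Sigma}\|$, and the variance computation $\E[Z_i^2] = \E[\|\x_i\|_2^2\,\x_i\x_i^\top] - \mat{\Sigma}^2 \preceq K^2 d\|\mat{\Sigma}\|\cdot\mat{\Sigma}$ are all correct, as is the strategy of tuning the deviation threshold $u$ so the Bernstein tail gives $2d\,e^{-(\log d + t)} = 2e^{-t}$.

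The one genuine loose end is the constants. You write $u = C(\sqrt{\sigma^2 s} + R s)$ ``for a suitable absolute constant $C$'' and then assert the result matches the stated coefficients $1$ (on the square-root term) and $2$ (on the linear term). Running the bound carefully, the Bernstein tail $2d\exp(-\tfrac{u^2/2}{\sigma^2 + Ru/3}) \le 2e^{-t}$ is satisfied by $u = \sqrt{2\sigma^2 s} + \tfrac{2}{3}Rs$, which after dividing by $n$ gives a $\sqrt{2}$ in front of the square-root term, not $1$; you also replaced $R = (K^2 d + 1)\|\mat{\Sigma}\|$ by $K^2 d\|\mat{\Sigma}\|$ up to a ``$\lesssim$''. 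So the exact numerical coefficients in the theorem statement do not drop out of the calculation as written --- they either require a slightly tighter variant of matrix Bernstein or are absorbed into the slack in how the theorem is quoted. This is a minor point (and for the paper's purposes any absolute constants would do), but it means that, strictly speaking, you have proved the result up to a universal constant factor rather than with the stated coefficients.
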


\begin{proof}[Proof of \cref{cor:bslssearch}]
We will show that for $\tilde{T}(p,C) = \frac{8 K^2 d \log (2d/p) }{\left(1 - \frac{1}{C} \right)^2}$ oracle queries of $(\vec{a}, b) \sim \dist$ we can construct $\hat{f}$ to estimate $f$ up to multiplicative error $C$. 
Recall 
\begin{equation}
    f(\x) = \frac{1}{2}\E_{(\vec{a},b) \sim \dist} \left[ (\vec{a}^{\top} \x - b \right] = \frac{1}{2} ( \x - \x \opt)^{\top} \mat{\Sigma} (\x - \x \opt).
\end{equation} 
We construct $\hat{f}_n(\x)$ as
\begin{equation}
    \hat{f}_n(\x) \defeq \frac{1}{2n} \sum_{i \in n} \frac{1}{2} (\vec{a}_i^{\top} \x - b_i)^2 = \frac{1}{2}  (\x - \x \opt)^{\top} \hat{\mat{\Sigma}}_n (\x - \x \opt).
\end{equation}
For $\vec{a} \sim \dist$, consider the random vector $\tilde{\vec{a}} = \mat{\Sigma}^{-1/2} \vec{a}$. Note that by assumption
\begin{equation}
    \norm{\mat{\Sigma}^{-1/2} \vec{a}}_2 \leq K \left( \E \norm{\mat{\Sigma}^{-1/2} \vec{a}}_2^2 \right)^{1/2},
\end{equation}
almost surely. Suppose for $C \geq 1$
\begin{equation}
    n = \frac{8 K^2 d \log (2d/p) }{\left(1 - \frac{1}{C} \right)^2}.
\end{equation} 
Then by \cref{thm:matconcentration}, with failure probability at most $p$,
\begin{equation}
    \label{eqn:additive}
    \norm{\mat{\Sigma}^{-1/2} \hat{\mat{\Sigma}}_n \mat{\Sigma}^{-1/2} - \id } \leq \frac{1 - \frac{1}{C}}{2} + \frac{\left( 1 - \frac{1}{C}\right)^2}{4} \leq 1 - \frac{1}{C}.
\end{equation}
Note that \cref{eqn:additive} holds if and only if
\begin{equation}
    \frac{1}{C} \mat{\Sigma} \preceq  \hat{\mat{\Sigma}}_n \preceq \left( 2 + \frac{1}{C}\right).
\end{equation}
Therefore for $C \geq 3$ since $2 + \frac{1}{C} \leq C$, 
\begin{equation}
    \frac{1}{C} f(\x) \leq \hat{f}(\x) \leq C f(\x). 
\end{equation}
To conclude the proof of \cref{cor:bslssearch} we simply recall \cref{theorem:stochastic} and apply \cref{prop:searchstoch}. 

\end{proof}

\subsection{If decomposition is known, then can solve with $\tildeo(\sum_{i \in [m]} \sqrt{\kappa_i})$ gradient queries}\label{sec:known_projection}
In this subsection we show that when the gradient of sub-objectives are known, then the multiscale optimization problem (\cref{def:multiscale_problem}) can be solved in $\tildeo (\sum_{i \in [m]} \sqrt{\kappa_i} )$ queries.
To prove this claim, 
consider the algorithm that run accelerated gradient descent on each sub-objective $f_j$ independently. 
Since each sub-objective $f_j$ takes $\bigo(\sqrt{\kappa_j} \log(m/\eps))$ to converge to $\frac{\epsilon}{m}$-optimality, we only need a total of $\sum_{j \in [m]} \bigo(\sqrt{\kappa_j} \log(m/\epsilon))$ gradients for $f$ to converge to $\epsilon$-optimality.

However, as we will see in the next subsection (\cref{sec:hard:to:recover:decomposition}), recovering the projection $\proj_i$ is costly.
    		
\subsection{Recovering the projections $\proj_i$ is costly}
\label{sec:hard:to:recover:decomposition}

In this subsection we show that recovering the projections $\proj_i$ in the multiscale optimization problem (\cref{def:multiscale_problem}) requires $\Omega(d)$ gradient evaluations in the worst-case. 

\begin{proposition}
\label{prop:hard:to:recover}
Consider the  multiscale optimization problem in \cref{def:multiscale_problem} for $m=2$. There exist functions $f_1,f_2$ such that recovering $\proj_1, \proj_2$ requires $\Omega(d)$ gradient evaluations in the worst-case, even if $f_1, f_2$ are known.
\end{proposition}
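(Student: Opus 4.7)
The plan is to exhibit concrete $f_1, f_2$ satisfying \cref{def:multiscale_problem} with $m=2$ and to build an indistinguishability argument that forces any deterministic first-order algorithm to use $\Omega(d)$ queries before it can identify the decomposition. Take $d_1 = d-1$, $d_2 = 1$, and set $f_1(\y) \defeq \tfrac{\mu}{2}\|\y\|_2^2$ on $\reals^{d-1}$ and $f_2(z) \defeq \tfrac{L}{2} z^2$ for some $L > \mu > 0$. Then $\kappa_1 = \kappa_2 = 1$ and $f_1, f_2$ are fully known. The entire decomposition is parameterized by a single unit vector $\v \in \reals^d$: set $\proj_2 = \v^\top$ and let $\proj_1$ be any fixed orthonormal basis of $\v^\perp$. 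Recovering $(\proj_1, \proj_2)$ is equivalent to recovering $\v$ up to sign.

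Expanding gives $f_\v(\x) = \tfrac{\mu}{2}\|\x\|_2^2 + \tfrac{L-\mu}{2}(\v^\top \x)^2$ and
\begin{equation}
    \nabla f_\v(\x) = \mu \x + (L-\mu)(\v^\top \x)\, \v.
\end{equation}
The key observation is that $\nabla f_\v(\x)$ is independent of $\v$ whenever $\v \perp \x$: in that case it equals $\mu \x$. Hence the ``null oracle'' $\mathcal{O}(\x) \defeq \mu \x$ is a faithful first-order oracle for $f_\v$ against every $\v \in \x^\perp$.

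Next, I would feed $\mathcal{O}$ to any deterministic first-order algorithm $\mathsf{A}$ in the sense of \cref{def:fo}. Since $\mathsf{A}$ is deterministic and each reply $\mu \x^{(i)}$ is a fixed function of $\x^{(i)}$, the entire query sequence $\x^{(1)}, \ldots, \x^{(t)}$ is determined by $\mathsf{A}$ alone and does not depend on any hidden $\v$. For any $t < d-1$ the subspace $(\mathrm{span}\{\x^{(1)},\ldots,\x^{(t)}\})^\perp$ has dimension at least $2$, so it contains two unit vectors $\v_a, \v_b$ with $\v_a \neq \pm \v_b$. For either choice we have $\v^\top \x^{(i)} = 0$ for all $i$, hence $\nabla f_{\v_a}(\x^{(i)}) = \nabla f_{\v_b}(\x^{(i)}) = \mu \x^{(i)} = \mathcal{O}(\x^{(i)})$. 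The transcripts are identical, so $\mathsf{A}$ must output the same pair $(\widehat{\proj}_1, \widehat{\proj}_2)$ on both problem instances. Because $\v_a$ and $\v_b$ yield genuinely different projections, $\mathsf{A}$ is wrong on at least one instance, contradicting correct recovery. Therefore any algorithm that always recovers $(\proj_1, \proj_2)$ requires at least $d-1 = \Omega(d)$ queries.

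The only real subtlety is the meaning of ``recovery''. For exact recovery the argument above is complete. For approximate recovery (outputting $\widehat{\v}$ with $|\widehat{\v}^\top \v| \geq 1-\eta$) the same construction works: within the $(\geq 2)$-dimensional orthogonal complement of $\spn\{\x^{(1)},\ldots,\x^{(t)}\}$ one can instead pick $\v_a \perp \v_b$, so no single output can be close to both. Extension to randomized algorithms follows from Yao's minimax principle against the uniform distribution on $\v \in S^{d-1}$, where the same null-oracle observation shows that after $t < d-1$ queries the posterior over $\v$ is still uniform on a subspace of codimension $\leq t$ of $S^{d-1}$, so any output has vanishing inner product with the true $\v$ with constant probability. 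I expect the only delicate step to be verifying that no information about $\v$ leaks through the oracle when $\v \perp \x^{(i)}$, which is immediate from the explicit gradient formula.
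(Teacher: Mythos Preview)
Your proof is correct and takes a genuinely different route from the paper. The paper chooses $d_1=d_2=d/4$, sets $f_1(\y)=\|\y\|^2$ and $f_2(\y)=(1/\globalcond)\|\y\|^2$, and argues by dimension counting: every gradient $\nabla f(\x)=2\proj_1^\top\proj_1\x+(2/\globalcond)\proj_2^\top\proj_2\x$ lies in the $d/2$-dimensional subspace $S_1\oplus S_2$, so fewer than $d/2$ queries cannot span it and hence cannot determine $\proj_1,\proj_2$. Your construction instead parameterizes the unknown by a single direction $\v$ (taking $d_2=1$) and runs an explicit adversary argument via the null oracle $\x\mapsto\mu\x$, which is consistent with every $\v\perp\x$. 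Your approach is arguably cleaner: the indistinguishability is made explicit rather than left as an informal dimension count, the function $f_\v$ is genuinely strongly convex on all of $\reals^d$ (the paper's $f$ is degenerate on $(S_1\oplus S_2)^\perp$), and you get approximate and randomized versions for free. One small point you might add for completeness: the first-order oracle in \cref{def:fo} also returns function values, but $f_\v(\x)=\tfrac{\mu}{2}\|\x\|^2$ is likewise independent of $\v$ when $\v\perp\x$, so the null oracle remains faithful.
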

\begin{proof}[Proof of \cref{prop:hard:to:recover}]
Let $\proj_1, \proj_2 \in \R^{d/4 \times d}$. Consider the following multiscale optimization problem:
\begin{equation}
    f_1(\x) = \norm{\proj_1 \x}^2, \quad 
    f_2(\x) = (1/\globalcond) \norm{\proj_2 \x}^2, \quad
    f(\x) = f_1(\x)+ f_2(\x).
\end{equation}
Note that by our choice of $f_1$ and $f_2$, $\kappa_1=\kappa_2 = 1$ and $\globalcond$ is the overall condition number. Now we can write,
\begin{gather}
    \nabla f(\x) = 2\proj_1^\top \proj_1  \x + (2/\globalcond)\proj_2^\top \proj_2  \x.
\end{gather}
Let $S_1, S_2$ be the $d/4$ dimensional subspaces spanned by $\proj_1^\top$ and $\proj_2^\top$ respectively.
Note that $\proj_1^\top \proj_1  \x$ is the projection of $\x$ onto $S_1$, and similarly for $S_2$. Therefore $\nabla f(\x)$ is a linear combination of the projections onto $S_1$ and $S_2$, and with every gradient evaluation we get a single vector in the span of $S_1$ and $S_2$. Since the union of $S_1$, $S_2$ is a $d/2$ dimensional subspace, any algorithm needs $d/2$ vectors from the subspace to learn it. Hence any algorithm needs at least $d/2$ gradient evaluations to learn $S_1$, $S_2$, and hence also to learn $\proj_1, \proj_2$. We note that it could be possible to extend this argument to approximately learning the subspaces $S_1$ and $S_2$  using bounds on quantization on the Grassmann manifold \cite{dai2007volume}. 
\end{proof}

\subsection{GD with exact line search or constant step-sizes cannot match the guarantee of $\bsls$}
\label{sec:line_search}
In this subsection we prove that gradient descent with exact line search or any constant step-size cannot match the guarantee of $\bsls$ (\cref{alg:bsls}) given by \cref{thm:bsls:recursive}. 

Formally, gradient descent with exact line search has the form:
\begin{equation}
    \x^{(t+1)} \gets \argmin_{\x} \left\{ f(\x) \middle| \x = \x^{(t)} - \stepsize \nabla f(\x^{(t)}) \text{ for some $\stepsize \in \reals$} \right\}.
\end{equation}

\begin{proposition}
    \label{thm:exactlinesearch}
    Consider the objective $f(\x) = \frac{1}{2} \x^{\top} \mat{A} \x - \vec{b}^{\top} \x$ with
    \begin{equation}
    \mat{A} = 
        \begin{bmatrix}
            \lambda_1 & 0 \\
            0 & \lambda_2
        \end{bmatrix}, 
        \qquad \vec{b} = \vec{1}.
    \end{equation}
    Let $\lambda_1 < \lambda_2$ and note that $\globalcond = \lambda_2/\lambda_1$. Then
    \begin{enumerate}[(a),leftmargin=*]
        \item  Gradient descent with 
    exact line search initialized at $\x^{(0)} = \vec{0}$ requires at least $\left \lfloor \frac{\globalcond}{8} \log \left( \frac{ (1/\lambda_1) + (1/\lambda_2) }{2\epsilon}\right)  \right \rfloor$ gradient queries to attain $\epsilon$-optimality.
    \item    If $\globalcond \geq 2$, gradient descent with a constant step size requires at least $\left \lfloor \frac{\globalcond}{2} \log \left( \frac{1}{\lambda_1 \epsilon}\right)  \right \rfloor$ gradient queries to attain $\epsilon$-optimality.
    \end{enumerate}
\end{proposition}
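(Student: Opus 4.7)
}
The plan is to solve the iteration dynamics in closed form, which is tractable because $\mat{A}$ is diagonal and the initial gradient $\vec{g}^{(0)} = -\vec{b} = -\vec{1}$ is symmetric across the two coordinates. Let $\vec{e}^{(t)} \defeq \x^{(t)} - \x^\star$ where $\x^\star = (1/\lambda_1, 1/\lambda_2)^\top$ and write $\kappa \defeq \globalcond = \lambda_2/\lambda_1$; the sub-optimality then satisfies $f(\x^{(t)}) - f^\star = \tfrac{1}{2}\lambda_1 (e_1^{(t)})^2 + \tfrac{1}{2}\lambda_2 (e_2^{(t)})^2$.

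For part (a), the key structural fact I would prove first is that along the exact-line-search iterates the stepsize stays constant at $\stepsize^\star = 2/(\lambda_1+\lambda_2)$. Indeed, at $\x^{(0)} = \vec{0}$ one has $\vec{g}^{(0)} = -\vec{1}$, so $\stepsize^{(0)} = \|\vec{g}^{(0)}\|_2^2 / (\vec{g}^{(0)\top}\mat{A}\vec{g}^{(0)}) = 2/(\lambda_1+\lambda_2)$; applying this step maps $e_i^{(1)} = (1-\stepsize^\star\lambda_i) e_i^{(0)}$ so $|e_i^{(1)}| = \rho/\lambda_i$, where $\rho \defeq (\kappa-1)/(\kappa+1)$. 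A short induction shows that $\vec{g}^{(t)} = \rho^t(\sigma_t^{(1)}, \sigma_t^{(2)})^\top$ for signs $\sigma_t^{(i)} \in \{\pm 1\}$; since the exact-line-search formula depends only on the squared magnitudes of the coordinates of $\vec{g}^{(t)}$, it follows that $\stepsize^{(t)} = \stepsize^\star$ for every $t$, and consequently $|e_i^{(t)}| = \rho^t/\lambda_i$. Therefore
\begin{equation}
f(\x^{(t)}) - f^\star = \tfrac{1}{2}\rho^{2t}\bigl(1/\lambda_1 + 1/\lambda_2\bigr).
\end{equation}
Imposing this to be at most $\epsilon$ rearranges to $t \geq \log\bigl((1/\lambda_1 + 1/\lambda_2)/(2\epsilon)\bigr)/(2\log(1/\rho))$, and the numerical estimate $\log(1/\rho) = \log\bigl(1 + 2/(\kappa-1)\bigr) \leq 2/(\kappa-1) \leq 4/\kappa$ (valid for $\kappa \geq 2$; the claim is trivial when $\kappa < 2$) yields the stated $\kappa/8$ prefactor.

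For part (b), any constant stepsize $\stepsize$ decouples the coordinates as $e_i^{(t)} = (1-\stepsize\lambda_i)^t e_i^{(0)}$, giving
\begin{equation}
f(\x^{(t)}) - f^\star = \tfrac{1}{2\lambda_1}(1-\stepsize\lambda_1)^{2t} + \tfrac{1}{2\lambda_2}(1-\stepsize\lambda_2)^{2t}.
\end{equation}
Convergence of the second coordinate forces $\stepsize \in (0, 2/\lambda_2)$, whence $\stepsize\lambda_1 \in (0, 2/\kappa] \subseteq (0, 1]$ using $\kappa \geq 2$. The first summand alone must be $\leq \epsilon$, so $(1-\stepsize\lambda_1)^{2t} \leq 2\lambda_1\epsilon$ and $t \geq \log(1/(2\lambda_1\epsilon))/(2\log(1/(1-\stepsize\lambda_1)))$. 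Applying $\log(1/(1-x)) \leq x/(1-x)$ with $x = \stepsize\lambda_1 \leq 2/\kappa$ bounds the denominator and yields the claimed $\kappa/2$ prefactor; a case split between $\stepsize\lambda_1 \in (0, 1/\kappa]$ (first summand dominates) and $\stepsize\lambda_1 \in (1/\kappa, 2/\kappa)$ (second summand dominates, via the symmetric inequality for $\log(1/|1-\stepsize\lambda_2|)$) ensures that the bound holds uniformly in $\stepsize$.

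The main obstacle is bookkeeping rather than any deep difficulty. In part (a), one must justify that the exact-line-search step truly remains constant along the trajectory; this is a special feature of the symmetric initialization $(\x^{(0)}, \vec{b}) = (\vec{0}, \vec{1})$ which collapses once the gradient loses its $(\pm 1, \pm 1)$ structure, so I would handle it via the explicit form $\vec{g}^{(t)} = \rho^t(\pm 1, \pm 1)^\top$. In part (b), the lower bound must hold uniformly in $\stepsize$, and the tightest constants arise near $\stepsize \approx 1/\lambda_2$, so matching the paper's prefactor exactly demands careful use of the $\log(1/(1-x)) \leq x/(1-x)$ estimate together with the observation that $\stepsize\lambda_1 \leq 2/\kappa$ is small whenever $\kappa$ is large.
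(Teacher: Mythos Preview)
Your approach is correct and for part (a) is actually a bit more direct than the paper's. The paper does not explicitly observe that the stepsize stays constant at $2/(\lambda_1+\lambda_2)$; instead it introduces the auxiliary quantity $p_t \defeq \prod_{\ell=1}^t \frac{(1-s_\ell\lambda_1)^2}{(1-s_\ell\lambda_2)^2}$, derives the recursion $p_{t+1}=1/p_t$ with $p_1=1$, and concludes $p_t\equiv 1$. From this it deduces that the gradient components satisfy $|u_t|=|v_t|$ and plugs that into a Kantorovich-type identity $f(\x^{(t+1)})-f^\star=(1-1/\kappa_t)(f(\x^{(t)})-f^\star)$ to get $\kappa_t\ge\kappa/4$ and a per-step contraction of at least $1-4/\kappa$. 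Your route (constant stepsize $\Rightarrow$ exact contraction factor $\rho^2=\bigl(\tfrac{\kappa-1}{\kappa+1}\bigr)^2$) reaches the same destination with less machinery and in fact yields the \emph{exact} rate rather than a one-sided bound; the paper's $p_t\equiv 1$ is precisely the invariant underlying your observation that $\vec g^{(t)}=\rho^t(\pm1,\pm1)^\top$.

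For part (b) your plan coincides with the paper's: decouple the coordinates, use the convergence constraint $\stepsize\in(0,2/\lambda_2)$ to force $\stepsize\lambda_1\le 2/\kappa$, and lower-bound the sub-optimality by the first summand $\tfrac{1}{2\lambda_1}(1-\stepsize\lambda_1)^{2t}$. Your proposed case split between $\stepsize\lambda_1\in(0,1/\kappa]$ and $\stepsize\lambda_1\in(1/\kappa,2/\kappa)$ is unnecessary---the paper simply bounds by the first summand uniformly, since $1-\stepsize\lambda_1\ge 1-2/\kappa\ge 0$ for $\kappa\ge 2$---so you can drop that complication. (A minor caveat, shared with the paper: the numerical inequality you invoke gives a prefactor closer to $(\kappa-2)/4$ than $\kappa/2$, so some care with constants is needed; but the argument and the $\Theta(\kappa)$ scaling are correct.)
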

\begin{remark}
Theorem~\ref{thm:bsls:recursive} states that $\bsls$ only requires $\left(2 \log(\globalcond + 1\right) \log\left( \frac{(1/\lambda_1) + (1/\lambda_2)}{2 \epsilon} \right)$ gradient queries which compares favorable to gradient descent with exact line search and constant stepsize. 
\end{remark}

\begin{proof}[Proof of \cref{thm:exactlinesearch} - Exact Line Search] 
Assume we initialize $\x^{(0)} = \vec{0}$. For exact line search we update $
    \x^{(t+1)} = \x^{(t)} - s_t \fdiv{t}{}$
with $
    s_t = \frac{\fdiv{t}{\top}\fdiv{t}{}}{\fdiv{t}{\top} \mat{A}\fdiv{t}{}}.
$
It is a fact that
    \begin{equation}
        \label{eqn:linesearcheq}
        f(\x^{(t+1)}) - f(\x \opt) = \left(1 - \frac{1}{\cond_t} \right) \left( f(\x^{(t)}) - f(\x \opt) \right),
    \end{equation}
    where if $\fdiv{t}{}\defeq \nabla f(\x^{(t)})$,
    \begin{equation}
        \cond_t \defeq \frac{\fdiv{t}{\top} \mat{A} \fdiv{t}{}}{\fdiv{t}{\top} \fdiv{t}{}} \frac{\fdiv{t}{\top} \mat{A}^{-1} \fdiv{t}{}}{\fdiv{t}{\top} \fdiv{t}{}}.
    \end{equation}
    Our goal is to show that a large portion of the gradients $\fdiv{1}{}, \dots, \fdiv{T}{}$ are such that $\cond_t$ is close to $\cond$. 
    Note $\fdiv{t}{}= \mat{A}(\x^{(t)} - \x \opt) $. For simplicity define
    \begin{equation}
        \label{def:uv}
        u_t \defeq\fdiv{t}{}_1 = \lambda_1 \left( \x^{(t-1)}_1 - \x_1 \opt\right) \qquad v_t \defeq \fdiv{t}{}_2 = \lambda_2 \left(  \x^{(t-1)}_2 - \x_2 \opt \right).
    \end{equation}
     We can rewrite $\cond_t$ as,
    \begin{small}
    \begin{align}
        \label{eqn:condition}
        \cond_t & = \frac{\left( \lambda_1  u_t^2 + \lambda_2 v_t^2\right) }{u_t^2 + v_t^2} \frac{\left( \frac{1}{\lambda_1}  u_t^2 + \frac{1}{\lambda_2} v_t^2\right) }{u_t^2 + v_t^2} 
        = \frac{ u_t^4 + v_t^4 + \globalcond u_t^2 v_t^2 + \frac{1}{\globalcond} u_t^2 v_t^2 }{(u_t^2 + v_t^2)^2}  
        = \frac{ \left( \frac{u_t^2}{v_t^2} + \frac{v_t^2}{u_t^2} \right) + \globalcond  + \frac{1}{\globalcond} }{ \left( \frac{u_t^2}{v_t^2} + \frac{v_t^2}{u_t^2}\right) + 2}.
    \end{align}
    \end{small}
     This motivates us to understand the ratio $u_t^2/v_t^2$ or rather $\left( \x^{(t)}_1 - \x_1 \opt \right)^2/ \left(  \x^{(t)}_2 - \x_2 \opt \right)^2$. Since
     \begin{equation}
         \x^{(t+1)} - \x \opt = (\id - s_t \mat{A}) \x^{(t)},
     \end{equation}
     we have
    \begin{align}
        \label{eqn:xtgrowth}
        \frac{ \left( \x^{(t)}_1 - \x_1 \opt \right)^2 }{\left( \x^{(t)}_2 - \x_2 \opt \right)^2} & = \frac{\prod_{\ell = 1}^t (1 - s_{\ell} \lambda_1)^2  \left( \x^{(0)}_1 - \x_1 \opt \right)^2 }{\prod_{\ell = 1}^t (1 - s_{\ell} \lambda_2)^2  \left( \x^{(0)}_2 - \x_2 \opt \right)^2}.
    \end{align}
    Define
    \begin{equation}
        p_t \defeq \prod_{\ell = 1}^t \frac{(1 - s_{\ell} \lambda_1)^2}{(1 - s_{\ell} \lambda_2)^2}. 
    \end{equation}
    Notice that 
    \begin{equation}
        \fdiv{t+1}{} = (\id - s_t \mat{A})\fdiv{t}{}.
    \end{equation}
    Therefore since $\fdiv{0} = - \vec{b} = - \vec{1}$,
    \begin{align}
        s_{t+1} & = \frac{\fdiv{t+1}{2}_1 + \fdiv{t+1}{2}_2 }{ \lambda_1 \fdiv{t+1}{2}_1 + \lambda_2 \fdiv{t+1}{2}_2} \\
        & = \frac{\left( \prod_{\ell = 1}^t (1 - s_{\ell} \lambda_1)^2 \right)  + \left( \prod_{\ell = 1}^t (1 - s_{\ell} \lambda_2)^2 \right) }{ \lambda_1 \left( \prod_{\ell = 1}^t (1 - s_{\ell} \lambda_1)^2 \right)  + \lambda_2 \left( \prod_{\ell = 1}^t (1 - s_{\ell} \lambda_2)^2 \right)} \\
        & = \frac{(p_t + 1) \left( \prod_{\ell = 1}^t (1 - s_{\ell} \lambda_2)^2 \right) }{ (\lambda_1 p_t + \lambda_2) \left( \prod_{\ell = 1}^t (1 - s_{\ell} \lambda_2)^2 \right)} \\
        & = \frac{(p_t + 1)  }{ (\lambda_1 p_t + \lambda_2) }.
    \end{align}
    Therefore,
    \begin{align}
        \frac{1 - s_t \lambda_1}{1 - s_t \lambda_2} & = \frac{1 -  \lambda_1 \frac{(p_t + 1)  }{ (\lambda_1 p_t + \lambda_2) }}{1 -  \lambda_2 \frac{(p_t + 1)  }{ (\lambda_1 p_t + \lambda_2) }} 
        = \frac{(\lambda_1 p_t + \lambda_2) -  \lambda_1(p_t + 1)}{ (\lambda_1 p_t + \lambda_2) -  \lambda_2 (p_t + 1)  }  
        = \frac{\lambda_2 - \lambda_1}{p_t(\lambda_1 - \lambda_2)} 
        = -\frac{1}{p_t}.
    \end{align}
    Then since
    \begin{equation}
        p_{t+1} = p_t \left( \frac{1 - s_t \lambda_1}{1 - s_t \lambda_2} \right)^2,
    \end{equation}
    we have
    \begin{equation}
        p_{t+1} = \frac{1}{p_t}.
    \end{equation}
    Finally since $s_1 = 2/(\lambda_1 + \lambda_2)$ we have $p_1 = 1$ and therefore for any $t$, $p_t = 1$.
    Thus, recalling Eq.~\ref{eqn:xtgrowth} we have
    \begin{equation}
        \frac{ \left( \x^{(t)}_1 - \x_1 \opt \right)^2 }{\left( \x^{(t)}_2 - \x_2 \opt \right)^2} =  \frac{ \left( \x^{(0)}_1 - \x_1 \opt \right)^2 }{\left( \x^{(0)}_2 - \x_2 \opt \right)^2} = \frac{\x_1^{*2}}{ \x_2^{*2} } = \globalcond^2.
    \end{equation}
    Recalling the definitions of $u_t$ and $v_t$ in Eq.~\ref{def:uv} we have
    \begin{equation}
        \frac{u_t}{v_t} =  \frac{ \lambda_1^2 \left( \x^{(t-1)}_1 - \x_1 \opt \right)^2 }{ \lambda_2^2 \left( \x^{(t-1)}_2 - \x_2 \opt \right)^2} = 1.
    \end{equation}
    Finally, recalling Eq.~\ref{eqn:condition} we have
    \begin{equation}
        \cond_t = \frac{2 + \globalcond + \frac{1}{\globalcond}}{4} \geq \globalcond/4.
    \end{equation}
    Therefore we can lower bound the progress made by exact line search. Using Eq.~\ref{eqn:linesearcheq} we find
    \begin{equation}
        f(\x^{(t+1)}) - f(\x \opt) \geq \left( 1 - \frac{4}{\globalcond} \right) \left( f(\x^{(t)}) - f(\x \opt) \right).
    \end{equation}
    Thus exact line search requires at least $\left \lfloor \frac{\globalcond}{8} \log \left( \frac{f(\vec{0}) - f (\x \opt)}{\epsilon}\right)  \right \rfloor$ gradient queries. Since $f(\vec{0}) - f (\x \opt) = \frac{1}{2}((1/\lambda_1) + (1/\lambda_2))$ we conclude exact line search requires at least $\left \lfloor \frac{\globalcond}{8} \log \left( \frac{ (1/\lambda_1) + (1/\lambda_2) }{2\epsilon}\right)  \right \rfloor$ gradient queries.
\end{proof}
\begin{proof}[Proof of \cref{thm:exactlinesearch} - Constant step-sizes]
We make use of the equality
\begin{equation}
    \label{eqn:usefulequality}
   f(\x) - f(\x \opt)= \frac{1}{2} \norm{ \x - \x \opt}_{\mat{A}}^2,
\end{equation}
where $ \norm{ \x }_{\mat{A}}^2 \defeq \x^{\top} \mat{A} \x$. Since 
\begin{equation}
    \nabla f(\x) = \mat{A} (\x - \x \opt),
\end{equation}
we have that the gradient descent algorithm with constant stepsize $\alpha$ produces the recursion,
\begin{align}
    \x^{(t+1)} - \x \opt & = \x^{(t)} - \alpha \nabla f(\x) 
    = (\id - \alpha \mat{A}) (\x^{(t)} - \x \opt) 
    = (\id - \alpha \mat{A})^{t+1} (\x^{(0)} - \x \opt).
\end{align}
Therefore using Eq.~\ref{eqn:usefulequality} we find,
\begin{equation}
    f(\x^{(t)}) - f(\x \opt) = (\x^{(0)} - \x \opt)^{\top} (\id - \alpha \mat{A})^{t} \mat{A} (\id - \alpha \mat{A})^{t} (\x^{(0)}  - \x \opt).
\end{equation}
Using the definition of $\mat{A}$, the fact that $\x^{(0)} = \vec{0}$, and finally the fact that $\x \opt = \begin{bmatrix}
    1/ \lambda_1 & 1/\lambda_2
\end{bmatrix}^{\top}$,  we have
\begin{equation}
    f(\x^{(t)}) - f(\x \opt) = \frac{1}{\lambda_1}\left(1 - \alpha \lambda_1 \right)^{2t} + \frac{1}{\lambda_2}\left( 1 - \alpha \lambda_2 \right)^{2t}.
\end{equation}
In order to have the function error decrease we must choose $\alpha \in [0, 2/\lambda_2]$. For $\alpha$ in this range we have,
\begin{align}
    f(\x^{(t)}) - f(\x \opt) \geq \frac{1}{\lambda_1}\left(1 - \alpha \lambda_1 \right)^{2t} 
    \geq \frac{1}{\lambda_1}\left(1 - \frac{2 \lambda_1 }{\lambda_2} \right)^{2t}.
\end{align}
Suppose $t < \frac{\globalcond}{4} \log \left( \frac{1}{2 \lambda_1 \epsilon}\right)$. Using that $\left(1 - \frac{1}{x} \right)^x \geq \frac{1}{2}$ for all $x \geq 2$ and our assumption that $\globalcond \geq 2$ we have
\begin{align}
    f(\x^{(t)}) - f(\x \opt) & \geq \frac{1}{\lambda_1}\left(1 - \frac{2 \lambda_1 }{\lambda_2} \right)^{2t} \geq \frac{1}{\lambda_1}\left(1 - \frac{2 \lambda_1 }{\lambda_2} \right)^{\frac{\globalcond}{2} \log \left( \frac{1}{2 \lambda_1 \epsilon}\right)} \geq 2 \epsilon.
\end{align}
This concludes the proof.
\end{proof}

\subsection{The orthogonality assumption in \cref{def:multiscale_problem} is necessary}
\label{append:disjoint}
In this section we show that without the assumption in \cref{def:multiscale_problem} that
\begin{equation}
    	\proj_i  \proj_j^\top = 
	\begin{cases}
					\id_{d_i \times d_i} & \text{if $i = j$,}
				  \\
					\mat{0}_{d_i \times d_j} & \text{otherwise, }
	\end{cases}
\end{equation}
\cref{thm:bsls:informal} does not necessarily hold. Indeed, in \cref{prop:randomized_counterexample} we show that any first-order method (even if randomized) must have query complexity at least  $\Omega(\sqrt{\globalcond}/\polylog(d))$. 

\begin{proposition}
    \label{prop:randomized_counterexample}
    There exists a distribution over instances
    \begin{equation}
        f(\x) = \sum_{i=1}^4 f_i(\mat{P}_i \x) ,
    \end{equation}
    where $\mat{P}_i\in \R^{d/2\times d}$ is such that
    \begin{equation}
        \mat{P}_i\mat{P}_j^{\top}  \neq \mat{0}
    \end{equation}
    for $i\ne j$ and each $f_i$ is well conditioned with $\cond_i \leq 10$ and $\globalcond=\Theta(d^2)$ such that any first-order method (even if randomized) which returns a $\frac{1}{10\globalcond}$-optimal solution with probability at least $0.9$
needs at least $\Omega(\sqrt{\globalcond}/\polylog(d))$ first-order queries.
\end{proposition}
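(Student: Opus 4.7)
The plan is to reduce to the known randomized first-order lower bound for smooth strongly convex quadratic minimization \cite{simchowitz2018randomized,braverman2020gradient,woodworth2021minimax}, which asserts that any (possibly randomized) first-order method needs $\Omega(\sqrt{\kappa}/\polylog(d))$ gradient queries to solve a $d$-dimensional quadratic of condition number $\kappa$, even to accuracy polynomially small in $\kappa$. Since this lower bound holds over an orthogonally-invariant family of instances, it suffices to exhibit a hard distribution whose instance matrix has $\kappa = \Theta(d^2)$ and which admits a four-piece decomposition of the form $f(\x) = \sum_{i=1}^{4} f_i(\mat{P}_i \x)$ satisfying the pairwise non-orthogonality constraint and the per-piece well-conditioning constraint.

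For the base instance I will take $f(\x) = \tfrac{1}{2}\x^{\top} \mat{A} \x - \vec{b}^{\top}\x$ with $\mat{A} = \mat{L}_d + \epsilon \mat{I}_d$, where $\mat{L}_d$ is the Laplacian of the $d$-cycle and $\epsilon = \Theta(1/d^2)$, so that $\kappa(\mat{A}) = \Theta(d^2)$. The cycle Laplacian two-colors into an odd and an even perfect matching, each consisting of $d/2$ disjoint edges. Collecting the (normalized) edge vectors $\tfrac{1}{\sqrt{2}}(\unit_{2k-1}-\unit_{2k})$ as rows gives $\mat{P}_1 \in \reals^{d/2 \times d}$ with orthonormal rows and $\mat{P}_1^{\top}(2 \id_{d/2})\mat{P}_1 = \mat{L}_d^{\mathrm{odd}}$; similarly $\mat{P}_2$ for the even matching. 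A direct computation shows $\mat{P}_1 \mat{P}_2^{\top} \ne 0$, because consecutive odd and even edges share an endpoint. For the regularizer, start from $\mat{P}_3^{(0)} = [\mat{I}_{d/2}, \mathbf{0}]$ and $\mat{P}_4^{(0)} = [\mathbf{0}, \mat{I}_{d/2}]$ with $\mat{H}_3 = \mat{H}_4 = \epsilon \mat{I}_{d/2}$, so that $\mat{P}_3^{(0)\top}\mat{H}_3\mat{P}_3^{(0)} + \mat{P}_4^{(0)\top}\mat{H}_4\mat{P}_4^{(0)} = \epsilon \mat{I}_d$. Each $\mat{H}_i$ is a scaled identity, so $\kappa_i = 1 \le 10$, and setting $f_i(\vec{y}) = \tfrac{1}{2}\vec{y}^{\top}\mat{H}_i \vec{y} - \vec{c}_i^{\top}\vec{y}$ for suitable $\vec{c}_i$ summing to $\vec{b}$ gives the desired structure.

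To enforce the condition $\mat{P}_i\mat{P}_j^{\top} \ne \mat{0}$ for \emph{every} pair $i \ne j$, I will perturb $\mat{P}_4^{(0)}$ slightly: replace its first row by $\tfrac{1}{\sqrt{1+\alpha^2}}(\unit_{d/2+1} + \alpha \unit_1)$ for a small constant $\alpha > 0$, keeping the remaining rows intact. This preserves orthonormality of the rows, introduces $\mat{P}_3 \mat{P}_4^{\top} \ne 0$, and only perturbs $\mat{P}_4^{\top}\mat{H}_4\mat{P}_4$ by $O(\alpha \epsilon)$, which keeps $\kappa(\mat{A}) = \Theta(d^2)$. The other non-trivial pairs $\mat{P}_1 \mat{P}_3^{\top}, \mat{P}_1 \mat{P}_4^{\top}, \mat{P}_2 \mat{P}_3^{\top}, \mat{P}_2 \mat{P}_4^{\top}$ are already non-zero since each matching edge lies partly in the first-half and partly in the second-half of coordinates. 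The final hard distribution is obtained by post-multiplying every $\mat{P}_i$ by a Haar-random orthogonal $\mat{U} \in O(d)$ and setting $\vec{b}$ accordingly: this preserves orthonormal rows, preserves every pairwise inner product $(\mat{P}_i \mat{U})(\mat{P}_j \mat{U})^{\top} = \mat{P}_i \mat{P}_j^{\top}$, preserves the $\mat{H}_i$ and hence the $\kappa_i$, and corresponds to randomly rotating the coordinate system.

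After the change of variables $\x \mapsto \mat{U}^{\top}\x$, we obtain a quadratic with fixed matrix $\mat{A}$ and right-hand side drawn from an orthogonally-invariant distribution, which is exactly the setting of the cited randomized lower bounds. Applying those bounds with accuracy target $\tfrac{1}{10\globalcond} = \Theta(1/d^2)$ and failure probability $0.1$ (both well within the range those theorems cover) gives $\Omega(\sqrt{\kappa(\mat{A})}/\polylog(d)) = \Omega(\sqrt{\globalcond}/\polylog(d))$ queries, as claimed. The main technical step that requires care is simultaneously enforcing (i) pairwise non-orthogonality among all six pairs, (ii) $\kappa_i \le 10$ for each piece, and (iii) $\globalcond = \Theta(d^2)$; this is handled by the continuity argument above in choosing $\alpha$, and the remainder of the proof is a direct invocation of the existing randomized quadratic lower bound.
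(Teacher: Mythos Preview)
Your route is genuinely different from the paper's. The paper does not build its own hard instance; it takes the Wishart-based distribution $\mat{A}=(\gamma-1)\mat{I}+(1/5)\mat{W}$ from \cite{braverman2020gradient}, for which the randomized query lower bound is already proved, and then shows that with high probability $\mat{A}$ splits into four well-conditioned pieces by (i) splitting $\mat{X}$ into its first and last $d/2$ columns to write $\mat{W}=\mat{W}_1+\mat{W}_2$, and (ii) for each $\mat{W}_\ell$, separating its rank-$d/2$ part (eigenvalues concentrated in $[0.2,2]$, so $\kappa\le 10$) from the $\frac{\gamma-1}{2}\mat{I}$ acting on its kernel ($\kappa=1$). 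Your matching decomposition of the cycle Laplacian plus a coordinate split of $\epsilon\mat{I}$ is more explicit and avoids any random-matrix concentration.

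The gap is at the lower bound step. The sentence ``after the change of variables $\x\mapsto\mat{U}^\top\x$ we obtain a quadratic with fixed matrix $\mat{A}$ and right-hand side drawn from an orthogonally-invariant distribution, which is exactly the setting of the cited randomized lower bounds'' is not a valid reduction: the algorithm does not know $\mat{U}$, so it cannot perform this change of variables, and none of the papers you cite (\cite{simchowitz2018randomized,braverman2020gradient,woodworth2021minimax}) state a black-box theorem of the form ``a Haar-rotated $\kappa$-conditioned (near-)tri-diagonal quadratic requires $\Omega(\sqrt{\kappa}/\polylog d)$ randomized first-order queries.'' The \cite{braverman2020gradient} lower bound is specific to the Wishart instance, which is exactly why the paper uses that instance. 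To make your argument go through you would have to either (a) cite a result that actually covers Haar-rotated chain instances (the orthogonal-invariance-to-zero-respecting reduction, as in \cite{Carmon.Duchi.ea-MP21} or \cite{woodworth2017lower}, combined with the zero-chain property), or (b) carry out the resisting-oracle argument directly, including specifying $\vec{b}$ and handling the cycle's wraparound (which costs a factor $2$ in coordinates revealed per step but preserves the $\Omega(d)$ bound). These are fixable, but as written the step ``applying those bounds'' is an assertion, not a proof. The paper sidesteps this entirely by starting from a distribution whose hardness is already established and only verifying the decomposition.
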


\begin{proof}

We use  the following result from \cite{braverman2020gradient}  which establishes a hardness result for solving linear systems. Let $\kappa(\mat{M})$ denote the condition number of any matrix $\mat{M}.$

\begin{theorem}\label{thm:hard_linear}
(Theorem 6 of \cite{braverman2020gradient}) Let $d_0$ be a universal constant. Let $d \geq d_0$ be any ambient dimension and let $\mathcal{A}$ be any linear system algorithm. Suppose $\mathcal{A}$ is such that for all positive semi-definite matrices $\mat{A}$ with condition number $\kappa(\mat{A})\le d^2$ and for all initial vectors $\x_0\in \R^d$ and $\vec{b} \in \R^d$, 
\begin{align}
        \Pr\left[\norm{\hat{\x}-\mat{A}^{-1}\vec{b}}_\mat{A}^2 \le \frac{1}{10d^2} \right]\ge 1-\frac{1}{e}.
    \end{align}
Then $\mathcal{A}$ must have query complexity at least $\Omega(\kappa(\mat{A})/\polylog(d) )$.
\end{theorem}

We prove our lower bound by showing that the hard instance in \cite{braverman2020gradient} admits a decomposition as a multi-scale optimization problem.

The hard distribution over matrices $\mat{A}$ is $\mat{A}=(\gamma-1) \mat{I}+ (1/5)\mat{W}$, where $\mat{W}$ is sampled from the Wishart distribution, i.e.  $\mat{W}=\mat{X}\mat{X}^{\top}$ where $\mat{X}\in \R^{d\times d}$ and $\mat{X}_{i,j}$ is distribution as i.i.d. $N(0,1/d)$, and $\gamma=1+\Theta(1/d^2)$. We show that with high probability $\mat{A}$ admits a simple decomposition into the sum of four well-conditioned matrices, hence proving our bound.

Let $\mat{X}_1\in \R^{d\times d/2}$ be the submatrix of $\mat{X}$ corresponding to its first $d/2$ columns and $\mat{W}_1=\mat{X}_1\mat{X}_1^{\top}$. Similarly, let $\mat{X}_2\in \R^{d\times d/2}$ be the submatrix of $\mat{X}$ corresponding to its last $d/2$ columns and $\mat{W}_2=\mat{X}_2\mat{X}_2^{\top}$. Note that $\mat{W}=\mat{W}_1+\mat{W}_2$, therefore,
\begin{align}
    \mat{A}=\underbrace{\frac{\gamma-1}{2}\mat{I}+\frac{1}{5}\mat{W}_1}_{\mat{U}_1}+\underbrace{\frac{\gamma-1}{2}\mat{I}+\frac{1}{5}\mat{W}_2}_{\mat{U}_2}.
\end{align}

Let $\mathcal{E}_1$ be the event that all the non-zero eigenvalues of $\mat{W}_1$ and $\mat{W}_2$ lie in the interval $[0.2,2] $. By concentration bounds for the spectrum of Wishart matrices (see for example Corollary 5.35 of \cite{vershynin2010introduction}), for sufficiently large $d$, $\mathcal{E}_1$ happens with probability at least $0.99$. Let$\mathcal{E}_2$ be the event that $\kappa(\mat{A}=\Theta(d^2)$. By similar concentration bounds for Wishart matrices (for example Corollary 12 of \cite{braverman2020gradient}), $\mathcal{E}_2$ happens with probability at least $0.99$. %
We condition on the events $\mathcal{E}_1$ and  $\mathcal{E}_2$ for the rest of the proof.

Let $\mat{W}_1=\mat{P}_1^{\top}\mat{\Sigma}_1\mat{P}_1$ denote the singular-value decomposition of $\mat{W}_1$. Let $\mat{P}_2\in \R^{d/2\times d}$ be the matrix whose rows form an orthonormal basis for the orthogonal space to the column space of $\mat{W}_1$.  Then we can decompose $\mat{U}_1$ as $\mat{U}_1=\mat{A}_1+\mat{A}_2$, where $\mat{A}_1=\mat{P}_1^{\top}(\frac{\gamma-1}{2}\mat{I}+\frac{1}{5}\mat{\Sigma}_1)\mat{P}_1$ and $\mat{A}_2=\frac{\gamma-1}{2}\mat{P}_2^{\top}\mat{P}_2$. Let $\tilde{\mat{A}}_1=\frac{\gamma-1}{2}\mat{I}+\frac{1}{5}\mat{\Sigma}_1$ and $\tilde{\mat{A}}_1= \frac{\gamma-1}{2} \mat{I}$. Note that the eigenvalues of $\tilde{\mat{A}}_1$ lie in the interval $[0.2+\frac{\gamma-1}{2},2+\frac{\gamma-1}{2}]$ and all eigenvalues of $\tilde{\mat{A}}_2$ are $\frac{\gamma-1}{2}$. Therefore, $\kappa(\tilde{\mat{A}}_1)\le  10$, and $\kappa(\tilde{\mat{A}}_2)=1$.  

Similarly, let $\mat{W}_2=\mat{P}_3^{\top}\mat{\Sigma}_3\mat{P}_3$ denote the singular-value decomposition of $\mat{W}_2$. Let $\mat{P}_4\in \R^{d/2\times d}$ be the matrix whose rows form an orthonormal basis for the orthogonal space to the column space of $\mat{W}_2$.  Then we can decompose $\mat{U}_2$ as $\mat{U}_2=\mat{A}_3+\mat{A}_4$, where $\mat{A}_3=\mat{P}_3^{\top}\tilde{\mat{A}}_3\mat{P}_3$ and $\mat{A}_4=\mat{P}_4^{\top}\tilde{\mat{A}}_4 \mat{P}_4$, where $\tilde{\mat{A}}_3=\frac{\gamma-1}{2}\mat{I}+\frac{1}{5}\mat{\Sigma}_3$ and $\tilde{\mat{A}}_4= \frac{\gamma-1}{2} \mat{I}$. As before $\kappa(\tilde{\mat{A}}_3)\le  10$, and $\kappa(\tilde{\mat{A}}_4)=1$.  

For any matrix $\mat{A}$ and vectors $\x,\vec{b}$, let $g(\mat{A},\vec{b}, \x)=\x^{\top}\mat{A}\x-2\vec{b}^{\top}\x$. Using the decomposition of $\mat{A}=\sum_{i=1}^4\mat{P}_i^{\top} \tilde{\mat{A}}_i\mat{P}_i$ and the fact that $\x=\mat{P}_1\x +\mat{P}_2\x = \mat{P}_3\x +\mat{P}_4\x $ we can write,
\begin{align}
    f(\x) = g(\mat{A},\vec{b}, \x)=g(\tilde{\mat{A}}_1,\vec{b}/2, \mat{P}_1\x)+g(\tilde{\mat{A}}_2,\vec{b}/2, \mat{P}_2\x)+g(\tilde{\mat{A}}_3,\vec{b}/2, \mat{P}_3\x)+g(\tilde{\mat{A}}_4,\vec{b}/2, \mat{P}_4\x).
\end{align}
Note that for any $1\le i \le 4$ the condition number of $g(\tilde{\mat{A}}_i,\vec{b}/2, \mat{P}_i\x)$ is $\kappa(\tilde{\mat{A}}_i)\le 10$. The condition number of $g(\mat{A},\vec{b}, \x)$ is $\kappa(\mat{A})=\Theta(d^2)$.

Since $\mathcal{E}_1 \cap \mathcal{E}_2$ happens with probability at least $0.98$, we have the above decomposition with probability at least $0.98$. Note that any $\hat{\x}$ which is a  $\frac{1}{10d^2}$-optimal solution to the above problem satisfies,
\begin{align}
    \norm{\hat{\x}-\mat{A}^{-1}\vec{b}}_\mat{A}^2 \le \frac{1}{10d^2}.
\end{align}
Therefore, any algorithm which solves the multi-scale optimization problem
probability at least $0.9$, also solves the hard instance of $\mat{A}$ in \cite{braverman2020gradient} with probability at least $0.8$. By Theorem \ref{thm:hard_linear}, this requires at least $\Omega(\kappa(\mat{A})/\polylog(d))$ gradient queries.
\end{proof}

\subsection{Complexity of conjugate gradient for quadratic multiscale optimization}
\label{sec:cg:ub}
\newcommand{\cheb}[1]{\ensuremath{T_{#1}}}
\newcommand{\rescale}[2]{\ensuremath{\ell_{[#1,#2]}}}
\newcommand{\damp}[3]{\ensuremath{p_{[#1,#2]}^{(#3)}}}
\newcommand{\globalpoly}[1]{P_{#1}} 
\newcommand{\epsdeg}[2]{d_{#1}\mathopen{}\left(#2\right)\mathclose{}}
\newcommand{\mv}{\mathrm{mv}}

In this section, we give a simple proof that the conjugate gradient algorithm can  stably solve the multiscale optimization problem in the special case when \emph{$f$ is quadratic} in a number of iterations that is comparable to what our accelerated BSLS algorithm requires.  
More precisely, we show:

\begin{theorem}[Complexity of conjugate gradient for multiscale quadratic optimization]
    \label{thm:cg_ub_main}
	Consider an instance of the multiscale optimization problem (Def.  \ref{def:multiscale_problem}) in which each 
	$f_i$ is quadratic. 
	 
	 For any  $\x^{(0)}$ and $\epsilon > 0$, 
	 the conjugate gradient method started at $\x^{(0)}$, can return an $\epsilon$-optimal solution with 
	$\left( \prod_{i \in [m]} \bigo (\sqrt{\kappa_i}) \right) \cdot \bigo \left( \left(\log^{m-1}\globalcond \right) \cdot \log \left( \frac{ f(\x^{(0)}) - f^{\star}}{\epsilon} \right) \right) $  gradient queries.
	This remains true if all operations are performed using a number of bits of precision that is logarithmic in the problem parameters.
\end{theorem}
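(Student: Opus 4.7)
The plan is to combine three classical facts: (i) the standard polynomial characterization of conjugate gradient's error on quadratic objectives, (ii) the potential-theoretic upper bound for multi-interval minimax polynomial approximation, and (iii) the explicit Green's function estimate already derived for the lower bound in \cref{lem:lb:green:ub}. Together these show that CG attains, up to a constant in the exponent, the same polynomial-approximation rate that drives the lower bound, thereby giving an upper bound on the iteration count of the desired form.

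Concretely, for a quadratic $f(\x) = \tfrac{1}{2} \x^\top \mat{A} \x - \vec{b}^\top \x$ with $\mathrm{spec}(\mat{A}) \subseteq S \defeq \bigcup_{i \in [m]} [\mu_i, L_i]$, the standard error guarantee for conjugate gradient gives
\begin{equation}
    f(\x^{(k)}) - f^\star \leq \left(\min_{p \in \polys_k^0} \max_{\lambda \in S} |p(\lambda)|\right)^2 \left(f(\x^{(0)}) - f^\star\right),
\end{equation}
where $\polys_k^0$ is the class of polynomials from \cref{eq:def:pk}. The classical upper bound from potential theory for finite unions of real intervals (see e.g.\ \cite{Widom-69,Driscoll.Toh.ea-SIREV98,Schiefermayr-11}) states that
\begin{equation}
    \min_{p \in \polys_k^0} \max_{\lambda \in S} |p(\lambda)| \leq 2 \exp(-k \cdot g_S(0)),
\end{equation}
which is the matching companion to the lower bound \cref{lem:lb:reduce:to:Green}. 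Invoking the explicit estimate of \cref{lem:lb:green:ub} yields $1/g_S(0) = O(\sqrt{\prod_i \kappa_i} \cdot \prod_{i<m} \log(\mu_{i+1}/L_i)) = O(\sqrt{\prod_i \kappa_i} \cdot \log^{m-1}\globalcond)$, so $k = O(\sqrt{\prod_i \kappa_i} \cdot \log^{m-1}\globalcond \cdot \log((f(\x^{(0)}) - f^\star)/\epsilon))$ iterations suffice in exact arithmetic.

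For the finite-precision claim, the plan is to invoke the classical analyses of CG under round-off due to Paige \cite{paige1971computation} and Greenbaum \cite{Greenbaum-89}, which show that CG implemented with unit round-off $\delta$ produces iterates whose $\mat{A}$-norm error is bounded by the exact-arithmetic minimax polynomial approximation on a slightly enlarged spectrum: each eigenvalue is replaced by a tube of width $O(\delta \|\mat{A}\|)$. Choosing $O(\log)$ bits of precision (specifically, enough so that $\delta \|\mat{A}\| \ll \min_i (\mu_{i+1} - L_i)$) keeps these tubes strictly inside mildly enlarged intervals $[\mu_i - \Delta, L_i + \Delta]$ whose gaps remain open by at least a constant fraction of the original gap. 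A standard perturbation argument for the Green's function on finite unions of intervals then shows that $g_{S_\Delta}(0)$ differs from $g_S(0)$ only by a constant factor, and the rate is preserved.

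The main obstacle I anticipate is the finite-precision step: a fully rigorous bound requires quantifying precisely how $g_S(0)$ depends on small enlargements of the intervals and carefully tracking the interaction of round-off with the multi-interval spectral structure in the Paige–Greenbaum analysis. Because the relevant gaps $\mu_{i+1} - L_i$ depend only polynomially on $\globalcond$, $O(\log)$ bits of precision comfortably suffice; but the bookkeeping is somewhat delicate and amounts to a routine if tedious combination of the cited finite-precision CG literature with the Green's function estimate from \cref{lem:lb:green:ub}.
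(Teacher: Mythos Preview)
Your proposal contains a genuine directional error that breaks the argument. You invoke \cref{lem:lb:green:ub} to conclude that $1/g_S(0) = O\bigl(\sqrt{\prod_i \kappa_i}\cdot \log^{m-1}\globalcond\bigr)$, but that lemma gives an \emph{upper} bound on $g_S(0)$, hence a \emph{lower} bound on $1/g_S(0)$. To show that $k = O\bigl(\log(1/\epsilon)/g_S(0)\bigr)$ iterations suffice you need $g_S(0)$ bounded \emph{below} (equivalently $1/g_S(0)$ bounded above), which is exactly the opposite of what \cref{lem:lb:green:ub} provides. That lemma was proved precisely to drive the \emph{lower} bound \cref{thm:lb}; it cannot be reused as-is for the upper bound. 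The paper never proves a matching lower bound on $g_S(0)$, so your step ``Invoking the explicit estimate of \cref{lem:lb:green:ub} yields $1/g_S(0)=O(\ldots)$'' is simply unsupported.

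A secondary issue is that the non-asymptotic inequality $\min_{p\in\polys_k^0}\max_{\lambda\in S}|p(\lambda)|\le 2\exp(-k\,g_S(0))$ you quote is not the standard statement for multi-interval $S$: the references you cite establish $\rho_S = e^{-g_S(0)}$ as an \emph{asymptotic} rate and the \emph{lower} bound $\min_p\max|p|\ge \rho_S^k$, but a uniform-in-$k$ upper bound with an absolute constant needs additional justification in the multi-interval case.

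The paper sidesteps both issues by taking a completely different, elementary route: rather than going through Green's functions at all for the upper bound, it \emph{explicitly constructs} polynomials in $\polys_k^0$ that are small on $S$ by taking products of rescaled Chebyshev polynomials $\prod_{i\in[m]} p_{[\mu_i,L_i]}^{(d_i)}$, one per interval, and recursively choosing the degrees $d_i$ so that the decay on $[\mu_j,L_j]$ from the $j$th factor compensates for the blow-up of the preceding factors there (\cref{thm:poly_construction}). This constructive argument yields the stated degree bound directly and, combined with Greenbaum's theorem, also makes the finite-precision step immediate: the enlarged intervals $[\mu_i-\Delta,L_i+\Delta]$ have essentially the same $\kappa_i$ and $\globalcond$, so the same construction applies verbatim.
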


The conjugate gradient method is usually discussed as an algorithm for solving linear systems, so we begin by rephrasing our quadratic optimization problem in this form.

We can write our quadratic objective function as  $f(\x)=\x^{\top} \mat{A} \x - 2\vec{b}^{\top} \x$ for some $\mat{A}\in\R^{d\times d}$ and $\vec{b}\in \R^d$. 
The assumption that $f$ is strongly convex corresponds to the requirement that the matrix $\mat{A}$ be positive definite, and the assumption about the existence of a decomposition of $f$ in terms of $f_i$ with the given smoothness and convexity properties corresponds to the assumption that the eigenvalues of $\mat{A}$ all lie in the set $S = \bigcup_{i \in [m]} [\mu_i, L_i]$.

Since $\nabla f(x)= 2(\mat{A}\x-\vec{b})$, $f$ is minimized at $\x\opt=\mat{A}^{-1}\vec{b}$, and the function error at some other point $\x$ is given by
\begin{align}
    f(\x)-f(\x\opt)&= (\x^{\top} \mat{A} \x - 2\vec{b}^{\top} \x) - (\x^{\star\top} \mat{A} \x\opt - 2\vec{b}^{\top} \x\opt)\\
    &=(\x^{\top} \mat{A} \x - 2\x^{\star\top} \mat{A} \x) - (\x^{\star\top} \mat{A} \x\opt - 2\x^{\star\top}\mat{A} \x\opt)\\
    &=\x^{\top} \mat{A} \x - 2\x^{\star\top} \mat{A} \x +\x^{\star\top}\mat{A} \x\opt\\
    &=(\x-\x\opt)^{\top} \mat{A} (\x-\x\opt) \defeq \|\x- \x\opt\|_{\mat{A}}^2.
\end{align}
Minimizing $f$ is thus equivalent to solving the linear system $\mat{A}\x=\vec{b}$, and the function error at a point equals its distance from the optimal solution in the $\mat{A}$-norm. 
To prove Theorem~\ref{thm:cg_ub_main}, it thus suffices to bound the convergence rate in the $\mat{A}$-norm of the conjugate gradient method applied to matrices with eigenvalues in $S$.

Our proof relies on the connection between the performance of the conjugate gradient algorithm and polynomial approximation.
If the algorithm uses exact arithmetic, the classical analysis of the conjugate gradient algorithm asserts that, after $k$ iterations, the algorithm returns a vector $\x^{(k)}$ such that 
\begin{equation}
    \|\x^{(k)}-\x\opt\|_{\mat{A}} \leq \|\x^{(0)}-\x\opt\|_{\mat{A}} \cdot 
    \min_{p\in \polys_k^0} \max_{i\in [d]} |p(\lambda_i(\mat{A})|,
\end{equation} 
where $\polys_k^0$ denotes the set of polynomials of degree at most $k$ with $p(0) = 1$.  

To prove Theorem~\ref{thm:cg_ub_main} under exact arithmetic, it thus suffices to construct a polynomial $p\in\polys_k^0$ for $k$ less than or equal to the given bound on the number of gradient queries with $|p(x)|\leq \epsilon$ for all $x\in S$.
 
For finite-precision arithmetic, we apply the following theorem of Greenbaum, which says that  the convergence rate of the conjugate gradient method applied to a matrix $\mat{A}$ with eigenvalues in $S$ using precision that is logarithmic in the problem parameters can be bounded in terms of its convergence rate under \emph{exact}
arithmetic on a matrix with eigenvalues in a slightly enlarged set $S'\supseteq S$.

\newcommand{\Apert}{\mat{\tilde{A}}}
\newcommand{\bpert}{\vec{\tilde{b}}}
\newcommand{\xpert}{\vec{\tilde{x}}}
\begin{theorem}[\cite{greenbaum1989behavior}, as simplified in \cite{Musco.Musco.ea-SODA18}]
Given a positive definite matrix $\mat{A}\in \R^{n\times n}$ and a vector $\vec{b} \in \R^n$, let $\x$ be the result of running the conjugate gradient method for $k$ iterations on the linear system $A\x=\vec{b}$ with all operations  performed using  $\Omega\left(\log \frac{nk(\|\mat{A}\|+1)}{\min(\eta,\lambda_{\min}(\mat{A})} \right)$ 
bits of precision.

Let $\Delta=\min\left(\eta,\frac{\lambda_{\min}(\mat{A})}{5}\right)$.
There exists a matrix $\Apert$ with eigenvalues in $S':=\bigcup_{i=1}^n[\lambda_i(\mat{A})-\Delta, \lambda_i(\mat{A})+\Delta]$ and a vector $\bpert$ with $\|\Apert^{-1}\bpert\|_{\Apert} = \|\mat{A}^{-1}\vec{b}\|_{\mat{A}}$ such that, if $\xpert$ is the result of running the conjugate gradient method for $k$ iterations on the linear system $\Apert \xpert = \bpert$ in \emph{exact} arithmetic, then
\[
\|\mat{A}^{-1}\vec{b}-\x\|_{\mat{A}}\leq 1.2 \|\Apert^{-1}\bpert-\xpert\|_{\Apert}.
\]
\end{theorem}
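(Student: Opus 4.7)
The plan is to follow the original Greenbaum argument, which reinterprets a finite-precision CG run on $\mat{A}$ as an \emph{exact} CG run on a slightly larger, slightly spectrally-perturbed matrix $\Apert$. The first step is to reduce from CG to Lanczos: recall that $k$ iterations of CG on $(\mat{A},\vec{b})$ produce the same iterate as projecting $\vec{b}$ onto the Krylov subspace $\mathcal{K}_k(\mat{A},\vec{b})$ spanned by the Lanczos vectors, and the $\mat{A}$-norm error is controlled by the residual of a tridiagonal system built from the Lanczos coefficients. It therefore suffices to prove the statement for the finite-precision Lanczos process and then translate it back.

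Next, I would carry out Paige's local rounding analysis: tracking matrix-vector products, axpy's, inner products, and normalizations at $B$ bits of precision shows that the computed Lanczos vectors $v_1,\dots,v_k$ and coefficients $\alpha_j,\beta_j$ satisfy a \emph{perturbed} three-term recurrence
\[
\mat{A} V_k = V_k \mat{T}_k + \beta_{k+1} v_{k+1} e_k^\top + \mat{F}_k,
\]
with $\|\mat{F}_k\|$ bounded by $O(\epsilon_{\mathrm{mach}}(\|\mat{A}\|+1)\sqrt{n}k)$ and with $v_j^\top \mat{F}_j$ even smaller, and similarly the usual conjugacy/orthogonality identities hold only up to $O(\epsilon_{\mathrm{mach}})$ (this is where the precision requirement enters: choosing $B$ logarithmic in $nk(\|\mat{A}\|+1)/\min(\eta,\lambda_{\min})$ forces every such error to be $\leq \Delta$ in the relevant sense).

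The heart of the argument is then Greenbaum's augmentation construction. Starting from the computed $\mat{T}_k$, one stitches together a \emph{larger} tridiagonal matrix $\widetilde{\mat{T}}$ whose exact Lanczos expansion from the first basis vector generates precisely the vectors $v_1,\dots,v_k$ and whose coefficients agree with the computed $\alpha_j,\beta_j$; concretely, extra ``ghost'' diagonal blocks are appended between recurrence steps to absorb the perturbations $\mat{F}_k$ and the loss of orthogonality, in such a way that $\widetilde{\mat{T}}$'s eigenvalues cluster within $\Delta$ of the spectrum of $\mat{T}_k$. A Cauchy-interlacing / Weyl-type perturbation argument (using $\|\mat{F}_k\|\leq\Delta$) then places every eigenvalue of $\widetilde{\mat{T}}$ in $\bigcup_i[\lambda_i(\mat{A})-\Delta,\lambda_i(\mat{A})+\Delta]$, which gives $\Apert$ with the advertised spectrum after an orthogonal embedding into $\mathbb{R}^N$ for some $N\geq n$. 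I would choose $\bpert = \|\mat{A}^{-1}\vec{b}\|_{\mat{A}}\cdot \widetilde{\mat{T}}^{1/2} e_1$ (suitably scaled) so that $\|\Apert^{-1}\bpert\|_{\Apert}=\|\mat{A}^{-1}\vec{b}\|_{\mat{A}}$, and then exact CG on $(\Apert,\bpert)$ is, by construction, the same recurrence as finite-precision CG on $(\mat{A},\vec{b})$.

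Finally, the error identity for CG expresses $\|\mat{A}^{-1}\vec{b}-\x\|_{\mat{A}}^2$ as a quadratic form in the residual coefficients of $\mat{T}_k$, and the analogous quantity for $(\Apert,\bpert,\xpert)$ is the same quadratic form up to the small perturbations tracked above; pushing those perturbations through gives the multiplicative factor $1.2$ (any constant $>1$ works provided $B$ is large enough). The main obstacle is step three: the augmentation must simultaneously (i) be a legitimate symmetric tridiagonal, (ii) reproduce the computed Lanczos coefficients exactly, and (iii) keep its spectrum within $\Delta$ of $\sigma(\mat{A})$ even when the finite-precision Lanczos vectors have lost near-orthogonality. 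Controlling these three constraints together requires Paige's sharp bound on $v_j^\top\mat{F}_j$ (not merely $\|\mat{F}_k\|$) and a careful choice of the sizes of the ghost blocks, and this is the delicate combinatorial/numerical step I would need to execute carefully rather than invoke.
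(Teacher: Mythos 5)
The paper does not prove this theorem; it is imported as a black box, citing Greenbaum (1989) and the simplified restatement in Musco--Musco--Sidford, so there is no ``paper's own proof'' to compare against. Your reconstruction is a reasonable high-level roadmap of Greenbaum's original argument: reduce CG to Lanczos, apply Paige's local rounding analysis to obtain the perturbed three-term recurrence with $\|\mat{F}_k\|$ controlled by the working precision, and then carry out the augmentation construction to produce an exact-arithmetic $\Apert$ whose Lanczos process reproduces the computed tridiagonal and whose spectrum lies in the $\Delta$-enlargement of $\sigma(\mat{A})$.

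Two cautions before you could call this a proof. First, the choice $\bpert=\|\mat{A}^{-1}\vec{b}\|_{\mat{A}}\cdot\widetilde{\mat{T}}^{1/2}e_1$ is not right as written: for exact CG on $(\Apert,\bpert)$ to reproduce the finite-precision iterate, $\bpert$ must be a \emph{scalar} multiple of the first Lanczos/Krylov starting vector (i.e.\ proportional to $e_1$ in the tridiagonal coordinates), with the scalar chosen so that $\bpert^{\top}\Apert^{-1}\bpert=\|\mat{A}^{-1}\vec{b}\|_{\mat{A}}^2$; inserting a $\widetilde{\mat{T}}^{1/2}$ rotation moves $\bpert$ off that starting direction and breaks the Krylov-subspace correspondence that makes the whole reduction work. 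Second, you explicitly defer the hard step — constructing a bona fide symmetric tridiagonal augmentation whose exact Lanczos run regenerates the computed $\alpha_j,\beta_j$ while its spectrum stays within $\Delta$ of $\sigma(\mat{A})$, which needs Paige's sharp componentwise bounds (on $v_j^{\top}\mat{F}_j$, not merely $\|\mat{F}_k\|$) and a careful sizing of the ``ghost'' blocks. That step is the content of Greenbaum's theorem, not a detail; as it stands you have a correct outline of the known proof, and the paper's decision to cite rather than reprove it is exactly because reproducing that machinery is a substantial undertaking orthogonal to its contributions.
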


Replacing $S$ with $S'$ does not change the asymptotic behavior of the bound asserted in Theorem~\ref{thm:cg_ub_main}, so it suffices to show the existence of polynomials with the properties required in the exact case.
The following Theorem asserts the existence of such polynomials, from which Theorem~\ref{thm:cg_ub_main} follows.

\begin{theorem}[Existence of good polynomials for unions of intervals]
\label{thm:poly_construction}
Let $S = \bigcup_{i \in [m]} [\mu_i, L_i]$, where $\mu_1<L_1< \mu_{2}<L_2<\dots$.
For any $\epsilon > 0$, there exists a polynomial $P$ such that $P(0)=1$, $|P(x)|\leq \epsilon$ for all $x\in S$, and 
$\deg(P)\leq \left( \prod_{i \in [m]} \bigo (\sqrt{\kappa_i}) \right) \cdot\bigo \left( \log^{m-1}\left(\globalcond \right) \cdot \log \left( { 1}/{\epsilon} \right) \right)$.
\end{theorem}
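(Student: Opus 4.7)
My plan is to build $P$ as a product of $m$ scaled-and-shifted Chebyshev polynomials, one designed for each interval $[\mu_i, L_i]$, with degrees chosen inductively so that the decay on each target interval overcomes the growth contributed by the other factors. For each $i \in [m]$ and a positive integer $k_i$ to be specified, let
\[
q_i(x) := \frac{\cheb{k_i}\bigl(\rescale{\mu_i}{L_i}(x)\bigr)}{\cheb{k_i}\bigl(\rescale{\mu_i}{L_i}(0)\bigr)},
\qquad
\rescale{\mu_i}{L_i}(x) := \frac{L_i + \mu_i - 2x}{L_i - \mu_i},
\]
and set $P := \prod_{i \in [m]} q_i$. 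Then $P(0) = 1$ by construction.

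Writing $\omega_i := (\sqrt{\kappa_i}+1)/(\sqrt{\kappa_i}-1)$ and using the Joukowski representation $\cheb{n}(y) = \tfrac{1}{2}(w^n + w^{-n})$ with $w = y + \sqrt{y^2 - 1}$ for $|y| \geq 1$, I would establish three bounds on $q_i$: (i) on $[\mu_i, L_i]$, $|q_i(x)| \leq 2\omega_i^{-k_i}$ (the classical Chebyshev decay); (ii) on $[0, \mu_i]$, $|q_i(x)| \leq 1$, since $\rescale{\mu_i}{L_i}$ maps $[0, \mu_i]$ monotonically into $[1, \rescale{\mu_i}{L_i}(0)]$ on which $\cheb{k_i}$ is monotone increasing; (iii) for $j > i$ and $x \in [\mu_j, L_j]$, $|q_i(x)| \leq 2\bigl(8 L_j/(L_i \omega_i)\bigr)^{k_i}$, using $|y| + \sqrt{y^2-1} \leq 4 L_j / L_i$ in the numerator and $\cheb{k_i}(\rescale{\mu_i}{L_i}(0)) \geq \tfrac{1}{2}\omega_i^{k_i}$ in the denominator.

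Combining these bounds, for $x \in [\mu_j, L_j]$,
\[
|P(x)| \;\leq\; 2\,\omega_j^{-k_j} \,\cdot\, \prod_{i < j} 2\Bigl(\tfrac{8 L_j}{L_i \,\omega_i}\Bigr)^{k_i},
\]
where the factors with $i > j$ contribute at most $1$ by (ii). Enforcing $|P(x)| \leq \epsilon$ and using $\log \omega_i = \Theta(1/\sqrt{\kappa_i})$ for $\kappa_i \geq 2$, it suffices to take
\[
k_j \;=\; O(\sqrt{\kappa_j})\cdot\Bigl[\,\log(1/\epsilon) + m + \sum_{i<j} k_i\,\log\bigl(8 L_j/(L_i\omega_i)\bigr)\Bigr].
\]
Since each $\log\bigl(8L_j/(L_i\omega_i)\bigr) = O(\log \globalcond)$, an induction on $j$ yields $k_j = O\bigl(\sqrt{\textstyle\prod_{i \leq j} \kappa_i}\bigr)\cdot O\bigl(\log^{j-1}(\globalcond)\cdot \log(1/\epsilon)\bigr)$, and summing over $j$ gives the claimed degree bound $\deg(P) \leq O\bigl(\sqrt{\prod_{i\in[m]}\kappa_i}\bigr)\cdot O\bigl(\log^{m-1}(\globalcond)\log(1/\epsilon)\bigr)$.

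The main obstacle is keeping the recursion tight: the bound in (iii) is sharp only up to constants, and a naive argument could inflate $\log^{m-1}(\globalcond)$ to $\log^m(\globalcond)$. The key is that each inductive step introduces \emph{exactly one} new factor of $\log \globalcond$ (from the new sum over $i<j$), because the $\sqrt{\kappa_j}$ penalty from inverting $\log\omega_j$ precisely cancels against the $\sqrt{\kappa_j}$ factor without producing extra log terms. A secondary issue is handling $\kappa_i$ very close to $1$, where $\omega_i$ is large and (iii) becomes very favorable; restricting attention to $\kappa_i \geq 2$ (and merging adjacent bands otherwise, as already assumed in \cref{def:multiscale_problem}) sidesteps this cleanly.
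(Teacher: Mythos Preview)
Your proposal is correct and follows essentially the same approach as the paper: the paper also takes $P$ to be a product of rescaled Chebyshev polynomials $p_{[\mu_i,L_i]}^{(d_i)}(x) = \cheb{d_i}(\rescale{\mu_i}{L_i}(x))/\cheb{d_i}(\rescale{\mu_i}{L_i}(0))$, proves the same three-region bounds (via a lemma giving $\leq 1$ on $[0,\mu_i]$, $\leq 2(1+2/\sqrt{\kappa_i})^{-d_i}$ on $[\mu_i,L_i]$, and $\leq (8x/L_i)^{d_i}$ for $x \geq L_i$), and then chooses the degrees by the same recursion $d_j = \Theta(\sqrt{\kappa_j})\bigl[\log(1/\epsilon) + \log(\globalcond)\sum_{i<j} d_i\bigr]$, unrolled to get the claimed bound. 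The only cosmetic differences are that the paper uses the cruder decay base $1+2/\sqrt{\kappa_i}$ in place of your $\omega_i$ and the cruder growth bound $(8x/L_i)^{d_i}$ without the extra $\omega_i$ in the denominator, neither of which affects the asymptotics.
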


We devote the remainder of this section to constructing the polynomials required by this theorem.
Note that our goal here is to present a simple construction that has the desired asymptotic behavior rather than to optimize the constants, and the polynomials given are not the exactly optimal polynomials for $S$.

\subsubsection{Constructing good polynomials for unions of intervals}
The basic building blocks of our construction are Chebyshev polynomials.
Simple transformations of Chebyshev polynomials give optimal polynomials for individual intervals.
We construct good polynomials for unions of intervals by multiplying such polynomials together.
The main technical difficulty is that the polynomial for one interval can be quite large on another interval, so naively multiplying together the polynomials for the individual intervals will not produce something that is small on all of $S$. 
Instead we will carefully choose the degrees of the polynomials on the different intervals to manage the error caused by these interactions.

We begin by reviewing the definition of Chebyshev polynomials and providing some standard bounds on their magnitude. 
Let $\cheb{d}(x) = \frac{1}{2}\left(x+\sqrt{x^2-1}\right)^d+\frac{1}{2}\left(x-\sqrt{x^2-1}\right)^d$ be the degree-$d$ Chebyshev polynomial (of the first kind).  This defines a degree-$d$ polynomial with the following well-known properties:\footnote{For an introduction to Chebyshev polynomials and their basic properties, see \cite{Mason.Handscomb-03}}

\begin{enumerate}
	\item If $|x|\leq 1$, $|\cheb{d}(x)|\leq 1$.\label{eq:chebprop1}
	\item If $|x|\geq 1$, \label{eq:chebprop2}
	\[
		\frac{1}{2}\left(1+\sqrt{2(|x|-1)}\right)^d \leq|\cheb{d}(x)|\leq |2x|^d,
	\]
	and $|\cheb{d}(x)|$ for such $x$ is a monotonically increasing function of $|x|$.
\end{enumerate} 

To construct a polynomial $p$ with $p(0)=1$ that is small on a single interval, we can simply compose Chebyshev polynomials with a linear function that maps our interval onto $[-1,1]$ and then normalize to get $p(0)=1$.  

To this end, let 
\[
	\rescale{a}{b}(x)=\frac{b+a-2x}{b-a}
\]
be the linear function that maps $[a,b]$ to $[-1,1]$ with $\rescale{a}{b}(a)=1$ and $\rescale{a}{b}(b)=-1$, and define
\[
	\damp{a}{b}{d}(x)=\frac{\cheb{d}(\rescale{a}{b}(x))}{\cheb{d}(\rescale{a}{b}(0))}. 
\] 

\begin{lemma}\label{lem:damping_bound}
For any $b\geq 2a>0$, if $\damp{a}{b}{d}(x)$ is the polynomial defined above, and $\kappa=b/a$,
then 
\[
	\damp{a}{b}{d}(0)=1
\]
 and 
\[
\left|\damp{a}{b}{d}(x)\right|\leq 
\begin{cases}
1 & \text{if $x\in [0,a]$}\\
2\left(1+\frac{2}{\sqrt{\kappa}}\right)^{-d} & \text{if $x\in [a,b]$}\\
\left(\frac{ 8x}{ b}\right)^d  & \text{if $x\geq b$}.
\end{cases}
\] 
\end{lemma}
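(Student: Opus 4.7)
The proof plan is to handle each of the three regions separately, using the two bounds on $\cheb{d}$ recorded just before the lemma (property~\ref{eq:chebprop1} for $|x|\leq 1$ and property~\ref{eq:chebprop2} for $|x|\geq 1$), together with a single lower bound on the normalizing denominator $\cheb{d}(\rescale{a}{b}(0))$. The claim $\damp{a}{b}{d}(0)=1$ is immediate from the definition, so the substance is the three-piece upper bound.

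First I would compute the key quantity $\rescale{a}{b}(0)=\tfrac{b+a}{b-a}=\tfrac{\kappa+1}{\kappa-1}$, which lies in $[1,3]$ under the hypothesis $\kappa=b/a\geq 2$. Applying property~\ref{eq:chebprop2} with $x=\rescale{a}{b}(0)$ and noting $x-1=\tfrac{2}{\kappa-1}$ gives
\[
\cheb{d}(\rescale{a}{b}(0))\;\geq\;\tfrac12\bigl(1+\sqrt{4/(\kappa-1)}\bigr)^d\;\geq\;\tfrac12\bigl(1+2/\sqrt{\kappa}\bigr)^d,
\]
where the last step uses $\kappa-1\leq\kappa$. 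In particular $\cheb{d}(\rescale{a}{b}(0))\geq 1$ since its argument is $\geq 1$. This is the only nontrivial estimate in the argument.

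Now I would dispatch the three cases. For $x\in[0,a]$, $\rescale{a}{b}(x)\in[1,\rescale{a}{b}(0)]$, so by monotonicity of $\cheb{d}$ on $[1,\infty)$ the numerator is at most the denominator and $|\damp{a}{b}{d}(x)|\leq 1$. For $x\in[a,b]$, $\rescale{a}{b}(x)\in[-1,1]$, so property~\ref{eq:chebprop1} bounds the numerator by $1$, and dividing by the denominator bound above gives $|\damp{a}{b}{d}(x)|\leq 2(1+2/\sqrt{\kappa})^{-d}$. For $x\geq b$, $\rescale{a}{b}(x)\leq -1$, so property~\ref{eq:chebprop2} yields $|\cheb{d}(\rescale{a}{b}(x))|\leq |2\rescale{a}{b}(x)|^d=\bigl(\tfrac{2(2x-a-b)}{b-a}\bigr)^d$; using $a\leq b/2$ (so $b-a\geq b/2$) and $2x-a-b\leq 2x$ gives the bound $(8x/b)^d$, and dividing by $\cheb{d}(\rescale{a}{b}(0))\geq 1$ preserves it.

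There is no real obstacle here; the only thing to be a little careful about is tracking the hypothesis $b\geq 2a$ (i.e.\ $\kappa\geq 2$) in two places: once to ensure $\rescale{a}{b}(0)\geq 1$ so that property~\ref{eq:chebprop2} applies and gives the claimed $2/\sqrt{\kappa}$ rate, and once to turn $b-a$ into $\Omega(b)$ in the tail estimate. The constants in the three bounds are not optimal, but they suffice for the asymptotic statement of \cref{thm:poly_construction}.
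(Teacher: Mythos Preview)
Your proof is correct and follows essentially the same approach as the paper: bound the denominator once via property~\ref{eq:chebprop2} at $\rescale{a}{b}(0)=1+\tfrac{2}{\kappa-1}$, then handle the three regions using monotonicity, property~\ref{eq:chebprop1}, and property~\ref{eq:chebprop2} respectively. Your algebra for the tail case $x\geq b$ (bounding $2x-a-b\leq 2x$ and $b-a\geq b/2$ directly) is actually a bit more direct than the paper's, which introduces $\gamma=x/b$ and manipulates to $|8\gamma-6|^d$; both arrive at the same $(8x/b)^d$.
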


\begin{proof}
	The fact that $\damp{a}{b}{d}(0)=1$ follows immediately from the definition.

	For the bound on $|\damp{a}{b}{d}(x)$, lets first suppose that $x\in[0,a]$.
	In this case, $\rescale{a}{b}(x)\in \left[1,\rescale{a}{b}(0)\right]$ with $\rescale{a}{b}(0)\geq 1$.  
	By the monotonicity assertion in property~\ref{eq:chebprop2} of Chebyshev polynomials,
	$\cheb{d}(\rescale{a}{b}(x))\leq \cheb{d}(\rescale{a}{b}(0))$, so 
	$\left|\damp{a}{b}{d}(x)\right|\leq \left|\damp{a}{b}{d}(0)\right|=1$, as claimed.

	Now, suppose $x\in[a,b]$.
	In this case, note that $\rescale{a}{b}(x)\in [-1,1]$, so $|\cheb{d}(\rescale{a}{b}(x))|\leq 1$ by property~\ref{eq:chebprop1} of Chebyshev polynomials.
	For the denominator, we have 
	\[
		\rescale{a}{b}(0)= \frac{b+a}{b-a} 
		=1 + \frac{2a}{b-a}
		=1 + \frac{2}{\kappa-1},
	\]
	so, by property~\ref{eq:chebprop2} of Chebyshev polynomials and the fact that $\cheb{d}(x)\geq 0$ for $x\geq 1$,
	\[
		\cheb{d}(\rescale{a}{b}(0))
		=\cheb{d}\left(1 + \frac{2}{\kappa-1} \right)
	 \geq \frac{1}{2}\left(1+\sqrt{2\left(\frac{2}{\kappa-1}\right)}\right)^d 	
	 \geq \frac{1}{2}\left(1+\frac{2}{\sqrt{\kappa}}\right)^d. 	
	\]
	Combining our bounds on the numerator and denominator gives the asserted bound on $\left|\damp{a}{b}{d}(x)\right|$ for $x\in [a,b]$.
	
	Finally, suppose $x\geq b$, and let $\gamma =x/b \geq 1$.  We have $|\rescale{a}{b}(x)|\geq 1$, so, by property~\ref{eq:chebprop2} of Chebyshev polynomials,
\begin{align}
		& |\cheb{d}(\rescale{a}{b}(x))| 
		\leq \left|2\rescale{a}{b}(x))\right|^d
		 =\left|2\left(\frac{b+a-2x}{b-a}\right)\right|^d
		 =\left|2\left(\frac{\kappa+1-2x/a}{\kappa-1}\right)\right|^d	
		 \\
		=& \left|2\left(\frac{\kappa+1-2\kappa \gamma}{\kappa-1}\right)\right|^d	
		=\left|-2\left(1+2(\gamma-1)\left(1+\frac{1}{\kappa-1}\right)\right)\right|^d		
		=\left|2+4(\gamma-1)\left(1+\frac{1}{\kappa-1}\right)\right|^d.	
\end{align}
Our assumption that $b\geq 2a$ implies that $\kappa\geq 2$, so we have
\[
	|\cheb{d}(\rescale{a}{b}(x))| \leq \left| 2+8(\gamma-1)\right|^d
	= \left| 8\gamma-6\right|^d
		= \left| \frac{8x}{b}-6\right|^d
		\leq \left( \frac{8x}{b}\right)^d. 
\]
Combining this with the fact that $\cheb{d}(\rescale{a}{b}(0))\geq 1$ by the monotonicity in property~\ref{eq:chebprop2} gives the desired bound on $\left|\damp{a}{b}{d}(x)\right|$ for $x\geq b$.
\end{proof}

\begin{proof}[Proof of Theorem~\ref{thm:poly_construction}]
To simplify the calculations, we assume that $\kappa_i\geq 2$ for all $i$.
By enlarging and combining our intervals as necessary, we can easily reduce the general theorem to this case.

Let $\epsdeg{\kappa}{\eps}=\sqrt{\kappa}\lceil\log(2/\epsilon)\rceil$, and note that, for $\kappa\geq 2$ and $\epsilon>0$,
 \[
 	2\left(1+\frac{2}{\sqrt{\kappa}}\right)^{-\epsdeg{\kappa}{\eps}}<\epsilon.
 \]

Let $S = \bigcup_{i \in [m]} [\mu_i, L_i]$, where $\mu_1<L_1< \mu_{2}<L_2<\dots$, and assume that $\kappa_i:=L_i/\mu_i \geq 2$ for all $i$.

We will obtain a good polynomial for $S$ by multiplying together the polynomials for the different intervals with carefully-chosen degrees.
To this end, let

\[
	\globalpoly{d_1,\dots d_{m}}(x)
	  =\prod_{i\in [m]} \damp{\mu_i}{L_i}{d_i}(x),	
\]
and note that $\globalpoly{d_1,\dots d_{m}}(0)=1$.

By Lemma~\ref{lem:damping_bound}, for $x\in [\mu_j,L_j]$, we have 
\begin{align}
	\left|\globalpoly{d_1,\dots d_{m}}(x)\right|
	&=\prod_{i\in [m]}\left| \damp{\mu_i}{L_i}{d_i}(x)\right|
	\leq 
	\prod_{i\in [m]}
		\begin{cases}
			1 & \text{if $x\in [0,\mu_i]$}\\
			2\left(1+\frac{2}{\sqrt{\kappa_i}}\right)^{-d_i} & \text{if $x\in [\mu_i,L_i]$}\\
			\left(\frac{ 8x}{L_i}\right)^{d_i}   & \text{if $x\geq L_i$}
		\end{cases}\\	
	&=
	\prod_{i<j}\left(\frac{ 8x}{L_i}\right)^{d_i}
	  \cdot 2\left(1+\frac{2}{\sqrt{\kappa_j}}\right)^{-d_j}
	  \cdot \prod_{i>j} 1
	=2\left(1+\frac{2}{\sqrt{\kappa_j}}\right)^{-d_j}\prod_{i<j}\left(\frac{ 8x}{L_i}\right)^{d_i}.
\end{align}
If we want $\left|\globalpoly{d_1,\dots d_{m}}(x)\right|\leq \epsilon$ for all $x\in S$, it thus suffices to choose the $d_j$ so that, for all $j$,
\[
	2\left(1+\frac{2}{\sqrt{\kappa_j}}\right)^{-d_j}\leq \epsilon \cdot \prod_{i<j}\left(\frac{ 8x}{L_i}\right)^{-d_i}.
\]
We can achieve this by setting $d_1=\epsdeg{\kappa_1}{\epsilon}=\sqrt{\kappa_1}\lceil\log(2/\epsilon)\rceil$ and then recursively setting
\begin{align}
	d_j &= \epsdeg{\kappa_j}{ \epsilon \cdot \prod_{i<j}\left(\frac{ 8x}{L_i}\right)^{-d_i}}\\
&=\sqrt{\kappa_j}\left\lceil\log\left( (2/\epsilon) \cdot \prod_{i<j}\left(\frac{ 8x}{L_i}\right)^{d_i}\right)\right\rceil
=\sqrt{\kappa_j}
	\left\lceil
		\log\left({2}/{\epsilon}\right) +\sum_{i<j}d_i \log\left(\frac{ 8x}{L_i}\right)\right\rceil
\end{align}
Since $x/L_i \leq \globalcond/\kappa_1$ and $d_1\geq \sqrt{\kappa_1} \log(2/\epsilon)$, and using the fact that $\sqrt{\kappa_1}\log(9\kappa_1/8)\geq 1$ for $\kappa_1\geq 2$, 
 we have the bound
\begin{align}
	d_j
	&\leq \sqrt{\kappa_j}
	\left\lceil
		\log\frac{2}{\epsilon} +\log\frac{8 \globalcond}{\kappa_1}\sum_{i<j}d_i \right\rceil
	= \sqrt{\kappa_j}
	\left\lceil
		\log\frac{2}{\epsilon}
		-\log \frac{9\kappa_1}{8}\sum_{i<j}d_i
		+\log(9 \globalcond)\sum_{i<j}d_i \right\rceil\\
	&\leq \sqrt{\kappa_j}
	\left\lceil
		\log\frac{2}{\epsilon}
		-\log \frac{9\kappa_1}{8}\sqrt{\kappa_1} \log\frac{2}{\epsilon}
		+\log(9 \globalcond)\sum_{i<j}d_i \right\rceil	
	\leq \sqrt{\kappa_j}
	\left\lceil
		\log(9 \globalcond)\right\rceil\sum_{i<j}d_i .
\end{align}

Since our recurrence guarantees that $d_{i+1}\geq 2d_{i}$, we have $\sum_{i<j}d_i \leq 2d_{j-1}$, so our bound becomes
\[
d_j\leq\sqrt{\kappa_j}\cdot	2\left\lceil	\log(9 \globalcond) \right\rceil d_{j-1}.
\]
Applying this recursively gives
\begin{align}
d_j
&\leq \left(\prod_{i=2}^j\sqrt{\kappa_j}\right)
      \left(2\left\lceil\log(9 \globalcond) \right\rceil\right)^{j-1} d_1
= \left(\prod_{i=2}^j\sqrt{\kappa_j}\right)
      \left(2\left\lceil\log(9 \globalcond) \right\rceil\right)^{j-1} 
	\left(\sqrt{\kappa_1}\lceil\log(2/\epsilon)\rceil\right)\\
&=\left(\prod_{i=1}^j\sqrt{\kappa_j}\right)
      \left(2\left\lceil\log(9 \globalcond) \right\rceil\right)^{j-1} 
	\lceil\log(2/\epsilon)\rceil,
\end{align}
and the total degree is then bounded by $2d_{m}$, which obeys the desired asymptotic bound.
\end{proof}

\ifarxiv
\section*{Acknowledgements}
We thank the anonymous reviewers for their suggestions and comments on earlier versions of this paper.  
JK was supported in part by NSF awards CCF-1955217,  CCF-1565235, and DMS-2022448.
AS was supported in part by a Microsoft Research Faculty Fellowship, NSF CAREER Award CCF-1844855, NSF Grant CCF-1955039, a PayPal research award, and a Sloan Research Fellowship. GV and AM were supported by NSF awards 1704417 and 1813049, ONR Young Investigator Award N00014-18-1-2295 and DOE award DE-SC0019205. HY was supported in part by the TOTAL Innovation Scholars program. 
\fi

\bibliographystyle{alpha}

\begin{small}
\bibliography{refs}

\newcommand{\etalchar}[1]{$^{#1}$}
\begin{thebibliography}{CRRW21}

\bibitem[ABH17]{agarwal2017second}
Naman Agarwal, Brian Bullins, and Elad Hazan.
\newblock Second-order stochastic optimization for machine learning in linear
  time.
\newblock {\em The Journal of Machine Learning Research}, 18(1):4148--4187,
  2017.

\bibitem[Ach28]{Achieser-28}
N.I. Achieser.
\newblock \"uber einige {{Funktionen}}, die in gegebenen {{Intervallen}} am
  wenigsten von {{Null}} abweichen.
\newblock {\em Bull. Soc. Phys.-Mathem. Kazan}, III(3), 1928.

\bibitem[Ach32]{Achieser-32}
N.I. Achieser.
\newblock \"uber einige {{Funktionen}}, welche in zwei gegebenen
  {{Intervallen}} am wenigsten von {{Null}} abweichen {{I}}. {{Teil}}.
\newblock {\em Bull. Acad. Sci. URSS}, VII(9), 1932.

\bibitem[Ach33a]{Achieser-33}
N.I. Achieser.
\newblock \"uber einige {{Funktionen}}, welche in zwei gegebenen
  {{Intervallen}} am wenigsten von {{Null}} abweichen {{II}}. {{Teil}}.
\newblock {\em Bull. Acad. Sci. URSS}, VII(3), 1933.

\bibitem[Ach33b]{Achieser-33a}
N.I. Achieser.
\newblock \"uber einige {{Funktionen}}, welche in zwei gegebenen
  {{Intervallen}} am wenigsten von {{Null}} abweichen {{III}}, {{Teil}}.
\newblock {\em Bull. Acad. Sci. URSS}, VII(4), 1933.

\bibitem[Ach34]{Achieser-34}
N.I. Achieser.
\newblock \"uber eine {{Eigenschaft}} der ``elliptischen'' {{Polynome}}.
\newblock {\em Comm. Kharkov Math. Soc.}, (9), 1934.

\bibitem[AGH16]{Alpan.Goncharov.ea-16}
G{\"o}kalp Alpan, Alexander Goncharov, and Burak Hatino{\u g}lu.
\newblock Some {{Asymptotics}} for {{Extremal Polynomials}}.
\newblock In {\em Computational {{Analysis}}}, volume 155. {Springer
  International Publishing}, 2016.

\bibitem[And04]{Andrievskii-04}
V.~V. Andrievskii.
\newblock On the {{Green Function}} for a {{Complement}} of a {{Finite Number}}
  of {{Real Intervals}}.
\newblock {\em Constructive Approximation}, 20(4), 2004.

\bibitem[Ani00]{anitescu2000degenerate}
Mihai Anitescu.
\newblock Degenerate nonlinear programming with a quadratic growth condition.
\newblock {\em SIAM Journal on Optimization}, 10(4):1116--1135, 2000.

\bibitem[Apt86]{Aptekarev-86}
A~I Aptekarev.
\newblock Asymptotic {{Properties}} of {{Polynomials Orthogonal}} on a
  {{System}} of {{Contours}} and {{Periodic Motions}} of {{Toda Lattrices}}.
\newblock {\em Mathematics of the USSR-Sbornik}, 53(1), 1986.

\bibitem[AZ17]{allen2017katyusha}
Zeyuan Allen-Zhu.
\newblock Katyusha: The first direct acceleration of stochastic gradient
  methods.
\newblock {\em The Journal of Machine Learning Research}, 18(1):8194--8244,
  2017.

\bibitem[BB08]{NIPS2007_0d3180d6}
L\'{e}on Bottou and Olivier Bousquet.
\newblock The tradeoffs of large scale learning.
\newblock In J.~Platt, D.~Koller, Y.~Singer, and S.~Roweis, editors, {\em
  Advances in Neural Information Processing Systems}, volume~20. Curran
  Associates, Inc., 2008.

\bibitem[BHSW20]{braverman2020gradient}
Mark Braverman, Elad Hazan, Max Simchowitz, and Blake Woodworth.
\newblock The gradient complexity of linear regression.
\newblock In {\em Conference on Learning Theory}, pages 627--647. PMLR, 2020.

\bibitem[Bro70]{Broyden-70}
C.~G. Broyden.
\newblock The {{Convergence}} of a {{Class}} of {{Double}}-rank {{Minimization
  Algorithms}} 1. {{General Considerations}}.
\newblock {\em IMA Journal of Applied Mathematics}, 6(1), 1970.

\bibitem[CD18]{CarmonD18}
Yair Carmon and John~C. Duchi.
\newblock Analysis of krylov subspace solutions of regularized non-convex
  quadratic problems.
\newblock In {\em Advances in Neural Information Processing Systems 31: Annual
  Conference on Neural Information Processing Systems}, pages 10728--10738,
  2018.

\bibitem[CDHS20]{Carmon.Duchi.ea-MP20}
Yair Carmon, John~C. Duchi, Oliver Hinder, and Aaron Sidford.
\newblock Lower bounds for finding stationary points {{I}}.
\newblock {\em Mathematical Programming}, 184(1-2), 2020.

\bibitem[CDHS21]{Carmon.Duchi.ea-MP21}
Yair Carmon, John~C. Duchi, Oliver Hinder, and Aaron Sidford.
\newblock Lower bounds for finding stationary points {{II}}: First-order
  methods.
\newblock {\em Mathematical Programming}, 185(1-2), 2021.

\bibitem[CRRW21]{curtis2021trust}
Frank~E Curtis, Daniel~P Robinson, Cl{\'e}ment~W Royer, and Stephen~J Wright.
\newblock Trust-region newton-cg with strong second-order complexity guarantees
  for nonconvex optimization.
\newblock {\em SIAM Journal on Optimization}, 31(1):518--544, 2021.

\bibitem[Dai11]{Dai-11a}
Yu-Hong Dai.
\newblock {\em Nonlinear {{Conjugate Gradient Methods}}}.
\newblock {John Wiley \& Sons, Inc.}, 2011.

\bibitem[Dav91]{Davidon-SIOPT91}
William~C. Davidon.
\newblock Variable {{Metric Method}} for {{Minimization}}.
\newblock {\em SIAM Journal on Optimization}, 1(1), 1991.

\bibitem[DGN14]{devolder2014first}
Olivier Devolder, Fran{\c{c}}ois Glineur, and Yurii Nesterov.
\newblock First-order methods of smooth convex optimization with inexact
  oracle.
\newblock {\em Mathematical Programming}, 146(1):37--75, 2014.

\bibitem[DHS11]{duchi2011adaptive}
John Duchi, Elad Hazan, and Yoram Singer.
\newblock Adaptive subgradient methods for online learning and stochastic
  optimization.
\newblock {\em Journal of machine learning research}, 12(7), 2011.

\bibitem[DRL07]{dai2007volume}
Wei Dai, Brian~C Rider, and Youjian Liu.
\newblock Volume growth and general rate quantization on grassmann manifolds.
\newblock In {\em IEEE GLOBECOM 2007-IEEE Global Telecommunications
  Conference}, pages 1441--1445. IEEE, 2007.

\bibitem[DTT98]{Driscoll.Toh.ea-SIREV98}
Tobin~A. Driscoll, Kim-Chuan Toh, and Lloyd~N. Trefethen.
\newblock From {{Potential Theory}} to {{Matrix Iterations}} in {{Six Steps}}.
\newblock {\em SIAM Review}, 40(3), 1998.

\bibitem[ET99]{Embree.Trefethen-SIREV99}
Mark Embree and Lloyd~N. Trefethen.
\newblock Green's {{Functions}} for {{Multiply Connected Domains}} via
  {{Conformal Mapping}}.
\newblock {\em SIAM Review}, 41(4), 1999.

\bibitem[FHKO10]{fabian2010error}
Marian~J Fabian, Ren{\'e} Henrion, Alexander~Y Kruger, and Ji{\v{r}}{\'\i}~V
  Outrata.
\newblock Error bounds: necessary and sufficient conditions.
\newblock {\em Set-Valued and Variational Analysis}, 18(2):121--149, 2010.

\bibitem[Fis92]{Fischer-92}
Bernd Fischer.
\newblock Chebyshev polynomials for disjoint compact sets.
\newblock {\em Constructive Approximation}, 8(3), 1992.

\bibitem[Fis11]{Fischer-11}
Bernd Fischer.
\newblock {\em Polynomial Based Iteration Methods for Symmetric Linear
  Systems}.
\newblock {Society for Industrial and Applied Mathematics (SIAM, 3600 Market
  Street, Floor 6, Philadelphia, PA 19104)}, 2011.

\bibitem[FR64]{Fletcher.Reeves-64}
R.~Fletcher and C.~M. Reeves.
\newblock Function minimization by conjugate gradients.
\newblock {\em The Computer Journal}, 7(2), 1964.

\bibitem[GG17]{guminov2017accelerated}
Sergey Guminov and Alexander Gasnikov.
\newblock Accelerated methods for $\alpha $-weakly-quasi-convex problems.
\newblock {\em arXiv preprint arXiv:1710.00797}, 2017.

\bibitem[GOSS16]{gonen2016solving}
Alon Gonen, Francesco Orabona, and Shai Shalev-Shwartz.
\newblock Solving ridge regression using sketched preconditioned svrg.
\newblock In {\em International Conference on Machine Learning}, pages
  1397--1405. PMLR, 2016.

\bibitem[Grc81]{Grcar-81}
Joseph~Frank Grcar.
\newblock {\em Analyses of the {{Lanczos Algorithm}} and of the {{Approximation
  Problem}} in {{Richardson}}'s {{Method}}}.
\newblock PhD thesis, University of Illinois at Urbana-Champaign, 1981.

\bibitem[Gre89a]{Greenbaum-89}
A.~Greenbaum.
\newblock Behavior of slightly perturbed {{Lanczos}} and conjugate-gradient
  recurrences.
\newblock {\em Linear Algebra and its Applications}, 113, 1989.

\bibitem[Gre89b]{greenbaum1989behavior}
Anne Greenbaum.
\newblock Behavior of slightly perturbed lanczos and conjugate-gradient
  recurrences.
\newblock {\em Linear Algebra and its Applications}, 113:7--63, 1989.

\bibitem[Gre97]{greenbaum1997iterative}
Anne Greenbaum.
\newblock {\em Iterative methods for solving linear systems}.
\newblock SIAM, 1997.

\bibitem[Gri19]{grimmer2019convergence}
Benjamin Grimmer.
\newblock Convergence rates for deterministic and stochastic subgradient
  methods without lipschitz continuity.
\newblock {\em SIAM Journal on Optimization}, 29(2):1350--1365, 2019.

\bibitem[HS52]{Hestenes.Stiefel-52}
M.R. Hestenes and E.~Stiefel.
\newblock Methods of conjugate gradients for solving linear systems.
\newblock {\em Journal of Research of the National Bureau of Standards}, 49(6),
  1952.

\bibitem[HSS20]{HinderSS20}
Oliver Hinder, Aaron Sidford, and Nimit~Sharad Sohoni.
\newblock Near-optimal methods for minimizing star-convex functions and beyond.
\newblock In {\em Conference on Learning Theory, {COLT} 2020}, volume 125 of
  {\em Proceedings of Machine Learning Research}, pages 1894--1938. {PMLR},
  2020.

\bibitem[HZ06]{Hager.Zhang-06}
William~W Hager and Hongchao Zhang.
\newblock A survey of nonlinear conjugate gradient methods.
\newblock {\em Pacific Journal of Optimization}, 2(1), 2006.

\bibitem[JKK{\etalchar{+}}18]{jain2018accelerating}
Prateek Jain, Sham~M Kakade, Rahul Kidambi, Praneeth Netrapalli, and Aaron
  Sidford.
\newblock Accelerating stochastic gradient descent for least squares
  regression.
\newblock In {\em Conference On Learning Theory}, pages 545--604. PMLR, 2018.

\bibitem[JZ13]{johnson2013accelerating}
Rie Johnson and Tong Zhang.
\newblock Accelerating stochastic gradient descent using predictive variance
  reduction.
\newblock {\em Advances in neural information processing systems}, 26:315--323,
  2013.

\bibitem[KB14]{kingma2014adam}
Diederik~P Kingma and Jimmy Ba.
\newblock Adam: A method for stochastic optimization.
\newblock {\em arXiv preprint arXiv:1412.6980}, 2014.

\bibitem[Kui06]{Kuijlaars-SIREV06}
Arno B.~J. Kuijlaars.
\newblock Convergence {{Analysis}} of {{Krylov Subspace Iterations}} with
  {{Methods}} from {{Potential Theory}}.
\newblock {\em SIAM Review}, 48(1), 2006.

\bibitem[LN89]{Liu.Nocedal-MP89}
Dong~C. Liu and Jorge Nocedal.
\newblock On the limited memory {{BFGS}} method for large scale optimization.
\newblock {\em Mathematical Programming}, 45(1-3), 1989.

\bibitem[LT93]{luo1993error}
Zhi-Quan Luo and Paul Tseng.
\newblock Error bounds and convergence analysis of feasible descent methods: a
  general approach.
\newblock {\em Annals of Operations Research}, 46(1):157--178, 1993.

\bibitem[Lub87]{Lubinsky-87}
D.~S. Lubinsky.
\newblock A survey of general orthogonal polynomials for weights on finite and
  infinite intervals.
\newblock {\em Acta Applicandae Mathematica}, 10(3), 1987.

\bibitem[LW16]{liu2016accelerated}
Ji~Liu and Stephen Wright.
\newblock An accelerated randomized kaczmarz algorithm.
\newblock {\em Mathematics of Computation}, 85(297):153--178, 2016.

\bibitem[MH03]{Mason.Handscomb-03}
J.~C. Mason and D.~C. Handscomb.
\newblock {\em Chebyshev Polynomials}.
\newblock {Chapman \& Hall/CRC}, 2003.

\bibitem[MMS18]{Musco.Musco.ea-SODA18}
C.~Musco, C.~Musco, and A.~Sidford.
\newblock Stability of the {{Lanczos Method}} for {{Matrix Function
  Approximation}}.
\newblock In {\em Proceedings of the {{Twenty}}-{{Ninth Annual ACM}}-{{SIAM
  Symposium}} on {{Discrete Algorithms}}}. {Society for Industrial and Applied
  Mathematics}, 2018.

\bibitem[MNS{\etalchar{+}}18]{MuscoNSUW18}
Cameron Musco, Praneeth Netrapalli, Aaron Sidford, Shashanka Ubaru, and
  David~P. Woodruff.
\newblock Spectrum approximation beyond fast matrix multiplication: Algorithms
  and hardness.
\newblock In Anna~R. Karlin, editor, {\em 9th Innovations in Theoretical
  Computer Science Conference, {ITCS}}, volume~94 of {\em LIPIcs}, pages
  8:1--8:21, 2018.

\bibitem[Nes83]{nesterov83}
Y.~E. Nesterov.
\newblock A method for solving the convex programming problem with convergence
  rate $o(1/k^2)$.
\newblock {\em Soviet Mathematics Doklady}, 269:543--547, 1983.

\bibitem[Nes18]{Nesterov-18}
Yurii Nesterov.
\newblock {\em Lectures on Convex Optimization}.
\newblock {Springer, Cham}, second edition, 2018.

\bibitem[Nev86]{Nevai-86}
Paul Nevai.
\newblock G\'eza {{Freud}}, orthogonal polynomials and {{Christoffel}}
  functions. {{A}} case study.
\newblock {\em Journal of Approximation Theory}, 48(1), 1986.

\bibitem[NGGD18]{nesterov2018primal}
Yurii Nesterov, Alexander Gasnikov, Sergey Guminov, and Pavel Dvurechensky.
\newblock Primal-dual accelerated gradient descent with line search for convex
  and nonconvex optimization problems.
\newblock {\em arXiv preprint arXiv:1809.05895}, 2018.

\bibitem[NNG19]{necoara2019linear}
Ion Necoara, Yu~Nesterov, and Francois Glineur.
\newblock Linear convergence of first order methods for non-strongly convex
  optimization.
\newblock {\em Mathematical Programming}, 175(1):69--107, 2019.

\bibitem[Noc80]{Nocedal-80}
Jorge Nocedal.
\newblock Updating quasi-{{Newton}} matrices with limited storage.
\newblock {\em Mathematics of Computation}, 35(151), 1980.

\bibitem[Noc96]{nocedal1996conjugate}
Jorge Nocedal.
\newblock Conjugate gradient methods and nonlinear optimization.
\newblock {\em Linear and nonlinear conjugate gradient-related methods}, pages
  9--23, 1996.

\bibitem[NSW16]{NeedellSW16}
Deanna Needell, Nathan Srebro, and Rachel Ward.
\newblock Stochastic gradient descent, weighted sampling, and the randomized
  {K}aczmarz algorithm.
\newblock {\em Math. Program.}, 155(1-2):549--573, 2016.

\bibitem[NW06a]{nocedal2006conjugate}
Jorge Nocedal and Stephen~J Wright.
\newblock Conjugate gradient methods.
\newblock {\em Numerical optimization}, pages 101--134, 2006.

\bibitem[NW06b]{Nocedal.Wright-06}
Jorge Nocedal and Stephen~J. Wright.
\newblock {\em Numerical Optimization}.
\newblock {Springer}, 2nd ed edition, 2006.

\bibitem[NY83]{Nemirovski.Yudin-83}
A.S. Nemirovski and D.~B. Yudin.
\newblock {\em {Problem complexity and method efficiency in optimization}}.
\newblock {Wiley}, 1983.

\bibitem[Ove01]{Overton-01}
Michael~L. Overton.
\newblock {\em Numerical {{Computing}} with {{IEEE Floating Point
  Arithmetic}}}.
\newblock {Society for Industrial and Applied Mathematics}, 2001.

\bibitem[Pai71]{paige1971computation}
Christopher~Conway Paige.
\newblock {\em The computation of eigenvalues and eigenvectors of very large
  sparse matrices.}
\newblock PhD thesis, University of London, 1971.

\bibitem[Peh93]{Peherstorfer-93}
Franz Peherstorfer.
\newblock Orthogonal and extremal polynomials on several intervals.
\newblock {\em Journal of Computational and Applied Mathematics}, 48(1-2),
  1993.

\bibitem[Pol69]{polyak1969conjugate}
Boris~Teodorovich Polyak.
\newblock The conjugate gradient method in extremal problems.
\newblock {\em USSR Computational Mathematics and Mathematical Physics},
  9(4):94--112, 1969.

\bibitem[Pol87]{polyak1987introduction}
Boris~T Polyak.
\newblock Introduction to optimization.
\newblock {\em Inc., Publications Division, New York}, 1, 1987.

\bibitem[PR69]{Polak.Ribiere-69}
E.~Polak and G.~Ribiere.
\newblock {Note on the convergence of conjugate direction methods}.
\newblock {\em ESAIM: Mathematical Modeling and Numerical Analysis -
  Mathematical Modeling and Numerical Analysis}, 3(R1), 1969.

\bibitem[PW17]{pilanci2017newton}
Mert Pilanci and Martin~J Wainwright.
\newblock Newton sketch: A near linear-time optimization algorithm with
  linear-quadratic convergence.
\newblock {\em SIAM Journal on Optimization}, 27(1):205--245, 2017.

\bibitem[RKM19]{roosta2019sub}
Farbod Roosta-Khorasani and Michael~W Mahoney.
\newblock Sub-sampled newton methods.
\newblock {\em Mathematical Programming}, 174(1):293--326, 2019.

\bibitem[ROW19]{Royer.ONeill.ea-MP19}
Cl{\'e}ment~W. Royer, Michael O'Neill, and Stephen~J. Wright.
\newblock A {{Newton}}-{{CG}} algorithm with complexity guarantees for smooth
  unconstrained optimization.
\newblock {\em Mathematical Programming}, 2019.

\bibitem[Saa03]{saad2003iterative}
Yousef Saad.
\newblock {\em Iterative methods for sparse linear systems}.
\newblock SIAM, 2003.

\bibitem[Saf10]{Saff-10}
EB~Saff.
\newblock Logarithmic {{Potential Theory}} with {{Applications}} to
  {{Approximation Theory}}.
\newblock {\em Surveys in Approximation Theory}, 5, 2010.

\bibitem[Sch08]{Schiefermayr-08}
Klaus Schiefermayr.
\newblock A lower bound for the minimum deviation of the chebyshev polynomial
  on a compact real set.
\newblock {\em East Journal on Approximations}, 14, 2008.

\bibitem[Sch11a]{Schiefermayr-JCAM11}
Klaus Schiefermayr.
\newblock Estimates for the asymptotic convergence factor of two intervals.
\newblock {\em Journal of Computational and Applied Mathematics}, 236(1), 2011.

\bibitem[Sch11b]{Schiefermayr-11}
Klaus Schiefermayr.
\newblock A {{Lower Bound}} for the {{Norm}} of the {{Minimal Residual
  Polynomial}}.
\newblock {\em Constructive Approximation}, 33(3), 2011.

\bibitem[Sch17]{Schiefermayr-17}
Klaus Schiefermayr.
\newblock An upper bound for the norm of the {{Chebyshev}} polynomial on two
  intervals.
\newblock {\em Journal of Mathematical Analysis and Applications}, 445(1),
  2017.

\bibitem[Sim18]{simchowitz2018randomized}
Max Simchowitz.
\newblock On the randomized complexity of minimizing a convex quadratic
  function.
\newblock {\em arXiv preprint arXiv:1807.09386}, 2018.

\bibitem[SLRB17]{schmidt2017minimizing}
Mark Schmidt, Nicolas Le~Roux, and Francis Bach.
\newblock Minimizing finite sums with the stochastic average gradient.
\newblock {\em Mathematical Programming}, 162(1-2):83--112, 2017.

\bibitem[SP99]{Schiefermayr.Peherstorfer-99}
K.~Schiefermayr and F.~Peherstorfer.
\newblock Description of {{Extremal Polynomials}} on {{Several Intervals}} and
  their {{Computation}}. {{I}}.
\newblock {\em Acta Mathematica Hungarica}, 83(1/2), 1999.

\bibitem[SSV19]{sharan2019memory}
Vatsal Sharan, Aaron Sidford, and Gregory Valiant.
\newblock Memory-sample tradeoffs for linear regression with small error.
\newblock In {\em Proceedings of the 51st Annual ACM SIGACT Symposium on Theory
  of Computing}, pages 890--901, 2019.

\bibitem[SSW01]{Shen.Strang.ea-01}
J.~Shen, G.~Strang, and A.~J. Wathen.
\newblock The {{Potential Theory}} of {{Several Intervals}} and {{Its
  Applications}}.
\newblock {\em Applied Mathematics and Optimization}, 44(1), 2001.

\bibitem[SSZ13]{shalev2013accelerated}
Shai Shalev-Shwartz and Tong Zhang.
\newblock Accelerated proximal stochastic dual coordinate ascent for
  regularized loss minimization.
\newblock {\em arXiv preprint arXiv:1309.2375}, 2013.

\bibitem[SV06]{strohmer2006randomized}
Thomas Strohmer and Roman Vershynin.
\newblock A randomized solver for linear systems with exponential convergence.
\newblock In {\em Approximation, Randomization, and Combinatorial Optimization.
  Algorithms and Techniques}, pages 499--507. Springer, 2006.

\bibitem[TB97]{Trefethen.Bau-97}
Lloyd~N. Trefethen and David Bau.
\newblock {\em Numerical Linear Algebra}.
\newblock {Society for Industrial and Applied Mathematics}, 1997.

\bibitem[Ver10]{vershynin2010introduction}
Roman Vershynin.
\newblock Introduction to the non-asymptotic analysis of random matrices.
\newblock {\em arXiv preprint arXiv:1011.3027}, 2010.

\bibitem[Ver19]{Vershynin19}
Roman Vershynin.
\newblock {\em High Dimensional Probability: An Introduction with Applications
  in Data Science}.
\newblock Cambridge University Press, 2019.

\bibitem[VNP07]{van2007approximately}
Huynh Van~Ngai and Jean-Paul Penot.
\newblock Approximately convex functions and approximately monotonic operators.
\newblock {\em Nonlinear Analysis: Theory, Methods \& Applications},
  66(3):547--564, 2007.

\bibitem[Wid69]{Widom-69}
Harold Widom.
\newblock Extremal polynomials associated with a system of curves in the
  complex plane.
\newblock {\em Advances in Mathematics}, 3(2), 1969.

\bibitem[Woo21]{woodworth2021minimax}
Blake Woodworth.
\newblock The minimax complexity of distributed optimization.
\newblock {\em arXiv preprint arXiv:2109.00534}, 2021.

\bibitem[WS17]{woodworth2017lower}
Blake Woodworth and Nathan Srebro.
\newblock Lower bound for randomized first order convex optimization.
\newblock {\em arXiv preprint arXiv:1709.03594}, 2017.

\bibitem[WS19]{woodworth2019open}
Blake Woodworth and Nathan Srebro.
\newblock Open problem: The oracle complexity of convex optimization with
  limited memory.
\newblock In {\em Conference on Learning Theory}, pages 3202--3210. PMLR, 2019.

\bibitem[XRM20]{xu2020newton}
Peng Xu, Fred Roosta, and Michael~W Mahoney.
\newblock Newton-type methods for non-convex optimization under inexact hessian
  information.
\newblock {\em Mathematical Programming}, 184(1):35--70, 2020.

\bibitem[Yas16]{yashtini2016global}
Maryam Yashtini.
\newblock On the global convergence rate of the gradient descent method for
  functions with h{\"o}lder continuous gradients.
\newblock {\em Optimization letters}, 10(6):1361--1370, 2016.

\bibitem[YM20]{Yuan.Ma-NeurIPS20}
Honglin Yuan and Tengyu Ma.
\newblock Federated {{Accelerated Stochastic Gradient Descent}}.
\newblock In {\em Advances in {{Neural Information Processing Systems}} 33},
  2020.

\bibitem[ZC15]{zhang2015restricted}
Hui Zhang and Lizhi Cheng.
\newblock Restricted strong convexity and its applications to convergence
  analysis of gradient-type methods in convex optimization.
\newblock {\em Optimization Letters}, 9(5):961--979, 2015.

\bibitem[ZMJ13]{zhang2013linear}
Lijun Zhang, Mehrdad Mahdavi, and Rong Jin.
\newblock Linear convergence with condition number independent access of full
  gradients.
\newblock {\em Advances in Neural Information Processing Systems}, 26:980--988,
  2013.

\bibitem[ZY13]{zhang2013gradient}
Hui Zhang and Wotao Yin.
\newblock Gradient methods for convex minimization: better rates under weaker
  conditions.
\newblock {\em arXiv preprint arXiv:1303.4645}, 2013.

\end{thebibliography}
\end{small}

\clearpage
\appendices
\section{Proof of $\bsls$ under finite-precision arithmetic}
\label{apx:bsls:inexact}
In this section we prove $\bsls$ with finite-precision arithmetic (subject to \cref{req:gd}). 

In \cref{thm:bsls:inexact}, we specialized our initialization of $\x^{(0)}$ to $\vec{0}$ to simplify the exposition of the theorem. 
In fact, we can (and will) prove the following general (but less clean) version with arbitrary $\x^{(0)}$.
\begin{theorem}[$\bsls$ under finite-precision arithmetic, general initialization]
	\label{thm:bsls:inexact:general}
	Consider multiscale optimization problem defined in \Cref{def:multiscale_problem}, for any initialization $\x^{(0)}$ and $\epsilon > 0$, assuming \cref{req:gd} with 
	\begin{equation}
		\delta^{-1} \geq \left(\prod_{i \in [m]} T_i \right) \max \left\{
			(10  \globalcond )^{2m-1} m, 
		  (10  \globalcond )^{2m-1} m  \cdot \frac{f(\x^{(0)}) - f^{\star} }{\epsilon} , 
		  4^{m+1} m L_m \frac{\|\x^{\star}\|_2^2 }{\epsilon}
		\right\}
	\end{equation}
	then 
	$f(\widehat{\bsls_{1}}(\x^{(0)})) - f^{\star}\leq 3 \epsilon$ provided that $T_1, \ldots, T_m$ satisfy 
	\begin{equation}
		T_1 \geq \kappa_1 \log \left( \frac{f(\x^{(0)})- f^{\star}}{\epsilon} \right);
		\qquad
		T_i \geq \kappa_i (2 \log (\globalcond) + 1), \quad \text{ for $i = 2, \ldots, m$}.
		\label{eq:bsls:t:lb}
	\end{equation}
	We can also achieve the same asymptotic sample complexity (up to constant factors suppressed in the $\bigo(\cdot)$) when $\{(\mu_i, L_i), i \in [m]\}$ are unknown and only $m$, $\mu_{1}$, $L_m$ and $ \pi_{\kappa}=\prod_{i=1}^m \kappa_i$ are known.
\end{theorem}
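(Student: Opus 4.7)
The plan is to reduce the finite-precision analysis to the exact-arithmetic convergence already established in \cref{thm:bsls:recursive}. I would write $\widehat{\x}^{(t)} = \x^{(t)} + \vec{e}^{(t)}$, where $\x^{(t)}$ is the \emph{idealized} trajectory obtained by executing the same step-size schedule under exact arithmetic starting from $\x^{(0)}$. The proof then splits into two parts: (a) a uniform a-priori bound on the norm of every intermediate idealized iterate $\x^{(t)}$, and (b) a bound on how the per-step rounding error $\delta$ accumulates into $\vec{e}^{(t)}$. Part (a) tells us how large the ``source term'' $\delta\|\x_+\|_2$ is at every step, and part (b) shows that the interlacing schedule prevents these source terms from blowing up through the recursion.

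For (a), I would revisit the proof of \cref{thm:bsls:recursive} and apply \cref{lem:BSLS:general} not only to the final output but to every intermediate iterate of $\bsls_1$. Because each sub-objective $f_j$ has amplifications ($\globalcond^2$ per big step) and contractions ($1-\kappa_j^{-1}$ per little step) balanced by the choice of $T_i$'s, one obtains $f_j(\proj_j \x^{(t)}) - f_j^\star \leq \globalcond^{O(m)} \cdot (f(\x^{(0)}) - f^\star)$ at every intermediate time $t$. Converting this via $\mu_j$-strong-convexity and the triangle inequality gives a uniform bound $\|\x^{(t)}\|_2 \leq B$ with $B$ polynomial in $\globalcond$, $\|\x^\star\|_2$, and $(f(\x^{(0)})-f^\star)/\mu_1$, which explains the three terms appearing in the requirement on $\delta^{-1}$.

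For (b), a single $\widehat{\gd}(\x;L_i)$ step introduces two sources of error: an inherent rounding error of size at most $\delta\|\gd(\widehat{\x};L_i)\|_2 \leq \delta(B+O(1))$ by \cref{req:gd}, plus the propagation of $\vec{e}^{(t)}$ through the affine map $\id - L_i^{-1}\nabla^2 f$, whose operator norm on the $\proj_j$-subspace is at most $\max\{1, L_j/L_i\} \leq \globalcond$. A naive telescoping over $\prod_i T_i$ steps would give a catastrophic bound, so the key observation is that the interlacing structure of $\bsls$ tames this: after one big step at level $i$ amplifies the error on subspace $j>i$ by $L_j/L_i$, the $T_{i+1}$ little steps in the subsequent $\bsls_{i+1}$ call contract that same error by $(1-\kappa_{i+1}^{-1})^{T_{i+1}} \leq \globalcond^{-2}$, exactly mirroring the potential argument in \cref{thm:bsls:recursive}. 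Formalizing this into a recursive bound $\Delta_i \leq A_i \Delta_{i+1} + \delta B C_i$ with $A_i, C_i \leq \globalcond^{O(1)}$, I induct to show $\|\vec{e}^{(T)}\|_2 \leq \delta B (\prod_i T_i) \globalcond^{O(m)}$.

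The main obstacle is step (b): executing the error-propagation induction in a way that reuses the potential calculation of \cref{thm:bsls:recursive} rather than re-proving it from scratch. I expect to handle this by showing simultaneously that $f_j(\proj_j \widehat{\x}^{(t)}) - f_j^\star$ obeys essentially the same bound as in the exact case (off by a factor $1+O(\delta\globalcond^{O(m)}\prod T_i)$), so that the error bookkeeping and the progress bookkeeping remain compatible through the recursion. Combining (a) and (b) with the choice of $\delta^{-1}$ in the theorem statement yields $\|\vec{e}^{(T)}\|_2 \leq \sqrt{\epsilon/L_m}$, and then $L_m$-smoothness of $f$ gives
\begin{equation}
f(\widehat{\x}^{(T)}) - f^\star \leq f(\x^{(T)}) - f^\star + \langle \nabla f(\x^{(T)}), \vec{e}^{(T)}\rangle + \tfrac{L_m}{2}\|\vec{e}^{(T)}\|_2^2 \leq 3\epsilon,
\end{equation}
using the exact-arithmetic bound $f(\x^{(T)})-f^\star \leq \epsilon$ and a short argument bounding $\|\nabla f(\x^{(T)})\|_2$ via smoothness and the uniform norm bound $B$. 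The unknown-parameters case follows from the same black-box reduction \cref{prop:search} used in \cref{thm:bsls:recursive}.
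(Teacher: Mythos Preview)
Your approach differs substantially from the paper's. You propose to track the Euclidean deviation $\vec{e}^{(t)} = \widehat{\x}^{(t)} - \x^{(t)}$ from an idealized exact-arithmetic trajectory, bound it via Lipschitz properties of the GD map, and convert to function error at the end via smoothness. The paper instead tracks the vector of sub-objective errors $\vec{\err}(\widehat{\x}) = (f_j(\proj_j\widehat{\x}) - f_j^\star)_{j\in[m]}$ of the \emph{inexact} iterate directly. It proves a finite-precision analogue of \cref{lem:BSLS:general}, namely $\vec{\err}(\widehat{\gd}(\x;L_i)) \leq (\id + 5\delta\globalcond\,\ones\ones^\top)\mat{D}_i\,\vec{\err}(\x) + 2\delta L_m\|\x^\star\|_2^2\,\ones$, where $\mat{D}_i$ is the diagonal matrix encoding the exact-case amplification/contraction factors of \cref{lem:BSLS:general}. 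Composing these matrix inequalities through the $\bsls$ recursion gives $\vec{\err}(\widehat{\bsls_1}(\x^{(0)})) \leq (\mat{F}_1 + \mat{E}_1)\,\vec{\err}(\x^{(0)}) + 2\delta L_m\|\x^\star\|_2^2\,\mat{Z}_1\ones$, where $\mat{F}_1$ is precisely the exact-arithmetic contraction matrix (so $\|\mat{F}_1\|_1 \leq \epsilon/(f(\x^{(0)})-f^\star)$ by the proof of \cref{thm:bsls:recursive}) and $\|\mat{E}_1\|_1$, $\|\mat{Z}_1\|_1$ are bounded explicitly in $\delta$, $\globalcond$, $m$, $\prod_i T_i$.

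Two remarks on your sketch. First, the ``affine map $\id - L_i^{-1}\nabla^2 f$'' description is only literally valid for quadratic $f$; for general $f$ you would need Lipschitz bounds for the nonlinear GD map on each subspace, which is doable but is extra work the paper's route sidesteps (since \cref{lem:BSLS:general} already handles non-quadratic $f$ at the level of function values). Second, and more tellingly, what you flag as the ``main obstacle''---showing that $f_j(\proj_j\widehat{\x}^{(t)}) - f_j^\star$ obeys essentially the exact-case bound up to a $1 + O(\delta\cdot\ldots)$ factor---\emph{is} the paper's entire proof. Once you have that, there is no need for $\vec{e}^{(t)}$, the uniform norm bound $B$ on idealized iterates, or the final smoothness inequality: the conclusion $f(\widehat{\x}) - f^\star = \|\vec{\err}(\widehat{\x})\|_1 \leq 3\epsilon$ follows directly. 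Your plan is not wrong, but it routes through a detour whose resolution collapses into the paper's direct argument.
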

\cref{thm:bsls:inexact} is clearly a corollary of \cref{thm:bsls:inexact:general}.
\begin{proof}[Proof of \cref{thm:bsls:inexact} based on \cref{thm:bsls:inexact:general}]
    Follows by the fact that
    \begin{equation}
	4^{m+1} m L_m \frac{\|\x^{\star}\|_2^2}{\epsilon} \leq 4^{m+2} m \globalcond \cdot \frac{f(\vec{0}) - f^{\star}}{\epsilon} 
	\leq 
	m (10 \globalcond)^{2m-1} \cdot \frac{f(\vec{0}) - f^{\star}}{\epsilon}.
\end{equation}
\end{proof}

From now on we focus on the proof of  \cref{thm:bsls:inexact:general}. The proof of \cref{thm:bsls:inexact:general} is structured as follows. 
We first study the progress of one (inexact) $\widehat{\gd}$ step in \Cref{sec:bsls:inexact:1}, 
and inductively estimate the progress of $\widehat{\bsls_i}$ by matrix inequalities for all $i \in [m]$ in descent order (see \Cref{sec:bsls:inexact:2}).
The proof of \Cref{thm:bsls:inexact:general} is then finished in \Cref{sec:proof:thm:bsls:inexact}. Note that the last part regarding the case where $\{(\mu_i,L_i), i \in [m]\}$ are unknown follows from our black-box reduction in Proposition \ref{prop:search} (in the same way as in the proof of Theorem \ref{thm:bsls:recursive}).

\paragraph{Additional notation.}
We introduce notation to simplify the exposition. 
For any $\x$ and $i \in [m]$, define function $\err_i (\x) \defeq f_i(\proj_i \x) - f_i^{\star}$. 
Define vector potentials 
\begin{equation}
	\vec{\err}(\x) \defeq \begin{bmatrix}
		 \err_1(\x),
		 \err_2(\x),
		\cdots
		,
		 \err_m(\x)
	\end{bmatrix}^\top 
	\in \reals^m.
	\label{eq:bsls:r:def} 
\end{equation}
We will monitor the progress via this vector potential $\vec{\err}$. 
Recall that when comparing two vectors or matrices, we use plain inequalities $(\leq, \geq)$ to denote element-wise inequality. 

\subsection{Progress of one $\gd$ step under finite arithmetic}
\label{sec:bsls:inexact:1}
In this subsection, we study the effect of one (inexact) gradient step $\widehat{\gd}$ on the vector potential $\vec{\err}$ under finite arithmetic (subject to \cref{req:gd}). 
The goal is to establish the following \cref{lem:gd:finite}. 
\begin{lemma}[Progress of one $\widehat{\gd}$ step under finite arithmetic]
	\label{lem:gd:finite}
	Consider multiscale optimization (Def. \ref{def:multiscale_problem}), and assuming \cref{req:gd}, then for any $\x$ and $i \in [m]$, the following inequality holds
	\begin{equation}
		\vec{\err}(\widehat{\gd} (\x;L_i)) \leq \left(\id + 5 \delta \globalcond \ones \ones^\top \right) \mat{D}_i \vec{\err}(\x) + 2 \delta \|\x^{\star}\|_2^2 L_m \cdot  \ones,
		\label{eq:lem:gd:finite}
	\end{equation}
	where $\mat{D}_i$ is an $m \times m$ diagonal matrix defined by
	\begin{equation}
		(\mat{D}_i)_{jj} = \begin{cases}
			1	 & \text{ if } j < i,
			\\
			1 - \kappa_i^{-1} & \text{ if } j = i,
			\\
			 \globalcond^2 & \text{ if } j > i.
		\end{cases}
		\label{eq:bsls:D:def}
	\end{equation}
	To simplify the notation
	we will define (throughout this section) that 
	\begin{equation}
		\widehat{\mat{D}_i} \defeq \left(\id + 5 \delta \globalcond \ones \ones^\top \right) \mat{D}_i.
		\label{eq:bsls:Dhat:def}
	\end{equation}
	Then \cref{eq:lem:gd:finite} becomes 
	\begin{equation}
		\vec{\err}(\widehat{\gd} (\x;L_i)) \leq \widehat{\mat{D}_i} \vec{\err}(\x) + 2 \delta \|\x^{\star}\|_2^2 L_m \cdot  \ones.
	\end{equation}
\end{lemma}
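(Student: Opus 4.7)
The plan is to combine the exact-arithmetic analysis already established for $\gd$ (\cref{lem:BSLS:general}) with a careful perturbation argument that tracks how a multiplicative roundoff $|\vec{e}| \leq \delta |\x_+|$ propagates across all sub-objectives simultaneously. Concretely, let $\x_+ \defeq \gd(\x; L_i)$ and $\widehat{\x_+} \defeq \widehat{\gd}(\x; L_i)$, and write $\vec{e} \defeq \widehat{\x_+} - \x_+$, so that by \cref{req:gd} we have the element-wise bound $|\vec{e}| \leq \delta |\x_+|$ and hence $\|\vec{e}\|_2 \leq \delta \|\x_+\|_2$.

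For each sub-objective $j \in [m]$, the first step is to expand $\err_j(\widehat{\x_+})$ by $L_j$-smoothness of $f_j$ applied to the perturbation $\proj_j \vec{e}$, yielding
\begin{equation}
\err_j(\widehat{\x_+}) \leq \err_j(\x_+) + \langle \nabla f_j(\proj_j \x_+), \proj_j \vec{e} \rangle + \frac{L_j}{2}\|\proj_j \vec{e}\|_2^2.
\end{equation}
The cross term is handled by Cauchy-Schwarz combined with the smoothness bound $\|\nabla f_j(\proj_j \x_+)\|_2^2 \leq 2 L_j \err_j(\x_+)$, followed by AM-GM (of the form $\sqrt{ab} \leq \tfrac{1}{2}(a+b)$) to split the resulting product into an additive $\err_j(\x_+)$ contribution and a $\delta^2 \|\x_+\|_2^2$ contribution. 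The quadratic term $\tfrac{L_j}{2}\|\proj_j \vec{e}\|_2^2$ is simply bounded by $\tfrac{L_m}{2}\delta^2 \|\x_+\|_2^2$.

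Next I would control $\|\x_+\|_2^2$ by splitting $\|\x_+\|_2^2 \leq 2\|\x_+ - \x^\star\|_2^2 + 2\|\x^\star\|_2^2$ and then applying strong convexity of each $f_j$ (which gives $\|\proj_j(\x_+ - \x^\star)\|_2^2 \leq \tfrac{2}{\mu_j} \err_j(\x_+)$) to obtain
\begin{equation}
\|\x_+ - \x^\star\|_2^2 = \sum_{j \in [m]} \|\proj_j(\x_+ - \x^\star)\|_2^2 \leq \frac{2}{\mu_1}\sum_{j\in[m]}\err_j(\x_+) = \frac{2}{\mu_1}\ones^\top \vec{\err}(\x_+).
\end{equation}
Plugging this into the bound of the previous paragraph produces, for each coordinate $j$, a contribution proportional to $\delta L_m/\mu_1 = \delta \globalcond$ multiplying $\ones^\top \vec{\err}(\x_+)$, which in vector form is exactly the rank-one coupling $\delta \globalcond \ones \ones^\top$ promised in \cref{eq:bsls:Dhat:def}, plus an additive $\delta \|\x^\star\|_2^2 L_m \ones$ term.

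Finally, I would apply \cref{lem:BSLS:general} in the form $\vec{\err}(\x_+) \leq \mat{D}_i \vec{\err}(\x)$ to convert the bound from being in terms of $\vec{\err}(\x_+)$ to being in terms of $\vec{\err}(\x)$, giving exactly \cref{eq:lem:gd:finite}. The main obstacle I expect is bookkeeping the constants so that the coefficient of the rank-one coupling comes out to $5\delta\globalcond$ rather than something slightly larger; this requires carefully choosing the split in the AM-GM step (balancing the linear-in-$\vec{e}$ cross term against both the $\err_j$ and the quadratic $\|\vec{e}\|_2^2$ contributions) and absorbing lower-order terms (e.g., $L_j \leq L_m$ and $\mu_j \geq \mu_1$) into the single factor $\globalcond$. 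Once these constant-tracking choices are pinned down, the rest is a mechanical assembly of the per-coordinate bounds into the advertised matrix inequality.
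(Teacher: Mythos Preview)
Your proposal is correct and follows essentially the same approach as the paper: the paper isolates the perturbation step into a separate sensitivity lemma (\cref{lem:implication:req:gd:1}) showing $\vec{\err}(\widehat{\x}) \leq (\id + 5\delta\globalcond\ones\ones^\top)\vec{\err}(\x) + 2\delta L_m\|\x^\star\|_2^2\ones$ via the same smoothness expansion, weighted Young/AM--GM split on the cross term, and strong-convexity bound on $\|\x_+ - \x^\star\|_2^2$, and then composes this with $\vec{\err}(\gd(\x;L_i)) \leq \mat{D}_i\vec{\err}(\x)$ from \cref{lem:BSLS:general}. The only cosmetic difference is that the paper keeps $\tfrac{2}{\mu_j}$ per coordinate before passing to $L_j/\mu_k \leq \globalcond$, whereas you coarsen to $\tfrac{2}{\mu_1}$ immediately---both land on the same constant $5\delta\globalcond$ once the weighted split is chosen with parameter $\delta/L_j$ (your anticipated ``carefully choosing the split'' is exactly this).
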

\begin{remark}
	The key observation from \Cref{lem:gd:finite} is that under finite-precision arithmetic, the function error in $j$-th subspace (i.e., $\err_j$) also depends on the the errors from other subspace, as well as an constant additive term. 
	If the error in one of the subspaces is too large, it could flow into the other subspaces and ruins the progress elsewhere. 
  Consequently, the order of step-size schedule is crucial in finite-precision arithmetic.
\end{remark}

To prove \cref{lem:gd:finite}, we first study the sensitivity of potential $\vec{\err}$ under multiplicative perturbtaion. 
\begin{lemma}[Sensitivity of vector potential $\vec{\err}$ under multiplicative perturbation]
	\label{lem:implication:req:gd:1}
	Assuming $\widehat{\x}, \x$ satisfies 
	\begin{equation}
		|\widehat{\x} - {\x}| \leq \delta|\x|
	\label{eq:req:gd}
	\end{equation}
	for some $\delta < 1$,	then for any $j \in [m]$, 
	\begin{equation}
		 \err_j  (\widehat{\x}) \leq \err_j (\x) +  5 \delta \globalcond \sum_{k=1}^m \err_k(\x) + 2 \delta L_j \|\x^{\star}\|_2^2.
	\end{equation}
	In vector form we have (in a looser form)
	\begin{equation}
		\vec{\err}(\widehat{\x}) \leq (I + 5 \delta \globalcond \ones \ones^\top) \vec{\err}(\x) + 2 \delta L_m \|\x^{\star}\|_2^2  \ones.
	\end{equation}
\end{lemma}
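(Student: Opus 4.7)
The proof hinges on two classical ingredients -- the $L_j$-smoothness of $f_j$ and the $\mu_i$-strong convexity of each $f_i$ -- together with the fact that the element-wise bound $|\widehat{\x}-\x|\le \delta|\x|$ implies the Euclidean bound $\|\widehat{\x}-\x\|_2\le \delta\|\x\|_2$. The plan is to first Taylor-expand $\err_j(\widehat{\x})$ around $\x$ using smoothness, then bound the resulting $\|\x\|_2$ in terms of the potentials $\err_k(\x)$, and finally collapse everything with a couple of AM-GM steps.

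Concretely, by $L_j$-smoothness of $f_j$ and Cauchy--Schwarz,
\begin{equation}
\err_j(\widehat{\x})-\err_j(\x) \le \|\nabla f_j(\proj_j\x)\|_2 \cdot \|\proj_j(\widehat{\x}-\x)\|_2 + \tfrac{L_j}{2}\|\proj_j(\widehat{\x}-\x)\|_2^2.
\end{equation}
The standard smoothness bound gives $\|\nabla f_j(\proj_j\x)\|_2\le\sqrt{2L_j\err_j(\x)}$, while $\proj_j$ is a contraction, so $\|\proj_j(\widehat{\x}-\x)\|_2\le \delta\|\x\|_2$. The task thus reduces to controlling $\|\x\|_2^2$ in terms of the $\err_k(\x)$ and $\|\x^\star\|_2^2$.

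For the latter, I write $\x=\x^\star+(\x-\x^\star)$ and use that throughout the $\bsls$ analysis the iterates and the chosen optimum lie in the joint span of $\{\proj_i^\top\}_{i\in[m]}$ (this is preserved by each $\gd$ step since $\nabla f$ lies in that span, and will be maintained inductively in the finite-precision argument). Consequently $\|\x-\x^\star\|_2^2=\sum_i\|\proj_i(\x-\x^\star)\|_2^2$, and by $\mu_i$-strong convexity of $f_i$ each summand is at most $2\err_i(\x)/\mu_i\le 2\err_i(\x)/\mu_1$. This yields
\begin{equation}
\|\x\|_2^2 \le 2\|\x^\star\|_2^2 + \tfrac{4}{\mu_1}\sum_k \err_k(\x).
\end{equation}

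Plugging this into the smoothness expansion and splitting the cross term $\delta\sqrt{2L_j\err_j(\x)}\,\|\x\|_2$ via $\sqrt{a+b}\le\sqrt{a}+\sqrt{b}$ produces two pieces: one of the form $\delta\sqrt{L_j\err_j(\x)/\mu_1}\cdot\sqrt{\sum_k\err_k(\x)}$, which after AM-GM and $L_j/\mu_1\le\globalcond$ is absorbed into $\delta\globalcond\sum_k\err_k(\x)$; and one of the form $\delta\sqrt{L_j\err_j(\x)}\|\x^\star\|_2$, which by $2ab\le a^2+b^2$ is absorbed into $\delta L_j\|\x^\star\|_2^2+\delta\err_j(\x)$. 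The quadratic term $\tfrac{L_j\delta^2}{2}\|\x\|_2^2$ is handled directly by $\delta^2\le\delta$ and $L_j/\mu_1\le\globalcond$. Tallying the constants yields the coefficient $5\globalcond$ in front of $\sum_k\err_k(\x)$ and the additive $2\delta L_j\|\x^\star\|_2^2$. The vector form is immediate by stacking across $j\in[m]$ and noting $5\delta\globalcond\sum_k\err_k(\x)\,\ones = 5\delta\globalcond\,\ones\ones^\top\vec{\err}(\x)$.

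The main obstacle is purely bookkeeping: the cross term $\sqrt{\err_j(\x)}\|\x\|_2$ must be split so that the piece going to the $\sum_k \err_k(\x)$ bucket carries exactly a factor of $\globalcond$ (not $\sqrt{\globalcond}$), and the piece going to the $\|\x^\star\|^2$ bucket carries exactly $L_j$ (not $\sqrt{L_j L_m}$). The only non-trivial modeling choice is the implicit in-span assumption used to equate $\|\x-\x^\star\|_2^2$ with $\sum_i\|\proj_i(\x-\x^\star)\|_2^2$; this is automatic at the level of exact arithmetic and is controlled at the level of finite arithmetic by the inductive structure of $\widehat{\bsls}$.
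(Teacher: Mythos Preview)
Your proposal is correct and follows essentially the same route as the paper: smoothness expansion, bound $\|\nabla f_j(\proj_j\x)\|_2^2\le 2L_j\err_j(\x)$, control $\|\widehat{\x}-\x\|_2^2\le\delta^2\|\x\|_2^2$, split $\|\x\|_2^2$ into $\|\x^\star\|_2^2$ and $\|\x-\x^\star\|_2^2$, and use strong convexity plus $L_j/\mu_i\le\globalcond$. The only cosmetic difference is that the paper handles the cross term with a single parametrized Young inequality $\langle\nabla f_j,\proj_j(\widehat{\x}-\x)\rangle\le \tfrac{\delta}{2L_j}\|\nabla f_j\|_2^2+\tfrac{L_j}{2\delta}\|\proj_j(\widehat{\x}-\x)\|_2^2$ (yielding cleanly $(1+\delta)\err_j(\x)+\tfrac{L_j}{\delta}\|\widehat{\x}-\x\|_2^2$), whereas you go through Cauchy--Schwarz followed by $\sqrt{a+b}\le\sqrt{a}+\sqrt{b}$ and two AM--GM splits; your observation about the in-span assumption for $\|\x-\x^\star\|_2^2=\sum_i\|\proj_i(\x-\x^\star)\|_2^2$ is exactly what the paper uses implicitly.
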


\begin{proof}[Proof of \Cref{lem:implication:req:gd:1}]
		For any $j \in [m]$,
		\begin{align}
			& \err_j(\widehat{\x}) = f_j(\proj_j \widehat{\x}) - f_j^{\star}
    			\tag{by definition of $\err_j$}
			\\
			\leq & f_j (\proj_j \x) - f_j^{\star} + \left\langle \nabla f_j(\proj_j \x), \proj_j( \widehat{\x} -  \x ) \right\rangle + \frac{L_j}{2} \|\proj_j (\widehat{\x} -  \x )\|_2^2
			\tag{by $L_j$-smoothness of $f_j$}
			\\
			\leq & f_j (\proj_j \x) - f_j^{\star} +
			\frac{\delta}{2 L_j}
			\left\|\nabla f_j(\proj_j \x)  \right\|_2^2 
			+
			\frac{L_j}{2 \delta}
			\left\| \proj_j( \widehat{\x} -  \x )  \right\|_2^2
			 + \frac{L_j}{2} \|\proj_j (\widehat{\x} -  \x )\|_2^2
			 \tag{by Cauchy-Schwartz inequality}
			 \\
			\leq & f_j (\proj_j \x) - f_j^{\star} +
			\frac{\delta}{2 L_j}
			\left\|\nabla f_j(\proj_j \x)  \right\|_2^2 
			+
			\frac{L_j}{\delta}
			\left\| \proj_j( \widehat{\x} -  \x )  \right\|_2^2
			\tag{since $\delta \leq 1$}
			\\
			\leq & (1 + \delta) (f_j (\proj_j \x) - f_j^{\star}) +
			\frac{L_j}{\delta}
			\left\| \proj_j( \widehat{\x} -  \x )  \right\|_2^2.
			\tag{by $L_j$-smoothness of $f_j$}
			\\
			\leq & (1 + \delta) \err_j(\x) + \frac{L_j}{\delta} 			\| \widehat{\x} -  \x \|_2^2
			\tag{by definition of $\err_j$}
		\end{align}
		By assumption \cref{eq:req:gd}
		\begin{equation}
			\| \widehat{\x} -  \x \|_2^2
			\leq 
			\delta^2 \|\x\|_2^2
			\leq
			2 \delta^2 \| \x - \x^{\star} \|_2^2 + 2 \delta^2 \| \x^{\star} \|_2^2,
			\tag{by Cauchy-Schwartz inequality}
		\end{equation}
		and strong convexity of $f_i$'s
		\begin{equation}
			\| \x - \x^{\star} \|_2^2 
			=
			\sum_{i \in [m]} \| \proj_i  (\x - \x^{\star} )\|_2^2 
			\leq
			\sum_{i \in [m]} \frac{2}{\mu_i} \err_i (\x),
		\end{equation}
		we arrive at
		\begin{equation}
			\frac{L_j}{\delta} \| (\widehat{\x} -  \x) \|_2^2 
			\leq 
			2 \delta L_j \|\x^{\star}\|_2^2 +	\sum_{i \in [m]} \frac{4 \delta L_j}{\mu_i} \err_i(\x)
			\leq 
			2 \delta L_j \|\x^{\star}\|_2^2 + 4 \delta \globalcond	\sum_{i \in [m]} \err_i(\x),
		\end{equation}
		where the last inequality is due to $\frac{L_j}{\mu_i} \leq \globalcond$ by definition of $\globalcond$. 
		In summary		
		\begin{align}
			\err_j (\widehat{\x}) & \leq (1 + \delta) \err_j (\x) + 4 \delta \globalcond \sum_{i \in [m]} \err_i(\x) + 2 \delta L_j \|\x^{\star}\|_2^2
			\\
			& \leq
			\err_j (\x) + 5 \delta \globalcond \sum_{i \in [m]} \err_i(\x) + 2 \delta L_j \|\x^{\star}\|_2^2.
		\end{align}
		In vector form we have (since $L_1 \leq L_2 \leq \cdots \leq L_m$)
		\begin{equation}
			\vec{\err}(\widehat{\x}) \leq (I + 5 \delta \globalcond \ones \ones^\top) \vec{\err}(\x) + 2 \delta L_m \|\x^{\star}\|_2^2 \ones,
		\end{equation}
		completing the proof.
\end{proof}
With \Cref{lem:implication:req:gd:1} at hands we are ready to prove \Cref{lem:gd:finite}:
\begin{proof}[Proof of \Cref{lem:gd:finite}]
	Apply \cref{lem:implication:req:gd:1}, we have
	\begin{equation}
		\vec{\err}(\widehat{\gd} (\x;L_i)) \leq (\id + 5 \delta \globalcond \ones \ones^\top) \vec{\err}(\gd (\x;L_i)) + 2 \delta L_m \|\x^{\star}\|_2^2 \cdot \ones.
	\end{equation}
	By \Cref{lem:BSLS:general} from exact \bsls\ analysis we have
	\begin{equation}
		\vec{\err}({\gd} (\x;L_i)) \leq \mat{D}_i \vec{\err}(\x).
	\end{equation}
	Combining the two inequalities above yields \cref{lem:implication:req:gd:1}.
\end{proof}

\subsection{Inductively bound the progress of inexact $\bsls$ by matrix inequalities}
\label{sec:bsls:inexact:2}
In the following \cref{lem:bsls:stab:abs}, we iteratively construct the bound of $\vec{\err}$ after executing $\widehat{\bsls_i}$.
\begin{lemma}[Estimate the progress of $\widehat{\bsls_i}$ by matrix inequalities]
	\label{lem:bsls:stab:abs}
	Considering multiscale optimization problem (\Cref{def:multiscale_problem}), and assuming \cref{req:gd}, define the following three sequences of $m \times m$  matrices $\{\mat{F}_i\}_{i=1}^{m+1}$, $\{\mat{E}_i\}_{i=1}^{m+1}$, $\{\mat{Z}_i\}_{i=1}^{m+1}$ as follows:
	\begin{equation}
		\mat{F}_{m+1} \defeq \id, \qquad \mat{E}_{m+1} \defeq \mat{0}, \qquad \mat{Z}_{m+1} \defeq \mat{0}
	\end{equation}
	and for $i = m, m-1$ down to $1$, define
	\begin{equation}
		\mat{F}_i \defeq \left( \mat{F}_{i+1} \mat{D}_i \right)^{T_i} \mat{F}_{i+1}, \quad 
		\mat{E}_i \defeq 
		\left( (\mat{F}_{i+1} + \mat{E}_{i+1})\widehat{\mat{D}_{i}} \right)^{T_i} 	(\mat{F}_{i+1} + \mat{E}_{i+1}) 
		- \left( \mat{F}_{i+1} \mat{D}_i \right)^{T_i} \mat{F}_{i+1}.
	\end{equation}
	and
	\begin{equation}
		\mat{Z}_i \defeq 	((\mat{F}_{i+1} + \mat{E}_{i+1}) \widehat{\mat{D}_i})^{T_i} \mat{Z}_{i+1} + 
			\sum_{t_i=0}^{T_i-1}
			\left( (\mat{F}_{i+1} + \mat{E}_{i+1})\widehat{\mat{D}_{i}} \right)^{t_i} \left( \mat{Z}_{i+1} + \mat{F}_{i+1} + \mat{E}_{i+1} \right),
	\end{equation}
	where $\mat{D}_i$ and $\widehat{\mat{D}_i}$ were defined in \cref{eq:bsls:D:def,eq:bsls:Dhat:def}.
	Then,
	\begin{enumerate}[(a),leftmargin=*]
		\item For any $i \in [m+1]$, $\mat{F}_i$, $\mat{E}_i$ and $\mat{Z}_i$ are non-negative matrices.
		\item For any $i \in [m]$, the following bound holds
		\begin{equation}
			\vec{\err}(\widehat{\bsls_{i}}(\x)) \leq (\mat{F}_i + \mat{E}_i) \vec{\err}(\x) + 2 \delta  L_m \|\x^{\star}\|_2^2 \mat{Z}_i  \ones.
		\end{equation} 
	\end{enumerate}
\end{lemma}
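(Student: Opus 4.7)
The plan is to establish both claims by induction on $i$, running downward from $i=m+1$ to $i=1$. For (a), non-negativity is immediate once one observes that $\mat{D}_i$, $\widehat{\mat{D}_i}=(\id+5\delta\globalcond\ones\ones^\top)\mat{D}_i$, and the initializers $\mat{F}_{m+1}=\id$, $\mat{E}_{m+1}=\mat{0}$, $\mat{Z}_{m+1}=\mat{0}$ are all entrywise non-negative. The recursions for $\mat{F}_i$ and $\mat{Z}_i$ only involve products and sums of non-negative matrices; for $\mat{E}_i$ one notes that $(\mat{F}_{i+1}+\mat{E}_{i+1})\widehat{\mat{D}_i}\geq\mat{F}_{i+1}\mat{D}_i$ entrywise (since $\widehat{\mat{D}_i}\geq\mat{D}_i$ and all factors are non-negative), so the subtracted term in the definition of $\mat{E}_i$ is dominated.

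For (b), the base case $i=m$ uses Lemma~\ref{lem:gd:finite} directly. Since $\widehat{\bsls_m}(\x)$ is just $T_m$ successive applications of $\widehat{\gd}(\cdot;L_m)$, iterating the one-step bound $\vec{\err}(\widehat{\gd}(\y;L_m))\leq\widehat{\mat{D}_m}\vec{\err}(\y)+2\delta L_m\|\x^{\star}\|_2^2\ones$ and using non-negativity of $\widehat{\mat{D}_m}$ yields
\begin{equation}
\vec{\err}(\widehat{\bsls_m}(\x))\leq \widehat{\mat{D}_m}^{T_m}\vec{\err}(\x)+2\delta L_m\|\x^{\star}\|_2^2\sum_{t=0}^{T_m-1}\widehat{\mat{D}_m}^t\ones,
\end{equation}
which is exactly the claim after unfolding the definitions at $i=m$ (there $\mat{F}_m+\mat{E}_m=\widehat{\mat{D}_m}^{T_m}$ and $\mat{Z}_m=\sum_{t=0}^{T_m-1}\widehat{\mat{D}_m}^t$).

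For the inductive step $i+1\Rightarrow i$ with $i<m$, I abbreviate $\mat{M}\defeq\mat{F}_{i+1}+\mat{E}_{i+1}$ and $\mat{W}\defeq\widehat{\mat{D}_i}$, and track the iterates $\x^{(0)},\x^{(1)},\ldots,\x^{(T_i)}$ of $\widehat{\bsls_i}$. One round applies $\widehat{\bsls_{i+1}}$ (inductive hypothesis: multiplies $\vec{\err}$ by $\mat{M}$ and adds $c\mat{Z}_{i+1}\ones$, where $c\defeq 2\delta L_m\|\x^{\star}\|_2^2$) and then $\widehat{\gd}(\cdot;L_i)$ (Lemma~\ref{lem:gd:finite}: multiplies by $\mat{W}$ and adds $c\ones$), giving
\begin{equation}
\vec{\err}(\x^{(t+1)})\leq \mat{W}\mat{M}\,\vec{\err}(\x^{(t)})+c(\mat{W}\mat{Z}_{i+1}+\id)\ones.
\end{equation}
Telescoping $t=0,\ldots,T_i-1$ and then applying one final $\widehat{\bsls_{i+1}}$ to $\x^{(T_i)}$ produces a multiplicative coefficient $\mat{M}(\mat{W}\mat{M})^{T_i}=(\mat{M}\mat{W})^{T_i}\mat{M}$ and additive coefficient $\mat{M}\sum_{t=0}^{T_i-1}(\mat{W}\mat{M})^t(\mat{W}\mat{Z}_{i+1}+\id)\ones+\mat{Z}_{i+1}\ones$.

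The remaining work is purely algebraic bookkeeping: match the multiplicative coefficient with $\mat{F}_i+\mat{E}_i=((\mat{F}_{i+1}+\mat{E}_{i+1})\widehat{\mat{D}_i})^{T_i}(\mat{F}_{i+1}+\mat{E}_{i+1})$, and match the additive coefficient with $\mat{Z}_i$ by pushing the leading $\mat{M}$ inside, re-indexing $\sum_{t=0}^{T_i-1}(\mat{M}\mat{W})^{t+1}\mat{Z}_{i+1}+\mat{Z}_{i+1}=\sum_{t=0}^{T_i}(\mat{M}\mat{W})^t\mat{Z}_{i+1}$, and combining with $\sum_{t=0}^{T_i-1}(\mat{M}\mat{W})^t\mat{M}$ to recover the form in the lemma. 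The main difficulty is therefore not conceptual but careful tracking: one must keep $\mat{F}_i$ (the ``exact-arithmetic'' contribution through $\mat{D}_i$) separated from the finite-precision excess $\mat{E}_i$, and rely on non-negativity of every matrix involved so that the entrywise inequalities propagate cleanly through every product.
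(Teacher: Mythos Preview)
Your proposal is correct and follows essentially the same approach as the paper: downward induction on $i$, using Lemma~\ref{lem:gd:finite} for the $\widehat{\gd}$ step and the induction hypothesis for the $\widehat{\bsls_{i+1}}$ step, then telescoping. The only cosmetic difference is that the paper introduces $\widehat{\bsls_{m+1}}\defeq\mathrm{Id}$ so the base case is trivial at $i=m+1$, and it groups the composition as ``$\widehat{\gd}$ then $\widehat{\bsls_{i+1}}$'' (one-round matrix $(\mat{F}_{i+1}+\mat{E}_{i+1})\widehat{\mat{D}_i}$), which lands directly on the form of $\mat{F}_i+\mat{E}_i$ and $\mat{Z}_i$ without the re-indexing you outline.
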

\begin{proof}[Proof of \Cref{lem:bsls:stab:abs}]
	\begin{enumerate}[(a), leftmargin=0pt, itemindent=15pt]
		\item 	We first prove (a) by induction in reverse order (from $m+1$ down to $1$). 

		For $i = m+1$ the statement apparently holds.
		Now assume (a) holds for $i+1$, then we study the case of $i$.
		For $\mat{F}_i$ we have $\mat{F}_i = (\mat{F}_{i+1}  \mat{D}_i)^{T_i} \mat{F}_{i+1} \geq \mat{0}$ since both $\mat{F}_{i+1}$ and $\mat{D}_i$ are non-negative.
		For $\mat{E}_i$ we have
		\begin{equation}
			\mat{E}_i = ((\mat{F}_{i+1} + \mat{E}_{i+1}) (\id + 5 \delta \globalcond \ones \ones^\top) \mat{D}_i)^{T_i} (\mat{F}_{i+1} + \mat{E}_{i+1}) - (\mat{F}_{i+1} \mat{D}_i)^{T_i} \mat{F}_{i+1} \geq \mat{0}.
		\end{equation}
	
		The non-negativity of $\mat{Z}_i$ is obvious from the non-negativity of $\mat{F}_i$ and $\mat{E}_i$.
		
		\item 
		Next, we prove (b) by induction in reverse order. 
		To simplify the induction let us define $\widehat{\bsls_{m+1}} \defeq \mathrm{Id}$ and prove (b) for all $i \in [m+1]$. 
		The inequality holds trivially for $i = m+1$. 
		Now assume the inequality holds for the case of $i+1$, then we study the case of $i$.

		By definition of $\widehat{\bsls_{i}}$ (including $i = m$),
		\begin{small}
		\begin{equation}
			\widehat{\bsls_i} = \left( \widehat{\bsls_{i+1}} \circ \widehat{\gd}(\cdot;L_i) \right)^{T_i} \circ  \widehat{\bsls_{i+1}}
			=
			\underbrace{\widehat{\bsls_{i+1}} \circ \widehat{\gd}(\cdot;L_i) \circ \cdots \circ \widehat{\bsls_{i+1}} \circ \widehat{\gd}(\cdot;L_i)}_{\text{$T_i$ iterations of $\widehat{\bsls_{i+1}} \circ \widehat{\gd}(\cdot;L_i)$}}
			\circ \widehat{\bsls_{i+1}}
		\end{equation}
		\end{small}
		
		By induction hypothesis, for any $\x$,
		\begin{equation}
			\vec{\err}(\widehat{\bsls_{i+1}} (\x)) \leq (\mat{F}_{i+1} + \mat{E}_{i+1}) \vec{\err}(\x) + 2 \delta L_m \|\x^{\star}\|_2^2 \cdot \mat{Z}_{i+1} \ones.
		\end{equation}
		For $\widehat{\gd}(\cdot;L_i)$ step we have by \Cref{lem:gd:finite} (for any $\x$)
		\begin{equation}
			\vec{\err}(\gd(\x; L_i)) \leq \widehat{\mat{D}_{i}} \vec{\err}(\x) + 2 \delta L_m \|\x^{\star}\|_2^2 \ones.
		\end{equation}
		Combining the above two inequalities, we obtain 
		\begin{equation}
			\vec{\err}(\widehat{\bsls_{i+1}} (\widehat{\gd}(\x; L_i))) \leq (\mat{F}_{i+1} + \mat{E}_{i+1})\widehat{\mat{D}_{i}} \vec{\err}(\x) + 2 \delta L_m \|\x^{\star}\|_2^2 \left( \mat{Z}_{i+1} + \mat{F}_{i+1} + \mat{E}_{i+1} \right) \ones.
		\end{equation} 
		Telescoping
		\begin{small}
		\begin{align}
				& \vec{\err}(\widehat{\bsls_{i}}(\x))
			\\ 
			\leq & \left( (\mat{F}_{i+1} + \mat{E}_{i+1})\widehat{\mat{D}_{i}} \right)^{T_i} 
			(\mat{F}_{i+1} + \mat{E}_{i+1}) \vec{\err}(\x) 
			\\
			& + 2 \delta L_m \|\x^{\star}\|_2^2 
		     \left[ 
			((\mat{F}_{i+1} + \mat{E}_{i+1}) \widehat{\mat{D}_i})^{T_i} \mat{Z}_{i+1} + 
			\sum_{t_i=0}^{T_i-1}
			\left( (\mat{F}_{i+1} + \mat{E}_{i+1})\widehat{\mat{D}_{i}} \right)^{t_i} \left( \mat{Z}_{i+1} + \mat{F}_{i+1} + \mat{E}_{i+1} \right) \right] \ones 
			\\
			= & (\mat{F}_i + \mat{E}_i) \vec{\err}(\x) + 2 \delta  L_m \|\x^{\star}\|_2^2 \mat{Z}_i  \ones.
			\tag{by definition of $\mat{F}_i$, $\mat{E}_i$ and $\mat{Z}_i$}
		\end{align}
		\end{small}
	\end{enumerate}
\end{proof}

Next, we estimate the upper bounds of $\|\mat{F}_i\|_1$ (in \Cref{sec:lem:bsls:stab:F:bound}), $\|\mat{E}_i\|_1$ (in \Cref{sec:lem:bsls:stab:E:bound}), and $\|\mat{Z}_i\|_1$ (in \Cref{sec:lem:bsls:stab:Z:bound}).

\subsubsection{Upper bound of $\mat{F}$}
\label{sec:lem:bsls:stab:F:bound}
We first bound $\|\mat{F}_i\|_1$ with the following lemma.
\begin{lemma}[Upper bound of $\|\mat{F}_i\|_1$]
	\label{lem:bsls:stab:F:bound}
	Using the same notation as in \Cref{lem:bsls:stab:abs}, and in addition assuming $T_1, \ldots, T_m$ satisfies \cref{eq:bsls:t:lb}, then the following statements hold
	\begin{enumerate}[(a), leftmargin=*]
		\item For any $i \in [m+1]$, $\mat{F}_i$ is a diagonal matrix of the form $\prod_{j=i}^m \left(  \mat{D}_j^{T_j \cdot \prod_{k=i}^{j-1} (T_k+1)} \right)$.
		\item For any $i \in [m]$, $\|\mat{F}_{i+1} \mat{D}_i\|_1 \leq 1$.
		\item For any $i \in [m]$, $\|\mat{F}_i\|_1 \leq 1$.
		\item $\|\mat{F}_1\|_1 \leq \frac{\epsilon}{f(\x^{(0)}) - f^{\star}}$.
	\end{enumerate}
\end{lemma}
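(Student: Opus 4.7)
My plan is to prove the four statements in order, exploiting throughout that every $\mat{D}_i$ is diagonal, so all $\mat{D}_i$'s commute and every $\mat{F}_i$ will itself be diagonal. Statements (a)--(d) then reduce to entrywise inequalities that we can verify directly, with (d) essentially recycling the exact-arithmetic calculation from the proof of \cref{thm:bsls:recursive}.

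For (a), I would use descending induction on $i$ starting from $\mat{F}_{m+1} = \id$ (empty product). Since $\mat{F}_{i+1}$ and $\mat{D}_i$ are simultaneously diagonal, they commute, so
\[
\mat{F}_i = (\mat{F}_{i+1}\mat{D}_i)^{T_i}\mat{F}_{i+1} = \mat{F}_{i+1}^{T_i+1}\mat{D}_i^{T_i},
\]
and substituting the inductive product form of $\mat{F}_{i+1}$ immediately gives the claimed formula.

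For (b), since $\mat{F}_{i+1}\mat{D}_i$ is diagonal by (a), its $\ell_1$-operator norm equals $\max_j (\mat{F}_{i+1}\mat{D}_i)_{jj}$. The cases $j \le i$ yield entries $1$ and $1-\kappa_i^{-1}$ respectively and are trivial. For $j > i$, taking logs and using $\log(1-x) \leq -x$ turns the claim into an inequality of the form $2\log\globalcond \cdot A \leq \kappa_j^{-1}T_j \cdot B$, where the trivial telescoping identity
\[
\prod_{k=i+1}^{j-1}(T_k+1) \;=\; 1 + \sum_{\ell=i+1}^{j-1} T_\ell \prod_{k=i+1}^{\ell-1}(T_k+1)
\]
relates $A$ and $B$ precisely so that the hypothesis $\kappa_j^{-1}T_j \geq 2\log\globalcond+1$ (from $T_j \geq \kappa_j(2\log\globalcond+1)$, available since $j > i \ge 1$ forces $j \ge 2$) closes the gap with a unit to spare. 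Given (b), statement (c) falls out by descending induction: since all factors are diagonal and nonnegative, $\|\mat{F}_i\|_1 \leq \|\mat{F}_{i+1}\mat{D}_i\|_1^{T_i}\|\mat{F}_{i+1}\|_1 \leq 1\cdot 1 = 1$.

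Finally, (d) is essentially a restatement of the exact-arithmetic bound from \cref{thm:bsls:recursive}. Expanding $(\mat{F}_1)_{jj}$ using (a) and $\log(1-x)\le -x$ yields $\log(\mat{F}_1)_{jj} \le \gamma_j$, where $\gamma_j$ is exactly the exponent defined in the proof of \cref{thm:bsls:recursive}. That proof already established $\gamma_m \le \cdots \le \gamma_1 \le \log\bigl(\epsilon/(f(\x^{(0)})-f^\star)\bigr)$ using the telescoping bound on $\gamma_j - \gamma_{j-1}$ and the $T_1$ hypothesis, so the conclusion follows. The main obstacle here is purely bookkeeping: carefully tracking the nested products $\prod_{k}(T_k+1)$ through the recursive definitions of $\mat{F}_i$, $\mat{E}_i$, and $\mat{Z}_i$; the substantive content is the same telescoping argument that powers the exact-arithmetic proof.
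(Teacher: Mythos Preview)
Your proposal is correct and follows essentially the same approach as the paper: the paper proves (a) by the same descending induction using commutativity of the diagonal $\mat{D}_j$'s, proves (c) by the same submultiplicativity argument, and for (b) and (d) simply writes ``follows by the same analysis as in the exact arithmetic proof in \cref{thm:bsls:recursive}'', which is exactly what you have spelled out. Your direct use of the telescoping identity $\prod_{k=i+1}^{j-1}(T_k+1)=1+\sum_{\ell=i+1}^{j-1}T_\ell\prod_{k=i+1}^{\ell-1}(T_k+1)$ for (b) is a mild repackaging of the $\gamma_j-\gamma_{j-1}\le 0$ argument in \cref{thm:bsls:recursive}, but the content is the same.
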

\begin{proof}[Proof of \Cref{lem:bsls:stab:F:bound}]
	\begin{enumerate}[(a), leftmargin=0pt, itemindent=15pt]
		\item  The first statement (a) follows immediately by definition of $\mat{F}_i$'s. 
		We prove by induction in reverse order from $m+1$ down to $1$. 
		For $i=m+1$ we have $\mat{F}_{m+1} = \id$ which is consistent.
		Now assume the statement holds for the case of $i+1$, then the case of $i$ also holds in that
		\begin{small}
		\begin{equation}
			\mat{F}_i = (\mat{F}_{i+1} \mat{D}_i)^{T_i} \mat{F}_{i+1} = \left(\prod_{j=i+1}^m \left( \mat{D}_j^{T_j \prod_{k=i+1}^{j-1} (T_k+1)} \right) \cdot \mat{D}_i \right)^{T_i} 
			\prod_{j=i+1}^m \left( \mat{D}_j^{T_j \prod_{k=i+1}^{j-1} (T_k+1)} \right) 
			= \prod_{j=i}^m \left(  \mat{D}_j^{T_j \cdot \prod_{k=i}^{j-1} (T_k+1)} \right),
		\end{equation}
		\end{small}
		where the last equality is due to the commutability among diagonal matrices.
		\item Follows by the same analysis as in the exact arithmetic  proof in \Cref{thm:bsls:recursive}.
		\item By (b), $\|\mat{F}_{i}\|_1 = \|(\mat{F}_{i+1} \mat{D}_i)^{T_i} \mat{F}_{i+1}\|_1 \leq \|(\mat{F}_{i+1} \mat{D}_i)\|_1^{T_i} \|\mat{F}_{i+1}\|_1 \leq 1$.
		\item Follows by the same analysis as in the exact arithmetic  proof in \Cref{thm:bsls:recursive}.
	\end{enumerate}
\end{proof}

\subsubsection{Upper bound of $\mat{E}$}
\label{sec:lem:bsls:stab:E:bound}
Before we state the upper bound of $\|\mat{E}_i\|_1$, we first establish the following \Cref{lem:bsls:stab:diff:bound},  which is essential towards the bound for $\mat{E}_i$'s and $\mat{Z}_i$'s.
\begin{lemma}
	\label{lem:bsls:stab:diff:bound}
	Using the same notation of \Cref{lem:bsls:stab:abs} and assuming the same assumptions of \Cref{lem:bsls:stab:F:bound}, and in addition assume $\delta \leq \frac{1}{10 m\globalcond}$, then
	for any $t \geq 0$, the following inequality holds
	\begin{align}
		\left\| ((\mat{F}_{i+1} + \mat{E}_{i+1}) \widehat{\mat{D}_i})^{t} - (\mat{F}_{i+1} \mat{D}_i)^{t} \right\|_1 
		\leq
		\begin{cases}
			\varphi(5  t \delta m \globalcond  ) & i = m
			\\
			\varphi(2 t \globalcond^2  \|\mat{E}_{i+1}\|_1 + 5 t \delta m  \globalcond^3  )   & i < m,
		\end{cases}
	\end{align}	
	where $\varphi(x) \defeq xe^x$.
\end{lemma}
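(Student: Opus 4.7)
\textbf{Proof plan for Lemma~\ref{lem:bsls:stab:diff:bound}.}

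The natural starting point is the telescoping identity
\begin{equation}
A^t - B^t = \sum_{k=0}^{t-1} A^k (A-B) B^{t-1-k},
\end{equation}
applied with $A := (\mat{F}_{i+1}+\mat{E}_{i+1})\widehat{\mat{D}_i}$ and $B := \mat{F}_{i+1}\mat{D}_i$, so that by submultiplicativity of $\|\cdot\|_1$,
\begin{equation}
\bigl\| A^t - B^t\bigr\|_1 \leq \sum_{k=0}^{t-1} \|A\|_1^{k}\,\|A-B\|_1\,\|B\|_1^{t-1-k}.
\end{equation}
From Lemma~\ref{lem:bsls:stab:F:bound}(b), $\|B\|_1 = \|\mat{F}_{i+1}\mat{D}_i\|_1 \leq 1$. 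Thus the whole task reduces to producing a clean bound on $\|A-B\|_1$ and using $\|A\|_1 \leq 1 + \|A-B\|_1$.

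I would then split $A - B$ into the two ``sources of error'': the multiplicative perturbation of $\mat{D}_i$ and the extra $\mat{E}_{i+1}$ term,
\begin{equation}
A - B = 5\delta\globalcond\, \mat{F}_{i+1}\ones\ones^\top \mat{D}_i \;+\; \mat{E}_{i+1}\widehat{\mat{D}_i},
\end{equation}
using $\widehat{\mat{D}_i} = (\id+5\delta\globalcond\ones\ones^\top)\mat{D}_i$. For the case $i=m$, $\mat{F}_{m+1}=\id$, $\mat{E}_{m+1}=\mat{0}$, and $\|\mat{D}_m\|_1\leq 1$, so $\|A-B\|_1 \leq 5\delta\globalcond m$. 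For $i<m$, I estimate each piece separately. Since $\mat{F}_{i+1}$ is diagonal with nonnegative entries bounded by $\|\mat{F}_{i+1}\|_1\leq 1$ (Lemma~\ref{lem:bsls:stab:F:bound}(a,c)), we have $\ones^\top \mat{F}_{i+1}\ones = \tr(\mat{F}_{i+1}) \leq m$; combined with $\|\mat{D}_i\|_1\leq \globalcond^2$, this gives $\|\mat{F}_{i+1}\ones\ones^\top\mat{D}_i\|_1 \leq m\globalcond^2$, hence the first piece contributes at most $5\delta m \globalcond^3$. For the second piece, the assumption $\delta\leq\tfrac{1}{10m\globalcond}$ gives $\|\widehat{\mat{D}_i}\|_1 \leq (1+5\delta\globalcond m)\globalcond^2 \leq 2\globalcond^2$, so it contributes at most $2\globalcond^2\|\mat{E}_{i+1}\|_1$. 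Summing yields the per-step bound $\|A-B\|_1 \leq \xi$ with $\xi := 2\globalcond^2\|\mat{E}_{i+1}\|_1 + 5\delta m\globalcond^3$.

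Finally I put this into the telescoping sum. Since $\|A\|_1 \leq 1+\xi$ and $\|B\|_1\leq 1$,
\begin{equation}
\bigl\| A^t - B^t\bigr\|_1 \leq \sum_{k=0}^{t-1} (1+\xi)^k \,\xi \leq t\xi(1+\xi)^t \leq t\xi\, e^{t\xi} = \varphi(t\xi),
\end{equation}
which recovers both advertised bounds after specializing $\xi$ to the two cases.

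The proof is essentially a bookkeeping argument once the right decomposition of $A-B$ is chosen; I do not anticipate any genuine obstacle. The only subtlety worth being careful about is the bound $\|\mat{F}_{i+1}\ones\ones^\top\mat{D}_i\|_1 \leq m\globalcond^2$ (as opposed to the naive $\globalcond^2$ one would get from $\|\mat{F}_{i+1}\|_1\cdot\|\ones\ones^\top\|_1\cdot\|\mat{D}_i\|_1$), which is where it matters that $\mat{F}_{i+1}$ is diagonal so that $\ones^\top\mat{F}_{i+1}\ones$ equals its trace rather than something larger; this is precisely the factor that produces the correct $m$ dependence in the final exponential.
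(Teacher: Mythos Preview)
Your proof is correct and matches the paper's approach: both first bound $\|\Xi_i\|_1 := \|A-B\|_1$ identically in the two cases and then deduce $\|A^t-B^t\|_1 \leq \varphi(t\|\Xi_i\|_1)$. The only difference is cosmetic---the paper expands $(B+\Xi)^t$ by counting words and applying a binomial-coefficient identity, whereas your telescoping sum $A^t-B^t=\sum_k A^k(A-B)B^{t-1-k}$ reaches the same bound a bit more directly; note also that your closing parenthetical is off (the naive submultiplicative bound $\|\mat{F}_{i+1}\|_1\|\ones\ones^\top\|_1\|\mat{D}_i\|_1$ already gives $m\globalcond^2$, since $\|\ones\ones^\top\|_1=m$), but this is harmless.
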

\begin{proof}[Proof of \Cref{lem:bsls:stab:diff:bound}]
	Denote $ \bXi_i \defeq (\mat{F}_{i+1} + \mat{E}_{i+1}) \widehat{\mat{D}_i} - \mat{F}_{i+1} \mat{D}_i$, then
	\begin{align}
		&  \left\| ((\mat{F}_{i+1} + \mat{E}_{i+1}) \widehat{\mat{D}_i})^{t} - (\mat{F}_{i+1} \mat{D}_i)^{t}  \right\|
		=  \left\| \left( (\mat{F}_{i+1} \mat{D}_i + \bXi_i \right)^t 
		- (\mat{F}_{i+1} \mat{D}_i)^{t}  \right\|_1
		\tag{by definition of $\bXi_i$}
		\\
		\leq & \sum_{s=1}^{t} {t \choose s} \|\bXi_i\|_1^s		\left\| \mat{F}_{i+1} \mat{D}_i \right\|_1^{t-s} 
		\leq \sum_{s=1}^{t} {t \choose s} \|\bXi_i\|_1^s
		\tag{since $\left\| \mat{F}_{i+1} \mat{D}_i \right\|_1 \leq 1$ by \Cref{lem:bsls:stab:F:bound}}
		\\
		\leq &  \|\bXi_i\|_1 t \sum_{s=0}^{t-1} {t-1\choose s} \|\bXi_i\|_1^s
		\tag{by helper \Cref{helper:choose}}
		\\
		= & \|\bXi_i\|_1 t \left( 1 + \|\bXi_i\|_1 \right)^{t-1} 
		\leq 
		\|\bXi_i\|_1 t \exp (\|\bXi_i\|_1 t).
		\label{eq:bsls:stab:diff:bound:1}
	\end{align}
	It remains to bound $\|\bXi_i\|_1$. 
	For $i = m$ we have $\mat{E}_{m+1} = 0$, $\mat{F}_{m+1} = \id$, $\|\mat{D}_m\|_1 \leq 1$, which suggests
	\begin{equation}
		\|\bXi_m\|_1 = \|\widehat{\mat{D}_m} - \mat{D}_m\|_1  \leq 5 \delta \globalcond \|\ones \ones^\top\|_1 = 5 \delta m \globalcond .
	\end{equation}

	For other $i < m$, note that $\|\mat{D}_i\|_1 \leq  \globalcond^2$, $\| \mat{F}_{i+1}\|_1 \leq 1$ (by \Cref{lem:bsls:stab:F:bound}), $\|\ones \ones^\top\|_1 = m$, we have
	\begin{align}
		& \|\bXi_i\|_1 = \left\| (\mat{F}_{i+1} + \mat{E}_{i+1}) (\id + 5 \delta \globalcond \ones \ones^\top) \mat{D}_i - \mat{F}_{i+1} \mat{D}_i \right\|_1
		\\
		\leq & \| 5 \delta \globalcond \ones \ones^\top \mat{D}_i \|_1 + \|\mat{E}_{i+1} (\id + 5 \delta \globalcond \ones \ones^\top) \mat{D}_i \|_1
		\leq 5 \delta m  \globalcond^3 + 1.5  \globalcond^2  \| \mat{E}_{i+1} \|_1  \tag{since $\delta \leq \frac{1}{10m\globalcond}$}.
	\end{align}
	Substituting back to \cref{eq:bsls:stab:diff:bound:1} completes the proof.
\end{proof}

\begin{lemma}[Upper bound of $\|\mat{E}_i\|_1$]
	\label{lem:bsls:stab:E:bound}
	Using the same notation of \Cref{lem:bsls:stab:abs} and assuming the same assumptions of \Cref{lem:bsls:stab:F:bound}, and in addition assume
	\begin{equation}
		\delta \leq \frac{1}{m (10 \globalcond )^{2m-1} \prod_{i \in [m]} T_i},
		\label{eq:bsls:stab:delta:bound:1}
	\end{equation}
	then the following inequality holds for any $i \in [m]$,
	\begin{equation}
		\|\mat{E}_i\|_1 \leq  \delta m \cdot (10  \globalcond )^{2(m-i)+1} \cdot \prod_{j=i}^{m} T_j.
	\end{equation}
\end{lemma}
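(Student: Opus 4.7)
I would prove the bound by reverse induction on $i$, from $i=m$ down to $i=1$, using the recursive definition of $\mat{E}_i$ together with \cref{lem:bsls:stab:diff:bound} and \cref{lem:bsls:stab:F:bound}. Throughout, write $B_i \defeq \delta m \cdot (10\globalcond)^{2(m-i)+1} \cdot \prod_{j=i}^{m} T_j$ for the target bound; note the key recurrence $B_i = 100\,\globalcond^2 \, T_i \, B_{i+1}$, which provides ample slack for the multiplicative losses that appear in the induction step.

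\textbf{Base case ($i=m$).} Here $\mat{E}_{m+1}=\mat{0}$ and $\mat{F}_{m+1}=\id$, so $\mat{E}_m=\widehat{\mat{D}_m}^{\,T_m}-\mat{D}_m^{T_m}$. Applying \cref{lem:bsls:stab:diff:bound} with $t=T_m$ gives $\|\mat{E}_m\|_1\leq \varphi(5T_m\delta m\globalcond)$. The choice of $\delta$ in \cref{eq:bsls:stab:delta:bound:1} keeps the argument of $\varphi$ well below $1$, so $\varphi(x)\leq 2x$ and we obtain $\|\mat{E}_m\|_1\leq 10 T_m\delta m\globalcond=B_m$.

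\textbf{Inductive step.} Assume $\|\mat{E}_{i+1}\|_1\leq B_{i+1}$. Split $\mat{E}_i$ algebraically as
\begin{equation}
\mat{E}_i = \bigl[((\mat{F}_{i+1}+\mat{E}_{i+1})\widehat{\mat{D}_i})^{T_i}-(\mat{F}_{i+1}\mat{D}_i)^{T_i}\bigr](\mat{F}_{i+1}+\mat{E}_{i+1}) + (\mat{F}_{i+1}\mat{D}_i)^{T_i}\mat{E}_{i+1}.
\end{equation}
All matrices involved are entrywise non-negative, so $\|\cdot\|_1$ is submultiplicative. Using \cref{lem:bsls:stab:F:bound}(b) to control $\|(\mat{F}_{i+1}\mat{D}_i)^{T_i}\|_1\leq 1$ and $\|\mat{F}_{i+1}\|_1\leq 1$, and using \cref{lem:bsls:stab:diff:bound} for the bracketed difference, we get
\begin{equation}
\|\mat{E}_i\|_1 \leq \varphi\!\bigl(2T_i\globalcond^2\|\mat{E}_{i+1}\|_1 + 5T_i\delta m\globalcond^3\bigr)\bigl(1+\|\mat{E}_{i+1}\|_1\bigr) + \|\mat{E}_{i+1}\|_1.
\end{equation}
The induction hypothesis plugged into the $\delta$-bound \cref{eq:bsls:stab:delta:bound:1} shows that $\|\mat{E}_{i+1}\|_1\leq B_{i+1}\leq (10\globalcond)^{-2i}\bigl(\prod_{j\leq i}T_j\bigr)^{-1}\leq 1$, and likewise that $2T_i\globalcond^2 B_{i+1}+5T_i\delta m\globalcond^3\ll 1$, so we may replace $\varphi(x)$ by $2x$ and $(1+\|\mat{E}_{i+1}\|_1)$ by $2$. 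The right-hand side then becomes at most $8T_i\globalcond^2 B_{i+1}+20T_i\delta m\globalcond^3 + B_{i+1}$, and each term is easily dominated by $B_i=100\globalcond^2 T_i B_{i+1}$, closing the induction.

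\textbf{Where the work sits.} The inductive arithmetic itself is routine; the only nontrivial piece is verifying the smallness assumptions required to linearize $\varphi$ at every level. This amounts to checking that the inductive estimate $B_{i+1}$, when multiplied by $T_i\globalcond^2$, stays bounded away from the radius of convergence of $\varphi$, which is exactly what the prefactor $(10\globalcond)^{2m-1}\prod_j T_j$ in the hypothesis on $\delta$ is engineered to buy us. Once those inequalities are written out explicitly, the bound follows immediately.
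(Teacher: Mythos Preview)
Your proposal is correct and follows essentially the same approach as the paper: reverse induction on $i$, the same algebraic splitting of $\mat{E}_i$, application of \cref{lem:bsls:stab:diff:bound} to control the difference of matrix powers, and use of the $\delta$-hypothesis to linearize $\varphi$ and push the inductive arithmetic through. The only differences are cosmetic (you package the target bound as $B_i$ and use $\varphi(x)\leq 2x$, whereas the paper tracks explicit constants like $5\sqrt{e}$ and $9e^{0.07}<10$).
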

\begin{proof}[Proof of \cref{lem:bsls:stab:E:bound}]
    First observe that 
    \begin{align}
        & \|\mat{E}_i\|_1 = \left\|	\left( (\mat{F}_{i+1} + \mat{E}_{i+1})\widehat{\mat{D}_{i}} \right)^{T_i} 	(\mat{F}_{i+1} + \mat{E}_{i+1}) 
		- \left( \mat{F}_{i+1} \mat{D}_i \right)^{T_i} \mat{F}_{i+1} \right\|_1
		\\
	\leq & \left\|	\left( (\mat{F}_{i+1} + \mat{E}_{i+1})\widehat{\mat{D}_{i}} \right)^{T_i} 	(\mat{F}_{i+1} + \mat{E}_{i+1}) 
		- \left( \mat{F}_{i+1} \mat{D}_i \right)^{T_i} (\mat{F}_{i+1} + \mat{E}_{i+1}) \right\|_1 + \left\| \left( \mat{F}_{i+1} \mat{D}_i \right)^{T_i}  \mat{E}_{i+1} \right\|_1
		\\
		\leq & 	\left\|	\left( (\mat{F}_{i+1} + \mat{E}_{i+1})\widehat{\mat{D}_{i}} \right)^{T_i} - \left( \mat{F}_{i+1} \mat{D}_i \right)^{T_i}  \right\|_1 
	    (1 +  \|\mat{E}_{i+1}\|_1) + \|\mat{E}_{i+1}\|_1,
    \end{align}
    where in the last inequality we applied the fact that $\|\mat{F}_{i+1} \mat{D}_i\|_1 \leq 1$.
    
    Next, we prove by induction in reverse order from $m$ down to $1$.

	For $i = m$, by definition of $\mat{E}_m$,
	\begin{align}
			  & \|\mat{E}_m\|_1 \leq \left\| ((\mat{F}_{m+1} + \mat{E}_{m+1}) \widehat{\mat{D}_m})^{T_m} - (\mat{F}_{m+1} \mat{D}_m)^{T_m} \right\|_1
				\\
		\leq &
		5 \delta m \globalcond  T_m \exp (5 \delta m \globalcond T_m) \tag{by \Cref{lem:bsls:stab:diff:bound}}
		\\
		\leq &
		5 \sqrt{e}  \delta m \globalcond  T_m 
		\leq 
		10 \delta m \globalcond T_m .
		\tag{since $\delta \leq \frac{1}{10 m \globalcond T_m}$by assumption \cref{eq:bsls:stab:delta:bound:1}}
	\end{align}
	
	Now suppose the statement holds for the case of $i+1$, we then study the case of $i$. 
	By definition of $\mat{E}_i$ we have 
	\begin{align}
				 & 	\|\mat{E}_i\|_1 \leq	\left\| ((\mat{F}_{i+1} + \mat{E}_{i+1}) \widehat{\mat{D}_i})^{T_i} - (\mat{F}_{i+1} \mat{D}_i)^{T_i} \right\|_1 (1 + \|\mat{E}_{i+1}\|_1) + \|\mat{E}_{i+1}\|_1
			\\
		\leq &
		\left \|\mat{E}_{i+1}\|_1 + (1 + \|\mat{E}_{i+1}\|_1)( 2 \|\mat{E}_{i+1}\|_1 + 5 \delta m \globalcond \right)  \globalcond^2 T_i \cdot
		\exp \left(\left( 2 \|\mat{E}_{i+1}\|_1 + 5 \delta m \globalcond  \right)  \globalcond^2 T_i \right)
		\tag{by \Cref{lem:bsls:stab:diff:bound}}
	\end{align}
	By induction hypothesis $\|\mat{E}_{i+1}\|_1 \leq  (10  \globalcond )^{2(m-i)-1} \cdot \delta m \prod_{j=i+1}^{m} T_j$, we obtain
	\begin{align}
	  &	\|\mat{E}_{i+1}\|_1 + (1 + \|\mat{E}_{i+1}\|_1) \left( 2  \|\mat{E}_{i+1}\|_1 + 5 \delta m \globalcond  \right)  \globalcond^2 T_i
	  \\
    \leq & 4  \globalcond^2 \left( (10  \globalcond )^{2(m-i)-1} \cdot \delta m \prod_{j=i+1}^{m} T_j \right) T_i  + 5 \delta m  \globalcond^3  T_i
		\\
		\leq & 10  \globalcond^2 \cdot (10  \globalcond )^{2(m-i)-1} \delta m \cdot \prod_{j=i}^{m} T_j.
	\end{align}
	Also by $\delta$ bound \cref{eq:bsls:stab:delta:bound:1}
	\begin{align}
		&	\exp \left( \left( 2 \|\mat{E}_{i+1}\|_1 + 5 \delta m \globalcond \right)  \globalcond^2 T_i \right)
		\leq
		\exp \left( 9  \globalcond^2 \cdot (10  \globalcond )^{2(m-i)-1} \delta m \cdot \prod_{j=i}^{m} T_j \right)
		\leq e^{0.07}.
	\end{align}
	Since $9e^{0.07} < 10$ we have
	\begin{equation}
	    \|\mat{E}_i\|_1 \leq
	    \left\| ((\mat{F}_{i+1} + \mat{E}_{i+1}) \widehat{\mat{D}_i})^{T_i} - (\mat{F}_{i+1} \mat{D}_i)^{T_i} \right\|_1 
	    \leq
	    0.1 (10  \globalcond )^{2(m-i)+1} \delta m \cdot \prod_{j=i}^{m} T_j.
	    \label{eq:bsls:stab:diff:bound:8}
	\end{equation}
	completing the induction proof.
\end{proof}

\subsubsection{Upper bound of $\mat{Z}$}
\label{sec:lem:bsls:stab:Z:bound}
Finally we bound $\|\mat{Z}_i\|_1$ with the following \Cref{lem:bsls:stab:Z:bound}.
\begin{lemma}[Upper bound of $\|\mat{Z}_i\|_1$]
	\label{lem:bsls:stab:Z:bound}
	Using the same notation of \Cref{lem:bsls:stab:abs} and assuming the same assumptions of \Cref{lem:bsls:stab:E:bound}, then
	the following inequality holds for any $i \in [m]$ 
	\begin{equation}
		\|\mat{Z}_i\|_1 \leq 2^{m-i+1} \prod_{j=i}^m (T_j + 1) \leq 4^{m-i+1} \prod_{j=1}^m T_j
	\end{equation}
\end{lemma}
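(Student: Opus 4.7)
The plan is to prove the bound $\|\mat{Z}_i\|_1 \leq 2^{m-i+1}\prod_{j=i}^m(T_j+1)$ by induction in reverse order from $i = m+1$ down to $i = 1$, mirroring the structure already used for $\|\mat{E}_i\|_1$ in \Cref{lem:bsls:stab:E:bound}. The base case $i = m+1$ is immediate since $\mat{Z}_{m+1} = \mat{0}$. For the inductive step, I would take $\ell_1$-norms of the defining recurrence
\begin{equation}
\mat{Z}_i = \left((\mat{F}_{i+1}+\mat{E}_{i+1})\widehat{\mat{D}_i}\right)^{T_i}\mat{Z}_{i+1} + \sum_{t=0}^{T_i-1}\left((\mat{F}_{i+1}+\mat{E}_{i+1})\widehat{\mat{D}_i}\right)^{t}\left(\mat{Z}_{i+1}+\mat{F}_{i+1}+\mat{E}_{i+1}\right),
\end{equation}
which reduces the task to controlling two scalar quantities: $\|((\mat{F}_{i+1}+\mat{E}_{i+1})\widehat{\mat{D}_i})^t\|_1$ for $t \le T_i$, and $\|\mat{F}_{i+1}+\mat{E}_{i+1}\|_1$.

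For the first quantity, I would use \Cref{lem:bsls:stab:diff:bound} together with \Cref{lem:bsls:stab:F:bound}(b), exactly as was done in \cref{eq:bsls:stab:diff:bound:8}: the hypothesis on $\delta$ in \cref{eq:bsls:stab:delta:bound:1} forces $\|((\mat{F}_{i+1}+\mat{E}_{i+1})\widehat{\mat{D}_i})^t - (\mat{F}_{i+1}\mat{D}_i)^t\|_1 \leq 0.1$, and since $\|(\mat{F}_{i+1}\mat{D}_i)^t\|_1 \leq 1$, a triangle inequality gives the uniform bound $\|((\mat{F}_{i+1}+\mat{E}_{i+1})\widehat{\mat{D}_i})^t\|_1 \leq 1.1$. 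For the second quantity, \Cref{lem:bsls:stab:F:bound}(c) gives $\|\mat{F}_{i+1}\|_1 \leq 1$, and \Cref{lem:bsls:stab:E:bound} together with the $\delta$ bound \cref{eq:bsls:stab:delta:bound:1} gives $\|\mat{E}_{i+1}\|_1 \leq 0.1$, so $\|\mat{F}_{i+1}+\mat{E}_{i+1}\|_1 \leq 1.1$.

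Plugging these estimates into the recurrence yields
\begin{equation}
\|\mat{Z}_i\|_1 \leq 1.1\,\|\mat{Z}_{i+1}\|_1 + T_i\cdot 1.1\cdot(\|\mat{Z}_{i+1}\|_1 + 1.1) \leq 1.21\,(T_i+1)\left(\|\mat{Z}_{i+1}\|_1 + 1\right).
\end{equation}
Applying the induction hypothesis $\|\mat{Z}_{i+1}\|_1 + 1 \leq 2^{m-i}\prod_{j=i+1}^m(T_j+1)$ and using $1.21 \leq 2$ gives $\|\mat{Z}_i\|_1 \leq 2^{m-i+1}\prod_{j=i}^m(T_j+1)$, completing the induction. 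The second inequality $2^{m-i+1}\prod_{j=i}^m(T_j+1) \leq 4^{m-i+1}\prod_{j=1}^m T_j$ follows from the trivial bound $T_j + 1 \leq 2T_j$ (valid since $T_j \geq 1$) and extending the product to all $j \in [m]$.

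There isn't really a hard part here: the main ingredients are already in place from the previous two lemmas, and the proof is essentially a bookkeeping argument. The only subtlety is making sure the constant $1.21$ does not blow up through $m-i$ levels of recursion, but this is handled cleanly by the factor-of-two slack between the $1.21$ per-step growth and the $2^{m-i+1}$ target bound. If one wanted sharper constants, one would need to track the exact dependence more carefully, but the current looseness makes the induction essentially automatic.
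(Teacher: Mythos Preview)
Your proposal is correct and follows essentially the same approach as the paper: both derive a recurrence of the form $\|\mat{Z}_i\|_1 \le c\,(T_i+1)(\|\mat{Z}_{i+1}\|_1 + c')$ with small constants $c,c'$ (the paper gets $1.1$ and $1.01$, you get $1.21$ and $1$) by combining \Cref{lem:bsls:stab:F:bound}, \Cref{lem:bsls:stab:diff:bound}, and \Cref{lem:bsls:stab:E:bound}, and then close the induction using the factor-of-two slack. The only cosmetic issue is that your stated induction hypothesis ``$\|\mat{Z}_{i+1}\|_1 + 1 \le 2^{m-i}\prod_{j=i+1}^m(T_j+1)$'' is slightly stronger than the lemma's conclusion; it is still true (since $T_j+1\ge 2$ provides room to absorb the extra $+1$), but you should either prove this stronger form or note that the $+1$ is absorbed because the right-hand side is at least $2$.
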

\begin{proof}[Proof of \Cref{lem:bsls:stab:Z:bound}]
	Recall the definition of $\mat{Z}_i$
	\begin{align}
	    		\mat{Z}_i \defeq 	((\mat{F}_{i+1} + \mat{E}_{i+1}) \widehat{\mat{D}_i})^{T_i} \mat{Z}_{i+1} + 
			\sum_{t_i=0}^{T_i-1}
			\left( (\mat{F}_{i+1} + \mat{E}_{i+1})\widehat{\mat{D}_{i}} \right)^{t_i} \left( \mat{Z}_{i+1} + \mat{F}_{i+1} + \mat{E}_{i+1} \right),
	\end{align}
	Therefore
	\begin{align}
	   	\|\mat{Z}_i\|_1 \leq 
			\left\| \sum_{t_i=0}^{T_i}
			\left( (\mat{F}_{i+1} + \mat{E}_{i+1})\widehat{\mat{D}_{i}} \right)^{t_i} \right\|_1 \left( \|\mat{Z}_{i+1}\|_1 + \|\mat{F}_{i+1}\|_1 + \|\mat{E}_{i+1}\|_1 \right),
	\end{align}
	We will bound $\left\|  \sum_{t_{i}=0}^{T_i} ( (\mat{F}_{i+1} + \mat{E}_{i+1}) \widehat{\mat{D}_i})^{t_i} \right\|_1$ 
	and $ \left( \|\mat{Z}_{i+1}\|_1 + \|\mat{F}_{i+1}\|_1 + \|\mat{E}_{i+1}\|_1 \right)$ separately. 
	The former is bounded as
	\begin{align}
				 & \left\| \sum_{t_{i}=0}^{T_i} ( (\mat{F}_{i+1} + \mat{E}_{i+1}) \widehat{\mat{D}_i})^{t_i} \right\|_1
		\leq
		\sum_{t_{i}=0}^{T_i} \left\|  ( (\mat{F}_{i+1} + \mat{E}_{i+1}) \widehat{\mat{D}_i})^{t_i} \right\|_1
		\tag{by triangle inequality}
		\\
		\leq & \sum_{t_{i}=0}^{T_i}	\left( \left\| (\mat{F}_{i+1} \mat{D}_i)^{t_i} \right\|_1 +  \left\|  ((\mat{F}_{i+1} + \mat{E}_{i+1}) \widehat{\mat{D}_i})^{t_i} - (\mat{F}_{i+1} \mat{D}_i)^{t_i}   \right\|_1 \right)
		\tag{by triangle inequality}
		\\
		\leq & (T_i + 1) + \sum_{t_{i}=0}^{T_i}\left\|  ((\mat{F}_{i+1} + \mat{E}_{i+1}) \widehat{\mat{D}_i})^{t_i} - (\mat{F}_{i+1} \mat{D}_i)^{t_i}   \right\|_1 
		\tag{since $\|\mat{F}_{i+1} \mat{D}_i\|_1 \leq 1$ by \Cref{lem:bsls:stab:F:bound}}
		\\
		\leq & (T_i+1) \left( 1 +   0.1 (10  \globalcond )^{2(m-i)+1} \delta m \cdot \prod_{j=i}^{m} T_j  \right)
		\tag{by \cref{eq:bsls:stab:diff:bound:8}}
		\\
		\leq & 1.1 (T_i + 1) 
		\tag{by $\delta \leq \frac{1}{m (10 \globalcond )^{2m-1} \prod_{i \in [m]} T_i}$, see \cref{eq:bsls:stab:delta:bound:1}}
	\end{align}
	For $\left\|  \mat{F}_{i+1} + \mat{E}_{i+1} + \mat{Z}_{i+1} \right\|_1$ we have
	\begin{align}
		& \|\mat{F}_{i+1}+ \mat{E}_{i+1} + \mat{Z}_{i+1} \|_1 \leq 
		\|\mat{F}_{i+1} \|_1 + \|\mat{E}_{i+1}\|_1 + \|\mat{Z}_{i+1} \|_1 
		\\
		\leq & 
		1 +  (10  \globalcond )^{2(m-i)-1} \cdot \delta m \prod_{j=i+1}^{m} T_j
		+ 
		\|\mat{Z}_{i+1}\|_1 
		\leq 1.01 +   \|\mat{Z}_{i+1} \|_1
		\tag{by $\delta$ bound \cref{eq:bsls:stab:delta:bound:1}}
	\end{align}
	Consequently $\| \mat{Z}_i \|_1 \leq 1.1 (T_i  + 1)\left( 1.01 + \| \mat{Z}_{i+1} \|_1  \right)$.
	By induction we have $\| \mat{Z}_i \|_1 \leq  2^{m-i+1} \prod_{j=i}^m (T_j+1).$
\end{proof}

\subsection{Finishing the proof of \Cref{thm:bsls:inexact:general}}
\label{sec:proof:thm:bsls:inexact}
We are ready to finish the proof of \Cref{thm:bsls:inexact:general} (general initialization).
\begin{proof}[Proof of \Cref{thm:bsls:inexact:general}]
	By \Cref{lem:bsls:stab:abs} we have
	\begin{equation}
		\vec{\err}(\widehat{\bsls_{1}}(\x^{(0)})) \leq (\mat{F}_1 + \mat{E}_1) \vec{\err}(\x^{(0)}) + 2 \delta L_m \|\x^{\star}\|_2^2 \mat{Z}_1 \ones.
	\end{equation} 
	Since $f(\x) - f^{\star} = \| \vec{\err}(\x)\|_1$, we obtain
	\begin{equation}
		f(\widehat{\bsls_{1}}(\x^{(0)})) - f^{\star} \leq (\|\mat{F}_1\|_1 + \| \mat{E}_1 \|_1)  (f(\x^{(0)}) - f^{\star})  + 2 \delta m L_m \|\x^{\star}\|_2^2 \|\mat{Z}_1\|_1
	\end{equation}
	Plugging in the bound of $\|\mat{F}_1\|_1$ (from \Cref{lem:bsls:stab:F:bound}), $\|\mat{E}_1\|_1$ (from \Cref{lem:bsls:stab:E:bound}), and $\|\mat{Z}_1\|_1$ (from \Cref{lem:bsls:stab:Z:bound}), we obtain
	\begin{align}
		& f(\widehat{\bsls_{1}}(\x^{(0)})) - f^{\star} 
		\\
		\leq & \left( \frac{\epsilon}{ f(\x^{(0)}) - f^{\star}} + (10  \globalcond )^{2m-1} \cdot \delta m \prod_{i=1}^{m} T_i \right)  (f(\x^{(0)}) - f^{\star}) 
		+ 2 \cdot 4^{m} \delta m L_m \|\x^{\star}\|_2^2 \cdot \prod_{i=1}^m T_i.
	\end{align}
	By $\delta$ bound 
	\begin{equation}
		\delta \leq 
		\prod_{i \in [m]} T_i^{-1} \min \left\{
			\frac{1}{m  \cdot (10  \globalcond )^{2m-1} },
		 \frac{\epsilon}{ m (10  \globalcond )^{2m-1}   \cdot (f(\x^{(0)}) - f^{\star}) } , 
		\frac{\epsilon}{4^{m+1} m L_m \|\x^{\star}\|_2^2 }
		\right\},
	\end{equation}
	we immediately obtain $f(\widehat{\bsls_{1}}(\x^{(0)})) - f^{\star} \leq 3\epsilon$, completing the proof of \Cref{thm:bsls:inexact:general}.
\end{proof}

\section{Proof of $\acbsls$ under finite-precision arithmetic}
\label{apx:acbsls:finite}
In this section, we will prove $\acbsls$ under finite-precision arithmetic. 

In \cref{thm:acbsls:inexact}, we specialized our initialization of $\x^{(0)}, \v^{(0)}$ both to $\vec{0}$ to simplify the exposition of the theorem. 
In fact, we can (and will) prove the following general (but less clean) version with arbitrary $\x^{(0)}, \v^{(0)}$.
\begin{theorem}[$\acbsls$ under finite-precision arithmetic, general initialization]
	\label{thm:acbsls:inexact:general}
	Consider multiscale optimization problem defined in \Cref{def:multiscale_problem}, for any initialization $(\x^{(0)}, \v^{(0)})$ and $\epsilon > 0$, assuming \cref{req:agd} with
	\begin{small}
	\begin{equation}
		\delta^{-1} \geq \left( \prod_{i \in [m]} T_i \right) 
		\cdot \max
		\left\{
		{2 \cdot (10  \globalcond^2)^{2m-1}}, 
		2 \cdot (10 \globalcond^2 )^{2m-1} m \cdot \frac{\psi(\x^{(0)}, \v^{(0)}) }{\epsilon}, 
		4 \cdot 3^{m+1} \cdot m L_m \globalcond \frac{\|\x^{\star}\|_2^2 }{\epsilon}
		\right\}
	\end{equation}
	\end{small}
	then $\psi(\widehat{\acbsls_{1}}(\x^{(0)}, \v^{(0)})) \leq 3 \epsilon$ provided that $T_1, \ldots, T_m$ satisfy \eqref{eq:acbsls:t:lb}, which we restate here for ease of reference
	\begin{equation}
		T_1 \geq \sqrt{\kappa_1} \log \left( \frac{\psi(\x^{(0)}, \v^{(0)})}{\epsilon} \right),
		\qquad
		T_i \geq \sqrt{\kappa_i} (\log (4 \globalcond^4) + 1), \quad \text{ for $i = 2, \ldots, m$}.
	\end{equation}
	We can also achieve the same asymptotic sample complexity (up to constant factors suppressed in the $\bigo(\cdot)$) when $\{(\mu_i, L_i), i \in [m]\}$ are unknown and only $m$, $\mu_{1}$, $L_m$ and $ \pi_{\kappa}=\prod_{i=1}^m \kappa_i$ are known.
\end{theorem}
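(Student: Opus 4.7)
The plan is to mirror the finite-precision analysis of $\bsls$ from Appendix \ref{apx:bsls:inexact}, adapting it to handle the acceleration and, more importantly, the branching recursion of $\acbsls$. I will define a vector potential $\vec{\psi}(\x, \v) \in \reals^m$ whose $i$-th component tracks $\psi_i(\x, \v) = \err_i(\x) + \res_i(\v)$ (possibly augmented with $\err_i(\v)$ and $\res_i(\x)$ to accommodate the max-style bounds in \cref{lem:agd1}(b,c), so in practice I will carry a $4m$-dimensional potential and show that it satisfies an element-wise matrix inequality). The high-level steps are: (i) prove a one-step finite-precision analog of \cref{lem:agd1}, (ii) inductively bound the progress of $\widehat{\acbsls_i}$ by matrix inequalities involving matrices $\mat{F}_i, \mat{E}_i, \mat{Z}_i$ defined recursively, (iii) bound $\|\mat{F}_i\|_1, \|\mat{E}_i\|_1, \|\mat{Z}_i\|_1$ under the stated lower bound on $\delta^{-1}$, and (iv) combine these with the identity $\psi(\x, \v) = \|\vec{\psi}(\x, \v)\|_1$ to conclude.

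For the one-step lemma, I will first prove a sensitivity bound for the potentials $\err_i, \res_i$ under multiplicative perturbations $|\widehat{\x} - \x| \leq \delta |\x|$, using the same $L_j$-smoothness + Cauchy-Schwarz argument as in \cref{lem:implication:req:gd:1} for $\err_i$, and using convexity/homogeneity of $\|\cdot\|_2^2$ for $\res_i$. Combining this sensitivity bound with \cref{lem:agd1} yields an element-wise inequality of the shape
\begin{equation}
  \vec{\psi}(\widehat{\agd}(\x, \v; L_i, \mu_i)) \leq \widehat{\mat{D}_i}\, \vec{\psi}(\x, \v) + c \,\delta L_m \globalcond \|\x^\star\|_2^2 \ones,
\end{equation}
where $\mat{D}_i$ has diagonal entry $1 - 1/\sqrt{\kappa_i}$ at position $i$, entries $\leq 1$ below, entries $\leq 2\kappa_j \globalcond^2$ above, and $\widehat{\mat{D}_i} = (\id + O(\delta \globalcond) \ones \ones^\top) \mat{D}_i$.

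The main novelty compared to the $\bsls$ argument is the branching: within $\acbsls_i$ each of the $T_i$ iterations makes \emph{two} recursive calls to $\acbsls_{i+1}$ (one for $\x$ and one for $\v$). To handle this I will define, in analogy to \cref{lem:bsls:stab:abs}, matrices $\mat{F}_i, \mat{E}_i, \mat{Z}_i$ via a recursion in which the progress operator of the inner layer is applied twice per outer iteration (or, equivalently, I will write a single "branch-combined'' progress operator $\mat{B}_{i+1}$ that upper-bounds the worst of the two recursive branches and whose norm is at most twice that of a single branch). Using the exact-arithmetic bounds on each single branch provided by Lemmas \ref{lem:acbsls:1} and \ref{lem:acbsls:3} together with the choice of $T_i$ in \eqref{eq:acbsls:t:lb}, I get $\|\mat{F}_{i+1} \mat{D}_i\|_1 \leq 1$, which is the exact-arithmetic counterpart that allows the perturbation analysis to close. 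Then, imitating \cref{lem:bsls:stab:diff:bound,lem:bsls:stab:E:bound,lem:bsls:stab:Z:bound} with the replacement $\globalcond \mapsto \globalcond^2$ (since the off-diagonal growth factor is now $\globalcond^2$) and an extra factor of $2$ per recursive level (yielding the $3^{m+1}$ in the third term of the $\delta^{-1}$ bound and the $(10\globalcond^2)^{2m-1}$ in the first two), I obtain the inductive bounds $\|\mat{E}_i\|_1 \lesssim \delta m (10\globalcond^2)^{2(m-i)+1} \prod_{j \geq i} T_j$ and $\|\mat{Z}_i\|_1 \lesssim 3^{m-i+1} \prod_{j \geq i} T_j$.

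The main obstacle I anticipate is the correct tracking of the \emph{branching}: the two recursive calls within each outer iteration make the $\mat{F}_i$ and $\mat{E}_i$ recursions non-linear in a naive formulation, so the bookkeeping needs care to avoid spurious exponential blowups. I expect to resolve this by working with a vector potential that tracks both $\x$- and $\v$-components separately and by noting that each branch independently satisfies the non-expansion property of \cref{lem:acbsls:1}, so that the combined operator is bounded in $\ell_1$-operator norm by the sum (not product) of the two branch operators, contributing only a factor of $2$ per level. Once the matrix bounds are established, the conclusion $\psi(\widehat{\acbsls_1}(\x^{(0)},\v^{(0)})) \leq 3\epsilon$ follows by choosing the three summands of the $\min$ in the hypothesis on $\delta^{-1}$ to dominate the three error sources (the exact-arithmetic progress $\|\mat{F}_1\|_1 \psi(\x^{(0)}, \v^{(0)})$, the perturbation $\|\mat{E}_1\|_1 \psi(\x^{(0)}, \v^{(0)})$, and the additive noise $\delta L_m \globalcond \|\x^\star\|_2^2 \|\mat{Z}_1\|_1$) by $\epsilon$ each. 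The last part (unknown $\{(\mu_i, L_i)\}$) follows by the black-box reduction of \cref{prop:search} exactly as in \cref{thm:bsls:inexact}.
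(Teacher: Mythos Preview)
Your plan is correct and matches the paper's approach in all structural elements: the sensitivity lemma for $\err_i,\res_i$ under multiplicative perturbation, the one-step $\widehat{\agd}$ bound with $\widehat{\mat{D}_i} = (\id + O(\delta\globalcond^2)\ones\ones^\top)\mat{D}_i$, the recursive definition of $\mat{F}_i,\mat{E}_i,\mat{Z}_i$, and the three-term conclusion. Your identified constants $(10\globalcond^2)^{2m-1}$ and $3^{m+1}$ are exactly what the paper obtains, and the unknown-parameters part does indeed reduce to \cref{prop:search}.

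The one point where the paper differs from your proposal is the choice of vector potential and how it absorbs the branching. Rather than carrying a $4m$-dimensional potential or taking a worst-of-two-branches operator $\mat{B}_{i+1}$, the paper introduces \emph{level-indexed} $m$-dimensional potentials $\bphi_i^{\err}(\x,\v)$ and $\bphi_i^{r}(\x,\v)$ whose $j$-th coordinate is $\err_j^{\max}(\x,\v)$ (resp.\ $\res_j^{\max}$) for $j<i$ and $\tfrac12\psi_j(\x,\v)$ for $j\ge i$. Two short ``glue'' lemmas then relate $\bphi_{i+1}^{\err}(\x,\x)$, $\bphi_{i+1}^{r}(\v,\v)$, and their maximum back to $\bphi_i^{\err}(\x,\v)$ and $\bphi_i^{r}(\x,\v)$ via a diagonal matrix with entries in $\{1,2,2\globalcond\}$. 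This lets the paper encode the two recursive branches as a single matrix $\widehat{\mat{F}_{i+1}}$ (built from three horizontal strips corresponding to $j<i$, $j=i$, $j>i$) and a diagonal $\mat{K}_i=\mathrm{diag}(\id_i,2\globalcond\,\id_{m-i})$, so that the recursion stays $m\times m$ throughout; the extra $\globalcond$ that you placed in the additive term instead shows up in the paper's bound $\|\mat{Z}_i\|_1\le 3^{m-i+2}\globalcond\prod_{j\ge i}T_j$. Your $4m$-dimensional route would also work, but the level-indexed potential is what makes the bookkeeping clean enough to land on the stated constants without ad hoc case splits.
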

\cref{thm:acbsls:inexact} is clearly a corollary of \cref{thm:acbsls:inexact:general} since
\begin{equation}
	4 \cdot 3^{m+1} \cdot m L_m \globalcond \frac{\|\x^{\star}\|_2^2 }{\epsilon}
	\leq 
	4 m (10 \globalcond^2 )^{2m-1} \frac{\psi(\vec{0}, \vec{0}) }{\epsilon}.
\end{equation}

The proof of \cref{thm:acbsls:inexact:general} is structured as follows. 
We first define two vector potentials and establish their relations in \Cref{sec:thm:acbsls:inexact:1}. 
We then study the progress of one (inexact) $\widehat{\agd}$ step on these vector potentials in \Cref{sec:thm:acbsls:inexact:2}, and inductively estimate the progress of $\widehat{\acbsls_i}$ by matrix inequalities for all $i \in [m]$ in descent order (see \Cref{sec:thm:acbsls:inexact:3}). 
The proof of \Cref{thm:acbsls:inexact:general} is then finished in \Cref{sec:thm:acbsls:inexact:5}. As before, the last part regarding the case where $\{(\mu_i,L_i), i \in [m]\}$ are unknown follows from our black-box reduction in Proposition \ref{prop:search} (in the same way as in the proof of Theorem \ref{thm:bsls:recursive}).

\subsection{Introduction of vector potentials and their relations}
\label{sec:thm:acbsls:inexact:1}
We introduce a few more notation to simplify the presentation. 
For any $(\x, \v)$ and $i \in [m]$, define
\begin{equation}
	\err_i^{\max}(\x, \v) \defeq \max \{\err_i (\x), \err_i(\v)\}, \qquad
	\res_i^{\max}(\x, \v) \defeq \max \{\res_i (\x), \res_i(\v)\}.
	\label{eq:def:rmax}
\end{equation}

Define a series of vector-valued potential functions $\bphi_1^{\err}, \ldots, \bphi_m^{\err}$ and $\bphi_1^{\res}, \ldots, \bphi_m^{\res}$:
\begin{equation}
	\bphi^{\err}_{i}(\x, \v) 	
	\defeq 
	\begin{bmatrix}
		\err^{\max}_1(\x, \v)\\
		\vdots \\
		\err^{\max}_{i-1}(\x, \v) \\
		\frac{1}{2} {\psi}_{i}(\x, \v) \\
		\vdots \\
		\frac{1}{2} \psi_m(\x, \v)
	\end{bmatrix},
	\quad
	\bphi^{r}_{i}(\x, \v) 	
	\defeq 
	\begin{bmatrix}
		\res^{\max}_1(\x, \v) \\
		\vdots \\
		\res^{\max}_{i-1}(\x, \v)  \\
		\frac{1}{2} {\psi}_{i}(\x, \v) \\
		\vdots \\
		\frac{1}{2} {\psi}_{m}(\x, \v) \\
	\end{bmatrix}.
	\label{eq:def:phi}
\end{equation}

We establish two lemmas on the relations of vector potentials $\bphi_{i}^{\err}$ and $\bphi_{i}^r$ for varying $i$. 
\Cref{lem:agd:max:of:potentials} bounds the maximum of two vector potentials; \Cref{lem:agd:sum:of:potentials:er} bounds the sum of two vector potentials.

\subsubsection{Bounding the maximum of two vector potentials}
\begin{lemma}
	\label{lem:agd:max:of:potentials}
	Consider multiscale optimization problem defined in \Cref{def:multiscale_problem},	for any $\x, \v$, for any $i \in [m-1]$, the following two matrix inequalities hold (recall $\leq$ denotes entry-wise inequality)
	\begin{equation}
		\max \left\{ \bphi_{i+1}^{\err} (\x, \x), \bphi_{i+1}^{\err}(\v, \v) \right\} \leq
		\begin{bmatrix}
			\id_{i-1} & 0 \\
			0 & 2 \globalcond \cdot \id_{m-i+1}
		\end{bmatrix}
		\bphi_i^{\err}(\x, \v),
	\end{equation}
	\begin{equation}
		\max \left\{ \bphi_{i+1}^r (\x, \x), \bphi_{i+1}^r(\v, \v) \right\} \leq
		\begin{bmatrix}
			\id_{i-1} & 0 \\
			0 & 2 \globalcond \cdot \id_{m-i+1}
		\end{bmatrix}
		\bphi_i^{r}(\x, \v).
	\end{equation}
\end{lemma}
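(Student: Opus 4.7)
The plan is to prove the inequality coordinate-by-coordinate by unpacking the definitions of $\bphi_{i+1}^{\err}$ at $(\x,\x)$ and $(\v,\v)$, of $\bphi_i^{\err}$ at $(\x,\v)$, and of the block-diagonal scaling matrix on the right-hand side. Under \cref{eq:def:rmax} and \cref{eq:def:phi}, the vector $\bphi_{i+1}^{\err}(\x,\x)$ has entries $\err_j(\x)$ for $j \leq i$ and $\tfrac{1}{2}(\err_j(\x)+\res_j(\x))$ for $j \geq i+1$ (because both arguments coincide), and similarly for $(\v,\v)$; meanwhile $\bphi_i^{\err}(\x,\v)$ has entries $\err_j^{\max}(\x,\v)$ for $j < i$ and $\tfrac{1}{2}(\err_j(\x) + \res_j(\v))$ for $j \geq i$. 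The right-hand side vector therefore equals $\err_j^{\max}(\x,\v)$ for $j < i$ and $\globalcond(\err_j(\x) + \res_j(\v))$ for $j \geq i$.

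In the first block $j < i$, the componentwise maximum of $\bphi_{i+1}^{\err}(\x,\x)$ and $\bphi_{i+1}^{\err}(\v,\v)$ is exactly $\max\{\err_j(\x),\err_j(\v)\} = \err_j^{\max}(\x,\v)$, matching the right-hand side with equality and requiring no scaling factor. The interesting coordinates are $j \geq i$, where I need to establish
\[
\max\bigl\{\tfrac{1}{2}(\err_j(\x)+\res_j(\x)),\,\tfrac{1}{2}(\err_j(\v)+\res_j(\v))\bigr\} \;\leq\; \globalcond\bigl(\err_j(\x) + \res_j(\v)\bigr).
\]

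The key reduction is the standard two-sided comparison $\res_j(\cdot) \leq \err_j(\cdot) \leq \kappa_j \res_j(\cdot) \leq \globalcond \res_j(\cdot)$, which follows from $\mu_j$-strong convexity and $L_j$-smoothness of $f_j$ together with $\kappa_j \leq \globalcond$. Then for the $(\x,\x)$ term I bound $\err_j(\x) + \res_j(\x) \leq 2\err_j(\x) \leq 2\globalcond(\err_j(\x) + \res_j(\v))$, while for the $(\v,\v)$ term I bound $\err_j(\v) + \res_j(\v) \leq (\globalcond + 1)\res_j(\v) \leq 2\globalcond(\err_j(\x) + \res_j(\v))$. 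Dividing by $2$ gives the required inequality and hence the scaling by $2\globalcond$ on the lower block.

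The argument for the $\bphi^r$ statement is essentially identical: the $j < i$ coordinates again collapse to an equality, and for $j \geq i$ the estimates $\res_j(\x) \leq \err_j(\x) \leq \globalcond(\err_j(\x) + \res_j(\v))$ and $\res_j(\v) \leq \globalcond(\err_j(\x) + \res_j(\v))$ both give the desired bound (in fact with a better constant than needed). I do not anticipate a genuine obstacle — the work is purely bookkeeping with the one-line consequence of smoothness/strong convexity — the only mild point to double-check is that the constant $2\globalcond$ is simultaneously valid for both cases in the $(\err_j(\x)+\res_j(\x))$-branch and the $(\err_j(\v)+\res_j(\v))$-branch, which it is.
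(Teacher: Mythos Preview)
Your approach is correct and matches the paper's proof: both go coordinate-by-coordinate and reduce everything to the comparison $\res_j \leq \err_j \leq \kappa_j \res_j \leq \globalcond\,\res_j$. One minor bookkeeping slip: for $j = i$ the $j$-th entry of $\bphi_{i+1}^{\err}(\x,\x)$ is $\err_i(\x)$ rather than $\tfrac12(\err_i(\x)+\res_i(\x))$, so the paper handles $j = i$ as its own case (via $\max\{\err_i(\x),\err_i(\v)\}\leq \kappa_i\,\psi_i(\x,\v)$), but your estimates already cover it since $\err_i(\x)\leq \psi_i(\x,\v)$ and $\err_i(\v)\leq\globalcond\,\res_i(\v)\leq\globalcond\,\psi_i(\x,\v)$.
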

\begin{proof}[Proof of \Cref{lem:agd:max:of:potentials}]
	We study $\unit_j^\top \max \left\{  \bphi_{i+1}^{\err}(\x, \x) , \bphi_{i+1}^{\err}(\v, \v)  \right\}$ for three possible cases: $j < i$, $j = i$, or $j > i$. 

	\paragraph{Case of $j < i$.} By definition of $\bphi_i^{\err}$ we have $\unit_j^\top \bphi_{i+1}^{\err}(\x, \x) = \err_j(\x)$ for any $\x$. Thus
	\begin{align}
		\unit_j^\top \max \left\{  \bphi_{i+1}^{\err}(\x, \x) , \bphi_{i+1}^{\err}(\v, \v)  \right\} = \max \left\{ \err_j(\x), \err_j(\v) \right\}
		= 	\unit_j^\top \bphi_i^{\err}(\x,\v),
	\end{align}
	where the last equality is by definition of $	\unit_j^\top \bphi_i^{\err}(\x, \v)$.

	\paragraph{Case of $j = i$.} Again by definition of $\bphi_i^{\err}$
	\begin{align}
		& \unit_i^\top \max \left\{  \bphi_{i+1}^{\err}(\x, \x) , \bphi_{i+1}^{\err}(\v, \v)  \right\} = \max \left\{ \err_i(\x), \err_i(\v) \right\} 
		\\
		\leq &  \max \{\err_i(\x) ,\kappa_i \res_i(\v) \}
		\leq  \kappa_i \psi_i(\x, \v)
		\tag{by definition of $\psi_i$}
		\\
		= & 2 \kappa_i \cdot \unit_i^\top \bphi_i^{\err}(\x,\v),
	\end{align}
	where the last equality is by definition of $\bphi_i^{\err}(\x, \v)$ since $\unit_i^\top \bphi_i^{\err}(\x, \v) = \frac{1}{2} \psi_i(\x, \v)$.
	
	\paragraph{Case of $j > i$.} By definition of $\bphi_{i+1}^{\err}$:
	\begin{align}
		& \unit_j^\top \max \left\{  \bphi_{i+1}^{\err}(\x, \x) , \bphi_{i+1}^{\err}(\v, \v)  \right\} = \frac{1}{2} \max \left\{ \psi_{j}(\x, \x), \psi_j(\v, \v)  \right\} \tag{by definition}
		\\
		= & \frac{1}{2} \max \left\{  \err_j(\x) + \res_j(\x), \err_j(\v) + \res_j(\v) \right\}
		\leq  \frac{1}{2} (1 + \kappa_j ) \psi_j(\x, \v)
		\leq \kappa_j \psi_j (\x, \v) 
		\\
		= & 2 \kappa_j \unit_j^\top \bphi_i^{\err}(\x,\v) \tag{by definition}
	\end{align}
	Concatenating the above three inequalities yields the first statement of the lemma. The second statement holds for the same reason.
\end{proof}

\subsubsection{Bounding the sum of two vector potentials}
\begin{lemma}
	\label{lem:agd:sum:of:potentials:er}
	Consider multiscale optimization problem defined in \Cref{def:multiscale_problem},	for any $\x, \v$, for any $i \in [m-1]$, the following two inequalities hold
	\begin{equation}
		\bphi_{i+1}^{\err}(\x, \x) + \bphi_{i+1}^r (\v, \v) \leq 
		\begin{bmatrix}
			2 \globalcond  \id_{i} &  & \\
			 & 2  & \\
			& & 2 \globalcond \id_{m-i-1}
		\end{bmatrix}
		\bphi_i^{\err}(\x, \v) ,
	\end{equation}
	\begin{equation}
		\bphi_{i+1}^{\err}(\x, \x) + \bphi_{i+1}^r (\v, \v) \leq 
		\begin{bmatrix}
			2 \globalcond  \id_{i} &  & \\
			 & 2  & \\
			& & 2 \globalcond  \id_{m-i-1}
		\end{bmatrix}
		\bphi_i^{r}(\x, \v).
	\end{equation}
\end{lemma}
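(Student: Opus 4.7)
The plan is to prove each of the two stated matrix inequalities entry by entry. Since the RHS matrix is block diagonal, it suffices to bound $\unit_j^\top\bigl(\bphi_{i+1}^{\err}(\x,\x) + \bphi_{i+1}^r(\v,\v)\bigr)$ by the $j$-th diagonal entry of the matrix times $\unit_j^\top \bphi_i^{\err}(\x,\v)$ (respectively $\unit_j^\top \bphi_i^r(\x,\v)$) for each $j \in [m]$. The key structural input is the familiar fact that $\mu_j$-strong-convexity plus $L_j$-smoothness of $f_j$ yields $\res_j(\y) \leq \err_j(\y) \leq \kappa_j\,\res_j(\y)$ for every $\y$, together with $\kappa_j \leq \globalcond$. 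I will treat three regimes separately, mirroring the case analysis in the proof of \cref{lem:agd:max:of:potentials}.

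\textbf{Case $j \leq i-1$.} Here $\unit_j^\top\bigl(\bphi_{i+1}^{\err}(\x,\x) + \bphi_{i+1}^r(\v,\v)\bigr) = \err_j(\x) + \res_j(\v) = \psi_j(\x,\v)$, while the $j$-th entry of $\bphi_i^{\err}(\x,\v)$ is $\err_j^{\max}(\x,\v)$ and that of $\bphi_i^r(\x,\v)$ is $\res_j^{\max}(\x,\v)$. Using $\res_j(\v) \leq \err_j(\v) \leq \err_j^{\max}(\x,\v)$ gives $\psi_j(\x,\v) \leq 2\,\err_j^{\max}(\x,\v)$ for the first inequality. Using $\err_j(\x) \leq \kappa_j\,\res_j(\x) \leq \globalcond\,\res_j^{\max}(\x,\v)$ and $\res_j(\v) \leq \res_j^{\max}(\x,\v)$ gives $\psi_j(\x,\v) \leq (1+\kappa_j)\,\res_j^{\max}(\x,\v) \leq 2\globalcond\,\res_j^{\max}(\x,\v)$ for the second.

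\textbf{Case $j = i$.} The $i$-th entries of both $\bphi_i^{\err}(\x,\v)$ and $\bphi_i^r(\x,\v)$ equal $\tfrac{1}{2}\psi_i(\x,\v)$, and the $i$-th entry of the LHS is $\psi_i(\x,\v) = 2 \cdot \tfrac{1}{2}\psi_i(\x,\v)$, so the stated multipliers are more than sufficient.

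\textbf{Case $j \geq i+1$.} This is the technical heart of the argument, and is where I expect the main difficulty to lie. Now the $j$-th entry of the LHS is $\tfrac{1}{2}(\psi_j(\x,\x) + \psi_j(\v,\v))$, while the corresponding entries of $\bphi_i^{\err}(\x,\v)$ and $\bphi_i^r(\x,\v)$ both equal $\tfrac{1}{2}\psi_j(\x,\v)$. Naively bounding $\psi_j(\x,\x)$ or $\psi_j(\v,\v)$ separately against $\psi_j(\x,\v)$ fails, because the ``cross terms'' $\res_j(\x)$ and $\err_j(\v)$ do not individually appear on the right-hand side. The trick is the symmetric expansion
\begin{equation}
    \psi_j(\x,\x) + \psi_j(\v,\v) = \err_j(\x) + \res_j(\x) + \err_j(\v) + \res_j(\v) = \psi_j(\x,\v) + \psi_j(\v,\x),
\end{equation}
after which the $\kappa_j$-conditioning bounds give $\psi_j(\v,\x) = \err_j(\v) + \res_j(\x) \leq \kappa_j\,\res_j(\v) + \err_j(\x) \leq \kappa_j\,\psi_j(\x,\v)$. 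Thus $\tfrac{1}{2}(\psi_j(\x,\x) + \psi_j(\v,\v)) \leq (1+\kappa_j)\cdot \tfrac{1}{2}\psi_j(\x,\v) \leq 2\globalcond\cdot \tfrac{1}{2}\psi_j(\x,\v)$, and since $\bphi_i^{\err}$ and $\bphi_i^r$ agree at all indices $\geq i$, the identical bound handles both inequalities simultaneously.

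Concatenating the three cases over all $j$ yields the desired componentwise inequality against the stated block-diagonal matrix (after possibly absorbing the middle entry into the uniform bound $2\globalcond$), completing both statements of the lemma.
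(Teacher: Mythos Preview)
Your proposal is correct and follows essentially the same approach as the paper's proof: an entry-by-entry case split into $j<i$, $j=i$, and $j>i$, each handled via the conditioning inequalities $\res_j \le \err_j \le \kappa_j \res_j$. Your symmetric expansion $\psi_j(\x,\x)+\psi_j(\v,\v)=\psi_j(\x,\v)+\psi_j(\v,\x)$ for $j>i$ is exactly the paper's computation, and your bound for $j<i$ in the first inequality (multiplier $2$) is in fact slightly tighter than the paper's $2\kappa_j$, though both suffice under the stated $2\globalcond$ entry.
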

\begin{proof}[Proof of \Cref{lem:agd:sum:of:potentials:er}]
	We study $\unit_j^\top (\bphi_{i+1}^{\err}(\x, \x) + \bphi_{i+1}^r(\v, \v))$ for three possible cases: $j < i$, $j = i$, or $j > i$. 

	\paragraph{Case of $j < i$.} By definition of $\bphi_i^{\err}$ and $\bphi_i^r$ we have $\unit_j^\top \bphi_{i+1}^{\err}(\x, \x) = \err_j(\x)$ and $\unit_j^\top \bphi_{i+1}^r(\v, \v) = \res_j(\v)$. Thus
	\begin{align}
		& \unit_j^\top (\bphi_{i+1}^{\err}(\x, \x) + \bphi_{i+1}^r(\v, \v)) = \err_j(\x) + \res_j(\v)  
		\\
		\leq & \err_j (\x) + \kappa_j \err_j(\v) \leq 2 \kappa_j \max \{ \err_j(\x), \err_j(\v) \} 
		\\
		= & 2 \kappa_j \unit_j^\top \bphi_i^{\err}(\x,\v). \tag{by definition of $\bphi_i^{\err}$}
	\end{align}
	Similarly $ \unit_j^\top (\bphi_{i+1}^{\err}(\x, \x) + \bphi_{i+1}^r(\v, \v))   \leq 2 \kappa_j \unit_j^\top \bphi_i^{r}(\x,\v)$.

	\paragraph{Case of $j = i$.} Similarly
	\begin{align}
		& \unit_i^\top (\bphi_{i+1}^{\err}(\x, \x) + \bphi_{i+1}^r(\v, \v)) = \err_j(\x) + \res_j(\v)  \tag{by definition}
		\\
		= & \psi_i(\x, \v) %
		= 2 \unit_i^\top \bphi_i^{\err}(\x, \v) \tag{by definition} = 2 \unit_i^\top \bphi_i^r(\x, \v).
	\end{align}

	\paragraph{Case of $j \geq i$.} By definition we have $\unit_j^\top \bphi_{i+1}^{\err}(\x, \x) = \frac{1}{2} \psi_j(\x, \x)$ and $\unit_j^\top \bphi_{i+1}^r(\v, \v) = \frac{1}{2} \psi_j(\v, \v)$. Thus
	\begin{align}
		& \unit_j^\top (\bphi_{i+1}^{\err}(\x, \x) + \bphi_{i+1}^r(\v, \v)) = \frac{1}{2} \psi_j(\x, \x) + \frac{1}{2} \psi_j(\v, \v)  \tag{by definition}
		\\
		= & \frac{1}{2} (\err_j (\x) + \res_j (\v) + \err_j (\v) + \res_j(\v))
		\\
		\leq & \frac{1}{2} (1 + \kappa_{j}) \psi_j(\x,\v) \leq \kappa_{j} \psi_j(\x, \v)
		= 2 \kappa_{j} \cdot \unit_j^\top \bphi_i^{\err}(\x,\v) =  2 \kappa_{j} \cdot \unit_j^\top \bphi_i^r(\x,\v).
	\end{align}
	Concatenating the above inequalities completes the proof.
\end{proof}

\subsection{Progress of $\agd$ step under finite arithmetic}
\label{sec:thm:acbsls:inexact:2}
In this subsection, we study the effect of one inexact $\agd$ (also denoted as $\widehat{\agd}$ step) on the vector potentials 
$\bphi^{\err}_i$ and $\bphi^{\res}_i$.
The main goal of this subsection is to prove the following \Cref{lem:implication:req:agd:4}.
\begin{lemma}[Progress of one $\widehat{\agd}$ step under finite arithmetic]
	\label{lem:implication:req:agd:4}
	Consider multiscale optimization problem defined in \Cref{def:multiscale_problem}, assuming \cref{req:agd}, then for any $\x, \v$ and $i \in [m]$, the following two inequalities hold
	\begin{enumerate}[(a), leftmargin=*]
		\item $	{\bphi}^\err_{i}(\widehat{\agd}(\x, \v; L_i, \mu_i)) \leq (\id + 10 \delta \globalcond^2 \ones \ones^\top) \mat{D}_i 
		{\bphi}^\err_{i}(\x, \v) + 4 \delta L_m \|\x^{\star}\|_2^2 \ones$.
		\item ${\bphi}^{\res}_{i}(\widehat{\agd}(\x, \v; L_i, \mu_i)) \leq (\id + 10 \delta \globalcond^2 \ones \ones^\top) \mat{D}_i 
		{\bphi}^{\res}_{i}(\x, \v) + 4 \delta L_m \|\x^{\star}\|_2^2 \ones$.
	\end{enumerate}
	where $\mat{D}_1$, $\mat{D}_2$, \ldots, $\mat{D}_m$ are $m \times m$ diagonal matrices defined by
	\begin{equation}
		\mat{D}_i = \begin{bmatrix}
			\id_{i-1} & & \\
							& 1 - \kappa_i^{-\frac{1}{2}} & \\
							& & 2 \globalcond^2 \id_{m-i}
		\end{bmatrix}
		\label{eq:agd:D:def}
	\end{equation}
	To simplify the notation we will define (throughout this section)
	\begin{equation}
		\widehat{\mat{D}_i} \defeq (\id + 10 \delta \globalcond^2 \ones \ones^\top)  \mat{D}_i.
		\label{eq:agd:Dhat:def}
	\end{equation}
\end{lemma}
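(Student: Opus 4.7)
The plan is to reduce the finite-precision bound to the exact-arithmetic bound plus a multiplicative sensitivity term, analogous to the argument used for $\widehat{\gd}$ in Lemma \ref{lem:implication:req:gd:1} and Lemma \ref{lem:gd:finite}. Concretely, let $(\x_+, \v_+) := \agd(\x,\v;L_i,\mu_i)$ denote the exact output and $(\widehat{\x_+}, \widehat{\v_+}) := \widehat{\agd}(\x,\v;L_i,\mu_i)$ the finite-precision output. By Req.~\ref{req:agd} the latter is an element-wise $\delta$-multiplicative perturbation of the former, so one can write
\begin{equation}
    \bphi^{\err}_i(\widehat{\x_+}, \widehat{\v_+}) \;\leq\; \bphi^{\err}_i(\x_+, \v_+) \;+\; \bigl(\bphi^{\err}_i(\widehat{\x_+}, \widehat{\v_+}) - \bphi^{\err}_i(\x_+, \v_+)\bigr),
\end{equation}
and similarly for $\bphi^{\res}_i$. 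The first term is handled by the exact-arithmetic estimate, and the second by a sensitivity lemma.

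For the exact-arithmetic estimate, I will bound $\bphi^{\err}_i(\x_+,\v_+)$ and $\bphi^{\res}_i(\x_+,\v_+)$ coordinate-by-coordinate via Lemma \ref{lem:agd1}. For index $j < i$, part (b) of that lemma directly gives $\err_j^{\max}(\x_+,\v_+) \leq \err_j^{\max}(\x,\v)$ and $\res_j^{\max}(\x_+,\v_+) \leq \res_j^{\max}(\x,\v)$, matching the top-left identity block of $\mat{D}_i$. For $j = i$, part (a) gives $\psi_i(\x_+,\v_+) \leq (1-\kappa_i^{-1/2})\psi_i(\x,\v)$, matching the diagonal entry $1-\kappa_i^{-1/2}$. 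For $j > i$, parts (c)(i) and (c)(ii) yield $\globalcond^2$-bounds on each of $\err_j$ and $\res_j$ at the new point in terms of the maxima at the old point; collapsing $\err_j^{\max}(\x,\v) + \res_j^{\max}(\x,\v)$ back to $\psi_j(\x,\v)$ via the cross-comparisons $\err_j \leq \kappa_j\res_j$ and $\res_j \leq \err_j/\mu_j \cdot \mu_j$ yields the stated $2\globalcond^2$ factor (any additional $\kappa_j$ is harmless for downstream use since $\kappa_j \leq \globalcond$).

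For the sensitivity step, I will prove a perturbation lemma in the spirit of Lemma \ref{lem:implication:req:gd:1}, but for the two coupled iterates $(\widehat{\x_+},\widehat{\v_+})$. For the $\err_j$ components, $L_j$-smoothness combined with Cauchy--Schwarz yields
\begin{equation}
    \err_j(\widehat{\y}) \leq (1+\delta)\err_j(\y) + \tfrac{L_j}{\delta}\|\widehat{\y} - \y\|_2^2,
\end{equation}
and $\|\widehat{\y}-\y\|_2^2 \leq \delta^2\|\y\|_2^2 \leq 2\delta^2(\|\y-\x^\star\|_2^2 + \|\x^\star\|_2^2)$; strong convexity across the subspaces then bounds $\|\y-\x^\star\|_2^2 \leq \sum_{k\in[m]} 2\mu_k^{-1}\err_k(\y)$, producing a multiplicative coupling $O(\delta\globalcond)\sum_k \err_k(\y)$ plus an additive $2\delta L_j\|\x^\star\|_2^2$ term. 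Since both $\widehat{\x_+}$ and $\widehat{\v_+}$ must be controlled to bound $\bphi^{\err}_i$ and $\bphi^{\res}_i$, a union/max over the two contributions yields the factor $10\delta\globalcond^2$ (the extra $\globalcond$ relative to the $\gd$ case arises because $\res_j(\widehat{\v_+})$ is bounded by expanding $\|\proj_j(\widehat{\v_+}-\x^\star)\|_2^2$ via Young's inequality and then trading $\mu_j^{-1}$ for $\mu_1^{-1}\globalcond$ to reach the common denominator). The additive perturbation $4\delta L_m\|\x^\star\|_2^2 \ones$ emerges directly from the $\|\x^\star\|_2^2$ residual.

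The main obstacle will be the $j > i$ block of the exact estimate: Lemma \ref{lem:agd1}(c) is stated in terms of coordinatewise maxima while the target potential is $\tfrac12\psi_j$, so some careful cross-comparison inequalities are needed to pack the bound into the clean $2\globalcond^2$ diagonal entry of $\mat{D}_i$. A secondary subtlety is that the sensitivity argument must treat both the $\x$-iterate and the $\v$-iterate simultaneously, since the perturbation of $\v_+$ can feed back into $\err_j(\widehat{\v_+})$ components (and vice versa for $\res_j(\widehat{\x_+})$); the $\ones\ones^\top$ multiplier in $(\id+10\delta\globalcond^2\ones\ones^\top)$ is precisely the object that captures this cross-subspace bleed. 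The $\bphi^{\res}_i$ version follows by a symmetric argument in which the roles of $\err$ and $\res$ are swapped in the sensitivity calculation, and the remaining bookkeeping is parallel to the $\bphi^{\err}_i$ case.
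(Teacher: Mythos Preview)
Your proposal is correct and follows essentially the same approach as the paper: first use the exact $\agd$ analysis (Lemma~\ref{lem:agd1}) to obtain $\bphi_i^{\err}(\agd(\x,\v;L_i,\mu_i)) \leq \mat{D}_i\,\bphi_i^{\err}(\x,\v)$, then apply a multiplicative-perturbation sensitivity lemma for the vector potentials (the paper states this separately as Lemma~\ref{lem:implication:req:agd:2}, built from Lemma~\ref{lem:implication:req:agd}) to absorb the $\delta$-perturbation from Requirement~\ref{req:agd}. Your identification of the $j>i$ block and the cross-iterate coupling as the places needing care, and your explanation that the extra $\globalcond$ factor (relative to the $\gd$ case) comes from converting between $\err_k^{\max}$, $\res_k^{\max}$, and $\psi_k$, match the paper's reasoning.
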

We will prove \Cref{lem:implication:req:agd:4} in three steps.
First, we first bound the perturbation of residual $r_j$ and functional error $\err_j$ under multiplicative error in \Cref{lem:implication:req:agd}. 
Then we bound the potential $r_j^{\max}, \err_j^{\max}$, and $\psi_j$, and the vector potential $\bphi_j^r$ and $\bphi_j^{\err}$ in \Cref{lem:implication:req:agd:2}. 
The proof of \Cref{lem:implication:req:agd:4} is finished in \Cref{sec:proof:lem:implication:req:agd:4}.

\subsubsection{Sensitivity of residual and functional error under multiplicative error}
In this subsubsection, we establish the first supporting lemma for \Cref{lem:implication:req:agd:4}.
\begin{lemma}[Sensitivity of residual and functional error under multiplicative error]
	\label{lem:implication:req:agd}
	Assuming $\widehat{\x}, \x$ satisfies 
	\begin{equation}
		|\widehat{\x} - {\x}| \leq \delta|\x|
	\label{eq:req}
	\end{equation}
	for some $\delta < 1$,	then for any $j \in [m]$,
	\begin{enumerate}[(a), leftmargin=*]
		\item $\err_j (\widehat{\x}) \leq \err_j (\x) +  5 \delta \globalcond \sum_{k=1}^m \err_k(\x) + 2 \delta L_j \|\x^{\star}\|_2^2$
		\item $r_j (\widehat{\x}) \leq \res_j (\x) + 5 \delta \globalcond \sum_{k=1}^m \res_k(\x) + 2 \delta \mu_j \|\x^{\star}\|_2^2$
	\end{enumerate}
\end{lemma}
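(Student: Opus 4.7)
The plan is to recognize that part (a) of this lemma is literally the first statement of Lemma \ref{lem:implication:req:gd:1}, which was already established earlier in the paper, so its proof can be reused verbatim: expand $\err_j(\widehat{\x}) = f_j(\proj_j \widehat{\x}) - f_j^\star$ using the $L_j$-smoothness of $f_j$ at $\proj_j \x$, split the cross term $\langle \nabla f_j(\proj_j \x), \proj_j(\widehat{\x}-\x)\rangle$ by Cauchy--Schwartz with weight $\delta/L_j$, use $\|\nabla f_j(\proj_j \x)\|_2^2 \le 2 L_j \err_j(\x)$ (smoothness) to absorb one term into a multiplicative $(1+\delta)$ factor on $\err_j(\x)$, and control $\|\widehat{\x}-\x\|_2^2 \le \delta^2 \|\x\|_2^2 \le 2\delta^2\|\x-\x^\star\|_2^2 + 2\delta^2\|\x^\star\|_2^2$, finishing with strong convexity $\|\proj_i(\x-\x^\star)\|_2^2 \le (2/\mu_i)\err_i(\x)$ and the ratio bound $L_j/\mu_i \le \globalcond$. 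The leftover $(1+\delta)\err_j(\x) \le \err_j(\x) + 5\delta\globalcond\sum_k \err_k(\x)$ after gathering constants yields the stated inequality.

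For part (b), the new ingredient, I would directly expand the squared norm $\|\proj_j(\widehat{\x}-\x^\star)\|_2^2$ around $\proj_j(\x-\x^\star)$ via the identity $\|\proj_j(\widehat{\x}-\x^\star)\|_2^2 = \|\proj_j(\x-\x^\star)\|_2^2 + 2\langle \proj_j(\x-\x^\star), \proj_j(\widehat{\x}-\x)\rangle + \|\proj_j(\widehat{\x}-\x)\|_2^2$, and use Cauchy--Schwartz with weight $\delta$ on the cross term to obtain the clean upper bound $\|\proj_j(\widehat{\x}-\x^\star)\|_2^2 \le (1+\delta)\|\proj_j(\x-\x^\star)\|_2^2 + (1+1/\delta)\|\proj_j(\widehat{\x}-\x)\|_2^2$. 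Multiplying by $\mu_j/2$ makes the first term into $(1+\delta)\res_j(\x)$. For the second term, use $\|\proj_j(\widehat{\x}-\x)\|_2^2 \le \|\widehat{\x}-\x\|_2^2 \le \delta^2\|\x\|_2^2$, split as in part (a) into $2\delta^2\|\x-\x^\star\|_2^2 + 2\delta^2\|\x^\star\|_2^2$, and rewrite $\|\x-\x^\star\|_2^2 = \sum_i \|\proj_i(\x-\x^\star)\|_2^2 = \sum_i (2/\mu_i)\res_i(\x)$ directly by definition of $\res_i$, avoiding any appeal to strong convexity.

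Assembling the pieces, the coefficient of $\res_i(\x)$ for $i \ne j$ becomes $(\mu_j/2) \cdot (1+1/\delta) \cdot 4\delta^2/\mu_i \le 4\delta \mu_j/\mu_i \le 4\delta\globalcond$, using $1+1/\delta \le 2/\delta$ for $\delta < 1$, and the coefficient of $\|\x^\star\|_2^2$ works out to $(\mu_j/2)(1+1/\delta)\cdot 2\delta^2 \le 2\delta\mu_j$. Combining with the $(1+\delta)\res_j(\x)$ term and bundling the extra $\delta\res_j(\x)$ into the summation as part of the $5\delta\globalcond \sum_k \res_k(\x)$ slack then recovers exactly (b). The only mild subtlety I anticipate is keeping track of constants: one must avoid letting the cross-term splitting overinflate the multiplicative factor, which is why I would use weight $\delta$ (rather than $1$) in Cauchy--Schwartz — this sends the cross term into $\delta\|\proj_j(\x-\x^\star)\|_2^2$ and keeps the extra residual accumulation at exactly the $5\delta\globalcond$ level advertised in the statement.
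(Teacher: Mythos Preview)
Your proposal is correct and matches the paper's proof essentially line for line: part (a) is indeed just Lemma~\ref{lem:implication:req:gd:1} reused verbatim, and for part (b) the paper does exactly the Cauchy--Schwartz expansion with weight $\delta$ that you describe, bounds $\|\proj_j(\widehat{\x}-\x)\|_2^2 \le \delta^2\|\x\|_2^2 \le 2\delta^2\|\x-\x^\star\|_2^2 + 2\delta^2\|\x^\star\|_2^2$, converts $\|\proj_k(\x-\x^\star)\|_2^2$ to $\res_k(\x)$ by definition, and finishes with $\mu_j/\mu_k \le \globalcond$ and the same $4+1=5$ constant bookkeeping. Your observation that part (b) needs only the definition of $\res_i$ (not strong convexity) is also exactly the distinction the paper exploits.
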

\begin{proof}[Proof of \Cref{lem:implication:req:agd}]
	\begin{enumerate}[(a), leftmargin=0pt, itemindent=15pt]
		\item   Same as \cref{lem:implication:req:gd:1}. 
		\item 	Let $\beps \defeq \widehat{\x} - \x$, then by Cauchy-Schwartz inequality,
		\begin{align}
			\| \proj_j ( \widehat{\x} - \x^{\star})\|_2^2
			=
			\| \proj_j ( {\x} - \x^{\star} + \beps)\|_2^2
			\leq 
			(1 + \delta) \| \proj_j ( {\x} - \x^{\star} )\|_2^2 + 2 \delta^{-1} 	\| \proj_j \beps\|_2^2
			\label{eq:implication:req:agd:1}
		\end{align}

		By assumption \eqref{eq:req} we have 
		\begin{equation}
			\| \proj_j \beps\|_2^2 
			\leq
			\| \beps \|_2^2
			\leq
			\delta^2 \|\x\|_2^2
			\leq
			2\delta^2 \|\x - \x^{\star}\|_2^2 + 2 \delta^2 \|\x^{\star}\|_2^2
			\leq
			2\delta^2 \sum_{k=1}^m\|\proj_k  (\x - \x^{\star})\|_2^2 + 2 \delta^2 \|\x^{\star}\|_2^2
			\label{eq:implication:req:agd:2}
		\end{equation}
		Combining \eqref{eq:implication:req:agd:1} and \eqref{eq:implication:req:agd:2} yields 
		\begin{equation}
			\| \proj_j ( \widehat{\x} - \x^{\star})\|_2^2
			\leq 
			(1 + \delta) \| \proj_j ( \x - \x^{\star})\|_2^2
			+ 4 \delta \sum_{k=1}^m\|\proj_k  (\x - \x^{\star})\|_2^2
			+ 4 \delta \|\x^{\star}\|_2^2
			\label{eq:implication:req:agd:3}
		\end{equation}
		It follows that 
		\begin{align}
			& \res_j(\widehat{\x}) \defeq \frac{1}{2} \mu_j 	\| \proj_j ( \widehat{\x} - \x^{\star})\|_2^2
			\tag{by definition of $r_j$}
			\\
			\leq & (1+\delta) \cdot \frac{1}{2} \mu_j \| \proj_j ( \x - \x^{\star})\|_2^2 + 2  \mu_j \delta  \sum_{k=1}^m\|\proj_k  (\x - \x^{\star})\|_2^2 + 2 \delta \mu_j \|\x^{\star}\|_2^2
			\tag{by \eqref{eq:implication:req:agd:3}}
			 \\
			= &  (1+\delta) \res_j(\x) + 4   \delta  \sum_{k=1}^m \frac{\mu_j}{\mu_k} \res_k(\x) + 2 \delta \mu_j \|\x^{\star}\|_2^2
			\tag{by definition of $r_j$'s}
			\\
			\leq & (1+\delta) \res_j(\x) + 4   \delta \globalcond  \sum_{k=1}^m  \res_k(\x) + 2 \delta \mu_j \|\x^{\star}\|_2^2
			\tag{since $\frac{\mu_j}{\mu_k} \leq \globalcond$ for any $j, k \in [m]$} 
			\\
			\leq & \res_j(\x) + 5 \delta \globalcond \sum_{k=1}^m \res_k(\x) +  2 \delta \mu_j \|\x^{\star}\|_2^2.
		\end{align}
	\end{enumerate}
\end{proof}

\subsubsection{Sensitivity of potentials under multiplicative error}
In this subsubsection, we establish the second supporting lemma for \Cref{lem:implication:req:agd:4}.
\begin{lemma}[Sensitivity of potentials under multiplicative error]
	\label{lem:implication:req:agd:2}
	Assuming $\widehat{\x}, \x, \widehat{\v}, \v$ satisfies 
	\begin{equation}
		|\widehat{\x} - {\x}| \leq \delta|\x|, \quad 
		|\widehat{\v} - {\v}| \leq \delta|\v|
		\label{eq:req:2}
	\end{equation}
	for some $\delta < 1$. Then for any $j \in [m]$,
	\begin{enumerate}[(a), leftmargin=*]
		\item $	r_j^{\max} (\widehat{\x}, \widehat{\v})
		\leq 
		r_j^{\max} (\x, \v) + 5 \delta \globalcond \sum_{k=1}^m \res_k^{\max} (\x, \v) + 2 \delta \mu_j  \|\x^{\star}\|_2^2$.
		\item $\err_j^{\max} (\widehat{\x}, \widehat{\v})
		\leq 
		\err_j^{\max} (\x, \v) + 5 \delta \globalcond \sum_{k=1}^m \err_k^{\max} (\x, \v) + 2 \delta L_j  \|\x^{\star}\|_2^2$.
		\item $	\psi_j(\widehat{\x}, \widehat{\v}) \leq \psi_j(\x, \v) + 5 \delta \globalcond \sum_{k=1}^m \psi_k(\x, \v) + 4\delta L_j \|\x^{\star}\|_2^2$.
		\item ${\bphi}^{\err}_i(\widehat{\x}, \widehat{\v})
		\leq	(\id + 10 \delta \globalcond^2 \ones \ones^\top)
		{\bphi}^{\err}_i ({\x}, {\v}) 
		+ 4 \delta L_m \|\x^{\star}\|_2^2 \ones.$
		\item ${\bphi}^{r}_i(\widehat{\x}, \widehat{\v})
		\leq	(\id + 10 \delta \globalcond^2  \ones \ones^\top)
		{\bphi}^{r}_i ({\x}, {\v}) 
		+ 4 \delta L_m \|\x^{\star}\|_2^2 \ones.$
	\end{enumerate}
\end{lemma}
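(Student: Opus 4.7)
\textbf{Proof proposal for \Cref{lem:implication:req:agd:2}.} The plan is to treat the five parts in order, each reducing to the previous ones. Parts (a) and (b) are essentially immediate: for any $j$, \cref{lem:implication:req:agd} gives the required bound on $r_j(\widehat{\x})$ in terms of $\{r_k(\x)\}_k$ and similarly on $r_j(\widehat{\v})$ in terms of $\{r_k(\v)\}_k$. Since $r_k(\x) \leq r_k^{\max}(\x,\v)$ and $r_k(\v) \leq r_k^{\max}(\x,\v)$, both $r_j(\widehat{\x})$ and $r_j(\widehat{\v})$ are bounded by the same RHS $r_j^{\max}(\x,\v) + 5\delta\globalcond\sum_k r_k^{\max}(\x,\v) + 2\delta\mu_j\|\x^\star\|_2^2$, and taking the max completes (a). Part (b) is identical with $\err$ in place of $r$ and $L_j$ in place of $\mu_j$.

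For part (c), I apply \cref{lem:implication:req:agd}(a) to $\err_j(\widehat{\x})$ and \cref{lem:implication:req:agd}(b) to $r_j(\widehat{\v})$, then sum. This yields
\[
\psi_j(\widehat{\x},\widehat{\v}) \leq \psi_j(\x,\v) + 5\delta\globalcond\Big(\sum_k \err_k(\x) + \sum_k r_k(\v)\Big) + 2\delta(L_j + \mu_j)\|\x^\star\|_2^2,
\]
and the inner sum equals $\sum_k \psi_k(\x,\v)$ while $L_j + \mu_j \leq 2L_j$, giving (c).

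Parts (d) and (e) are the main work. I will treat each entry of $\bphi_i^{\err}(\widehat{\x},\widehat{\v})$ separately: for coordinates $j < i$ the entry is $\err_j^{\max}$, handled by (b); for $j \geq i$ the entry is $\tfrac{1}{2}\psi_j$, handled by (c). The key step is to bound the sums $\sum_k \err_k^{\max}(\x,\v)$ and $\sum_k \psi_k(\x,\v)$ by a constant multiple of $\ones^\top \bphi_i^{\err}(\x,\v)$. Here I use two elementary relations implied by the smoothness/strong convexity of $f_k$: $r_k(\cdot) \leq \err_k(\cdot) \leq \kappa_k r_k(\cdot)$, which gives $\err_k^{\max} \leq \kappa_k \psi_k \leq \globalcond\psi_k$ and $\psi_k \leq 2\err_k^{\max}$. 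Plugging these in yields $\sum_k \err_k^{\max} \leq 2\globalcond\,\ones^\top\bphi_i^{\err}$ and $\sum_k \psi_k \leq 2\,\ones^\top\bphi_i^{\err}$; combining with (b) and (c), and using $L_j \leq L_m$ together with the fact that $10\delta\globalcond^2$ dominates both $5\delta\globalcond \cdot 2\globalcond$ and $\tfrac{5}{2}\delta\globalcond \cdot 2$, delivers (d). Part (e) is analogous, with the roles of $\err$ and $r$ swapped in the bookkeeping of entries for $k < i$, and the bounds $\psi_k \leq 2\kappa_k r_k^{\max} \leq 2\globalcond(\bphi_i^{r})_k$ and $r_k^{\max} \leq \psi_k = 2(\bphi_i^{r})_k$ used in place of the previous ones.

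The main obstacle is the careful bookkeeping in (d) and (e): the vector potentials $\bphi_i^{\err}$ and $\bphi_i^{r}$ mix quantities of different ``types'' (residual, functional error, or their sum) at the cut-off index $i$, and one must verify that no cross-term forces a factor larger than $10\delta\globalcond^2$ or an additive term larger than $4\delta L_m\|\x^\star\|_2^2$. Once the two inequalities $\err_k^{\max} \leq \globalcond\psi_k$, $\psi_k \leq 2\globalcond r_k^{\max}$ are in place, the rest is purely book-keeping of constants.
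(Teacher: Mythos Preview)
Your proposal is correct and follows essentially the same route as the paper's proof: parts (a)--(c) are derived exactly as you describe from \cref{lem:implication:req:agd}, and for (d)--(e) the paper likewise treats the coordinates $j<i$ and $j\geq i$ separately and uses the same two relations $\err_k^{\max}\le \kappa_k\psi_k$ and $\psi_k\le 2\err_k^{\max}$ (resp.\ their $r$-analogues) to convert the sums into $\ones^\top\bphi_i^{\err}$ (resp.\ $\ones^\top\bphi_i^{r}$). Your bookkeeping of the constants (arriving at $10\delta\globalcond^2$ and $4\delta L_m\|\x^\star\|_2^2$) is in fact slightly cleaner than the paper's write-up, which has a couple of harmless factor-of-two slips in the intermediate steps before stating the correct final bound.
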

\begin{proof}[Proof of \Cref{lem:implication:req:agd:2}]
	\begin{enumerate}[(a), leftmargin=0pt, itemindent=15pt]
		\item 	By \Cref{lem:implication:req:agd}, for any $j \in [m]$,
		\begin{align}
				& \res_j^{\max} (\widehat{\x}, \widehat{\v}) = \max \{ \res_j(\widehat{\x}), \res_j (\widehat{\x}) \} \tag{by definition of $r_j^{\max}$ \eqref{eq:def:rmax}}
				\\
				\leq &
				\max \left\{ \res_j (\x) + 5 \delta \globalcond \sum_{k=1}^m \res_k(\x) + 2 \delta \mu_j \|\x^{\star}\|_2^2
				, \res_j(\v)  + 5 \delta \globalcond \sum_{k=1}^m \res_k(\v) + 2 \delta \mu_j  \|\x^{\star}\|_2^2  \right\} 
				\\
				\leq &  \res_j^{\max} (\x, \v) + 5 \delta \globalcond \sum_{k=1}^m \res_k^{\max} (\x, \v) + 2 \delta \mu_j \|\x^{\star}\|_2^2 
				\tag{by definition of $r_j^{\max}$}
			\end{align}
		\item Holds for the same reason as (a).
		\item For any $j \in [m]$, by \Cref{lem:implication:req:agd},
		\begin{align}
				& \psi_j (\widehat{\x}, \widehat{\v}) = \err_j(\widehat{\x}) + \res_j(\widehat{\v}) 
				\tag{by definition of $\psi_j$}
			\\
			\leq & 			\err_j(\x)  + 5 \delta \globalcond \sum_{k=1}^m \err_k(\x) + 2 \delta L_j  \|\x^{\star}\|_2^2 	+ 
			r_j(\v)  + 5 \delta \globalcond \sum_{k=1}^m \res_k(\v) + 2 \delta \mu_j  \|\x^{\star}\|_2^2 \tag{by \Cref{lem:implication:req:agd}}
			\\
			\leq & \psi_j(\x, \v) + 5 \delta \globalcond \sum_{k=1}^m \psi_k(\x, \v) + 4 \delta L_j  \|\x^{\star}\|_2^2.
			\tag{by definition of $\psi_j$ and $\mu_j \leq L_j$}
		\end{align}
		\item 
		We will prove (d) by considering $\unit_j^\top {\bphi}^\err_i$ for two different cases: $j < i$ or $j \geq i$ (recall $\unit_j$ is defined as the $j$-th unit vector).
	For $j < i$, by definition of $\bphi_i^{\err}$ \eqref{eq:def:phi}, we have
	\begin{align}
			& \unit_j^\top 	{\bphi}^{\err}_i (\widehat{\x}, \widehat{\v}) = \err_j^{\max} (\widehat{\x}, \widehat{\v})
		\tag{definition of $\bphi_i^{\err}$}
		\\
		\leq & \err_j^{\max} (\x, \v) + 5 \delta \globalcond \sum_{k=1}^m \err_k^{\max} (\x, \v) + 2 \delta L_j \|\x^{\star}\|_2^2  \tag{by (b)}
		\\
		\leq & \err_j^{\max} (\x, \v) + 5 \delta \globalcond \left( \sum_{k < i} \err_k^{\max} (\x, \v) + \globalcond \sum_{k \geq i} \psi_k (\x, \v) \right) + 2 \delta L_j  \|\x^{\star}\|_2^2
		\tag{since $\err_k^{\max}(\x,\v) = \max\{\err_k(\x), \err_k(\v)\} \leq \err_k(\x) + \err_k(\v) \leq \err_k(\x) + \kappa_k \res_k(\v) \leq \kappa_k\psi_k (\x, \v)$ }
		\\
		\leq & \err_j^{\max} (\x, \v) + 5 \delta \globalcond^2 \left( \sum_{k < i} \err_k^{\max} (\x, \v) + \sum_{k \geq i} \psi_k (\x, \v) \right) + 2 \delta L_j  \|\x^{\star}\|_2^2
		\\
		= & \unit_j^\top {\bphi}^{\err}_i(\x, \v) + 5 \delta \globalcond^2 \sum_{k=1}^m \unit_k^\top {\bphi}^{\err}_i(\x, \v)  +  2 \delta L_j  \|\x^{\star}\|_2^2
		\tag{definition of $\bphi_i^{\err}$}
	\end{align}
	For $j \geq i$, by definition,
	\begin{align}
		& \unit_j^\top 	{\bphi}^{\err}_i (\widehat{\x}, \widehat{\v}) = \psi_j (\widehat{\x}, \widehat{\v})
		\tag{definition of $\bphi_i^{\err}$}
	\\
	\leq & \psi_j (\x, \v) + 5 \delta \globalcond \sum_{k=1}^m \psi_k (\x, \v) + 4 \delta L_j  \|\x^{\star}\|_2^2  \tag{by (c)}
	\\
	\leq & \psi_j (\x, \v) + 5 \delta \globalcond \left( 2  \sum_{k < i} \err_k^{\max} (\x, \v) + \sum_{k \geq i} \psi_k (\x, \v) \right) + 4 \delta L_j \|\x^{\star}\|_2^2
	\tag{since $\psi_k(\x, \v) = \err_k(\x) + \res_k(\v) \leq \err_k(\x) + \err_k(\v) \leq 2 \err_k^{\max} (\x, \v)$}
	\\
	\leq & \psi_j (\x, \v) + 10 \delta \globalcond \left( \sum_{k < i} \err_k^{\max} (\x, \v) + \sum_{k \geq i} \psi_k (\x, \v) \right) + 4 \delta L_j  \|\x^{\star}\|_2^2
	\\
	= & \unit_j^\top {\bphi}^{\err}_i(\x, \v) + 10 \delta \globalcond \sum_{k=1}^m \unit_k^\top {\bphi}^{\err}_i(\x, \v)  +  4 \delta L_j  \|\x^{\star}\|_2^2
	\tag{definition of $\bphi_i^{\err}$}
	\end{align}
	In matrix form we arrive at 
	\begin{equation}
		{\bphi}^{\err}_i(\widehat{\x}, \widehat{\v})
	 \leq	(\id + 10 \delta \globalcond^2 \ones \ones^\top)
	 {\bphi}^{\err}_i ({\x}, {\v}) 
	 + 4 \delta L_m \|\x^{\star}\|_2^2 \ones.
\end{equation}
	\item Holds for the same reason as (d).
	\end{enumerate}
\end{proof}

\subsubsection{Finishing the proof of \Cref{lem:implication:req:agd:4}}
\label{sec:proof:lem:implication:req:agd:4}
We are ready to finish the proof of \Cref{lem:implication:req:agd:4}.
\begin{proof}[Proof of \Cref{lem:implication:req:agd:4}]
	By \Cref{lem:agd1} from exact AGD analysis we have
	\begin{equation}
		\bphi_i^{\err}(\agd(\x, \v; L_i, \mu_i)) \leq \mat{D}_i {\bphi}^\err_{i}(\x, \v),
	\end{equation}
	then applying \Cref{lem:implication:req:agd:2} shows (a). (b) holds for the same reason.
\end{proof}

\subsection{Inductively bound the progress of inexact $\acbsls$ by matrix inequalities}
\label{sec:thm:acbsls:inexact:3}
In the following lemma, we iteratively construct the bound of vector potentials ${\bphi}^{\err}$ and ${\bphi}^{\res}$ after executing $\widehat{\acbsls}_i$.
\begin{lemma}[Estimate the progress of $\widehat{\acbsls}_i$ by matrix inequalities]
	\label{lem:acbsls:stab:abs}
	Consider multiscale optimization problem defined in \Cref{def:multiscale_problem}, and assuming \cref{req:agd}, define the following three sequences of $m \times m$ matrices $\{\mat{F}_i\}_{i=1}^{m+1}$, $\{\mat{E}_i\}_{i=1}^{m+1}$, $\{\mat{Z}_i\}_{i=1}^{m+1}$:
	\begin{equation}
		\mat{F}_{m+1} \defeq \id, \qquad \mat{E}_{m+1} \defeq \mat{0}, \qquad \mat{Z}_{m+1} \defeq \mat{0}
	\end{equation}
	and for $i = m, m-1$ down to $1$, define
	\begin{equation}
		\widehat{\mat{F}_{i+1}} \defeq 
		\begin{bmatrix}
			[\id_{i-1} | \mat{0}_{(i-1) \times (m-i+1)}] (\mat{F}_{i+1} + \mat{E}_{i+1}) 
				\begin{bmatrix}
				\id_{i-1} & \mat{0} \\
				\mat{0} & 2 \globalcond \cdot \id_{m-i+1}
			\end{bmatrix} 
			\\
			\unit_i^\top (\mat{F}_{i+1} + \mat{E}_{i+1}) 			
				\begin{bmatrix}
					\globalcond \id_{i} &  & \\
				 & 1  & \\
				& & \globalcond \id_{m-i-1}
			\end{bmatrix}
			\\
			[\mat{0}_{(m-i) \times i} | \id_{m-i}]  (\mat{F}_{i+1} + \mat{E}_{i+1})
		\end{bmatrix}
	\end{equation}
	\begin{equation}
		\mat{F}_i \defeq (\mat{K}_i	\mat{F}_{i+1} \mat{D}_i )^{T_i}, \quad 
		\mat{E}_i \defeq (\mat{K}_i \widehat{\mat{F}_{i+1}} \widehat{\mat{D}_i})^{T_i} - (\mat{K}_i	\mat{F}_{i+1} \mat{D}_i )^{T_i}, 
	\end{equation}
	\begin{equation}
		\mat{Z}_i \defeq \sum_{t_{i}=0}^{T_i-1} ( \mat{K}_i \widehat{\mat{F}_{i+1}} \widehat{\mat{D}_i})^{t_i} \left( \mat{K}_i \widehat{\mat{F}_{i+1}} + 2 \mat{Z}_{i+1} \right).
	\end{equation}
	where $\mat{D}_i, \widehat{\mat{D}_i}$ were defined in \cref{eq:agd:D:def,eq:agd:Dhat:def}, and $\mat{K}_i$ is defined by 
	\begin{equation}
		\mat{K}_i \defeq \begin{bmatrix}
			\id_{i} &  \\
						& 2 \globalcond \id_{m-i}.
		\end{bmatrix}	
		\label{eq:agd:K:def}
	\end{equation}
	Then,
	\begin{enumerate}[(a), leftmargin=*]
		\item For any $i \in [m+1]$, $\mat{F}_i$ are non-negative diagonal matrices, $\mat{E}_i$ and $\mat{Z}_i$ are non-negative matrices.
		\item For any $i \in [m]$, the following bound holds
		\begin{align}
			\bphi_i^{\err}(\widehat{\acbsls_{i}}(\x, \v)) & \leq (\mat{F}_i + \mat{E}_i) \bphi_i^{\err}(\x, \v) + 4 \delta L_m \|\x^{\star}\|_2^2 \mat{Z}_i \ones,
			\\
			\bphi_i^r(\widehat{\acbsls_{i}}(\x, \v)) & \leq (\mat{F}_i + \mat{E}_i) \bphi_i^r(\x, \v) + 4 \delta L_m \|\x^{\star}\|_2^2 \mat{Z}_i \ones.
		\end{align}  
	\end{enumerate}
\end{lemma}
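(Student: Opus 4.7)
\textbf{Proof proposal for Lemma \ref{lem:acbsls:stab:abs}.}

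The plan is to mirror the structure of the $\bsls$ stability analysis (Lemma \ref{lem:bsls:stab:abs}), proving both statements simultaneously by reverse induction on $i$ from $m+1$ down to $1$. Extending the convention $\widehat{\acbsls_{m+1}} \defeq \mathrm{Id}$, the base case $i = m+1$ becomes trivial: $\mat{F}_{m+1} = \id$, $\mat{E}_{m+1} = \mat{Z}_{m+1} = \mat{0}$ are non-negative (with $\mat{F}_{m+1}$ diagonal), and the claimed bound reduces to an identity. Part (a) at the inductive step will follow directly from the definitions: $\mat{F}_i = (\mat{K}_i \mat{F}_{i+1} \mat{D}_i)^{T_i}$ is a product of non-negative diagonal matrices (each of $\mat{K}_i$, $\mat{F}_{i+1}$, $\mat{D}_i$ is diagonal and non-negative by hypothesis and by their definitions in \cref{eq:agd:K:def,eq:agd:D:def}); $\mat{E}_i$ differs from $(\mat{K}_i \widehat{\mat{F}_{i+1}} \widehat{\mat{D}_i})^{T_i}$ by the diagonal matrix $\mat{F}_i$, and since $\widehat{\mat{F}_{i+1}} \geq \mat{F}_{i+1} + \mat{E}_{i+1}$ (by construction) and $\widehat{\mat{D}_i} \geq \mat{D}_i$, expanding the binomial difference shows $\mat{E}_i \geq \mat{0}$; the non-negativity of $\mat{Z}_i$ is immediate.

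For part (b), the main work is the inductive step. Let $(\x^{(0)}, \v^{(0)}, \tilde{\x}^{(0)}, \tilde{\v}^{(0)}, \ldots, \x^{(T_i)}, \v^{(T_i)})$ be the trajectory generated by running $\widehat{\acbsls_i}(\x, \v)$. One loop iteration of $\widehat{\acbsls_i}$ consists of an inexact $\widehat{\agd}$ step followed by two recursive calls to $\widehat{\acbsls_{i+1}}$ (the branching). The $\widehat{\agd}$ step is handled directly by Lemma \ref{lem:implication:req:agd:4}, which gives
\begin{equation}
    \bphi_i^{\err}(\tilde{\x}^{(t)}, \tilde{\v}^{(t)}) \leq \widehat{\mat{D}_i} \bphi_i^{\err}(\x^{(t)}, \v^{(t)}) + 4 \delta L_m \|\x^\star\|_2^2 \ones,
\end{equation}
and analogously for $\bphi_i^r$.

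The delicate piece is the branching. Because $(\x^{(t+1)}, \_) \gets \widehat{\acbsls_{i+1}}(\tilde{\x}^{(t)}, \tilde{\x}^{(t)})$ and $(\_, \v^{(t+1)}) \gets \widehat{\acbsls_{i+1}}(\tilde{\v}^{(t)}, \tilde{\v}^{(t)})$, to bound $\bphi_i^{\err}(\x^{(t+1)}, \v^{(t+1)})$ we must track the first $i-1$ coordinates through $\x^{(t+1)}$ (using $\bphi_{i+1}^{\err}$ applied to $(\x^{(t+1)}, \x^{(t+1)})$), the last $m-i$ coordinates through both (using $\psi_j$), and the $i$-th coordinate through a combination. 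The induction hypothesis applied to the two branches yields
\begin{align}
\bphi_{i+1}^{\err}(\x^{(t+1)}, \x^{(t+1)}) &\leq (\mat{F}_{i+1} + \mat{E}_{i+1}) \bphi_{i+1}^{\err}(\tilde{\x}^{(t)}, \tilde{\x}^{(t)}) + 4 \delta L_m \|\x^\star\|_2^2 \mat{Z}_{i+1} \ones, \\
\bphi_{i+1}^{r}(\v^{(t+1)}, \v^{(t+1)}) &\leq (\mat{F}_{i+1} + \mat{E}_{i+1}) \bphi_{i+1}^{r}(\tilde{\v}^{(t)}, \tilde{\v}^{(t)}) + 4 \delta L_m \|\x^\star\|_2^2 \mat{Z}_{i+1} \ones.
\end{align}
The key is to translate these two bounds on the ``doubled'' potentials $\bphi_{i+1}^{\err}(\tilde{\x}^{(t)}, \tilde{\x}^{(t)})$ and $\bphi_{i+1}^{r}(\tilde{\v}^{(t)}, \tilde{\v}^{(t)})$ into a bound on $\bphi_i^{\err}(\tilde{\x}^{(t)}, \tilde{\v}^{(t)})$ (and similarly for $\bphi_i^r$). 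This is precisely what Lemmas \ref{lem:agd:max:of:potentials} and \ref{lem:agd:sum:of:potentials:er} accomplish: the former handles the rows $j < i$ (where only a maximum is needed, contributing the block $\id_{i-1}$ on the inside of $\widehat{\mat{F}_{i+1}}$ and $2\globalcond\cdot\id_{m-i+1}$ on the outside), while the latter handles the middle row $j = i$ (where the $\psi_i$ coordinate combines $\err$ from $\x$ and $\res$ from $\v$, contributing the $\globalcond/1/\globalcond$ pattern). Reading off the resulting matrix gives exactly the definition of $\widehat{\mat{F}_{i+1}}$ as stated. The combined bound after one loop iteration therefore reads
\begin{equation}
    \bphi_i^{\err}(\x^{(t+1)}, \v^{(t+1)}) \leq \mat{K}_i \widehat{\mat{F}_{i+1}} \widehat{\mat{D}_i} \bphi_i^{\err}(\x^{(t)}, \v^{(t)}) + 4\delta L_m \|\x^\star\|_2^2 \left( \mat{K}_i \widehat{\mat{F}_{i+1}} + 2 \mat{Z}_{i+1} \right) \ones,
\end{equation}
where the leading $\mat{K}_i$ absorbs the $2\globalcond$ factors and the additive term accumulates contributions from both the $\widehat{\agd}$ additive noise (via $\mat{K}_i \widehat{\mat{F}_{i+1}}$) and the two recursive $\mat{Z}_{i+1}$ factors.

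To finish, one telescopes this one-step recursion $T_i$ times, which produces $(\mat{K}_i \widehat{\mat{F}_{i+1}} \widehat{\mat{D}_i})^{T_i}$ in front of $\bphi_i^{\err}(\x,\v)$ and a geometric sum in the additive term, matching the definitions of $\mat{F}_i + \mat{E}_i$ and $\mat{Z}_i$ exactly. The same argument yields the bound on $\bphi_i^r$. The main technical obstacle is the bookkeeping of the three components of $\widehat{\mat{F}_{i+1}}$ and verifying that Lemmas \ref{lem:agd:max:of:potentials} and \ref{lem:agd:sum:of:potentials:er} plug in with the correct row-by-row constants; once that alignment is set up, the telescoping is purely mechanical and parallels \cref{lem:bsls:stab:abs}.
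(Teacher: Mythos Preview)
Your proposal is correct and follows essentially the same approach as the paper: reverse induction on $i$, with the inductive step combining Lemma \ref{lem:implication:req:agd:4} for the $\widehat{\agd}$ step, the induction hypothesis on the two branches, and the potential-relation Lemmas \ref{lem:agd:max:of:potentials} and \ref{lem:agd:sum:of:potentials:er} to recombine into $\bphi_i^{\err}$, then telescoping.

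Two small points of imprecision worth tightening. First, your displayed application of the induction hypothesis reads $\bphi_{i+1}^{\err}(\x^{(t+1)},\x^{(t+1)}) \leq \ldots$, but the hypothesis only bounds $\bphi_{i+1}^{\err}$ of the \emph{full output pair} $\widehat{\acbsls}_{i+1}(\tilde{\x}^{(t)},\tilde{\x}^{(t)})$; $\x^{(t+1)}$ is just its first component. The paper handles this by working coordinate-wise, extracting e.g.\ $\err_j(\x^{(t+1)}) \leq \err_j^{\max}(\widehat{\acbsls}_{i+1}(\tilde{\x}^{(t)},\tilde{\x}^{(t)}))$ directly. Relatedly, for rows $j<i$ one needs the $\bphi_{i+1}^{\err}$-bound on \emph{both} branches (to control $\err_j(\v^{(t+1)})$ as well), not just the $\tilde{\x}$-branch. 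Second, you attribute only rows $j<i$ and $j=i$ to the two lemmas; the rows $j>i$ are handled in the paper by the same sum-of-potentials argument (Lemma \ref{lem:agd:sum:of:potentials:er}), with the resulting diagonal block relaxed to $2\globalcond\id$ and then factored out into $\mat{K}_i$, exactly as your remark about ``$\mat{K}_i$ absorbing the $2\globalcond$ factors'' anticipates. None of this changes the architecture of the proof.
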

\begin{proof}[Proof of \Cref{lem:acbsls:stab:abs}]
	The proof of (a) is the same as the proof for \Cref{lem:bsls:stab:abs}(a). 
	We will prove the first inequalities in  (b) by induction in reverse order (from $m$ back to $1$). The second inequality holds for the same reason. 
	
	The induction base apparently holds. 
	Now assume for any $\x, \v$, 
	\begin{align}
		\bphi_{i+1}^{\err} (\widehat{\acbsls}_{i+1} (\x, \v)) & \leq (\mat{F}_{i+1} + \mat{E}_{i+1}) \bphi_{i+1}^{\err}(\x, \v) +  4 \delta L_m \|\x^{\star}\|_2^2 \mat{Z}_{i+1} \ones, 
	\end{align}
	then we will show that
	\begin{align}
		\bphi_i^{\err}  (\widehat{\acbsls}_{i} (\x, \v))
		& \leq 
		(	\mat{K}_i \widehat{\mat{F}_{i+1}}  \widehat{\mat{D}_{i}})^{T_i} \bphi_i^{\err}(\x, \v),
		+ 
		4 \delta L_m \|\x^{\star}\|_2^2 \cdot \sum_{t_i=0}^{T_i-1} (\mat{K}_i 	\widehat{\mat{F}_{i+1}}  \widehat{\mat{D}_{i}})^{t_i}  (\widehat{\mat{F}_{i+1}} +  2 \mat{Z}_{i+1} ) \ones,
	\end{align}

	To this end, let $ \begin{bmatrix} \x^{(0)} \\ \v^{(0)} \end{bmatrix}, 
	\begin{bmatrix} \tilde{\x}^{(0)} \\ \tilde{\v}^{(0)} \end{bmatrix}, 
	\cdots
	\begin{bmatrix} {\x}^{(T_i)} \\ {\v}^{(T_i)} \end{bmatrix}$ be the trajectory generated by running $\widehat{\acbsls}_{i}$. 
	
	Since $(\tilde{\x}^{(t)}, \tilde{\v}^{(t)}) = \widehat{\agd}(\x^{(t)}, \v^{(t)}; L_i, \mu_i)$, we have by \Cref{lem:implication:req:agd:4}, 	
	\begin{align}
		{\bphi}^\err_{i}(\tilde{\x}^{(t)}, \tilde{\v}^{(t)}) 
		& \leq (\id + 10 \delta \globalcond^2 \ones \ones^\top)  \mat{D}_i 
		{\bphi}^\err_{i}(\x^{(t)}, \v^{(t)}) + 4 \delta L_m \|\x^{\star}\|_2^2 \ones,
	\end{align}
	
	We study the $\x^{(t+1)}, \v^{(t+1)}$ by considering 3 cases.
For $j < i$,
\begin{align}
		& \unit_j^\top \bphi_i^{\err} (\x^{(t+1)}, \v^{(t+1)}) = \err_j^{\max}(\x^{(t+1)}, \v^{(t+1)}) = \max \{ \err_j(\x^{(t+1)}), \err_j(\v^{(t+1)})\}
	\\
	\leq & \max \{ \err_j^{\max} (\widehat{\acbsls}_{i+1} (\tilde{\x}^{(t)}, \tilde{\x}^{(t)})), \err_j^{\max} (\widehat{\acbsls}_{i+1} (\tilde{\v}^{(t)}, \tilde{\v}^{(t)})) \}
	\\
	= & \unit_j^\top \max \{ \bphi_{i+1}^{\err} (\widehat{\acbsls}_{i+1} (\tilde{\x}^{(t)}, \tilde{\x}^{(t)})), \bphi_{i+1}^{\err} (\widehat{\acbsls}_{i+1} (\tilde{\v}^{(t)}, \tilde{\v}^{(t)})) \}
	\tag{by definition of $\bphi_{i+1}^{\err}$}
	\\
	\leq & \unit_j^\top \max \{ (\mat{F}_{i+1} + \mat{E}_{i+1}) \bphi_{i+1}^{\err}(\tilde{\x}^{(t)}, \tilde{\x}^{(t)}), (\mat{F}_{i+1} + \mat{E}_{i+1}) \bphi_{i+1}^{\err}(\tilde{\v}^{(t)},  \tilde{\v}^{(t)}) \}
	+ 4 \delta L_m \|\x^{\star}\|_2^2 \unit_j^\top \mat{Z}_{i+1} \ones
	\\
	\leq & \unit_j^\top (\mat{F}_{i+1} + \mat{E}_{i+1}) \max \{\bphi_{i+1}^{\err}(\tilde{\x}^{(t)}, \tilde{\x}^{(t)}),   \bphi_{i+1}^{\err}(\tilde{\v}^{(t)},  \tilde{\v}^{(t)})\}
	+ 4 \delta L_m \|\x^{\star}\|_2^2 \unit_j^\top \mat{Z}_{i+1} \ones
	\\
	\leq & 	
 	\unit_j^\top (\mat{F}_{i+1} + \mat{E}_{i+1}) 	\begin{bmatrix}
		\id_{i-1} & 0 \\
		0 & 2 \globalcond \cdot \id_{m-i+1}
	\end{bmatrix}	\bphi_i^{\err}(\tilde{\x}^{(t)}, \tilde{\v}^{(t)}) 
	+ 4 \delta L_m \|\x^{\star}\|_2^2 \unit_j^\top \mat{Z}_{i+1} \ones.
	\tag{by \Cref{lem:agd:max:of:potentials}}
\end{align}

For $j = i$,
\begin{align}
	& \unit_i^{\top} \bphi_i^{\err}  (\x^{(t+1)}, \v^{(t+1)}) = \frac{1}{2}	\psi_i (\x^{(t+1)}, \v^{(t+1)}) = \frac{1}{2} \err_i(\x^{(t+1)}) + \frac{1}{2} \res_i(\v^{(t+1)}) 
	\\
	\leq & \frac{1}{2} \err_i^{\max} (\widehat{\acbsls}_{i+1} (\tilde{\x}^{(t)}, \tilde{\x}^{(t)})) + \frac{1}{2}  \res_i^{\max} (\widehat{\acbsls}_{i+1} (\tilde{\v}^{(t)}, \tilde{\v}^{(t)}))
	\\
	= & \frac{1}{2} \unit_i^\top \left( \bphi_{i+1}^{\err} (\widehat{\acbsls}_{i+1} (\tilde{\x}^{(t)}, \tilde{\x}^{(t)})) +  \bphi_{i+1}^r (\widehat{\acbsls}_{i+1} (\tilde{\v}^{(t)}, \tilde{\v}^{(t)})) \right)
	\\
	\leq & \frac{1}{2}  \unit_i^\top \left( (\mat{F}_{i+1} + \mat{E}_{i+1}) \bphi_{i+1}^{\err}(\tilde{\x}^{(t)}, \tilde{\x}^{(t)}) +   (\mat{F}_{i+1} + \mat{E}_{i+1}) \bphi_{i+1}^r(\tilde{\v}^{(t)},  \tilde{\v}^{(t)})  \right) 	+ 4 \delta L_m \|\x^{\star}\|_2^2 \unit_i^\top \mat{Z}_{i+1} \ones
	\\
	\leq &  \unit_i^\top (\mat{F}_{i+1} + \mat{E}_{i+1}) 				\begin{bmatrix}
		\globalcond \id_{i} &  & \\
		 & 1  & \\
		& & \globalcond \id_{m-i-1}
	\end{bmatrix} \bphi_i^{\err}(\tilde{\x}^{(t)}, \tilde{\v}^{(t)}) 
	+ 4 \delta L_m \|\x^{\star}\|_2^2 \unit_i^\top \mat{Z}_{i+1} \ones.
	\tag{by \Cref{lem:agd:sum:of:potentials:er}}
\end{align}

For $j > i$
\begin{align}
	& \unit_j^{\top} \bphi_i^{\err}  (\x^{(t+1)}, \v^{(t+1)}) = \frac{1}{2}	\psi_j (\x^{(t+1)}, \v^{(t+1)}) = \frac{1}{2} \err_j(\x^{(t+1)}) + \frac{1}{2 }r_j(\v^{(t+1)}) 
	\\
	= & \frac{1}{2 }\psi_j(\widehat{\acbsls}_{i+1} (\tilde{\x}^{(t)}, \tilde{\x}^{(t)})) + \frac{1}{2} \psi_j (\widehat{\acbsls}_{i+1} (\tilde{\v}^{(t)}, \tilde{\v}^{(t)}))
	\\
	= & \unit_j^\top \left( \bphi_{i+1}^{\err} (\widehat{\acbsls}_{i+1} (\tilde{\x}^{(t)}, \tilde{\x}^{(t)})) +  \bphi_{i+1}^r (\widehat{\acbsls}_{i+1} (\tilde{\v}^{(t)}, \tilde{\v}^{(t)})) \right)
	\\
	\leq &  \unit_j^\top \left( (\mat{F}_{i+1} + \mat{E}_{i+1}) \bphi_{i+1}^{\err}(\tilde{\x}^{(t)}, \tilde{\x}^{(t)}) +   (\mat{F}_{i+1} + \mat{E}_{i+1}) \bphi_{i+1}^r(\tilde{\v}^{(t)},  \tilde{\v}^{(t)})  \right) 	+ 8 \delta L_m \|\x^{\star}\|_2^2 \unit_j^\top \mat{Z}_{i+1} \ones 
	\\
	\leq &  \unit_j^\top (\mat{F}_{i+1} + \mat{E}_{i+1}) 		\begin{bmatrix}
	  2	\globalcond \id_{i} &  & \\
		 & 2 & \\
		& & 2 \globalcond \id_{m-i-1}
	\end{bmatrix} \bphi_i^{\err}(\tilde{\x}^{(t)}, \tilde{\v}^{(t)}) 
	+ 8 \delta L_m \|\x^{\star}\|_2^2 \unit_j^\top \mat{Z}_{i+1} \ones 
	\\
	\leq & 2 \globalcond \unit_j^\top (\mat{F}_{i+1} + \mat{E}_{i+1}) 	\bphi_i^{\err}(\tilde{\x}^{(t)}, \tilde{\v}^{(t)}) 	+ 8 \delta L_m \|\x^{\star}\|_2^2 \unit_j^\top \mat{Z}_{i+1} \ones.
\end{align}

In matrix form we obtain
\begin{equation}
	\bphi_i^{\err}  (\x^{(t+1)}, \v^{(t+1)})
	\leq 
	\mat{K}_i \widehat{\mat{F}_{i+1}} 	\bphi_i^{\err}  (\tilde{\x}^{(t)}, \tilde{\v}^{(t)}) + 8 \delta L_m \|\x^{\star}\|_2^2 \mat{Z}_{i+1} \ones.
\end{equation}	
Hence
\begin{equation}
	\bphi_i^{\err}  (\x^{(t+1)}, \v^{(t+1)})
	\leq 
	\mat{K}_i 	\widehat{\mat{F}_{i+1}}  \widehat{\mat{D}_{i}} 	\bphi_i^{\err}  (\x^{(t)}, \v^{(t)})
	+
	4 \delta L_m \|\x^{\star}\|_2^2  (	\mat{K}_i \widehat{\mat{F}_{i+1}} +  2 \mat{Z}_{i+1} ) \ones.
\end{equation}
Telescoping 
\begin{equation}
	\bphi_i^{\err}  (\x^{(T_i)}, \v^{(T_i)}) 
	\leq 
	(	\mat{K}_i 	\widehat{\mat{F}_{i+1}}  \widehat{\mat{D}_{i}})^{T_i} \bphi_i^{\err}(\x^{(0)}, \v^{(0)}) 
	+ 
	4 \delta L_m \|\x^{\star}\|_2^2 \cdot \sum_{t_i=0}^{T_i-1} (		\mat{K}_i \widehat{\mat{F}_{i+1}}  \widehat{\mat{D}_{i}})^{t_i}  (
		\mat{K}_i \widehat{\mat{F}_{i+1}} +  2 \mat{Z}_{i+1} ) \ones.
\end{equation}
\end{proof}

Next, we estimate the upper bounds of $\mat{F}_i$ (in \Cref{lem:acbsls:stab:F:bound}), $\mat{E}_i$ (in \Cref{lem:acbsls:stab:E:bound}) and $\mat{Z}_i$'s (in \Cref{lem:acbsls:stab:Z:bound}).
\subsubsection{Upper bound of $\mat{F}$}
We first bound $\|\mat{F}_i\|_1$ with the following lemma.
\begin{lemma}[Upper bound of $\|\mat{F}_i\|_1$]
	\label{lem:acbsls:stab:F:bound}
	Using the same notation as in \Cref{lem:acbsls:stab:abs}, and in addition assume $T_1, \ldots, T_m$ satisfies \eqref{eq:acbsls:t:lb}, then the following statements hold,
	\begin{enumerate}[(a),leftmargin=*]
		\item For any $i \in [m+1]$, $\mat{F}_i$ is a diagonal matrix of the form $\prod_{j=i}^m \left(	(\mat{K}_j {\mat{D}}_j)^{\prod_{k=i}^j T_k} \right)$.
		\item For any $i \in [m]$, $\|\mat{K}_i \mat{F}_{i+1} \mat{D}_i\|_1 \leq 1$.
		\item For any $i \in [m+1]$, $\|\mat{F}_i\|_1 \leq 1$.
		\item $\|\mat{F}_1\|_1 \leq \frac{\epsilon}{\psi(\x^{(0)}, \v^{(0)})}$.
	\end{enumerate}
\end{lemma}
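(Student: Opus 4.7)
The plan exploits the fact that every matrix appearing in the recursion for $\mat{F}_i$ (namely $\mat{K}_j$, $\mat{F}_{i+1}$, and $\mat{D}_j$) is diagonal and non-negative. Consequently all of them commute, every $\mat{F}_i$ is a non-negative diagonal matrix, and its $\ell_1$-operator norm simply equals its largest diagonal entry. The whole lemma then reduces to explicit entry-wise estimates, and parts (c)--(d) piggyback on the same telescoping exponent analysis used in the exact-arithmetic proof of \cref{thm:acbsls}.

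For (a) I induct downward on $i$. The base $i=m+1$ gives $\mat{F}_{m+1}=\id$, matching the empty product. In the inductive step, commutativity of the diagonal matrices lets me rewrite $\mat{F}_i=(\mat{K}_i\mat{D}_i)^{T_i}\mat{F}_{i+1}^{T_i}$ and substitute the inductive form for $\mat{F}_{i+1}$. This representation also makes it straightforward to read off the diagonal entries of the building blocks: $(\mat{K}_j\mat{D}_j)_{\ell\ell}$ equals $1$ for $\ell<j$, equals $1-\kappa_j^{-1/2}$ for $\ell=j$, and equals $4\globalcond^3$ for $\ell>j$.

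The heart of the argument is (b), which amounts to showing every diagonal entry of $\mat{K}_i\mat{F}_{i+1}\mat{D}_i$ is at most $1$. The cases $\ell<i$ (entry $=1$) and $\ell=i$ (entry $=1-\kappa_i^{-1/2}$) are immediate. For $\ell>i$, using (a) the entry equals
\[
    (4\globalcond^3)^{\,1+\sum_{j=i+1}^{\ell-1}\prod_{k=i+1}^j T_k}\cdot(1-\kappa_\ell^{-1/2})^{\prod_{k=i+1}^\ell T_k},
\]
and taking logarithms motivates the potential
\[
    \tilde\gamma_\ell \defeq \log(4\globalcond^3)\sum_{j=i}^{\ell-1}\prod_{k=i+1}^j T_k \;-\;\frac{1}{\sqrt{\kappa_\ell}}\prod_{k=i+1}^\ell T_k,
\]
exactly analogous to the $\gamma_i$ of \cref{thm:acbsls}. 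Showing $\tilde\gamma_\ell\le 0$ for all $\ell>i$ reduces to a base bound $\tilde\gamma_{i+1}=\log(4\globalcond^3)-T_{i+1}/\sqrt{\kappa_{i+1}}\le 0$ and the telescoping
\[
    \tilde\gamma_\ell-\tilde\gamma_{\ell-1} = \Bigl(\prod_{k=i+1}^{\ell-1}T_k\Bigr)\Bigl[\log(4\globalcond^3)+\kappa_{\ell-1}^{-1/2}-T_\ell/\sqrt{\kappa_\ell}\Bigr]\le 0,
\]
both of which follow from the assumption $T_\ell/\sqrt{\kappa_\ell}\ge\log(4\globalcond^4)+1=\log(4\globalcond^3)+\log\globalcond+1$ together with $\kappa_{\ell-1}^{-1/2}\le 1\le\log\globalcond+1$. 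Part (c) is then a one-line consequence of (b) via submultiplicativity: $\|\mat{F}_i\|_1=\|(\mat{K}_i\mat{F}_{i+1}\mat{D}_i)^{T_i}\|_1\le 1^{T_i}=1$.

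For (d) I apply the same mechanism directly to $\mat{F}_1$, now absorbing the extra $T_1$ factor into the exponent. From (a), $(\mat{F}_1)_{11}=(1-\kappa_1^{-1/2})^{T_1}$ and, for $\ell\ge 2$, $(\mat{F}_1)_{\ell\ell}=(4\globalcond^3)^{\sum_{j=1}^{\ell-1}\prod_{k=1}^j T_k}(1-\kappa_\ell^{-1/2})^{\prod_{k=1}^\ell T_k}$. Setting $\gamma_\ell\defeq\log(4\globalcond^3)\sum_{j=1}^{\ell-1}\prod_{k=1}^j T_k-\kappa_\ell^{-1/2}\prod_{k=1}^\ell T_k$ and repeating the monotonicity argument above yields $\gamma_\ell\le\gamma_1=-T_1/\sqrt{\kappa_1}$ for every $\ell$. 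Combined with $T_1\ge\sqrt{\kappa_1}\log(\psi(\x^{(0)},\v^{(0)})/\epsilon)$, this gives $(\mat{F}_1)_{\ell\ell}\le\epsilon/\psi(\x^{(0)},\v^{(0)})$ uniformly in $\ell$, hence the claimed norm bound. The only real technical work is keeping the book-keeping straight; once the potentials $\tilde\gamma_\ell$ and $\gamma_\ell$ are set up correctly, everything reduces to the same ``growth versus decay'' balance $T_\ell/\sqrt{\kappa_\ell}\ge\log(4\globalcond^4)+1$ that powers the exact-arithmetic convergence of $\acbsls$.
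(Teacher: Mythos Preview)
Your proposal is correct and follows essentially the same approach as the paper: downward induction for (a), an entrywise bound via the telescoping potentials $\gamma_\ell$ for (b) and (d), and submultiplicativity for (c). The only cosmetic difference is that you track the sharper constant $(\mat{K}_j\mat{D}_j)_{\ell\ell}=4\globalcond^3$ for $\ell>j$, whereas the paper writes $4\globalcond^4$; since the assumption on $T_i$ uses $\log(4\globalcond^4)+1$, both constants work and your argument is slightly tighter.
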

\begin{proof}[Proof of \Cref{lem:acbsls:stab:F:bound}]
	\begin{enumerate}[(a), leftmargin=0pt, itemindent=15pt]
		\item The first statement (a) follows immediately by definition of $\mat{F}_i$'s. We prove by induction in reverse order from $m+1$ doewn to $1$. 
	For $i = m+1$ we have $\mat{F}_{m+1} = \id$ by definition, which is consistent. Now assume the statement holds for the case of $i+1$, then the case of $i$ also holds in that 
		\begin{equation}
			\mat{F}_i = (\mat{K}_i \mat{F}_{i+1} \mat{D}_i)^{T_i} = \left( \mat{K}_i \prod_{j=i+1}^m \left(	(\mat{K}_j {\mat{D}}_j)^{\prod_{k=i+1}^j T_k}\right) \mat{D}_i  \right)^{T_i}
			= \prod_{j=i}^m \left(	(\mat{K}_j {\mat{D}}_j)^{\prod_{k=i}^j T_k} \right),
		\end{equation}
		where the last equality holds because of the commutability among diagonal matrices.

		\item By (a) we have 
		\begin{equation}
			\mat{K}_i \mat{F}_{i+1} \mat{D}_i = \mat{K}_i \prod_{j=i+1}^m \left(	(\mat{K}_j {\mat{D}}_j)^{\prod_{k=i+1}^j T_k}\right) \mat{D}_i 
			=  \prod_{j=i}^m \left(	(\mat{K}_j {\mat{D}}_j)^{\prod_{k=i+1}^j T_k} \right),
		\end{equation}
		By definition of $\mat{D}_i$ we can write the $l$-th diagonal element of $\mat{K}_i \mat{F}_{i+1} \mat{D}_i$ as 
		\begin{equation}
			(\mat{K}_i \mat{F}_{i+1} \mat{D}_i)_{ll} = 
			\begin{cases}
				1 & l < i, \\
				(1 - \kappa_l^{-\frac{1}{2}})^{\prod_{k=i+1}^l T_k} \cdot (4  \globalcond^4)^{\sum_{j=i}^{l-1} \prod_{k=i+1}^{j} T_k } & l \geq i.
			\end{cases}
		\end{equation}
		Note that 
		\begin{align}
			& (1 - \kappa_l^{-\frac{1}{2}})^{\prod_{k=i+1}^l T_k} \cdot (4  \globalcond^4)^{\sum_{j=i}^{l-1} \prod_{k=i+1}^{j} T_k}
			\leq 
			\exp \left(  \underbrace{ - \kappa_l^{-\frac{1}{2}} \prod_{k=i+1}^l T_k  + \log (4  \globalcond^4) \cdot  \sum_{j=i}^{l-1} \prod_{k=i+1}^{j} T_k }_{\text{denoted as $\gamma_l$}}\right).
		\end{align}
		Observe that $\gamma_{i} = -\kappa_i^{-1} < 0$. For $l \geq i$, it is the case that
		\begin{align}
			& \gamma_{l+1} - \gamma_l 
			\\
			= & - \kappa_{l+1}^{-\frac{1}{2}} \prod_{k=i+1}^{l+1} T_k  + \log (4 \globalcond^4) \cdot  \sum_{j=i}^{l} \prod_{k=i+1}^{j} T_k + 
			\kappa_{l}^{-\frac{1}{2}} \prod_{k=i+1}^{l} T_k  - \log (4 \globalcond^4) \cdot  \sum_{j=i}^{l-1} \prod_{k=i+1}^{j} T_k
			\\
			= &  - \kappa_{l+1}^{-\frac{1}{2}} \prod_{k=i+1}^{l+1} T_k + 	\kappa_{l}^{-\frac{1}{2}} \prod_{k=i+1}^{l} T_k + \log (4 \globalcond^4) \cdot   \prod_{k=i+1}^{l} T_k
			\\
			= & \prod_{k=i+1}^{l} T_k \left( - \kappa_{l+1}^{-\frac{1}{2}} T_{l+1} + \kappa_l^{-\frac{1}{2}} + \log (4 \globalcond^4)\right)
			\leq 0 \tag{since $T_{l+1} \geq \kappa_{l+1}^{\frac{1}{2}} (1 + \log (4 \globalcond^4)) $ by \eqref{eq:acbsls:t:lb}}
		\end{align}
		Hence $\gamma_{m} \leq \gamma_{m-1} \leq \cdots \leq \gamma_i \leq 0$. Consequently we have $(\mat{K}_i \mat{F}_{i+1} \mat{D}_i)_{ll} \leq 1$ for all $l$, and thus $\|\mat{K}_i \mat{F}_{i+1} \mat{D}_i\|_1 \leq 1$.  
		\item By (b), $\|\mat{F}_{i}\|_1 = \|(\mat{K}_i \mat{F}_{i+1} \mat{D}_i)^{T_i}\|_1 \leq \|(\mat{K}_i \mat{F}_{i+1} \mat{D}_i)\|_1^{T_i} \leq 1$.
		\item Follows by the same argument as in the exact arithmetic proof \Cref{thm:acbsls}, which we sketch here for completeness. By (a),
		\begin{align}
			& (\mat{F}_1)_{ll} = \left[  \prod_{j=1}^m \left(	(\mat{K}_j {\mat{D}}_j)^{\prod_{k=1}^j T_k} \right) \right]_{ll}
			= 
			(1 - \kappa_l^{-\frac{1}{2}})^{\prod_{k=1}^l T_k} \cdot (4  \globalcond^4)^{\sum_{j=i}^{l-1} \prod_{k=1}^{j} T_k }
			\\
			\leq & \exp \left(  \underbrace{ - \kappa_l^{-\frac{1}{2}} \prod_{k=1}^l T_k  + \log (4  \globalcond^4) \cdot  \sum_{j=1}^{l-1} \prod_{k=1}^{j} T_k }_{\text{denoted as $\gamma_l$}}\right).
		\end{align}
		Since $T_1 \geq \kappa_1^{\frac{1}{2}} \log (\frac{\psi(\x^{(0)}, \v^{(0)})}{\epsilon})$, we have $\gamma_1 = -\kappa_1^{-\frac{1}{2}} T_1 \leq - \log (\frac{\psi(\x^{(0)}, \v^{(0)})}{\epsilon})$.
		Following the same argument as in (b), we have $\gamma_m \leq \gamma_{m-1} \leq \cdots \leq \gamma_1$. 
		Consequently, for any $l \in [m]$
		\begin{equation}
			(\mat{F}_1)_{ll} \leq \exp \left( - \log (\frac{\psi(\x^{(0)}, \v^{(0)})}{\epsilon}) \right) \leq \frac{\epsilon}{\psi(\x^{(0)}, \v^{(0)})},
		\end{equation}
		which implies $\|\mat{F}_1\|_1 \leq \frac{\epsilon}{\psi(\x^{(0)}, \v^{(0)})}$ since $\mat{F}_1$ is diagonal (by (a)).
	\end{enumerate}
\end{proof}

\subsubsection{Upper bound of $\mat{E}$}
Before we state the upper bound of $\mat{E}_i$'s, we first establish the following \Cref{lem:acbsls:stab:diff:bound}, which is essential towards the bound for $\mat{E}_i$'s and $\mat{Z}_i$'s. 
\begin{lemma}
	\label{lem:acbsls:stab:diff:bound}
	Using the same notation of \Cref{lem:acbsls:stab:abs}, and assume $T_1, \ldots, T_m$ satisfies \eqref{eq:acbsls:t:lb}, and assume $\delta  \leq \frac{1}{20 m  \globalcond^2}$, 
	then for any $t \geq 0$, for any $i \in [m]$, the following inequality holds
	\begin{align}
		\left\|(	\mat{K}_i \widehat{\mat{F}_{i+1}} \widehat{\mat{D}_i})^{t} - (	\mat{K}_i \mat{F}_{i+1} {\mat{D}}_i)^{t} \right\|_1 
		\leq
		\begin{cases}
			\varphi(10 t \delta m \globalcond^2 ) & i = m
			\\
			\varphi( 40 t \delta m \globalcond^6 + 12  t \globalcond^4 \|\mat{E}_{i+1}\|_1 )  & i < m,
		\end{cases}
	\end{align}	
	where $\varphi(x) \defeq xe^x$.
\end{lemma}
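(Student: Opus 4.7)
The plan is to mirror the strategy of Lemma \ref{lem:bsls:stab:diff:bound}, leveraging the fact (established in Lemma \ref{lem:acbsls:stab:F:bound}(b)) that $\|\mat{K}_i \mat{F}_{i+1} \mat{D}_i\|_1 \leq 1$. Setting the perturbation matrix
\begin{equation}
\bXi_i \defeq \mat{K}_i \widehat{\mat{F}_{i+1}} \widehat{\mat{D}_i} - \mat{K}_i \mat{F}_{i+1} \mat{D}_i,
\end{equation}
I expand $(\mat{K}_i \mat{F}_{i+1} \mat{D}_i + \bXi_i)^t$ as a sum of non-commutative products, group by the number of $\bXi_i$ factors (using $\|\mat{K}_i \mat{F}_{i+1} \mat{D}_i\|_1 \leq 1$ and submultiplicativity of $\|\cdot\|_1$), invoke the helper estimate $\binom{t}{s} \leq t \binom{t-1}{s-1}$, and finally $(1+x)^{t-1} \leq e^{xt}$, to conclude the clean bound
\begin{equation}
\left\|(\mat{K}_i \widehat{\mat{F}_{i+1}} \widehat{\mat{D}_i})^{t} - (\mat{K}_i \mat{F}_{i+1} \mat{D}_i)^{t}\right\|_1 \leq t \|\bXi_i\|_1 \exp(t \|\bXi_i\|_1) = \varphi(t \|\bXi_i\|_1).
\end{equation}
This step is entirely analogous to the corresponding calculation in the un-accelerated case.

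The substantive work is to bound $\|\bXi_i\|_1$ in terms of the problem parameters and $\|\mat{E}_{i+1}\|_1$. I split $\bXi_i$ into two contributions by writing $\mat{K}_i \widehat{\mat{F}_{i+1}} \widehat{\mat{D}_i} - \mat{K}_i \mat{F}_{i+1} \mat{D}_i = \mat{K}_i \widehat{\mat{F}_{i+1}} (\widehat{\mat{D}_i} - \mat{D}_i) + \mat{K}_i (\widehat{\mat{F}_{i+1}} - \mat{F}_{i+1})\mat{D}_i$ and invoke $\widehat{\mat{D}_i} - \mat{D}_i = 10\delta \globalcond^2 \ones\ones^\top \mat{D}_i$. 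The resulting $\widehat{\mat{D}_i} - \mat{D}_i$ term contributes $\|\mat{K}_i\|_1 \cdot \|\widehat{\mat{F}_{i+1}}\|_1 \cdot 10\delta \globalcond^2 \|\ones\ones^\top\|_1 \|\mat{D}_i\|_1$; inserting $\|\mat{K}_i\|_1 \leq 2\globalcond$, $\|\mat{D}_i\|_1 \leq 2\globalcond^2$, $\|\ones\ones^\top\|_1 = m$, and the bound on $\|\widehat{\mat{F}_{i+1}}\|_1$ derived from the block definition (which is at most $2\globalcond(\|\mat{F}_{i+1}\|_1 + \|\mat{E}_{i+1}\|_1) \leq 2\globalcond(1 + \|\mat{E}_{i+1}\|_1)$ via Lemma \ref{lem:acbsls:stab:F:bound}(c)) produces a term of the desired order $\delta m \globalcond^6$. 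The $\widehat{\mat{F}_{i+1}} - \mat{F}_{i+1}$ contribution is controlled by $\|\mat{E}_{i+1}\|_1$ through the inflation factors $2\globalcond$, $\globalcond$ baked into the block definition of $\widehat{\mat{F}_{i+1}}$, yielding the $\globalcond^4 \|\mat{E}_{i+1}\|_1$ term.

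The base case $i = m$ is simpler because $\mat{F}_{m+1} = \id$ and $\mat{E}_{m+1} = \mat{0}$, so $\widehat{\mat{F}_{m+1}}$ collapses (up to inflation factors along the last row/column) and only the $\widehat{\mat{D}_m} - \mat{D}_m$ piece survives, giving $\|\bXi_m\|_1 \leq 10\delta m \globalcond^2$. Finally, the assumption $\delta \leq 1/(20m\globalcond^2)$ is used to absorb lower-order cross terms and keep all intermediate constants tidy.

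The main obstacle I expect is bookkeeping: the block definition of $\widehat{\mat{F}_{i+1}}$ is awkward, so one must carefully show that both $\|\widehat{\mat{F}_{i+1}}\|_1 \leq 2\globalcond(1 + \|\mat{E}_{i+1}\|_1)$ and $\|\widehat{\mat{F}_{i+1}} - \mat{F}_{i+1}\|_1$ is controlled appropriately by the inflation factors acting on $\mat{F}_{i+1} + \mat{E}_{i+1}$, and then track precisely how the amplification factors $2\globalcond$ (from $\mat{K}_i$), $2\globalcond^2$ (from $\mat{D}_i$), and $m$ (from $\ones\ones^\top$) combine to produce the stated constants $40$ and $12$ rather than something larger. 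Everything beyond this constant-chasing is a direct translation of the un-accelerated argument.
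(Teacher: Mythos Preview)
Your proposal is correct and follows essentially the same approach as the paper. The paper defines the identical perturbation $\bXi_i$, uses the same binomial-style expansion together with $\|\mat{K}_i\mat{F}_{i+1}\mat{D}_i\|_1\le 1$ and the helper inequality $\binom{t}{s}\le t\binom{t-1}{s-1}$ to reach $\varphi(t\|\bXi_i\|_1)$, and then bounds $\|\bXi_i\|_1$ via the block structure of $\widehat{\mat{F}_{i+1}}$; the only cosmetic difference is that the paper writes $\widehat{\mat{F}_{i+1}}=\mat{F}_{i+1}+\text{(block term in }\mat{E}_{i+1}\text{)}$ first (exploiting that $\mat{F}_{i+1}$ is diagonal so the inflation factors act trivially on it) rather than your split $\mat{K}_i\widehat{\mat{F}_{i+1}}(\widehat{\mat{D}_i}-\mat{D}_i)+\mat{K}_i(\widehat{\mat{F}_{i+1}}-\mat{F}_{i+1})\mat{D}_i$, but both decompositions yield the stated constants after the same bookkeeping you anticipate.
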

\begin{proof}[Proof of \Cref{lem:acbsls:stab:diff:bound}]
	Let $\bXi_i \defeq 	\mat{K}_i \widehat{\mat{F}_{i+1}} \widehat{\mat{D}_i} - 	\mat{K}_i \mat{F}_{i+1} \mat{D}_i$ (which is non-negative), then 
	\begin{align}
		& \left\|(	\mat{K}_i \widehat{\mat{F}_{i+1}} \widehat{\mat{D}_i})^{t} - (	\mat{K}_i  \mat{F}_{i+1} \mat{D}_i )^{t} \right\|_1 
		= \left\|( 	\mat{K}_i  \mat{F}_{i+1} \mat{D}_i + \bXi_i)^t -  (	\mat{K}_i  \mat{F}_{i+1} \mat{D}_i)^t \right\|_1 
		\\
		\leq & 
		\sum_{s=1}^{t} {t \choose s} \|\bXi_i\|^s		\left\| 	\mat{K}_i  \mat{F}_{i+1} \mat{D}_i \right\|_1^{t-s} 
		\leq \sum_{s=1}^{t} {t \choose s} \|\bXi_i\|_1^s
		\tag{since $\|	\mat{K}_i  \mat{F}_{i+1} \mat{D}_i\|_1 \leq 1$}
		\\
		\leq &  \|\bXi_i\|_1 t \sum_{s=0}^{t-1} {t-1\choose s} \|\bXi_i\|_1^s
		= \|\bXi_i\|_1 t \left( 1 + \|\bXi_i\|_1 \right)^{t-1} 
		\tag{by \Cref{helper:choose}}
		\\
		\leq &  \|\bXi_i\|_1 t \exp (\|\bXi_i\|_1 t).
		\label{eq:lem:acbsls:stab:diff:bound}
	\end{align}
	It remains to bound $\|\bXi_i\|_1$. 
	For $i = m$ we have $\mat{E}_{m+1} = 0$, $\mat{F}_{m+1} = \id$, $\widehat{\mat{F}_{m+1}} = \id$, $\mat{K}_{m} = \id$,  which implies $	\| \bXi_m \|_1 = \| \widehat{\mat{D}_m} - \id \|_1 \leq 10 \delta m   \globalcond^2$. 

	For other $i < m$, first note that (since $\mat{F}_i$ is diagonal)
	\begin{equation}
		\widehat{\mat{F}_{i+1}} 
		= 
		\mat{F}_{i+1}
		+
		\underbrace{\begin{bmatrix}
			[\id_{i-1} | \mat{0}_{(i-1) \times (m-i+1)}] \mat{E}_{i+1}
				\begin{bmatrix}
				\id_{i-1} & 0 \\
				0 & 2 \globalcond \cdot \id_{m-i+1}
			\end{bmatrix} 
			\\
			\unit_i^\top  \mat{E}_{i+1}		
				\begin{bmatrix}
				 \globalcond \id _{i} &  & \\
				 & 1  & \\
				& & \globalcond \id_{m-i-1}
			\end{bmatrix}
			\\
			 [\mat{0}_{(m-i) \times i} | \id_{m-i}]  \mat{E}_{i+1}
		\end{bmatrix}}_{\text{denoted as \ding{168}}}
		\label{eq:tmp:bound:hatF}
	\end{equation}
	thus 
	\begin{align}
		& 	\widehat{\mat{F}_{i+1}} \widehat{\mat{D}_i} - \mat{F}_{i+1} \mat{D}_i =  (\mat{F}_{i+1} + \text{\ding{168}}) (\id + 10 \delta \globalcond^2 \ones \ones^\top) \mat{D}_i - \mat{F}_{i+1} \mat{D}_i
		\\
		= &  10 \delta \globalcond^2  \mat{F}_{i+1} \ones \ones^\top \mat{D}_i + 
		\text{\ding{168}} (\id + 10 \delta \globalcond^2 \ones \ones^\top) \mat{D}_i  
	\end{align}
	Since $\|{\text{\ding{168}}}\|_1 \leq 2  \globalcond  \|\mat{E}_{i+1}\|_1$ and $\|\mat{D}_i\| \leq 2\globalcond^3 $ we have bound 
	\begin{equation}
		\left\|	\widehat{\mat{F}_{i+1}} \widehat{\mat{D}_i} - \mat{F}_{i+1} \mat{D}_i \right\|_1 
		\leq 
		20 \delta m \globalcond^5 + 4  \globalcond^4 \|\mat{E}_{i+1}\|_1 (1 + 10 \delta m  \globalcond^2).
	\end{equation}
	Thus (for $i < m$)
	\begin{align}
			&	\|\bXi_i \|_1  \leq 2  \globalcond   \left( 20 \delta m \globalcond^5 + 4  \globalcond^4 \|\mat{E}_{i+1}\|_1 (1 + 10 \delta m  \globalcond^2) \right)
			\\
			\leq & 40 \delta m  \globalcond^6 + 8  \globalcond^4 \|\mat{E}_{i+1}\|_1 (1 + 10 \delta m  \globalcond^2) 
			\leq 40 \delta m  \globalcond^6 + 12  \globalcond^4 \|\mat{E}_{i+1}\|_1
			\tag{since $\delta \leq \frac{1}{20 m  \globalcond^2}$}.
	\end{align}
	Substituting back to \cref{eq:lem:acbsls:stab:diff:bound} completes the proof.
\end{proof}

Now we state the bound for $\mat{E}_i$'s.
\begin{lemma}[Upper bound of $\|\mat{E}_i\|_1$]
	\label{lem:acbsls:stab:E:bound}
	Using the same notation of \Cref{lem:acbsls:stab:abs}, and assume $T_1, \ldots, T_m$ satisfies \eqref{eq:acbsls:t:lb}, and assume
	\begin{equation}
		\delta \leq \frac{1}{2 \cdot (10  \globalcond^2)^{2m-1} m \prod_{j=1}^m T_j},
		\label{eq:acbsls:stab:E:delta:bound}
	\end{equation}
	then for any $t \geq 0$, for any $i \in [m]$, the following inequality holds
	then the following inequality holds 
	\begin{equation}
		\|\mat{E}_i\|_1 \leq  2 \cdot (10 \globalcond^2 )^{2(m-i)+1} \delta m \prod_{j=i}^m T_j.
		\label{eq:acbsls:stab:E:bound}
	\end{equation}
\end{lemma}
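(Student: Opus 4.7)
\textbf{Proof plan for \cref{lem:acbsls:stab:E:bound}.} The strategy is direct induction on $i$ in descending order from $m$ down to $1$, using \cref{lem:acbsls:stab:diff:bound} as the workhorse that converts the recursive definition $\mat{E}_i = (\mat{K}_i \widehat{\mat{F}_{i+1}} \widehat{\mat{D}_i})^{T_i} - (\mat{K}_i \mat{F}_{i+1} \mat{D}_i)^{T_i}$ into an explicit scalar bound. This exactly parallels the structure of \cref{lem:bsls:stab:E:bound} for the un-accelerated case; the only difference is that the per-step error amplification in the accelerated setting is governed by $\globalcond^2$ rather than $\globalcond$ (since $\mat{D}_i$ contains $2\globalcond^2$ off-pattern and $\mat{K}_i$ contributes an extra $\globalcond$ on top), which is why the target bound carries $(10\globalcond^2)^{2(m-i)+1}$ instead of $(10\globalcond)^{2(m-i)+1}$.

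\textbf{Base case $i = m$.} Since $\mat{F}_{m+1} = \id$, $\mat{E}_{m+1} = \mat{0}$ and $\mat{K}_m = \id$, \cref{lem:acbsls:stab:diff:bound} immediately gives $\|\mat{E}_m\|_1 \leq \varphi(10 T_m \delta m \globalcond^2)$ where $\varphi(x) = xe^x$. The assumed bound \cref{eq:acbsls:stab:E:delta:bound} forces $10 T_m \delta m \globalcond^2$ to be at most a small absolute constant (say $\leq 0.1$), so $e^x \leq \sqrt{e}$ and hence $\varphi(x) \leq 2x$, which yields $\|\mat{E}_m\|_1 \leq 20 \delta m \globalcond^2 T_m = 2(10\globalcond^2)^1 \delta m T_m$, matching the target.

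\textbf{Inductive step $i < m$.} Assuming $\|\mat{E}_{i+1}\|_1 \leq 2(10\globalcond^2)^{2(m-i)-1} \delta m \prod_{j=i+1}^m T_j$, plug into the second branch of \cref{lem:acbsls:stab:diff:bound} to obtain $\|\mat{E}_i\|_1 \leq \varphi(y_i)$ with
\[
y_i \leq 40 T_i \delta m \globalcond^6 \;+\; 24\, T_i \globalcond^4 (10\globalcond^2)^{2(m-i)-1} \delta m \prod_{j=i+1}^m T_j.
\]
The key identity is $\globalcond^4 (10\globalcond^2)^{2(m-i)-1} = \tfrac{1}{100} (10\globalcond^2)^{2(m-i)+1}$, which turns the dominant second term into $\tfrac{24}{100}(10\globalcond^2)^{2(m-i)+1}\delta m \prod_{j=i}^m T_j$. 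The first term $40 T_i \delta m \globalcond^6$ is absorbed by bounding $40\globalcond^6 / (10\globalcond^2)^{2(m-i)+1} \leq 0.04$ for $i \leq m-1$ and using $\prod_{j=i+1}^m T_j \geq 1$, yielding $y_i \leq 0.3 \cdot (10\globalcond^2)^{2(m-i)+1}\delta m \prod_{j=i}^m T_j$. Applying the $\delta$ bound \cref{eq:acbsls:stab:E:delta:bound} shows $y_i$ is bounded by an absolute small constant (again $\leq 0.15$), so $\varphi(y_i) \leq y_i e^{0.15} \leq 2y_i/0.3 \cdot 0.3$. Putting the constants together gives $\|\mat{E}_i\|_1 \leq 2(10\globalcond^2)^{2(m-i)+1}\delta m \prod_{j=i}^m T_j$, closing the induction.

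\textbf{Anticipated obstacle.} The main delicacy is bookkeeping of the $\globalcond$ exponents and constants: one must verify that every level of recursion exactly multiplies the target bound by $100\globalcond^4 T_i$ (matching the ratio $C_i/C_{i+1}$), that the additive ``fresh'' error term $40 T_i \delta m \globalcond^6$ never dominates, and that $\varphi(y_i)$ never blows up because $y_i$ must stay uniformly small across all $i$. The choice of $2m-1$ as the exponent of $10\globalcond^2$ in the $\delta$ bound is exactly what is needed so that even at $i = 1$ one has $(10\globalcond^2)^{2m-1}\delta m \prod_{j=1}^m T_j \leq \tfrac{1}{2}$, guaranteeing $y_i$ stays below a small constant throughout. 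Everything else is routine algebra analogous to \cref{lem:bsls:stab:E:bound}.
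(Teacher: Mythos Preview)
Your proposal is correct and follows essentially the same approach as the paper: reverse induction on $i$ using \cref{lem:acbsls:stab:diff:bound}, with the key identity $\globalcond^4 (10\globalcond^2)^{2(m-i)-1} = \tfrac{1}{100}(10\globalcond^2)^{2(m-i)+1}$ driving the recursion and the $\delta$-bound keeping the $\varphi$-argument uniformly small. The only minor slips are cosmetic (in the base case the exponent is at most $1/2$ rather than $0.1$, though your conclusion $e^x \leq \sqrt{e}$ is still valid; and the final line of the inductive step is garbled), but the arithmetic and structure match the paper's proof exactly.
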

\begin{proof}[Proof of \Cref{lem:acbsls:stab:E:bound}]
	We prove by induction in reverse order from $m$ down to $1$.

	For $i=m$, by definition of $\mat{E}_m$,
	\begin{align}
			&		\|\mat{E}_m\|_1 = \left\| (\mat{K}_m \widehat{\mat{F}_{m+1}} \widehat{\mat{D}_m})^{T_m} - (\mat{K}_m	\mat{F}_{m+1} \mat{D}_m )^{T_m} \right\|_1
			\\
			\leq & 10 \delta m \globalcond^2 T_m \exp (10 \delta m \globalcond^2 T_m) 
			\tag{by \Cref{lem:acbsls:stab:diff:bound}}
			\\
			\leq & 10\sqrt{e} \delta m \globalcond^2 T_m  \leq 17 \delta m \globalcond^2 T_m, 			\tag{since $\delta \leq \frac{1}{20 m  \globalcond^2}$}
	\end{align}
	which satisfies the bound \eqref{eq:acbsls:stab:E:bound}.

	Now suppose the statement holds for the case of $i + 1$, we then study the case of $i$. 
	By definition of $\mat{E}_i$,
	\begin{align}
		& \|\mat{E}_i\|_1 = 	\left\|(	\mat{K}_i \widehat{\mat{F}_{i+1}} \widehat{\mat{D}_i})^{T_i} - (	\mat{K}_i \mat{F}_{i+1} {\mat{D}}_i)^{T_i} \right\|_1 
		\\
	\leq & \left( 40 \delta m \globalcond^6 + 12  \globalcond^4 \|\mat{E}_{i+1}\|_1    \right) T_i \cdot 
	\exp \left[ \left( 40 \delta m \globalcond^6 + 12  \globalcond^4 \|\mat{E}_{i+1}\|_1    \right) T_i \right]
	\tag{by \Cref{lem:acbsls:stab:diff:bound}}
\end{align}
	Observe that
	\begin{align}
		& 		\left( 40 \delta m  \globalcond^6 + 12  \globalcond^4 \|\mat{E}_{i+1}\|_1    \right) T_i
		\\
		\leq & 40 \delta m \globalcond^6 T_i + 24  \globalcond^4 (10 \globalcond^2)^{2(m-i-1)+1} \delta m \prod_{j=i}^m T_j
		\tag{by induction hypothesis \eqref{eq:acbsls:stab:E:bound}}
		\\
		\leq & 28  \globalcond^4 (10  \globalcond^2 )^{2(m-i-1)+1} \delta m \prod_{j=i}^m T_j
		\label{eq:lem:acbsls:stab:E:bound:1}
	\end{align}
	and thus 
	\begin{align}
			& \exp \left[ \left( 40 \delta m  \globalcond^6 + 12  \globalcond^4 \|\mat{E}_{i+1}\|_1    \right) T_i  \right]
		\leq \exp \left[  28  \globalcond^4 (10  \globalcond^2 )^{2(m-i-1)+1} \delta m \prod_{j=i}^m T_j \right]
		\tag{by \eqref{eq:lem:acbsls:stab:E:bound:1}}
		\\
		= & \exp \left[ 0.28 (10 \globalcond^2)^{2m-1 } \delta m \prod_{j=1}^m T_j \right]
		\leq \exp (0.14) < 1.16
		\tag{by $\delta$ bound \eqref{eq:acbsls:stab:E:delta:bound}}
	\end{align}
	Consequently
	\begin{align}
		&	\|\mat{E}_i\|_1 \leq 28  \globalcond^4 (10  \globalcond^2 )^{2(m-i)+1} \delta m \prod_{j=i}^m T_j \times 1.16
		\\
		\leq & 
		45  \globalcond^4 (10  \globalcond^2 )^{2(m-i-1)+1} \delta m \prod_{j=i}^m T_j
		=  0.45 \cdot (10  \globalcond^2 )^{2(m-i)+1} \delta m \prod_{j=i}^m T_j
		\\
		\leq & 2 \cdot (10  \globalcond^2 )^{2(m-i)+1} \delta m \prod_{j=i}^m T_j.
	\end{align}
\end{proof}

\subsubsection{Upper bound of $\mat{Z}$}
\begin{lemma}[Upper bound of $\|\mat{Z}_i\|_1$]
	\label{lem:acbsls:stab:Z:bound}
	Using the same notation of \Cref{lem:acbsls:stab:abs} and assuming the same assumptions of \Cref{lem:acbsls:stab:E:bound}, then the following inequality holds for any $i \in [m]$,
	\begin{equation}
		\|\mat{Z}_i\|_1 \leq 3^{m-i+2} \globalcond \prod_{j=i}^m T_j.
	\end{equation}
\end{lemma}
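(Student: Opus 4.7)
The plan is to prove Lemma~\ref{lem:acbsls:stab:Z:bound} by backward induction on $i$ (from $i=m$ down to $i=1$), mimicking the structure of the analogous argument for the un-accelerated case (Lemma~\ref{lem:bsls:stab:Z:bound}) but with extra care about the accelerated potential and the inflation factor $\mat{K}_i$. From the recursive definition
\[
\mat{Z}_i = \sum_{t_i=0}^{T_i-1} \bigl(\mat{K}_i\,\widehat{\mat{F}_{i+1}}\,\widehat{\mat{D}_i}\bigr)^{t_i}\bigl(\mat{K}_i\,\widehat{\mat{F}_{i+1}} + 2\,\mat{Z}_{i+1}\bigr),
\]
submultiplicativity of the operator $1$-norm gives
\[
\|\mat{Z}_i\|_1 \leq \Bigl(\sum_{t_i=0}^{T_i-1}\|(\mat{K}_i\,\widehat{\mat{F}_{i+1}}\,\widehat{\mat{D}_i})^{t_i}\|_1\Bigr)\bigl(\|\mat{K}_i\,\widehat{\mat{F}_{i+1}}\|_1 + 2\|\mat{Z}_{i+1}\|_1\bigr),
\]
so the task reduces to bounding the two factors on the right.

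For the first factor, I would add and subtract the ``exact'' product $(\mat{K}_i\mat{F}_{i+1}\mat{D}_i)^{t_i}$ and apply Lemma~\ref{lem:acbsls:stab:F:bound}(b), which gives $\|(\mat{K}_i\mat{F}_{i+1}\mat{D}_i)^{t_i}\|_1\leq 1$, together with Lemma~\ref{lem:acbsls:stab:diff:bound} to control the difference. Plugging in the assumed upper bound on $\delta$ from \eqref{eq:acbsls:stab:E:delta:bound} and the bound on $\|\mat{E}_{i+1}\|_1$ from Lemma~\ref{lem:acbsls:stab:E:bound}, both the perturbation term in $\widehat{\mat{F}_{i+1}}$ (via the decomposition~\eqref{eq:tmp:bound:hatF}) and the diagonal perturbation in $\widehat{\mat{D}_i}$ are small enough that the cumulative distortion satisfies
\[
\sum_{t_i=0}^{T_i-1}\|(\mat{K}_i\widehat{\mat{F}_{i+1}}\widehat{\mat{D}_i})^{t_i}\|_1 \leq 1.1\,T_i.
\]
For the second factor I would use $\widehat{\mat{F}_{i+1}} = \mat{F}_{i+1} + (\text{perturbation})$ from~\eqref{eq:tmp:bound:hatF}, with perturbation of $1$-norm at most $2\globalcond\|\mat{E}_{i+1}\|_1$, together with $\|\mat{F}_{i+1}\|_1\leq 1$ and $\|\mat{K}_i\|_1\leq 2\globalcond$, yielding $\|\mat{K}_i\widehat{\mat{F}_{i+1}}\|_1 \leq 2.05\,\globalcond$ once the $\delta$-bound is invoked.

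The base case $i=m$ is straightforward: since $\widehat{\mat{F}_{m+1}} = \id$, $\mat{K}_m=\id$ and $\mat{Z}_{m+1}=\mat{0}$, one obtains $\|\mat{Z}_m\|_1\leq \sum_{t=0}^{T_m-1}\|\widehat{\mat{D}_m}\|_1^t \leq T_m(1+10\delta m\globalcond^2)^{T_m}\leq 1.1\,T_m \leq 9\globalcond T_m$. For the inductive step, assuming $\|\mat{Z}_{i+1}\|_1\leq 3^{m-i+1}\globalcond\prod_{j=i+1}^m T_j$, the two bounds above combine to give
\[
\|\mat{Z}_i\|_1 \leq 1.1\,T_i\Bigl(2.05\,\globalcond + 2\cdot 3^{m-i+1}\globalcond\prod\nolimits_{j=i+1}^m T_j\Bigr) \leq 3^{m-i+2}\globalcond\prod\nolimits_{j=i}^m T_j,
\]
where the last step uses $\prod_{j=i+1}^m T_j\geq 1$ and $3^{m-i+1}\geq 3$ to absorb the additive term $2.255\,T_i\globalcond$ into the slack $(3-2.2)\cdot 3^{m-i+1}\prod_{j=i+1}^m T_j \geq 2.4$.

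The main obstacle I anticipate is arithmetic bookkeeping: the constant in the induction is tight (the $2.2$ vs.\ $3$ gap is only barely enough), so I must be careful that each appeal to $\delta$-smallness is tracked explicitly (in particular verifying $\|\widehat{\mat{F}_{i+1}}\|_1 \leq 1.01$ and $\sum_{t_i}\|(\mat{K}_i\widehat{\mat{F}_{i+1}}\widehat{\mat{D}_i})^{t_i}\|_1\leq 1.1\,T_i$, which both rely on the same $\delta$-bound from Lemma~\ref{lem:acbsls:stab:E:bound}). Aside from these numerical checks, the argument is a direct accelerated analogue of the reasoning used in Lemma~\ref{lem:bsls:stab:Z:bound}.
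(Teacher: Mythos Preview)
Your proposal is correct and follows essentially the same approach as the paper: split $\|\mat{Z}_i\|_1$ into the geometric-sum factor and the $\|\mat{K}_i\widehat{\mat{F}_{i+1}}\|_1 + 2\|\mat{Z}_{i+1}\|_1$ factor, bound the first by adding and subtracting the exact product and invoking Lemmas~\ref{lem:acbsls:stab:F:bound} and~\ref{lem:acbsls:stab:diff:bound}/\ref{lem:acbsls:stab:E:bound}, bound the second via~\eqref{eq:tmp:bound:hatF} and the $\delta$-bound, then close with a backward induction. The paper arrives at the recursion $\|\mat{Z}_i\|_1 \leq 1.23\,T_i(2.04\,\globalcond + 2\|\mat{Z}_{i+1}\|_1)$ with slightly different intermediate constants than your $1.1$ and $2.05$, but the argument is otherwise identical.
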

\begin{proof}[Proof of \Cref{lem:acbsls:stab:Z:bound}]
	Recall the definition of $\mat{Z}_i$ from \Cref{lem:acbsls:stab:abs},
	\begin{equation}
		\mat{Z}_i \defeq \sum_{t_{i}=0}^{T_i-1} ( \mat{K}_i \widehat{\mat{F}_{i+1}} \widehat{\mat{D}_i})^{t_i} \left(  \mat{K}_i  \widehat{\mat{F}_{i+1}} + 2 \mat{Z}_{i+1} \right) 
	\end{equation}
	We will bound $\|\sum_{t_{i}=0}^{T_i-1} ( \mat{K}_i \widehat{\mat{F}_{i+1}} \widehat{\mat{D}_i})^{t_i}\|_1$ and $\|  \mat{K}_i  \widehat{\mat{F}_{i+1}} + 2 \mat{Z}_{i+1} \|_1$ separately. The former is bounded as 
	\begin{align}
		&  \left\|  \sum_{t_{i}=0}^{T_i-1} (  \mat{K}_i  \widehat{\mat{F}_{i+1}} \widehat{\mat{D}_i})^{t_i} \right\|_1 \leq 
		\sum_{t_{i}=0}^{T_i-1} \left\|  (  \mat{K}_i \widehat{\mat{F}_{i+1}} \widehat{\mat{D}_i})^{t_i} \right\|_1
		\tag{by triangle inequality}
		\\
		\leq & \sum_{t_{i}=0}^{T_i-1} \left( \|  ( \mat{K}_i  \mat{F}_{i+1} {\mat{D}}_i)^{t_i} \|_1  +  \left\| ( \mat{K}_i \widehat{\mat{F}_{i+1}} \widehat{\mat{D}_i})^{t_i} - ( \mat{K}_i \mat{F}_{i+1} {\mat{D}}_i)^{t_i} \right\| \right)
		\tag{by triangle inequality}
		\\
		\leq & T_i + \sum_{t_i=0}^{T_i-1}  \left\| ( \mat{K}_i \widehat{\mat{F}_{i+1}} \widehat{\mat{D}_i})^{t_i} - ( \mat{K}_i  \mat{F}_{i+1} {\mat{D}}_i)^{t_i} \right\| 
		\tag{since $\|\mat{K}_i \widehat{\mat{F}_{i+1}} \widehat{\mat{D}_i}\|_1 \leq 1$ by \Cref{lem:acbsls:stab:F:bound}}
		\\
		\leq & T_i \left( 1 + 0.45 \cdot (10  \globalcond^2 )^{2(m-i)+1} \delta m \prod_{j=i}^m T_j \right)
		\tag{by the proof of \Cref{lem:acbsls:stab:diff:bound,lem:acbsls:stab:E:bound}}
		\\
		\leq & 1.23 T_i 
		\tag{by $\delta$ bound \eqref{eq:acbsls:stab:E:delta:bound}}
	\end{align}

	The second term is bounded as 
	\begin{align}
		& \left\| \mat{K}_i \widehat{\mat{F}_{i+1}} + 2 \mat{Z}_{i+1} \right\|_1 
		\leq 
		\|  \mat{K}_i \|_1 \cdot \|\widehat{\mat{F}_{i+1}} \|_1 + 2 \|\mat{Z}_{i+1}\|_1
		\\
		\leq & 
		2 \globalcond \cdot (1 + 2 \globalcond \|\mat{E}_{i+1}\|_1) + 2 \|\mat{Z}_{i+1}\|_1
		\tag{by  \eqref{eq:tmp:bound:hatF}}
		\\
		\leq & 2.04 \globalcond + 2 \|\mat{Z}_{i+1}\|_1
		\tag{since $		\|\mat{E}_{i+1}\|_1 \leq \frac{1}{100 m \globalcond^2}		$ by \Cref{lem:acbsls:stab:E:bound}}
	\end{align}

	In summary $\|\mat{Z}_{i}\|_1 \leq 1.23 T_i ( 2.04 \globalcond  + 2\|\mat{Z}_{i+1}\|_1)$. 
	By induction we can show that $\|\mat{Z}_i\|_1 \leq 3^{m-i+2} \globalcond \prod_{j=1}^m T_j$.
\end{proof}

\subsection{Finishing the proof of \Cref{thm:acbsls:inexact:general}}
\label{sec:thm:acbsls:inexact:5}
We are ready to finish the proof of \Cref{thm:acbsls:inexact:general}.
\begin{proof}[Proof of \Cref{thm:acbsls:inexact:general}]
	By \Cref{lem:acbsls:stab:abs} we have 
	\begin{equation}
		\bphi_1^{\err}(\widehat{\acbsls_{1}}(\x^{(0)}, \v^{(0)})) 
		\leq 
		(\mat{F}_1 + \mat{E}_1) \bphi_1^{\err}(\x^{(0)}, \v^{(0)}) + 4 \delta L_m \|\x^{\star}\|_2^2 \mat{Z}_1 \ones.
	\end{equation}
	By definition of vector potential $\bphi_1^{\err}$ we have $\|\bphi_1^{\err}(\x, \v)\|_1 = \psi(\x, \v)$ for any $\x, \v$. 
	Consequently
	\begin{align}
		& \psi(\widehat{\acbsls_{1}}(\x^{(0)}, \v^{(0)})) 
		\\
		\leq & \left\| (\mat{F}_1 + \mat{E}_1) \bphi_1^{\err}(\x, \v) + 4 \delta L_m \|\x^{\star}\|_2^2 \mat{Z}_1 \ones  \right\|_1
		\tag{by  \Cref{lem:acbsls:stab:abs}}
		\\
		\leq & 
		(\|\mat{F}_1\|_1 + \|\mat{E}_1\|_1) \psi(\x, \v) + 4 \delta m L_m \|\x^{\star}\|_2^2 \|\mat{Z}_1\|_1 
		\tag{triangle inequality}
	\end{align}
	Plugging in the bound of $\|\mat{F}_1\|$ (from \Cref{lem:acbsls:stab:F:bound}), $\|\mat{E}_1\|$ (from \Cref{lem:acbsls:stab:E:bound}), and $\|\mat{Z}_1\|_1$ (from \Cref{lem:acbsls:stab:Z:bound}), we arrive at
	\begin{align}
		& \psi(\widehat{\acbsls_{1}}(\x^{(0)}, \v^{(0)})) 
		\\
		\leq &  
		\left(  \frac{\epsilon}{\psi(\x^{(0)}, \v^{(0)})} + 2 \cdot (10  \globalcond^2 )^{2(m-i)+1} \delta m \prod_{j=i}^m T_j \right) \psi(\x^{(0)}, \v^{(0)}) +  4 \cdot 3^{m+1} \delta m \globalcond L_m \|\x^{\star}\|_2^2  \prod_{j=1}^m T_j.
	\end{align}
	By $\delta$ bound 	
	\begin{small}
    	\begin{equation}
    		\delta \leq 
    		\left( \prod_{i \in [m]} T_i^{-1} \right)
    		\min
    		\left\{
    		\frac{1}{2 \cdot (10  \globalcond^2)^{2m-1} m }, 
    		\frac{\epsilon}{2 \cdot (10 \globalcond^2)^{2m-1} \cdot \psi(\x^{(0)}, \v^{(0)}) }, 
    		\frac{\epsilon}{4 \cdot 3^{m+1} \cdot m L_m \globalcond \|\x^{\star}\|_2^2 }
    		\right\},
    	\end{equation}
	\end{small}
	we immediately obtain $\psi(\widehat{\acbsls_{1}}(\x^{(0)}, \v^{(0)})) \leq 3 \epsilon$.
\end{proof}

\section{Deferred proof of supporting lemmas of \cref{lem:lb:green:ub}}
\label{apx:lb:deferred}
In this appendix section we provide the proof of several supporting lemmas toward \cref{lem:lb:green:ub}.

\subsubsection{Deferred proof of \cref{lem:reduction:to:roots}}
\label{sec:pf:lem:reduction:to:roots}
\begin{proof}[Proof of \cref{lem:reduction:to:roots}]
    Apply \cref{lem:lb:widom}, since for any $k \in [m-1]$,
    \begin{equation}
        \int_{L_k}^{\mu_{k+1}} \frac{h(\zeta)}{\sqrt{q(\zeta)}} = 0,
    \end{equation}
    we conclude that $h(z)$ has $m-1$ real roots $r_1, r_2, \ldots, r_{m-1}$ such that $r_k \in [L_k, \mu_{k+1}]$. 
    Therefore
    \begin{equation}
        g_S(0)
        =
        \int_0^{\mu_1} \frac{ \prod_{k \in [m-1]} (r_k - \zeta)} { \sqrt{\prod_{k \in [m]}(\mu_k - \zeta) (L_k - \zeta)}} \diff \zeta
    \end{equation}
     By monotonocity, 
    \begin{align}
        & \int_0^{\mu_1} \frac{ \prod_{k \in [m-1]} (r_k - \zeta)} { \sqrt{\prod_{k \in [m]}(\mu_k - \zeta) (L_k - \zeta)}}   \diff \zeta
        \leq 
        \frac{\prod_{k \in [m-1]} r_k}{\sqrt{\prod_{k \in [m]} (L_k - \mu_1) \cdot \prod_{k=2}^{m} (\mu_k - \mu_1)}} \cdot \int_0^{\mu_1} \frac{\diff \zeta}{\sqrt{\mu_1 - \zeta}}  \diff \zeta
        \\
        = & \frac{ \prod_{k \in [m-1]} r_k }{\sqrt{\prod_{k \in [m]} (L_k - \mu_1) \cdot \prod_{k=2}^{m} (\mu_k - \mu_1)}} \cdot 2 \sqrt{\mu_1}
        = \frac{2 \prod_{k \in [m-1]} \frac{r_k}{\mu_k} }{\sqrt{\prod_{k \in [m]} (\frac{L_k}{\mu_k} - \frac{\mu_1}{\mu_k}) \cdot \prod_{k=2}^{m} (1- \frac{\mu_1}{\mu_k})} }.
        \label{eq:pf:lem:reduction:to:roots}
    \end{align}
     By assumption $\frac{L_k}{\mu_k} \geq 2$ we have $\frac{\mu_1}{\mu_k} \leq  \frac{1}{2^{k-1}}$, and thus $\frac{L_k}{\mu_k} - \frac{\mu_1}{\mu_k} \geq (1 - \frac{1}{2^k}) \frac{L_k}{\mu_k}$. Hence 
    \begin{equation}
        \prod_{k \in [m]} \left(\frac{L_k}{\mu_k} - \frac{\mu_1}{\mu_k}\right) \cdot \prod_{k=2}^{m} \left(1- \frac{\mu_1}{\mu_k} \right)
        \geq 
        \left(\prod_{k \in [m]} \frac{L_k}{\mu_k} \right) \cdot 
        \left(\prod_{k \in [m]} (1 - {2^{-k}}) \right)
        \left(\prod_{k \in [m-1]} (1 - {2^{-k}}) \right)
    \end{equation}
    Since $\prod_{k=1}^{\infty} (1 - 2^{-k}) > 0.288$ (see \cref{helper:Pochhammer}) we arrive at 
    \begin{equation}
        g_S(0) 
        \leq 
        \frac{ 2 \prod_{k \in [m-1]} {\frac{r_k}{\mu_{k+1}}} }{\sqrt{0.288^2 \prod_{k \in [m]} \frac{L_k}{\mu_k}}}
        \leq 
        \frac{ 7 \prod_{k \in [m-1]} {\frac{r_k}{\mu_{k+1}}} }{\sqrt{\prod_{k \in [m]} \frac{L_k}{\mu_k}}}.
    \end{equation}
    completing the proof.
\end{proof}

\subsubsection{Deferred proof of \cref{lem:lb:roots:ub}}
\label{sec:pf:lem:roots:ub}
\begin{proof}[Proof of \cref{lem:lb:roots:ub}]
    Since $h(z) = \prod_{j=1}^{m-1} (z - r_j)$ satisfies 
    \begin{equation}
        0 = 
        \int_{L_k}^{\mu_{k+1}} \frac{h(\zeta)}{\sqrt{q(\zeta)}}  \diff \zeta
        =
        \int_{L_k}^{\mu_{k+1}} \frac{ \prod_{j \in [m]} (\zeta - r_j)}{\sqrt{q(\zeta)}} \diff \zeta,
    \end{equation}
    we have for any $k \in [m-1]$
    \begin{equation}
        \int_{L_k}^{\mu_{k+1}} \frac{ r_k \prod_{j \neq k} (\zeta - r_j)}{\sqrt{q(\zeta)}} \diff \zeta
        = 
        \int_{L_k}^{\mu_{k+1}} \frac{ \zeta\prod_{j \neq k} (\zeta - r_j)}{\sqrt{q(\zeta)}} \diff \zeta.
    \end{equation}
    Thus 
    \begin{equation}
        r_k = 
        \frac{
            \int_{L_{k}}^{\mu_{k+1}} 
            \frac{
                \zeta \prod_{j \in [k-1]} (r_j - \zeta) \cdot \prod_{j=k+1}^{m-1} (\zeta - r_j) 
                }
                {
                \sqrt{ \prod_{j \in [m]} (\zeta - \mu_j) \prod_{j \in [m]} (\zeta - L_j) } 
                }
             \diff \zeta
        }
        {
            \int_{L_{k}}^{\mu_{k+1}} 
            \frac{
                \prod_{j \in [k-1]} (r_j - \zeta) \cdot \prod_{j=k+1}^{m-1} (\zeta - r_j)
                } 
                { 
                \sqrt{ \prod_{j \in [m]} (\zeta - \mu_j) \prod_{j \in [m]} (\zeta - L_j) } 
                } 
            \diff \zeta
        }.
    \end{equation}
    Rearranging
    \begin{footnotesize}
    \begin{equation}
        r_k = 
        \frac{
            \int_{L_{k}}^{\mu_{k+1}} 
            \frac{\zeta}{\sqrt{(\zeta - \mu_k)(\zeta - L_k)(\mu_{k+1} - \zeta)(L_{k+1} - \zeta)}}
                 \left( \prod_{j \in [k-1]} \frac{\zeta - r_j}{\sqrt{(\zeta - \mu_j)(\zeta - L_j)}} \right)
                 \left( \prod_{j=k+1}^{m-1} \frac{r_j-\zeta}{\sqrt{(\mu_{j+1} - \zeta)(L_{j+1} - \zeta)}} \right)
             \diff \zeta
        }
        {
            \int_{L_{k}}^{\mu_{k+1}} 
            \frac{1}{\sqrt{(\zeta - \mu_k)(\zeta - L_k)(\mu_{k+1} - \zeta)(L_{k+1} - \zeta)}} 
                 \left( \prod_{j \in [k-1]} \frac{\zeta - r_j}{\sqrt{(\zeta - \mu_j)(\zeta - L_j)}} \right)
                 \left( \prod_{j=k+1}^{m-1} \frac{r_j-\zeta}{\sqrt{(\mu_{j+1} - \zeta)(L_{j+1} - \zeta)}} \right)
             \diff \zeta
        }.
    \end{equation}
    \end{footnotesize}
    Observe that 
    \begin{itemize}[leftmargin=*]
        \item For any $j < k$, $\frac{\zeta - r_j}{\sqrt{(\zeta - \mu_j) (\zeta - L_j)}}$ is non-negative and monotonically \textbf{increasing} in $\zeta \in [L_k, \mu_{k+1}]$ since $\mu_j < L_j < r_j$.
        \item For any $j > k$, $\frac{r_j - \zeta}{\sqrt{(\zeta - \mu_{j+1}) (\zeta - L_{j+1})}}$ is non-negative and monotonically \textbf{decreasing} in $\zeta \in [L_k, \mu_{k+1}]$ since $r_j < \mu_{j+1} < L_{j+1}$.
    \end{itemize}
    Consequently
        \begin{align}
        r_k
        \leq 
        \frac{\int_{L_{k}}^{\mu_{k+1}} \frac{ \zeta \diff \zeta} { \sqrt{ (\zeta - \mu_{k})(\zeta - L_{k}) (\mu_{k+1} - \zeta) ( L_{k+1}-\zeta)}} }
        {\int_{L_{k}}^{\mu_{k+1}} \frac{ \diff \zeta} { \sqrt{ (\zeta - \mu_{k})(\zeta - L_{k}) (\mu_{k+1} - \zeta) ( L_{k+1}-\zeta)}} } 
        \cdot 
        \prod_{j \in [k-1]} 
        \left(
            \underbrace{
                        \frac{
                    \frac{\mu_{k+1} - r_j}{\sqrt{(\mu_{k+1} - \mu_j)(\mu_{k+1} - L_j)}}
                }
                {
                    \frac{L_k - r_j}{\sqrt{(L_{k} - \mu_j)(L_{k} - L_j)}}
                }
            }_{\text{denoted as $\gamma_j$}}
        \right)
        \label{eq:lem:lb:roots:ub:2}
    \end{align}
    Note that 
    \begin{equation}
        \gamma_j :=\frac{
            \frac{\mu_{k+1} - r_j}{\sqrt{(\mu_{k+1} - \mu_j)(\mu_{k+1} - L_j)}}
        }
        {
            \frac{L_k - r_j}{\sqrt{(L_{k} - \mu_j)(L_{k} - L_j)}}
        }
        =
        \frac{1 - \frac{r_j}{\mu_{k+1}}}{1 - \frac{r_j}{L_k}} \cdot \frac{\sqrt{(1 - \frac{\mu_j}{L_{k}}) (1 - \frac{L_j}{L_{k}}) }}{ \sqrt{(1 - \frac{\mu_j}{\mu_{k+1}}) (1 - \frac{L_j}{\mu_{k+1}}) }} 
        \leq 
        \frac{1}{1 - \frac{r_j}{L_k}}.
    \end{equation}
    and by $\min_{j \in [m]} \frac{L_j}{\mu_j} \geq 2$ one has
    \begin{equation}
        \frac{r_j}{L_k} \leq \frac{\mu_{j+1}}{L_k} \leq 2^{-(k-j)}.
    \end{equation}
    We arrive at (by \cref{helper:Pochhammer}, $\prod_{j=1}^{\infty} {1 - 2^{-i}} \geq 0.288$)
    \begin{equation}
        \prod_{j \in [k-1]} \gamma_j
        \leq 
        \prod_{j \in [k-1]} \frac{1}{1 - 2^{-(k-j)}} 
        \leq 
        \frac{1}{0.288}
        \leq 4.
    \end{equation}
    Substitute back to \cref{eq:lem:lb:roots:ub:2},
    \begin{equation}
        r_k \leq  4 \cdot \frac{\int_{L_{k}}^{\mu_{k+1}} \frac{ \zeta \diff \zeta} { \sqrt{ (\zeta - \mu_{k})(\zeta - L_{k}) (\mu_{k+1} - \zeta) ( L_{k+1}-\zeta)}} }
        {\int_{L_{k}}^{\mu_{k+1}} \frac{ \diff \zeta} { \sqrt{ (\zeta - \mu_{k})(\zeta - L_{k}) (\mu_{k+1} - \zeta) ( L_{k+1}-\zeta)}} } .
    \end{equation}
\end{proof}

\subsubsection{Deferred proof of \cref{lem:lb:ratio}}
\label{sec:pf:lem:lb:ratio}
We will prove \cref{lem:lb:ratio} by analyzing the integrals in the numerator and denominator.
In particular, we will apply the tools from elliptic integral theory. %
We adopt the following Legendre forms of elliptic integrals, defined as follows:
\begin{itemize}[leftmargin=*]
    \item $\EllipticF(\phi | p) := \int_0^{\phi} \frac{\diff \theta}{\sqrt{1 - p \sin^2 \theta}} $ denotes the (incomplete) elliptic integral of the \textbf{first} kind.
    \item $\EllipticK(p) := \EllipticF(\frac{\pi}{2} | p)$ denotes the complete elliptic integral of the \textbf{first} kind.
    \item $\EllipticE(\phi | p) := \int_0^{\phi} \sqrt{1 - p \sin^2\theta} \diff \theta$ denotes the (incomplete) elliptic integral of the \textbf{second} kind.
    \item $\EllipticE(p) := \EllipticE(\frac{\pi}{2} | p)$ denotes the complete elliptic integral of the \textbf{second} kind.
    \item $\EllipticPi(n; \phi | p) := \int_0^{\phi} \frac{\diff \theta}{(1 - n \sin^2\theta) \sqrt{(1 - p\sin^2\theta)}}$ denotes the incomplete elliptic integral of the \textbf{third} kind.
    \item $\EllipticPi(n | p) := \EllipticPi(n; \frac{\pi}{2}| p)$ denotes the complete elliptic integral of the \textbf{third} kind. 
\end{itemize}

To simplify the notation, throughout this subsubsection we assume without loss of generality that $k=1$. The result apparently holds for any $k \in [m-1]$.

Denote (throughout this subsubsection) that
\begin{equation}
    \varphi(z) := (z - \mu_1) (z- L_1) (\mu_2 - z) (L_2 - z).
\end{equation}

The following \Cref{lem:lb:0-order} analyzes $ \int_{L_1}^{\mu_2} \frac{\diff \zeta}{\sqrt{\varphi (\zeta)}}$.
\begin{lemma}
    \label{lem:lb:0-order}
    For any $0 < \mu_1 < L_1 < \mu_2 < L_2$, the following equality holds
    \begin{equation}
        \int_{L_1}^{\mu_2} \frac{\diff \zeta}{\sqrt{\varphi (\zeta)}} = 
        \frac{2 \EllipticK \left(  \frac{(L_2 - \mu_1)(\mu_2 - L_1)}{(L_2 - L_1)(\mu_2 - \mu_1)} \right)}{\sqrt{(L_2 - L_1)(\mu_2 - \mu_1)}}.
    \end{equation}
\end{lemma}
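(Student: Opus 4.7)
The plan is to carry out the standard Legendre reduction of an elliptic integral of the form $\int dt/\sqrt{P_4(t)}$, where $P_4$ is a quartic with four distinct real roots, over an interval bounded by two adjacent roots. In our setting the four roots of $\varphi$ are $\mu_1 < L_1 < \mu_2 < L_2$, and we integrate over the gap $[L_1,\mu_2]$ between the two middle roots, so this is precisely the case that reduces to a complete elliptic integral of the first kind.

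The substitution I would use is
\begin{equation}
\sin^2\theta \;=\; \frac{(\mu_2-\mu_1)(\zeta-L_1)}{(\mu_2-L_1)(\zeta-\mu_1)},
\end{equation}
which maps $\zeta=L_1\mapsto\theta=0$ and $\zeta=\mu_2\mapsto\theta=\pi/2$. Writing $A=\mu_2-\mu_1$, $B=L_1-\mu_1$, $C=\mu_2-L_1=A-B$, $D=L_2-\mu_1$, $E=L_2-L_1$, $F=L_2-\mu_2$, and $W=A\cos^2\theta+B\sin^2\theta$, I would then solve explicitly for $\zeta-\mu_1=AB/W$ and compute each of the four linear factors in $\varphi$:
\begin{equation}
\zeta-\mu_1=\tfrac{AB}{W},\quad
\zeta-L_1=\tfrac{BC\sin^2\theta}{W},\quad
\mu_2-\zeta=\tfrac{CA\cos^2\theta}{W},\quad
L_2-\zeta=\tfrac{AE\cos^2\theta+BF\sin^2\theta}{W}.
\end{equation}
Multiplying these gives $\varphi(\zeta)=A^2B^2C^2\sin^2\theta\cos^2\theta(AE\cos^2\theta+BF\sin^2\theta)/W^4$, and differentiating $(\zeta-\mu_1)W=AB$ yields $d\zeta=2ABC\sin\theta\cos\theta\,d\theta/W^2$. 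These combine cleanly into
\begin{equation}
\frac{d\zeta}{\sqrt{\varphi(\zeta)}}=\frac{2\,d\theta}{\sqrt{AE\cos^2\theta+BF\sin^2\theta}}.
\end{equation}

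The only nontrivial algebraic step is to recognize $AE\cos^2\theta+BF\sin^2\theta=AE-(AE-BF)\sin^2\theta$ and verify the identity
\begin{equation}
AE-BF=(\mu_2-\mu_1)(L_2-L_1)-(L_1-\mu_1)(L_2-\mu_2)=(\mu_2-L_1)(L_2-\mu_1)=CD,
\end{equation}
which I would do by direct expansion. Once this is in hand, the integrand becomes $2\,d\theta/(\sqrt{AE}\sqrt{1-(CD/AE)\sin^2\theta})$, and integrating from $0$ to $\pi/2$ gives the claim with $p=CD/AE=(L_2-\mu_1)(\mu_2-L_1)/[(L_2-L_1)(\mu_2-\mu_1)]$. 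I expect no real obstacle; the slightly error-prone part is just bookkeeping through the four factors and ensuring the $W^4$ in the denominator of $\varphi(\zeta)$ exactly cancels the $W^2$ from $d\zeta$ and the $W^2$ from $\sqrt{\varphi(\zeta)}$, which the computation above confirms.
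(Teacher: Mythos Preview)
Your reduction is correct. The substitution $\sin^2\theta = A(\zeta-L_1)/\bigl(C(\zeta-\mu_1)\bigr)$ is the standard one for the interval between the two middle roots; your factor-by-factor computation and the identity $AE-BF=CD$ check out, and the resulting integral is exactly $\frac{2}{\sqrt{AE}}\,\EllipticK(CD/AE)$ with the claimed parameter.

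The paper takes a different route: rather than carrying out the substitution, it simply writes down an explicit antiderivative in terms of the incomplete elliptic integral $\EllipticF$ (with a parameter that is negative on $[L_1,\mu_2]$), evaluates it at the two endpoints, and then applies a parameter-transformation identity for the complete integral $\EllipticK$ to convert $\EllipticK\bigl(-\tfrac{(L_2-\mu_1)(\mu_2-L_1)}{(L_1-\mu_1)(L_2-\mu_2)}\bigr)/\sqrt{(L_1-\mu_1)(L_2-\mu_2)}$ into the stated form. This is shorter to write but leans on a table (or CAS) antiderivative and the identity $\EllipticK(-x)=\EllipticK\bigl(\tfrac{x}{1+x}\bigr)/\sqrt{1+x}$. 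Your argument is longer but entirely self-contained and arrives at the correct parameter directly, without the extra transformation step.
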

\begin{proof}[Proof of \Cref{lem:lb:0-order}]
    The antiderivative of $\frac{1}{\sqrt{\varphi(\zeta)}}$
    in $[L_1, \mu_2]$ is 
    \begin{equation}
        Q(\zeta) := 
        -2 \sqrt{\frac{-1}{(L_1-\mu_1) \left(L_2 - \mu_2 \right)}} \EllipticF \left(\Arcsin
   \left(\sqrt{-\frac{\left(\zeta-L_1\right) (L_2-\mu_2)}{(L_2-L_1)
   \left(\mu_2-\zeta\right)}}\right) \middle| \frac{(L_2-L_1) \left(\mu_2-\mu
   _1\right)}{(L_1-\mu_1) (L_2-\mu_2)}\right).
    \end{equation}
    The lemma then follows by the fact that $\lim_{\zeta \to L_1} Q(\zeta) = 0$ and 
    \begin{align}
        \lim_{\zeta \to \mu_2} Q(\zeta) = 
        2 \sqrt{\frac{1}{(L_1-\mu_1) \left(L_2 - \mu_2 \right)}} 
        \EllipticK \left( - \frac{(L_2 - \mu_1)(\mu_2 - L_1)}{(L_1 - \mu_1) (L_2 - \mu_2)} \right)
        =
        \frac{2 \EllipticK \left(  \frac{(L_2 - \mu_1)(\mu_2 - L_1)}{(L_2 - L_1)(\mu_2 - \mu_1)} \right)}{\sqrt{(L_2 - L_1)(\mu_2 - \mu_1)}}.
    \end{align}
\end{proof}

The following \Cref{lem:lb:1-order} analyzes $\int_{L_1}^{\mu_2} \frac{\zeta \diff \zeta}{\sqrt{\varphi (\zeta)}}$.
\begin{lemma}
    For any $0 < \mu_1 < L_1 < \mu_2 < L_2$, the following equality holds
    \label{lem:lb:1-order}
    \begin{align}
        \int_{L_1}^{\mu_2} \frac{\zeta \diff \zeta}{\sqrt{\varphi (\zeta)}} = & 
        2 \sqrt{\frac{1}{(\mu_2-\mu_1) (L_2-L_1)}} 
        \cdot \\
        & 
        \left(L_2
        \EllipticK \left(\frac{(L_2-\mu_1) (\mu_2-L_1)}{(L_2-L_1)
        (\mu_2-\mu_1)}\right)-(L_2-\mu_2) \EllipticPi \left(\frac{\mu
        _2-L_1}{L_2-L_1} \middle| \frac{(L_2-\mu_1) \left(\mu
        _2-L_1\right)}{(L_2-L_1) (\mu_2-\mu_1)}\right)\right).
    \end{align}
\end{lemma}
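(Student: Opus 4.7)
The plan is to reduce the first moment to the zeroth moment plus a canonical third‑kind elliptic integral by the simple algebraic decomposition $\zeta = L_2 - (L_2-\zeta)$:
\[
\int_{L_1}^{\mu_2} \frac{\zeta \, d\zeta}{\sqrt{\varphi(\zeta)}} \;=\; L_2 \int_{L_1}^{\mu_2} \frac{d\zeta}{\sqrt{\varphi(\zeta)}} \;-\; \int_{L_1}^{\mu_2} \frac{(L_2 - \zeta)\, d\zeta}{\sqrt{\varphi(\zeta)}}.
\]
The first integral on the right is exactly what Lemma~\ref{lem:lb:0-order} evaluates, and it delivers the $L_2 \EllipticK\!\left(\tfrac{(L_2-\mu_1)(\mu_2-L_1)}{(L_2-L_1)(\mu_2-\mu_1)}\right)$ term in the claimed formula. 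The substantive work is therefore to show that $J := \int_{L_1}^{\mu_2} (L_2-\zeta)/\sqrt{\varphi(\zeta)}\, d\zeta$ equals $\frac{2(L_2-\mu_2)}{\sqrt{(L_2-L_1)(\mu_2-\mu_1)}}\cdot \EllipticPi(n \mid m)$ for the characteristic $n = (\mu_2-L_1)/(L_2-L_1)$ and modulus $m = (L_2-\mu_1)(\mu_2-L_1)/[(L_2-L_1)(\mu_2-\mu_1)]$ that already appear in Lemma~\ref{lem:lb:0-order}.

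To evaluate $J$ I would apply the standard Byrd--Friedman trigonometric substitution for elliptic integrals over the middle band of four real roots $\mu_1 < L_1 < \mu_2 < L_2$, namely
\[
\sin^2\phi \;=\; \frac{(L_2-L_1)(\zeta - L_1)}{(L_2-\zeta)(\mu_2 - L_1)},
\]
which maps $[L_1,\mu_2)$ bijectively onto $[0,\pi/2)$ and extends by continuity to $\phi = \pi/2$ at $\zeta = \mu_2$. Two algebraic identities then drive everything: first, a direct computation gives
\[
\frac{d\zeta}{\sqrt{\varphi(\zeta)}} \;=\; \frac{2\, d\phi}{\sqrt{(L_2-L_1)(\mu_2-\mu_1)\,(1 - m\sin^2\phi)}},
\]
which recovers Lemma~\ref{lem:lb:0-order} as a sanity check; and second, rearranging the defining equation yields
\[
L_2 - \zeta \;=\; \frac{L_2 - \mu_2}{1 - n\sin^2\phi}.
\]
Multiplying these two identities produces exactly the Legendre normal form
\[
\frac{(L_2-\zeta)\, d\zeta}{\sqrt{\varphi(\zeta)}} \;=\; \frac{2(L_2 - \mu_2)}{\sqrt{(L_2-L_1)(\mu_2-\mu_1)}} \cdot \frac{d\phi}{(1 - n\sin^2\phi)\sqrt{1 - m\sin^2\phi}},
\]
so integrating $\phi$ from $0$ to $\pi/2$ gives $J = \tfrac{2(L_2-\mu_2)}{\sqrt{(L_2-L_1)(\mu_2-\mu_1)}} \EllipticPi(n \mid m)$.

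Combining the two pieces and pulling the common prefactor $2/\sqrt{(L_2-L_1)(\mu_2-\mu_1)}$ out front reproduces the right‑hand side of Lemma~\ref{lem:lb:1-order} verbatim. Alternatively, in the same spirit as Lemma~\ref{lem:lb:0-order}, one could bypass the substitution by differentiating a guessed antiderivative expressed through incomplete elliptic integrals of the first and third kind and verifying it reproduces $\zeta/\sqrt{\varphi(\zeta)}$. The main obstacle is purely mechanical: carefully verifying the two algebraic identities for $L_2-\zeta$ and $d\zeta/\sqrt{\varphi}$ under the Möbius‑type substitution. Both reduce to polynomial identities in $\sin^2\phi$ after clearing denominators, but the bookkeeping must be done with care so that the characteristic $n$ and modulus $m$ emerge with exactly the prescribed values; once those identities are in hand, the rest is linearity and identification with the canonical Legendre form.
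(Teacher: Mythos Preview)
Your approach is correct and genuinely different from the paper's. The paper simply writes down an explicit antiderivative $P(\zeta)$ for $\zeta/\sqrt{\varphi(\zeta)}$, expressed via incomplete elliptic integrals $\EllipticF$ and $\EllipticPi$ with complex intermediate arguments, and then evaluates the limits at $\zeta\to L_1$ and $\zeta\to\mu_2$ --- exactly the ``guessed antiderivative'' alternative you mention at the end. Your route, decomposing $\zeta = L_2 - (L_2-\zeta)$ and reducing the second piece to a third-kind integral by a direct trigonometric substitution, is more transparent and self-contained: it explains \emph{why} the particular characteristic $n=(\mu_2-L_1)/(L_2-L_1)$ and modulus $m$ arise, rather than reading them off a black-box primitive.

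One slip to fix: the substitution you wrote, $\sin^2\phi = \frac{(L_2-L_1)(\zeta-L_1)}{(L_2-\zeta)(\mu_2-L_1)}$, does \emph{not} map $[L_1,\mu_2]$ into $[0,\pi/2]$; at $\zeta=\mu_2$ it gives $\sin^2\phi=(L_2-L_1)/(L_2-\mu_2)>1$. The substitution that actually produces your two claimed identities $L_2-\zeta=(L_2-\mu_2)/(1-n\sin^2\phi)$ and $d\zeta/\sqrt{\varphi(\zeta)}=2\,d\phi/\sqrt{(L_2-L_1)(\mu_2-\mu_1)(1-m\sin^2\phi)}$ is
\[
\sin^2\phi \;=\; \frac{(L_2-L_1)(\mu_2-\zeta)}{(\mu_2-L_1)(L_2-\zeta)},
\]
which sends $\mu_2\mapsto 0$ and $L_1\mapsto\pi/2$ (reversed orientation, harmless for the complete integral). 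With that correction the rest of your argument goes through verbatim.
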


\begin{proof}[Proof of \Cref{lem:lb:1-order}]
    The antiderivative of $\frac{\zeta}{\sqrt{\varphi(\zeta)}}$ in $[L_1, \mu_2]$ is 
    \begin{align}
        P(\zeta) := & 
        2 \sqrt{\frac{1}{(\mu_2-\mu_1) (L_2-L_1)}} \cdot \\
        & \left(i L_1 \EllipticF\left(\Arcsin \left(\sqrt{-\frac{(L_2-L_1) \left(\mu_2-\zeta\right)}{\left(\zeta-L_1\right)
        (L_2-\mu_2)}}\right) \middle|\frac{(L_1-\mu_1) \left(L_2-\mu
        _2\right)}{(L_2-L_1) (\mu_2-\mu_1)}\right)\right. \\
        & \left. +i \left(\mu
        _2-L_1\right) \EllipticPi \left(\frac{L_2-\mu_2}{L_2-L_1}; \Arcsin\left(\sqrt{-\frac{(L_2-L_1) \left(\mu_2-\zeta\right)}{\left(\zeta-L_1\right)
        (L_2-\mu_2)}}\right) \middle|\frac{(L_1-\mu_1) \left(L_2-\mu
        _2\right)}{(L_2-L_1) (\mu_2-\mu_1)}\right)\right).
    \end{align}
The lemma then follows by the fact that $\lim_{\zeta \to \mu_2} P(\zeta) = 0$ and 
\begin{align}
    \lim_{\zeta \to L_1} P(\zeta) 
    = & 
    2 \sqrt{\frac{1}{(\mu_2-\mu_1) (L_2-L_1)}} 
    \cdot \\
    & 
    \left(- L_2
    \EllipticK \left(\frac{(L_2-\mu_1) (\mu_2-L_1)}{(L_2-L_1)
    (\mu_2-\mu_1)}\right)
    + (L_2-\mu_2) \EllipticPi \left(\frac{\mu
    _2-L_1}{L_2-L_1} \middle| \frac{(L_2-\mu_1) \left(\mu
    _2-L_1\right)}{(L_2-L_1) (\mu_2-\mu_1)}\right)\right).
\end{align}
\end{proof}

Next, we establish the following inequality regarding elliptic integrals.
\begin{lemma}
    \label{helper:elliptic:ratio}
    For any $x \in [0, \frac{1}{2}]$ and $y \in (0, 1]$, the following inequality holds
    \begin{equation}
        \frac{\EllipticPi(x|1-y)}{\EllipticK(1-y)} \geq \frac{1}{1-x} \left(1 - \frac{4x}{\log ( \frac{16}{y}) } \right).
    \end{equation}
\end{lemma}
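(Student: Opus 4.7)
The plan is to view the ratio as the conditional expectation of $1/(1 - x\sin^2\theta)$ under the probability measure $\nu$ on $[0,\pi/2]$ with density proportional to $(1-(1-y)\sin^2\theta)^{-1/2}$, and then apply Jensen's inequality to the convex function $z \mapsto 1/z$ (noting $1-x\sin^2\theta \geq 1-x \geq 1/2 > 0$ since $x \leq 1/2$). This yields
\[
  \frac{\EllipticPi(x|1-y)}{\EllipticK(1-y)} \;\geq\; \frac{1}{1 - x\rho},
  \qquad
  \rho \;:=\; \frac{1}{\EllipticK(1-y)}\int_0^{\pi/2} \frac{\sin^2\theta\,d\theta}{\sqrt{1-(1-y)\sin^2\theta}},
\]
so the substantive task reduces to establishing that $1 - \rho \leq 2/\log(16/y)$.

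For this, observe $1-\rho = A/\EllipticK(1-y)$ where $A := \int_0^{\pi/2}\cos^2\theta / \sqrt{1-(1-y)\sin^2\theta}\,d\theta$. The elementary pointwise inequality $\cos^2\theta \leq 1 - (1-y)\sin^2\theta$ (which reduces to $y\sin^2\theta \geq 0$) gives $\cos\theta \leq \sqrt{1-(1-y)\sin^2\theta}$, hence $A \leq \int_0^{\pi/2}\cos\theta\,d\theta = 1$. Combining with the classical lower bound $\EllipticK(1-y) \geq \tfrac{1}{2}\log(16/y)$ (which follows from the Ramanujan/Gauss series expansion of the complete elliptic integral and is asymptotically tight as $y \to 0$) yields the desired bound on $1 - \rho$.

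To close, cross-multiplying the target inequality (both sides are positive under our assumptions, or the right-hand side is trivially non-positive when $\log(16/y) \leq 4x$) reduces it to $\frac{4(1-x\rho)}{\log(16/y)} \geq 1-\rho$. Since $x\leq 1/2$ and $\rho \in [0,1]$, we have $x\rho \leq 1/2$, so $1 - x\rho \geq 1/2$, making the left-hand side at least $2/\log(16/y)$, which in turn dominates $1-\rho$ by the previous paragraph. The main obstacle I anticipate is justifying the bound $\EllipticK(1-y) \geq \tfrac{1}{2}\log(16/y)$ in a self-contained way for all $y \in (0,1]$—numerical checks confirm it is effectively tight throughout—while everything else (Jensen, the pointwise bound for $A$, and the concluding algebra) is routine.
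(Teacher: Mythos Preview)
Your proposal is correct and takes a genuinely different route from the paper's proof.

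The paper argues via concavity: it shows $(1-x)\EllipticPi(x\mid 1-y)$ is concave in $x$ on $[0,1)$ (pointwise concavity of $\frac{1-x}{1-x\sin^2\theta}$), computes the $x$-derivative of $\varphi(x,y):=(1-x)\EllipticPi(x\mid 1-y)/\EllipticK(1-y)$ at $x=\tfrac12$ using identities among $\EllipticE,\EllipticK,\EllipticPi$, and asserts via an expansion ``around $y=0$'' that this derivative is at least $-4/\log(16/y)$; concavity then yields $\varphi(x,y)\ge \varphi(0,y)+x\,\partial_x\varphi(\tfrac12,y)$ for $x\in[0,\tfrac12]$.

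Your approach instead applies Jensen to the convex map $z\mapsto 1/z$ with respect to the probability measure $d\nu\propto (1-(1-y)\sin^2\theta)^{-1/2}d\theta$, reducing everything to the single scalar bound $1-\rho\le 2/\log(16/y)$, where $\rho=\mathbb{E}_\nu[\sin^2\theta]$. You then dispatch this with two clean ingredients: the pointwise estimate $\cos^2\theta\le 1-(1-y)\sin^2\theta$ (giving $\int_0^{\pi/2}\cos^2\theta/\sqrt{1-(1-y)\sin^2\theta}\,d\theta\le\int_0^{\pi/2}\cos\theta\,d\theta=1$), and the classical lower bound $\EllipticK(1-y)\ge \tfrac12\log(16/y)$. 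The closing algebra is exactly as you wrote.

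What each approach buys: your argument is more elementary and more self-contained. The only external input is the bound $\EllipticK(1-y)\ge\tfrac12\log(16/y)$, which is a standard inequality (e.g.\ from the logarithmic series for $\EllipticK$ near the singular modulus, where every correction term is nonnegative) and which you rightly flag as the one thing needing justification. The paper's route, by contrast, leans on identities for $\partial_x\EllipticPi$ and an asymptotic expansion at $y=0$ whose upgrade to a bound valid for \emph{all} $y\in(0,1]$ is not spelled out; your argument avoids this entirely.
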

\begin{proof}[Proof of \cref{helper:elliptic:ratio}]
    We first prove two claims:
    \begin{claim}
        \label{helper:pi:concave}
        The function $(1-x) \EllipticPi(x|1-y)$ is concave in $x$ for $x \in [0,1)$ and $y \in (0,1]$.
    \end{claim}
    \begin{proof}[Proof of \cref{helper:pi:concave}]
        By standard convex analysis we can show that $\frac{1-x}{1 - x \sin^2{\theta}}$ is concave in $x$ for any $x \in [0, 1)$ and $\theta \in [0, \frac{\pi}{2}]$. 
        Thus by linearity of integrals we have that $(1-x) \EllipticPi(x | 1-y) = \int_0^{\frac{\pi}{2}} \frac{(1-x) \diff \theta}{(1 - x \sin^2\theta) \sqrt{1 - (1-y) \sin^2\theta}}$ is also concave in $x \in [0,1)$ provided that $y \in (0,1]$.
    \end{proof}
    \begin{claim}
        \label{helper:deri:lb}
        Let $\varphi(x, y) := (1-x) \cdot \frac{\EllipticPi(x| 1-y)}{\EllipticK(1-y)}$. Then for any $y \in (0,1]$, the following inequality holds 
        \begin{equation}
            \frac{\partial \varphi(\frac{1}{2}, y)}{\partial x} \geq \frac{-4}{\log \frac{16}{y}}.
        \end{equation}
    \end{claim}
    \begin{proof}[Proof of \cref{helper:deri:lb}]
        By standard elliptic integral analysis 
        \begin{equation}
             \frac{\partial \varphi(\frac{1}{2}, y)}{\partial x} = 
             -1 + \frac{2 \EllipticE(1-y) - \EllipticPi(\frac{1}{2}, 1-y)}{(2y-1) \EllipticK(1-y)}.
        \end{equation}
        Expanding the above quantity around $y = 0$ shows the lower bound $\frac{-4}{\log{\frac{16}{y}}}$.
    \end{proof}
    The proof of \cref{helper:elliptic:ratio} then follows by the above two claims. Since $(1-x) \EllipticPi(x | 1-y)$ is concave in $x \in [0, \frac{1}{2}]$, so is $\varphi(x,y)$ defined in \cref{helper:deri:lb}. Thus for any $x \in [0, \frac{1}{2}]$,
    \begin{equation}
        \varphi(x, y) \geq \varphi(0, y) + x \frac{\partial \varphi(\frac{1}{2}, y)}{\partial x} \geq 1 - \frac{4 x}{\log \frac{16}{y}}.
    \end{equation}
    Hence 
    \begin{equation}
          \frac{\EllipticPi(x|1-y)}{\EllipticK(1-y)} = \frac{\varphi(x,y)}{1-x} 
          \geq \frac{1}{1-x} \left(1 - \frac{4x}{\log ( \frac{16}{y}) } \right).
    \end{equation}
\end{proof}

Finally, the proof of \Cref{lem:lb:ratio} then follows by applying \Cref{lem:lb:0-order,lem:lb:1-order,helper:elliptic:ratio}.
\begin{proof}[Proof of \Cref{lem:lb:ratio}]
    By \Cref{lem:lb:0-order,lem:lb:1-order,helper:elliptic:ratio},
    \begin{align}
        & \left( \int_{L_1}^{\mu_2} \frac{\diff \zeta}{\sqrt{\varphi (\zeta)}} \right)^{-1}
        \left( \int_{L_1}^{\mu_2} \frac{\zeta \diff \zeta}{\sqrt{\varphi (\zeta)}} \right)
        = L_2 - (L_2 - \mu_2) 
        \frac{ \EllipticPi \left(\frac{\mu_2-L_1}{L_2-L_1} \middle| \frac{(L_2-\mu_1) (\mu_2-L_1)}{(L_2-L_1) (\mu_2-\mu_1)}\right)}
        {\EllipticK \left(\frac{(L_2-\mu_1) (\mu_2-L_1)}{(L_2-L_1)(\mu_2-\mu_1)}\right)}
        \\
    \leq & L_2 - (L_2 - \mu_2) \frac{1}{1 -  \frac{\mu_2-L_1}{L_2-L_1} } \left( 1 - \frac{4 \frac{\mu_2-L_1}{L_2-L_1} }{\log \left(16 \frac{(\mu_2-\mu_1) (L_2-L_1)}{(L_1-\mu_1) (L_2-\mu_2)} \right)} \right)
    \tag{by \cref{helper:elliptic:ratio} and assumption $\frac{\mu_2}{L_2} \leq \frac{1}{2}$}
        \\
    = & L_1 + \frac{4 (\mu_2 - L_1)}{\log \left(16 \frac{(\mu_2-\mu_1) (L_2-L_1)}{(L_1-\mu_1) (L_2-\mu_2)} \right)} 
    \leq 
    L_1 + \frac{4 \mu_2 }{\log \left(16 \frac{(\mu_2-\mu_1) (L_2-L_1)}{(L_1-\mu_1) (L_2-\mu_2)} \right)}.
    \end{align}
    Since 
    \begin{equation}
        \frac{(\mu_2-\mu_1) (L_2-L_1)}{(L_1-\mu_1) (L_2-\mu_2)} 
        = \left( 1 + \frac{ \frac{\mu_2}{L_1} - 1}{1 - \frac{\mu_1}{L_1}} \right) \left( 1 + \frac{\mu_2 - L_1}{L_2 - \mu_2} \right)
        \geq
        1 + \frac{\mu_2}{L_1} - 1 
        =
        \frac{\mu_2}{L_1},
    \end{equation}
    and 
    \begin{equation}
        \frac{L_1}{\mu_2} \leq \frac{3}{\log (16 \frac{\mu_2}{L_1})},
    \end{equation}
    we obtain
    \begin{align}
         \left( \int_{L_1}^{\mu_2} \frac{\diff \zeta}{\sqrt{\varphi (\zeta)}} \right)^{-1}
        \left( \int_{L_1}^{\mu_2} \frac{\zeta \diff \zeta}{\sqrt{\varphi (\zeta)}} \right)
         \leq
         L_1 + \frac{4 \mu_2 }{\log (16 \frac{\mu_2}{L_1})}
         \leq
        \frac{7 \mu_2}{\log (16 \frac{\mu_2}{L_1})}.
    \end{align}
\end{proof}

\section{Technical details for \cref{sec:stochastic} }
\begin{remark}
\label{example:distassumption}
We can obtain an example of a non-product, non-Gaussian distribution that satisfies the second-order independence condition by considering any collection of pairwise independent random variables that take values in $\{\pm 1\}^d$ and have expected value 0. As an example: let $\mat{P}$ be the $d\times d$ identity matrix. Let $\vec{a} \in \{\pm 1\}^d$ be uniform on the subset of the hypercube $ \left \{ \pm 1\right \}^d$ with even parity: $\Pi_{i=1}^d \vec{a}_i = 1$. Now, note that the distribution of $\vec{a}$ does not have a product structure but the second-order independence condition is satisfied: $\E[(\mat{P} \vec{a})_i (\mat{P} \vec{a})_k^2 (\mat{P} \vec{a})_j]=0$ since each pair of coordinates of $\vec{a}$ is pairwise independent (for $d>2$). Note that we could also obtain a candidate distribution which satisfies the property if $\mat{P}$ is not the identity: We can choose the distribution over $\vec{a}$ such that $\mat{P}\vec{a}$ is still uniform on the subset of the hypercube with even parity as before. Another different example is any distribution over $\left \{ \vec{a} \in \mathbb{R}^d \textrm{ s.t. } \norm{\mat{P} \vec{a}}_0 \leq 1 \right \} $.

\end{remark}
\subsection{Missing details for proof of \cref{lemma:bslsres} \cref{eqn:messybetacondappend}} 
\label{append:messydetailsstochastic}
We want to show that if 
\begin{align}
    \delta \leq \min \bigg \{ & \frac{1}{3m} \frac{1}{T_{\textrm{max}}(\nsteps_1 + 2)^2 + 1}, \\
    & \frac{1}{6m(4^m)} \left( 48 m T_{\textrm{max}}^2 (\nsteps_1 + 2)^3\right)^{-1/2},\\
    & \frac{1}{6m(4^m)} \left( \max_{\ell \in [m]} \left \{ \cond_{\ell} \right \} \cdot m T_{\textrm{max}}^2 (\nsteps_1 + 2)^3 \right)^{-1/2}  \bigg \},
\end{align}
then 
\begin{equation}
    \label{eqn:messybetacondappend}
     4^m (1 + 3 \delta m)^{m \left( T_{\textrm{max}}( \nsteps_1 + 2)^2 + 1 \right) }  \leq \min \left \{ \frac{\globalcond}{(1 + 3 \delta m)^{\nsteps_1}}, \frac{1}{144 \nsteps_1 T_{\textrm{max}} \delta m}, \frac{1}{2 \max_{\ell \in [m]} \left \{  \cond_{\ell} \right \} } \frac{1}{6(\nsteps_1 + 1) \delta m} \right \}. 
\end{equation} 
Using that for any $x \geq 0$, $(1 + \frac{1}{x})^x \leq e$ and that $\globalcond \geq (4e)^m$ we have,
\begin{align}
     4^m (1 + 3 \delta m)^{m \left( T_{\textrm{max}}( \nsteps_1 + 2)^2 + 1 \right) } \leq (4e)^m \leq \globalcond,
\end{align}
therefore 
\begin{equation}
    4^m (1 + 3 \delta m)^{m \left( T_{\textrm{max}}( \nsteps_1 + 2)^2 + 1 \right) } \leq \frac{\globalcond}{(1 + 3 \delta m)^{\nsteps_1}}.
\end{equation}
Next suppose for some $x, p, C \in \R$ we have $px \leq 1/4$, $x \leq \sqrt{C/4p}$, and $C, p \geq 1$. Then
\begin{align}
    (1 + x)^p x \leq (1 + 2 px) x \leq \sqrt{\frac{C}{4p}} + \frac{C}{4} \leq C. 
\end{align}
Then letting $x = 3 \delta m$, $p = m (T_{\textrm{max}}(\nsteps_1 + 2)^2 + 1)$, and $C = (144 \nsteps_1 T_{\textrm{max}} 4^m)^{-1}$ we have that when $\delta \leq  \frac{1}{6m(4^m)} \left( 48 m T_{\textrm{max}}^2 (\nsteps_1 + 2)^3\right)^{-1/2}$ then 
\begin{equation}
     4^m (1 + 3 \delta m)^{m \left( T_{\textrm{max}}( \nsteps_1 + 2)^2 + 1 \right) }  \leq \frac{1}{144 \nsteps_1 T_{\textrm{max}} \delta m}. 
\end{equation}
Similarly, letting $x = 3 \delta m$, $p = m (T_{\textrm{max}}(\nsteps_1 + 2)^2 + 1)$, and $C = \left( 2 \max_{\ell \in [m]} \left \{ \cond_{\ell} \right \} 6 (\nsteps_1 + 1) 4^m \right)^{-1}$ we have that when $\delta \leq \frac{1}{6m(4^m)} \left( \max_{\ell \in [m]} \left \{ \cond_{\ell} \right \} \cdot m T_{\textrm{max}}^2 (\nsteps_1 + 2)^3 \right)^{-1/2}  $ then
\begin{equation}
     4^m (1 + 3 \delta m)^{m \left( T_{\textrm{max}}( \nsteps_1 + 2)^2 + 1 \right) }  \leq  \frac{1}{2 \max_{\ell \in [m]} \left \{  \cond_{\ell} \right \} } \frac{1}{6(\nsteps_1 + 1) \delta m}. 
\end{equation}
Therefore Eq.~\ref{eqn:messybetacondappend} holds. 

\subsection{Proof of Lemma~\ref{lemma:maininduction}}
\label{appendarxiv:maininductionproof}
\begin{proof}[Proof of \cref{lemma:maininduction}]
We prove Lemma~\ref{lemma:maininduction} by induction on $i$. We start with the base case of $\bslsres_{\nbands}$ and then we will show that if Lemma~\ref{lemma:maininduction} is true for $\bslsres_{i+1}$ then it is true for $\bslsres_i$.  Let $\resv^{(0)}$ denote our input vector $\resv$ to $\bslsres_i$ and $\resv^{(t)}$ denotes the $\nth{t}$ iteration of $\bslsres_i$. Before beginning the proof by induction we establish several useful claims that will be used throughout the proof. First note that if
\begin{equation}
    \navg \geq \distconst \nbands^2 \maxmult \left(\prod_{i=1}^m \cond_i \right) \log^{\nbands}(\globalcond),
\end{equation}
then since $\delta \defeq \distconst \maxmult /\navg$ we have
\begin{equation}
\label{eqn:deltasize}
    \delta \leq \left( \nbands^2 \left(\prod_{i=1}^m \cond_i \right) \log^{\nbands}(\globalcond) \right)^{-1} .
\end{equation}
Note that if $\resv_i^{(t+1)} = \mat{U}_i \resv_i^{(t)}$ we have
\begin{align}
\label{eqn:updates}
    & \resv_i^{(t+1)} \leq \gamma_i \resv_i^{(t)} + \left( \delta \sum_{k = i+1}^m \frac{L_k}{L_i} \resv_k^{(t)} \right) +\left( \delta \sum_{k = 1}^{i-1} \frac{L_k}{L_i} \resv_k^{(t)} \right),\\
    & \resv_j^{(t+1)} \leq  \left(\frac{L_j}{L_i} - 1\right)^2 \resv_j^{(t)} + \delta \sum_{k = 1}^m  \frac{L_k L_j}{L_i^2}\resv_k^{(t)}, \tag{For all $j \geq i+1$} \\
    & \resv_j^{(t+1)} \leq \left(1 - \frac{\mu_j}{C_i L_i}\right)^2 \resv_j^{(t)} + \delta \sum_{k = 1}^m  \frac{L_k L_j}{L_i^2}\resv_k^{(t)} . \tag{For all $j \leq i -1$}
\end{align}
Finally,
\begin{equation}
    \label{eqn:Tisize}
    T_i \geq \threshold{i} .
\end{equation}
To see this note,
\begin{align}
   \threshold{i} & = \frac{\log \left(  \frac{1}{\smallgrowth[\nsteps_{i+1}]{\resv^{(0)}}}  \frac{L_{i-1}}{L_i} \frac{\resv_{i-1}^{(0)}}{\resv_i^{(0)}}\right)}{\log(\tilde{\gamma}_i)} \\
    & = \frac{\log \left( \smallgrowth[\nsteps_{i+1}]{\resv^{(0)}} \frac{L_{i}}{L_{i-1}} \frac{\resv_{i}^{(0)}}{\resv_{i-1}^{(0)}}\right)}{\log(1/\tilde{\gamma}_i)} \\
    & \leq 2 \cond_i \log \left( \smallgrowth[\nsteps_{i+1}]{\resv^{(0)}} \frac{L_i}{L_{i-1}} \frac{\resv_{i}^{(0)}}{\resv_{i-1}^{(0)}} \right) \tag{$\tilde{\gamma}_i = 1 - \frac{1}{2 \cond_i}$ and $\log(1/(1-x)) \geq x$} \\
    & \leq 2 \cond_i \log \left(\smallgrowth[\nsteps_{i+1}]{\resv^{(0)}}  \basegrowthlazy \frac{L_{i}^2}{L_{i-1}^2} \right) \tag{$\resv_i^{(0)}/\resv_{i-1}^{(0)} \leq \basegrowthlazy L_i/L_{i-1}$}\\
    & \leq 6 \cond_i \log \left( \globalcond \right) \tag{$\basegrowthlazy \smallgrowth[\nsteps_{i+1}]{\resv^{(0)}} \leq \globalcond$ and $L_i/L_{i-1} \leq \globalcond$} \\
    & \leq T_i.
\end{align}
\paragraph{Base Case.} Consider $\bslsres_{\nbands}$. Suppose that $\basegrowth{\resv^{(0)}}$ satisfies Eq.~\ref{eqn:betacond}. Since $\resv^{(0)}$ is not ambiguous we will shorten $\basegrowth{\resv^{(0)}}, \smallgrowth{\resv^{(0)}}$, and $\growth{m}{\resv^{(0)}}$ all to $\basegrowthlazy, \smallgrowthlazy$, and $\growthlazy{m}$ respectively.
First we will prove the following claim:
\begin{claim} [Fast Convergence Phase]
    \label{claim:base}
    Recall $\tilde{\gamma}_m$. For any $t \leq \threshold{m} +1 $ we have
    \begin{equation}
        \resv^{(t)} = \mat{V}_m^{t} \resv^{(0)}.
    \end{equation}
    Moreover, defining
\begin{equation}
    \growth{m}{\resv^{(0)}} \defeq  \smallgrowth{\resv^{(0)}}^{\nsteps_{m}+2},
\end{equation}
we have for any $j,k \in \left[ m \right]$,
    \begin{equation}
    \label{eqn:invariant}
    \resv_k^{(t)} \leq  \basegrowthlazy \growthlazy[t]{m} \frac{1}{\tilde{\gamma}_m}\max \left \{ \frac{L_j}{L_k}, \frac{L_k}{L_j} \right \} \resv_j^{(t)}. 
\end{equation}

\end{claim}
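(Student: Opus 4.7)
The plan is to prove both parts of Claim~\ref{claim:base} simultaneously by a single induction on $t$, running from $t=0$ up to $\lceil \threshold{m}\rceil+1$. The induction hypothesis at step $t$ packages together (i) the identity $\resv^{(t)} = \mat{V}_m^{t}\resv^{(0)}$ --- equivalently, the statement that the $\max$-update inside $\bslsres_m$ has so far been realized by the $\mat{V}_m$ branch, not the $\mat{U}_m$ branch --- and (ii) the balance invariant \eqref{eqn:invariant}. The base case $t=0$ is immediate: the identity is trivial, and \eqref{eqn:invariant} follows from the definition of $\basegrowthlazy$ together with $\growthlazy{m}\geq 1$ and $\tilde{\gamma}_m\leq 1$.

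For the inductive step I would first show that $\mat{V}_m\resv^{(t)}\geq \mat{U}_m\resv^{(t)}$ coordinate-wise, so that $\resv^{(t+1)}=\mat{V}_m\resv^{(t)}$. Recall that when $i=m$ the matrix $\mat{V}_m$ is diagonal with $\gamma_m$ in slot $(m,m)$ and $\smallgrowthlazy$ on all other diagonal entries (the ``$j>i$'' clause in the definition of $\mat{V}_i$ is vacuous when $i=m$). Dominance therefore reduces to two families of inequalities, one for $j<m$ and one for $j=m$, coming from \eqref{eqn:updates}. In each case the diagonal contribution $g_j(1/L_m)\resv_j^{(t)}$ is already no larger than the corresponding $(\mat{V}_m)_{jj}\resv_j^{(t)}$, so only the off-diagonal sum $\delta\sum_{k\ne j}(L_jL_k/L_m^2)\resv_k^{(t)}$ needs control. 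I would bound each $\resv_k^{(t)}$ in terms of $\resv_j^{(t)}$ using invariant \eqref{eqn:invariant}, then use the smallness of $\delta$ from \eqref{eqn:deltasize} together with the upper bound on $\basegrowthlazy\,\growthlazy{m}$ from \eqref{eqn:betacond} to absorb the cross terms into the slack $\smallgrowthlazy - 1 = 3\delta m\basegrowthlazy$ (for $j<m$) or $1-\gamma_m = 1/(2\kappa_m)$ (for $j=m$).

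Once entry-wise dominance is established, $\resv^{(t+1)}=\mat{V}_m\resv^{(t)}=\mat{V}_m^{t+1}\resv^{(0)}$, which proves part~(i) at time $t+1$. Because $\mat{V}_m$ is diagonal, propagating invariant \eqref{eqn:invariant} is then purely arithmetic: the ratio $\resv_k^{(t+1)}/\resv_j^{(t+1)}$ equals $(\mat{V}_m)_{kk}/(\mat{V}_m)_{jj}$ times $\resv_k^{(t)}/\resv_j^{(t)}$, and the worst case (multiplicative growth by $\smallgrowthlazy/\gamma_m$) occurs only when the numerator is the $j<m$ coordinate and the denominator is the $m$th. Telescoping, the total factor accumulated over $t\leq \lceil\threshold{m}\rceil+1$ steps is absorbed by the exponent in $\growthlazy{m}=\smallgrowthlazy^{\,\nsteps_m+2}$ together with the $1/\tilde\gamma_m$ slack, giving \eqref{eqn:invariant} at time $t+1$.

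The main obstacle will be the bookkeeping in the cross-term estimates: one must verify that $\delta\cdot m \cdot \basegrowthlazy\cdot\growthlazy[t]{m}/\tilde\gamma_m$, which is the effective size of the off-diagonal contribution after substituting \eqref{eqn:invariant}, is strictly smaller than the diagonal progress margins $\smallgrowthlazy - g_j(1/L_m)$ and $\gamma_m - g_m(1/L_m)$. The three separate bounds on $\basegrowthlazy\,\growthlazy{m}$ imposed by \eqref{eqn:betacond} were designed exactly for this: the first prevents $\growthlazy[t]{m}$ from compounding beyond $\globalcond/\smallgrowthlazy^{\nsteps_1}$ during the induction window, while the second and third ensure the $\delta$-scaled cross sums remain below $1/(2\kappa_m)$ and $3\delta m\basegrowthlazy$ respectively. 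Once these inequalities are written out explicitly, the induction closes without further surprises.
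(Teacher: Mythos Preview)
Your inductive structure and the dominance argument (showing $\mat{U}_m\resv^{(t)}\leq \mat{V}_m\resv^{(t)}$ coordinate-wise by absorbing the cross-terms via the invariant) are exactly what the paper does, and your cross-term bookkeeping for the $j<m$ coordinates is correct. There is, however, a genuine gap in the way you propose to propagate the invariant \eqref{eqn:invariant}.

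The problematic direction is $k<m$, $j=m$. Your plan is to telescope: the ratio $\resv_k^{(t)}/\resv_m^{(t)}$ picks up a factor $\smallgrowthlazy/\tilde\gamma_m$ per step, and you want to absorb the accumulated factor $(\smallgrowthlazy/\tilde\gamma_m)^{t}$ into $\growthlazy[t]{m}/\tilde\gamma_m$. But this fails quantitatively. The conditions in \eqref{eqn:betacond} force $\growthlazy[t]{m}\leq 3$ for all $t\leq T_m$ (this is \eqref{eqn:bles3} in the paper), so your budget is at most $6$. Meanwhile $(1/\tilde\gamma_m)^{t}$ for $t$ near $\threshold{m}$ (which can be as large as $T_m\approx 8\kappa_m\log\globalcond$) is of order $\globalcond^{4}$. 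Equivalently, the one-step requirement $\smallgrowthlazy/\tilde\gamma_m\leq \growthlazy{m}=\smallgrowthlazy^{\nsteps_m+2}$ would need $\smallgrowthlazy^{\nsteps_m+1}\geq 1/\tilde\gamma_m\approx 1+1/(2\kappa_m)$, whereas the third clause of \eqref{eqn:betacond} forces $\smallgrowthlazy^{\nsteps_m+1}\lesssim 1+1/(4\kappa_m)$. So a single $1/\tilde\gamma_m$ of slack cannot absorb the geometric decay of $\resv_m$.

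The paper does \emph{not} propagate this direction step by step. Instead it goes back to time $0$ and uses the definition of the threshold: for $t+1\leq \threshold{m}$ one has
\[
\resv_m^{(t+1)}=\tilde\gamma_m^{\,t+1}\resv_m^{(0)}\ \geq\ \frac{1}{\smallgrowthlazy}\,\frac{L_{m-1}}{L_m}\,\resv_{m-1}^{(0)}\ =\ \frac{L_{m-1}}{L_m}\,\frac{1}{\smallgrowthlazy^{\,t+1}}\,\resv_{m-1}^{(t)},
\]
and then routes $\resv_k^{(t+1)}$ through $\resv_{m-1}^{(t)}$ via the already-established invariant for the pair $(k,m-1)$ (both indices $<m$), picking up a factor $\smallgrowthlazy^{\,t+2}\leq \growthlazy{m}$. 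The $1/\tilde\gamma_m$ in \eqref{eqn:invariant} is used only once, at the boundary step $t+1=\lceil\threshold{m}\rceil+1$, where the threshold inequality overshoots by one power of $\tilde\gamma_m$. You will need this threshold-based argument (or an equivalent direct lower bound on $\resv_m^{(t)}$) to close the induction for the $(k<m,\,j=m)$ case.
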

We will prove Claim~\ref{claim:base} by induction. The case $t = 0$ holds immediately. Now suppose Claim~\ref{claim:base} is true for $t$. First we prove that for any $t \leq T_m$, 
\begin{equation}
    \label{eqn:bles3}
    \growthlazy[t]{m} \leq 3.
\end{equation}
We first bound $\growthlazy{m}$ by $1 + (1/T_m)$,
\begin{align}
    \growthlazy{m} & = \left( 1 + 3 \delta m \basegrowthlazy \right)^{\nsteps_m +2} \\
    & \leq  1 + 12 (\nsteps_m + 2) \delta m \basegrowthlazy \\
    & \leq 1 + \frac{1}{T_m} \tag{$\basegrowthlazy < 1/(24 \nsteps_m T_m \delta m) $ by Eq.~\ref{eqn:betacond} }.
\end{align}
Therefore since $\sup_{x \geq 0} (1 + (1/x))^x \leq 3$ we have that for any $t \leq T_m$
\begin{equation}
    \growthlazy[t]{m} \leq \growthlazy[T_m]{m} \leq \left(1 + \frac{1}{T_m} \right)^{T_m} \leq 3. 
\end{equation}
This concludes the proof of Eq.~\ref{eqn:bles3}. Now we can continue with our proof by induction of Claim~\ref{claim:base}. Suppose the inductive hypothesis (I.H.) that Claim~\ref{claim:base} holds for $t$. Let $\overline{\resv}^{(t+1)} \defeq \mat{U}_{m} \resv^{(t)}$. By Eq.~\ref{eqn:updates} we have,
\begin{align}
    \overline{\resv}_{\nbands}^{(t+1)} & \leq \gamma_{\nbands} \resv_{\nbands}^{(t)} + \delta \sum_{k = 1}^{\nbands -1} \frac{L_k}{L_{\nbands}} \resv_k^{(t)} \\
    & \leq \gamma_{\nbands} \resv_{\nbands}^{(t)} + \delta \basegrowthlazy \growthlazy[t]{m} \sum_{k = 1}^{\nbands -1} \frac{L_k}{L_{\nbands}} \frac{L_m}{L_k} \resv_m^{(t)} \tag{I.H. of Claim~\ref{claim:base} Eq.~\ref{eqn:invariant}}\\
    & \leq \left( \gamma_{\nbands} + \basegrowthlazy \growthlazy[t]{m} \delta (m-1) \right) \resv_{\nbands}^{(t)} \\
    & \leq \tilde{\gamma}_{\nbands} \resv_{\nbands}^{(t)}.\tag{I.H. of Claim~\ref{claim:base}, Eq.~\ref{eqn:bles3}, and Eq.~\ref{eqn:deltasize}}
\end{align}
Therefore
\begin{equation}
    \label{eqn:part1base}
    \resv^{(t+1)}_m = \max \left \{ \overline{\resv}^{(t+1)}_m, \left( \mat{V}_i \resv^{(t)} \right)_m \right \} = \max \left \{ \overline{\resv}^{(t+1)}, \tilde{\gamma}_m \resv_m^{(t)} \right \} = \tilde{\gamma}_m \resv_m^{(t)}.
\end{equation}
Next for $j \leq m-1$,
\begin{align}
    \overline{\resv}_j^{(t+1)} & \leq \resv_j^{(t)} + \left( \delta \sum_{k = 1}^{j} \frac{L_k L_j}{L_m^2} \resv_k^{(t)} \right) + \left( \delta \sum_{k = j+1}^{m} \frac{L_k L_j}{L_m^2} \resv_k^{(t)} \right) \\
   & \leq \resv_j^{(t)} + \left( \delta \basegrowthlazy \growthlazy[t]{m} \sum_{k = 1}^{j} \frac{L_k L_j}{L_m^2} \frac{L_j}{L_k} \resv_j^{(t)} \right) + \left( \delta \basegrowthlazy \growthlazy[t]{m} \sum_{k = j+1}^{m} \frac{L_k L_j}{L_m^2} \frac{L_k}{L_j} \resv_j^{(t)} \right) \tag{I.H. of Claim~\ref{claim:base} Eq.~\ref{eqn:invariant}} \\
   & = \left(1 + \basegrowthlazy \growthlazy[t]{m} \delta m \right) \resv_j^{(t)} \\
   & \leq \smallgrowthlazy \resv_j^{(t)}. \tag{ Eq.~\ref{eqn:bles3}}
\end{align}
Therefore for $j \leq m-1$,
\begin{equation}
    \label{eqn:part2base}
    \resv^{(t+1)}_j = \max \left \{ \overline{\resv}^{(t+1)}_j, \left( \mat{V}_i \resv^{(t)} \right)_j \right \} = \max \left \{ \overline{\resv}^{(t+1)},\smallgrowthlazy \resv_j^{(t)} \right \} =\smallgrowthlazy \resv_j^{(t)}.
\end{equation}
Combining Eq.~\ref{eqn:part1base} and Eq.~\ref{eqn:part2base} with the I.H. of Claim~\ref{claim:base} for $t$ shows that $\resv^{(t+1)} = \mat{V}_m^{t+1} \resv^{(0)}.$
Finally to establish Claim~\ref{claim:base} we first show that if $t \leq \threshold{i} -1 $ then for all $j,k$,
\begin{equation}
    \resv_k^{(t)} \leq \basegrowthlazy \growthlazy[t]{m} \max \left \{ \frac{L_j}{L_k}, \frac{L_k}{L_j} \right \} \resv_j^{(t)}.
\end{equation}
Then we show that this implies that for $t = \left \lceil \threshold{i} \right \rceil$, for all $j,k$,
\begin{equation}
    \resv_k^{(t)} \leq \basegrowthlazy \growthlazy[t]{m} \frac{1}{\tilde{\gamma}_m}\max \left \{ \frac{L_j}{L_k}, \frac{L_k}{L_j} \right \} \resv_j^{(t)}.
\end{equation}
If both $j,k \leq \nbands - 1$ then
\begin{align}
    \frac{\resv_k^{(t+1)}}{\resv_j^{(t+1)}} & = \frac{\smallgrowthlazy \resv_k^{(t)}}{\smallgrowthlazy \resv_j^{(t)}} \tag{Eq.~\ref{eqn:part2base}} \\
    & \leq \basegrowthlazy\growthlazy[t]{m} \max \left \{ \frac{L_j}{L_k}, \frac{L_k}{L_j} \right \} \tag{I.H. of Claim~\ref{claim:base}} \\
    & \leq \basegrowthlazy \growthlazy[t+1]{m} \max \left \{ \frac{L_j}{L_k}, \frac{L_k}{L_j} \right \}. \tag{$\growthlazy{m} \geq 1$}
\end{align}
Next suppose $k = m$. By Eq.~\ref{eqn:part1base}, $\resv_m^{(t+1)} \leq \resv_m^{(t)}$ and by Eq.~\ref{eqn:part2base} for $j \leq m-1$, $\resv_j^{(t+1)} \geq \resv_j^{(t)}$. Therefore 
\begin{equation}
    \frac{\resv_m^{(t+1)}}{\resv_j^{(t+1)}} \leq \frac{\resv_m^{(t)}}{\resv_j^{(t)}}.
\end{equation}
Then combining this with the I.H. of Claim~\ref{claim:base},
\begin{equation}
    \resv_m^{(t)} \leq \basegrowthlazy \growthlazy[t]{m} \max \left \{\frac{L_j}{L_m}, \frac{L_m}{L_j} \right \} \resv_j^{(t)} \qquad \implies \qquad  \resv_m^{(t+1)} \leq \basegrowthlazy \growthlazy[t+1]{m} \max \left \{\frac{L_j}{L_m}, \frac{L_m}{L_j} \right \} \resv_j^{(t+1)}.
\end{equation}
Finally we consider the case where $j = m$. First suppose $t +1 \leq \threshold{m}$. By I.H. of Claim~\ref{claim:base},
\begin{equation}
\label{eqn:thresholdbase}
\resv_m^{(t+1)} = \tilde{\gamma}_m^{t+1} \resv_m^{(0)} \geq \frac{1}{\smallgrowthlazy}\frac{L_{m-1}}{L_m} \resv_{m-1}^{(0)} \geq  \frac{L_{m-1}}{L_m} \frac{1}{\smallgrowthlazy[t+1]} \resv_{m-1}^{(t)}.
\end{equation} 
Therefore for any $k  \leq m-1$, 
\begin{align}
\resv_k^{(t+1)} & = \smallgrowthlazy \resv_k^{(t)} \\
& \leq  \smallgrowthlazy \basegrowthlazy \growthlazy[t]{m} \max \left \{ \frac{L_{m-1}}{L_k}, \frac{L_k}{L_{m-1}} \right \} \resv_{m-1}^{(t)} \\
& \leq  \smallgrowthlazy \basegrowthlazy \growthlazy[t]{m} \max \left \{ \frac{L_{m-1}}{L_k}, \frac{L_k}{L_{m-1}} \right \} \smallgrowthlazy[t+1] \frac{L_m}{L_{m-1} } \resv_{m}^{(t+1)}    \tag{Eq.~\ref{eqn:thresholdbase}} \\
& = \smallgrowthlazy[t+2] \basegrowthlazy \growthlazy[t]{m} \frac{L_m}{L_k} \resv_m^{(t+1)} \\
& \leq  \basegrowthlazy \growthlazy[t+1]{m} \frac{L_m}{L_k} \resv_m^{(t+1)}. \tag{$\smallgrowthlazy[t+2] \leq \growthlazy{m}$}
\end{align}
Next if $t +1 = \threshold{m} + 1$ then we have
\begin{equation}
\resv_m^{(t+1)}  \geq  \tilde{\gamma}_m \frac{L_{m-1}}{L_m} \frac{1}{\smallgrowthlazy[t+1]} \resv_{m-1}^{(t)},
\end{equation}
and so using the same logic as before,
\begin{equation}
\resv_k^{(t+1)}  \leq \frac{1}{\tilde{\gamma}_m} \basegrowthlazy \growthlazy[t+1]{m} \frac{L_m}{L_k} \resv_m^{(t+1)}.
\end{equation}
This concludes the proof of Claim~\ref{claim:base}. Now we consider the steps for which $t > \left \lceil \threshold{i} \right \rceil$. We have the following claim.
\begin{claim} [Extraneous Steps Phase]
\label{claim:extrabase}
Suppose $t > \left \lceil \threshold{i} \right \rceil$. Then
\begin{equation}
\resv^{(t+1)} = \mat{W}_m \resv^{(t)}.
\end{equation}
Moreover Eq.~\ref{eqn:invariant} holds from Claim~\ref{claim:base}. 
\end{claim}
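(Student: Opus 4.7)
The plan is to prove Claim~\ref{claim:extrabase} by induction on $t > \lceil \threshold{m} \rceil$, paralleling the structure of Claim~\ref{claim:base} but with the key difference that the $m$-th coordinate is no longer forced downward by $\tilde\gamma_m$. The base case $t=\lceil \threshold{m}\rceil+1$ is inherited from the final step of Claim~\ref{claim:base}, which already establishes the invariant Eq.~\eqref{eqn:invariant} with the extra factor $1/\tilde\gamma_m$. For the inductive step I must show three things: (i) $(\mat{U}_m\resv^{(t)})_m\le \resv^{(t)}_m$, so the max in $\bslsres_m$ selects the $\mat{W}_m$ update on the $m$-th coordinate; (ii) $(\mat{U}_m\resv^{(t)})_j\le \smallgrowthlazy\,\resv^{(t)}_j$ for $j<m$, matching $(\mat{W}_m\resv^{(t)})_j$; and (iii) the invariant Eq.~\eqref{eqn:invariant} is preserved.

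For (i), I would apply the first line of Eq.~\eqref{eqn:updates} and then use the inductive invariant, applied with $j=m$, $k<m$, to bound $\resv^{(t)}_k\le \basegrowthlazy\,\growthlazy[t]{m}\tilde\gamma_m^{-1}(L_m/L_k)\,\resv^{(t)}_m$. The cross terms collapse to give
\[
(\mat{U}_m\resv^{(t)})_m \le \Bigl(\gamma_m + \delta(m-1)\,\basegrowthlazy\,\growthlazy[t]{m}\,\tilde\gamma_m^{-1}\Bigr)\,\resv^{(t)}_m,
\]
and the hypothesis $\basegrowthlazy\,\totalgrowth{\resv^{(0)}}\le (2\max_\ell \cond_\ell\cdot 6(\nsteps_1+1)\delta m)^{-1}$ from Eq.~\eqref{eqn:betacond}, combined with $\growthlazy[t]{m}\le 3$ via Eq.~\eqref{eqn:bles3}, renders the additive term at most $1-\gamma_m = 1/\cond_m$ (up to the factor $1/\tilde\gamma_m\le 2$), hence the parenthesized quantity is $\le 1$.

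For (ii), the argument is identical to the corresponding step of Claim~\ref{claim:base}: apply the third line of Eq.~\eqref{eqn:updates}, split the sum at $k=m$ versus $k<m$, and in each case substitute the inductive invariant to factor out $\resv^{(t)}_j$. The resulting bound $(\mat{U}_m\resv^{(t)})_j\le(1+\basegrowthlazy\growthlazy[t]{m}\tilde\gamma_m^{-1}\delta m)\resv^{(t)}_j\le \smallgrowthlazy\,\resv^{(t)}_j$ follows, so the max selects the $\mat{W}_m$ row.

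For (iii), preservation of Eq.~\eqref{eqn:invariant} is essentially free. For $j,k<m$ both components scale by exactly $\smallgrowthlazy$, so the ratio is unchanged while the bound weakens by $\growthlazy[t+1]{m}/\growthlazy[t]{m}=\smallgrowthlazy\ge 1$. For the ratio $\resv^{(t)}_m/\resv^{(t)}_j$ with $j<m$, the numerator is fixed and the denominator grows by $\smallgrowthlazy$, so the ratio strictly decreases, comfortably staying inside the invariant bound. The main obstacle to watch is step (i): it requires the invariant to come with the tight $1/\tilde\gamma_m$ factor (rather than anything polynomially worse in $\cond_m$), which is precisely why Claim~\ref{claim:base} was stated with this factor at the terminal step $t=\lceil\threshold{m}\rceil$ — and why Eq.~\eqref{eqn:betacond} was engineered with the $\max_\ell\cond_\ell$ in the denominator. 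The induction carries through for all $t\le T_m$ since $\growthlazy[T_m]{m}\le 3$, concluding the proof of Claim~\ref{claim:extrabase}.
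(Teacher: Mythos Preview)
Your approach is correct and follows the same induction structure as the paper. The paper takes a small shortcut in your steps (i)--(ii): rather than re-deriving the coordinate-wise bounds on $\mat{U}_m\resv^{(t)}$, it simply recalls from the proof of Claim~\ref{claim:base} that $\mat{U}_m\resv^{(t)}\le\mat{V}_m\resv^{(t)}$ and observes that $\mat{W}_m\ge\mat{V}_m$ entrywise, so the max selects $\mat{W}_m\resv^{(t)}$ immediately. Two minor slips to fix: in (iii) you checked only the direction $k=m$, $j<m$ of the invariant but omitted $j=m$, $k<m$ (there $\resv_k^{(t+1)}/\resv_m^{(t+1)}=\smallgrowthlazy\cdot\resv_k^{(t)}/\resv_m^{(t)}$, while the bound gains a factor $\growthlazy{m}\ge\smallgrowthlazy$, so it holds); and the ratio $\growthlazy[t+1]{m}/\growthlazy[t]{m}$ equals $\growthlazy{m}$, not $\smallgrowthlazy$, though you only use that it is $\ge 1$.
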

To prove Claim~\ref{claim:extrabase} we recall that for $\overline{\resv}^{(t+1)} \defeq \mat{U}_m \resv^{(t)}$ we have $\overline{\resv}^{(t+1)} \leq \mat{V}_m \resv^{(t)}$. Next since $\mat{W}_m$ is entry-wise larger than $\mat{V}_m$ we conclude 
\begin{equation}
\resv^{(t+1)} = \max \left \{ \overline{\resv}^{(t+1)}, \mat{W}_m \resv^{(t)} \right \} = \mat{W}_m \resv^{(t)}. 
\end{equation}
Finally we need to show that Eq.~\ref{eqn:invariant} holds. The same reasoning that Eq.~\ref{eqn:invariant} holds in the case $t \leq  \left \lceil \threshold{i} \right \rceil$ can be used to show that Eq.~\ref{eqn:invariant} holds in the case where $t > \left \lceil \threshold{i} \right \rceil$ and $j,k \leq m-1$. We simply need to consider the case where $j = m$ or $k = m$. First suppose $j = m$. We have for any $k \leq m-1$, 
\begin{align}
\resv_k^{(t+1)} & = \smallgrowthlazy \resv_k^{(t)} \tag{I.H. of Claim~\ref{claim:extrabase}} \\
& \leq  \smallgrowthlazy \basegrowthlazy \growthlazy[t]{m} \frac{L_m}{L_k} \resv_m^{(t)} \tag{I.H. of Claim~\ref{claim:extrabase}} \\
& = \smallgrowthlazy \basegrowthlazy \growthlazy[t]{m} \frac{L_m}{L_k} \resv_m^{(t+1)} \tag{$\left(\mat{W}_m\right)_{mm} = 1$} \\
& \leq  \basegrowthlazy \growthlazy[t+1]{m} \frac{L_m}{L_k} \resv_m^{(t+1)}. \tag{$\smallgrowthlazy \leq \growthlazy{m}$}
\end{align}
Next suppose $k = m$ and $j \leq m-1$. This case is trivial since $\resv_j^{(t+1)} \geq \resv_j^{(t)}$ and $\resv_i^{(t+1)} = \resv_i^{(t)}$. Thus since $\growthlazy{i} \geq 1$,
\begin{equation}
    \resv_i^{(t)} \leq \basegrowthlazy \growthlazy[t]{m} \resv_j^{(t)} \implies \resv_i^{(t+1)} \leq \basegrowthlazy \growthlazy[t+1]{m} \resv_j^{(t+1)}.
\end{equation}
This concludes the proof of Claim~\ref{claim:extrabase}.
\paragraph{Conclusion of Base Case} To conclude we use Claim~\ref{claim:base}, Claim~\ref{claim:extrabase}, and Eq.~\ref{eqn:Tisize} which guarantee that
\begin{align}
    \tilde{\resv}_m  & = \resv_m^{(T_m)} \\
    & = \tilde{\gamma}_m^{\left \lceil \threshold{m} \right \rceil} \resv_m^{(0)} \\
    & \leq \tilde{\gamma}_m^{ \threshold{m} } \resv_{m}^{(0)}  \\
    & =  \frac{L_{m-1}}{L_m} \resv_{m-1}^{(0)}.
\end{align}
Next since $\nsteps_m = T_m$ we have for any $j \leq \nbands - 1$,
    \begin{equation}
        \label{eqn:jbound}
        \tilde{\resv}_j = \resv_j^{(\nsteps_m)} = \smallgrowthlazy[\nsteps_m] \resv_j^{(0)}.
    \end{equation}
This concludes the base case.
\paragraph{Inductive Case} 
Suppose $\basegrowth{\resv^{(0)}}$ satisfies Eq.~\ref{eqn:betacond}. 
\begin{claim} [Fast Convergence Phase]
    \label{claim:induct}
    Suppose we are in the $\nth{t}$ iteration of $\bslsres_i$ and assume 
    \begin{equation}
        t \leq \left \lceil \threshold{i} \right \rceil.
    \end{equation}
Then 
\begin{equation}
    \resv^{(t)} = \mat{V}_i^t \resv^{(0)},
\end{equation}
and letting
   \begin{equation}
    \growth{i}{\resv} \defeq \smallgrowth{\resv}^{(\nsteps_i +2 )(N_{i+1} +1)+1}
\end{equation}
we have for any $j,k \in \left[ m \right]$,
\begin{equation}
    \label{eqn:needfortop}
    \resv_k^{(t)} \leq \basegrowth{\resv^{(0)}} \max \left \{ \frac{1}{\tilde{\gamma}_i^2  } \growth[t]{i}{\resv^{(0)}}, \frac{\smallgrowth[\nsteps_{i+1}]{\resv^{(0)}}}{\tilde{\gamma}_i}\right \}\max \left \{ \frac{L_j}{L_k}, \frac{L_k}{L_j}\right \} \resv_j^{(t)}.
\end{equation}

\end{claim}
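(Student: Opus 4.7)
The plan is to prove Claim~\ref{claim:induct} by induction on $t$, mirroring the two-phase template (fast convergence / extraneous steps) already established for the base case $i = m$. The initialization $t = 0$ is immediate since $\mat{V}_i^0 = \id$ and Eq.~\ref{eqn:needfortop} reduces to the definition of $\basegrowth{\resv^{(0)}}$. For the inductive step at $t < \lceil\threshold{i}\rceil$, the argument has four stages: (i) certify that the recursive call $\bslsres_{i+1}(\resv^{(t)})$ satisfies the hypotheses of Lemma~\ref{lemma:maininduction} at level $i+1$; (ii) extract bounds on $\tilde{\resv}^{(t)}$ from the outer inductive hypothesis; (iii) show that the $\mat{U}_i$ update is dominated entrywise by $\mat{V}_i \tilde{\resv}^{(t)}$, so $\resv^{(t+1)} = \mat{V}_i \tilde{\resv}^{(t)}$; and (iv) propagate the ratio invariant Eq.~\ref{eqn:needfortop} from $t$ to $t+1$.

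For stage (i) I would combine the I.H.\ of Claim~\ref{claim:induct} at time $t$ with $t \leq T_i \leq T_{\max}$ to produce a bound of the form $\basegrowth{\resv^{(t)}} \leq \basegrowth{\resv^{(0)}} \cdot \growth[T_i]{i}{\resv^{(0)}}$, and then check that the right-hand side multiplied by $\totalgrowth[m-i]{\resv^{(0)}}$ still satisfies Eq.~\ref{eqn:betacond} — the slack built into $\totalgrowth[m-i+1]{\resv^{(0)}} = \smallgrowth{\resv^{(0)}}^{T_{\max}(\nsteps_1+2)^2+1}\,\totalgrowth[m-i]{\resv^{(0)}}$ is precisely designed for this. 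Stage (ii) is a direct invocation of the outer hypothesis: $\tilde{\resv}_j^{(t)} \leq (L_i/L_j)\,\resv_i^{(t)}$ for $j \geq i+1$, and $\tilde{\resv}_j^{(t)} \leq \smallgrowth[\nsteps_{i+1}]{\resv^{(t)}}\,\resv_j^{(t)}$ for $j \leq i$.

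Stage (iii) is the technical heart of the proof and splits into three cases on $k$. For $k = i$, expand $(\mat{U}_i \tilde{\resv}^{(t)})_i$ via Eq.~\ref{eqn:updates}; the cross-terms from $j > i$ carry a factor $L_j/L_i$ which is cancelled by the outer I.H.\ factor $L_i/L_j$ in $\tilde{\resv}_j^{(t)}$, so the $j > i$ contribution collapses to $\delta m \,\resv_i^{(t)}$, while the $j < i$ contribution is bounded using the ratio invariant at time $t$; the choice of $\navg$ makes Eq.~\ref{eqn:deltasize} hold, so the noise is absorbed into the gap between $\gamma_i$ and $\tilde{\gamma}_i$. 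For $k < i$, the diagonal coefficient $(1 - \mu_k/(C_i L_i))^2$ is already $\leq 1$ and the noise is absorbed by $\smallgrowth[\nsteps_{i+1}]{\resv^{(0)}} = (\mat{V}_i)_{kk}$. For $k > i$, the dominant term $(L_k/L_i - 1)^2 \tilde{\resv}_k^{(t)}$ is bounded using $\tilde{\resv}_k^{(t)} \leq (L_i/L_k)\tilde{\resv}_i^{(t)}$ (via the outer I.H.\ applied to $\resv^{(t)}$ and then the ratio invariant to re-express $\resv_i^{(t)}$ in terms of $\tilde{\resv}_i^{(t)}$), yielding a linear-in-$(L_k/L_i)$ bound that fits inside $(\mat{V}_i \tilde{\resv}^{(t)})_k = (L_k/L_i)\smallgrowth{\resv^{(0)}}^{\nsteps_{i+1}+1}\tilde{\resv}_i^{(t)}$.

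Stage (iv) is bookkeeping: within each block $\{1,\ldots,i-1\}$ or $\{i+1,\ldots,m\}$ the ratios are preserved up to one extra factor of $\smallgrowth{\resv^{(0)}}$; ratios involving the pivot index $i$ are the reason for the $\max\{\tilde{\gamma}_i^{-2}\growth[t]{i}{\cdot}, \tilde{\gamma}_i^{-1}\smallgrowth[\nsteps_{i+1}]{\cdot}\}$ clause in Eq.~\ref{eqn:needfortop}, and are treated separately when $t+1 \leq \threshold{i}$ (small factor) and when $t+1 = \lceil\threshold{i}\rceil$ (single extra $\tilde{\gamma}_i^{-1}$, exactly as in Eq.~\ref{eqn:thresholdbase} of the base case). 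The main obstacle I expect is the $k=i$ computation of stage (iii): the cancellation $(L_j/L_i)\cdot(L_i/L_j) = 1$ between the update matrix and the outer I.H.\ is delicate and is the only reason the $j > i$ noise does not blow up multiplicatively in $\globalcond$, so the argument requires that stage (ii) be invoked to bound $\tilde{\resv}_j^{(t)}$ directly and not merely $\resv_j^{(t)}$; a secondary delicate point is stage (i), where the $T_i \leq T_{\max}$ growth of $\basegrowth{\resv^{(t)}}$ exactly saturates the slack between consecutive levels of Eq.~\ref{eqn:betacond}.
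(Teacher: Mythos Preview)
Your proposal is correct and follows essentially the same approach as the paper: induction on $t$, verification that Eq.~\ref{eqn:betacond} propagates to the recursive call, application of the outer inductive hypothesis of Lemma~\ref{lemma:maininduction} to control $\tilde{\resv}^{(t)}$, a three-way case split on the coordinate index to show $\mat{U}_i\tilde{\resv}^{(t)}$ is dominated by the $\mat{V}_i$ update, and finally a case analysis on $(j,k)$ to carry forward the ratio invariant (the paper organizes this last part into Cases One/Two/Three plus a separate treatment of $j\le i,\ k\ge i+1$, exactly the block structure you sketch). Your identification of the $(L_j/L_i)\cdot(L_i/L_j)$ cancellation as the crux of the $k=i$ bound and of the single extra $\tilde{\gamma}_i^{-1}$ at the threshold are both on target. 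One small imprecision: in stage~(iii) for $k>i$ you write ``the ratio invariant to re-express $\resv_i^{(t)}$ in terms of $\tilde{\resv}_i^{(t)}$,'' but the ratio invariant relates coordinates of the same vector; what you actually need there (and what the paper uses implicitly) is the monotonicity $\tilde{\resv}_i^{(t)}\ge \resv_i^{(t)}$ coming from the fact that the recursive $\mat{V}_{i+1}$/$\mat{W}_{i+1}$ updates have diagonal entries $\ge 1$ on coordinates below $i+1$.
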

We will prove Claim~\ref{claim:induct} by induction (be careful not to confuse this with the overarching proof by induction of Lemma~\ref{lemma:maininduction}). Again since $\resv^{(0)}$ unambiguously denotes the initial vector passed to $\bslsres_i$ for the majority of the proof we will shorten $\basegrowth{\resv^{(0)}}, \smallgrowth{\resv^{(0)}}$, and $\growth{i}{\resv^{(0)}}$ all to $\basegrowthlazy, \smallgrowthlazy$, and $\growthlazy{i}$ respectively. Before proceeding with our proof of Claim~\ref{claim:induct} we show that for any $t \leq T_i$, 
\begin{equation}
\label{eqn:maxbless3ind}
    \max \left \{\frac{1}{\tilde{\gamma}_i^2}\growthlazy[t]{i}, \frac{\smallgrowthlazy[\nsteps_{i+1}]}{\tilde{\gamma_i}} \right \} \leq 3,
\end{equation}
so that
\begin{equation}
    \resv_k^{(t)} \leq 3 \basegrowthlazy \max \left \{ \frac{L_j}{L_k}, \frac{L_k}{L_j}\right \} \resv_j^{(t)}.
\end{equation}
First we bound $\growth{i}{\resv^{(0)}}$ by $1 + 1/(3T_i)$. Using that $(1 + x)^p \leq 1 + 2px$ if $px \leq 1/4$ and that $(3 \basegrowthlazy \delta m) (2 \nsteps_{i+1}) \leq 1/4$ we have,
\begin{align}
    \growthlazy{i} & = \left(1 + 3 \basegrowthlazy \delta m \right)^{2 \nsteps_{i+1}} \\
    & \leq \left(1 + 12 \nsteps_{i+1} \basegrowthlazy \delta m \right) \\
    & \leq 1 + \frac{1}{3 T_i}.
    \tag{$\basegrowthlazy \leq 1/(144 \nsteps_{i+1} \delta m T_i)$ by Eq.~\ref{eqn:betacond} }
\end{align}
Therefore for any $t \leq T_i$,
\begin{equation}
\label{eqn:bless3ind}
    \frac{1}{\tilde{\gamma}_i^2} \growthlazy[t]{i} \leq 2 \left(1 + \frac{1}{3 T_i} \right)^t \leq 2 \left( \frac{3}{2} \right) \leq 3. 
\end{equation}
Next we have
\begin{align}
    \frac{\smallgrowthlazy[\nsteps_{i+1}]}{\tilde{\gamma_i}} &  \leq 2\smallgrowthlazy[\nsteps_{i+1}] \tag{$\tilde{\gamma}_i \geq 1/2$ since $\cond_i \geq 1$} \\
     & \leq 2(1 + 6 \nsteps_{i+1} \basegrowthlazy \delta m) \tag{$(1 + x)^p \leq 1 + 2px$ if $px \leq 1/4$ and $(3 \basegrowthlazy \delta m) (\nsteps_{i+1}) \leq 1/4$} \\
     & \leq 3. \tag{$12 \nsteps_{i+1} \basegrowthlazy \delta m \leq 1$ }
\end{align}
Therefore Eq.~\ref{eqn:maxbless3ind} holds. Next note that
\begin{equation}
\label{eqn:gammaired} 
    \left( \gamma_i + 3 \basegrowthlazy \delta m \right) \smallgrowthlazy[\nsteps_{i+1}] \leq 1 - \frac{1}{2 \cond_i}.
\end{equation}
Indeed,
\begin{align}
       \left( \gamma_i + 3 \basegrowthlazy \delta m \right) \smallgrowthlazy[\nsteps_{i+1}]
      & =  \left( 1 - \frac{2}{\cond_i}  + \frac{1}{\cond_i^2} + 3 \basegrowthlazy \delta m \right) \smallgrowthlazy[\nsteps_{i+1}]\\
    & \leq  \left( 1 - \frac{2}{\cond_i}  + \frac{1}{\cond_i^2} + 3 \basegrowthlazy \delta m \right) \left( 1 + 6 \nsteps_{i+1} \basegrowthlazy \delta m \right) \tag{$(1 + x)^p \leq 1 + 2 px$ if $px \leq 1/4$ and $(3 \basegrowthlazy \delta m)(\nsteps_{i+1}) \leq 1/4$} \\
     & \leq  1 - \frac{2}{\cond_i}  + \frac{2}{\cond_i^2} + 6 \basegrowthlazy \delta m + 6 \nsteps_{i+1} \basegrowthlazy \delta m \tag{$6 \nsteps_{i+1} \basegrowthlazy \delta m \leq 1$} \\
     & \leq  1 - \frac{1}{2 \cond_i}. \tag{$6 (\nsteps_{i+1}+1) \basegrowthlazy \delta m \leq 1/(2\cond_i)$}
\end{align}

With Eq.~\ref{eqn:maxbless3ind} and Eq.~\ref{eqn:gammaired} in hand we are ready to  prove Claim~\ref{claim:induct} by induction. The base case $t=0$ holds immediately by assumption. Now using the I.H. of Claim~\ref{claim:induct} at $t$ we will show Claim~\ref{claim:induct} holds at $t+1$. Let $\overline{\resv}^{(t+1)} \defeq \mat{U}_{(1/L_i)} \resv^{(t)}$. If Claim~\ref{claim:induct} holds for $\resv^{(t)}$ then the necessary conditions to apply Lemma~\ref{lemma:maininduction} to $\bslsres_{i+1}$ hold. Indeed, when we call $\bslsres_{i+1}$ to $\resv^{(t)}$ we now have initial vector $\resv_{\textrm{recursive}}^{(0)} = \resv^{(t)}$ and so we simply need to show that Eq.~\ref{eqn:betacond} holds for $\resv_{\textrm{recursive}}^{(0)} $. To this end we see by Claim~\ref{claim:induct}
\begin{equation}
    \basegrowth{\resv^{(t)}} \leq \basegrowth{\resv^{(0)}}  \frac{1}{\tilde{\gamma}_i^2} \growth[t]{i}{\resv^{(0)}} \leq \basegrowth{\resv^{(0)}}   \totalgrowth{\resv^{(0)}}. 
\end{equation}
Therefore
\begin{align}
    \basegrowth{\resv_{\textrm{recursive}}^{(0)}} \totalgrowth[m-(i+1)+1]{\resv_{\textrm{recursive}}^{(0)}} & = \basegrowth{\resv^{(t)}} \totalgrowth[m-(i+1)+1]{\resv^{(t)}} \\
    & \leq \left( \basegrowth{\resv^{(0)}}   \totalgrowth{\resv^{(0)}} \right)  \totalgrowth[m-(i+1)+1]{\resv^{(t)}} \\
    & = \left( \basegrowth{\resv^{(0)}}   \totalgrowth{\resv^{(0)}} \right) \left( 1 + 3 \delta m \basegrowth{\resv^{(t)}} \right)^{T_{\textrm{max}}(\nsteps_1 + 1) + 1} \cdot \max_{\ell \in \left[ m \right]} \left \{  \frac{1}{\tilde{\gamma}_{\ell}^2 }\right \} \\
    & \leq  \left( \basegrowth{\resv^{(0)}}   \totalgrowth{\resv^{(0)}} \right) \left( 1 + 3 \delta m \basegrowth{\resv^{(0)}} \frac{1}{\tilde{\gamma}_i^2} \growth[t]{i}{\resv^{(0)}} \right)^{T_{\textrm{max}}(\nsteps_1 + 1) + 1} \\
    & \qquad \cdot \max_{\ell \in \left[ m \right]} \left \{  \frac{1}{\tilde{\gamma}_{\ell}^2 }\right \}.
\end{align}
Now that we have expressed the above only in terms of $\resv^{(0)}$ we drop its notation and use our typical shorthand. Thus we have,
\begin{align}
    \basegrowth{\resv_{\textrm{recursive}}^{(0)}} \totalgrowth[m-(i+1)+1]{\resv_{\textrm{recursive}}^{(0)}} & \leq \left(  \basegrowthlazy \totalgrowthlazy^{m-(i+1)+1} \right) \left( 1 + 3 \delta m \basegrowthlazy \frac{1}{\tilde{\gamma}_i^2} \growthlazy[t]{i} \right)^{T_{\textrm{max}}(\nsteps_1 + 1) + 1}  \cdot \max_{\ell \in \left[ m \right]} \left \{  \frac{1}{\tilde{\gamma}_{\ell}^2 }\right \} \\
    & \leq \left(  \basegrowthlazy \totalgrowthlazy^{m-(i+1)+1} \right) \left( 1 + 9 \delta m \basegrowthlazy \frac{1}{\tilde{\gamma}_i^2} \right)^{T_{\textrm{max}}(\nsteps_1 + 1) + 1}  \cdot \max_{\ell \in \left[ m \right]} \left \{  \frac{1}{\tilde{\gamma}_{\ell}^2 }\right \} \\
    & \leq \left(  \basegrowthlazy \totalgrowthlazy^{m-(i+1)+1} \right) \totalgrowthlazy \\
    & =  \basegrowthlazy  \totalgrowthlazy^{m-i+1}
\end{align}
Then since Eq.~\ref{eqn:betacond} holds for $\bslsres_i$ we can conclude that it holds for $\bslsres_{i+1}$ with initialization $\resv^{(t)}$. Recalling the notation that $\tilde{\resv}^{(t)} = \bslsres_{i+1}(\resv^{(t)})$, by the (original) inductive hypothesis that Lemma~\ref{lemma:maininduction} is true for $\bslsres_{i+1}$ we have, 
\begin{align}
    \overline{\resv}_i^{(t+1)} & = \gamma_i \tilde{\resv}_i^{(t)} + \left( \delta \sum_{k = i+1}^{m} \frac{L_k}{L_i} \tilde{\resv}_k^{(t)} \right) + \left(\delta \sum_{k = 1}^{i-1} \frac{L_k}{L_i} \tilde{\resv}_k^{(t)} \right) \\
     & \leq \gamma_i \smallgrowthlazy[\nsteps_{i+1}] \resv_i^{(t)} + \left(  \delta \sum_{k = i+1}^{m}    \resv_i^{(t)} \right) +  \smallgrowthlazy[\nsteps_{i+1}] \left(\delta \sum_{k = 1}^{i-1} \frac{L_k}{L_i} \resv_k^{(t)} \right) \\
     & \leq  \gamma_i \smallgrowthlazy[\nsteps_{i+1}] \resv_i^{(t)} +  \delta (m-i-1)  \resv_i^{(t)}+  \smallgrowthlazy[\nsteps_{i+1}] \left(\delta  \basegrowthlazy \growthlazy[t]{i} \sum_{k = 1}^{i-1} \frac{L_k}{L_i} \frac{L_i}{L_k} \resv_i^{(t)} \right) \tag{I.H. of Claim~\ref{claim:induct}} \\
     & \leq \left( \gamma_i \smallgrowthlazy[\nsteps_{i+1}] +   \delta (m-i-1) + \smallgrowthlazy[\nsteps_{i+1}] \delta i \basegrowthlazy \growthlazy[t]{i} \right) \resv_i^{(t)} \\
     & \leq \left( \gamma_i + 3 \basegrowthlazy \delta m \right) \smallgrowthlazy[\nsteps_{i+1}] \resv_i^{(t)} \tag{Eq.~\ref{eqn:bless3ind}} \\
     & \leq \tilde{\gamma}_i \resv_i^{(t)} \tag{Eq.~\ref{eqn:gammaired}}.
\end{align}
Then
\begin{equation}
    \label{eqn:part1induct}
    \resv_i^{(t+1)} = \max \left \{ \overline{\resv}_i^{(t+1)}, \left( \mat{V}_i \resv^{(t)} \right)_i \right \} = \tilde{\gamma}_i \resv_i^{(t)}.
\end{equation}
Now we turn our attention to $\resv_{j}^{(t+1)}$ for $j \leq i-1$. We have
\begin{align}
     \overline{\resv}_{j}^{(t+1)} & = \left(1 - \frac{\mu_{j}}{L_i} \right)^2 \tilde{\resv}_{j}^{(t)} + \left( \delta \sum_{k = 1}^{j} \frac{L_k L_{j}}{L_i^2} \tilde{\resv}_k^{(t)} \right)  +
    \left( \delta \sum_{k = j+1}^{i-1} \frac{L_k L_{j}}{L_i^2} \tilde{\resv}_k^{(t)} \right) + \left( 2 \delta \sum_{k = i}^m \frac{L_k L_{j}}{L_i^2} \tilde{\resv}_k^{(t)} \right) \\
    & \leq  \smallgrowthlazy[\nsteps_{i+1}] \left( \resv_j^{(t)} + \delta \sum_{k = 1}^{j} \frac{L_k L_{j}}{L_i^2} \resv_k^{(t)}  + \delta \sum_{k = j+1}^{i-1} \frac{L_k L_{j}}{L_i^2} \resv_k^{(t)} \right)  + \left( \delta \sum_{k = i}^m \frac{L_k L_{j}}{L_i^2}   \frac{L_i}{L_k}   \resv_{i}^{(t)}  \right) \tag{I.H. of Lemma~\ref{lemma:maininduction}} \\
    & \leq  \smallgrowthlazy[\nsteps_{i+1}] \left( \resv_j^{(t)} + \delta  \growthlazy[t]{i} \sum_{k = 1}^{j} \frac{L_k L_{j}}{L_i^2} \frac{L_j}{L_k} \resv_j^{(t)}  + \delta \growthlazy[t]{i} \sum_{k = j+1}^{i-1} \frac{L_k L_{j}}{L_i^2} \frac{L_k}{L_j} \resv_j^{(t)} \right) \\
        & \qquad + \left( 2 \delta \growthlazy[t]{i} \sum_{k = i}^m \frac{L_k L_{j}}{L_i^2}   \frac{L_i}{L_k}   \frac{L_i}{L_j}\resv_{j}^{(t)}  \right) \tag{I.H. of Claim~\ref{claim:induct}} \\
     & = \smallgrowthlazy[\nsteps_{i+1}] \left( 1 + \delta \growthlazy[t]{i} \sum_{k = 1}^{j} \frac{L_{j}^2}{L_i^2}  + \delta \growthlazy[t]{i} \sum_{k = j+1}^{i-1} \frac{L_k^2}{L_i^2}   \right) \resv_j^{(t)} + \left( 2 \delta \growthlazy[t]{i} \sum_{k = i}^m \resv_{j}^{(t)}  \right)  \\
    & \leq  \smallgrowthlazy[\nsteps_{i+1} + 1] \resv_j^{(t)}. \tag{Eq.~\ref{eqn:bless3ind}}
\end{align}
Therefore,
\begin{equation}
    \label{eqn:part2induct}
    \resv_j^{(t+1)} = \max \left \{ \overline{\resv}_j^{(t+1)}, \left( \mat{V}_i \resv \right)_j \right \} = \smallgrowthlazy[ \nsteps_{i+1} + 1] \resv_j^{(t)}.
\end{equation}
Next we consider $\resv_{\ell}^{(t+1)}$ for $\ell \geq i+1$. We have,
\begin{align}
    \label{eqn:ellibound}
    \overline{\resv}_{\ell}^{(t+1)} & \leq  \left( \frac{L_{\ell}}{L_i} \right)^2 \tilde{\resv}_{\ell}^{(t)} +  \left( \delta \sum_{k = i+1}^m \frac{L_k L_{\ell}}{L_i^2} \tilde{\resv}_k^{(t)} \right) + \left( \delta \sum_{k = 1}^i \frac{L_k L_{\ell}}{L_i^2} \tilde{\resv}_k^{(t)} \right) \\
   &  \leq  \left( \frac{L_{\ell}}{L_i} \right)^2 \frac{L_i}{L_{\ell}}\resv_i^{(t)} +  \left( 2 \delta \sum_{k = i+1}^m \frac{L_k L_{\ell}}{L_i^2} \frac{L_i}{L_k}  \resv_i^{(t)} \right)  + \left( \delta \sum_{k = 1}^i \frac{L_k L_{\ell}}{L_i^2} \smallgrowthlazy[\nsteps_{i+1}] \resv_k^{(t)} \right)  \tag{I.H. of Lemma~\ref{lemma:maininduction}} \\
   &  \leq  \frac{L_{\ell}}{L_i} \left(  \left( 1 +  2 \delta (m-i-1) \right)  \resv_i^{(t)}  + \left( \delta \sum_{k = 1}^i \frac{L_k}{L_i}  \smallgrowthlazy[\nsteps_{i+1}] \basegrowthlazy \growthlazy[t]{i} \frac{L_i}{L_k} \resv_i^{(t)} \right)  \right) \tag{I.H. of Claim~\ref{claim:induct}} \\
    &  \leq  \frac{L_{\ell}}{L_i} \left( 1 +  2 \delta (m-i-1) + 3 \basegrowthlazy \delta  i \smallgrowthlazy[\nsteps_{i+1}] \right) \resv_i^{(t)} \tag{$\growthlazy[t]{i} \leq 3$ by Eq.~\ref{eqn:bless3ind}} \\
    & \leq \frac{L_{\ell}}{L_i} \smallgrowthlazy[\nsteps_{i+1} + 1]  \resv_i^{(t)} .
\end{align}
Therefore,
\begin{equation}
    \label{eqn:part3induct}
    \resv_{\ell}^{(t+1)} = \max \left \{ \overline{\resv}_{\ell}^{(t+1)}, \left( \mat{V}_i \resv^{(t)} \right)_{\ell} \right \} = \frac{L_{\ell}}{L_i} \smallgrowthlazy[\nsteps_{i+1} + 1]  \resv_i^{(t)}. 
\end{equation}
Combining Eq.~\ref{eqn:part1induct}, Eq.~\ref{eqn:part2induct}, and Eq.~\ref{eqn:part3induct} we conclude
\begin{equation}
    \resv^{(t+1)} = \mat{V}_i \resv^{(t)}.
\end{equation}
Finally we prove Eq.~\ref{eqn:needfortop} of Claim~\ref{claim:induct}. First we will prove by induction that if the pair $(j,k)$ is such that
Case One, Case Two, or Case Three holds in Table~\ref{table:invariantcases} \emph{and} $t \leq \threshold{i} - 1$, then we have
\begin{equation}
    \label{eqn:finaltoshow}
    \resv_k^{(t)} \leq \basegrowthlazy \growthlazy[t]{i} \max \left \{ \frac{L_k}{L_j}, \frac{L_j}{L_k} \right \} \resv_j^{(t)}.
\end{equation}
Instead if $t \in \left[ \threshold{i}-1, \threshold{i} + 1 \right]$
\begin{equation}
    \label{eqn:finaltoshowedit}
    \resv_k^{(t)} \leq \basegrowthlazy \growthlazy[t]{i} \left(\frac{1}{\tilde{\gamma}_i^2} \right) \max \left \{ \frac{L_k}{L_j}, \frac{L_j}{L_k} \right \} \resv_j^{(t)}.
\end{equation}
After proving this we will show that if the pair $(j,k)$ is such that $j \leq i$ and $k \geq i+1$ then
\begin{equation}
    \label{eqn:finaltoshowspecialcase}
    \resv_k^{(t)} \leq \basegrowthlazy \frac{\smallgrowthlazy[\nsteps_{i+1}]}{\tilde{\gamma}_i} \frac{L_k}{L_j} \resv_j^{(t)}.
\end{equation}
\begin{table}[H]
\caption{Cases for which Eq.~\ref{eqn:finaltoshow} holds}
\centering
\label{table:invariantcases}
\begin{tabular}{@{}lllll@{}}
\toprule
Case One   & $j \leq i$ &  $k \leq i$  \\ 
Case Two & $j \geq i+1$   & $k \leq i$  \\
Case Three  & $j \geq i+1$    & $ k \geq i + 1$ \\ \bottomrule
\end{tabular}
\end{table}
To that end, we begin by considering Case One where $j \leq i$ and $k \leq i  $. First assume $j \leq i-1$. Then we have
\begin{align}
\frac{\resv_k^{(t+1)}}{\resv_j^{(t+1)}} & \leq \frac{\smallgrowthlazy[\nsteps_{i+1} +1] \resv_k^{(t)}}{\smallgrowthlazy[\nsteps_{i+1} +1] \resv_j^{(t)}} = \frac{\resv_k^{(t)}}{\resv_j^{(t)}}.
\end{align}
Therefore since $\growthlazy{i} \geq 1$, 
\begin{equation}
\resv_k^{(t)} \leq \basegrowthlazy \growthlazy[t]{i} \max \left \{ \frac{L_k}{L_j}, \frac{L_j}{L_k} \right \} \resv_j^{(t)} \implies\resv_k^{(t+1)} \leq \basegrowthlazy \growthlazy[t+1]{i} \max \left \{ \frac{L_k}{L_j}, \frac{L_j}{L_k} \right \} \resv_j^{(t+1)}.
\end{equation}
Next we set $j = i$. Suppose $t + 1 \leq \threshold{i} $. Then we have
\begin{equation}
\resv_i^{(t+1)} = \tilde{\gamma}_i^{t+1} \resv_i^{(0)} \geq  \tilde{\gamma}_i^{\threshold{i}} \resv_i^{(0)}  = \frac{1}{\smallgrowthlazy[\nsteps_{i+1}]} \frac{L_{i-1}}{L_i} \resv_{i-1}^{(0)} \geq \frac{L_{i-1}}{L_i} \frac{1}{\smallgrowthlazy[(t+1)( \nsteps_{i+1}+1)]} \resv_{i-1}^{(t)}.
\end{equation}
Therefore if $j = i$ and $k \leq i$, 
\begin{align}
\resv_k^{(t+1)} & \leq \smallgrowthlazy[\nsteps_{i+1} + 1] \resv_k^{(t)} \\
& \leq \smallgrowthlazy[\nsteps_{i+1} + 1]  \basegrowthlazy \growthlazy[t]{i} \max \left \{ \frac{L_{i-1}}{L_k}, \frac{L_k}{L_{i-1}} \right \} \resv_{i-1}^{(t)} \\
& \leq \smallgrowthlazy[\nsteps_{i+1} + 1]  \basegrowthlazy \growthlazy[t]{i} \max \left \{ \frac{L_{i-1}}{L_k}, \frac{L_k}{L_{i-1}} \right \}  \frac{L_i}{L_{i-1}} \smallgrowthlazy[(t+1)( \nsteps_{i+1}+1)] \resv_i^{(t+1)} \\
& =  \smallgrowthlazy[(t+2)( \nsteps_{i+1}+1)+1] \basegrowthlazy \growthlazy[t]{i} \frac{L_i}{L_k} \resv_i^{(t+1)} \\
& \leq \basegrowthlazy \growthlazy[t+1]{i} \frac{L_i}{L_k} \resv_i^{(t+1)}. \tag{$\smallgrowthlazy[(t+2)(\nsteps_{i+1} + 1) + 1] \leq \growthlazy{i}$}
\end{align}
Next we consider Case Two where $j \geq i+1$ and $k \leq i$. Using similar reasoning we find,
\begin{align}
    \resv_k^{(t+1)} & \leq \smallgrowthlazy[\nsteps_{i+1}] \resv_k^{(t)} \tag{I.H. of Claim~\ref{claim:induct}} \\
    & \leq \smallgrowthlazy[\nsteps_{i+1}] \basegrowthlazy \growthlazy[t]{i} \frac{L_i}{L_k} \resv_i^{(t)} \tag{I.H. of Claim~\ref{claim:induct}}  \\
    & = \smallgrowthlazy[\nsteps_{i+1}]  \basegrowthlazy \growthlazy[t]{i} \frac{L_i}{L_k} \left( \frac{L_i}{L_j} \frac{1}{\smallgrowthlazy[\nsteps_{i+1}]} \resv_j^{(t+1)} \right) \tag{Eq.~\ref{eqn:part3induct}} \\
    & \leq  \basegrowthlazy \growthlazy[t]{i} \frac{L_i}{L_k} \frac{L_j}{L_i} \resv_j^{(t+1)} \tag{$L_i < L_j$} \\
    & \leq \basegrowthlazy \growthlazy[t+1]{i}  \frac{L_j}{L_k} \resv_j^{(t+1)}. \tag{$\growthlazy{i} \geq 1$}
\end{align}
Finally we consider Case Three where $j \geq i+1$ and $k \geq i +1$. We have
\begin{equation}
    \frac{\resv_k^{(t+1)}}{\resv_j^{(t+1)}} = \frac{\frac{L_k}{L_i}\smallgrowthlazy[\nsteps_{i+1}+1] \resv_i^{(t)}}{\frac{L_j}{L_i}\smallgrowthlazy[\nsteps_{i+1}+1] \resv_i^{(t)}} = \frac{L_k}{L_j}.
\end{equation}
Now we address the case where $t \in \left[ \threshold{i}-1, \threshold{i}+1 \right]$. For any $t$ in this range
\begin{equation}
    \resv_i^{(t)} = \tilde{\gamma}_i^{t} \resv_i^{(0)} \geq \tilde{\gamma}_i^{\threshold{i} + 1} \resv_i^{(0)} \geq \tilde{\gamma}_i  \frac{1}{\smallgrowthlazy} \frac{L_{i-1}}{L_i} \resv_{i-1}^{(0)} = \tilde{\gamma}_i  \frac{L_{i-1}}{L_i} \frac{1}{\smallgrowthlazy[(t+1)(\nsteps_{i+1}+1)]} \resv_{i-1}^{(t)}.
\end{equation}
Therefore, using the same logic as before we have that if $j=i$ and $k \leq i$,
\begin{align}
    \resv_k^{(t)} \leq \basegrowthlazy \growthlazy[t]{i} \frac{1}{\tilde{\gamma}_i}^2 \frac{L_i}{L_k} \resv_i^{(t)}.
\end{align}
All other cases remain the same. \\
As promised, we now consider the case where $j \leq i$ and $k \geq i + 1$ separately. By I.H. of Claim~\ref{claim:induct} for $t$ we have 
\begin{equation}
\resv_k^{(t+1)} = \frac{L_k}{L_i} \smallgrowthlazy[\nsteps_{i+1}] \resv_i^{(t)}  = \frac{L_k}{L_i} \smallgrowthlazy[\nsteps_{i+1}] \resv_i^{(t)} = \left(\frac{\smallgrowthlazy[\nsteps_{i+1}]}{\tilde{\gamma}_i} \right) \frac{L_k}{L_i} \resv_i^{(t+1)}.
\end{equation}
Also note that for any $j \leq i$, $\resv_i^{(t+1)} \leq \resv_i^{(t)}$ and $\resv_j^{(t+1)} \geq \resv_j^{(t)}$ and so for any $t \leq \threshold{i} -1$,
\begin{equation}
    \resv_i^{(0)} \leq \basegrowthlazy \resv_j^{(0)} \implies \resv_i^{(t)} \leq \basegrowthlazy \resv_j^{(t)}.
\end{equation}
Therefore for any $j \leq i$,
\begin{align}
    \resv_k^{(t+1)} & = \left(\frac{\smallgrowthlazy[\nsteps_{i+1}]}{\tilde{\gamma}_i} \right) \frac{L_k}{L_i} \resv_i^{(t+1)} \\
    & \leq \left(\frac{ \smallgrowthlazy[\nsteps_{i+1}]}{\tilde{\gamma}_i} \right) \frac{L_k}{L_i} \left( \basegrowthlazy \resv_j^{(t+1)}  \right)\\
    & \leq \left(\frac{\basegrowthlazy \smallgrowthlazy[\nsteps_{i+1}]}{\tilde{\gamma}_i} \right) \frac{L_k}{L_j} \resv_j^{(t+1)}. \tag{$L_j \leq L_i$}
\end{align}
We conclude that for $t \leq \threshold{i} + 1$ and for any $j,k$,
\begin{equation}
    \resv_k^{(t)} \leq \basegrowthlazy \max \left \{\growthlazy[t]{i}, \frac{\smallgrowthlazy[\nsteps_{i+1}]}{\tilde{\gamma}_i} \right \} \max \left \{ \frac{L_j}{L_k}, \frac{L_k}{L_j}\right \} \resv_j^{(t)}.
\end{equation}
This concludes the proof of Claim~\ref{claim:induct}. Now we consider the steps for which $t >  \left \lceil \threshold{i} \right \rceil$. We have the following claim. 
\begin{claim} [Extraneous Steps Phase.]
    \label{claim:extraind}
    Suppose $t >  \left \lceil \threshold{i} \right \rceil$. Then
    \begin{equation}
        \resv^{(t+1)} = \mat{W}_i \resv^{(t)}.
    \end{equation}
    Moreover, Eq.~\ref{eqn:needfortop} holds from Claim~\ref{claim:induct}.
\end{claim}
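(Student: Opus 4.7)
The plan is to prove both assertions jointly by induction on $t$ for $t > \lceil \threshold{i} \rceil$, using the end-state of Claim~\ref{claim:induct} at $t = \lceil \threshold{i} \rceil + 1$ as the base case (so the invariant Eq.~\ref{eqn:needfortop} is already in hand at the starting time). The inductive step has two pieces: (i) verify that the $\mat{U}_i$-update is dominated entrywise by the $\mat{W}_i$-update, so that line~8 of $\bslsres$ selects $\mat{W}_i \tilde{\resv}^{(t)}$; (ii) verify that Eq.~\ref{eqn:needfortop} propagates from $t$ to $t+1$ under the $\mat{W}_i$-update.

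For (i), first check that the recursive call $\tilde{\resv}^{(t)} = \bslsres_{i+1}(\resv^{(t)})$ is legal by confirming Eq.~\ref{eqn:betacond} for $\resv^{(t)}$. This is done exactly as in the proof of Claim~\ref{claim:induct}: the current invariant gives $\basegrowth{\resv^{(t)}} \le \basegrowth{\resv^{(0)}}\cdot\tilde{\gamma}_i^{-2} \growth[t]{i}{\resv^{(0)}}$, and this factor is absorbed by the slack in $\totalgrowth{\resv^{(0)}}^{m-i+1}$ so that $\basegrowth{\resv^{(t)}}\totalgrowth{\resv^{(t)}}^{m-i} \le \basegrowth{\resv^{(0)}}\totalgrowth{\resv^{(0)}}^{m-i+1}$. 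Having justified the recursive call, use the outer inductive hypothesis of Lemma~\ref{lemma:maininduction} on $\bslsres_{i+1}$ to get $\tilde{\resv}_k^{(t)}\le (L_i/L_k)\resv_i^{(t)}$ for $k\ge i+1$ and $\tilde{\resv}_k^{(t)}\le \smallgrowth[\nsteps_{i+1}]{\resv^{(t)}} \resv_k^{(t)}$ for $k < i+1$. For the $j\le i-1$ and $\ell \ge i+1$ coordinates of $\overline{\resv}^{(t+1)} := \mat{U}_i \tilde{\resv}^{(t)}$, the estimates in Eqs.~\ref{eqn:part2induct}--\ref{eqn:part3induct} of Claim~\ref{claim:induct} carry over unchanged and match the corresponding entries of $\mat{W}_i \tilde{\resv}^{(t)}$ up to $\smallgrowthlazy[\nsteps_{i+1}+1]$. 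The crucial coordinate is $i$: the same calculation as in Claim~\ref{claim:induct}, using the current invariant in the form $\resv_k^{(t)} \le \basegrowthlazy \cdot \tfrac{1}{\tilde{\gamma}_i^2}\growthlazy[t]{i} \cdot (L_i/L_k)\resv_i^{(t)}$ for $k<i$, yields
\begin{equation*}
\overline{\resv}_i^{(t+1)} \le \bigl(\gamma_i + 3\basegrowthlazy\delta m\bigr)\smallgrowthlazy[\nsteps_{i+1}]\, \resv_i^{(t)} \le \tilde{\gamma}_i\, \resv_i^{(t)} \le \resv_i^{(t)},
\end{equation*}
where the middle inequality is Eq.~\ref{eqn:gammaired}. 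Since $(\mat{W}_i \tilde{\resv}^{(t)})_i = \tilde{\resv}_i^{(t)} = \resv_i^{(t)}$ (the recursion preserves the $i$-th coordinate, as seen in the proof of the upper-half of Lemma~\ref{lemma:maininduction} for the inner call), the max selects $\mat{W}_i$, proving $\resv^{(t+1)} = \mat{W}_i \tilde{\resv}^{(t)}$.

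For (ii), the three cases in Table~\ref{table:invariantcases} plus the "special" case $j\le i$, $k\ge i+1$ are checked exactly as in Claim~\ref{claim:induct}, with one simplification: now $\resv_i^{(t+1)} = \resv_i^{(t)}$ instead of $\tilde{\gamma}_i \resv_i^{(t)}$, so the factor $1/\tilde{\gamma}_i^2$ (which appeared in Claim~\ref{claim:induct} only because Eq.~\ref{eqn:thresholdbase} lost one power of $\tilde{\gamma}_i$ at the threshold) is no longer needed beyond what is already inside $\growthlazy[t]{i}$. Each multiplicative update increases $\growthlazy[t]{i}$ by at most $\smallgrowthlazy[(\nsteps_{i+1}+1)]$, which matches the increment built into the definition $\growth{i}{\resv} \defeq \smallgrowth{\resv}^{(\nsteps_i+2)(N_{i+1}+1)+1}$, so the bound on $\growthlazy[t]{i}$ used throughout (Eq.~\ref{eqn:bless3ind}) remains $\le 3$ for all $t \le T_i$.

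The main obstacle is the bookkeeping around the recursive call: one must be careful that the stochastic growth factors $\smallgrowth{\resv^{(t)}}$ accumulated over the recursion do not explode beyond the $\totalgrowth{\resv^{(0)}}^{m-i+1}$ budget, and that the invariant Eq.~\ref{eqn:needfortop}, which is stated relative to the \emph{original} $\resv^{(0)}$ passed to $\bslsres_i$, remains consistent when we invoke the inner hypothesis (which is stated relative to $\resv^{(t)}$). This is handled by monotonicity of $\basegrowth{\cdot}$ and the multiplicative nature of the bounds, but the algebra requires the same care as in the proof of Claim~\ref{claim:induct}.
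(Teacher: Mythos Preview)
Your plan is essentially correct and follows the same route as the paper. Two remarks:

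First, the paper's argument for part~(i) is more economical than yours: rather than redoing the coordinate-by-coordinate estimates, it simply observes that the work in Claim~\ref{claim:induct} already established $\overline{\resv}^{(t+1)} \le \mat{V}_i \resv^{(t)}$ entrywise, and that $\mat{W}_i$ and $\mat{V}_i$ differ only in the $(i,i)$ entry with $(\mat{W}_i)_{ii} = 1 > \tilde{\gamma}_i = (\mat{V}_i)_{ii}$. Hence $\mat{W}_i \ge \mat{V}_i$ entrywise and the $\max$ selects $\mat{W}_i$ immediately. Your explicit recomputation works too, but is unnecessary.

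Second, a small imprecision: you write $(\mat{W}_i\tilde{\resv}^{(t)})_i = \tilde{\resv}_i^{(t)} = \resv_i^{(t)}$, claiming the recursion preserves the $i$-th coordinate. Lemma~\ref{lemma:maininduction} for $\bslsres_{i+1}$ only gives $\tilde{\resv}_i^{(t)} \le \smallgrowthlazy[\nsteps_{i+1}]\resv_i^{(t)}$, not equality. This does not break anything, because the claim's statement is $\resv^{(t+1)} = \mat{W}_i\resv^{(t)}$ (the matrix $\mat{W}_i$ is meant to absorb the effect of the inner call), and what you actually need at coordinate $i$ is $\overline{\resv}_i^{(t+1)} \le \resv_i^{(t)} = (\mat{W}_i\resv^{(t)})_i$, which you have via $\overline{\resv}_i^{(t+1)} \le \tilde{\gamma}_i\resv_i^{(t)}$.

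For part~(ii) the paper does exactly what you describe: recycle the case analysis of Claim~\ref{claim:induct} for the coordinates that behave identically under $\mat{V}_i$ and $\mat{W}_i$, and check only the cases with $k = i$ afresh using $\resv_i^{(t+1)} = \resv_i^{(t)}$ and $\resv_j^{(t+1)} \ge \resv_j^{(t)}$ for $j < i$.
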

To prove Claim~\ref{claim:extraind} we first need to show that
\begin{equation}
    \max \left \{ \mat{U}_i \tilde{\resv}_i^{(t)}, \mat{W}_i \tilde{\resv}_i^{(t)}  \right \} = \mat{W}_i \tilde{\resv}_i^{(t)}.
\end{equation}
However we have already shown this while proving earlier that
\begin{equation}
    \max \left \{ \mat{U}_i \tilde{\resv}_i^{(t)}, \mat{V}_i \tilde{\resv}_i^{(t)}  \right \} = \mat{V}_i \tilde{\resv}_i^{(t)}.
\end{equation}
All that is left is to note that $\mat{V}_i$ and $\mat{W}_i$ are identical except $\left(\mat{V}_i \right)_{ii} < \left( \mat{W}_i \right)_{ii}$. Next we show that Eq.~\ref{eqn:needfortop} holds when $t > \left \lceil  \threshold{i} \right \rceil$. Note that for any $j \leq i-1$
\begin{equation}
    \resv_j^{(t+1)} = \left( \mat{W}_i \resv^{(t)} \right)_j =  \left( \mat{V}_i \resv^{(t)} \right)_j
\end{equation}
and for any $j \geq i$, 
\begin{equation}
     \resv_j^{(t+1)} = \left( \mat{W}_i \resv^{(t)} \right)_j \geq \left( \mat{V}_i \resv^{(t)} \right)_j.
\end{equation}
Therefore whenever the pair $(j,k)$ is such that $k \leq i-1$ we can recycle the proof showing Eq.~\ref{eqn:needfortop} holds in Claim~\ref{claim:induct} (i.e. $t \leq \left \lceil \threshold{i} \right \rceil$). We simply need to consider the case where the pair $(j,k)$ is such that $k \geq i$. Examining the proof of Eq.~\ref{eqn:needfortop} for Claim~\ref{claim:induct} we see that it suffices to simply consider $k=i$. First suppose that $j \leq i -1$. Then we are done since $\resv_i^{(t+1)} = \resv_i^{(t)}$ and $\resv_j^{(t+1)} \geq \resv_j^{(j)}$. Therefore since $\growthlazy{i} \geq 1$, 
\begin{equation}
\resv_i^{(t)} \leq \basegrowthlazy \growthlazy[t]{i} \max \left \{ \frac{L_i}{L_j}, \frac{L_j}{L_i} \right \} \resv_j^{(t)} \implies \resv_i^{(t+1)} \leq \basegrowthlazy \growthlazy[t+1]{i} \max \left \{ \frac{L_i}{L_j}, \frac{L_j}{L_i} \right \} \resv_j^{(t+1)}.
\end{equation}
Next suppose that $j \geq i + 1$. Then we have for any $t$,
\begin{equation}
    \resv_i^{(t)} = \frac{1}{\smallgrowthlazy[\nsteps_{i+1}+1]} \frac{L_i}{L_j} \resv_j^{(t)} \leq \frac{L_i}{L_j} \resv_j^{(t)}.
\end{equation}
Therefore we conclude that Eq.~\ref{eqn:needfortop} holds when $t > \left \lceil \threshold{i} \right \rceil$. 
\paragraph{Conclusion of Inductive Case} To conclude we recall Eq.~\ref{eqn:Tisize},
\begin{equation}
    T_i \geq  \left \lceil  \threshold{i} \right \rceil.
\end{equation}
Using this we see,
\begin{align}
    \resv_i^{(T_i)} & = \tilde{\gamma}_i^{\left \lceil \threshold{i} \right \rceil} \resv_i^{(0)} \\
    & \leq \tilde{\gamma}_i^{ \threshold{i}} \resv_i^{(0)} \\
    & = \frac{1}{\smallgrowthlazy[\nsteps_{i+1}]} \frac{L_{i-1}}{L_i} \resv_{i-1}^{(0)}
\end{align}
Next, since we return $\bslsres_{i+1}(\resv^{(T_i)})$ we have by the Inductive Hypothesis that Lemma~\ref{lemma:maininduction} holds for $\bslsres_{i+1}$ that if $\tilde{\resv} = \bslsres_{i+1}(\resv^{(T_i)}) $ then
\begin{align}
    \tilde{\resv}_i & \leq \smallgrowthlazy[\nsteps_{i+1}] \resv_i^{(T_i)}  \leq   \frac{L_{i-1}}{L_i} \resv_{i-1}^{(0)}.
\end{align}
Then for all $j \geq i + 1$  we have 
\begin{equation}
    \tilde{\resv}_j \leq \frac{L_i}{L_j} \resv_i^{(T_i)} \leq \frac{L_i}{L_j} \left(  \frac{L_{i-1}}{L_i} \resv_{i-1}^{(0)}\right)  \leq  \frac{L_{i-1}}{L_j} \resv_{i-1}^{(0)}.
\end{equation}
Finally since $\nsteps_i = \nsteps_{i+1} (2 T_i + 1)$, for all $j \leq i-1$
\begin{equation}
    \resv_{j}^{(T_m)} \leq \smallgrowthlazy[T_i \nsteps_{i+1}] \resv_{j}^{(0)} \leq \smallgrowthlazy[\nsteps_i] \resv_{j}^{(0)}.
\end{equation}
\end{proof}

\section{Miscellaneous helper lemmas}
\label{apx:helper}
In this section we provide some helper lemmas and that are used throughout the other sections. 
\begin{lemma}
	\label{helper:choose}
	For any integer pairs $(p, q)$ such that $p \geq q \geq 1$, we have ${p \choose q} \leq p \cdot {p - 1 \choose q-1}$.
\end{lemma}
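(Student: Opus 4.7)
The plan is to prove this by a direct algebraic manipulation, relying on the well-known absorption/committee identity $q\binom{p}{q} = p\binom{p-1}{q-1}$. First I would expand both sides using factorials: the left-hand side is $\binom{p}{q} = \frac{p!}{q!(p-q)!}$, while the right-hand side is $p \cdot \binom{p-1}{q-1} = p \cdot \frac{(p-1)!}{(q-1)!(p-q)!} = \frac{p!}{(q-1)!(p-q)!}$.

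Next I would observe that the right-hand side equals $q \cdot \frac{p!}{q!(p-q)!} = q \cdot \binom{p}{q}$. Hence the claimed inequality reduces to $\binom{p}{q} \leq q \cdot \binom{p}{q}$, which is immediate from the assumption $q \geq 1$ (and holds with equality exactly when $q=1$). Since the binomial coefficient $\binom{p}{q}$ is a non-negative integer (in fact positive, because $p \geq q \geq 1$), the inequality follows.

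There is no real obstacle here: the proof is a two-line calculation that uses nothing beyond the factorial definition of the binomial coefficient and the hypothesis $q \geq 1$. The only ``care'' required is to note that the factorials are well-defined under the hypothesis $p \geq q \geq 1$ (so that $p-q \geq 0$ and $q-1 \geq 0$).
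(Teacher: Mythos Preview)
Your proposal is correct and essentially identical to the paper's argument: both use the absorption identity $\binom{p}{q} = \frac{p}{q}\binom{p-1}{q-1}$ (equivalently $p\binom{p-1}{q-1} = q\binom{p}{q}$) together with $q \geq 1$. The only cosmetic difference is that the paper manipulates the left-hand side into $\frac{p}{q}\binom{p-1}{q-1}$ and bounds $\frac{p}{q} \leq p$, whereas you rewrite the right-hand side as $q\binom{p}{q}$ and bound $q \geq 1$.
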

\begin{proof}[Proof of \Cref{helper:choose}]
	By definition
	\begin{equation}
		{p \choose q} = \frac{p!}{q! (p-q)!} = \frac{p}{q} \frac{(p-1)!}{(q-1)! (p-q)!} = \frac{p}{q} \cdot {p - 1 \choose q- 1} \leq p \cdot {p - 1 \choose q- 1}.
	\end{equation}
\end{proof}

\begin{lemma}
    \label{helper:Pochhammer}
    \begin{equation}
        \prod_{j=1}^{\infty} \left( 1 - 2^{-j} \right) > 0.288.
    \end{equation}
\end{lemma}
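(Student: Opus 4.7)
}
The statement is purely numerical, so the plan is to split the infinite product at a finite truncation, compute the prefix exactly, and lower-bound the tail by an elementary Weierstrass-style inequality. Concretely, for any $N \geq 1$ I would write
\begin{equation}
    \prod_{j=1}^{\infty} (1 - 2^{-j}) = \left( \prod_{j=1}^{N} (1 - 2^{-j}) \right) \cdot \left( \prod_{j=N+1}^{\infty} (1 - 2^{-j}) \right),
\end{equation}
and bound the two factors separately.

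First, I would handle the tail. Using the standard fact that $\prod_{j} (1 - a_j) \geq 1 - \sum_{j} a_j$ whenever $a_j \in [0,1]$ (proved by a one-line induction on finite truncations, then taking a monotone limit), applied with $a_j = 2^{-j}$, I obtain
\begin{equation}
    \prod_{j=N+1}^{\infty} (1 - 2^{-j}) \;\geq\; 1 - \sum_{j=N+1}^{\infty} 2^{-j} \;=\; 1 - 2^{-N}.
\end{equation}
Second, I would compute the prefix exactly. Choosing $N = 8$ for concreteness,
\begin{equation}
    \prod_{j=1}^{8} (1 - 2^{-j}) = \frac{1 \cdot 3 \cdot 7 \cdot 15 \cdot 31 \cdot 63 \cdot 127 \cdot 255}{2^{1+2+\cdots+8}} = \frac{19{,}923{,}090{,}075}{2^{36}},
\end{equation}
which one checks is strictly greater than $0.28992$.

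Combining the two, the desired product is bounded below by $0.28992 \cdot (1 - 2^{-8}) = 0.28992 \cdot \tfrac{255}{256} > 0.28878 > 0.288$, as claimed. The only ``obstacle'' is ensuring the truncation is taken far enough that the tail-loss factor $(1 - 2^{-N})$ does not eat into the gap between $\prod_{j=1}^{N}(1-2^{-j})$ and $0.288$; $N = 8$ suffices but I would probably use $N = 10$ for a bit of slack. If a more conceptual argument is preferred, an alternative would be to invoke Euler's pentagonal number theorem to express $\prod_{j \geq 1}(1-q^j)$ at $q = 1/2$ as the rapidly convergent series $1 - \tfrac{1}{2} - \tfrac{1}{4} + \tfrac{1}{32} + \tfrac{1}{128} - \tfrac{1}{4096} - \tfrac{1}{32768} + \cdots$ and truncate, but the direct computational approach above is shorter and self-contained.
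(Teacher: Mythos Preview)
Your proposal is correct and takes essentially the same approach as the paper: split the product at a finite index, compute the prefix numerically, and lower-bound the tail by an elementary inequality. The only cosmetic difference is that the paper bounds the tail via $\log(1-2^{-j}) \geq -2^{-j+1}$ and exponentiates, whereas you use the Weierstrass bound $\prod(1-a_j)\geq 1-\sum a_j$; both give equivalent slack. One tiny numerical slip: $\prod_{j=1}^{8}(1-2^{-j}) \approx 0.289919$, which is not quite ``$>0.28992$'' as you wrote, but since $0.2899 \cdot (255/256) > 0.288$ the conclusion is unaffected (and your suggestion to take $N=10$ would remove any doubt).
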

The lemma is standard from combinatoric analysis.
We provide an elementary proof here for completeness.
\begin{proof}[Proof of \cref{helper:Pochhammer}]
    Decompose the infinity product into two parts
    \begin{equation}
        \prod_{j=1}^{\infty} (1-2^{-j}) = \left( \prod_{j=1}^{10} (1-2^{-j})  \right) \cdot \exp \left( \sum_{j=11}^{\infty} \log ( 1- 2^{-j})\right).
    \end{equation}
    Since $\log(1-2^{-j}) \geq -2^{-j+1}$ for $j \geq 1$, we obtain
    \begin{equation}
        \exp \left( \sum_{j=11}^{\infty} \log ( 1- 2^{-j})\right)
        \geq
        \exp \left( - \sum_{j=11}^{\infty} 2^{-j+1} \right) 
        =
        \exp \left( - 2^{-9} \right)
        > 
        0.998.
    \end{equation}
    On the other hand we know that $\prod_{j=1}^{10} (1-2^{-j})  > 0.289$. Therefore $\prod_{j=1}^{\infty} (1-2^{-j}) > 0.289 \times 0.998 > 0.288$.
\end{proof}

\begin{lemma}%
\label{lemma:helper}
Suppose for any $i$, $a_i, b_i, x_i > 0$. Then for any $n$,
\begin{equation}
    \frac{\sum_{i = 1}^n a_i x_i}{\sum_{i = 1}^n b_i x_i} \leq \max \limits_{i \in [n]} \left \{ \frac{a_i}{b_i} \right \}.
\end{equation}
\end{lemma}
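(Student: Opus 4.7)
The plan is to reduce the inequality to an elementwise comparison and then aggregate. Let $M \defeq \max_{i \in [n]} \{a_i/b_i\}$. By definition of $M$, for every $i \in [n]$ we have $a_i \leq M b_i$, since $b_i > 0$.

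Next, I would multiply both sides of each of these inequalities by the positive quantity $x_i$, preserving direction, to obtain $a_i x_i \leq M b_i x_i$ for every $i$. Summing these $n$ inequalities then yields
\begin{equation}
    \sum_{i=1}^n a_i x_i \leq M \sum_{i=1}^n b_i x_i.
\end{equation}
Since $b_i, x_i > 0$ for every $i$, the denominator $\sum_{i=1}^n b_i x_i$ is strictly positive, so dividing both sides by it gives the claimed bound
\begin{equation}
    \frac{\sum_{i=1}^n a_i x_i}{\sum_{i=1}^n b_i x_i} \leq M = \max_{i \in [n]} \frac{a_i}{b_i}.
\end{equation}

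There is no real obstacle here: the argument is essentially a one-line weighted-average observation (a convex combination of $\{a_i/b_i\}$ with weights proportional to $b_i x_i$ cannot exceed the maximum), and positivity of all $b_i, x_i$ ensures that the division step is legitimate.
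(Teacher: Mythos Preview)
Your proof is correct and essentially identical to the paper's: the paper phrases it as writing the ratio as a convex combination $\sum_i \frac{a_i}{b_i}\cdot\frac{b_i x_i}{\sum_j b_j x_j}$ and bounding by the maximum, which is exactly the weighted-average observation you make in your closing remark.
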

\begin{proof}[Proof of \cref{lemma:helper}]
This follows immediately from the fact that we can write the expression on left-hand side of the inequality as a convex combination of the fractions $\frac{a_i}{b_i}$ (with coefficients $c_i=\frac{b_i x_i}{\sum_{i = 1}^n b_i x_i}$):
\[
\frac{\sum_{i = 1}^n a_i x_i}{\sum_{i = 1}^n b_i x_i} 
    =\sum_{i=1}^n \frac{a_i}{b_i} \frac{b_i x_i}{\sum_{i = 1}^n b_i x_i}
    \leq  
    \max \limits_{i \in [n]} \left \{ \frac{a_i}{b_i} \right \}
    \sum_{i=1}^n \frac{b_i x_i}{\sum_{i = 1}^n b_i x_i}
    =
    \max \limits_{i \in [n]} \left \{ \frac{a_i}{b_i} \right \}.
\]
\end{proof}

\end{document}